\newcommand{\fke}{\ensuremath{\mathfrak{e}}\xspace}
\newcommand{\fkt}{\ensuremath{\mathfrak{t}}\xspace}
\newcommand{\fkY}{\ensuremath{\mathfrak{Y}}\xspace}
\newcommand{\fkZ}{\ensuremath{\mathfrak{Z}}\xspace}
\newcommand{\BA}{\ensuremath{\mathbb{A}}\xspace}
\newcommand{\BC}{\ensuremath{\mathbb{C}}\xspace}
\newcommand{\BD}{\ensuremath{\mathbb{D}}\xspace}
\newcommand{\BE}{\ensuremath{\mathbb{E}}\xspace}
\newcommand{\BF}{\ensuremath{\mathbb{F}}\xspace}
\newcommand{\BG}{\ensuremath{\mathbb{G}}\xspace}
\newcommand{\BL}{\ensuremath{\mathbb{L}}\xspace}
\newcommand{\BM}{\ensuremath{\mathbb{M}}\xspace}
\newcommand{\BN}{\ensuremath{\mathbb{N}}\xspace}
\newcommand{\BP}{\ensuremath{\mathbb{P}}\xspace}
\newcommand{\BQ}{\ensuremath{\mathbb{Q}}\xspace}
\newcommand{\BT}{\ensuremath{\mathbb{T}}\xspace}
\newcommand{\BV}{\ensuremath{\mathbb{V}}\xspace}
\newcommand{\BW}{\ensuremath{\mathbb{W}}\xspace}
\newcommand{\BX}{\ensuremath{\mathbb{X}}\xspace}
\newcommand{\BY}{\ensuremath{\mathbb{Y}}\xspace}
\newcommand{\BZ}{\ensuremath{\mathbb{Z}}\xspace}
\newcommand{\bM}{\ensuremath{\mathbf{M}}\xspace}
\newcommand{\bY}{\ensuremath{\mathbf{Y}}\xspace}
\newcommand{\bZ}{\ensuremath{\mathbf{Z}}\xspace}
\newcommand{\CC}{\ensuremath{\mathcal{C}}\xspace}
\newcommand{\CE}{\ensuremath{\mathcal{E}}\xspace}
\newcommand{\CF}{\ensuremath{\mathcal{F}}\xspace}
\newcommand{\CM}{\ensuremath{\mathcal{M}}\xspace}
\newcommand{\CN}{\ensuremath{\mathcal{N}}\xspace}
\newcommand{\CO}{\ensuremath{\mathcal{O}}\xspace}
\newcommand{\CT}{\ensuremath{\mathcal{T}}\xspace}
\newcommand{\CV}{\ensuremath{\mathcal{V}}\xspace}
\newcommand{\CW}{\ensuremath{\mathcal{W}}\xspace}
\newcommand{\CY}{\ensuremath{\mathcal{Y}}\xspace}
\newcommand{\CZ}{\ensuremath{\mathcal{Z}}\xspace}
\newcommand{\uF}{\underline{F}}
\newcommand{\uV}{\underline{V}}
\newcommand{\Ad}{\mathrm{Ad}}
\DeclareMathOperator{\Aut}{Aut}
\DeclareMathOperator{\charac}{char}
\newcommand{\corr}{\mathrm{corr}}
\newcommand{\del}{\operatorname{\partial Orb}}
\DeclareMathOperator{\End}{End}
\newcommand{\Fil}{\ensuremath{\mathrm{Fil}}\xspace}
\DeclareMathOperator{\Gal}{Gal}
\newcommand{\GL}{\mathrm{GL}}
\DeclareMathOperator{\Gr}{Gr}
\newcommand{\GS}{\mathrm{GS}}
\newcommand{\GU}{\mathrm{GU}}
\DeclareMathOperator{\Hom}{Hom}
\newcommand{\id}{\ensuremath{\mathrm{id}}\xspace}
\DeclareMathOperator{\Int}{\ensuremath{\mathrm{Int}}\xspace}
\DeclareMathOperator{\Ker}{Ker}
\DeclareMathOperator{\Lie}{Lie}
\newcommand{\OGr}{\mathrm{OGr}}
\DeclareMathOperator{\Orb}{Orb}
\DeclareMathOperator{\rank}{rank}
\DeclareMathOperator{\Res}{Res}
\newcommand{\rs}{\ensuremath{\mathrm{rs}}\xspace}
\DeclareMathOperator{\Spec}{Spec}
\DeclareMathOperator{\Spf}{Spf}
\newcommand{\SO}{{\mathrm{SO}}}
\newcommand{\nspt}{\mathrm{nspl}}
\newcommand{\spt}{\mathrm{spl}}
\newcommand{\SU}{{\mathrm{SU}}}
\newcommand{\val}{{\mathrm{val}}}
\newcommand{\Ver}{{\mathrm{Vert}}}
\DeclareMathOperator{\tr}{tr}
\newcommand{\U}{\mathrm{U}}
\DeclareMathOperator{\vol}{vol}
\newcommand{\wit}{\widetilde}
\newcommand{\wh}{\widehat}
\newcommand{\pair}[1]{\langle {#1} \rangle}
\newcommand{\ov}{\overline}
\newcommand{\incl}{\hookrightarrow}
\newcommand{\lra}{\longrightarrow}
\newcommand{\bs}{\backslash}
\newcommand{\lv}{\lvert}
\newcommand{\rv}{\rvert}
\newcommand{\ep}{\varepsilon}
\newenvironment{altenumerate}
   {\begin{list}
      {(\theenumi) }
      {\usecounter{enumi}
       \setlength{\labelwidth}{0pt}
       \setlength{\labelsep}{0pt}
       \setlength{\leftmargin}{0pt}
       \setlength{\itemsep}{\the\smallskipamount}
       \renewcommand{\theenumi}{\roman{enumi}}
      }}
   {\end{list}}
\newenvironment{altenumerate2}
   {\begin{list}
      {\textup{(\theenumii)} }
      {\usecounter{enumii}
       \setlength{\labelwidth}{0pt}
       \setlength{\labelsep}{0pt}
       \setlength{\leftmargin}{2em}
       \setlength{\itemsep}{\the\smallskipamount}
       \renewcommand{\theenumii}{\alph{enumii}}
      }}
   {\end{list}}
\newenvironment{altitemize}
   {\begin{list}
      {$\bullet$}
      {\setlength{\labelwidth}{0pt}
	   \setlength{\itemindent}{5pt}
       \setlength{\labelsep}{5pt}
       \setlength{\leftmargin}{0pt}
       \setlength{\itemsep}{\the\smallskipamount}
      }}
   {\end{list}}
\renewcommand{\to}{%
   \ifbool{@display}{\longrightarrow}{\rightarrow}%
   }
\let\shortmapsto\mapsto
\renewcommand{\mapsto}{%
   \ifbool{@display}{\longmapsto}{\shortmapsto}%
   }
\newlength{\olen}
\newlength{\ulen}
\newlength{\xlen}
\newcommand{\xra}[2][]{%
   \ifbool{@display}%
      {\settowidth{\olen}{$\overset{#2}{\longrightarrow}$}%
       \settowidth{\ulen}{$\underset{#1}{\longrightarrow}$}%
       \settowidth{\xlen}{$\xrightarrow[#1]{#2}$}%
       \ifdimgreater{\olen}{\xlen}%
          {\underset{#1}{\overset{#2}{\longrightarrow}}}%
          {\ifdimgreater{\ulen}{\xlen}%
             {\underset{#1}{\overset{#2}{\longrightarrow}}}
             {\xrightarrow[#1]{#2}}}}%
      {\xrightarrow[#1]{#2}}
   }
\newcommand{\xyra}[2][]{%
   \settowidth{\xlen}{$\xrightarrow[#1]{#2}$}%
   \ifbool{@display}%
      {\settowidth{\olen}{$\overset{#2}{\longrightarrow}$}%
       \settowidth{\ulen}{$\underset{#1}{\longrightarrow}$}%
       \ifdimgreater{\olen}{\xlen}%
          {\mathrel{\xymatrix@M=.12ex@C=3.2ex{\ar[r]^-{#2}_-{#1} &}}}%
          {\ifdimgreater{\ulen}{\xlen}%
             {\mathrel{\xymatrix@M=.12ex@C=3.2ex{\ar[r]^-{#2}_-{#1} &}}}
             {\mathrel{\xymatrix@M=.12ex@C=\the\xlen{\ar[r]^-{#2}_-{#1} &}}}}}%
      {\mathrel{\xymatrix@M=.12ex@C=\the\xlen{\ar[r]^-{#2}_-{#1} &}}}%
   }
\newcommand{\xla}[2][]{%
   \ifbool{@display}%
      {\settowidth{\olen}{$\overset{#2}{\longleftarrow}$}%
       \settowidth{\ulen}{$\underset{#1}{\longleftarrow}$}%
       \settowidth{\xlen}{$\xleftarrow[#1]{#2}$}%
       \ifdimgreater{\olen}{\xlen}%
          {\underset{#1}{\overset{#2}{\longleftarrow}}}%
          {\ifdimgreater{\ulen}{\xlen}%
             {\underset{#1}{\overset{#2}{\longleftarrow}}}
             {\xleftarrow[#1]{#2}}}}%
      {\xleftarrow[#1]{#2}}
   }
\newcommand{\isoarrow}{%
   \ifbool{@display}{\overset{\sim}{\longrightarrow}}{\xrightarrow\sim}%
   }
\renewcommand{\lra}{%
   \ifbool{@display}{\longleftrightarrow}{\leftrightarrow}%
   }
\newcommand{\OFb}{{O_{\breve F}}}
\newcommand{\wt}{\wit}
\DeclareFontFamily{U}{matha}{\hyphenchar\font45}
\DeclareFontShape{U}{matha}{m}{n}{
      <5> <6> <7> <8> <9> <10> gen * matha
      <10.95> matha10 <12> <14.4> <17.28> <20.74> <24.88> matha12
      }{}
\DeclareSymbolFont{matha}{U}{matha}{m}{n}
\DeclareFontFamily{U}{mathx}{\hyphenchar\font45}
\DeclareFontShape{U}{mathx}{m}{n}{
      <5> <6> <7> <8> <9> <10>
      <10.95> <12> <14.4> <17.28> <20.74> <24.88>
      mathx10
      }{}
\DeclareSymbolFont{mathx}{U}{mathx}{m}{n}
\DeclareMathSymbol{\obot}         {2}{matha}{"6B}
\newtheorem{theorem}[subsubsection]{Theorem}
\newtheorem{proposition}[subsubsection]{Proposition}
\newtheorem{lemma}[subsubsection]{Lemma}
\newtheorem {conjecture}[subsubsection]{Conjecture}
\newtheorem{corollary}[subsubsection]{Corollary}
\theoremstyle{definition}
\newtheorem{definition}[subsubsection]{Definition}
\newtheorem{remark}[subsubsection]{Remark}
\newtheorem{remarks}[subsubsection]{Remarks}
\newtheorem{question}[subsubsection]{Question}
\numberwithin{equation}{subsection}
\newcommand{\fp}{\varphi}
\newcommand{\Sing}{\mathrm{Sing}}
\newcommand{\Exc}{\mathrm{Exc}}
\newcommand{\WT}{\mathrm{WT}}
\newcommand{\bLambda}{\boldsymbol{\Lambda}}
\begin{document}

\title[Ramified splitting Arithmetic transfer conjectures]{More regular formal moduli spaces and arithmetic transfer conjectures: the ramified quadratic case}

\author{Y. Luo}
\address{University of Wisconsin-Madison, Department of Mathematics, 480 Lincoln Drive, Madison, WI 53706, USA}
\email{yluo237@wisc.edu}
\author{M. Rapoport}
\address{Mathematisches Institut der Universit\"at Bonn, Endenicher Allee 60, 53115 Bonn, Germany}
\email{rapoport@math.uni-bonn.de}

\author{W. Zhang}
\address{Massachusetts Institute of Technology, Department of Mathematics, 77 Massachusetts Avenue, Cambridge, MA 02139, USA}
\email{weizhang@mit.edu}

 \date{\today}

\begin{abstract}For unitary groups associated to a ramified quadratic extension of a $p$-adic field,
we define various regular formal moduli spaces of $p$-divisible groups with parahoric levels, characterize exceptional special divisors on them, and construct correspondences between them. We formulate arithmetic transfer conjectures,
which are variants of the arithmetic fundamental lemma conjecture in this context. We prove the conjectures in the lowest dimensional cases.
\end{abstract}

\maketitle
\tableofcontents

\section{Introduction}\label{Intro section}

 The arithmetic Gan--Gross--Prasad (GGP) conjecture \cite{GGP} is one of the generalizations of the Gross--Zagier formula \cite{GZ} from modular curves to higher dimensional Shimura varieties.  The third author  proposed a relative trace formula approach to the arithmetic GGP conjecture \cite{Z12}. In this context, he formulated the \emph{arithmetic fundamental lemma} (AFL) conjecture, which is now a theorem, cf. \cite{Zha21,MZ,ZZha}. The AFL conjecture  relates the special value of the derivative of an orbital integral to an arithmetic intersection number on a \emph{Rapoport--Zink formal moduli space of $p$-divisible groups} (RZ space) attached to a unitary group. It is essential for the AFL conjecture that one is dealing with a situation that is unramified in every possible sense (the quadratic extension $F/F_0$ defining the unitary group is unramified, and the special vector has unit length, and the function appearing in the derivative of the orbital integral is the characteristic function of a hyperspecial maximal compact subgroup).

When these  unramifiedness hypotheses are dropped,  the statement of the AFL has to be modified. In the context of the \emph{fundamental lemma} (FL) conjecture of Jacquet--Rallis, this question leads naturally to their \emph{smooth transfer} (ST) conjecture, proved by the third author in the non-archimedean case  \cite{Z14}. In the arithmetic context, this question naturally leads  to the problem of formulating  \emph{arithmetic transfer} (AT) conjectures. There are two ways of relaxing the unramifiedness conditions. One is when the quadratic extension $F/F_0$ is unramified but where the level structure imposed is no longer hyperspecial (and, relatedly, the special vector is no longer of unit length). This case is dealt with in \cite{RSZ2}, \cite{LRZ2} and \cite{ZZha}. The other kind of AT conjectures arises when $F/F_0$ is no longer unramified.  This  was considered in special cases in \cite{RSZ1} and \cite{RSZ2}. In  the present paper, we explore systematically the ramified case. In both the unramified and the ramified cases, a limiting factor is the requirement that the ambient space (a product of RZ spaces) is a regular formal scheme.

The AFL conjecture concerns the closed embedding of RZ spaces
\begin{equation}\label{diaga-intro}
\CN_{n}^{[0]}\hookrightarrow \CN_{n+1}^{[0]} ,
\end{equation} 
where $\CN_{n}^{[0]}\simeq \CZ(u_0)$, for a special vector $u_0$ of unit norm, from which we deduce the special cycle $\CZ(u_0)\subset \CN_{n}^{[0]}\times \CN_{n+1}^{[0]}$. 
In the generic fiber (a rigid-analytic space), the left term in \eqref{diaga-intro} is the member $S_{K_n^{[0]}}$ of the RZ tower of $\CN_{n}^{[0]}$ corresponding to the (hyperspecial) parahoric $K_n^{[0]}$ of $\U(W_0^\flat)$ and the right term is the member $S_{K_{n+1}^{[0]}}$ of the RZ tower of $\CN_{n+1}^{[0]}$ corresponding to the (hyperspecial) parahoric $K_{n+1}^{[0]}$ of $\U(W_0)$.  In  \cite{LRZ2} and the present paper, the inclusion \eqref{diaga-intro}, which is an integral model of the inclusion $S_{K_{n}^{[0]}}\subset S_{K_{n+1}^{[0]}}$,  is replaced by an integral \emph{correspondence}. This correspondence is to be as simple as possible, i.e., of atomic type in the sense of \cite[\S 4.2]{LRZ1}, modelled on the definition $\varphi_{t, t'}=\varphi_t\otimes\varphi_{t'}$ of an atomic function in loc.~cit., in which either $t'=0$ or $t=0$. This means that the correspondence is of either of the following two types: 
\begin{equation}\label{smcorintro}
 \begin{aligned}
 \xymatrix{&&\CN^{[r, t]}_n \times \CN_{n+1}^{[t]}  \ar[rd]   \ar[ld] &
 \\ \CN_n^{[t]}\ar@{^(->}[r]& \CN_n^{[t]} \times \CN_{n+1}^{[t]} &&  \CN_{n}^{[r]} \times \CN_{n+1}^{[t]} 
 }
 \end{aligned}
 \end{equation}
 or 
 \begin{equation}\label{lgcorintro}
 \begin{aligned}
 \xymatrix{&&\CN^{[ t]}_n \times \CN_{n+1}^{[ r, t]}  \ar[rd]   \ar[ld] &
 \\ \CN_n^{[t]}\ar@{^(->}[r]& \CN_n^{[t]} \times \CN_{n+1}^{[t]} &&  \CN_{n}^{[t]} \times \CN_{n+1}^{[r]} .
 }
 \end{aligned}
 \end{equation}
In \eqref{smcorintro}, the map in the second factor is the identity; in \eqref{lgcorintro}, the map in the first factor is the identity. Here we have added on  the left of these diagrams  the closed embeddings given by  the graphs of  closed embeddings $\CN_n^{[t]}\hookrightarrow   \CN_{n+1}^{[t]} $ which exhibit $\CN_n^{[t]}$ as a special cycle in $\CN_{n+1}^{[t]} $. We call the first type a \emph{small correspondence} and the second type a \emph{big} or a \emph{large correspondence} (in the first type, the non-trivial correspondence is on the RZ space of dimension $n$; in the second type,  the non-trivial correspondence is on the RZ space of dimension $n+1$).

When $F/F_0$ is ramified (the case considered in this paper), the regularity condition on the  ambient product of RZ spaces  $\CN_n^{[s]}\times\CN_{n+1}^{[r]}$  imposes that   either $s=n-\ep(n)$ or $r=n+\ep(n)$, where $\ep(n)=0$ if $n$ is even and $\ep(n)=1$ if $n$ is odd.  In this case, one of the factors in the product  is formally smooth. However, the other factor will in general not be regular. When the second factor is not regular, we replace it by an explicit  blow-up which is regular, in fact semi-stable (the \emph{splitting model}, see below).

\subsection{AT conjectures}
Before we give more details on the construction of the correspondences, let us state the general form of our AT conjectures. Let $F/F_0$ be a ramified quadratic extension of $p$-adic local fields ($p\neq 2$). The relevant RZ spaces $\CN_{n, \ep}^{[t]}$ (see \S\ref{sec:RZ-spaces}) depend on two integers $n$ and $t$, and on $\varepsilon\in\{\pm 1\}$.  Here $n$ denotes the dimension, and $t$ (the \emph{type}) is  an even integer   between $0$ and $n$ and defines the level structure, and $\ep$ fixes the isomorphism class of the framing object.  
Here, when $n$ is odd, the isomorphism class of $\CN_{n, \ep}^{[t]}$ is independent of $\varepsilon$.

In our AT conjectures, the spaces are (variants of) RZ spaces and the cycles are closed formal subspaces in a product of these attached to the integers $n$ and $n+1$. The precise definitions of the spaces and the cycles are given in the main body of the paper,  see \S\ref{sec:cycle}. We denote by $G'(F_0)_\rs$ the set of regular semi-simple elements on the $\GL$-side and by $G_{W}(F_0)_\rs$ the set of regular semi-simple elements on the $\U$-side,  comp. \cite[\S 2]{RSZ1}. Here $W$ denotes a hermitian space of dimension $n+1$. Also, we have incorporated the transfer factor in the definition of weighted orbital integrals on the $\GL$-side. 

 \begin{conjecture}\label{conj all}
 Let $n, t, \ep$ be numerical invariants as above ($n\geq 1$, $0\leq t\leq n+1$ is an even integer, and $\ep\in\{\pm 1\}$), and let  $(\CN_{n,n+1; t}, \CZ^{[t]}_n,\varphi)$ be a triple consisting of an ambient space $\CN_{n,n+1; t}$ (a product of RZ spaces of dimension $n$ and $n+1$), a special cycle $ \CZ^{[t]}_n$ on $\CN_{n,n+1; t}$, and a test function $\varphi$ on the $\U$-side, as in the table in \S\ref{ss:table}.

 \begin{altenumerate}
\item
There exists $\varphi'\in C_c^\infty(G')$ with transfer $(\varphi,0)\in C_c^\infty(G_{W_0})\times C_c^\infty(G_{W_1})$ such that, if $\gamma\in G'(F_0)_\rs$ is matched with  $g\in G_{W_1}(F_0)_\rs$, then
 \begin{equation*}
  \left\langle \CZ^{[t]}_n , g\CZ^{[t]}_n \right\rangle_{\CN_{n,n+1; t}} \cdot\log q=- \del\big(\gamma,  \varphi' \big).
\end{equation*}
\item For any $\varphi'\in C_c^\infty(G')$  with transfer $(\varphi,0)\in C_c^\infty(G_{W_0})\times C_c^\infty(G_{W_1})$,  there exists $\fp'_\corr\in C_c^\infty(G')$  such that, if $\gamma\in G'(F_0)_\rs$ is matched with  $g\in G_{W_1}(F_0)_\rs$, then
 \begin{equation*}
  \left\langle \CZ^{[t]}_n , g\CZ^{[t]}_n \right\rangle_{\CN_{n,n+1; t}} \cdot\log q=- \del\big(\gamma,  \varphi' \big)- \Orb\big(\gamma,  \fp'_\corr \big).
\end{equation*}
\end{altenumerate}
Here $W_0$ is the hermitian space of dimension $n+1$ with Hasse invariant $\ep$ and $W_1$ is the opposite space. 
\end{conjecture}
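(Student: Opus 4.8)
\noindent\emph{Proof strategy for the lowest-dimensional cases.}
I would only attempt Conjecture~\ref{conj all} in the entries of the table of \S\ref{ss:table} with $n$ smallest --- concretely $n=1$ together with the first admissible types $t$ --- where the RZ spaces admit a completely explicit description. In these cases the ambient space $\CN_{n,n+1;t}$ is a product of a formally smooth RZ factor over $\Spf\OFb$ and a (one-dimensional, in general non-regular) RZ factor, which by the regularity discussion of the introduction one replaces by its semistable splitting model. The first step is to write out this splitting model and the special cycle $\CZ^{[t]}_n$ (and hence its translate $g\CZ^{[t]}_n$) completely explicitly in terms of special formal $\OFb$-modules and the local equations of the blow-up, and to identify the exceptional special divisors on it, as in the characterization promised in the abstract.

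Next, the geometric side. By the projection formula and the product decomposition $\CN_{n,n+1;t}=(\text{smooth})\times(\text{splitting model})$, the arithmetic intersection number $\langle \CZ^{[t]}_n, g\CZ^{[t]}_n\rangle_{\CN_{n,n+1;t}}$ collapses to an intersection of two divisors on a regular two-dimensional formal scheme. I would compute it by passing to complete local rings at the finitely many intersection points (finiteness holds because $g$ is regular semisimple), writing down regular parameters, and computing lengths; the Bruhat--Tits/lattice description of $g$ controls the relative position of $\CZ^{[t]}_n$ and $g\CZ^{[t]}_n$ and organizes the answer as an explicit function of the invariants of $g$. Two points require care: the contribution of the exceptional divisors of the splitting model, and the case of improper intersection (shared components), where one passes to the derived tensor product and carries out a $\mathrm{Tor}$-computation.

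In parallel, on the analytic side, I would compute $\del(\gamma,\varphi')$ for a well-chosen test function: via the explicit matching of $\gamma\in G'(F_0)_\rs$ with $g\in G_{W_1}(F_0)_\rs$, both the weighted orbital integral and its derivative at $s=0$ reduce to elementary low-rank $p$-adic integrals, which I would evaluate directly (or extract from \cite{RSZ1,RSZ2}). Part~(i) then amounts to exhibiting a $\varphi'$ with transfer $(\varphi,0)$ --- the natural candidate being assembled from characteristic functions of suitable compact open subsets --- and checking that the two resulting explicit functions of $g$ (resp.\ $\gamma$) agree up to the sign and the factor $\log q$; since there are only finitely many relevant orbits this can be checked on representatives, or by verifying that both sides obey the same recursion in the invariants. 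Part~(ii) is then formal: given any $\varphi'$ with transfer $(\varphi,0)$ and the function $\varphi'_0$ of part~(i), the difference $\varphi'-\varphi'_0$ transfers to $0$, so $\Orb(\gamma,s,\varphi'-\varphi'_0)$ vanishes at $s=0$ for all matched $\gamma$, whence $\gamma\mapsto\del(\gamma,\varphi'-\varphi'_0)$ is represented by an honest orbital integral $\Orb(\gamma,\fp'_\corr)$ for some $\fp'_\corr\in C_c^\infty(G')$ by the smooth transfer results on $G'$; this $\fp'_\corr$ is the required correction term.

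The main obstacle is the geometric side: keeping the arithmetic intersection on the splitting model under control --- in particular disentangling the exceptional special divisors and the improper-intersection ($\mathrm{Tor}$) contributions --- and then recognizing the resulting combinatorial expression as the derivative of the weighted orbital integral. The analytic computation and the deduction of (ii) from (i) are comparatively routine given the existing machinery.
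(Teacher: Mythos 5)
First, note that the statement you are addressing is a conjecture; the paper only establishes it for $n=1$ (Theorem \ref{thmn=1-intro}), where two of the three cases are quoted from \cite{RSZ2} and the only new case, type $(n-1,t)=(0,0)$ for the cycle $\wt\CM_1^{[0],\spt}$, is proved in \S\ref{sec:(0,0)-case}. Your restriction to the lowest-dimensional cases is therefore the right scope, but your plan diverges from the paper's proof at the two points that actually carry the difficulty. On the geometric side, the paper does not compute lengths from regular parameters on the blown-up local model; it first transports everything through the exceptional isomorphism $\CN_2^{[0]}\simeq(\CM_{\Gamma_0(\pi_0)})_{O_{\breve F}}$, identifies $\wt\CM_1^{[0],\pm,\spt}$ with the pullbacks $p_i^*\CW_0$ of the canonical lift under the two projections of the Iwahori correspondence, decomposes these into quasi-canonical lifting divisors $\CY_{j,\pm}$, and then reduces to Gross's formula via the projection formula (Propositions \ref{prop:quasi-canonical-iwahori}--\ref{prop:compute-hecke-2}). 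Your worry about improper intersections is moot: for $g$ regular semisimple the intersection is Artinian. These structural identifications are not optional bookkeeping; without them the "explicit local equations" route is not realistically executable even for $n=1$.

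The genuine gaps are on the analytic side. You propose to exhibit an explicit $\varphi'$ with transfer $(\varphi,0)$ and to compute $\del(\gamma,\varphi')$ directly. In the ramified setting no such explicit transfer is available (this failure is the raison d'\^etre of the AT conjectures), and the derivative at $s=0$ is \emph{not} determined by the transfer property, so "checking on representatives" cannot get off the ground. The paper avoids this entirely: it computes only the intersection numbers and the orbital integrals of $\varphi$ at \emph{irregular} semisimple elements (Lemmas \ref{lem:lattice count}, \ref{lem:compute-orbit-int}), invokes the germ expansion of \cite[Thm.~15.1]{RSZ2} (Theorem \ref{thm:germ-expan}) together with Mihatsch's criterion \cite[Cor.~3.8]{Mih-AT} to conclude that the discrepancy function $\gamma\mapsto\langle\cdots\rangle\log q+\del(\gamma,\wt\varphi')$ is an orbital integral, and then absorbs it into $\varphi'$ via \cite[Lem.~5.13]{RSZ1}. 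Finally, your deduction of part (ii) from part (i) is not formal: the assertion that $\del(\gamma,\varphi'-\varphi'_0)$ is an honest orbital integral whenever $\varphi'-\varphi'_0$ transfers to $(0,0)$ is precisely the density conjecture \cite[Conj.~5.16]{RSZ1}, which is open; the unconditional implication runs the other way, (ii)$\Rightarrow$(i), by \cite[Prop.~5.14]{RSZ1}.
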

By  \cite[Prop. 5.14]{RSZ1}, part (i) follows from part (ii); by the density conjecture \cite[Conj. 5.16]{RSZ1}, part (ii) follows from part (i). Something analogous holds for all further conjectures later in this paper; in the interest of brevity, we will omit the variants (ii) of these conjectures in the statements below. The conjecture above is the \emph{homogeneous version}.  There is also an inhomogeneous version, which we omit here and below. 

\subsection{Summary of cases} 
\label{ss:table}
The following table summarizes all the cases of AT conjectures in this paper.  Here $t$ is always even and lies in  $[0,n+1]$ or $[0,n]$, depending on whether $t$ appears as the second entry or the first. Moreover, in each row the parity of $n$ is determined by the rule that all types are even. In each case, an \emph{aligned triple} $(\BY, \BX, u)$ that underlies the construction of the correspondence is fixed, cf. Definition \ref{defn:aligned-module}. For simplicity, we drop the invariant $\ep$ from the notation of RZ spaces. 

\medskip
{\setlongtables
\renewcommand{\arraystretch}{1.25}
\begin{longtable}{|c|c|c|c|c|}
\hline
\begin{varwidth}{\linewidth}
   \centering
  Type 
\end{varwidth} 
&
\begin{varwidth}{\linewidth}
   \centering
 Ambient space
  \\ $\CN_{n,n+1;t}$
\end{varwidth} 
   & \begin{varwidth}{\linewidth}
   \centering
   The cycle\\ $\CZ^{[t]}_n$
\end{varwidth} 
   &  \begin{varwidth}{\linewidth}
   \centering
  Test function \\ $\varphi$
\end{varwidth} & \begin{varwidth}{\linewidth}
         \centering
        AT \\ Conjecture
      \end{varwidth}
      
        \\
    \hline
 $(n,n)$& $\CN_{n}^{[n]}\times \CN_{n+1}^{[n]}$&   $\CN_{n}^{[n]} $ 
 & $ \vol( K_n^{[n]})^{-2}  {\bf 1}_{K_n^{[n]}\times K^{[n]}_{n+1}}$&  \cite[Conj. 5.3]{RSZ1}
   \\
\hline
$(n-1,n+1)$& $\CN_{n}^{[n-1]}\times \CN_{n+1}^{[n+1]}$ &   $\CN_{n}^{[n-1]}$ &$\vol(K_n^{[n-1],\circ})^{-2} {\bf 1}_{K_n^{[n-1]} \times K_{n+1}^{[n+1]} }$
&
\cite[Conj. 12.4]{RSZ2}
\\	 \hline
 $(n,t)$, $0\leq t\leq n$ & $\CN_{n}^{[n]}\times \CN_{n+1}^{[t],\spt}$&   $\wh \CN_{n}^{[t],\spt} $&
$ \vol( K_n^{[n,t]})^{-2}  {\bf 1}_{K_n^{[n]}\times K^{[t]}_{n+1}}$ & Conj. \ref{conj (n even,t<n)}  
	\\
 \hline
 $(n-1,t)$, $0\leq t\leq n-1$& $\CN_{n}^{[n-1]}\times \CN_{n+1}^{[t],\spt}$&   $\wh \CN_{n}^{[t],\spt} $ 
& $ \vol( K_n^{[n-1,t]})^{-2} {\bf 1}_{K_n^{[n-1]}\times K^{[t]}_{n+1}}$&  Conj. \ref{conj (n odd,t<n) Z}
	\\
	 \hline
  $(n-1,t)$,  $0\leq t\leq n+1$ & $\CN_{n}^{[n-1]}\times \CN_{n+1}^{[t],\spt}$&   $\wt \CM_{n}^{[t],\pm,\spt} $ 
  &$\vol(K_n^{[n-1],\circ})^{-2} {\bf 1}_{K_n^{[n-1]}}\otimes \varphi^{[n+1,t]}_{n+1}$&  Conj. \ref{conj (n odd,t) Y}
	\\
\hline
 $(t,n)$, $0\leq t\leq n$& $\CN_{n}^{[t],\spt}\times \CN_{n+1}^{[n]}$&   $\wh\CN_{n}^{[t],\spt}$&$\vol(K_n^{[n,t]})^{-2}  {\bf 1}_{ K_{n}^{[t]}  \times K^{[n]}_{n+1} } $
  &
  Conj. \ref{conj (t,n even) sm}  \\
   \hline
 $(t,n)$, $0\leq t\leq n$ & $\CN_{n}^{[t],\spt}\times \CN_{n+1}^{[n]}$&   $\wt \CN_{n}^{[t],\spt}$
  &$\vol(K_n^{[t]})^{-2} {\bf 1}_{K_n^{[t]}}\otimes \varphi^{[t,n]}_{n+1}$
   &
 Conj. \ref{conj (t,n even)} \\ 
    \hline
 $(t,n+1)$, $0\leq t\leq n-1$& $\CN_{n}^{[t],\spt}\times \CN_{n+1}^{[n+1]}$ &   $\wh \CN_{n}^{[t],\spt}$
 &$\vol(K_n^{[n-1,t],\circ})^{-2} {\bf 1}_{K_{n}^{[t]}\times K_{n+1}^{[n+1]} }$&
  Conj. \ref{conj (t,n odd) sm} 
   \\	 \hline
\end{longtable}

We have a few comments. 
\begin{itemize}
\item The cycles in the  first two rows  are graphs of closed embeddings. The cycles in the  third, fourth, sixth and eighth row are  small correspondences. The cycles in the fifth and the seventh row are large correspondences. 
\item If $t$ achieves the upper bound, there are the variants without the superscript $\spt$. In some of these extreme cases, splitting models coincide with the usual ones, and the corresponding conjecture is then  identical to that in \cite{RSZ1,RSZ2}. More precisely, in the sixth row the case for $t=n$ and in the seventh row the case for $t=n$  are both identical with the first row, see Remark \ref{rem: even t=n} and Remark \ref{rem:t=n alt}.
On the other hand,  the case $t=n$ in the third row has a different ambient space from the case in the first row ($\CN_{n+1}^{[n], \spt}$ versus $\CN_{n+1}^{[n]}$), even though the cycles are identical. Nevertheless, we show that the two AT conjectures are  equivalent,  see Proposition \ref{prop: (n even,t=n)}. Similarly, the case $t=n-1$ in the last row  differs from the second row ($\CN_{n}^{[n-1], \spt}$ versus $\CN_{n}^{[n-1]}$); we conjecture that the difference of intersection numbers is an orbital integral function, cf. Conjecture \ref{conj: t=n-1 corr}.

\item Regarding the fifth row, we refer to  Conjecture  \ref{conj (n odd,t) Y}, (ii) and (iii) for refinements taking into account the disjoint sum decomposition of $\wt \CM_{n}^{[t],\pm,\spt} $.

\end{itemize}
 
 \medskip

  \subsection{Low dimensional cases}
 We can prove our conjectures in the first non-trivial case. 
  \begin{theorem}\label{thmn=1-intro}
  Conjecture \ref{conj all} holds when $n=1$. 
  \end{theorem}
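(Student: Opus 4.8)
The plan is to prove Theorem \ref{thmn=1-intro} by reducing Conjecture \ref{conj all} for $n=1$ to a finite list of explicit computations. When $n=1$ all types are even, and the parity constraint of \S\ref{ss:table} singles out the rows of the table with $n$ odd: the graph case on $\CN_{1}^{[0]}\times\CN_{2}^{[2]}$ (the instance of \cite[Conj. 12.4]{RSZ2}), the small correspondences of Conjectures \ref{conj (n odd,t<n) Z} and \ref{conj (t,n odd) sm} at $t=0$, and the large correspondences of Conjecture \ref{conj (n odd,t) Y} at $t\in\{0,2\}$. The first simplification is that $\CN_{1}^{[0]}$ is formally smooth of relative dimension $0$ over $\Spf\OFb$ (in fact $\CN_{1}^{[0]}\cong\Spf\OFb$), so that $\CN_{1}^{[0],\spt}=\CN_{1}^{[0]}$, the ambient spaces collapse either to $\CN_{2}^{[t]}$ or to its splitting model $\CN_{2}^{[t],\spt}$, and only the geometry of a $2$-dimensional (relative dimension $1$) regular formal scheme is at issue. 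Among the resulting cases, those coinciding with conjectures of \cite{RSZ1,RSZ2} are either already established there in this low-dimensional range or fall to the same explicit method, and the equivalences proved in the body of the paper (such as Proposition \ref{prop: (n even,t=n)} and its analogues, together with the comparison of $\CN_{2}^{[0]}$ with $\CN_{2}^{[0],\spt}$) cut the list down further.

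\textbf{Geometric side.} I would recall the explicit structure of the relevant formal schemes at $n=1$: $\CN_{2}^{[2]}$ is formally smooth over $\Spf\OFb$, while $\CN_{2}^{[0]}$ is not regular and $\CN_{2}^{[0],\spt}$ is its explicit (semistable) blow-up along the non-regular locus; in every case the reduced special fiber is a curve over $\kb$ with known components. A regular semisimple class in $G_{W_1}(F_0)_\rs$, with $W_1$ hermitian of dimension $2$, is governed by essentially one numerical invariant, so the special cycle $\CZ^{[t]}_1$ and its translate $g\CZ^{[t]}_1$ are effective divisors on a regular (semistable) surface given by explicit local equations, and their arithmetic intersection number $\bigl\langle \CZ^{[t]}_1, g\CZ^{[t]}_1\bigr\rangle$ is a length one computes directly, with care for the exceptional divisor when one works on a splitting model. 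For the correspondence cycles of the third, fourth, fifth and eighth rows (which are not graphs) one first rewrites the cycle by pullback and pushforward along the legs of the diagrams \eqref{smcorintro} and \eqref{lgcorintro}; for $n=1$ the intermediate and target RZ spaces occurring there are again one-dimensional over $\OFb$, so these operations and the resulting intersection numbers stay elementary. In the large-correspondence case one must additionally track the disjoint-sum decomposition of $\wt\CM_{1}^{[t],\pm,\spt}$ featured in Conjecture \ref{conj (n odd,t) Y}.

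\textbf{Analytic side and matching.} On the $\GL$-side the group $G'$ is of very low rank (of the shape $\GL_{1}\times\GL_{2}$ up to restriction of scalars), regular semisimple orbits are again described by a single invariant, and the weighted orbital integral $\del(\gamma,\varphi')$ — as well as $\Orb(\gamma,\fp'_\corr)$ for the correction term of part (ii) — reduces to a one-variable $p$-adic integral evaluable in closed form, after which one differentiates in the complex parameter and specializes at $s=0$. I would then produce the test function $\varphi'$ (and $\fp'_\corr$ where needed) with transfer $(\varphi,0)$ for $\varphi$ as in the table, verify the matching of invariants between $G'(F_0)_\rs$ and $G_{W_1}(F_0)_\rs$, and compare the two closed forms. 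As recalled after Conjecture \ref{conj all}, it suffices to work with part (ii) (or to prove part (i) and invoke density), by \cite[Prop. 5.14]{RSZ1}.

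\textbf{Main obstacle.} I expect the crux to lie on the geometric side, in the splitting model $\CN_{2}^{[0],\spt}$ (and the intermediate correspondence spaces built over it): one must pin down its local equations, locate its exceptional divisor precisely, and compute how the cycles $\CZ(u)$ and their $g$-translates meet it, all while respecting the normalizations in the definition of $\CZ^{[t]}_1$ from \S\ref{sec:cycle}. The most delicate book-keeping is the large-correspondence case with the $\pm$ decomposition of $\wt\CM_{1}^{[t],\pm,\spt}$. By comparison the analytic side at $n=1$ is routine, though pinning down the exact correction function $\fp'_\corr$ and checking the required transfer statements still demands care.
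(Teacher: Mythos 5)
Your enumeration of the $n=1$ cases is essentially right, and your observation that $\CN_1^{[0],\spt}=\CN_1^{[0]}$ collapses the ambient spaces matches the paper. But the paper's proof is much shorter than your plan suggests: two of the three cases (type $(0,0)$ via the $\CZ$-divisor, and type $(0,2)$ via the $\CY$-divisor) are literally quotations of \cite[Thm.~13.2, 13.4]{RSZ2} and \cite[Thm.~1.6]{RSZ2}, and only the third case ($\wt\CM_1^{[0],\spt}\subset\CN_1^{[0]}\times\CN_2^{[0],\spt}$) is new and occupies \S\ref{sec:(0,0)-case}. On the geometric side of that new case, the paper does not work with raw local equations of the splitting model: it passes through the exceptional isomorphism $\CN_2^{[0]}\cong(\CM_{\Gamma_0(\pi_0)})_{O_{\breve F}}$ with the Iwahori-level Lubin--Tate space, identifies $\wt\CM_1^{[0],\pm}$ with $p_i^*\CW_0=\CY_0+\CY_{1,\pm}$ (sums of quasi-canonical lifting divisors, via \cite{Shi23a}), checks that the blow-up is an isomorphism on these divisors, and then reduces the intersection length to Gross's formula for endomorphisms of quasi-canonical lifts. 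Your ``lengths one computes directly'' would eventually have to reproduce exactly this reduction; without it the computation on $\CN_2^{[0],\spt}$ is not elementary.

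The genuine gap is on the analytic side. You propose to ``produce the test function $\varphi'$'' and ``compare the two closed forms,'' but no explicit transfer $\varphi'$ of $(\varphi,0)$ is available — smooth transfer is only an existence theorem, and the existential quantifier in Conjecture~\ref{conj all}(i) is there precisely for this reason. The paper (following \cite{RSZ2,Mih-AT}) never evaluates $\del(\gamma,\varphi')$ in closed form. Instead it takes an \emph{arbitrary} transfer $\wt\varphi'$, forms $\phi(\gamma)=\langle\cdots\rangle\log q+\del(\gamma,\wt\varphi')$, and shows via the germ expansion (Theorem~\ref{thm:germ-expan}, i.e.\ \cite[Thm.~15.1]{RSZ2}) that the logarithmic singularities of $\del$ near the irregular semisimple set cancel against the $v(b')$-growth of the geometric side (Propositions~\ref{prop:compute-hecke-1}, \ref{prop:compute-hecke-2} versus Lemmas~\ref{lem:lattice count}, \ref{lem:compute-orbit-int}), so that $\phi$ is locally constant near $A_S$; one then invokes \cite[Cor.~3.8]{Mih-AT} to realize $\phi$ as an orbital integral and \cite[Lem.~5.13]{RSZ1} to absorb it into a correction of $\varphi'$. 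Your outline contains no substitute for this germ-expansion-plus-correction mechanism, and without it the passage from the two computations to the existence of $\varphi'$ (or of $\fp'_\corr$ in part (ii)) does not go through.
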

 \begin{proof} Indeed, the cases when $n=1$ are all covered by the literature, except case (iii) below, which is dealt with in \S  \ref{sec:(0,0)-case}.
  \begin{altenumerate}

 \item type $(n-1,t)=(0,0)$: $\CN_1^{[0]}\to \CN_1^{[0]}\times \CN_{2}^{[0],\spt}$, cf. Conjecture \ref{conj (n odd,t<n) Z}.   This case follows from  \cite[Thm. 13.4]{RSZ2} when $\ep=1$ (i.e., $\CN_{2}^{[0]}$ is the base change of the Drinfeld space), resp. from  \cite[Thm. 13.2]{RSZ2} when $\ep=-1$ (i.e.,  $\CN_{2}^{[0]}$ is the base change of the Lubin--Tate space at the Iwahori level). 
  \item type $(n-1,t)=(0,2)$:
$\wt{\CN}_1^{[0],\circ}\to \CN_1^{[0]}\times \CN_{2}^{[2]}$, cf. Conjecture \ref{conj (n odd,t) Y}.
This case follows from  \cite[Thm. 1.6]{RSZ2}.

 \item type $(n-1,t)=(0,0)$:
$\wt{\CM}_1^{[0],\spt}\to \CN_1^{[0]}\times \CN_{2}^{[0],\spt}$, cf. Conjecture \ref{conj (n odd,t) Y}. \qedhere
\end{altenumerate}
\end{proof} 

 We list the cases when $n=2$, one of which is known. \begin{altenumerate}
\item
type $(n,t)=(2,2)$: 
$
\wt{\CN}_2^{[2]}\to \CN_2^{[2]}\times \CN_{3}^{[2],\spt}$, cf. Conjecture \ref{conj (n even,t=n)}. This is proved by  Proposition \ref{prop: (n even,t=n)} and \cite{RSZ1}.
\item
type $(n,t)=(2,0)$:  $\wt\CN^{[0]}_{2}  \to \CN^{[2]}_{2}\times\CN^{[0],\spt}_{3} $, cf. Conjecture \ref{conj (n even,t<n)}.
\item type $(t,n)=(0,2)$ when $W_0^\flat$ is split:   $\CN^{[0,2],\spt}_{2}  \to \CN^{[0],\spt}_{2}\times\CN^{[2]}_{3} 
$, cf. Conjecture \ref{conj (t,n even) sm}.
\item type $(t,n)=(0,2)$ when $W_0^\flat$ is split:  $\wt\CN^{[0],\spt}_{2}  \to \CN^{[0],\spt}_{2}\times\CN^{[2]}_{3} 
$, cf. Conjecture \ref{conj (t,n even)}, the case $\ep^\flat=1$. 
\item type $(t,n)=(0,2)$ when $W_0^\flat$ is non-split:   $\wt\CN^{[0],\spt}_{2}  \to \CN^{[0],\spt}_{2}\times\CN^{[2]}_{3} 
$, cf. Conjecture \ref{conj (t,n even)}, the case $\ep^\flat=-1$.
\end{altenumerate}
These cases stand as the next test cases of our conjectures. We hope to return to them in the future.

\subsection{More background on the enumeration of cases} 
For $n\geq 2$, the RZ space $\CN_{n, \ep}^{[t]}$ is formally smooth (\emph{exotic smoothness}) in two instances: when $n$ is even and $t=n$ (in which case $\ep=+1$ is the only possibility), and when $n$ is odd and $t=n-1$. In \cite{RSZ1} and \cite{RSZ2}, the last two  authors and B. Smithing consider  on the geometric side the natural closed embedding of formally smooth RZ spaces $\CN_{n,1}^{[n]}\hookrightarrow \CN_{n+1}^{[n]}$. They also construct in a non-trivial way an embedding $\CN_{n,1}^{[n-1]}\hookrightarrow \CN_{n+1}^{[n+1]}$. They then propose AT conjectures in these two cases and verify them for $n=1$ and $n=2$. The construction of these embeddings is based on the moduli-theoretic definition of these particular RZ spaces. 

Beyond these cases,  there are no natural embeddings; instead, we replace the embeddings  by  correspondences linking the two spaces and obtain  in this way cycles on the product space. This is made possible by the recent moduli-theoretic definition of all relevant RZ spaces due to the first author \cite{Luo24}. Our spaces and cycles are built on the answer to the following question:
\begin{question}\label{consid-intro}
\item [1) ]\label{cond1} \emph{Correspondences}: We would like the ambient space to be a product of RZ spaces of {\em maximal parahoric levels} which is regular. How can this be achieved? 
 
\item  [2) ] \label{cond2} \emph{Cycles}: When is the $\CZ$-divisor $ \CZ(u)^{[t]}$ or the $\CY$-divisor $ \CY(u)^{[t]}$ on $\CN_{n+1}^{[t]}$   isomorphic  to a lower-dimensional RZ space of maximal parahoric level? 
 \end{question}

   \subsection{Cycles}

Let us first consider part 2) of Question \ref{cond2}. We have the following \emph{exceptional isomorphisms} (see \S \ref{sec:special-cycles} for notation and precise statements):
\begin{theorem}\label{thm:ex iso-intro}
Let $u\in \BV(\BX_{n,\ep}^{[t]})$ be a unit length vector. Set $\ep^\flat=\ep\ep(u)\eta((-1)^{n-1})$. 
\begin{altenumerate}
\item Consider the special $\CZ$-cycle $\CZ(u)_{n,\ep}^{[t]}\subset \CN_{n,\ep}^{[t]}$. Then:
	\begin{altitemize} 
	\item When $n$ is even and $t=n$, $\CZ(u)^{[t]}_{n,\ep}$ is empty (note that $\ep=1$ in this case).
	\item In the remaining cases, we have an isomorphism
	\begin{equation*}
		\CZ(u)_{n,\ep}^{[t]}\simeq \CN_{n-1,\ep^\flat}^{[t]},
	\end{equation*}
	except when:
	\item $n$ is odd, $t=n-1$, and $\ep^\flat=-1$, in which case the RHS is not defined  and  the special cycle $\CZ(u)^{[t]}_{n,\ep}$ is the disjoint union of  points $\WT(\Lambda)$ in $\Sing(\CN_{n-1, \ep}^{[t]})$ (the \emph{worst points}, indexed by all  almost $\pi$-modular lattices $\Lambda\subset \BV(\BX_{n,\ep}^{[t]})$  containing $u$). \end{altitemize}

\item(H.~Yao \cite[Thm. 5.5]{Yao24}) Let $n$ be even and $t=n$.
Define $ \CN_{n-1, \ep^\flat}^{[n-2],\circ}$ by the following  fiber product diagram,
\begin{equation*}
	\begin{aligned}
		\xymatrix{
			\CN_{n-1, \ep^\flat}^{[n-2],\circ}\ar@{}[rd]|{\square}\ar[d]\ar@{^(->}[r]&\CN_{n}^{[n-2,n]}\ar[d]\\
			\CN_{n-1, \ep^\flat}^{[n-2]}\ar@{^(->}[r]&\CN_{n}^{[n-2]} .&}
	\end{aligned}
\end{equation*}
Then the morphism $\CN_{n}^{[n-2,n]}\to \CN_{n}^{[n-2]}$ is a trivial double covering, cf. \cite[Prop. 6.4]{RSZ2}. Furthermore,  the composition $ \CN_{n-1,\ep^\flat}^{[n-2],\circ}\to \CN_{n}^{[n-2,n]}\to \CN_{n}^{[n]}$ factors through $\CY(u)^{[n]}_{n}$ and induces an isomorphism
$$
\CN_{n-1,\ep^\flat}^{[n-2],\circ}\simeq \CY(u)^{[n]}_{n}.
$$In particular, there is a natural morphism 
$$
\CY(u)^{[n]}_{n}\to  \CN_{n-1,\ep^\flat}^{[n-2]},
$$
which is a trivial double covering.
Furthermore, $\CY(u)^{[n]}_{n}=\CZ(\pi u)^{[n]}_{n}$. 
\end{altenumerate}
\end{theorem}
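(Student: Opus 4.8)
The plan is to argue throughout via the moduli descriptions of the RZ spaces $\CN_{n,\ep}^{[t]}$ and of the special divisors $\CZ(u),\CY(u)$ recalled in \S\ref{sec:special-cycles} (ultimately resting on \cite{Luo24}). Up to the standard normalizations, an $S$-point of $\CZ(u)$ is an $S$-point $(X,\iota,\lambda,\dots)$ of $\CN_{n,\ep}^{[t]}$ on which the framing quasi-homomorphism $u\colon\overline{\CE}_S\to X$ is an honest $O_F$-linear homomorphism, and $\CY(u)$ is the analogous divisor attached to $\pi u$.

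For part (i), outside the two exceptional configurations, I would exploit the unit-norm hypothesis to split the universal object. On $\CZ(u)$ the polarization-adjoint $u^\vee$ of $u$ satisfies $u^\vee\circ u=(\text{unit})\cdot\id_{\overline{\CE}}$, so $\overline{\CE}_S$ is a polarization-orthogonal $O_F$-linear direct summand, $X\cong\overline{\CE}_S\oplus X^\flat$, and the lattice chain and its polarizations decompose orthogonally. One then checks that the Kottwitz, wedge and spin conditions restrict to the corresponding conditions for $X^\flat$, so $X^\flat$ gives a point of $\CN_{n-1}^{[t]}$; the isomorphism class of its framing object is fixed by the Hasse invariant of the orthogonal complement $u^\perp\subset\BV(\BX_{n,\ep}^{[t]})$, and a short computation with hermitian invariants identifies it with $\ep^\flat=\ep\,\ep(u)\,\eta((-1)^{n-1})$. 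The functor $X^\flat\mapsto\overline{\CE}_S\oplus X^\flat$, with the tautological splitting in the role of $u$, is inverse to this, giving $\CZ(u)_{n,\ep}^{[t]}\simeq\CN_{n-1,\ep^\flat}^{[t]}$. In the exceptional cases this breaks down: when $n$ is even and $t=n$ the polarization at the deepest level is $\pi$-modular, while the splitting would force a self-dual rank-one summand whose orthogonal complement carries a $\pi$-modular lattice of the odd rank $n-1$, so the discriminant valuations have opposite parity; hence there is no $S$-point and $\CZ(u)^{[n]}_{n,\ep}=\emptyset$ (and $\ep=1$ automatically). When $n$ is odd, $t=n-1$ and $\ep^\flat=-1$, the would-be target "dimension $n-1$, type $n-1$" is the exotically smooth case, which exists only with Hasse invariant $+1$; the splitting still makes sense at the level of Dieudonné/lattice data but, over a geometric point, now outputs precisely the set of almost $\pi$-modular lattices $\Lambda\subset\BV(\BX_{n,\ep}^{[t]})$ with $u\in\Lambda$, each being a worst point $\WT(\Lambda)\in\Sing(\CN_{n-1,\ep}^{[t]})$, and one concludes that $\CZ(u)^{[t]}_{n,\ep}$ is the reduced disjoint union of these points.

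Part (ii) is \cite[Thm. 5.5]{Yao24}, and I would recall the shape of its proof. Here $n$ is even and $t=n$, so $\CZ(u)^{[n]}_n=\emptyset$ by part (i) while $\CY(u)$ is nonempty. The morphism $\CN_n^{[n-2,n]}\to\CN_n^{[n-2]}$ adjoins a $\pi$-modular lattice to a type $n-2$ chain; there are exactly two choices and they are globally separated, so it is a trivial double covering by \cite[Prop. 6.4]{RSZ2}, and its base change $\CN_{n-1,\ep^\flat}^{[n-2],\circ}\to\CN_{n-1,\ep^\flat}^{[n-2]}$ along the cycle inclusion $\CN_{n-1,\ep^\flat}^{[n-2]}\hookrightarrow\CN_n^{[n-2]}$ (which is itself an instance of part (i) at type $n-2$) is again one. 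On $S$-points, a point of $\CN_{n-1,\ep^\flat}^{[n-2],\circ}$ packages $X^\flat$ of dimension $n-1$ and type $n-2$, the embedding $X^\flat\hookrightarrow X$ into the ambient dimension-$n$ object at type $n$, and a $\pi$-modular refinement; one checks that the resulting $u$ satisfies the $\CY$-condition (but not the $\CZ$-condition, consistently with the emptiness above), which yields the factorization through $\CY(u)^{[n]}_n$, and running the construction in reverse (a point of $\CY(u)^{[n]}_n$ produces $X^\flat$ together with a canonical type $n-2$ sublattice inside the ambient $\pi$-modular one, plus its two refinements) shows the map $\CN_{n-1,\ep^\flat}^{[n-2],\circ}\to\CY(u)^{[n]}_n$ is an isomorphism. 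Composing its inverse with the double covering then gives the asserted trivial double covering $\CY(u)^{[n]}_n\to\CN_{n-1,\ep^\flat}^{[n-2]}$, and $\CY(u)^{[n]}_n=\CZ(\pi u)^{[n]}_n$ follows by comparing the defining conditions, multiplication by $\pi$ carrying one to the other.

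The hard part will be, first, the sign determination $\ep^\flat=\ep\,\ep(u)\,\eta((-1)^{n-1})$ in part (i): one must track carefully how the polarization form, the normalization of the framing objects, and the $\pm 1$ choice built into the ramified RZ data interact under the orthogonal splitting, since in the ramified setting these invariants are intertwined. Second, in part (ii), the delicate point is to verify that the composition $\CN_{n-1,\ep^\flat}^{[n-2],\circ}\to\CN_n^{[n]}$ factors through $\CY(u)$ \emph{exactly} — i.e.\ that the wedge/spin conditions, and not merely the naive extension condition, are inherited by the image — together with establishing $\CY(u)^{[n]}_n=\CZ(\pi u)^{[n]}_n$ as an identity of divisors, with multiplicities, rather than only on underlying sets.
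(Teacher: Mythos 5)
Your overall architecture for part (i) — split off $\ov{\CE}$ via the idempotent $e=\lambda_{\ov\CE}^{-1}\circ u^\vee\circ\lambda\circ u$, identify the complement with $\CN_{n-1,\ep^\flat}^{[t]}$, and treat the two exceptional configurations separately — is the same as the paper's, and part (ii) is correctly deferred to Yao. But there is a genuine gap at the step you dismiss as ``one then checks that the Kottwitz, wedge and spin conditions restrict to the corresponding conditions for $X^\flat$.'' The relevant condition is the \emph{strengthened spin} condition, which is not a conjunction of Kottwitz and wedge conditions except for $t=0$, and its behaviour under the splitting is the main technical content of the paper (Theorem \ref{thm:ss-comparison} and all of \S\ref{sec:exc-iso-proof}). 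The direction $Y\Rightarrow X$ is manageable (Proposition \ref{prop:ori-red} through Corollary \ref{cor:ss-red}), but the converse $X\Rightarrow Y$ — which is what you need to show that every point of $\CZ(u)$ actually splits into a point of the smaller RZ space — requires the explicit generators of the lattice $L^{n-1,1}_{\Lambda,-1}(R)$ over the special fiber (Theorem \ref{thm:ss-lattice}, quoted from \cite{Luo24}) together with a flatness argument on local models to pass from the special fiber to the integral model (Theorem \ref{thm:exc-iso-loc}). Crucially, this converse \emph{fails} for $t=n-1$: one has $\bZ_n^{[n-1]}=\bM_{n-1}^{[n-1]}\amalg\{*\}$ with an extra worst point. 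So your ``remaining cases'' treatment does not cover $n$ odd, $t=n-1$, $\ep^\flat=+1$; the paper handles that case by a separate argument (Corollary \ref{cor:pi-modular-iso}(i)): bijectivity on geometric points, using non-splitness of $C^\flat$ to exclude $\tau$-invariant lattices, followed by infinitesimal \'etaleness via the local model. You also misidentify the hard part as the sign $\ep^\flat=\ep\,\ep(u)\,\eta((-1)^{n-1})$, which in fact falls out of the Hasse-invariant bookkeeping in \eqref{eprel}.

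A second, smaller gap: in the case $n$ odd, $t=n-1$, $\ep^\flat=-1$ you only compute the set of geometric points of $\CZ(u)$. The assertion is about the formal scheme, and to conclude that it is a \emph{discrete} union of reduced points you must show each such point has trivial first-order deformations inside $\CZ(u)$; the paper gets this from Grothendieck--Messing theory combined with the rigidity of the worst point in $\bZ(u)_n^{[n-1]}$ (Theorem \ref{thm:exc-iso-loc}(ii)). Without that deformation-theoretic input your argument does not rule out embedded or infinitesimal structure on these points.
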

 
It is conceivable that exceptional special divisors on $\CN_{n,\ep}^{[t]}$, i.e., the divisors $\CZ(u)^{[t]}_{n, \ep}$ appearing in Theorem \ref{thm:ex iso-intro}, are characterized by the property that they are non-empty regular formal schemes, see Conjecture \ref{conj:reg-exceptional}. Similarly, the divisors $\CY(u)^{[n]}_{n, \ep}$ in Theorem \ref{thm:ex iso-intro} should be characterized by the property that $\CY(u)^{[n]}_{n, \ep}$ is a non-empty regular formal scheme, see Remark \ref{rmk:char-regular-Y}.

At the heart of the proof of this theorem is a thorough analysis of the \emph{}strengthened spin condition which gives a moduli-theoretic definition of  the RZ spaces $ \CN_{n,\ep}^{[t]}$, as established in   \cite{Luo24}. We prove the following theorem (cf. Theorem \ref{thm:ss-comparison}). Again, we refer to the body of the paper for the definitions and the notation.

\begin{theorem}\label{thm:ss-comparison-intro}
	Let 
	 $(Y,\iota_Y,\lambda_Y)$ be a hermitian $O_F$-module of dimension $n-1$ and type $t$ over $S\in ({\rm Sch}/\Spf O_{\breve{F}})$.
	Let $\zeta\in O_{F_0}^\times$ be a unit and define
	\begin{equation*}
		(X,\iota_X,\lambda_X):=(Y\times\ov{\CE},\iota_Y\times\iota_{\ov{\CE}},\lambda_Y\times \zeta\lambda_{\ov{\CE}}).
	\end{equation*}
	Then the following assertions hold:
	\begin{altenumerate}
		\item $(X,\iota_X,\lambda_X)$ is a hermitian $O_F$-module of dimension $n$ and type $t$.
		\item If $(Y,\iota_Y,\lambda_Y)$ satisfies the strengthened spin condition, then so does $(X,\iota_X,\lambda_X)$.
		\item Suppose $t\neq n-1$. Then, if $(X,\iota_X,\lambda_X)$ satisfies the strengthened spin condition, then so does $(Y,\iota_Y,\lambda_Y)$.
	\end{altenumerate}
\end{theorem}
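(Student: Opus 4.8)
The plan is to reduce everything to a pointwise (or rather, module-theoretic over an affine base) statement about the strengthened spin condition, which is a condition on a certain wedge/exterior power of the Lie algebra (or of the display/Dieudonné module) of the $p$-divisible group equipped with its $O_F$-action and polarization. The key structural fact is that the formation $(Y,\iota_Y,\lambda_Y)\mapsto (X,\iota_X,\lambda_X)=(Y\times\ov\CE,\dots)$ is compatible with all the linear-algebra data: the Lie algebra (or Hodge filtration) of $X$ is the direct sum of that of $Y$ and that of $\ov\CE$, the $O_F$-actions and the polarization forms decompose as orthogonal direct sums, and $\ov\CE$ is the fixed framing object of dimension $1$ and type $0$ (so it automatically satisfies the strengthened spin condition, indeed it is the basic building block from which the condition is calibrated). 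Thus part (i) is the routine verification that direct sum preserves the numerical invariants: dimension adds ($n-1$ plus $1$ gives $n$), and the type is unchanged because $\ov\CE$ has type $0$ and the polarization on the $\ov\CE$-factor is $\zeta\lambda_{\ov\CE}$ with $\zeta$ a unit, hence contributes a unimodular (type $0$) summand; one also checks the Kottwitz/signature condition and the $\pi$-modularity-up-to-type are inherited. This is bookkeeping with the definitions in \S\ref{sec:RZ-spaces} and \cite{Luo24}, nothing more.

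\smallskip

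For part (ii), I would write the strengthened spin condition explicitly as the requirement that a certain morphism built from $\wedge^\bullet$ of the Lie algebra (the map cutting out the "spin" locus in \cite{Luo24}) lands in a prescribed subbundle. Since $\Lie X = \Lie Y \oplus \Lie \ov\CE$ as $O_F\otimes O_S$-modules compatibly with the polarization pairings, the relevant exterior-power construction for $X$ factors through that for $Y$ tensored with the (trivial, one-dimensional, type-$0$) contribution of $\ov\CE$. Concretely, the strengthened spin condition involves the line $\wedge^{n}(\dots)$ inside $\wedge^{n}$ of a $2n$-dimensional space; under the decomposition this $n$-th wedge is a sum indexed by how many factors come from the $\ov\CE$-part, and because $\ov\CE$ contributes a rank-one piece that is already in the "correct" position (it is the framing datum), the condition for $X$ is implied by the condition for $Y$. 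So the implication (ii) is: "if the $Y$-part of the wedge is in the good subbundle, so is the $X$-part," which is immediate from the direct-sum decomposition and the fact that $\ov\CE$ satisfies the condition. The main thing to be careful about here is the sign/normalization $\ep^\flat = \ep\,\ep(u)\,\eta((-1)^{n-1})$-type twist: one must track how the Hasse invariant of the ambient hermitian space changes when passing between $X$ and $Y$, since the strengthened spin condition is sensitive to $\ep$, and make sure the choice of $\zeta$ (equivalently $\ep(u)$) is exactly what makes the framing compatible. I would handle this by invoking the explicit description of the framing objects $\BX_{n,\ep}^{[t]}$ and their relation under the $\times\ov\CE$ operation already recorded in \S\ref{sec:special-cycles} (this is precisely the computation underlying Theorem~\ref{thm:ex iso-intro}).

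\smallskip

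For part (iii), the converse direction, the point is that one wants to project the strengthened spin condition for $X$ back down to $Y$, i.e., to show that if the rank-$n$ wedge for $X$ sits in the good subbundle then so does the rank-$(n-1)$ wedge for $Y$. Here the decomposition of $\wedge^n(\Lie X \oplus \text{dual stuff})$ has several summands, and one needs the summand that "sees" all of $\Lie Y$ and none (or exactly one copy) of the $\ov\CE$-part to be the decisive one, and to be able to divide out by the $\ov\CE$-contribution. This is exactly where the hypothesis $t\neq n-1$ is used: when $t = n-1$ (the exotic case, $n$ odd), the module $Y$ has the "almost $\pi$-modular" type and the polarization on $Y$ is nearly degenerate in a way that makes the relevant summand of the wedge decomposition vanish or degenerate, so the condition on $X$ no longer forces the condition on $Y$ — this matches the appearance of the worst points $\WT(\Lambda)$ in Theorem~\ref{thm:ex iso-intro}(i). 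So the strategy for (iii) is: assume $t\neq n-1$, use that the type-$t$ polarization on $Y$ is non-degenerate enough (on the relevant part) that the projection $\wedge^n(\text{$X$-data}) \to \wedge^{n-1}(\text{$Y$-data})\otimes(\text{$\ov\CE$-line})$ is an isomorphism onto the relevant rank-one or rank-small piece, and conclude.

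\smallskip

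\textbf{Main obstacle.} I expect the hard part to be part (iii), and specifically the precise linear-algebra identification of which summand of the exterior-power decomposition controls the condition and why $t\neq n-1$ is exactly the condition that makes the back-projection surjective/injective. This requires unwinding the definition of the strengthened spin condition from \cite{Luo24} at a level of detail not needed for (i)–(ii), and correctly matching it against the type-$t$ structure of $Y$. Parts (i) and (ii) I expect to be essentially formal once the direct-sum compatibilities and the Hasse-invariant bookkeeping ($\ep^\flat$) are set up; the genuine content, and the place where the hypothesis is consumed, is in (iii).
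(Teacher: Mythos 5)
Your outline of (i) and of the general shape of (ii) is consistent with the paper, but you underestimate (ii) and there is a genuine gap in (iii). For (ii), the point is not the Hasse-invariant bookkeeping $\ep^\flat$ you flag (that only identifies the framing object); the real content is deciding into which of the two $\SO$-irreducible summands $\prescript{n}{}{\CV}_{\pm 1}$ of $\bigwedge^n\CV$ the subspace $\prescript{n-1}{}{\CV^\flat_{-1}}\otimes F(\Pi+\pi)u$ lands. This is not formal: the paper proves it (Prop.~\ref{prop:ori-red}) by showing the relevant subspaces are spanned by pure tensors (irreducibility), using connectedness of $\BP(\prescript{n-1}{}{\CV^\flat_{-1}})$ to see the image lies in a single component, pinning down which one by exhibiting a single explicit Lagrangian, and then a dimension count; a further argument (Prop.~\ref{prop:ss-red}) is needed to upgrade the subspace identity to an identity of \emph{lattices}, since intersecting with $\prescript{n}{}{\Lambda}^{n-1,1}_{-1}$ is not automatic. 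None of this is "immediate from the direct-sum decomposition."

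The serious gap is in (iii). Your plan is to project the condition on $\bigwedge^n\Fil(X)$ back to $\bigwedge^{n-1}\Fil(Y)$ by "dividing out the $\ov{\CE}$-contribution," but the inclusion $\bigl(\prescript{n-1}{}{\Lambda}^{\flat,n-2,1}_{-1}\otimes R\bigr)\otimes R(\Pi+\pi)u\subseteq \bigl(\prescript{n-1}{}{\Lambda^\flat}\otimes R(\Pi+\pi)u\bigr)\cap \bigl(\prescript{n}{}{\Lambda}^{n-1,1}_{-1}\otimes R\bigr)$ can be \emph{strict}: formation of the intersection of lattices does not commute with $\otimes R$, so membership of $\bigwedge^n\Fil(X)$ in $L^{n-1,1}_{\Lambda,-1}(R)$ does not directly force membership of $\bigwedge^{n-1}\Fil(Y)$ in $L^{n-2,1}_{\Lambda^\flat,-1}(R)$. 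The paper circumvents this in two steps your proposal does not supply: first an explicit special-fiber computation with the generators of $L^{n-1,1}_{\bLambda_\kappa,-1}(R)$ (Theorem~\ref{thm:ss-lattice}, imported from \cite{Luo24}), normalizing $u$ by Witt's theorem and checking case by case which pure tensors survive; second, a passage to mixed characteristic via local models, using that the canonical local model $\bM_{n-1}^{[t]}$ is \emph{flat} and that the closed immersion $\bM_{n-1}^{[t]}\hookrightarrow\bZ_n^{[t]}$ is an isomorphism on both generic and special fibers (hence an isomorphism). Without this flatness input the argument only works over $\BF$-algebras. Finally, your explanation of the hypothesis $t\neq n-1$ ("the polarization on $Y$ is nearly degenerate, so the relevant summand vanishes") is not what happens: the failure at $t=n-1$ is that the worst point, i.e.\ the filtration $\Pi\Lambda\subset\Lambda$ whose wedge line is $e_{\{n+1,\dots,2n\}}$, satisfies the strengthened spin condition for $\Lambda$ but its $\Lambda^\flat$-component does not lie in $L^{n-2,1}_{\Lambda^\flat,-1}$, because the generator of type (i) in Theorem~\ref{thm:ss-lattice} is absent from the almost $\pi$-modular standard lattice. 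This is exactly why $\bZ_n^{[n-1]}=\bM_{n-1}^{[n-1]}\amalg\{*\}$ picks up the extra worst point, matching the exceptional case in Theorem~\ref{thm:ex iso-intro}.
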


\subsection{Correspondences}\label{ss:correspcyc}
Now let us address part 1) of Question \ref{consid-intro}. As mentioned above, among all RZ spaces of maximal parahoric level, for $n\geq 2$, there are two instances when the RZ space $\CN_{n, \ep}^{[t]}$ is formally smooth (\emph{exotic smoothness}): when $n$ is even and $t=n$ (in which case $\ep=1$ is the only possibility), and when $n$ is odd and $t=n-1$. Outside these cases, $\CN_{n, \ep}^{[t]}$ is not even regular. 
However, there is a certain blow-up $\CN_{n, \ep}^{[t], \spt}$ of $\CN_{n, \ep}^{[t]}$ which is always regular (the \emph{splitting model}). 
Hence the product formal schemes $\CN_{n}^{[n]}\times\CN_{n+1, \ep}^{[t], \spt}$ and $\CN_{n, \ep^\flat}^{[n-1]}\times \CN_{n+1, \ep}^{[t], \spt}$ as well as $\CN_{n, \ep^\flat}^{[t],\spt}\times \CN_{n+1, \ep}^{[n]}$ and $\CN_{n, \ep^\flat}^{[t],\spt}\times \CN_{n+1}^{[n+1]}$ are regular and therefore  can serve as ambient spaces for arithmetic intersections. These four possibilities lead to the AT conjectures of type $(n,t)$ (\S \ref{sec:type-(n,t)}), type $(n-1,t)$ (\S \ref{sec:type-(n-1,t)}), type $(t,n)$ (\S \ref{sec:type-(t,n)}), and type $(t,n+1)$ (\S \ref{sec:type-(t,n+1)}).

The He-Luo-Shi  theory  of splitting models \cite{HLS1} is key here. Recall that they are  defined in two steps. First, one introduces 
 the naive splitting model $\CN_{n,\ep}^{[t],\nspt}$ over $\Spf O_{\breve F}$ parametrizing the collection of data 
\begin{equation*}
	(X, \iota, \lambda, \Fil^0(X),\Fil^0(X^\vee); \rho),
\end{equation*}
where $(X,\iota,\lambda,\Fil^0(X),\Fil^0(X^\vee))$ is a hermitian $O_F$-module of   signature $(1,n-1)$ and type $t$ with \emph{splitting  structure}, and where $\rho$ is a {\it framing} with the fixed framing object.
In a second step,  the splitting model $\CN_{n,\ep}^{[t],\spt}$ over $\Spf O_{\breve F}$ is defined as the flat closure of $\CN_{n,\ep}^{[t],\nspt}$.

 In the $\pi$-modular case, the naive splitting model $\CN_n^{[n],\nspt}$  and the splitting model $\CN_n^{[n],\spt}$ are both isomorphic to the RZ space $\CN_n^{[n]}$. 
 In the remaining cases for $n>1$, the splitting model is different from the RZ space. For instance, the splitting model $\CN_n^{[0],\spt}$ coincides with the Kr\"amer model \cite{Kramer}.  The splitting structure is uniquely determined outside the worst points. In fact,  the splitting model $\CN_{n,\ep}^{[t],\spt}$ is the blow-up of the RZ space $\CN_{n,\ep}^{[t]}$ in the worst points, cf. \cite[Thm. 1.3.1]{HLS1}.
Any splitting model $\CN_{n,\ep}^{[t],\spt}$ is flat and semi-stable, and it is smooth if and only if $t=n$, in which case, as mentioned above,  $\CN_n^{[n],\spt}\simeq \CN_n^{[n]}$.

\begin{remark}
More generally, one could consider an intersection on $\CN_n^{[r],\spt}\times\CN_{n+1}^{[s],\spt}$ for any pair $(r,s)$ of even integers. However, the last product is not regular in general.  One could replace the ambient space by its blow-up, similarly to \cite{ZZha}, and obtain in this way further AT conjectures. However, we will not discuss these cases here.
\end{remark}

\subsection{The large correspondence}
 We can relate  the large correspondences to the guiding principle of 
 \cite[\S1, (1.0.5)]{LRZ2}, i.e., to the construction of    ``pull-back" diagrams of exceptional special divisors along the natural projection maps   $\CN_{n+1}^{[r,s]}\to \CN_{n+1}^{[r]}$ from  RZ spaces of (non-maximal) parahoric levels (our notation here is modelled on that of \cite{LRZ2}):  
\begin{equation}
\label{eq Mnr}
  \begin{aligned}
  \xymatrix{  \wt\CZ_1  \ar@{^(->}[r]\ar[d] \ar@{}[rd]|{\square}  &  \CN_{n+1}^{[r,s]} \ar[d] \ar[r] &  \CN_{n+1}^{[s]} \\ 
\CC^{[r]}=  \CZ(u_0)^{[r]} \text{ or } \CY(u_0) ^{[r]} \ar@{^(->}[r] &   \CN_{n+1}^{[r]} .&}
  \end{aligned}
\end{equation}

We would like to consider the cartesian product $\wt\CZ_1$ as our cycle and the product $\CC^{[r]}\times\CN_{n+1}^{[s]}$ as the ambient space. The regularity of the latter  product requires that at least one of the factors is smooth over $\Spf O_{\breve{F}}$. We distinguish two cases. 

\smallskip

\noindent I. {\em The case when $\CC^{[r]}$ is smooth}.  Then there are three cases.
\begin{altenumerate}
\item
 $\CC^{[r]} = \CZ(u_0)^{[n]}\simeq \CN_{n}^{[n]}$ with  $v(u_0)=0$ (and $n$ is even),
  \item
 $\CC^{[r]} = \CZ(u_0)^{[n-1]}\simeq \CN_{n}^{[n-1]}$ with  $v(u_0)=0$ (and $n$ is odd),
  \item $\CC^{[r]} = \CY(u_0)^{[n+1]}\simeq \CN_{n}^{[n-1]}\coprod \CN_{n}^{[n-1]}$ with  $v(u_0)=0$ (and $n$ is odd).
 \end{altenumerate}
They give rise to the cases in  \S \ref{sec:type-(n,t)} (Conj. \ref{conj (n even,t<n)}) and \S \ref{sec:type-(n-1,t)} (Conj. \ref{conj (n odd,t) Y} (i)). In  \S \ref{sec:type-(n-1,t)} (cf. \S\ref{ss: refine} and Conj. \ref{conj (n odd,t) Y}, (ii), (iii)) we also have refinements of the case (iii) taking into account the individual summands in the disjoint union appearing in (iii).

\smallskip

\noindent II. {\em  The case when  $\CN_{n+1}^{[s]}$ is smooth}. 
Then there are two cases. 
\begin{altenumerate}
\item
 $\CN_{n+1}^{[s]}\simeq \CN_{n+1}^{[n]}$, (when $n$ is even),
  \item
 $\CN_{n+1}^{[s]}\simeq \CN_{n+1}^{[n+1]}$, (when $n$ is odd).
 \end{altenumerate}
They give rise to the cases in \S\ref{sec:type-(t,n)}  (Conj. \ref{conj (t,n even)}) 
and \S \ref{sec:type-(t,n+1)} (Conj. \ref{conj (t,n odd) sm}). 
  
  We conjecture that the formal scheme $\wt\CZ_1$ in \eqref{eq Mnr} is flat over $\Spf O_{\breve{F}}$ in all cases. In the cases we can prove this conjecture, this is done  by relating $\wt\CZ_1$ to RZ spaces, cf. Theorems \ref{th:wh} and \ref{th: wt N=N(t,n-1)} in the cases I., (i) and (ii), and Theorem \ref{sm=big10} in the case  II., (ii) (the latter proof being based on the work of H.~Yao \cite{Yao24}).

The correspondence obtained from \eqref{eq Mnr} is related to the large correspondence 
\begin{equation}\label{diala}
	\begin{aligned}
		\xymatrix{&&\CN^{[r]}_n \times \CN_{n+1}^{[r,s]}  \ar[rd]   \ar[ld] &
			\\ \CC^{[r]}\ar@{^(->}[r]& \CN_n^{[r]} \times \CN_{n+1}^{[r]} &&  \CN_{n}^{[r]} \times \CN_{n+1}^{[s]} .
		}
	\end{aligned}
\end{equation}
Indeed,  taking fiber products, \eqref{diala} leads to the following diagram:
\begin{equation*}
	\begin{aligned}\label{fiblar}
		\xymatrix{
			&\wt{\CZ}_1\ar@{}[d]|{\square}\ar[dl]\ar@{^(->}[r]\ar[rrd]
			&\CN_{n+1}^{[r,s]}\ar[rd]\ar[ld]|!{[l];[rd]}\hole&\\
			\CC^{[r]}\ar@{^(->}[r]&\CN_{n+1}^{[r]}&&\CN_{n+1}^{[s]}.}
	\end{aligned}
\end{equation*}
 The cartesian square  diagram  appearing here coincides with the cartesian square in   \eqref{eq Mnr}.
 This in turn leads to the linking diagram:
\begin{equation}\label{lilar}
	\begin{aligned}
\xymatrix{&\wt{\CZ}_1 \ar[rd]  \ar[ld] &\\ \CC^{[r]}&& \CN_{n+1}^{[s]}. }
\end{aligned}
\end{equation}

\subsection{The small correspondence}
Let us consider the small correspondence 
\begin{equation*}
	\begin{aligned}
		\xymatrix{&&\CN^{[r,s]}_n \times \CN_{n+1}^{[s]}  \ar[rd]   \ar[ld] &
			\\ \CC^{[s]}\ar@{^(->}[r]& \CN_n^{[s]} \times \CN_{n+1}^{[s]} &&  \CN_{n}^{[r]} \times \CN_{n+1}^{[s]} .
		}
	\end{aligned}
\end{equation*}
The small correspondence leads us (by taking fiber products) to the linking diagram:
\begin{equation}
	\begin{aligned}\label{corrsm}
		\xymatrix{&\CN^{[r,s]}_n \ar[rd]   \ar[ld] &&
			\\ \CN_n^{[r]} &&  \CC^{[s]}\simeq \CN_{n}^{[s]}\ar@{^(->}[r]&\CN_{n+1}^{[s]}.
		}
	\end{aligned}
\end{equation}
A small correspondence is always a closed formal subscheme of the corresponding big correspondence. 
Sometimes they coincide (for suitable indices $r, s$), see Theorems \ref{th:wh} and \ref{th: wt N=N(t,n-1)} and  \ref{sm=big10}. But often these two correspondences are different and lead to genuinely different ATC statements. For example, we arrive  in this way at  Conjecture  \ref{conj (t,n even) sm}.
\subsection{The generic fiber} Note that in the generic fiber, the small correspondence \eqref{corrsm} induces a correspondence in the RZ tower of the form 
\begin{equation}\label{diaggenun-intro}
\begin{aligned}
\xymatrix{&S_{\wt K_{n}^{[r,s]}} \ar[rd]  \ar[ld] &\\ S_{K_{n}^{[r]}}&& S_{K_{n+1}^{[s]}}, }
\end{aligned}
\end{equation}  in which $\wt K_{n}^{[r,s]}= K_{n}^{[r,s]}$ is a quasi-parahoric in $\U(W_0^\flat)(F_0)$, containing the  corresponding parahoric subgroup  with index one or two. In other words, the small correspondence is a natural integral model of the correspondence \eqref{diaggenun-intro} in the RZ tower of $\CN_n^{[r]}$. 

This is no longer  true for the generic fiber of the linking diagram \eqref{lilar} in the large correspondence, unless the large and the small correspondences coincide. In this case, the generic fiber of $\wt{\CZ}_1$ is not always a member of an RZ tower, comp. Remarks \ref{notRZ9} and \ref{rem: no RZ}. We note that in the situation of \cite{LRZ2}, a similar phenomenon occurs. Indeed, let us compare the correspondences in the present paper with those in \cite{LRZ2}. 

When $F/F_0$ is unramified (as in \cite{LRZ2}), in order that the ambient space $\CN_n^{[s]}\times\CN_{n+1}^{[r]}$  be regular, only the cases $r=0$ and $s=0$ are relevant. If $r=0$ and $s$ is even, then the generic fiber of $\wt{\CZ}_1$ is the  member $S_{\wt K_{n}^{[r,s]}} $ of the RZ tower corresponding to the open compact subgroup $\wt K_{n}^{[r,s]}\subset \U(W_0^\flat)(F_0)$ equal to  the parahoric $K_{n}^{[0,s]}$.  If  $r=0$  and $s$ is odd, the generic fiber of $\wt{\CZ}_1$ is again a member of the RZ tower but $\wt K_{n}^{[r,s]}$ is a non-parahoric.  In either case $\wt{\CZ}_1$ is given by the formal scheme $\wt\CN_n^{[r]}$ of  \cite[\S 3.5]{LRZ2}. If $s=0$ and $r$ is odd, then  the generic fiber of $\wt{\CZ}_1$ is the  member $S_{\wt K_{n}^{[r,s]}} $ of the RZ tower for $\wt K_{n}^{[r,s]}=K_{n}^{[r-1,0]}$.  In this case  $\wt{\CZ}_1$ is given by $\wt\CM_n^{[r]}$, cf. \cite[\S 3.10]{LRZ2}. If $s=0$ and $r$ is even $\neq 0$, then the generic fiber of $\wt{\CZ}_1=\wt\CM_n^{[r]}$ is not a member of the RZ tower of $\CN_n^{[s]}$, cf. \cite[\S 3.10]{LRZ2} (but the generic fiber of its closed formal subscheme  $\wt\CM_n^{[r], +}$ is).

\subsection{Test functions and the lattice models} 
To determine the correct test functions $\varphi$, we consider the lattice models of the RZ spaces in play, motivated by the global aspects of the conjectures. To explain this, 
we temporarily let $F/F_0$ be a CM extension of a totally real field; we refer to \cite{RSZ3} for  unexplained notation.  Consider the integral model $\CM$ of the Shimura variety associated to a variant of the unitary group $G_{W_0}$ (now over a global field) with  level $K\subset G_{W_0}(\BA_f)$,  and the arithmetic diagonal cycle $z_K$. For a bi-$K$-invariant $f\in \CC^\infty_c(G_{W_0}(\BA_f))$,  the global arithmetic Gan--Gross--Prasad conjecture \cite{Z12,RSZ3} concerns  the arithmetic intersection pairing  of Gillet--Soul\'e  on the arithmetic Chow groups of $\CM$,
$$
 \Int(f)=\bigl(\wh R(f)\wh z_{K}, \wh z_{K} \bigr)_{ \GS} .
 $$
It decomposes into a sum  of local terms $\Int_v(f)$ given by intersection numbers  at all places of the reflex field  above the given place $v$ of $F_0$.   Via a non-archimedean  uniformization, the local intersection numbers are in turn related to  intersection numbers on the relevant RZ spaces studied in this paper. We may transport our local construction of our cycles on RZ spaces to the global integral model $\CM$ and we can find the test function $f$ by considering the generic fiber of $\CM$. Since the Hecke action on the generic fiber is defined through the change of level structures involving Tate modules, it can be detected by considering a lattice model within a fixed rational Tate module.  
 
  For example, the lattice model of $\CN_{n, \ep}^{[t]}$ is $\BN_{n, \ep}^{[t]}$, defined as the set of vertex lattices $\Lambda$ of type $t$ in a hermitian space $W^\flat$ of dimension $n$ and Hasse invariant $\ep$. We then translate the construction of the naive  correspondences to the setting of lattice models.  We construct a  function $\varphi$ (the \emph{test function}) with the characterizing property that the naive (set theoretical) intersection number  on the lattice model is equal to a suitable orbital integral of this function, comp. \S\ref{sec:(n,t)-lat}, \ref{ss: Lat(n-1,t) Z}, \ref{ss: Lat(n-1,t) Y}, \ref{sec: t n lattice}, \ref{sec:lattice (n odd, n+1)}. 
 This  function is then  used to formulate the AT conjecture.
 
\subsection{Acknowledgements}
We thank Andreas Mihatsch for helpful discussions. Y. Luo and M. Rapoport  thank the departments of Mathematics at MIT and Zhejiang University for their hospitality when part of this work was done.
W. Zhang was
supported by NSF grant DMS 2401548 and a Simons Investigator grant.

 \subsection{Notation} \subsubsection{General notation} We let $F/F_0$ be a quadratic extension of finite extensions of $\BQ_p$ ($p\neq 2$), with corresponding ring extension $O_F/O_{F_0}$. We denote the residue field of $F_0$ by $k$ and write $\BF=\bar k$ for a fixed algebraic closure. 
 We write  $\eta=\eta_{F/F_0}$ for the corresponding quadratic character of $F_0^\times$, and ${\rm N}:F^\times\to F_0^\times$ for the norm character. 
 
 For an algebraic variety $X$ over $F_0$, we write $\CC^\infty_c(X)$ for $\CC^\infty_c(X(F_0))$.
  \subsubsection{Flat closure and flat fiber product}
 Let $X$ be a formal scheme locally of finite type over  a complete discrete valuation ring $O$ with uniformizer $\pi$. We define the \emph{flat closure} $X^f\subset X$ to be the closed formal subscheme defined by the ideal sheaf $\CO_{X}[\pi^{\infty}]\subset \CO_X$ ($\pi$-power torsion elements of the structure sheaf). Then $X^f$  satisfies the following universal property: for any formal scheme $Y$ locally of finite type and flat over $\Spf O$, a morphism $Y\rightarrow X$ factors through $X^f$,
\begin{equation*}
\begin{aligned}
\xymatrix{
Y\ar[rr]\ar@{-->}[rd]&&X\\
&X^f .\ar[ur]&
}
\end{aligned}
\end{equation*}
Let $(\mathrm{formal}/\Spf O)$ be the category of formal schemes locally of finite type over $\Spf O$ and let $(\mathrm{fformal}/\Spf O)$ be the full subcategory of $O$-flat formal schemes, and let  $i:(\mathrm{fformal}/\Spf O)\hookrightarrow (\mathrm{formal}/\Spf O)$ be the inclusion.
Then the universal property can be reinterpreted by the   adjunction property 
\begin{equation*}
\mathrm{Mor}_{\mathrm{fformal}}(i(Y)),X)\simeq \mathrm{Mor}_{\mathrm{formal}}(Y,X^f).
\end{equation*}
Therefore, the flat closure preserves limits. Let $X, Y, Z$ be flat $\Spf O$-schemes, with morphisms $X\to Z$ and $Y\to Z$.  We define the flat fiber product by the following cartesian product in $(\mathrm{fformal}/O_F)$,
\begin{equation}\label{equ:flat-prod}
  \begin{aligned}
  \xymatrix{
  (Y\times_X Z)^f \ar[r] \ar[d] \ar@{}[rd]  &Y \ar[d] 
  \\ Z \ar[r] & X.} 
  \end{aligned}
\end{equation}

\section{The setting}\label{sec:thesetting}

Let $p$ be an odd prime number. Let $F/F_0$ be a quadratic extension of $p$-adic local fields. We denote by $q$ the number of elements in the residue field of $F_0$.

We fix uniformizers $\pi_0, \pi$ of $F_0$ and $F$ respectively, such that $\pi_0 = \pi^2$ (resp. $\pi_0 = \pi$) when $F/F_0$ is ramified (resp. unramified). Denote by $x\mapsto \bar{x}$ the action of the nontrivial element in $\Gal(F/F_0)$. We fix an extension of $\eta=\eta_{F/F_0}:F_0^\times\to  \{\pm1\}$ to a character $\wt\eta:F^\times\to \BC^\times$, as follows. When $F/F_0$ is ramified, we require $\wt\eta|_{O_{F^\times}}$ to factor through the unique non-trivial quadratic character of $k^\times$. There are two choices of such extensions depending on the choice of  the value $\wt\eta(\pi)$ (a number such that  $\wt\eta(\pi)^2= \wt\eta(-\pi\ov \pi)=\eta(-1)$). When $F/F_0$ is unramified, we simply take  $\wt\eta(x)=(-1)^{v(x)}$.

Let $n\geq 1$. Let
\begin{equation}
G'(F_0)=\GL_{n} (F)\times\GL_{n+1}(F) .
\end{equation}
For a $F/F_0$-hermitian space $W$ of dimension $n+1$, fix $u\in W$  a non-isotropic vector (the {\it special vector}), and let $W^\flat=\langle u\rangle^\perp$. Set
\begin{equation}
G_W=\U^\flat\times \U 
\end{equation}
and $H= \U^\flat$ with the diagonal embedding of $\U^\flat$ into $G_W$. We have the notion of {\it regular semi-simple} element  $\gamma\in G'(F_0)$, resp. $g\in G_W(F_0)$, as well as the notion of {\it matching} $\gamma\leftrightarrow g$ for regular semi-simple elements, comp. \cite[\S 2]{RSZ1}. These notions are with respect to the action of $H\times H$ on $G_W$, resp., of $H'_{1, 2}=H'_1\times H'_2=\Res_{F/F_0} (\GL_{n})\times ( \GL_{n}\times\GL_{n+1})$ on $\Res_{F/F_0}(\GL_{n} \times\GL_{n+1})$. It is important to note that the latter action is arranged \emph{after} the choice of $u\in W$. 

We let $W_0, W_1$ denote the two isomorphism classes of $F/F_0$-hermitian spaces of dimension $n+1$. For $g\in G_W(F_0)_{\rs}$ and for a function $f\in C_c^\infty(G_W)$, we introduce the orbit integral
\[
\Orb(g,f):=\int_{H(F_0)\times H(F_0)}f(h_1^{-1}gh_2)dh_1dh_2.
\]
Here on $H(F_0)\times H(F_0)$ we take a product measure of identical Haar measure on $H(F_0)$.

For $\gamma\in G'(F_0)_{\rs}$, for a function $f'\in C_c^{\infty}(G')$, and for a complex parameter $s\in\BC$, we use the notation $\Orb(\gamma, f', s)$ for the weighted orbital integral
\begin{equation*}
   \Orb(\gamma, f', s) := 
	   \omega(\gamma)\cdot \int_{H_{1,2}'(F_0)} f'(h_1^{-1}\gamma h_2) \lv\det h_1\rv^s \eta(h_2)\, dh_1\, dh_2,
\end{equation*}
where:
\begin{altitemize}
\item $\lv\phantom{a}\rv$ denotes the normalized absolute value on $F$.
\item We use fixed Haar measures on $H_1'(F_0)$ and $H_2'(F_0)$ and the product Haar measure on $H_{1,2}'(F_0) = H_1'(F_0) \times H_2'(F_0)$ and set 
\[
   \eta(h_2) := \eta(\det h_2')^n \eta(\det h_2'')^{n-1}
	\quad\text{for}\quad
	h_2 = (h_2', h_2'')\in H_2'({F_0}) = \GL_{n-1}(F_0) \times \GL_n(F_0),
\] 

\item $\omega: G'(F_0)_{\rs}\to \BC^\times$ is a \emph{transfer factor}, see \cite[\S 5]{RSZ1}. We will take the following explicit transfer factor:
$$
\omega(\gamma):=\wt{\eta}\Bigl(
\det(\wt{\gamma})^{-(n-1)/2}\det(\wt{\gamma}^ie)_{i=0,\cdots,n-1}
\Bigr),
$$
where for $\gamma=(\gamma_1,\gamma_2)\in G'(F_0)_{\rs}$, we set $\wt{\gamma}=s(\gamma)=(\gamma_1^{-1}\gamma_2)\ov{(\gamma_1^{-1}\gamma_2)}^{-1}\in S_n(F_0)$, and where $e=(0,\cdots,0,1)\in F^n$ is the column vector.
\end{altitemize}

We further define the special values for a regular semi-simple element $\gamma\in G'(F_0)$, 
\begin{equation*}
   \Orb(\gamma, f') := \Orb(\gamma, f', 0)
	\quad\text{and}\quad
	\del(\gamma, f') := \frac{d}{ds} \Big|_{s=0} \Orb({\gamma},  f',s) . 
\end{equation*}
The integral defining $\Orb(\gamma,f',s)$ is absolutely convergent, and $\Orb(\gamma,f')$ has the transformation property\[
   \Orb(h_1^{-1}\gamma h_2,f') =\Orb(\gamma,f') 
	\quad\text{for}\quad 
	(h_1, h_2)\in H_{1, 2}'(F_0)=H_1'(F_0)\times H_2'(F_0) .
\]

\begin{definition}
A function $f'\in C_c^{\infty}(G')$ and a pair of functions $(f_0,f_1)\in C_c^{\infty}(G_{W_0})\times C_c^\infty(G_{W_1})$ are \emph{transfers} of each other (for the fixed choices of Haar measures, our fixed choice of transfer factor, and a fixed choice of special vectors $u_i$ in $W_i$), if for each $i\in \{0,1\}$ and each $g\in G_{W_i}(F_0)_{\rs}$,
\[
\Orb(g,f_i)=\Orb(\gamma,f')
\]
whenever $\gamma\in G'(F_0)_{\rs}$ matches $g$.
\end{definition}
Note that this notion depends on the choices of the transfer factor and the Haar measures. But the truth of the AFL and AT conjecture is independent of these choices.

\section{The AFL conjecture}\label{sec:AFL}
In this section, we recall the statement of the AFL conjecture, now a theorem. We assume that $F/F_0$ is unramified. In this section, we  denote by $W_0$ the split hermitian space of dimension $n+1$ and by $W_1$ the non-split space. We also assume  that the special vector $u_1\in W_1$ has norm a unit in $F_0$.  Under these unramifiedness hypotheses, we have the AFL conjecture.  Before stating it, we recall the following theorem on the Jacquet--Rallis FL.

\begin{theorem}(\cite{Y,BP-FL,Zha21}) Fix a special vector $u_0\in W_0$ of the same length as $u_1$, so that the notion of \emph{transfer} between functions $f'\in \CC_c^\infty(G')$ and pairs of functions $(f_0, f_1)\in \CC_c^\infty(G_{W_0})\times \CC_c^\infty(G_{W_1})$ is defined. Then  the function $\mathbf{1}_{\GL_{n}(O)\times\GL_{n+1}(O)}$  is a transfer of $(\mathbf{1}_{K_0^\flat\times {K}_0}, 0)$. Here $K_0^\flat$, resp. $K_0$, is the stabilizer of a selfdual lattice in $W_0^\flat$, resp. $W_0$. 
\end{theorem}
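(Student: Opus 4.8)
Since the theorem only recalls a known result, I indicate the strategy rather than the details. By the definition of transfer, and because every $\gamma\in G'(F_0)_\rs$ is matched with a regular semi-simple element of exactly one of $G_{W_0}$ or $G_{W_1}$, the assertion is equivalent to the following pair of identities: if $\gamma\leftrightarrow g\in G_{W_0}(F_0)_\rs$, then $\Orb(\gamma,\mathbf 1_{\GL_n(O)\times\GL_{n+1}(O)})=\Orb(g,\mathbf 1_{K_0^\flat\times K_0})$ (the transfer factor $\omega(\gamma)$ being incorporated into the left-hand side by our conventions), and if $\gamma\leftrightarrow g\in G_{W_1}(F_0)_\rs$, then $\Orb(\gamma,\mathbf 1_{\GL_n(O)\times\GL_{n+1}(O)})=0$. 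The plan is to reduce both identities to the infinitesimal form of the Jacquet--Rallis fundamental lemma and then to appeal to the known proof of the latter.

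First I would carry out the group-to-Lie-algebra descent. Via Harish-Chandra descent around semi-simple points combined with a Cayley transform, one replaces the symmetric spaces attached to $\GL_n\times\GL_{n+1}$ by their tangent spaces at the identity, and the unitary groups $\U(W_i^\flat)\times\U(W_i)$ by the corresponding Lie algebras, in a way that is compatible with the matching of regular semi-simple orbits and, up to a locally constant nowhere-vanishing Jacobian, with the chosen Haar measures. A standard localization (Shalika-germ) argument then shows that the two identities above follow from the infinitesimal Jacquet--Rallis fundamental lemma, comparing $\GL_n(F_0)$-orbital integrals on the tangent space of the symmetric space with those on the Lie algebras of the unitary groups. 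This step is routine and is already contained in the Jacquet--Rallis literature.

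Then I would invoke the infinitesimal fundamental lemma. It admits a geometric proof, due to Z.~Yun \cite{Y}: the two sides are realized as numbers of $\BF_q$-points in the fibers of a Hitchin-type (commuting-variety) morphism, and the identity follows from a symmetry of the Hitchin base together with the decomposition theorem applied to the associated perverse sheaves; this gives the identity for $p$ sufficiently large, whence, by the Cluckers--Loeser transfer principle applied to the (motivic) orbital integrals, for all odd $p$. Alternatively, Beuzart-Plessis \cite{BP-FL} gives a purely local proof over the given $p$-adic field, by a comparison of local relative trace formulas together with an induction on $n$ grounded in the low-rank cases. Combined with the descent above, either argument proves the theorem; compare also \cite{Zha21}.

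The main obstacle is the infinitesimal fundamental lemma itself. In the geometric route the substance lies in constructing the Hitchin fibration, establishing the required support and purity statements, and then performing the transfer from characteristic $p$ to characteristic $0$; in the local route it lies in the harmonic analysis on the symmetric space and in the careful tracking of transfer factors needed to obtain the vanishing in the $G_{W_1}$-case with the correct normalization. The group-to-Lie-algebra reduction, by contrast, presents no essential difficulty.
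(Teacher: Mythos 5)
The paper offers no proof of this statement: it is recalled as a known theorem and attributed to \cite{Y,BP-FL,Zha21}, so there is no argument of the authors' own to compare yours against. Your outline is an accurate summary of the strategies in the cited works (group-to-Lie-algebra descent, then either Yun's Hitchin-fibration/perverse-sheaf argument in positive characteristic or Beuzart-Plessis's local proof). One small correction: the combination of Yun's geometric proof with the Cluckers--Loeser transfer principle yields the fundamental lemma only for residue characteristic \emph{sufficiently large}, not for all odd $p$; the statement for every odd $p$ rests on the purely local argument of \cite{BP-FL} (or the approach of \cite{Zha21}), which your second route correctly supplies.
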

In the statement above, the Haar measures on $H'_{1, 2}(F_0)$ and $H(F_0)$ are normalized in such a  way that the canonical maximal compact subgroup gets volume one. 
In particular, if  $\gamma\in G'(F_0)_\rs$ is matched with the element $g\in G_{W_1}(F_0)_\rs$, we have  
$$
  \Orb(\gamma, \mathbf{1}_{\GL_{n}(O_F)\times\GL_{n+1}(O_F)}\bigr) = 0.
$$This vanishing of the orbital integral  motivates considering its derivative. The AFL conjecture is the  following statement.
\begin{theorem}\label{conjAFL}
Let $\gamma\in G'(F_0)_\rs$ be matched to the element $g\in G_{W_1}(F_0)_\rs$. Then
$$
-2 \langle\Delta, g\Delta\rangle= \del \bigl(\gamma, \mathbf{1}_{\GL_{n-1}(O_F)\times\GL_n(O_F)}\bigr) .
$$
\end{theorem}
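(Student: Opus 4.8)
The plan is to identify both sides of the identity with (derivatives of) local hermitian representation densities and then to conclude via the Kudla--Rapoport conjecture together with an induction on $n$. First I would reinterpret the geometric side: since $\CN_n^{[0]}\simeq\CZ(u_1)\subset\CN_{n+1}^{[0]}$ for the unit-length special vector (cf.~\eqref{diaga-intro} and the special cycle $\CZ(u_0)$ deduced there), the graph $\Delta$ of $\CN_n\hookrightarrow\CN_{n+1}$ and its translate $g\Delta$ are built from special $\CZ$-cycles, and the $\U$-equivariance of the $\CZ$-divisors together with the compatibility of the embedding with the cycle structure rewrites $\langle\Delta,g\Delta\rangle_{\CN_n\times\CN_{n+1}}$ as the Euler--Poincar\'e characteristic of a derived tensor product $\CO_{\CZ(x_1)}\otimes^{\mathbb{L}}\cdots\otimes^{\mathbb{L}}\CO_{\CZ(x_{n+1})}$ on $\CN_{n+1}^{[0]}$, where $x_1,\dots,x_{n+1}$ span an $O_F$-lattice in the space $\BV$ of special quasi-homomorphisms whose Gram matrix $T=T(g)$ records the invariants of the regular semisimple element $g$. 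Equivalently one may pass to the semi-Lie algebra version of the statement, carried by $\CN_{n+1}^{[0]}$ alone together with a pair (isometry, vector).

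The central geometric input is then the Kudla--Rapoport conjecture on $\CN_{n+1}^{[0]}$: the above arithmetic intersection number equals $\pDen(T)$, the value at $s=0$ of the derivative of an analytic family of local representation densities $\Den(T,s)$ attached to $T$. On the $\GL$-side I would compute $\Orb(\gamma,\mathbf 1,s)$ explicitly for $\gamma$ matched to $g$ with invariant $T$: after unfolding, and with the explicit transfer factor $\omega$ of \S\ref{sec:thesetting}, the weighted orbital integral is a product of local zeta integrals; its value at $s=0$ vanishes, which reproves the Jacquet--Rallis fundamental lemma, while a term-by-term comparison with the density expansion gives $\del(\gamma,\mathbf 1)=-2\,\pDen(T)$, the constant $-2$ arising from this analytic computation and the chosen normalizations of the Haar measures and of $\omega$. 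Combining the two computations yields $-2\langle\Delta,g\Delta\rangle=\del(\gamma,\mathbf 1)$ for this $T$, and hence in general since every invariant arises from some $g$.

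The real work, and the main obstacle, is the proof of the Kudla--Rapoport identity itself. I would establish it by induction on $n$: the base case $n=1$ (a single $\CZ$-divisor on $\CN_2^{[0]}$) is a direct calculation, and the inductive step exploits the recursive structure of the special cycles --- for a special vector of unit length one falls back on the exceptional isomorphism used above, and otherwise one peels off one $\CZ$-divisor to reduce the rank --- after which the residual discrepancy between the arithmetic intersection and $\pDen$ is forced to vanish by an \emph{uncertainty principle}: a function on $\BV$ whose support and whose Fourier transform's support are both suitably small must vanish. This argument imposes a hypothesis such as $p\geq n$; to remove it one globalizes, as in \cite{MZ}: via non-archimedean uniformization the local intersection numbers occur in arithmetic intersection numbers on a unitary Shimura variety, where the modularity of Kudla's generating series and the comparison of the Jacquet--Rallis relative trace formulas propagate the identity to all places once it is known for $p\geq n$. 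Throughout, the delicate points are the fine geometry of $\CN_{n+1}^{[0]}$ and its special cycles entering the Kudla--Rapoport identity and the attendant vanishing arguments; the orbital-integral side, though computationally heavy, is essentially harmonic analysis on $p$-adic symmetric spaces and follows the template of the Jacquet--Rallis smooth transfer.
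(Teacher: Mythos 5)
First, be aware that the paper does not actually prove Theorem \ref{conjAFL}: it is quoted from \cite{Zha21,MZ,ZZha}, and the proofs there are global, proceeding by a comparison of relative trace formulas together with the modularity of arithmetic theta series on RZ spaces and a double induction on $n$ (plus globalization arguments to reach all $p$-adic base fields); they do not pass through the Kudla--Rapoport conjecture. Your opening reduction is nevertheless correct and standard: for $g=(1,g)$ regular semisimple one has
$\langle\Delta,g\Delta\rangle=\chi\bigl(\CN_{n+1},\CO_{\CZ(u)}\otimes^{\BL}\CO_{\CZ(gu)}\otimes^{\BL}\cdots\otimes^{\BL}\CO_{\CZ(g^{n}u)}\bigr)\cdot\log q$,
so the geometric side becomes the Kudla--Rapoport intersection number of the rank-$(n+1)$ lattice spanned by $u,gu,\dots,g^{n}u$ with Gram matrix $T(g)$, and the Li--Zhang theorem identifies this with $\pDen(T(g))$.

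The gap is in the analytic half of your argument. The identity $\del(\gamma,\mathbf 1)=-2\,\pDen(T)$ is not a ``term-by-term comparison'': the weighted orbital integral on $\GL_{n}(F)\times\GL_{n+1}(F)$ (with the weighting $\lv\det h_1\rv^{s}\eta(h_2)$ and the transfer factor $\omega$) and the local density $\Den(T,s)$ are genuinely different lattice sums, with different weightings and with $s$ entering in structurally different ways, and no direct identity between their first derivatives is known in general. This is exactly why the AFL is not a formal corollary of the Kudla--Rapoport theorem even though both compute the same intersection numbers; such a direct comparison has been carried out only in special cases (the minuscule case \cite{RTZ}, and $n\le 2$ \cite{Z12}), and in general it is an open problem of comparable difficulty to the AFL itself. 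Two secondary points: the hypothesis of the type $p\ge n+2$ and its removal by globalization belong to the proof of the AFL in \cite{Zha21,MZ}, not to the Li--Zhang proof of Kudla--Rapoport (whose induction-plus-uncertainty-principle argument needs no bound relating $p$ and $n$), so you have conflated the two inductions; and the normalization --- the factor $-2$, which arises from the passage between the homogeneous and inhomogeneous versions, and the subscripts in the test function, which should read $\GL_{n}(O_F)\times\GL_{n+1}(O_F)$ --- must be pinned down rather than attributed generically to ``the analytic computation.''
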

 To define the LHS, we need to introduce certain RZ-spaces. Let $\CN_n$ be the RZ-space over $\Spf O_{\breve F}$ parametrizing tuples $(X, \iota, \lambda, \rho)$, where $X$ is a strict formal $O_{F_0}$-module, where $\iota\colon O_F\to \End(X)$ is an action of $O_F$ which satisfies the {\it Kottwitz condition} of signature $(1, n-1)$, where $\lambda$ is a {\it compatible} principal polarization, and where $\rho$ is a {\it framing} with \emph{framing object} $(\BX_n, \iota_{\BX_n}, \lambda_{\BX_n})$.  To be precise,  set $\BX_1 = \BE$, the unique such triple for $n=1$, and define  inductively 
\begin{equation}\label{BX_n unram}
   \BX_n = \BX_{n-1} \times \ov\BE ,
\end{equation}
 where $\ov\BE$ is the same as $\BE$ but with the conjugate action of $O_F$. We take $\BX_n$ as the framing object for $\CN_n$. For ease of notation, we set $\BY=\BX_{n}$ and $\BX=\BX_{n+1}=\BY\times\bar\BE$.  Let $\Delta\subset \CN_{n}\times \CN_{n+1}$ denote the graph of the closed embedding
\begin{equation}\label{deltaAFL}
   \delta\colon \CN_{n}\to \CN_{n+1}, \quad Y\mapsto Y\times \bar\CE.
\end{equation}

In this case, both $W_1$ and $W_1^\flat$ are non-split, and $\U_1(F_0)=\Aut^\circ(\BX)$ acts on $\CN_{n+1}$ and  $\U^\flat_1(F_0)=\Aut^\circ(\BY)$ acts on $\CN_{n}$. Hence $g\in G_{W_1}(F_0)$ acts on $\CN_{n}\times \CN_{n+1}$, and $g\Delta$ denotes the translate under $g$ of $\Delta$. Finally,
\begin{equation}\label{eqchi}
\langle\Delta, g\Delta\rangle:=\chi\big(\CO_\Delta\otimes^\BL\CO_{g\Delta}\big)\cdot\log q .
\end{equation} 
Since $\CN_{n}\times \CN_{n+1}$ is a regular formal scheme, the complex appearing in \eqref{eqchi} is perfect. For $g\in G_{W_1}(F_0)_\rs$, the quantity on the right is finite, cf. \cite[proof of Lem. 6.1]{M-Th}

The conjecture is known (by global methods) (W. Zhang \cite{Zha21}, A. Mihatsch and W. Zhang \cite{MZ}, Z. Zhang \cite{ZZha}). It is also known (by local methods) when $n=1,2$ (W. Zhang \cite{Z12}, Mihatsch \cite{M-AFL}) and when $g$ is \emph{minuscule} (Rapoport-Terstiege-Zhang/He-Li-Zhu \cite{RTZ,HLZ}).

\section{Hermitian $O_{F}$-modules for  the ramified quadratic case}
From now on we assume that $F/F_0$ is a ramified quadratic extension. In this case, we know fewer RZ-spaces that are formally smooth, or with semi-stable reduction. More precisely, of the first kind we only have the case of exotic smoothness. This occurs for the $\pi$-modular even case and the almost $\pi$-modular odd case and in no other case.  There are no other RZ spaces with semi-stable reduction (\cite{He-Pappas-Rapoport}). But there are RZ-spaces which have ``Kr\"amer-style'' blowings-up which are semi-stable: these are attached to a vertex lattice.

 \subsection{Parahoric subgroups for ramified unitary groups}\label{sec:parahoric}
 
Let $F/F_0$ be a ramified quadratic extension of $p$-adic fields ($p>2$) with uniformizers $\pi$ and $\pi_0$, resp., such that $\pi^2=\pi_0$.

Let $(V,\phi)$ be a $F/F_0$-Hermitian space of dimension $n$. We have associated $F_0$-bilinear forms,
\begin{equation*}
(x,y)=\frac{1}{2}\tr_{F/F_0}(\phi(x,y)),
\quad
\langle x,y\rangle=\frac{1}{2}\tr_{F/F_0}(\pi^{-1}\cdot \phi(x,y)).
\end{equation*}
The form $(\cdot,\cdot)$ is symmetric, while $\langle\cdot,\cdot\rangle$ is alternating. 

For any $O_F$-lattice $\Lambda$ in $V$, we set
\begin{equation*}
	\Lambda^\vee=\{v\in V\mid \phi(v,\Lambda)\subset O_F\}=\{v\in V\mid \langle v,V\rangle\subset O_{F_0}\}.
\end{equation*}
The lattice $\Lambda$  is called a \emph{vertex lattice} if 
\begin{equation*}
	\pi\Lambda^\vee\subseteq \Lambda\subseteq \Lambda^\vee.
\end{equation*}
We call the dimension $\dim_{k}\Lambda^\vee/\Lambda$ the \emph{type} of the vertex lattice and denote it by $t(\Lambda)$. Note that this integer is always even and satisfies $0\leq t\leq n=\rank \Lambda$.
Let $m=\lfloor n/2\rfloor$. We will fix a maximal  chain of vertex lattices, and enumerate the lattices by their type,
\begin{equation*}
	\Lambda_{2m}\subset \Lambda_{2m-2}\subset \ldots \subset\Lambda_{0}= \Lambda_0^\vee,\quad t(\Lambda_{i})=i.
\end{equation*}
Note that when $n$ is even and $(V,\phi)$ is non-split, the $\pi$-modular lattice $\Lambda_{2m}$ is missing. However, we will still index the lattices as $\{2m,\ldots,2,0\}$ to simplify notation.
We  extend the maximal chain of vertex lattices into a polarized chain of lattices, see \cite[Ch. 3]{RZ}.

For each non-empty subset $I=\{t_1,\ldots,t_k\}\subseteq \{2m,\ldots,2,0\}$, ordering the elements as $t_1>t_2>\ldots>t_k$, we have a  sub-chain
\begin{equation}\label{equ:lattice-chain}
	\Lambda_I: \quad \Lambda_{t_1}\subset \ldots \subset \Lambda_{t_k}\subseteq  \Lambda_{t_k}^\vee\subset \Lambda_{t_2}^\vee\subset\ldots \subset\Lambda_{t_1}^\vee.
\end{equation}
This extends periodically to a polarized chain of lattices. 
If $I=\{t\}$, resp. $I=\{s, t\}$, we will use the notation $\Lambda_{[t]}$, resp. $\Lambda_{[s, t]}$ for this sub-chain, or for its periodic extension. 

Consider the subgroup
\begin{equation*}
	K_n^{I}=\{g\in \U(V)(F_0)\mid g\Lambda_{i}=\Lambda_{i},\quad \forall i\in I\}.
\end{equation*}
If $I=\{t\}$, we also write  $K_n^{I}=K_n^{[t]}$. It is a quasi-parahoric subgroup of $\U(V)$.
There is a functorial surjective homomorphism called the \emph{Kottwitz map}, 
\begin{equation*}
	\kappa:\U(V)\rightarrow \pi_1(\U(V))_I^\sigma=\BZ/2\BZ.
\end{equation*}
We define the subgroup $K_n^{I,\circ}=K_n^{I}\cap \Ker\kappa$. We have the following:

\begin{proposition}[{\cite{PR08}}]\label{prop:parahoric}
\begin{altenumerate}
\item The groups $K_n^{I,\circ}$ are parahoric subgroups of $\U(V,\phi)$.
\item When $n=2m+1$ is odd, the subgroups $K_n^I\neq K_n^{I,\circ}$ are never parahoric, and the Kottwitz homomorphism induces an isomorphism $K_n^{I}/K_n^{I,\circ}\simeq \{\pm 1\}$.
\item When $n=2m$ is even, we have $K_n^{I}=K_n^{I,\circ}$ if and only if $n\in I$. Otherwise, we have $K_n^{I}/K_n^{I,\circ}\simeq \{\pm 1\}$.\qed
\item When $n=2m$ is even and $n-2\in I$, we have $K_n^{I,\circ}=K_n^{I\cup\{n\},\circ}$.
\end{altenumerate}
\end{proposition}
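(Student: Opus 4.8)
The statement is pure Bruhat--Tits theory for the ramified unitary group $\U(V)$, and the plan is to reduce it to the explicit analysis of affine flag varieties for such groups in \cite{PR08}, keeping track of one special feature at the $\pi$-modular vertex. First I would set up the dictionary: the periodic polarized lattice chain $\Lambda_I$ of \eqref{equ:lattice-chain} corresponds to a facet $\fkf_I$ of the Bruhat--Tits building of $\U(V)$ over $F_0$, and $K_n^I$ is precisely the pointwise stabilizer of $\fkf_I$ in $\U(V)(F_0)$; as $I$ ranges over the nonempty subsets of $\{2m,\dots,2,0\}$ the $\fkf_I$ exhaust the facets of the base alcove, the $\Lambda_{[i]}$ being its vertices. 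By the theorem of Haines--Rapoport on parahoric subgroups, for any facet $\fkf$ the associated parahoric equals (pointwise stabilizer of $\fkf$) $\cap\,\Ker\kappa$, with $\kappa=\kappa_{\U(V)}$ the Kottwitz homomorphism. As $K_n^{I,\circ}$ is defined to be $K_n^I\cap\Ker\kappa$, this gives (i) at once; it also identifies the parahorics of $\U(V)$ with the $K_n^{I,\circ}$, so that any $K_n^I\supsetneq K_n^{I,\circ}$ fails to be parahoric (the first half of (ii)). Finally it reduces (ii) and (iii) to computing the image $\kappa(K_n^I)\subseteq\BZ/2$, which by Haines--Rapoport has order dividing $2$: the question is only whether it is trivial or surjective, equivalently whether the Bruhat--Tits group scheme $\widehat{\mathcal{G}}_{\fkf_I}$ is connected.

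To decide this I would appeal to the structure, worked out in \cite{PR08}, of the reductive quotient of the special fibre of the lattice-chain automorphism group scheme $\widehat{\mathcal{G}}_{\fkf_I}$: it is a product of general linear groups, coming from the ``$\GL$'' graded pieces $\Lambda_{t_j}/\Lambda_{t_{j+1}}$ of the chain, together with an orthogonal and/or symplectic factor attached to the self-dual graded piece(s). By the Haines--Rapoport bound $\widehat{P}_{\fkf_I}/P_{\fkf_I}^\circ$ is a subquotient of $\BZ/2$, and it is nontrivial exactly when an orthogonal factor of positive rank is present. This happens for every facet except the one consisting of the $\pi$-modular vertex $\Lambda_n$ alone --- which exists only for $n$ even and $V$ split, and on which $\langle\cdot\,,\cdot\rangle$ is a perfect alternating form, so that the relevant factor is symplectic, hence connected. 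If $n\in I$ one sees $K_n^I\subseteq K_n^{[n]}=K_n^{[n],\circ}\subseteq\Ker\kappa$ directly; if $n$ is odd there is no $\pi$-modular lattice and disconnectedness is unconditional, giving (ii); if $n$ is even with $n\notin I$, one lifts (using smoothness of $\widehat{\mathcal{G}}_{\fkf_I}$) a determinant-$(-1)$ element of the orthogonal factor of the special fibre to an element of $K_n^I$ in the non-identity component, so $\kappa(K_n^I)=\BZ/2$, giving (iii).

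For (iv), assume $n$ even, $n-2\in I$, and (without loss of generality) $n\notin I$. From $\Lambda_n\subset\Lambda_{n-2}$ we get $K_n^{I\cup\{n\}}\subseteq K_n^I$, and $K_n^{I\cup\{n\}}\subseteq K_n^{[n]}\subseteq\Ker\kappa$, hence $K_n^{I\cup\{n\}}\subseteq K_n^{I,\circ}$; by (iii) it then suffices to show $[K_n^I:K_n^{I\cup\{n\}}]=2$ and that the nontrivial coset has Kottwitz invariant $1$. I would do this by a direct lattice computation at the almost-$\pi$-modular vertex: the $\pi$-modular lattices $\Lambda$ with $\pi\Lambda_{n-2}^\vee\subseteq\Lambda\subseteq\Lambda_{n-2}$ correspond to the isotropic lines of the $2$-dimensional $k$-quadratic space $\Lambda_{n-2}/\pi\Lambda_{n-2}^\vee$, which is split since $V$ is split; hence there are exactly two of them, $\Lambda_n$ and $\Lambda_n'$, and an element of $K_n^I$ either fixes both or interchanges them. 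An interchanging element reduces to a reflection of this $2$-dimensional orthogonal space, hence has Kottwitz invariant $1$; therefore $K_n^{I\cup\{n\}}=\Ker(\kappa|_{K_n^I})=K_n^{I,\circ}$, which is (iv).

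The step I expect to be the real obstacle is the local bookkeeping at the $\pi$-modular/almost-$\pi$-modular end of the local Dynkin diagram that underlies both the exceptional connectedness in (iii) and the index-$2$ assertion in (iv): one must track carefully which of the forms $(\cdot\,,\cdot)$, $\langle\cdot\,,\cdot\rangle$, and with which power of $\pi$, is induced on each quotient $\Lambda^\vee/\Lambda$ and $\Lambda/\pi\Lambda^\vee$, how the discriminant of the induced $k$-form records the split/non-split type of $V$, and --- most delicately --- that the resulting orthogonal reflections genuinely lie outside $\Ker\kappa$. That last point cannot be checked by determinants in $\GL_n(F)$, since the composite of $\U(V)(F_0)\hookrightarrow\GL_n(F)$ with $v_F\circ\det$ is the zero map; instead one computes $\kappa$ on the standard $\Res_{F/F_0}\mathbb{G}_m^{m}$-torus, where it sends a diagonal element with entries $t_1,\dots,t_m$ to $\sum_i v_F(t_i)\bmod 2$, and extends. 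All of this is carried out in \cite{PR08}.
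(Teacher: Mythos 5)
The paper gives no argument for this proposition at all: it is quoted from \cite{PR08} (and the discussion below (1.9) and in \S 1.2--1.3 of \cite{PR}), with the $\qed$ marking it as a citation. So the only meaningful comparison is between your reconstruction and the standard Bruhat--Tits/Pappas--Rapoport argument, and your reconstruction is essentially that argument and is correct: Haines--Rapoport gives (i) and the first half of (ii) for free, the $\det(g)\bmod\pi$ description of $\kappa$ (which the paper itself uses in Remark \ref{rmk:connected-components}, and which agrees with your torus formula $\sum_i v_F(t_i)\bmod 2$) makes the ``reflections lie outside $\Ker\kappa$'' step painless, and the containment $K_n^I\subseteq K_n^{[n]}=K_n^{[n],\circ}$ for $n\in I$ handles the connected direction of (iii).

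Two small improvements. First, your criterion ``$\kappa(K_n^I)$ is nontrivial exactly when an orthogonal factor of \emph{positive rank} is present'' is misphrased: the relevant orthogonal factor lives on the $(n-t_1)$-dimensional quadratic space $\Lambda_{t_1}/\pi\Lambda_{t_1}^\vee$ (with $t_1=\max I$), and for $n$ odd with $t_1=n-1$ this is $\mathrm{O}_1=\mu_2$, of rank zero but still disconnected --- and that case is needed for (ii). (For $n$ odd you can skip the lifting entirely: $-\mathrm{id}_V$ lies in every $K_n^I$ and has $\det=-1$, hence $\kappa\neq 0$.) Second, the step you flag as the real obstacle in (iv) --- checking that the element interchanging the two isotropic lines has nontrivial Kottwitz invariant --- is not actually needed: once you know $[K_n^I:K_n^{I\cup\{n\}}]=2$ (the orbit of $\Lambda_n$ has size two) and $K_n^{I\cup\{n\}}\subseteq K_n^{[n]}\subseteq\Ker\kappa$, you have two index-two subgroups of $K_n^I$ with $K_n^{I\cup\{n\}}\subseteq K_n^{I,\circ}$, so they coincide, and then $K_n^{I\cup\{n\},\circ}=K_n^{I\cup\{n\}}=K_n^{I,\circ}$. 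This removes the only place where you genuinely needed to match $\kappa$ against the determinant of the action on a single graded piece.
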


\subsection{The strengthened spin condition}\label{sec:spin}
The strengthened spin condition, introduced by Smithling \cite{S}, builds upon the spin condition proposed by Pappas and Rapoport \cite{PR}, which is used to construct a moduli functor for ramified unitary local models and RZ spaces. In this section, we provide a brief overview of the definition of the strengthened spin condition, comp. \cite{Luo24}.

We keep the notation as in the  previous subsection.
 Define the $2n$-dimensional $F$-vector space
\begin{equation*}
	\CV:=V\otimes_{F_0}F,
\end{equation*}
where $F$ acts on the right tensor factor. The $n$-th wedge power $\prescript{n}{}{\CV}:=\bigwedge^n_F \CV$ admits a canonical decomposition
\begin{equation}\label{eq:spin-subspace}
	\prescript{n}{}{\CV}=\bigoplus_{\substack{r+s=n\\\epsilon\in\{\pm 1\}}}\prescript{n}{}{\CV}^{r,s}_\epsilon
\end{equation}
which is described in \cite[\S 2.5]{S}. Let us briefly review it, with the same notation convention as in \cite{RSZ1}.
 The operator $\pi\otimes 1$ acts $F$-linearly on $\CV$ with eigenvalue $\pm \pi$; let\begin{equation*}
	\CV=\CV_\pi\oplus \CV_{-\pi}
\end{equation*}
be the corresponding eigenspace decomposition. For a partition $r+s=n$, define 
\begin{equation*}
	\prescript{n}{}{\CV}^{r,s}:=\bigwedge^r_F \CV_\pi\otimes_F \bigwedge^r_F \CV_{-\pi},
\end{equation*}
which is naturally a subspace of $\prescript{n}{}{\CV}$. Furthermore, the symmetric form $(\cdot,\cdot)$ splits after base change from $V$ to $\CV$, and therefore there is a decomposition
\begin{equation*}
	\prescript{n}{}{\CV}=\prescript{n}{}{\CV}_1\oplus \prescript{n}{}{\CV}_{-1}
\end{equation*}
as a $\SO((\cdot,\cdot))(F)$-representation. The subspaces $\prescript{n}{}{\CV}_{\pm}$ have the property that for any Lagrangian subspace $\CF\subset \CV$, the line $\bigwedge^n_F\CF\subset\prescript{n}{}{\CV}$ is contained in one of them, and in this way they distinguish the two connected components of the orthogonal Grassmannian $\OGr(n,\CV)$ over $\Spec F$.
The subspaces $\prescript{n}{}{\CV}_{\pm 1}$ are canonical up to labeling, and we will follow the labeling conventions in \cite{S} to which we refer the reader for details.
The summands in the decomposition \eqref{eq:spin-subspace} are then given by
\begin{equation*}
	\prescript{n}{}{\CV}^{r,s}_\epsilon:=\prescript{n}{}{\CV}^{r,s}\cap \prescript{n}{}{\CV}_\epsilon
\end{equation*}
as intersection in $\prescript{n}{}{\CV}$ for $\epsilon\in \{\pm 1\}$.

Given an $O_F$-lattice $\Lambda\subset V$,  define
\begin{equation*}
	\prescript{n}{}{\Lambda}:=\bigwedge^n_{O_F}(\Lambda\otimes_{O_{F_0}}O_F),
\end{equation*}
which is naturally a lattice in $\prescript{n}{}{\CV}$. For fixed $r,s$, and $\epsilon$, define
\begin{equation}\label{eq:spin-lattice-sum}
	\prescript{n}{}{\Lambda}^{r,s}_\epsilon:=\prescript{n}{}{\Lambda}\cap \prescript{n}{}{\CV}^{r,s}_{\epsilon}
\end{equation}
as intersection in $\prescript{n}{}{\CV}$. Then $\prescript{n}{}{\Lambda}^{r,s}_\epsilon$ is a direct summand of $\prescript{n}{}{\Lambda}$. For an $O_F$-scheme $S$, define
\begin{equation}\label{eq:spin-def}
	L^{r,s}_{\Lambda,\epsilon}(S):=\mathrm{im}\left[
	\prescript{n}{}{\Lambda}^{r,s}_\epsilon\otimes_{O_F}\CO_S\rightarrow \prescript{n}{}{\Lambda}\otimes_{O_F}\CO_S
	\right].
\end{equation}
Let $\Fil\subset \Lambda\otimes_{O_F}\CO_S$ be a $\CO_S$-direct summand of $\CO_S$-rank $n$.  We say that $\Fil$ satisfies the \emph{strengthened spin condition} if the line bundle
\begin{equation}
	\bigwedge^n_{\CO_S}\Fil\subset\prescript{n}{}{\Lambda}\otimes_{O_F}\CO_S
\end{equation}
is contained in $L^{n-1,1}_{\Lambda,-1}(S)$.

\subsection{Strict $O_{F_0}$-modules}
In this subsection, we briefly review the theory of strict $O_{F_0}$-modules. 
For more details, see \cite{M-Th,KRZ,LMZ}.
Assume $p\neq 2$ throughout, recall that $F_0/\BQ_p$ is an extension of $p$-adic field with a fixed uniformizer $\pi_0\in O_{F_0}$. 
Assume the residue field of $O_{F_0}$ is a finite field of order $q$.
Let $S$ be an $\Spf O_{F_0}$-scheme.
A \emph{strict $O_{F_0}$-module} $X$ over $S$ is a pair $(X,\iota)$ where $X$ is a $p$-divisible group over $S$ and $\iota: O_{F_0}\rightarrow \End(X)$ an action such that $O_{F_0}$ acts on $\Lie(X)$ via the structure morphism $O_{F_0}\rightarrow \CO_S$. 
A strict $O_{F_0}$-module is called \emph{formal} if the underlying $p$-divisible group is a formal group. 
By Ahsendorf-Cheng-Zink \cite{ACZ}, there is an equivalence of categories between the category of strict formal $O_{F_0}$-modules over $S$ and the category of nilpotent $O_{F_0}$-displays over $S$.
To any strict formal $O_{F_0}$-module, there is also an associated crystal $\BD_X$ on the category of $O_{F_0}$-pd-thickenings. 
We define the (covariant relative) de Rham homology $D(X):=\BD_X(S)$.
There is a short exact sequence of $\CO_S$-modules:
\begin{equation*}
	0\rightarrow \Fil(X)\rightarrow D(X)\rightarrow \Lie(X)\rightarrow 0;
\end{equation*}
where $\Fil(X)\subset D(X)$ is the Hodge filtration. The (relative) Grothendieck-Messing theory states that the deformations of $X$ along $O_{F_0}$-pd-thickenings are in bijection with liftings of the Hodge filtration.

We will restrict to the case when $X=(X,\iota)$ is \emph{biformal}, see \cite[Defn. 11.9]{M-Th} for the definition. 
For a biformal strict $O_{F_0}$-module $X$, we can define the \emph{(relative) dual} $X^\vee$ of $X$, and hence the \emph{(relative) polarization} and the \emph{(relative) height}.
It follows from the definition that there is a perfect pairing
\begin{equation*}
	D(X)\times D(X^\vee)\rightarrow \CO_S
\end{equation*}
such that $\Fil(X)\subset D(X)$ and $\Fil(X^\vee)$ are orthogonal complements to each other.

When $S=\Spec R$ is perfect, a nilpotent $O_{F_0}$-display is equivalent to a relative Dieudonne module $M(X)$ over $W_{O_{F_0}}(R)$ with the action of a $\sigma$-linear operator $\uF$ and a $\sigma^{-1}$-linear operator $\uV$ such that $\uF\uV=\uV\uF=\pi_0\id$.

\subsection{Hermitian $O_{F}$-modules}\label{sec:unitary-module}
\begin{definition}\label{def:unitary-module}
	Let $S$ be a formal scheme over $\Spf O_{\breve F}$.
	\begin{altenumerate}
		\item A \emph{hermitian $O_{F}$-module of type $t$ and dimension $n$ over $S$} is a triple
		$   (X,\iota_X,\lambda_X)$
		consisting of a strict biformal $O_{F_0}$-module $X$ of height $2n$ and dimension $n$ over $S$, a homomorphism
		\[
		\iota_X \colon O_F \to \End_S (X),
		\]
		and a \emph{relative} polarization
		\[
		\lambda_X \colon X \to X^\vee,
		\]
		subject to the following constraints:
		\begin{altitemize}
			\item 
			the Rosati involution on $\End_S^\circ(X)$ attached to $\lambda_X$ induces the nontrivial Galois automorphism on $O_F$; and
			\item 
			$\Ker \lambda_X \subset X[\iota_X(\pi)]$ has height $q^{t}$.
		\end{altitemize}
		\item A hermitian $O_F$-module is of signature $(1,n-1)$ if the $O_F$-action satisfies the \emph{strengthened spin condition}: if $n > 1$ is even and $t=n$, then the condition states that the operator $\iota_X(\pi)+\pi$ acts on $\Lie X$ with the image $\mathrm{im}(\iota_X(\pi)+\pi)$ a locally direct summand of $\Lie X$ of $\CO_S$-rank $1$\footnote{This is just a reformulation of the spin condition in \cite[\S 3.1]{RSZ1}}. 
In general, denote by $D(X)$ and $D(X^\vee)$ the respective de Rham homology of $X$ and $X^\vee$. Since $\ker\lambda_X$ is contained in $X[\iota(\pi)]$ and of rank $q^t$, there is a unique (necessarily $O_F$-linear) isogeny $\lambda^\vee$ such that the composite
		\begin{equation*}
			X\xrightarrow{\lambda}X^\vee\xrightarrow{\lambda^\vee}X
		\end{equation*}
		is $\iota(\pi)$, and the induced diagram
		\begin{equation*}
			D(X)\xrightarrow{\lambda_*}D(X^\vee)\xrightarrow{\lambda^\vee_*}D(X)
		\end{equation*}
		then extends periodically to a polarized chain of $O_F\otimes_{O_{F_0}}\CO_S$-modules of type $\Lambda_{[t]}$, comp.  \eqref{equ:lattice-chain}. By \cite[Th. 3.16]{RZ}, \'etale-locally on $S$ there exists an isomorphism of polarized chains
		\begin{equation*}\label{equ:ss-trivilization}
			[\cdots\xrightarrow{\lambda_*^\vee}D(X)\xrightarrow{\lambda_*}D(X^\vee)\xrightarrow{\lambda_*^\vee}\cdots]\xrightarrow{\sim}\Lambda_{[t]}\otimes_{O_{F_0}}\CO_S,
		\end{equation*}
		which in particular gives an isomorphism of $O_F\otimes_{O_{F_0}}\CO_S$-modules
		\begin{equation}\label{eq:lie-alg-trivilization}
			D(X)\overset{\sim}{\longrightarrow}\Lambda_{t}\otimes_{O_{F_0}}\CO_S.
		\end{equation}
		The strengthened spin condition we  impose is that
		\emph{
			upon identifying $\Fil(X)$ with a submodule of $\Lambda_{t}\otimes_{O_{F_0}}\CO_S$ via \eqref{eq:lie-alg-trivilization}, it satisfies the strengthened spin condition \eqref{eq:spin-def}, i.e., the line bundle $\bigwedge^n_{\CO_S}\Fil(X)$ is contained in $L^{n-1,1}_{\Lambda_{-\fkt},-1}(S)$.
		}
	\end{altenumerate}
\end{definition}

\begin{remarks}\label{rmk:unitry-module}
Let us make a few remarks on the definition.
\begin{altenumerate}
\item The Rosati condition on $\lambda_X$ is equivalent to requiring that $\lambda_X$ is $O_F$-linear, where $O_F$ acts on the dual $X^\vee$ via the rule
\begin{equation*}\label{dual action}
	\iota_{X^\vee}(a) = \iota_X(\ov a)^\vee.
\end{equation*}
\item For general $t$, we do not impose the strengthened spin condition on $X^\vee$ because it is automatic, cf. \cite[Prop. 2.4.3]{Luo24}.
\item When $n$ is even and $t=n$, the strengthened spin condition we impose here is equivalent to the combination of the Kottwitz condition, the wedge condition, and the spin condition in \cite[\S 6]{RSZ1}, see \cite[Rem. 6.1]{RSZ1}.
\item When $t=0$, the strengthened spin condition can be replaced by the following two conditions:
\begin{altitemize}
	\item 
	(\emph{Kottwitz condition})
	For the action of $O_F$ on $\Lie X$ induced by $\iota_X$, there is an equality of polynomials 
	\[
	\charac\bigl(\iota_X(\pi) \mid \Lie (X)\bigr) = (T - \pi)(T + \pi)^{n-1} \in \CO_S[T];
	\]
	\item 
	(\emph{Wedge condition}) $\bigwedge^2_{\CO_S} \bigl(\iota_X(\pi) + \pi \mid \Lie (X)\bigr) = 0$.
\end{altitemize}
\item In the remaining cases, the strengthened spin condition  implies the Kottwitz condition and the wedge condition, see \cite[Rem. 2.4.2]{Luo24}.
\end{altenumerate}
\end{remarks}

\section{Unitary RZ spaces }\label{sec:RZ-spaces}

\subsection{Framing objects}\label{sec:framing}
In this section  we consider hermitian $O_{F}$-modules of signature $(1,n-1)$ over $\Spec \BF$. This subsection continues the discussion in \cite{RSZ1,RSZ2}.

Let $(\BE,\iota_\BE,\lambda_\BE)$ be an isoclinic hermitian $O_{F}$-module of signature $(1,0)$. The deformation space is isomorphic to $\Spf O_{\breve{F}}$ and there is a unique lifting (\emph{the canonical lifting}) $\CE$ of the hermitian $O_{F}$-module. 
Define $\ov{\BE}$ to be the same $O_{F_0}$-module as $\BE$ but with $O_F$-action given by $\iota_{\ov{\BE}}:=\iota_\BE\circ (-)$, where $(-)$ is the Galois conjugation with respect to $F/F_0$, and $\lambda_{\ov{\BE}}:=\lambda_{\BE}$, and similarly define $\ov{\CE}$ and $\lambda_{\ov{\CE}}$.

Let $(\BX_n^{[t]},\iota_{\BX_n^{[t]}},\lambda_{\BX_n^{[t]}})$ be a hermitian $O_{F}$-module of signature $(1,n-1)$ and type $t$ over $\Spec \BF$ called the \emph{framing object} of dimension $n$. 
In this paper, we will require it to be isoclinic throughout. We define the space of \emph{special quasi-homomorphisms}
\begin{equation}\label{eq:special-hom}
\BV_n=\BV(\BX_n^{[t]}):=\Hom^\circ_{O_F}(\ov{\BE},\BX_n^{[t]}).
\end{equation}
Then $\BV_n$ is an $n$-dimensional $F$-vector space. It carries a natural nondegenerate $F/F_0$-hermitian form $h$: for $x,y\in \BV_n$, the composite
\begin{equation*}
	\ov{\BE}\xrightarrow{y}\BX_n
	\xrightarrow{\lambda_{\BX_n}}\BX_n^\vee\xrightarrow{x^\vee}\ov{\BE}^\vee\xrightarrow{\lambda^{-1}_{\ov{\BE}}}\ov{\BE}
\end{equation*}
lies in $\End^\circ_{O_F}(\ov{\BE})$ and, hence, identifies with an element $h(x,y)\in F$ via the isomorphism
\begin{equation*}
	\iota_{\ov{\BE}}:F\overset{\sim}{\longrightarrow}\End^\circ_{O_F}(\ov{\BE}).
\end{equation*}

We have the following result:
\begin{theorem}\label{thm:framing-obj}
	\begin{altenumerate}
		\item When $n$ is odd, there are two non-isomorphic framing objects $\BX^{[t]}_{n,\ep}$, where $\ep=\pm 1$. The Hasse invariant of  $\BV_n(\BX^{[t]}_{n,\ep})$ is $-\ep$.
		\item When $n$ is even and $0\leq t\leq n-2$, there are two non-isomorphic framing objects $\BX^{[t]}_{n,\ep}$, where $\ep=\pm 1$. The Hasse invariant of  $\BV_n(\BX^{[t]}_{n,\ep})$ is $-\ep$.
		\item When $n$ is even and $t=n$, up to isomorphism there is only one framing object $\BX_n^{[n]}$. Then  $\BV(\BX_n^{[n]})$ is non-split.
	\end{altenumerate}
\end{theorem}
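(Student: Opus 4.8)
The plan is to pass, via relative Dieudonn\'e theory, from framing objects to a linear-algebra problem over the Witt vectors, and then to identify isomorphism classes of framing objects with isometry classes of $F/F_0$-hermitian spaces of dimension $n$ (of which there are exactly two, distinguished by their Hasse invariant), the $\pi$-modular even case being exceptional.

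First I would invoke relative covariant Dieudonn\'e theory (Ahsendorf--Cheng--Zink \cite{ACZ}, cf.\ \S\ref{sec:unitary-module}): over $\Spec\BF$ an isoclinic hermitian $O_F$-module $(\BX,\iota_\BX,\lambda_\BX)$ of signature $(1,n-1)$ and type $t$ corresponds to its rational Dieudonn\'e module $N$, a free module of rank $n$ over $F\otimes_{F_0}\mathrm{Frac}\,W_{O_{F_0}}(\BF)$ carrying $\sigma$-linear $\uF$ and $\sigma^{-1}$-linear $\uV$ with $\uF\uV=\uV\uF=\pi_0$, the $O_F$-action, and the polarization pairing, whose Rosati involution is $a\mapsto\ov a$. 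Being isoclinic of $O_{F_0}$-height $2n$ and dimension $n$ forces all slopes to equal $1/2$, so $N$ is the unique such isocrystal. As in \cite{RSZ1,RSZ2}, one recovers $\BV_n=\BV(\BX^{[t]}_n)$ as the appropriate Frobenius-invariant vectors in $N$, the polarization inducing the hermitian form $h$ on it; conversely $(N,\uF,\uV,\iota_\BX,\lambda_\BX)$ is reconstructed from $(\BV_n,h)$, and the Dieudonn\'e lattice of a framing object is — given $N$ — determined up to $\Aut^\circ(\BX^{[t]}_n)=\U(\BV_n)(F_0)$ by the requirement that it be a $\uF$-stable vertex lattice of type $t$ compatible with the strengthened spin condition (the admissible lattices of a fixed type forming a single orbit, by the structure of the Bruhat--Tits building of $\U(\BV_n)$). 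Hence, once existence is settled, the isomorphism class of the framing object is determined by the isometry class of $(\BV_n,h)$; since there are exactly two $n$-dimensional $F/F_0$-hermitian spaces, there are at most two framing objects, and we \emph{define} $\ep\in\{\pm1\}$ by declaring $\mathrm{Hasse}(\BV_n(\BX^{[t]}_{n,\ep}))=-\ep$, in line with \cite{RSZ1}.

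It then remains to decide which hermitian space occurs, for each $(n,t)$. The base case $n=1$ (type $t=0$) is classical (cf.\ \cite{RSZ1,RSZ2}): both one-dimensional hermitian spaces are realized. The exceptional case of part (iii), $n$ even and $t=n$ (the $\pi$-modular, exotically smooth case), is the situation of \cite{RSZ1}: there is a unique framing object and $\BV(\BX_n^{[n]})$ is non-split, the point being that a $\pi$-modular polarization datum together with the signature condition is incompatible with a split $\BV_n$ when $n$ is even (this is the mechanism behind the exotic smoothness). All remaining cases are reached by induction on $n$ (with $t$ fixed) via the construction $\BX^{[t]}_{n}\mapsto\BX^{[t]}_{n}\times\ov\BE$: by Theorem~\ref{thm:ss-comparison-intro} this is again a framing object, of dimension $n+1$ and type $t$, and on special homomorphisms it yields $\BV_{n+1}=\BV_n\perp\langle\zeta\rangle$, where $\zeta$ is the unit twisting the polarization of the $\ov\BE$-factor. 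Since adjoining $\langle\zeta\rangle$ versus $\langle\zeta'\rangle$ changes the discriminant of the resulting $(n+1)$-dimensional space by $\zeta/\zeta'$, taking $\zeta$ a norm resp.\ a non-norm unit gives non-isometric $\BV_{n+1}$. Feeding the two dimension-$n$ framing objects into this construction (with $\zeta=1$) therefore yields the two dimension-$(n+1)$ framing objects for every non-exotic type; and for the almost-$\pi$-modular odd target ($n+1$ odd, $t=n$), where the dimension-$n$ input is the unique $\pi$-modular even object of part (iii), choosing $\zeta$ a norm resp.\ a non-norm unit in $F_0^\times/{\rm N}(F^\times)$ again yields the two framing objects. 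Since the smallest admissible dimension in each family is covered by the base case $n=1$ or by part (iii), this closes the induction for parts (i) and (ii). Tracking the Hilbert symbols along the way, from the normalization at $n=1$, gives the asserted formula $\mathrm{Hasse}(\BV_n(\BX_{n,\ep}^{[t]}))=-\ep$.

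I expect the hard part to be the exceptional case (iii) — showing that exactly one hermitian space occurs and that it is the non-split one — which depends on the fine analysis of the strengthened spin condition at the worst points (as in \cite{RSZ1,Luo24}) rather than on the soft orbit-counting that handles the other cases. Assembling the induction and the Dieudonn\'e dictionary is, by comparison, routine once \S\ref{sec:unitary-module} and Theorem~\ref{thm:ss-comparison-intro} are available.
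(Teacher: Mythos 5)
Your constructive core coincides with the paper's own proof: induction on $n$ via $\BX_{n-1,\ep}^{[t]}\times\ov\BE$ (the strengthened spin condition for the product being available either from Theorem \ref{thm:ss-comparison}, as you invoke it, or more directly from the Dieudonn\'e-module criterion of Lemma \ref{lem:sspin-dieudonne module}, which is what the paper actually uses), with the base cases $n=1,2$ and the $\pi$-modular even case (iii) imported from \cite{RSZ1,RSZ2} via Proposition \ref{prop:RSZ-frame}, and with a rescaling of the polarization by a non-norm unit to reach $\ep=-1$ in the almost $\pi$-modular odd case (the paper rescales the polarization of the $n$-dimensional object, you rescale $\lambda_{\ov\BE}$; the effect on the Hasse invariant of $\BV$ is the same).

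There is, however, a genuine flaw in your classification step. You assert that, once $N$ is fixed, the Dieudonn\'e lattice of a framing object is determined up to $\U(\BV_n)(F_0)$ because ``admissible lattices of a fixed type form a single orbit, by the structure of the Bruhat--Tits building'', and you conclude that the isomorphism class of a framing object is determined by the isometry class of $(\BV_n,h)$. This is false: the set of $\uF$- and $\uV$-stable lattices $M\subset N$ of type $t$ satisfying the signature/spin condition is precisely $\CN_{n,\ep}^{[t]}(\BF)$ (cf. \eqref{equ:RZ-geo-pts}), a positive-dimensional perfect scheme on which $\U(\BV_n)(F_0)$ acts with infinitely many orbits; Bruhat--Tits transitivity applies only to the $\tau$-invariant lattices, i.e. to vertex lattices in $C$, which are the worst points. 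In particular a quasi-isogeny class contains infinitely many isomorphism classes of isoclinic hermitian $O_F$-modules, so ``at most two framing objects up to isomorphism'' cannot be obtained this way. What is true, and what the paper uses, is that the classification of framing objects \emph{up to quasi-isogeny} reduces to classifying the polarized isocrystal $N$ with $F$-action, equivalently the hermitian space $C\simeq\BV_n$; this gives the exhaustiveness in (i),(ii), while uniqueness in (iii) is exactly Proposition \ref{prop:RSZ-frame}(iii) from \cite{RSZ1}, to which both you and the paper defer. With your lattice-rigidity argument replaced by this quasi-isogeny reduction, the rest of your proposal goes through: the two constructed objects are non-isomorphic simply because their spaces $\BV_n$ are non-isometric, and the Hasse-invariant labelling $\ep(\BX)=-\ep(\BV(\BX))$ is a normalization rather than something requiring a separate computation.
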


In the sequel, we use the notation $\ep(\BX)=-\ep(\BV(\BX))$.
We will prove Theorem \ref{thm:framing-obj} in the end of this subsection.

Denote by $M=M(\BX)$ the relative covariant Dieudonn\'e module of the framing object $\BX$. It is endowed with its Frobenius operator $\uF$ and Verschiebung $\uV$ such that $\uF\circ\uV=\uV\circ\uF=\pi_0$. The polarization on $\BX$ translates to an alternating form $\langle\cdot,\cdot\rangle$ on $M$ satisfying
\begin{equation*}
	\langle \uF x,y\rangle=\langle x,\uV y\rangle^\sigma\quad\text{ for all }x,y\in M,
\end{equation*}
where $\sigma$ denotes the Frobenius operator on $W_{O_{F_0}}(\BF)=O_{\breve{F_0}}$.
The $O_F$-action on $X$ translates to an $O_F$-action on $M$ commuting with $\uF$ and $\uV$ and satisfying
\begin{equation*}
	\langle ax,y\rangle=\langle x,\ov{a}y\rangle
\end{equation*}
for all $x,y\in M$.

\begin{lemma}\label{lem:sspin-dieudonne module}
Let $\Pi:=\iota(\pi)$ be the induced action of $\pi$ on $M$.
\begin{altenumerate}
\item Assume that $n$ is even and $t=n$. Then the strengthened spin condition is equivalent to
\begin{equation*}
	\uV M\stackrel{1}{\subset}\uV M+\Pi M.
\end{equation*}
\item Assume $t\neq n$. Then  the strengthened spin condition is equivalent to
\begin{equation*}
	\uV M\stackrel{\leq 1}{\subseteq}\uV M+\Pi M.
\end{equation*}
\end{altenumerate}
\end{lemma}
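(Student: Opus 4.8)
Unwinding the strengthened spin condition on the de Rham realization of $\BX$ over $\Spec\BF$ and matching it against the two displayed containments is the strategy. Recall that over the perfect field $\BF$ the covariant de Rham homology is $D(\BX)=M/\pi_0 M$ with Hodge filtration $\Fil(\BX)=\uV M/\pi_0 M$, so that $\Lie\BX=M/\uV M$; moreover $\pi$ lies in the maximal ideal of $O_{\breve F}$ and hence maps to $0$ in $\BF$, so the operator $\iota(\pi)+\pi$ of Definition~\ref{def:unitary-module} acts on $\Lie\BX$ simply as $\Pi$. Two elementary facts will be used repeatedly: $\Pi^2=\pi_0$, and $\pi_0 M=\uV\uF M\subseteq\uV M\subseteq M$, so that $\Pi$ and $\uV$ induce square-zero endomorphisms of $D(\BX)$ and $\Pi$ preserves $\uV M$. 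Under these identifications $\im(\Pi\mid\Lie\BX)=(\Pi M+\uV M)/\uV M$, whose $\BF$-dimension is the colength of $\uV M$ inside $\uV M+\Pi M$.

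Part (i) then follows at once: with $n$ even and $t=n$, Definition~\ref{def:unitary-module} defines the strengthened spin condition to be the requirement that $\im(\iota(\pi)+\pi\mid\Lie\BX)$ be a locally direct summand of rank $1$, which over the field $\BF$ is exactly $\dim_\BF(\Pi M+\uV M)/\uV M=1$, i.e. $\uV M\stackrel1\subset\uV M+\Pi M$. For the implication ``$\Rightarrow$'' in part (ii), Remark~\ref{rmk:unitry-module} records that the strengthened spin condition implies the wedge condition $\bigwedge^2_{\CO_S}(\iota(\pi)+\pi\mid\Lie\BX)=0$; over $\BF$ this reads $\bigwedge^2(\Pi\mid\Lie\BX)=0$, equivalently $\dim_\BF\im(\Pi\mid\Lie\BX)\le1$, which is $\uV M\stackrel{\le1}\subseteq\uV M+\Pi M$. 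When $t=0$ the reverse implication is equally formal: by Remark~\ref{rmk:unitry-module} the strengthened spin condition is there the conjunction of the Kottwitz and wedge conditions, and over $\BF$ (where $\pi=0$) the Kottwitz condition is automatic while the wedge condition is precisely the required containment.

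The substance is ``$\Leftarrow$'' in part (ii) for $2\le t\le n-1$, and here the plan is to reduce to the almost $\pi$-modular case $n$ odd, $t=n-1$, and then to argue directly. For the reduction: by the classification of framing objects (Theorem~\ref{thm:framing-obj}) together with Theorem~\ref{thm:ss-comparison-intro}(i), when $t\le n-2$ the framing object is of the form $\BX\simeq Y\times\ov\BE$ with $Y$ a hermitian $O_F$-module of dimension $n-1$ and type $t$ (for a suitable unit twist of the polarization), so that $M=M(Y)\oplus M(\ov\BE)$ compatibly with $\uF,\uV,\Pi$. Since $\ov\BE$ has signature $(1,0)$, the Kottwitz condition over $\BF$ forces $\Pi$ to vanish on $\Lie\ov\BE$, so the $\ov\BE$-summand contributes nothing to $(\Pi M+\uV M)/\uV M$ and the colength is unchanged when passing from $\BX$ to $Y$; on the other hand Theorem~\ref{thm:ss-comparison-intro}(ii),(iii) (applicable at each stage where the current dimension exceeds $t+1$) shows that $\BX$ satisfies the strengthened spin condition if and only if $Y$ does. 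Iterating brings us to $n$ odd, $t=n-1$. In that exotically smooth case one concludes either by checking that on this particular local model the Kottwitz and wedge conditions already force the strengthened spin condition --- so that over $\BF$ the latter is again equivalent to $\uV M\stackrel{\le1}\subseteq\uV M+\Pi M$ --- or by the explicit description of the framing object: choose an $O_{\breve{F_0}}$-basis of $M$ adapted to $\uF$, $\uV$, $\Pi$ and to the polarized chain $\Lambda_{[n-1]}$, write out $L^{n-1,1}_{\Lambda,-1}(\BF)$ via the combinatorial recipe of \cite[\S2.5]{S}, and verify that $\bigwedge^n_\BF\Fil(\BX)$ lands in it. Alternatively, the whole lemma can be extracted from the analysis of the strengthened spin condition on Dieudonn\'e modules carried out in \cite{Luo24}.

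I expect the genuinely delicate point to be this last step, specifically: showing that $\bigwedge^n_\BF\Fil(\BX)$ lies in the $\ep=-1$ summand $L^{n-1,1}_{\Lambda,-1}$ rather than in $L^{n-1,1}_{\Lambda,+1}$. Over $\BF$ the two summands of \eqref{eq:spin-lattice-sum} are no longer complementary, so a naive dimension count does not distinguish them, and one must track Smithling's labeling convention through the polarized chain and match it against the Hasse invariant of the framing object via the relation $\ep(\BX)=-\ep(\BV(\BX))$ of Theorem~\ref{thm:framing-obj}. Everything else above is a formal unwinding of the definitions together with Remark~\ref{rmk:unitry-module} and Theorems~\ref{thm:framing-obj} and~\ref{thm:ss-comparison-intro}.
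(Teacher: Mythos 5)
Your part (i), the implication ``strengthened spin $\Rightarrow$ lattice condition'' in part (ii), and the case $t=0$ are all correct, and they agree with the paper (which handles (i) by citing \cite[Prop. 3.10]{RSZ1}, i.e.\ the same reformulation you give). The genuine content of part (ii), however, is the converse for $2\le t\le n-1$: that the colength-$\le 1$ condition --- which over $\BF$ is nothing but the wedge condition --- forces the strengthened spin condition. The paper disposes of this in one line by citing \cite[Prop. 2.4]{HLS2}, which says that over geometric points the strengthened spin condition is equivalent to the Kottwitz plus wedge conditions. This is exactly the step you do not carry out: your base case ends with ``one concludes either by checking \dots or by writing out $L^{n-1,1}_{\Lambda,-1}(\BF)$ \dots or the whole lemma can be extracted from \cite{Luo24}'', and the point you yourself flag as delicate (that $\bigwedge^n_{\BF}\Fil(\BX)$ lies in the $\epsilon=-1$ summand rather than the $+1$ summand) is precisely what remains unverified. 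That this implication really needs an argument, and is special to $t\neq n$, is shown by the $\pi$-modular case: there the wedge local model contains the worst point while the spin condition excludes it, cf.\ \eqref{equ:wedge-pi-modular-lm}, so ``Kottwitz + wedge $\Rightarrow$ strengthened spin'' is false verbatim for $t=n$ and cannot be taken for granted for other $t$.

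The proposed reduction to the almost $\pi$-modular case is also not available as stated. You invoke Theorem \ref{thm:framing-obj} to write $\BX\simeq Y\times\ov{\BE}$ and then apply Theorem \ref{thm:ss-comparison}; but the proof of Theorem \ref{thm:framing-obj} uses Lemma \ref{lem:sspin-dieudonne module} as the criterion for checking that the constructed products satisfy the strengthened spin condition, so this is circular within the paper. Independently of ordering, the lemma has to serve as a pointwise criterion for a candidate triple $(X,\iota,\lambda)$ over $\BF$ that is \emph{not yet} known to satisfy the strengthened spin condition, hence not known to split off an $\ov{\BE}$-factor; assuming the splitting in the direction ``lattice condition $\Rightarrow$ strengthened spin'' begs the question (Theorem \ref{thm:ss-comparison} only transfers the condition along a splitting you do not have). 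So your argument stands or falls with the unexecuted base-case verification, i.e.\ with precisely the statement the paper imports from \cite[Prop. 2.4]{HLS2}; either cite that result directly (which makes the reduction unnecessary) or supply the explicit computation with Smithling's summands.
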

\begin{proof}
Recall that we have the identification $\Lie(X)=M/\uV M$. The case (i) is proved in \cite[Prop. 3.10]{RSZ1}. For case (ii), by \cite[Prop. 2.4]{HLS2}, over  geometric points, the strengthened spin condition is equivalent to the Kottwitz condition plus the wedge condition; the assertion now follows from the definitions.
\end{proof}

Denote  by $N:=M\otimes_{O_{\breve{F}_0}}\breve{F}_0$ the relative rational Dieudonn\'e module of $\BX$. Then $\langle\cdot,\cdot\rangle$ extends to a nondegenerate alternating form on $N$. 
The classification of framing objects up to quasi-isogeny reduces to classifying such polarized isocrystals with $F$-action (in the relative sense).

Fix an element $\delta\in O_{\breve{F}_0}^\times$ such that $\sigma(\delta)=-\delta$. Then $N$ is an $n$-dimensional $\breve{F}$-hermitian space equipped with the hermitian form $h$ defined\footnote{The factor $\delta$  is necessary to descend the hermitian form to $C$. In \cite{RSZ1}, the operator $\tau$ is defined  in a different way, which explains why $\delta$  is  missing in \cite[\S 3.3]{RSZ1}.} 
by:
\begin{equation*}
	h(x,y):=\delta (\langle \Pi x,y\rangle+\pi\langle x,y\rangle).
\end{equation*}
We can use the relation 
\begin{equation*}
	\langle x,y\rangle=\frac{1}{2\delta}\tr_{\breve{F}/\breve{F}_0}(\pi^{-1}h(x,y))
\end{equation*}
to recover $\langle\cdot,\cdot\rangle$.

Since $N$ is supersingular, all slopes of the $\sigma$-linear operator
\begin{equation*}
	\tau:=\Pi\uV^{-1}:N\rightarrow N
\end{equation*}
are $0$. Hence, 
\begin{equation*}
	C:=N^{\sigma=1}
\end{equation*}
is an $F_0$-subspace of $N$ such that
\begin{equation}\label{eq:C-iso-to-V}
	C\otimes_{F_0}\breve{F}_0\xrightarrow{\sim} N;
\end{equation}
and in this way $\id_C\otimes\sigma$ identifies with $\tau$. Furthermore, $C$ is $F$-stable, and the restriction of $h$ to $C$ makes $C$ into a non-degenerate $F/F_0$-hermitian space of dimension $n$.

By choosing a generator of the relative Dieudonn\'e module of $\ov{\BE}$, we have an isomorphism
\begin{equation}\label{equ:BV-to-C}
	\BV(\BX_n)=\Hom_{O_F}^\circ(\ov{\BE},\BX_n)\overset{\sim}{\longrightarrow} C.
\end{equation}
We refer the reader to \cite[Lem. 3.9]{KR-U1} and \cite[Lem. 3.6]{Shi23} for its construction and proof. We fix this isomorphism once and for all, and use it throughout the paper.

Clearly, to classify $N$ up to isomorphism as a polarized isocrystal with $F$-action is to classify $C$ up to similarity as hermitian space.
It remains to construct those spaces. 
We begin by recalling the following proposition.
\begin{proposition}\label{prop:RSZ-frame}
	\begin{altenumerate}Let $\zeta\in O^\times_{F_0}\setminus N(O_F)$.
		\item When $n=1$ and $t=0$, then, up to quasi-isogeny, there are two framing objects,
		\begin{equation*}
			(\BX^{[0]}_{1,-1},\iota_{\BX^{[0]}_{1,-1}},\lambda_{\BX^{[0]}_{1,-1}})=(\ov{\BE},\iota_{\ov{\BE}},\lambda_{\ov{\BE}}),
			\quad\text{and}\quad
			(\BX^{[0]}_{1,1},\iota_{\BX^{[0]}_{1,1}},\lambda_{\BX^{[0]}_{1,1}})=(\ov{\BE},\iota_{\ov{\BE}},\zeta\lambda_{\ov{\BE}}).
		\end{equation*}
		\item When $n=2$ and $t=0$, then, up to quasi-isogeny, there exist two framing objects, 
		\begin{equation*}
			(\BX^{[0]}_{2,1},\iota_{\BX^{[0]}_{2,1}},\lambda_{\BX^{[0]}_{2,1}}),\quad\text{and}\quad(\BX^{[0]}_{2,-1},\iota_{\BX^{[0]}_{2,-1}},\lambda_{\BX^{[0]}_{2,-1}}).
		\end{equation*} 
		\item For $n$ even and $t=n$, up to quasi-isogeny, there exists a unique framing object $(\BX_n^{[n]},\iota_{\BX_n^{[n]}},\lambda_{\BX_n^{[n]}})$. In this case, the hermitian space $\BV(\BX_n^{[n]})$ is non-split.
	\end{altenumerate}
\end{proposition}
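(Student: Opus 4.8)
The plan is to translate the classification of framing objects up to quasi-isogeny into a statement about $F/F_0$-hermitian spaces, using (relative, covariant) Dieudonn\'e theory over $\BF$ together with the dictionary set up immediately before the proposition. First I would note that, by the isoclinic hypothesis, a framing object $(\BX,\iota,\lambda)$ of dimension $n$ and type $t$ is equivalent to the datum of its relative rational Dieudonn\'e module $N$ --- with Frobenius, $O_F$-action and alternating form $\langle\cdot,\cdot\rangle$ --- together with a Dieudonn\'e lattice $M\subset N$ whose induced polarized chain has type $\Lambda_{[t]}$; and that $(N,\langle\cdot,\cdot\rangle,\iota)$ is in turn the same datum as the $n$-dimensional hermitian space $C=N^{\sigma=1}$ with its form $h$. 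Since in a framing object the polarization $\lambda$ is fixed on the nose (not merely up to a scalar), two framing objects are quasi-isogenous if and only if the corresponding spaces $C$ --- equivalently, by \eqref{equ:BV-to-C}, the spaces $\BV(\BX)$ --- are isometric and the Dieudonn\'e lattices match under the isometry. Thus in each of the three cases it suffices to (a) exhibit the claimed object(s) and check they are hermitian $O_F$-modules of the asserted dimension, type and signature, (b) compute the isometry class of $\BV(\BX)$, confirm it is as stated, and see that distinct listed objects are non-isomorphic, and (c) prove exhaustiveness.

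For (i) and (ii) ($t=0$) the key point is that there are exactly two isometry classes of $F/F_0$-hermitian spaces of each dimension $n\geq 1$, distinguished by the discriminant in $F_0^\times/\mathrm{N}(F^\times)$ (a group of order $2$ since $F/F_0$ is ramified), and that both classes contain a self-dual $O_F$-lattice and hence admit a type-$0$ polarized chain. For $n=1$ I would verify directly that $(\ov{\BE},\iota_{\ov{\BE}},\lambda_{\ov{\BE}})$ and $(\ov{\BE},\iota_{\ov{\BE}},\zeta\lambda_{\ov{\BE}})$ are hermitian $O_F$-modules of dimension $1$ and type $0$: $\ov{\BE}$ has height $2$ and dimension $1$, both polarizations are principal so their kernels are trivial, the Rosati involutions induce Galois conjugation by construction of $\ov{\BE}$, and the strengthened spin condition reduces to the Kottwitz plus wedge conditions (Remarks \ref{rmk:unitry-module}(iv)), which are vacuous for $n=1$. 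They are non-isomorphic: an $O_F$-linear self-quasi-isogeny of $\ov{\BE}$ is multiplication by some $c\in F^\times$ and rescales the polarization by $\mathrm{N}(c)$, while $\zeta\in O_{F_0}^\times\setminus\mathrm{N}(O_F)$, so no such $c$ carries $\lambda_{\ov{\BE}}$ to $\zeta\lambda_{\ov{\BE}}$. Exhaustiveness is immediate, since the supersingular height-$2$, dimension-$1$ strict $O_{F_0}$-module is $\ov{\BE}$ and the two isometry classes of its conjugate-type principal polarizations are realized by the two listed objects. For $n=2$ the scheme is identical: build $\BX^{[0]}_{2,\pm 1}$ from Dieudonn\'e modules carrying the two inequivalent hermitian lattices (or invoke \cite{RSZ1}), check signature and type, note the opposite Hasse invariants, and conclude by the same counting argument. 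In both cases the normalization $\ep(\BV(\BX))=-\ep(\BX)$ gives the Hasse-invariant assertions of Theorem \ref{thm:framing-obj}(i), (ii).

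For (iii) ($n$ even, $t=n$) the classification collapses to a single object because the $\pi$-modular case is rigid. The extra input here is that a polarized hermitian isocrystal of even dimension $n$ which admits a $\pi$-modular Dieudonn\'e lattice satisfying the strengthened spin condition --- equivalently, by Lemma \ref{lem:sspin-dieudonne module}(i), the condition $\uV M\stackrel{1}{\subset}\uV M+\Pi M$ --- forces the hermitian space $C=\BV(\BX_n^{[n]})$ to be \emph{non-split}. Since the non-split $n$-dimensional hermitian space is unique, this yields both the existence and the uniqueness of $\BX_n^{[n]}$, together with the non-splitness of $\BV(\BX_n^{[n]})$. This is carried out in \cite[\S 6]{RSZ1} and can also be read off from the analysis of the strengthened spin condition in \cite{Luo24}.

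The main obstacle is exactly this rigidity in (iii): one must show that $\pi$-modularity of $M$ together with the spin condition leaves no freedom in the Hasse invariant of $C$, i.e.\ that the spin condition is precisely what pins down non-splitness (and, in (ii), that the strengthened spin condition genuinely holds on the explicitly constructed lattices, rather than being automatic). By contrast, the Dieudonn\'e-theoretic dictionary, the vacuity of the spin condition for $n=1$, the count of hermitian spaces via $F_0^\times/\mathrm{N}(F^\times)$, and the bookkeeping of the signs relating $\ep(\BX)$, $\ep(\BV(\BX))$ and the Hasse invariant are all routine once the framework is set up.
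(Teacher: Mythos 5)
Your proposal is correct and in fact does more than the paper, whose entire proof of this proposition is a citation: parts (i) and (iii) to \cite[\S 3]{RSZ1} and part (ii) to \cite[\S 8]{RSZ2}. What you sketch --- the Dieudonn\'e dictionary reducing quasi-isogeny classes of isoclinic framing objects to isometry classes of the hermitian space $C=N^{\tau=1}$, the count of hermitian spaces by discriminant in $F_0^\times/\mathrm{N}(F^\times)$, the direct check for $n=1$, and the rigidity in the $\pi$-modular case coming from the spin condition (if $C$ were split it would contain a $\pi$-modular lattice, whose base change would be a $\tau$-invariant $M$ with $M=M+\tau(M)$, contradicting $M\stackrel{1}{\subset}M+\tau(M)$) --- is precisely the content of those cited sections, so your route is the intended one rather than a genuinely different one.

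Two small corrections. First, your stated equivalence ``quasi-isogenous iff the $C$'s are isometric \emph{and the Dieudonn\'e lattices match under the isometry}'' is too strong in one direction: a polarization-preserving quasi-isogeny only sees the isocrystal, so isometry of the $C$'s alone already implies quasi-isogeny; no lattice matching is required. You only ever use the correct directions (non-isometric $\Rightarrow$ non-quasi-isogenous for distinguishing the listed objects, and existence of a suitable type-$t$ lattice satisfying the strengthened spin condition for realizing each isometry class), so nothing breaks, but as written the criterion would make your counting argument for exhaustiveness in (ii) appear insufficient when it is in fact enough. Second, the reference for the construction in case (ii) ($n=2$, $t=0$) should be \cite[\S 8]{RSZ2}, not \cite{RSZ1}.
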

\begin{proof}
	(i)(iii) are proved in \cite[\S 3]{RSZ1}, and (ii) is proved in \cite[\S 8]{RSZ2}
\end{proof}

\begin{proof}[Proof of Theorem \ref{thm:framing-obj}]
We construct the required framing objects by induction.
The assertion is already established for $n=1$ and $2$ by Proposition \ref{prop:RSZ-frame}.
Now, assume that the assertion holds for $n-1$:
\begin{altitemize}
\item Suppose $n-1$ is even. Then we have constructed framing objects
\begin{equation*}
\BX_{n-1,\pm 1}^{[t]}\quad \text{for all }0\leq t\leq n-3,\quad\text{and}\quad\BX_{n-1,1}^{[n-1]}.
\end{equation*}
Define
\begin{equation*}
(\BX_{n,\ep}^{[t]},\iota_{\BX_{n,\ep}^{[t]}},\lambda_{\BX_{n,\ep}^{[t]}})
:=(\BX_{n-1,\ep}^{[t]}\times \ov{\BE},\iota_{\BX_{n-1,\ep}^{[t]}}\times \iota_{\ov{\BE}},\lambda_{\BX_{n-1,\ep}^{[t]}}\times \lambda_{\ov{\BE}}).
\end{equation*}
It is straightforward to verify they are the hermitian $O_F$-modules by Lemma \ref{lem:sspin-dieudonne module} and Proposition \ref{prop:RSZ-frame}(i).
Therefore, these form the desired framing objects.
To complete this step, it remains to  construct $\BX_{n,-1}^{[n-1]}$. It is
\begin{equation*}
(\BX_{n,-1}^{[n-1]},\iota_{\BX_{n,-1}^{[n-1]}},\lambda_{\BX_{n,-1}^{[n-1]}}):=(\BX_{n,1}^{[n-1]},\iota_{\BX_{n,1}^{[n-1]}},\delta\lambda_{\BX_{n,1}^{[n-1]}}).
\end{equation*}
		
\item Suppose $n-1$ is odd. In this case, we have constructed framing objects
\begin{equation*}
\BX_{n-1,\pm 1}^{[t]},\quad \text{for all }0\leq t\leq n-2.
\end{equation*}
Define 
\begin{equation*}
(\BX_{n,\ep}^{[t]},\iota_{\BX_{n,\ep}^{[t]}},\lambda_{\BX_{n,\ep}^{[t]}})
:=(\BX_{n-1,\ep}^{[t]}\times \ov{\BE},\iota_{\BX_{n-1,\ep}^{[t]}}\times \iota_{\ov{\BE}},\lambda_{\BX_{n-1,\ep}^{[t]}}\times \lambda_{\ov{\BE}}).
\end{equation*}
It is straightforward to verify that they are the hermitian $O_F$-modules by Lemma \ref{lem:sspin-dieudonne module} and Proposition \ref{prop:RSZ-frame}(i).
Therefore, these form the desired framing objects.
Finally, $\BX_{n,1}^{[n]}$ is already constructed in Proposition \ref{prop:RSZ-frame}(iii), which completes the proof.\qedhere
\end{altitemize}
\end{proof}

\subsection{RZ spaces}\label{sec:RZ-defn}

For $S$ a scheme over $\Spf O_{\breve{F}}$, let $
	\ov{S}:=\Spec \CO_S/\pi \CO_S$.
Let $(\BX_{n,\ep}^{[t]},\iota_{\BX_{n,\ep}^{[t]}},\lambda_{\BX_{n,\ep}^{[t]}})$ be the framing object constructed in Theorem \ref{thm:framing-obj}, it is a supersingular hermitian $O_{F}$-module of signature $(1,n-1)$ and type $t$ over $\Spec \BF$.

We define the \emph{unitary RZ space} $\CN_{n,\ep}^{[t]}$ over $\Spf O_{\breve F}$  as the formal scheme parametrizing isomorphism classes of quadruples $(X, \iota_X, \lambda_X, \rho_X)$, where
\begin{altitemize}
\item $(X,\iota_X,\lambda_X)$ is a hermitian $O_{F}$-module of signature $(1,n-1)$ and type $t$ over $S$;
\item  $\rho_X$, called the {\it framing} with framing object $(\BX_{n,\ep}^{[t]},\iota_{\BX_{n,\ep}^{[t]}},\lambda_{\BX_{n,\ep}^{[t]}})$, is an $O_F$-linear quasi-isogeny of height $0$
\begin{equation*}
	\rho_X: X\times_S \ov{S}\longrightarrow \BX_n^{[t]}\times_{\BF}\ov{S} ,
\end{equation*}
such that $\rho^*(\lambda_{\BX_n^{[t]}}\times_{\BF}\ov{S})=\lambda_X\times_S \ov{S}$. 
\end{altitemize}

Here an isomorphism between quadruples $(X,\iota_X,\lambda_X,\rho_X)\xrightarrow{\sim} (Y,\iota_Y,\lambda_Y,\rho_Y)$ is an $O_F$-linear isomorphism of hermitian $O_{F}$-modules $\alpha:X\xrightarrow{\sim}Y$ over $S$ such that we have $\rho_Y\circ(\alpha\times_S \ov{S})=\rho_X$ and such that $\alpha^*(\lambda_Y)=\lambda_X$.

The RZ spaces $\CN_{n,\ep}^{[t]}$ considered here are all flat, see Theorem \ref{thm:geo-RZ-canonical} for more properties.
\begin{remarks}\label{rmk:small-dim}
Let us make some remarks about the RZ spaces in small dimensions.
\begin{altenumerate}
\item When $n=2$ and $t=0$,  we only need to impose the Kottwitz condition to achieve flatness, see \cite[p. 596–7]{P}. Furthermore, the characteristic polynomial equals $(T-\pi)(T+\pi)=T^2-\pi_0$. Hence the RZ space $\CN_{2,\ep}^{[0]}$ can be defined over $\Spf O_{\breve{F}_0}$. When $\ep=1$, this model is isomorphic to $\Spf O_{\breve F_0}[[X, Y]]/(XY-\pi_0)$. When $\ep=-1$, this model is given by the formal Drinfeld halfplane attached to $F_0$. In either case, this model has  semi-stable reduction over $O_{\breve F_0}$, see \cite[Rem. 7.9 and \S 8]{RSZ2}. It is thus regular.  In \cite[\S 13]{RSZ2}   an Arithmetic Transfer conjecture for this model is formulated (and proved). Note that the  base change of this model  to $\Spf O_{\breve{F}}$ is not regular. Therefore the conjecture in \cite{RSZ2}  is genuinely different from the one discussed here, which pertains to our space over $O_{\breve F}$. 

\item When $n=2$ and $t=2$ (in which case $\ep=1$), we only need to  impose the Kottwitz condition and the spin condition defined in \cite[\S 7, esp. (7.10)]{PR}, to achieve flatness, see also \cite[Ex. 6.5]{RSZ2}.  The model $\CN_{2, \ep}^{[2]}$ is smooth. Again, it can be defined over $\Spf O_{\breve{F}_0}$, but we only consider the model over $O_{\breve{F}}$. 

\item When $n=3$ and $t=0$, we only need to impose the Kottwitz condition, see \cite[4.5, 4.15]{P} or \cite[\S 6]{PR}.

\item When $n=3$ and $t=2$, we need to impose the strengthened spin condition and the RZ space is smooth over $\Spf O_{\breve{F}}$, see \cite[\S 6]{PR}.
\end{altenumerate}
\end{remarks}

For $n\geq 1$, denote by $g\mapsto g^\dagger$ the Rosati involution on $\End^\circ_{O_F}(\BX_{n,\ep}^{[t]})$ induced by $\lambda_{\BX_{n,\ep}^{[t]}}$. Define
\begin{equation*}
	\U(\BX_{n,\ep}^{[t]}):=\U(\BX_{n,\ep}^{[t]},\iota_{\BX_{n,\ep}^{[t]}},\lambda_{\BX_{n,\ep}^{[t]}}):=\left\{
	g\in \End_{O_F}^{\circ}(\BX_{n,\ep}^{[t]})\mid gg^\dagger=\id_{\BX_{n,\ep}^{[t]}}
	\right\}.
\end{equation*}
By \eqref{eq:C-iso-to-V}, we see that $\U(\BX_{n,\ep}^{[t]})$ is a genuine unitary group, with Hasse invariant $-\ep$.
Each $g$ in the group $\U(\BX^{[t]}_{n,\ep})$, which is a quasi-isogeny of $\BX_{n,\ep}^{[t]}$ to itself of height $0$, and therefore, $\U(\BX_{n,\ep}^{[t]})$ acts naturally on $\CN_{n,\ep}^{[t]}$ on the left via the rule
\begin{equation*}
	g\cdot(X,\iota_X,\lambda_X,\rho_X)=(X,\iota_X,\lambda_X,g\circ\rho_X).
\end{equation*}

\begin{remark}
Using Proposition \ref{prop:RSZ-frame}, there exists an isomorphism
\begin{equation*}
	\nu:(\BX_{n,\ep}^{[t]},\iota_{\BX_{n,\ep}^{[t]}},\zeta\lambda_{\BX_{n,\ep}^{[t]}})\simeq (\BX_{n,\eta(\zeta)^n\ep}^{[t]},\iota_{\BX_{n,\eta(\zeta)^n\ep}^{[t]}},\lambda_{\BX_{n,\eta(\zeta)^n\ep}^{[t]}}).
\end{equation*}
Therefore, we can define the following isomorphism:
\begin{equation}\label{equ:odd-RZ-iso}
	\zeta_*: \CN_{n,\ep}^{[t]}\rightarrow \CN_{n,\eta(\zeta)^n\ep}^{[t]},\quad (X,\iota_X,\lambda_X,\rho_X)\mapsto (X,\iota_X,\zeta\lambda_X,\nu\circ \rho_X).
\end{equation}
Furthermore, the map
\begin{equation*}
\begin{aligned}
	\xymatrix{
		\nu_*:\U(\BX_{n,\ep}^{[t]})\ar[r]& \U(\BX_{n,\eta(\zeta)^n\ep}^{[t]})& g\ar@{|->}[r]&\nu\circ g\circ \nu^{-1}.
	}
\end{aligned}
\end{equation*}
is an isomorphism. 
From the definition, we see that $\zeta_*$ is compatible with the isomorphism $\nu_*$.

In particular, when $n$ is odd, there are two non-isomorphic framing objects, but the corresponding RZ spaces are isomorphic.
We cannot give an explicit description for $\zeta_*$, since the isomorphism $\nu$ is not explicit (it only exists by the equality of Hasse invariants).
\end{remark}

\subsection{Geometry of RZ space}\label{sec:geo-RZ}
In this subsection, we study some basic geometric structure of RZ spaces.

Recall that in \S \ref{sec:framing}, we defined $C:=N^{\tau=1}$. By restricting the hermitian form on $N$, it is a hermitian space over $F$. We have an isomorphism $C\simeq \BV$ and we can recover $N$ as $N\simeq C\otimes_{F}\breve{F}$.

When $t\neq n$, by computation on Dieudonn\'e modules, the geometric points of the RZ space are given as follows by $O_{\breve F}$-lattices (cf. \cite[Prop. 2.4]{RTW} and \cite[Prop. 3.5]{HLS2}),
\begin{align}
\CN_{n,\ep}^{[t]}(\BF)={}&\Big\{M\subset N\mid \Pi M^\vee\subseteq M\stackrel{t}{\subseteq}M^\vee,\quad
\pi_0 M \stackrel{n}{\subset} \uV M \stackrel{n}{\subset} M, \quad
 \uV M\,\stackrel{\leq 1}{\subseteq}(\uV M+\Pi M)\Big\}.\label{equ:RZ-geo-pts-1}\\
={}&\Big\{M\subset N\mid \Pi M^\vee\subseteq M\stackrel{t}{\subseteq}M^\vee,\quad\Pi M\stackrel{n}{\subset} \tau^{-1}(M)\stackrel{n}{\subset}\Pi^{-1}M, \quad M\,\stackrel{\leq 1}{\subseteq}(M+\tau(M))\Big\}.\label{equ:RZ-geo-pts}
\end{align}

When $t=n$, the geometric points of the RZ space are given as follows by $O_{\breve F}$-lattices (cf. \cite[Prop. 3.4]{Wu16}),
\begin{equation}\label{equ:RZ-geo-pts-pi-modular}
\CN_{n,\ep}^{[n]}(\BF)=\Big\{M\subset N\mid \pi M^\vee=M\stackrel{n}{\subset}M^\vee,\quad\Pi M\stackrel{n}{\subset} \tau^{-1}(M)\stackrel{n}{\subset}\Pi^{-1}M, \quad M\stackrel{ 1}{\subset}(M+\tau(M))\Big\}.
\end{equation}

Suppose $t\neq n$, and let $\Lambda\subset C$ be any vertex lattice of type $t$. The base change $\breve{\Lambda}:=\Lambda\otimes_{O_F}O_{\breve{F}}$ defines a  lattice in $N$ satisfying 
\begin{equation*}
\Pi\breve{\Lambda}^\vee\subseteq\breve{\Lambda}\stackrel{t}{\subseteq}\breve{\Lambda}^\vee,\quad\text{and}\quad\tau(\breve{\Lambda})=\breve{\Lambda}.
\end{equation*}
Such lattice defines a geometric point $\WT(\Lambda)\in\CN_{n,\ep}^{[t]}(\BF)$.
Conversely, for any lattice $M\in \CN_{n,\ep}^{[t]}(\BF)$ such that $\tau(M)=M$, the $\tau$-invariants $\Lambda:=M^{\tau=1}\subset C$ form  a vertex lattice of type $t$.
Therefore, there is a one-to-one correspondence between vertex lattices of type $t$ in $C$, and $\tau$-invariant lattices in $\CN_{n,\ep}^{[t]}(\BF)$. We call these points of $\CN_{n,\ep}^{[t]}(\BF)$ the \emph{worst points} of the RZ space, which explains the chosen notation  $\WT(\Lambda)$. 

The reason for the name ``worst point'' is the following:
recall that we have $\tau:=\Pi\uV^{-1}$. Therefore, $M=\tau(M)$ implies that $\Pi M=\uV M\subset M$. Denoting by $X$ the hermitian $O_F$-module corresponding to $M$, its  Hodge filtration equals
\begin{equation*}
\Bigl[\Fil(X)\subset D(X)\Bigr]=\Bigl[\uV M/\pi_0 M\subset M/\pi_0 M\Bigr]
=\Bigl[\Pi M/\pi_0 M\subset M/\pi_0 M\Bigr].
\end{equation*}
This defines the \emph{worst point} $*$ of the local model $\bM_n^{[t]}$ associated to the RZ space, comp. \cite{PR} (see also Definition \ref{defn:lm} for the definition of the local model).

Let $\Sing(\CN_{n,\ep}^{[t]})$ be the disjoint union of all worst points
\begin{equation*}\label{equ:def-Sing}
	\Sing(\CN_{n,\ep}^{[t]}):=\coprod_{\substack{\pi\Lambda^\vee\subseteq\Lambda\subseteq \Lambda^\vee\subset C \\ t(\Lambda)=t}}\WT(\Lambda).
\end{equation*}

When $t=n$, then, due to  the spin condition, there is no $\pi$-modular vertex lattice in $C$, see \cite[\S 3.3]{RSZ1}. This corresponds to the fact that the local model in this case does not have a worst point, see \cite[Rem. 5.3]{PR}.

We recall the following facts about the RZ space.
\begin{theorem}\label{thm:geo-RZ-canonical}
\begin{altenumerate}
\item The formal scheme  $\CN_{n,\ep}^{[t]}$ is flat, normal and Cohen-Macaulay.
\item The formal scheme  $\CN_{n,\ep}^{[t]}$ is smooth over $\Spf(O_{\breve F})$ if and only if $t=n$ or $t=n-1$.
\item In all cases, the formal scheme $\CN_{n,\ep}^{[t]}\setminus \Sing(\CN_{n,\ep}^{[t]})$ is semi-stable.
\end{altenumerate}
\end{theorem}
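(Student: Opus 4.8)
The plan is to transport all three assertions to the local model $\bM_n^{[t]}$ (cf.\ Definition \ref{defn:lm}) and to the splitting model $\CN_{n,\ep}^{[t],\spt}$, whose geometry is supplied by the cited literature. First I would set up the local model diagram in the present (strengthened spin) setting, following \cite[Ch.~3]{RZ}: the \'etale-local trivializations \eqref{eq:lie-alg-trivilization} of the polarized chain of de Rham homologies globalize to a formal scheme $\wt\CN_{n,\ep}^{[t]}$ carrying a torsor map $\wt\CN_{n,\ep}^{[t]}\to\CN_{n,\ep}^{[t]}$ under the smooth affine automorphism group scheme of the polarized lattice chain $\Lambda_{[t]}$, together with a formally smooth map $\wt\CN_{n,\ep}^{[t]}\to\bM_n^{[t]}$ remembering only the Hodge filtration $\Fil(X)\subset\Lambda_t\otimes_{O_{F_0}}\CO_S$; by Definition \ref{def:unitary-module}(2) this filtration satisfies the strengthened spin condition \eqref{eq:spin-def}, so its image lands in $\bM_n^{[t]}$, the closed subscheme of the relevant Grassmannian attached to the chain $\Lambda_{[t]}$ that is cut out by the polarization and strengthened spin conditions. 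Since both legs of this diagram are smooth, $\CN_{n,\ep}^{[t]}$ and $\bM_n^{[t]}$ have the same \'etale-local structure; hence flatness, normality, Cohen-Macaulayness, smoothness, and semi-stability hold for $\CN_{n,\ep}^{[t]}$ if and only if they hold for $\bM_n^{[t]}$, and one may argue on $\bM_n^{[t]}$ (or deduce the property from the splitting model, which is itself a blow-up of $\bM_n^{[t]}$).

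I would establish (iii) first. By \cite[Thm.~1.3.1]{HLS1}, $\CN_{n,\ep}^{[t],\spt}$ is the blow-up of $\CN_{n,\ep}^{[t]}$ along the (reduced, zero-dimensional) worst-point locus $\Sing(\CN_{n,\ep}^{[t]})$, so its structure morphism to $\CN_{n,\ep}^{[t]}$ restricts to an isomorphism over the open complement; since $\CN_{n,\ep}^{[t],\spt}$ is flat and semi-stable over $\Spf O_{\breve F}$, and semi-stability passes to open formal subschemes, $\CN_{n,\ep}^{[t]}\setminus\Sing(\CN_{n,\ep}^{[t]})$ is semi-stable, and in particular regular. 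For (i): flatness of $\bM_n^{[t]}$ over $\Spf O_{\breve F}$ together with the reducedness and Cohen-Macaulayness of its special fibre is precisely the payoff of imposing the strengthened spin condition, and is furnished by \cite{PR,S,HLS1,Luo24}; as $O_{\breve F}$ is a discrete valuation ring and the generic fibre of $\bM_n^{[t]}$ is smooth, flatness with Cohen-Macaulay special fibre forces $\bM_n^{[t]}$, hence $\CN_{n,\ep}^{[t]}$, to be Cohen-Macaulay, and a fortiori to satisfy Serre's condition $S_2$. For $R_1$: when $n=1$ the space is smooth over $\Spf O_{\breve F}$ and there is nothing to prove, while for $n\geq 2$ the non-regular locus is contained in $\Sing(\CN_{n,\ep}^{[t]})$ by the previous step, a closed subset of dimension $0$ in the $n$-dimensional formal scheme $\CN_{n,\ep}^{[t]}$, hence of codimension $n\geq 2$. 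Serre's criterion then yields normality.

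For (ii), the \emph{if} direction is exotic smoothness: when $n$ is even and $t=n$ the $\pi$-modular local model is smooth by \cite{PR} (see also \cite[\S6]{RSZ1}), and when $n$ is odd and $t=n-1$ the almost $\pi$-modular local model is smooth, cf.\ \cite{RSZ1,RSZ2} and \cite{He-Pappas-Rapoport}; in the former case $\bM_n^{[t]}$ has no worst point, and in the latter it has a worst point which is nonetheless a smooth point, consistently with (iii). For the \emph{only if} direction: since $t$ is even, the remaining cases are exactly those with $0\leq t\leq n-2$, and in each of them the chain of vertex lattices provides a lattice $\Lambda\subset C$ of type $t$, hence a worst point $\WT(\Lambda)\in\CN_{n,\ep}^{[t]}(\BF)$; the analysis of Pappas-Rapoport of the local model near the worst point (\cite{PR}; see also \cite{HLS1,Luo24}) shows that the special fibre of $\bM_n^{[t]}$ is singular there --- equivalently, that its Zariski tangent space at that point strictly exceeds the relative dimension $n-1$ --- so $\CN_{n,\ep}^{[t]}$ is not smooth over $\Spf O_{\breve F}$.

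The genuinely hard input is the one entering (i): the flatness of the strengthened spin local model together with the reducedness (equivalently, Cohen-Macaulayness) of its special fibre. Everything else --- the local model diagram, the blow-down comparison with $\CN_{n,\ep}^{[t],\spt}$, Serre's criteria, and the worst-point computation behind non-smoothness --- is either formal or already on record in \cite{PR,HLS1,Luo24}.
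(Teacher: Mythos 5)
Your proposal is correct, and at bottom it relies on the same hard inputs as the paper, whose own proof is purely a matter of citations: part (i) is quoted from \cite{Luo24}, the ``if'' part of (ii) from \cite{PR} and \cite{A}, the ``only if'' part from \cite{Luo24} and \cite{He-Pappas-Rapoport}, and (iii) from \cite[Cor.~1.3.2]{HLS2}. Where you differ is in the assembly. For (iii) you do not invoke \cite{HLS2} at all, but instead derive semi-stability of $\CN_{n,\ep}^{[t]}\setminus\Sing$ from the splitting model: since $\CN_{n,\ep}^{[t],\spt}$ is flat and semi-stable and is the blow-up of $\CN_{n,\ep}^{[t]}$ in the worst points (\cite[Thm.~1.3.1]{HLS1}), the blow-down is an isomorphism over the complement of $\Sing$, and semi-stability passes to open formal subschemes; this is a valid and arguably more self-contained route. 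For (i) you take flatness and the Cohen--Macaulay property from the local-model literature (the same input the paper attributes to \cite{Luo24}), but you then \emph{reprove} normality via Serre's criterion: $S_2$ from Cohen--Macaulayness, and $R_1$ because by your (iii) the non-regular locus lies in the zero-dimensional set $\Sing$, hence has codimension $n\geq 2$ (the case $n=1$ being smooth). This is a genuine simplification in logical structure, since it reduces (i) to Cohen--Macaulayness plus your (iii), whereas the paper treats normality as an independent quoted fact. Two small caveats: the singularity of the special fibre at the worst point for general $0\leq t\leq n-2$ is not literally in \cite{PR} (which treats mainly $t=0$ and $t=n$); the clean reference for the ``only if'' direction in full generality is \cite{He-Pappas-Rapoport} or \cite{Luo24}, which you do also cite, so this is only an attribution issue. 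Likewise your phrase that Cohen--Macaulayness of the special fibre is ``precisely the payoff'' of the strengthened spin condition overstates what that condition buys by itself --- flatness and Cohen--Macaulayness of the canonical local models are separate theorems --- but since you treat this as quoted input, exactly as the paper does, it does not affect the correctness of the argument.
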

\begin{proof}
 For (i), we refer to  \cite{Luo24} (and the literature cited there). For part (ii), the ``if'' part follows from \cite{PR} and \cite{A}. The ``only if'' part follows from \cite{Luo24}, see also \cite{He-Pappas-Rapoport}. 
Part (iii) follows from \cite[Cor. 1.3.2]{HLS2}.
\end{proof}

We conclude this subsection with an analysis of $\pi_0(\CN_{n,\ep}^{[t]})$. 
While this computation can be approached through a detailed study of the basic locus (see \cite{RTW,Wu16,HLS2}), such an approach alone is insufficient for our purposes, since  we will later need to consider RZ spaces $\CN_{n,\ep}^{[r,s]}$.
Instead, we will use a group-theoretical approach and reduce the problem to a computation involving affine Deligne-Lusztig varieties (ADLV).

Recall that $(V,\phi)$ is a $F/F_0$-hermitian space and we have the associated alternating form
\begin{equation*}
	\langle x,y\rangle=\frac{1}{2}\tr_{F/F_0}(\pi^{-1}\cdot\phi(x,y)).
\end{equation*}
We have the unitary similitude group 
\begin{equation*}
\wt{G}=\GU(V,\phi):=\left\{
g\in\GL_F(V)\mid \langle gx,gy\rangle=c(g)\langle x,y\rangle,\, c(g)\in F_0^\times
\right\},
\end{equation*}
where $c\colon \wt{G}\to \BG_m$ is the similitude factor, cf.  \cite[\S 1.2]{PR}.
Recall from \cite[1.b.]{PR} the Kottwitz map  
\begin{equation*}
\kappa_{\wt{G}}: \wt{G}(\breve{F}_0)\to \pi_1(\wt{G})_I=\left\{\begin{array}{ll}
	\BZ	&	n=2m+1;\\
	\BZ\oplus \BZ/2	&	n=2m.
\end{array}\right.
\end{equation*}
Here the first factor is given by  the homomorphism  
\begin{equation*}
\begin{aligned}
	\xymatrix{
		\mathrm{ht}:\wt{G}(\breve{F}_0)\ar[r]&\BZ&g\ar@{|->}[r]&\val(c(g)).
	}
\end{aligned}
\end{equation*}
In particular, the action of  $\sigma$ on $\pi_1(\wt{G})_I$ is trivial. Recall that $\breve{F}_0$ is the completion of the maximal unramified extension of $F_0$, with $\sigma$ denoting the lifting of Frobenius. 

Now let $I$ be a non-empty subset of  $I=\{t_1,\ldots,t_k\}\subseteq \{2m,\ldots,2,0\}$ and consider the quasi-parahoric subgroup 
\begin{equation*}
P_I:=\mathrm{Stab}_{\wt{G}}(\Lambda_I)\subset \wt{G}(F_0).
\end{equation*}
We define subgroups $P^\circ_I:=P_I\cap \Ker\kappa_{\wt G}$. We have the following result:
\begin{proposition}[{\cite{PR}}]\label{prop:similitude-parahoric}
\begin{altenumerate}
	\item The groups $P^\circ_I$ are parahoric subgroups of $\wt{G}$.
	\item When $n=2m$ is even, then $P_I=P^\circ_I$ if and only if $n\in I$. Otherwise,  $P_I/P^\circ_I\simeq \{\pm 1\}$.
	\item When $n=2m+1$ is odd, then $P_I=P^\circ_I$ for any $I$.
	\item When $n=2m$ and $n-2 \in I$, then $P_I^\circ=P_{I\cup\{n\}}^\circ$.\qed
\end{altenumerate}
\end{proposition}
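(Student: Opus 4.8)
The plan is to deduce the whole statement from its analogue for the unitary group, Proposition~\ref{prop:parahoric}, by exploiting the exact sequence of algebraic $F_0$-groups
\[
1\longrightarrow \U(V,\phi)\stackrel{j}{\longrightarrow}\wt{G}=\GU(V,\phi)\stackrel{c}{\longrightarrow}\BG_m\longrightarrow 1
\]
together with functoriality of the Kottwitz homomorphism and one elementary computation with the similitude factor. Fix a basis of $V$ adapted to the polarized chain \eqref{equ:lattice-chain}, so that all $\Lambda_i$ and a maximal torus $\wt{T}\subset\wt{G}$ are in standard position. The first observation is that $\mathrm{ht}$ vanishes on $P_I$: if $g\in P_I$ then $g$ preserves each free $O_F$-module $\Lambda_i$, $i\in I$, so $\det_F(g)\in O_F^\times$; viewing $\langle\cdot,\cdot\rangle$ as a nondegenerate alternating $F_0$-bilinear form on the $2n$-dimensional $F_0$-space $V$, with Gram matrix $J$, and taking Pfaffians in the relation $g^{\mathrm t}Jg=c(g)J$ gives $\det_{F_0}(g)=c(g)^n$, hence $c(g)^n=\Nm_{F/F_0}(\det_F g)\in\Nm_{F/F_0}(O_F^\times)\subseteq O_{F_0}^\times$, and so $\mathrm{ht}(g)=\val(c(g))=0$. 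Consequently $\kappa_{\wt G}(P_I)$ lies in the kernel of the projection $\mathrm{ht}_\ast\colon\pi_1(\wt G)_I\to\BZ$ induced by $\mathrm{ht}$, which is $0$ for $n$ odd and the torsion subgroup $\BZ/2$ for $n$ even.

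This already gives (iii): for $n$ odd $\pi_1(\wt G)_I=\BZ$, so $\kappa_{\wt G}(P_I)=0$ and $P_I=P_I^\circ$. For (i) one brings in Bruhat--Tits theory: the periodic lattice chain $\Lambda_I$ corresponds, by the standard dictionary (cf.\ \cite[Ch.~3]{RZ}), to a facet $\mathfrak{f}_I$ of $\mathcal{B}(\wt G,F_0)$, and an element of $\wt G(F_0)$ of height $0$ stabilizes $\mathfrak{f}_I$ if and only if it fixes each $\Lambda_i$, $i\in I$, on the nose (these lattices being distinguished by their types $t(\Lambda_i)=i$), i.e.\ if and only if it lies in $P_I$; since $\ker\kappa_{\wt G}\subseteq\{\mathrm{ht}=0\}$ this yields $P_I^\circ=\mathrm{Stab}_{\wt G(F_0)}(\mathfrak{f}_I)\cap\ker\kappa_{\wt G}$, which by the Haines--Rapoport characterization of parahoric subgroups (see also \cite{PR}) is exactly the parahoric attached to $\mathfrak{f}_I$.

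It remains to treat (ii) and (iv), for which $n$ is even. For $u\in O_{F_0}^\times$ let $t_u\in\wt T(F_0)$ be the diagonal element with entries $u$ on the first $n/2$ coordinates and $1$ on the remaining ones; then $c(t_u)=u$, all coordinates of $t_u$ lie in $O_F^\times$, so $t_u$ stabilizes every lattice of the maximal chain (in particular all $\Lambda_i$ and, when it exists, $\Lambda_n$), and $\kappa_{\wt G}(t_u)=0$ since $t_u$ is an $O_F$-integral point of $\wt T$. Hence $c$ maps $P_I^\circ$ onto $O_{F_0}^\times$, and by the first paragraph every $g\in P_I$ factors as $g=g_0\,t_{c(g)}$ with $g_0\in P_I\cap\U(V,\phi)(F_0)=K_n^I$. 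Functoriality of $\kappa$ along the displayed sequence shows the induced map $j_\ast\colon\pi_1(\U(V,\phi))_I\to\pi_1(\wt G)_I$ is injective with image $\ker(\mathrm{ht}_\ast)$ (the torsion subgroup $\BZ/2$); consequently $\ker\kappa_{\wt G}\cap\U(V,\phi)(F_0)=\ker\kappa_{\U(V,\phi)}$, so $P_J^\circ\cap\U(V,\phi)(F_0)=K_n^{J,\circ}$ for any even-type subset $J$, and for $g=g_0 t_{c(g)}\in P_I$ one has $\kappa_{\wt G}(g)=j_\ast(\kappa_{\U(V,\phi)}(g_0))$. Thus $\kappa_{\wt G}$ is nontrivial on $P_I$ if and only if $K_n^I\neq K_n^{I,\circ}$, which by Proposition~\ref{prop:parahoric}(iii) holds precisely when $n\notin I$; this is (ii). For (iv), with $n-2\in I$, the inclusion $P_{I\cup\{n\}}^\circ\subseteq P_I^\circ$ is clear, while $P_I^\circ\cap\U(V,\phi)(F_0)=K_n^{I,\circ}=K_n^{I\cup\{n\},\circ}=P_{I\cup\{n\}}^\circ\cap\U(V,\phi)(F_0)$ by Proposition~\ref{prop:parahoric}(iv) and $c(P_I^\circ)=O_{F_0}^\times=c(P_{I\cup\{n\}}^\circ)$ because the $t_u$ lie in $P_{I\cup\{n\}}^\circ$; the short five lemma applied to the map of short exact sequences $1\to(\,\cdot\,)\cap\U(V,\phi)(F_0)\to(\,\cdot\,)\xrightarrow{c}O_{F_0}^\times\to1$ then forces $P_I^\circ=P_{I\cup\{n\}}^\circ$.

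The ingredients used beyond Proposition~\ref{prop:parahoric} and elementary arithmetic of the torus $\wt T$ are purely from Bruhat--Tits theory for the ramified unitary similitude group: the computation of $\pi_1(\wt G)_I$ together with the triviality of the $\sigma$-action on it (entering the first paragraph and the description of $j_\ast$), and the dictionary identifying $P_I$ with a facet stabilizer in $\mathcal{B}(\wt G,F_0)$ (entering (i)). I would take these from \cite{PR} (and \cite{RZ}); this Bruhat--Tits input is the one genuinely non-formal part, everything else being a bootstrap from the unitary group case.
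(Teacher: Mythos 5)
The paper offers no proof of this proposition at all: it is stated with a citation to \cite{PR} and closed with \qed, so there is nothing internal to compare against. Your argument is a correct, essentially self-contained bootstrap from the unitary-group statement (Proposition \ref{prop:parahoric}) via the exact sequence $1\to \U(V,\phi)\to\wt G\xrightarrow{c}\BG_m\to 1$, and it checks out: the Pfaffian computation correctly gives $\det_{F_0}(g)=c(g)^n=\Nm_{F/F_0}(\det_F g)\in O_{F_0}^\times$, hence $\mathrm{ht}|_{P_I}=0$, which settles (iii) immediately; the section $u\mapsto t_u$ does exist in $\wt T(F_0)$ (one needs $u=\bar u$, which holds as $u\in O_{F_0}^\times$), stabilizes the whole standard chain, and reduces (ii) and (iv) to Proposition \ref{prop:parahoric}(iii),(iv) exactly as you say, using that $j_*\colon\pi_1(\U)_I\to\pi_1(\wt G)_I$ is injective onto the torsion part. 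Two small remarks. First, your justification that $\kappa_{\wt G}(t_u)=0$ ("$t_u$ is an $O_F$-integral point of $\wt T$") is a priori insufficient for a general torus, since the kernel of the Kottwitz map is the $O$-points of the \emph{connected} N\'eron model, which can be strictly smaller than the maximal bounded subgroup; it is valid here only because $\wt T\simeq(\Res_{F/F_0}\BG_m)^m\times\BG_m$ is an induced torus (equivalently, because $t_u=\nu(u)$ for a cocharacter $\nu\in X_*(\wt T)^{\Gal}$ and $\val(u)=0$) — this deserves a sentence. Second, part (i) is where the genuine Bruhat--Tits content lives (the identification of $P_I$ with the full fixer of the facet $\mathfrak f_I$ and the Haines--Rapoport description of parahorics), and you are right to defer it to \cite{PR}; given that the paper itself only cites that reference, this is an acceptable division of labor rather than a gap.
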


We refer the reader to \cite[\S 2.4]{PR} for the definition of the minuscule cocharacter $\wt\mu=\wt\mu_{1,n-1}$ and the admissible set $\mathrm{Adm}_{P_I^\circ}(\wt\mu)$. 
We denote by $\breve{P}_I$ and $\breve{P}_I^\circ$ the base change of $P_I$ and $P_I^\circ$ to $\wt{G}(\breve{F}_0)$, resp.
For any $\wt b\in \wt{G}(\breve{F}_0)$, we have the generalized ADLV:
\begin{equation*}
X_{P_I^\circ}(\wt{G},\wt\mu,\wt b)=\Big\{
g\in \wt{G}(\breve{F_0})/\breve{P}_I^\circ\mid g^{-1}b\sigma(g)\in \bigcup\nolimits_{w\in\mathrm{Adm}(\{\wt\mu\})}\breve{P}_I^\circ w\breve{P}_I^\circ
\Big\}.
\end{equation*}
Similarly we define $X_{P_I}(\wt{G},\wt\mu,\wt b)$.
This is a perfect subscheme of the partial (ramified) Witt vector flag variety attached to $\breve{P}_I^\circ$,  locally of finite type over $\BF$, in the sense of \cite{Zhu-17} or \cite{Bhatt-Scholze}. 

We define the relative unitary similitude RZ space\footnote{The notation $\wt{\CN}_{n,\ep}^{[t]}$ for the unitary similitude group is used exclusively in this subsection and will not cause confusion with the $\wt{\CN}_n^{[t]}$ appearing in later sections.}  $\wt{\CN}_{n,\ep}^{[t]}$ in the sense of \cite{RZ}, which parameterizes isomorphism classes of quadruples $(X,\iota_X,\lambda_X,\rho_X)$ similar to \S \ref{sec:RZ-defn}, with the following changes:
\begin{altitemize}
\item The polarization $\lambda_X$ is given up to a scalar in $O_{F_0}^\times$;
\item The framing map $\rho_X$ is any $O_F$-linear quasi-isogeny  which preserves the polarizations up to a scalar in $O_{F_0}^\times$.
\end{altitemize}

Note that   an isomorphism between two quadruples is now subjected to the condition that the pull-back of one polarization coincides with the other polarization up to an $O_{F_0}^\times$-scalar.

By \cite[Rem. 3.6]{RSZ1}, the height zero part of the relative unitary similitude RZ space $(\wt{\CN}_{n,\ep}^{[t]})_0$ is our RZ space $\CN_{n,\ep}^{[t]}$ (which is open and closed in $\wt{\CN}_{n,\ep}^{[t]}$).

\begin{proposition}\label{prop:ADLV-RZ}
Let  $[\wt b]\in B(\wt{G},\{\wt\mu\})$ be the $\sigma$-conjugacy class in $\wt{G}(\breve{F}_0)$ defined by the isocrystal $N$ of the framing object, comp. \cite[\S 1]{RZ}. Let   $\wt b\in [\wt b]$ be a representative.
Let $\BM:=M(\BX_{n, \ep}^{[t]})\subset N$ be the Dieudonn\'e module of the framing object $\BX_{n, \ep}^{[t]}$. Then the map
\begin{align}
	\Phi:X_{P_{[t]}}(\wt{G},\wt\mu,\wt b)\to{}&(\wt{\CN}_{n,\ep}^{[t]})_{\rm red}^{\rm perf},\notag\\
	g\mapsto{}&g\BM.\notag
\end{align}
defines an isomorphism  of perfect schemes between the ADLV and the perfection of the underlying reduced scheme of the relative unitary similitude RZ space. 
\end{proposition}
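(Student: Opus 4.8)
The plan is to follow the now-standard group-theoretic description of basic Rapoport--Zink spaces, as in Rapoport--Viehmann and its ramified incarnation (Pappas--Rapoport, Zhu, Bhatt--Scholze, Hamacher--Kim, etc.), and only to check that the present situation of strict $O_{F_0}$-modules with $O_F$-action and polarization chains of type $\Lambda_{[t]}$ fits into that framework. First I would recall that by the theory of \S\ref{sec:thesetting} (via Ahsendorf--Cheng--Zink and relative Dieudonn\'e theory over perfect bases, in the spirit of \cite{M-Th}) the category of objects parametrized by $(\wt{\CN}_{n,\ep}^{[t]})_{\rm red}$ over a perfect $\BF$-algebra $R$ is equivalent to the category of polarized chains of $W_{O_{F_0}}(R)$-Dieudonn\'e lattices inside the rational relative isocrystal $N\otimes_{\breve F_0} W_{O_{F_0}}(R)[1/\pi_0]$ which are isogenous (respecting all structures up to $O_{F_0}^\times$-scalar, and of the prescribed relative position) to the base change of $\BM$; here the chain condition of type $\Lambda_{[t]}$ reproduces exactly the lattice-chain data defining $P_{[t]}$ as a stabilizer, and the Kottwitz/signature condition is precisely encoded by $\wt\mu = \wt\mu_{1,n-1}$ (the strengthened spin condition, over geometric points, being equivalent to the admissibility condition, cf. Lemma \ref{lem:sspin-dieudonne module} and the discussion of \eqref{equ:RZ-geo-pts}).

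The key steps, in order, are: (1) set up the Dieudonn\'e-module dictionary on a perfect affine base $\Spec R$, identifying a quadruple $(X,\iota_X,\lambda_X,\rho_X)$ with a lattice chain of type $\Lambda_{[t]}$ in the isocrystal, the framing $\rho_X$ being encoded by the position of that chain relative to $g\mapsto g\BM$; (2) verify that the quasi-isogeny condition on $\rho_X$ — $O_F$-linear, preserving polarization up to $O_{F_0}^\times$-scalar, arbitrary height — translates to $g\in \wt G(\breve F_0)/\breve P_{[t]}$ (the similitude group being exactly the automorphisms of the rational chain preserving the form up to scalar), and that two quadruples are isomorphic iff the corresponding $g$'s differ by right multiplication by $\breve P_{[t]}$, i.e.\ the fiber functor is representable by $g\mapsto g\BM$; (3) translate the local-model/signature condition: the relative position $\mathrm{inv}(\BM, g\BM)$, after accounting for $\uF$, lies in $\mathrm{Adm}(\{\wt\mu\})$ iff the moduli conditions (Kottwitz + wedge + strengthened spin) hold, using Lemma \ref{lem:sspin-dieudonne module} and \eqref{equ:RZ-geo-pts}--\eqref{equ:RZ-geo-pts-pi-modular}; (4) conclude that $\Phi$ is a bijection on $R$-points functorially in perfect $R$, hence an isomorphism of perfect schemes (both sides being perfect schemes locally of finite type — the ADLV by \cite{Zhu-17,Bhatt-Scholze}, the reduced RZ space by \cite{RZ} plus perfection), since a functorial bijection on perfect rings between perfect schemes locally of finite type is an isomorphism.

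The main obstacle I expect is step (3): matching the moduli-theoretic signature/spin conditions with membership in $\mathrm{Adm}_{P_{[t]}^\circ}(\{\wt\mu\})$ at the level of the quasi-parahoric $P_{[t]}$ (not the parahoric $P_{[t]}^\circ$), and in particular handling the subtle point that $\mathrm{Adm}(\{\wt\mu\})$ must be taken with respect to the full stabilizer. Here one must be careful that the $O_F$-action and the chain of type $\Lambda_{[t]}$ single out the correct facet, and that the admissible set for $\wt\mu_{1,n-1}$ is exactly cut out by the wedge and strengthened spin conditions over $\BF$-points — this is where one invokes \cite{Luo24} (and \cite{PR,S}) rather than reproving it. The $\pi$-modular case $t=n$ needs separate bookkeeping because there is no $\pi$-modular vertex lattice and the local model has no worst point (so \eqref{equ:RZ-geo-pts-pi-modular} replaces \eqref{equ:RZ-geo-pts}), but the group-theoretic side is unchanged. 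A secondary (but routine) point is checking that passing to the height-zero component $(\wt{\CN}_{n,\ep}^{[t]})_0$ on the RZ side corresponds to the connected-component decomposition of the ADLV via $\kappa_{\wt G}$, which will be used in the subsequent $\pi_0$ computation but is not needed for the isomorphism statement itself.
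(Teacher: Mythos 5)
Your plan is correct and is essentially the argument the paper has in mind: the paper's proof consists of invoking the lattice description of $\wt{\CN}_{n,\ep}^{[t]}(\BF)$ analogous to \eqref{equ:RZ-geo-pts-1} and deferring the details to \cite[Prop. 3.7]{Wu16} and \cite[\S 3.2]{Zhu-17}, which is precisely the Dieudonn\'e-dictionary-plus-admissibility translation you spell out in steps (1)--(4). Your flagged subtlety about working with the full stabilizer $P_{[t]}$ rather than $P_{[t]}^\circ$ is apt but is exactly what the similitude (rather than unitary) RZ space is designed to absorb, so no new argument is needed there.
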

\begin{proof}
This follows directly from a lattice description of $\wt{\CN}_{n,\ep}^{[t]}(\BF)$ similar to \eqref{equ:RZ-geo-pts-1}, see \cite[Prop. 3.7]{Wu16} or \cite[\S 3.2]{Zhu-17} for more details.
\end{proof}

The key ingredient is now the following theorem of He-Zhou. Recall that there is an identification of the set of connected components of the affine partial  flag variety associated to the parahoric $P^\circ$,
\begin{equation}
\pi_0(\wt{G}(\breve{F}_0)/P^\circ)=\pi_1(\wt{G})_I .
\end{equation}In \cite[\S 6]{He-Zhou}, He-Zhou  define an element $c(\wt b, \wt\mu)\in \pi_1(\wt{G})_I$, well defined up to the action of the subgroup $\pi_1(\wt{G})_I^\sigma$, such that  the image of $X_{P^\circ}(\wt{G},\wt\mu,\wt b)$ equals  $c(\wt b, \wt\mu)+ (\pi_1(\wt{G})_I)^\sigma$.

\begin{theorem}[\cite{He-Zhou}, Thm. 0.1]\label{thm:He-Zhou}
The intersection of $X_{P_I^\circ}(\wt{G},\wt\mu,\wt b)$ with the connected component of $\wt{G}(\breve{F}_0)/P^\circ$ corresponding to an element in $c(\wt b, \wt\mu)+ (\pi_1(\wt{G})_I)^\sigma$ is connected. In particular, after the choice of an element in the coset $c(\wt b, \wt\mu)+ (\pi_1(\wt{G})_I)^\sigma$, there is an identification 
\begin{equation*}\pushQED{\qed} 
	\pi_0\left(X_{P^\circ}(\wt{G},\wt\mu,\wt b)\right)=(\pi_1(\wt{G})_I)^\sigma.\qedhere
	\popQED
\end{equation*}
\end{theorem}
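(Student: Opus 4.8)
The statement breaks into two parts. The formal part is the containment of the image of $X_{P_I^\circ}(\wt G,\wt\mu,\wt b)$ under the projection to $\pi_0(\wt G(\breve F_0)/\breve P_I^\circ)=\pi_1(\wt G)_I$ in a single coset of $(\pi_1(\wt G)_I)^\sigma$. This is where $c(\wt b,\wt\mu)$ is defined: the Kottwitz homomorphism $\kappa_{\wt G}:\wt G(\breve F_0)\to\pi_1(\wt G)_I$ is trivial on $\breve P_I^\circ$ and sends every double coset $\breve P_I^\circ w\breve P_I^\circ$ with $w\in\mathrm{Adm}(\{\wt\mu\})$ to the common image $\wt\mu^\natural$ of the admissible elements in $\pi_1(\wt G)_I$ (all of $\mathrm{Adm}(\{\wt\mu\})$ lies in one $\Omega$-coset of the Iwahori--Weyl group); applying $\kappa_{\wt G}$ to the defining condition $g^{-1}\wt b\sigma(g)\in\bigcup_w\breve P_I^\circ w\breve P_I^\circ$ gives $(1-\sigma)\kappa_{\wt G}(g)=\kappa_{\wt G}(\wt b)-\wt\mu^\natural$, a fixed element, so $\kappa_{\wt G}(g)$ runs over one coset of $\ker(1-\sigma)=(\pi_1(\wt G)_I)^\sigma$. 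Granting the connectedness statement, the identification $\pi_0(X_{P_I^\circ}(\wt G,\wt\mu,\wt b))=(\pi_1(\wt G)_I)^\sigma$ then follows by making the $\sigma$-centralizer $J_{\wt b}$ of $\wt b$ act on $X_{P_I^\circ}$ (an inner form of $\wt G$, here a ramified unitary similitude group over $F_0$) and checking via its own Kottwitz map $J_{\wt b}(F_0)\to(\pi_1(\wt G)_I)^\sigma$ that $J_{\wt b}(F_0)$ permutes the components of $X_{P_I^\circ}$ simply transitively over the coset $c(\wt b,\wt\mu)+(\pi_1(\wt G)_I)^\sigma$.

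The substance is the connectedness of $X_{P_I^\circ}(\wt G,\wt\mu,\wt b)$ inside a single connected component of $\wt G(\breve F_0)/\breve P_I^\circ$. The plan is: first reduce to Iwahori level via the surjection $X_{\breve I}(\wt b)\twoheadrightarrow X_{\breve P_I^\circ}(\wt b)$, which is compatible with the identification of $\pi_0$ of both flag varieties with $\pi_1(\wt G)_I$, so connectedness upstairs in each component implies it downstairs. At Iwahori level one uses the decomposition $X_{\breve I}(\wt b)=\bigsqcup_{w\in\mathrm{Adm}(\{\wt\mu\})}X_w(\wt b)$ and its Bruhat-type closure relations, together with the Deligne--Lusztig reduction of G\"ortz--He--Nie: for $w$ of minimal length in its $\sigma$-conjugacy class in the Iwahori--Weyl group, $X_w(\wt b)$ is empty unless the lift $\dot w$ represents $[\wt b]$, in which case it is an iterated fibration over a classical Deligne--Lusztig variety in a finite group of Lie type, with fibers isomorphic to affine spaces and tori, whose set of connected components is governed by the image of a norm map. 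One then invokes the Hodge--Newton decomposition: if $(\wt b,\wt\mu)$ is Hodge--Newton decomposable, $X_{\breve P_I^\circ}(\wt b)$ is isomorphic to the corresponding variety for a proper Levi, giving an induction on semisimple rank; in the Hodge--Newton indecomposable case (in particular the basic case relevant to supersingular framing objects) one argues directly from the structure of $\mathrm{Adm}(\{\wt\mu\})$ for the minuscule $\wt\mu=\wt\mu_{1,n-1}$, whose maximal elements are the translations $t^{x(\wt\mu)}$ ($x\in W/W_{\wt\mu}$), that the closures of the top strata $X_{t^{x(\wt\mu)}}(\wt b)$ exhaust $X_{\breve I}(\wt b)$ and, after restriction to one component, overlap enough to force connectedness.

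The main obstacle is this last step: identifying $\pi_0$ of the classical Deligne--Lusztig varieties at the bottom of the reduction and checking that gluing them along the closure relations of the Iwahori-level stratification over all of $\mathrm{Adm}(\{\wt\mu\})$ never breaks a component. This is the technical heart of \cite{He-Zhou}, and for basic $\wt b$ it rests in turn on global inputs on isogeny classes in the reduction of Shimura varieties. We therefore content ourselves with the reference.
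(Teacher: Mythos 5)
The paper does not actually prove this statement: it is imported wholesale from \cite[Thm.~0.1]{He-Zhou} (the tombstone is placed inside the statement itself), so your decision to ``content yourself with the reference'' for the connectedness is exactly what the paper does, and there is no internal proof to compare against. Your account of the formal part is accurate and is indeed how $c(\wt b,\wt\mu)$ is set up in \cite[\S 6]{He-Zhou}: the Kottwitz map kills $\breve P_I^\circ$, all elements of $\mathrm{Adm}(\{\wt\mu\})$ have a common image $\wt\mu^\natural$ in $\pi_1(\wt G)_I$ (they lie in a single coset of $W_{\mathrm{aff}}$ in the Iwahori--Weyl group --- ``$W_{\mathrm{aff}}$-coset'', not ``$\Omega$-coset''), and applying $\kappa_{\wt G}$ to the defining condition pins $\kappa_{\wt G}(g)$ to a single coset of $\ker(1-\sigma)=(\pi_1(\wt G)_I)^\sigma$; surjectivity onto that coset and the transitive action of $J_{\wt b}(F_0)$ on $\pi_0$ are then part of the content of the theorem. (In the present application $\sigma$ acts trivially on $\pi_1(\wt G)_I$, so the relevant coset is all of $\pi_1(\wt G)_I$, which is how Proposition \ref{prop:ADLV-comp} uses it.)

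One correction to your sketch of the hard part: He--Zhou's proof of connectedness is purely local. It proceeds by reduction to Iwahori level, minimal-length elements and the Deligne--Lusztig reduction method, the Hodge--Newton decomposition to reduce to HN-irreducible (in particular basic) pairs, and an explicit combinatorial ``connecting'' argument via one-dimensional affine Deligne--Lusztig varieties attached to affine reflections; it does \emph{not} rest on global inputs about isogeny classes in reductions of Shimura varieties. The logical dependence runs the other way: the connectedness theorem is what He--Zhou then use to deduce statements about isogeny classes and the Langlands--Rapoport conjecture. Since you (like the paper) only invoke the statement as a black box, this misattribution does not invalidate anything, but it should be fixed if the sketch is retained.
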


The application of this general result in our specific context leads to the following result.
\begin{proposition}\label{prop:ADLV-comp}
For any non-empty subset $I\subseteq\{0,2,\cdots,2m\}$, the number of connected components of $X_{P_I}(\wt{G},\wt\mu,\wt b)$ is case by case:
\begin{itemize}
\item When $n=2m+1$ is odd, then  $\pi_0\left(X_{P_I}(\wt{G},\wt\mu,\wt b)\right)=\pi_1(\wt{G})_I=\BZ$, defined by $\mathrm{ht}(g)$;
\item When $n=2m$ is even and $n\in I$, then $\pi_0\left(X_{P_I}(\wt{G},\wt\mu,\wt b)\right)=\pi_1(\wt{G})_I=\BZ\oplus\BZ/2$, where the first factor is defined defined by $\mathrm{ht}(g)$; 
\item When $n=2m$ is even and $n\notin I$,  then $\pi_0\left(X_{P_I}(\wt{G},\wt\mu,\wt b)\right)=\BZ$, defined by $\mathrm{ht}(g)$.
\end{itemize}
Hence $X^0_{P_I}(\wt{G},\wt\mu,\wt b)$  has two connected components if and only if $n$ is even and $n\in I$, and is connected in all other cases. Here  $X^0_{P_I}(\wt{G},\wt\mu,\wt b)$ denotes  the height $0$ part of $X_{P_I}(\wt{G},\wt\mu,\wt b)$.
\end{proposition}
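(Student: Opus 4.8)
The plan is to apply the He--Zhou theorem (Theorem \ref{thm:He-Zhou}) directly, the only work being the identification of $\pi_1(\wt G)_I$ and the Kottwitz invariant $c(\wt b,\wt\mu)$ in each case. First I would recall from Proposition \ref{prop:similitude-parahoric} the structure of the quasi-parahoric $P_I$ and its parahoric subgroup $P_I^\circ$: when $n$ is odd, $P_I = P_I^\circ$ and $\pi_1(\wt G)_I = \BZ$; when $n$ is even and $n\in I$, again $P_I = P_I^\circ$ but now $\pi_1(\wt G)_I = \BZ\oplus\BZ/2$; and when $n$ is even with $n\notin I$, $P_I^\circ$ has index two in $P_I$. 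In all cases $\sigma$ acts trivially on $\pi_1(\wt G)_I$ (as noted in the text, since the similitude factor detects the first summand and the group is a torsion-by-free abelian group with trivial Galois action on $I$-coinvariants), so $(\pi_1(\wt G)_I)^\sigma = \pi_1(\wt G)_I$. By Theorem \ref{thm:He-Zhou}, $\pi_0(X_{P_I^\circ}(\wt G,\wt\mu,\wt b)) = (\pi_1(\wt G)_I)^\sigma = \pi_1(\wt G)_I$.

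Next I would pass from $P_I^\circ$ to $P_I$. When $n$ is odd or ($n$ even and $n\in I$) there is nothing to do. When $n$ is even and $n\notin I$, the covering $\wt G(\breve F_0)/\breve P_I^\circ \to \wt G(\breve F_0)/\breve P_I$ is a $\BZ/2$-quotient, and $X_{P_I}(\wt G,\wt\mu,\wt b)$ is the image of $X_{P_I^\circ}(\wt G,\wt\mu,\wt b)$. Since $\pi_1(\wt G)_I \simeq \BZ\oplus\BZ/2$ with the $\BZ$-factor given by $\mathrm{ht}$ and the $\BZ/2$-factor killed by enlarging $P_I^\circ$ to $P_I$ (this is exactly Proposition \ref{prop:similitude-parahoric}(ii), together with Proposition \ref{prop:similitude-parahoric}(iv) which shows $P_I^\circ = P_{I\cup\{n\}}^\circ$ so the relevant $\BZ/2$ is the one quotiented out), the set of connected components of $X_{P_I}$ is $\BZ$, detected by $\mathrm{ht}(g) = \val(c(g))$. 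This gives the three bulleted assertions. The final sentence then follows by intersecting with the height-zero locus: cutting $\pi_0$ down to the fiber over $0\in\BZ$ under $\mathrm{ht}$ leaves a single point except in the case $n$ even, $n\in I$, where the residual $\BZ/2$ survives and gives two components; I should note that the height-zero locus is nonempty, which holds because $[\wt b]\in B(\wt G,\{\wt\mu\})$ and the relevant RZ space $\CN_{n,\ep}^{[t]} = (\wt\CN_{n,\ep}^{[t]})_0$ is nonempty by construction in \S\ref{sec:framing}--\S\ref{sec:RZ-defn}.

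The main obstacle I anticipate is bookkeeping around the Kottwitz point $c(\wt b,\wt\mu)$ and the ambiguity ``up to $(\pi_1(\wt G)_I)^\sigma$'' in Theorem \ref{thm:He-Zhou}: since $\sigma$ acts trivially here, that ambiguity is the whole group $\pi_1(\wt G)_I$, so the statement of He--Zhou only pins down $\pi_0$ as a torsor under $\pi_1(\wt G)_I$, not canonically as $\pi_1(\wt G)_I$ itself. To get the clean statements with the $\BZ$-factor genuinely identified with $\mathrm{ht}$, I would observe that $\mathrm{ht}$ is a well-defined locally constant function on the whole ADLV (it is $\val\circ c$ of the framing comparison), so it splits off a canonical $\BZ$ and the He--Zhou torsor structure is rigidified along it; only the $\BZ/2$-factor (when present) remains a torsor, but a $\BZ/2$-torsor that is nonempty is just a two-element set, which suffices. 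A minor secondary point is checking that $X^0_{P_I}$ really corresponds under $\Phi$ (Proposition \ref{prop:ADLV-RZ}) to the height-zero RZ space $\CN_{n,\ep}^{[t]}$, which is \cite[Rem. 3.6]{RSZ1} as already cited; no new argument is needed there.
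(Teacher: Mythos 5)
Your proposal is correct and follows essentially the same route as the paper: Theorem \ref{thm:He-Zhou} handles the cases $P_I=P_I^\circ$ directly, and for $n$ even with $n\notin I$ both arguments pass to the index-two parahoric $P_I^\circ$ and observe that the two-to-one map of affine flag varieties (equivalently, the paper's identification of $\wt{G}(\breve{F}_0)/\breve P_I^\circ$ with two copies of $\wt{G}(\breve{F}_0)/\breve P_I$) exactly kills the $\BZ/2$-factor of $\pi_1(\wt G)_I$ detected by the Kottwitz map, leaving $\pi_0=\BZ$ via $\mathrm{ht}$. One cosmetic remark: your parenthetical appeal to Proposition \ref{prop:similitude-parahoric}(iv) is unnecessary and only applies when $n-2\in I$; Proposition \ref{prop:similitude-parahoric}(ii) alone suffices, as in the paper's proof.
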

\begin{proof}
When $P_I=P_I^\circ$, the assertion follows from Theorem \ref{thm:He-Zhou}, which is the case unless $n=2m$ is even and $n\notin I$. Therefore, we only need to consider the latter case.
In this case, we have $\pi_1(\wt{G})_I=\BZ\oplus\BZ/2$, with trivial action by $\sigma$.
For any $g\in P_I$, we have $\mathrm{ht}(g)=0$ and $P_I^\circ$ is the subgroup of $P_I$ where the second factor of $\kappa_{\wt{G}}$ vanishes, comp. \cite[discussion below (1.9)]{PR}. 

On the other hand,  the affine flag variety $\wt{G}(\breve{F}_0)/P_I^\circ$ is isomorphic to the disjoint sum of two copies of $\wt{G}(\breve{F}_0)/P_I$, distinguished by the Kottwitz map, comp. \cite[\S 3.2]{PR} or \cite[\S 4]{PR08} (in the last paper,  the positive characteristic affine flag varieties is considered but the proof applies also to the Witt vector affine flag varieties). By Theorem \ref{thm:He-Zhou}, the intersection of the ADLV with each connected component of the affine flag varieties is non-empty, we conclude that $X_{P_I^\circ}(\wt{G},\wt\mu,\wt b)$ is the disjoint union of two copies of $X_{P_I}(\wt{G},\wt\mu,\wt b)$. The assertion  follows. 
\end{proof}
\begin{corollary}\label{thm:RZ-connected}
The RZ space $\CN_{n,\ep}^{[t]}$ has two connected components when $n=2m$ is even and $t=n$, and is connected in all other cases.
\end{corollary}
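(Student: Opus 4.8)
The plan is to deduce the statement from the group-theoretic computation of $\pi_0$ of the relevant affine Deligne--Lusztig variety carried out in Proposition~\ref{prop:ADLV-comp}, transported to the RZ space via Proposition~\ref{prop:ADLV-RZ}. First I would invoke Proposition~\ref{prop:ADLV-RZ} with $I=\{t\}$ to obtain the isomorphism of perfect schemes $\Phi\colon X_{P_{[t]}}(\wt{G},\wt\mu,\wt b)\overset{\sim}{\longrightarrow}(\wt{\CN}_{n,\ep}^{[t]})_{\rm red}^{\rm perf}$, $g\mapsto g\BM$. The key structural observation is that $\Phi$ is compatible with the grading by height: on the source this grading is given by $\mathrm{ht}(g)=\val(c(g))$, on the target by the height of the framing quasi-isogeny $\rho_X$, and $g\BM$ is a lattice of relative height $\mathrm{ht}(g)$ with respect to $\BM$, so the two match. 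Restricting to the height-zero parts and using \cite[Rem. 3.6]{RSZ1} (which identifies $(\wt{\CN}_{n,\ep}^{[t]})_0$ with $\CN_{n,\ep}^{[t]}$, open and closed in $\wt{\CN}_{n,\ep}^{[t]}$) then yields an isomorphism $X^0_{P_{[t]}}(\wt{G},\wt\mu,\wt b)\overset{\sim}{\longrightarrow}(\CN_{n,\ep}^{[t]})_{\rm red}^{\rm perf}$.

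Second, I would observe that the set of connected components is a purely topological invariant, unchanged by each of the operations separating $\CN_{n,\ep}^{[t]}$ from that last ADLV: passing from a formal scheme locally of finite type over $\Spf O_{\breve F}$ to its underlying reduced subscheme does not alter the underlying topological space, and taking the perfection is a universal homeomorphism. Hence $\pi_0\bigl(\CN_{n,\ep}^{[t]}\bigr)=\pi_0\bigl((\CN_{n,\ep}^{[t]})_{\rm red}^{\rm perf}\bigr)=\pi_0\bigl(X^0_{P_{[t]}}(\wt{G},\wt\mu,\wt b)\bigr)$.

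Finally, I would apply Proposition~\ref{prop:ADLV-comp} to the singleton $I=\{t\}$. Since $t$ is even with $0\leq t\leq n$, the condition ``$n$ is even and $n\in I$'' is equivalent to ``$n$ is even and $t=n$''; the proposition then gives that $X^0_{P_{[t]}}(\wt{G},\wt\mu,\wt b)$ has exactly two connected components in that case and is connected in all others, so by the previous paragraph the same holds for $\CN_{n,\ep}^{[t]}$. (In the case $n$ even, $t=n$, one automatically has $\ep=1$ by Theorem~\ref{thm:framing-obj}(iii), so no ambiguity in $\ep$ arises, and nonemptiness is built into the ADLV count.) I do not anticipate a genuine obstacle here: the only points needing a word of justification are the compatibility of $\Phi$ with the height grading and the invariance of $\pi_0$ under reduction and perfection, both of which are routine; all the real content is already packaged in Propositions~\ref{prop:ADLV-RZ} and~\ref{prop:ADLV-comp}, the latter resting on the theorem of He--Zhou.
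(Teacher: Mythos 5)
Your proposal is correct and follows essentially the same route as the paper: identify $\CN_{n,\ep}^{[t]}$ with the height-zero part of the similitude RZ space via \cite[Rem. 3.6]{RSZ1}, transport $\pi_0$ through Proposition \ref{prop:ADLV-RZ} (reduction and perfection do not change $\pi_0$), and conclude with Proposition \ref{prop:ADLV-comp}. The only slight imprecision is the height normalization — the paper records $\tfrac{1}{n}\mathrm{ht}(\rho_X)=\mathrm{ht}(c(\Phi^{-1}(X)))$ rather than an equality of heights — but this is immaterial since vanishing of one is equivalent to vanishing of the other.
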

\begin{proof}
By \cite[Rem. 3.6]{RSZ1}, the RZ space $\CN_{n,\ep}^{[t]}$ is the height $0$ part of the unitary similitude RZ space $\wt{\CN}_{n,\ep}^{[t]}$. The assertion follows from Proposition \ref{prop:ADLV-RZ} and Proposition \ref{prop:ADLV-comp}.
Note that we have the following equality between the height of the framing map and the height of ADLV:
\begin{equation*}
	\frac{1}{n}\mathrm{ht}(\rho_X)=\mathrm{ht}(c(\Phi^{-1}(X))),
\end{equation*}
see for instance, \cite[Lem. 1.5]{V2}.
\end{proof}
\begin{remark}
The first assertion also follows from \cite{RSZ1}, and the second assertion follows from \cite{HLS2}. When $t=0$, connectedness also follows from \cite{RTW}.
\end{remark}

\begin{remark}\label{rmk:connected-components}
The decomposition
\begin{equation*}
	\CN_n^{[n]}=\CN_n^{[n],+}\amalg \CN_n^{[n],-}.
\end{equation*}
relates as follows  to the Kottwitz map. 
Let $\BM:=M(\BX_n^{[n]})$ be the relative Dieudonn\'e module of the framing object and recall that $N=\BM[1/\pi_0]$ is the rational Dieudonn\'e module. 
The space $N$ is a $\breve{F}/\breve{F}_0$-hermitian space of dimension $n$.
Recall from   \eqref{equ:RZ-geo-pts-pi-modular} the description of the set of geometric points of  $\CN_n^{[n]}$ in terms of  $O_{\breve{F}}$-lattices. Then 
a geometric point $M\in\CN_n^{[n]}(\BF)$ lies in $\CN_n^{[n],+}$ or $\CN_n^{[n],-}$ according as the $O_{\breve{F}}$-length of the module 
\begin{equation}\label{equ:pi-modular-parity}
	(M+\BM)/\BM
\end{equation}
is even or odd, cf.  \cite[\S 6]{RSZ2}.
The parity of this length may also be described as follows.
Let $g\in \U(N)$. Then the determinant $\det(g)$ is a norm one element  in $\breve{F}$, and hence lies in $O_{\breve{F}}$ with reduction mod $\pi$ equal to $\ep(g)=\pm 1$.
For any $(X,\iota,\lambda)$ corresponding to a $\pi$-modular lattice $M$ in $N$, we can find $g\in \U(N)$ such that $g\BM=M$. Then the length of \eqref{equ:pi-modular-parity} is even or odd according as $\ep(g)$ is $1$ or $-1$.
\end{remark} 

We record the following consequence of the above discussion. 
\begin{proposition}\label{prop:pi-modular-comp-iso}
Let $n\in I$. There are isomorphisms $\CN_n^{[n],+}\simeq \CN_n^{[n],-}$. 
More precisely, let $g\in \U(\BX_n^{[t]})$. 
For $\ep\in\{\pm\}$, the  automorphism $g:\CN_n^{[n]}\rightarrow \CN_n^{[n]}$ restricts to an automorphism
\begin{equation*}\pushQED{\qed} 
	g:\CN_n^{[n],\ep}\rightarrow \CN_n^{[n],\ep\ep(g)}.\qedhere
	\popQED
\end{equation*}
\end{proposition}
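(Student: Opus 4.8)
The plan is to reduce everything to the lattice description of $\CN_n^{[n]}(\BF)$ and to the parity recipe recorded in Remark \ref{rmk:connected-components}; recall that here $t=n$, so $n$ is even and $\ep=1$. First I would observe that, by Corollary \ref{thm:RZ-connected}, $\CN_n^{[n]}$ has exactly two connected components $\CN_n^{[n],\pm}$, both open and closed. Since any $g\in\U(\BX_n^{[n]})$ acts as an automorphism of $\CN_n^{[n]}$, it either preserves both components or interchanges them, and it suffices to determine which on a single $\BF$-point.

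The core of the argument is to trace the action on geometric points. By \eqref{equ:RZ-geo-pts-pi-modular}, a point of $\CN_n^{[n]}(\BF)$ is a $\pi$-modular lattice $M\subset N=\BM[1/\pi_0]$, and, via the covariant Dieudonné functor, replacing the framing $\rho_X$ by $g\circ\rho_X$ replaces $M$ by $gM$, where we use the inclusion $\U(\BX_n^{[n]})\subset\U(N)$ of the unitary group into $\U$ of the rational Dieudonné module. Writing $M=g'\BM$ with $g'\in\U(N)$ (possible by the transitivity statement recalled in Remark \ref{rmk:connected-components}), that same remark says $M$ lies in the component labelled by $\ep(g')=\overline{\det g'}\bmod\pi\in\{\pm1\}$. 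Since $gM=(gg')\BM$ and $\ep$ is the composite of the multiplicative map $\det$ with reduction mod $\pi$, one has $\ep(gg')=\ep(g)\,\ep(g')$; hence $g$ carries the component labelled $\ep$ to the one labelled $\ep\cdot\ep(g)$. As the components are clopen and $g$ is an automorphism, this computation on $\BF$-points upgrades to the asserted isomorphism $g\colon\CN_n^{[n],\ep}\to\CN_n^{[n],\ep\ep(g)}$.

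To conclude $\CN_n^{[n],+}\simeq\CN_n^{[n],-}$ I need an element $g\in\U(\BX_n^{[n]})$ with $\ep(g)=-1$. Identifying $\U(\BX_n^{[n]})$ with $\U(\BV)(F_0)$ (a unitary group of an $F/F_0$-hermitian space of dimension $n\geq 2$), the determinant maps this group onto $\ker(N_{F/F_0}\colon F^\times\to F_0^\times)$, which contains $-1$; since $p\neq 2$, the reduction of $-1$ in $\BF$ is $\neq 1$, so $\ep(g)=-1$ for a suitable $g$. Then $g$ and $g^{-1}$ (for which also $\ep(g^{-1})=-1$) give mutually inverse isomorphisms between the two components.

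The only genuinely delicate point is the sign bookkeeping in the middle paragraph: confirming that under the covariant Dieudonné equivalence the replacement of $\rho_X$ by $g\circ\rho_X$ corresponds to $M\mapsto gM$ (and not $M\mapsto g^{-1}M$), and that the component label attached to $M=g'\BM$ in Remark \ref{rmk:connected-components} is exactly $\ep(g')$. Since $\ep$ takes values in $\{\pm1\}$ and a change of either convention merely swaps the names of the two components, none of this affects the conclusion, and the rest of the argument is purely formal.
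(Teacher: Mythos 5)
Your argument is correct and is essentially the paper's own: the proposition is recorded there as an immediate consequence of Remark \ref{rmk:connected-components}, i.e.\ of the identification of the component of $M=g'\BM$ with the sign $\ep(g')=\overline{\det g'}\bmod\pi$ together with multiplicativity of $\det$. The only thing you add beyond the paper's (tacit) reasoning is the explicit production of an element with $\ep(g)=-1$ via surjectivity of $\det\colon\U(\BV)(F_0)\to F^1$, which is a correct and harmless supplement.
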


\subsection{Splitting model $\CN_{n,\ep}^{[t],\spt}$}\label{sec:splitting-model}
Let $S$ be a scheme over $\Spf O_{\breve F}$.  A \emph{splitting structure} on a hermitian $O_{F}$-module  $(X,\iota_X,\lambda_X)$ of dimension $n$ and type $t$ is a pair
of two locally $\CO_S$-direct summands of rank one,
\begin{equation*}
	\Fil^0(X)\subseteq\Fil(X)\subset M(X),\quad \Fil^0(X^\vee)\subseteq\Fil(X^\vee)\subset M(X^\vee),
\end{equation*}
subject to the following constraints:
\begin{altitemize}
\item  The morphisms between Hodge filtrations induced by the polarization carry the one additional filtration into the other:
\begin{equation*}
	\lambda_*(\Fil^0(X))\subseteq \Fil^0(X^\vee),\quad \lambda^\vee_*(\Fil^0(X^\vee))\subseteq \Fil^0(X).
\end{equation*}
\item 
(\emph{Kr\"amer condition}) if $n > 1$ is even and $t=n$, then the condition states that $\Fil^0(X)=(\iota(\pi)-\pi)\Fil(X)$.
In general, it requires
\begin{align*}
	(\iota(\pi)-\pi)\Fil(X)\subseteq \Fil^0(X), & \quad (\iota(\pi)+\pi)\Fil^0(X)=(0);\\
	(\iota(\pi)-\pi)\Fil(X^\vee)\subseteq \Fil^0(X^\vee), &\quad (\iota(\pi)+\pi)\Fil^0(X^\vee)=(0).
\end{align*}
\end{altitemize}

\begin{remark}
The strengthened spin condition is part of the definition of a hermitian $O_F$-module of signature $(1, n-1)$ and type $t$, cf. Definition \ref{def:unitary-module}. In fact, the existence of the filtrations $\Fil^0(X)$ and $\Fil^0(X^\vee)$ with the above conditions implies the strengthened spin condition, cf. \cite[Thm. 1.4.1]{HLS1}. 
\end{remark}

Fix a framing object $(\BX_{n,\ep}^{[t]},\iota_{\BX_{n,\ep}^{[t]}},\lambda_{\BX_{n,\ep}^{[t]}})$. We define the \emph{naive splitting model} $\CN_{n,\ep}^{[t],\nspt}$ over $\Spf O_{\breve F}$ parametrizing the collection of data 
\begin{equation*}
	(X, \iota, \lambda, \Fil^0(X),\Fil^0(X^\vee); \rho),
\end{equation*}
where $(X,\iota,\lambda,\Fil^0(X),\Fil^0(X^\vee))$ is a hermitian $O_F$-module of  of signature $(1,n-1)$ and type $t$ with splitting  structure, and where $\rho$ is a {\it framing} with the fixed framing object.
We define the \emph{splitting model} $\CN_{n,\ep}^{[t],\spt}$ over $\Spf O_{\breve F}$ as the flat closure of $\CN_{n,\ep}^{[t],\nspt}$.

\begin{remarks}\label{rmk:split-model}
\begin{altenumerate}
\item In the $\pi$-modular case, the splitting structure on a hermitian $O_F$-module is uniquely determined if  the spin condition is satisfied, see Definition \ref{def:unitary-module}(ii) and Remark \ref{rmk:unitry-module}(iii).
Therefore, the naive splitting model $\CN_n^{[n],\nspt}$  and the splitting model $\CN_n^{[n],\spt}$ are isomorphic to the RZ space $\CN_n^{[n]}$.
\item As one sees from the moduli description, the naive splitting model $\CN_{1,\ep}^{[0],\nspt}$ and the splitting model $\CN_{1,\ep}^{[0],\spt}$ are both isomorphic to the RZ space $\CN_{1,\ep}^{[0]}$.
\item In the remaining cases, the splitting structure is uniquely determined outside the worst points. In fact,  the splitting model $\CN_{n,\ep}^{[t],\spt}$ is the blow-up of the RZ space in the worst points, cf. \cite[Thm. 1.3.1]{HLS1}.
\end{altenumerate}
\end{remarks}

We summarize some geometric properties of the splitting model:
\begin{theorem}[\protect{\cite{HLS1}}]\label{thm:geo-RZ}
\begin{altenumerate}
\item All splitting models $\CN_{n,\ep}^{[t],\spt}$ are flat and semi-stable;
\item The splitting model $\CN_{n,\ep}^{[t],\spt}$ is smooth if and only if $t=n$, in which case $\CN_n^{[n],\spt}\simeq \CN_n^{[n]}$.
\end{altenumerate}
\end{theorem}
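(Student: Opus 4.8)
The plan is to deduce everything from a local model analysis, following the strategy of \cite{HLS1}. The first step is the local model diagram for the splitting model, constructed in \cite{Luo24} (building on \cite{PR,S,HLS1}): there is a formal scheme that maps to $\CN_{n,\ep}^{[t],\spt}$ by a torsor under a smooth group scheme and maps smoothly to the \emph{splitting local model} $\bM_{n,\ep}^{[t],\spt}$, the latter being the flat closure, in an appropriate relative Grassmannian-type scheme over $O_{\breve F}$, of the naive splitting local model $\bM_{n,\ep}^{[t],\nspt}$ which parametrizes pairs of filtrations $\Fil^0\subseteq\Fil$ (and the dual pair) on $\Lambda_{[t]}\otimes\CO_S$ subject to the Kr\"amer and compatibility conditions of \S\ref{sec:splitting-model}. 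Since both structure maps are smooth, parts (i) and (ii) are equivalent to the analogous statements for $\bM_{n,\ep}^{[t],\spt}$, and flatness holds by construction as a flat closure.

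For semi-stability, I would argue as follows. Over the complement of the worst points the forgetful morphism $\CN_{n,\ep}^{[t],\spt}\to\CN_{n,\ep}^{[t]}$ is an isomorphism, since there the splitting structure is uniquely determined by $(X,\iota,\lambda)$ (Remark \ref{rmk:split-model}(iii), equivalently \cite[Thm. 1.3.1]{HLS1}); hence on this locus semi-stability is immediate from Theorem \ref{thm:geo-RZ-canonical}(iii). So the problem reduces to computing the completed local ring $R$ of $\bM_{n,\ep}^{[t],\spt}$ at a point lying over the worst point of $\bM_{n,\ep}^{[t]}$. To do this I would first write down the completed local ring of the \emph{naive} splitting local model there from the explicit conditions of \S\ref{sec:splitting-model}: choosing coordinates adapted to a decomposition of $\Lambda_t\otimes\CO_S$ into pieces on which $\iota(\pi)$ acts in the special fibre by $+\pi$ and by $-\pi$, the Kr\"amer condition $\Fil^0=(\iota(\pi)-\pi)\Fil$ (or, in the degenerate cases, its general form) together with the polarization compatibilities turns into an explicit bilinear system; passing to the $\pi$-torsion-free quotient should give $R\cong O_{\breve F}[[x_1,\dots,x_N]]/(x_1\cdots x_m-\pi)$ for some $2\le m\le N$, which is strictly semi-stable, thus establishing (i). This simultaneously identifies $\CN_{n,\ep}^{[t],\spt}$ with the blow-up of $\CN_{n,\ep}^{[t]}$ in its worst points, re-deriving \cite[Thm. 1.3.1]{HLS1}. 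For $t=0$ this is Kr\"amer's computation and for small $n$ it is in the literature; for general even $t$ I would induct on $n$ via the product decomposition $\BX_{n,\ep}^{[t]}=\BX_{n-1,\ep}^{[t]}\times\ov\BE$ of \S\ref{sec:framing} together with Theorem \ref{thm:ss-comparison-intro}, which controls how the strengthened spin condition propagates when the extra $\ov\BE$-factor is adjoined.

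For the smoothness criterion (ii): if $t=n$, the spin condition forbids $\pi$-modular lattices, so there are no worst points, and by Remark \ref{rmk:split-model}(i) the spin condition already forces the splitting structure to be unique; hence $\CN_n^{[n],\spt}\simeq\CN_n^{[n]}$, which is smooth by the exotic-smoothness part of Theorem \ref{thm:geo-RZ-canonical}(ii). Conversely, if $t<n$ there is a vertex lattice of type $t\le n-1$ in $\BV$, hence a worst point $\WT(\Lambda)$, and the local computation of the previous paragraph produces a completed local ring with $m\ge 2$, i.e.\ containing a genuine relation $x_1x_2=\pi$; the special fibre is then not smooth over $\BF$ at such a point, so $\CN_{n,\ep}^{[t],\spt}$ is not smooth over $\Spf O_{\breve F}$.

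The main obstacle is the explicit local computation at the worst points: controlling, uniformly in $n$ and in the even integer $t$, the flat closure of the bilinear Kr\"amer system and verifying that its completed local rings are strictly semi-stable, with the precise value of $m$ — in particular whether $m\ge 2$ — pinned down by $t$ and the parity of $n$. The local model diagram in the generality of the splitting setting is the other input requiring care, but it is available from \cite{Luo24}.
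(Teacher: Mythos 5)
The paper does not prove this theorem at all: it is stated as a result of \cite{HLS1} and the ``proof'' consists of the citations to \cite[Thm.~1.3.1(i), Rem.~1.3.4]{HLS1} for part (i) and \cite[Thm.~1.3.1(ii)]{HLS1} for part (ii). Your proposal is therefore being measured not against an argument in this paper but against the content of the cited work, and as a reconstruction of that work's strategy it is heading in the right direction: reduce to the splitting local model via a local model diagram, note that flatness is tautological from the flat-closure definition, observe that away from the worst points the splitting model agrees with $\CN_{n,\ep}^{[t]}$ so Theorem~\ref{thm:geo-RZ-canonical}(iii) gives semi-stability there, and then compute at the worst points. Your treatment of the ``if'' direction of (ii) is also correct and matches Remark~\ref{rmk:split-model}(i) together with the exotic smoothness in Theorem~\ref{thm:geo-RZ-canonical}(ii).

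The genuine gap is that the entire mathematical content of the theorem lives in the step you explicitly defer: the identification of the completed local rings of $\bM_{n,\ep}^{[t],\spt}$ at points over the worst point as $O_{\breve F}[[x_1,\dots,x_N]]/(x_1\cdots x_m-\pi)$ with $m\ge 2$. Without that computation you have neither semi-stability in (i) nor the ``only if'' in (ii), since both reduce to it; announcing it as ``the main obstacle'' is an admission that the proof is a roadmap. Two further points deserve care if you carry this out. First, the claim that passing to the flat closure of the Kr\"amer system yields a strictly semi-stable ring is not automatic from the shape of the equations: one must actually exhibit the flat closure (in \cite{HLS1} this is intertwined with identifying $\CN_{n,\ep}^{[t],\spt}$ as the blow-up of $\CN_{n,\ep}^{[t]}$ at the worst points, which you cannot simultaneously ``re-derive'' and use). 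Second, your proposed induction on $n$ via $\BX_{n,\ep}^{[t]}=\BX_{n-1,\ep}^{[t]}\times\ov\BE$ and Theorem~\ref{thm:ss-comparison-intro} controls the strengthened spin condition but says nothing about how the \emph{splitting structure} $\Fil^0$ behaves under adjoining the $\ov\BE$-factor, so the inductive step for the local ring computation is not set up; and the case $t=n-1$ with $n$ odd (where $\CN_n^{[n-1]}$ is already smooth but its blow-up at the worst points must be shown to be semi-stable and non-smooth) needs to be addressed separately from the cases where the underlying RZ space is singular.
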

\begin{proof}
Part (i) follows from \cite[Thm. 1.3.1. (i) and Rem. 1.3.4.]{HLS1}. Part (ii) follows from Thm. 1.3.1.(ii) of loc.cit.
\end{proof}

When $t\neq n$, we denote by $\Exc:=\pi^{-1}(\Sing)$  the preimage of the worst points along the natural projection $\pi:\CN_{n,\ep}^{[t],\spt}\to\CN_{n,\ep}^{[t]}$.

\subsection{Rapoport--Zink spaces of deeper parahoric level}\label{sec:RZ-deeper}

Let $0\le t\le s\le n$ be even integers and let $\ep\in\{\pm 1\}$. Let $\BX_{n,\ep}^{[s]}$ (resp. $\BX_{n,\ep}^{[t]}$) be the framing \emph{hermitian $O_F$-modules of $\CN_{n,\ep}^{[s]}$ (resp. $\CN_{n,\ep}^{[t]}$}). Fix an $O_F$-linear isogeny  $\alpha: \BX_{\ep}^{[s]}\rightarrow \BX_{\ep}^{[t]}$ compatible with polarizations such that $\ker \alpha\subseteq \BX_{\ep}^{[s]}[\pi_0]$ and has degree $q^{(s-t)/2}$. 

Consider the functor sending a $\Spf \OFb$-scheme $S$ to the set of isomorphism classes of tuples $(X^{[s]},\iota^{[s]},  \lambda^{[s]},\rho^{[s]}, X^{[t]},\iota^{[t]},\lambda^{[t]},\rho^{[t]})$, where
\[
(X^{[i]},\iota^{[i]},\lambda^{[i]},\rho^{[i]})\in \CN_n^{[i]}(S),\quad i\in\{s,t\},
\]
such that $(\rho^{[t]})^{-1}\circ\alpha\circ \rho^{[s]}: X^{[s]} \times_S \bar S\rightarrow X^{[t]}\times_S\bar S$ lifts to an isogeny $\wit\alpha: X^{[s]}\rightarrow X^{[t]}$. Note that if $\wit\alpha$ exists then it is unique  and $\ker \alpha\subseteq X^{[s]}[\pi_0]$ and has degree $q^{(s-t)/2}$. This functor is represented by a formal scheme $\CN_{n,\ep}^{[s,t]}$ known as the (relative) \emph{unitary Rapoport--Zink space of parahoric level}. The Rapoport--Zink space $\CN_{n,\ep}^{[s,t]}$ is formally locally of finite type,  of relative dimension $n-1$. In general, it is not regular but it is always flat \cite{Luo24}. By definition there are natural projections 
\begin{equation*}\label{eq Nrs}
\begin{aligned}
\xymatrix{&\CN_{n, \varepsilon}^{[s, t]} \ar[rd]  \ar[ld] &\\ \CN_{n, \varepsilon}^{[s]}&&  \CN_{n, \varepsilon}^{[t]}.}
\end{aligned}
\end{equation*}

The isogeny $\alpha$ induces an identification of the rational (relative) Dieudonn\'e modules of $\BX_{n,\ep}^{[s]}$ and $\BX_{n,\ep}^{[t]}$ as hermitian spaces. Their common value will be denoted by $N$.
	
Recall the assumption that $s>t$. When $s\neq n$, by computation on Dieudonn\'e modules, the geometric points of the RZ space $\CN_{n,\ep}^{[s,t]}$ are given as follows by $O_{\breve{F}}$-lattices:
\begin{equation*}
	\CN_{n,\ep}^{[s,t]}(\BF)=\Big\{
	M_s\subseteq M_t\subset N\mid \pi M_i^\vee\subseteq M_i\stackrel{i}{\subseteq}M_i^\vee,\, \Pi M_i\stackrel{n}{\subset} \tau^{-1}(M_i)\stackrel{n}{\subset} \Pi^{-1}M,\, M_i\stackrel{\leq 1}{\subseteq}(M_i+\tau(M_i)),\, i=s,t
	\Big\}.
\end{equation*}
When $n=2m$ is even and $s=n$, we have a similar description for the geometric points $\CN_n^{[s,t]}$, except that the last condition for $s=n$ is replaced by 
\begin{equation*}
	M_n\stackrel{1}{\subset} (M_n+\tau(M_n)).
\end{equation*}

Similarly to the discussion in \S \ref{sec:geo-RZ}, we have:
\begin{proposition}
Let  $s>t$. The RZ space $\CN_{n,\ep}^{[s,t]}$ has two connected components when $n=2m$ is even and $s=n$, and is connected in all other cases.
\end{proposition}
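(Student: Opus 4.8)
The plan is to reduce the connectedness statement for $\CN_{n,\ep}^{[s,t]}$ to the group-theoretic computation of connected components of generalized affine Deligne--Lusztig varieties, exactly as was done in \S\ref{sec:geo-RZ} for the single-type spaces $\CN_{n,\ep}^{[t]}$. First I would introduce the relative unitary similitude RZ space $\wt\CN_{n,\ep}^{[s,t]}$ of parahoric level, parametrizing quadruples $(X^{[s]}, \iota^{[s]}, \lambda^{[s]}, \rho^{[s]}, X^{[t]}, \iota^{[t]}, \lambda^{[t]}, \rho^{[t]})$ as in the definition of $\CN_{n,\ep}^{[s,t]}$ but with polarizations and framings only up to $O_{F_0}^\times$-scalar, so that (as in \cite[Rem. 3.6]{RSZ1}) our $\CN_{n,\ep}^{[s,t]}$ is the height-zero open and closed part $(\wt\CN_{n,\ep}^{[s,t]})_0$. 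Then, via a lattice description of $\wt\CN_{n,\ep}^{[s,t]}(\BF)$ of the form displayed above (two nested lattices $M_s\subseteq M_t$ each satisfying the appropriate chain and spin-type conditions), the perfection of the underlying reduced scheme is identified with the generalized ADLV $X_{P_{\{s,t\}}}(\wt G, \wt\mu, \wt b)$ for the quasi-parahoric $P_{\{s,t\}} = \mathrm{Stab}_{\wt G}(\Lambda_{\{s,t\}})$ attached to the sub-chain $\Lambda_{\{s,t\}}$ in \eqref{equ:lattice-chain}; this is the exact analogue of Proposition \ref{prop:ADLV-RZ}, with $I = \{s,t\}$ in place of $I = \{t\}$.

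Next I would apply the He--Zhou theorem (Theorem \ref{thm:He-Zhou}) together with Proposition \ref{prop:similitude-parahoric} to compute $\pi_0$. The key dichotomy is whether $P_{\{s,t\}} = P_{\{s,t\}}^\circ$: by Proposition \ref{prop:similitude-parahoric}(ii)--(iii), this holds exactly when $n$ is odd, or when $n$ is even and $n\in\{s,t\}$; since $s\geq t$, the latter means $s = n$. In the parahoric case $P_{\{s,t\}} = P_{\{s,t\}}^\circ$, Theorem \ref{thm:He-Zhou} gives $\pi_0(X_{P_{\{s,t\}}^\circ}(\wt G, \wt\mu, \wt b)) = (\pi_1(\wt G)_I)^\sigma$, which is $\BZ$ when $n$ is odd and $\BZ\oplus\BZ/2$ when $n$ is even with $s=n$ (the $\sigma$-action being trivial, as recalled before Proposition \ref{prop:similitude-parahoric}). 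In the remaining case ($n$ even, $s\neq n$, hence also $t\neq n$), one uses the standard fact — cf. \cite[\S 3.2]{PR}, \cite[\S 4]{PR08} — that the affine flag variety $\wt G(\breve F_0)/P_{\{s,t\}}^\circ$ is the disjoint union of two copies of $\wt G(\breve F_0)/P_{\{s,t\}}$ distinguished by the Kottwitz map, and that each copy meets the ADLV (again by Theorem \ref{thm:He-Zhou}), so $X_{P_{\{s,t\}}^\circ}$ is two disjoint copies of $X_{P_{\{s,t\}}}$ and $\pi_0(X_{P_{\{s,t\}}}(\wt G, \wt\mu, \wt b)) = \BZ$. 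In all cases the $\BZ$-factor is detected by $\mathrm{ht}(g) = \val(c(g))$, so intersecting with the height-zero locus kills exactly that factor: the height-zero ADLV has two components precisely when $n$ is even and $s = n$, and is connected otherwise. Transporting back via $\CN_{n,\ep}^{[s,t]} = (\wt\CN_{n,\ep}^{[s,t]})_0$ and the relation $\frac{1}{n}\mathrm{ht}(\rho) = \mathrm{ht}(c(\Phi^{-1}))$ (as in Corollary \ref{thm:RZ-connected}, citing \cite[Lem. 1.5]{V2}), gives the claim.

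The main obstacle I anticipate is verifying the lattice-theoretic description of $\wt\CN_{n,\ep}^{[s,t]}(\BF)$ and the resulting identification with the ADLV $X_{P_{\{s,t\}}}(\wt G,\wt\mu,\wt b)$ in full rigor — i.e., checking that the conditions cutting out pairs $(M_s, M_t)$ (the two $\pi$-modularity/type conditions, the two Frobenius-periodicity conditions $\Pi M_i \subset \tau^{-1}(M_i)\subset \Pi^{-1}M$, and the spin conditions $M_i \stackrel{\leq 1}{\subseteq} M_i + \tau(M_i)$, with the modification $M_n \stackrel{1}{\subset} M_n + \tau(M_n)$ when $s=n$) correspond precisely to the admissible set $\mathrm{Adm}_{P_{\{s,t\}}^\circ}(\wt\mu)$ for the chain parahoric. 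This is a routine but somewhat involved Dieudonné-module computation parallel to \eqref{equ:RZ-geo-pts-1} and Proposition \ref{prop:ADLV-RZ}; once it is in place, the rest is a formal application of the He--Zhou result combined with the bookkeeping of Proposition \ref{prop:similitude-parahoric}, exactly mirroring Proposition \ref{prop:ADLV-comp} and Corollary \ref{thm:RZ-connected}.
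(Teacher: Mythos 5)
Your proposal is correct and follows essentially the same route as the paper: pass to the relative unitary similitude RZ space, identify its reduced perfection with the generalized ADLV $X_{P_{[s,t]}}(\wt G,\wt\mu,\wt b)$, and invoke the He--Zhou theorem via Proposition \ref{prop:ADLV-comp} (which the paper has already stated for arbitrary non-empty $I$, so your re-derivation of the parahoric/non-parahoric dichotomy for $I=\{s,t\}$ is redundant but harmless). The lattice-to-ADLV identification you flag as the main obstacle is handled in the paper by citing \cite[Prop. 3.7]{Wu16} and \cite[\S 3.2]{Zhu-17}, exactly as in Proposition \ref{prop:ADLV-RZ}.
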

\begin{proof}
We define $\wt{\CN}_n^{[s,t]}$ as the relative unitary similitude RZ space. Let $\BM^{[s]}\subset \BM^{[t]}$ be the relative Dieudonn\'e module of the framing objects, then the map
\begin{equation*}
\begin{aligned}
\xymatrix{
	\Phi:X_{P_{[s,t]}}(\wt{G},\wt{\mu},\wt{b})\ar[r]&(\wt{\CN}_{n,\ep}^{[s,t]})_{\rm red}&g\mapsto g(\BM^{[s]}\subset \BM^{[t]}),
	}
\end{aligned}
\end{equation*}
defines an isomorphism between the ADLV and the underlying reduced scheme of the relative unitary similitude RZ space as perfect schemes. The assertion  now follows from Proposition \ref{prop:ADLV-comp}.
\end{proof}

By \cite{He16}, the morphism $\CN_{n}^{[n,t]}\rightarrow \CN_{n}^{[n]}$ is surjective on  geometric points, and we define $\CN_n^{[n,t],\pm}$ as the preimage of $\CN_n^{[n],\pm}$.
\begin{proposition}\label{prop:pi-modular-comp-iso-two-ind}
Let $n=2m$ be an even integer and let $s=n$. There are isomorphisms $\CN_n^{[n,t],+}\simeq \CN_n^{[n,t],-}$. 
More precisely, let $g\in \U(\BX_n^{[n]})$. 
For $\ep\in\{\pm 1\}$, the  automorphism $g:\CN_n^{[n,t]}\rightarrow \CN_n^{[n,t]}$ restricts to an isomorphism
\begin{equation*}
g:\CN_n^{[n,t],\ep}\rightarrow \CN_n^{[n,t],\ep\ep(g)}.
\end{equation*}
\end{proposition}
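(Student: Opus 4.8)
The plan is to deduce the proposition from the corresponding statement for $\CN_n^{[n]}$, namely Proposition \ref{prop:pi-modular-comp-iso}, by transporting it along the $\U(\BX_n^{[n]})$-equivariant projection $\mathrm{pr}\colon\CN_n^{[n,t]}\to\CN_n^{[n]}$. First I would make the relevant group action explicit. The fixed isogeny $\alpha\colon\BX_n^{[n]}\to\BX_n^{[t]}$ identifies the rational relative Dieudonn\'e modules of the two framing objects with the common hermitian space $N$, and hence identifies $\U(\BX_n^{[n]})=\U(N)=\U(\BX_n^{[t]})$; under this identification an element $g$ of this group has two incarnations $g_{[n]}\in\U(\BX_n^{[n]})$ and $g_{[t]}\in\U(\BX_n^{[t]})$, related by $\alpha\circ g_{[n]}=g_{[t]}\circ\alpha$, and it acts on a point $(X^{[n]},\iota^{[n]},\lambda^{[n]},\rho^{[n]},X^{[t]},\iota^{[t]},\lambda^{[t]},\rho^{[t]})$ of $\CN_n^{[n,t]}$ by postcomposing the framings with these, i.e. by replacing $\rho^{[n]}$ with $g_{[n]}\circ\rho^{[n]}$ and $\rho^{[t]}$ with $g_{[t]}\circ\rho^{[t]}$.

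The one point requiring a check is that this is well defined, i.e. that the compatibility condition defining $\CN_n^{[n,t]}$ is preserved. Using $g_{[t]}^{-1}\circ\alpha\circ g_{[n]}=\alpha$ one computes
\[
(g_{[t]}\circ\rho^{[t]})^{-1}\circ\alpha\circ(g_{[n]}\circ\rho^{[n]})=(\rho^{[t]})^{-1}\circ g_{[t]}^{-1}\circ\alpha\circ g_{[n]}\circ\rho^{[n]}=(\rho^{[t]})^{-1}\circ\alpha\circ\rho^{[n]},
\]
so the same isogeny $\wit\alpha$ lifts it; thus $g$ induces an automorphism of $\CN_n^{[n,t]}$, and by construction $\mathrm{pr}$ is equivariant for this action and the action of $g$ on $\CN_n^{[n]}$ from \S\ref{sec:RZ-defn} appearing in Proposition \ref{prop:pi-modular-comp-iso}. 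Since $\CN_n^{[n],\pm}$ is open and closed in $\CN_n^{[n]}$, the preimage $\CN_n^{[n,t],\pm}=\mathrm{pr}^{-1}(\CN_n^{[n],\pm})$ is open and closed in $\CN_n^{[n,t]}$, and equivariance together with the identity $g(\CN_n^{[n],\ep})=\CN_n^{[n],\ep\ep(g)}$ of Proposition \ref{prop:pi-modular-comp-iso} gives
\[
g\bigl(\CN_n^{[n,t],\ep}\bigr)=\mathrm{pr}^{-1}\bigl(g(\CN_n^{[n],\ep})\bigr)=\mathrm{pr}^{-1}\bigl(\CN_n^{[n],\ep\ep(g)}\bigr)=\CN_n^{[n,t],\ep\ep(g)}.
\]
As $g$ is an automorphism of $\CN_n^{[n,t]}$, its restriction is an isomorphism $\CN_n^{[n,t],\ep}\xrightarrow{\sim}\CN_n^{[n,t],\ep\ep(g)}$, which is the refined assertion. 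Applying this to any $g\in\U(N)$ with $\det g=-1$ (such $g$ exists since the determinant map $\U(N)\to\{x\in\breve F:\mathrm{N}(x)=1\}$ is surjective and $\mathrm{N}(-1)=1$), one has $\ep(g)=-1$ by the description in Remark \ref{rmk:connected-components}, since $-1\neq 1$ in the residue field as $p$ is odd; this yields $\CN_n^{[n,t],+}\simeq\CN_n^{[n,t],-}$.

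I do not expect a genuine obstacle here: the argument is a formal transport of structure along $\mathrm{pr}$, and the only mildly delicate point is the one-line verification that postcomposing both framings by $g$ preserves the lifting condition in the moduli problem, which reduces to the fact that $\alpha$ intertwines the actions of $g$ on $\BX_n^{[n]}$ and $\BX_n^{[t]}$. That $\CN_n^{[n,t],\pm}$ are nonempty (hence genuinely a decomposition into unions of connected components) is not needed for the statement, but it also follows from the surjectivity of $\mathrm{pr}$ on geometric points recalled before the proposition.
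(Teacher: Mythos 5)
Your argument is correct and is essentially the paper's proof, which consists of exactly the same reduction: the projection $\CN_n^{[n,t]}\to\CN_n^{[n]}$ is $\U(\BX_n^{[n]})$-equivariant (your lifting check is the content of that claim), and since $\CN_n^{[n,t],\pm}$ is by definition the preimage of $\CN_n^{[n],\pm}$, everything follows from Proposition \ref{prop:pi-modular-comp-iso}. The only quibble is in your final parenthetical: the element with $\ep(g)=-1$ must lie in $\U(\BX_n^{[n]})$, which is identified with the unitary group of $C=N^{\tau=1}$ rather than with all of $\U(N)$ (a general element of $\U(N)$ does not commute with $\tau$ and so does not act on the RZ space); but such an element exists in $\U(C)$ as well, e.g.\ a unitary reflection or any $g$ with $\det g=-1\in F^1$, so the conclusion stands.
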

\begin{proof}
The projection $\CN_n^{[n,t]}\rightarrow \CN_n^{[n]}$ is $\U(\BX_n^{[n]})$-equivariant, hence we can reduce to  Proposition \ref{prop:pi-modular-comp-iso}. 
\end{proof}

\begin{remark}
When $n=2m$ is even, we will also have occasion to consider the RZ space $\CN_{n}^{[n,n-2,t]}$ with three indices.
The definitions and properties of this space parallel those of $\CN_n^{[s,t]}$. In fact, by \cite[Prop. 9.12]{RSZ2} and Proposition \ref{prop:ADLV-comp}, the natural projection $\CN_n^{[n,n-2,t]}\to \CN_n^{[n-2,t]}$ is a trivial double cover. Given this straightforward relationship, we omit the detailed construction and properties here.
\end{remark}

 \section{Special cycles on RZ spaces}
\label{sec:cycle} 
 \subsection{Special cycles at vertex level}\label{sec:special-cycles}
Fix a framing object $\BX_{n,\ep}^{[t]}$ and the corresponding RZ space $\CN_{n, \ep}^{[t]}$. Recall the space of \emph{special quasi-homomorphisms} defined in \eqref{eq:special-hom}, 
\begin{equation*}\label{eq:special-quasihom}
	\BV_{n,\ep}=\BV(\BX_{n,\ep}^{[t]}):=\Hom^\circ_{O_F}(\ov{\BE},\BX_{n,\ep}^{[t]})\simeq C.
\end{equation*}
It is an $n$-dimensional $F/F_0$-hermitian space with Hasse invariant $-\ep$. See \eqref{eq:C-iso-to-V} for the definition of $C$ and the isomorphism.

 Just as in the unramified $F/F_0$ case \cite{LRZ2}, there are two types of special cycles on $\CN_{n, \varepsilon}^{[t]}$,  namely $\CZ(u)_{n,\ep}^{[t]}$ and $ \CY(u)_{n,\ep}^{[t]}$.
Recall that $t\equiv 0\mod 2$.

 \begin{definition}\label{def:speccy}
  Fix a vector $x\in \BV_{n,\ep}$.
  \begin{altenumerate}
  \item We define the $\CZ$-cycle $\CZ(x)^{[t]}_{n,\ep}\subseteq \CN_{n,\ep}^{[t]}$ to be the closed formal subscheme which represents the functor sending each scheme $S$ over $\Spf O_{\breve{F}}$  to the isomorphism classes of tuples $(X,\iota_X,\lambda_X,\rho)$ such that the quasi-homomorphism
  \begin{equation*}
  	\rho^{-1}\circ x\circ \rho_{\ov{\CE}}:\ov{\CE}_S\times_S \ov{S}\overset{\rho_{\ov{\CE}}}{\longrightarrow}\ov{\BE}\times_{\Spec\BF}\ov{S}\overset{x}{\longrightarrow}\BX^{[t]}_{n,\ep}\times_{\Spec\BF}\ov{S}\overset{\rho^{-1}}{\longrightarrow}X\times_S \ov{S}
  \end{equation*}
  extends to a homomorphism $\ov{\CE}_S\rightarrow X$ (this is a closed condition by \cite[Prop. 2.9]{RZ}).

  \item We define the $\CY$-cycle  $\CY(x)^{[t]}_{n,\ep}\subseteq \CN_{n,\ep}^{[t]}$ to be the closed formal subscheme which represents the functor sending each $S$ to the isomorphism classes of tuples $(X,\iota_X,\lambda_X,\rho)$ such that the quasi-homomorphism
    \begin{equation*}
  	\lambda\circ\rho^{-1}\circ x\circ \rho_{\ov{\CE}}:\ov{\CE}_S\times_S \ov{S}\overset{\rho_{\ov{\CE}}}{\longrightarrow}\ov{\BE}\times_{\Spec\BF}\ov{S}\overset{x}{\longrightarrow}\BX^{[t]}_{n,\ep}\times_{\Spec\BF}\ov{S}\overset{\rho^{-1}}{\longrightarrow}X\times_S \ov{S}\overset{\lambda}{\longrightarrow}X^\vee\times_S \ov{S}
  \end{equation*}
    extends to a homomorphism $\ov{\CE}_S\rightarrow X^\vee$.
  
  \smallskip 
  
Recall that by Remark \ref{rmk:split-model}, the splitting model $\CN_{n, \ep}^{[t],\spt}$ is the blow-up of $\CN_{n, \ep}^{[t]}$ over the worst points.
\item  We define the \emph{splitting $\CZ$-cycle}  $\CZ(x)^{[t],\spt}_{n,\ep}$ and the \emph{splitting $\CY$-cycle}  $\CY(x)^{[t],\spt}_{n,\ep}$ in $\CN^{[t],\spt}_{n,\ep}$ as the strict transforms of $\CZ(x)^{[t]}_{n,\ep}$ and $\CY(x)^{[t]}_{n,\ep}$, respectively. 
To be more precise, writing $\pi\colon\CN_{n,\ep}^{[t],\spt}\to \CN_{n,\ep}^{[t]}$  for the projection map, we define the splitting $\CZ$-cycle $\CZ(x)_{n,\ep}^{[t],\spt}$ as the closed formal subscheme of the pullback $\pi^{-1}(\CZ(x)_{n,\ep}^{[t]})\subset \CN_{n,\ep}^{[t],\spt}$ cut out by the quasi-coherent ideal of sections of $\CO_{\pi^{-1}(\CZ(x)_{n,\ep}^{[t]})}$ supported on $\Sing$. We  define $\CY(x)^{[t],\spt}_{n,\ep}$ in a similar way.

When $(\CN_{n,\ep}^{[t]})_{\rm red}$  is strictly larger than $\Sing$, an equivalent way of defining $\CZ(x)_{n,\ep}^{[t],\spt}$ is as follows. 
The morphism $\pi$ defines an  isomorphism $\CN_{n,\ep}^{[t],\spt}\setminus \Exc\to \CN_{n,\ep}^{[t]}\setminus \mathrm{Sing}$.
Therefore, we obtain a commutative diagram, in which the oblique arrow is a  locally closed immersion,
\begin{equation*}
	\begin{aligned}
		\xymatrix{
			&\CN_{n,\ep}^{[t],\spt}\ar[d]\\
			\CZ(x)^{[t]}_{n,\ep}\setminus\Sing\ar@{^(->}[r]\ar@{^(->}[ur]&\CN_{n,\ep}^{[t]}.
		}
	\end{aligned}
\end{equation*}
Then  the splitting $\CZ$-cycle $\CZ(x)_{n,\ep}^{[t],\spt}$ is the Zariski closure of the locally closed subscheme $\CZ(x)_{n,\ep}^{[t]}\setminus\Sing\hookrightarrow\CN_{n,\ep}^{[t],\spt}$. The same applies to $\CY(x)^{[t],\spt}_{n,\ep}$.
\end{altenumerate}
\end{definition}

The proof of the following theorem is given in \S \ref{sec:exc-iso-proof}.
\begin{theorem}\label{thm:ss-comparison}
	Let 
	 $(Y,\iota_Y,\lambda_Y)$ be a hermitian $O_F$-module of dimension $n-1$ and type $t$ over $S\in ({\rm Sch}/\Spf O_{\breve{F}})$.
	Let $\zeta\in O_{F_0}^\times$ be a unit and define
	\begin{equation*}
		(X,\iota_X,\lambda_X):=(Y\times\ov{\CE},\iota_Y\times\iota_{\ov{\CE}},\lambda_Y\times \zeta\lambda_{\ov{\CE}}).
	\end{equation*}
	Then the following assertions hold:
	\begin{altenumerate}
		\item $(X,\iota_X,\lambda_X)$ is a hermitian $O_F$-module of dimension $n$ and type $t$.
		\item If $(Y,\iota_Y,\lambda_Y)$ satisfies the strengthened spin condition, then so does $(X,\iota_X,\lambda_X)$.
		\item Suppose $t\neq n-1$. Then, if $(X,\iota_X,\lambda_X)$ satisfies the strengthened spin condition, then so does $(Y,\iota_Y,\lambda_Y)$.
	\end{altenumerate}
\end{theorem}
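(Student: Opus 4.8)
The plan is to deduce (i) directly from Definition~\ref{def:unitary-module}, and to obtain (ii) and (iii) by tracing the strengthened spin condition through the product decomposition $D(X)=D(Y)\oplus D(\ov\CE)$, reducing it to a linear-algebra comparison of the spin sublattices \eqref{eq:spin-lattice-sum} for the dimensions $n-1$ and $n$. For part (i): $\ov\CE$ is the canonical lift of an isoclinic hermitian $O_F$-module of signature $(0,1)$ and type $0$, so $\lambda_{\ov\CE}$, and hence $\zeta\lambda_{\ov\CE}$ for $\zeta\in O_{F_0}^\times$, is an isomorphism; therefore $\Ker\lambda_X=\Ker\lambda_Y$, which lies in $Y[\iota_Y(\pi)]\subseteq X[\iota_X(\pi)]$ and has height $q^t$. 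The module $X=Y\times\ov\CE$ is strict biformal of height $2(n-1)+2=2n$ and dimension $(n-1)+1=n$, and the Rosati involution of $\lambda_X$ is the product of those of $\lambda_Y$ and $\zeta\lambda_{\ov\CE}$, so it still induces the nontrivial automorphism of $O_F$; thus $(X,\iota_X,\lambda_X)$ is a hermitian $O_F$-module of dimension $n$ and type $t$. Moreover the polarized chain attached to $X$ is the orthogonal direct sum of the chain attached to $Y$ and the unimodular rank-one chain attached to $\ov\CE$, so the trivialization \eqref{eq:lie-alg-trivilization} for $X$ may be chosen of the form $D(X)\simeq(\Lambda_t\oplus O_Fv)\otimes_{O_{F_0}}\CO_S$, with $\Lambda_t$ a type-$t$ vertex lattice in the $(n-1)$-dimensional hermitian space underlying $Y$, $O_Fv$ a unimodular rank-one lattice, and the summands compatible with the chosen trivialization for $Y$.

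For parts (ii) and (iii): under this trivialization $\Fil(X)=\Fil(Y)\oplus\Fil(\ov\CE)$, and $\Fil(\ov\CE)\subseteq D(\ov\CE)=O_Fv\otimes_{O_{F_0}}\CO_S$ is the intrinsic rank-one submodule lying in the $\pi\otimes1$-eigenpart which, over the generic fibre, is one of the two isotropic lines $\CV''_\pm$ of the hyperbolic plane $\CV'':=O_Fv\otimes_{O_{F_0}}F$; hence $\bigwedge^n_{\CO_S}\Fil(X)=\bigl(\bigwedge^{n-1}_{\CO_S}\Fil(Y)\bigr)\otimes_{\CO_S}\Fil(\ov\CE)$. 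Since the decomposition \eqref{eq:spin-subspace} is built from the eigenspaces of $\pi\otimes1$ and from the orthogonal $\SO$-decomposition, both compatible with the orthogonal splitting $\CV=\CV'\perp\CV''$ (where $\CV'$ is the analogue of $\CV$ for $Y$), I would isolate a linear-algebra lemma asserting that wedging with $\Fil(\ov\CE)$ carries the sublattice $L^{n-2,1}_{\Lambda_t,-1}$ isomorphically onto the direct summand of $L^{n-1,1}_{\Lambda_t\oplus O_Fv,-1}$ that contains $\bigwedge^n_{\CO_S}\Fil(X)$, so that
\[
\textstyle\bigwedge^{n-1}_{\CO_S}\Fil(Y)\subseteq L^{n-2,1}_{\Lambda_t,-1}(S)\quad\Longleftrightarrow\quad\bigwedge^n_{\CO_S}\Fil(X)\subseteq L^{n-1,1}_{\Lambda_t\oplus O_Fv,-1}(S).
\]
Since $X$ has type $t\le n-1<n$ it is never $\pi$-modular, so the right-hand condition is precisely the strengthened spin condition for $X$. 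If $t\ne n-1$, then $Y$ is not $\pi$-modular either and the left-hand condition is precisely the strengthened spin condition for $Y$, which yields (iii). For (ii): whether or not $Y$ is $\pi$-modular, the strengthened spin condition for $Y$ implies $\bigwedge^{n-1}_{\CO_S}\Fil(Y)\subseteq L^{n-2,1}_{\Lambda_t,-1}$ --- in the $\pi$-modular case it is the stronger ``exotic'' condition, which in particular contains the spin condition of \cite{RSZ1}, cf.\ Remark~\ref{rmk:unitry-module}(iii) --- and hence $X$ satisfies its strengthened spin condition.

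The hard part will be the linear-algebra lemma, specifically tracking the $\pm1$ labels of the two connected components of the orthogonal Grassmannian through the orthogonal sum with the hyperbolic plane $\CV''$ and checking that the distinguished line $\Fil(\ov\CE)\otimes F$ is the one for which the label remains $-1$ on both sides. This is exactly the compatibility underpinning the inductive construction $\BX^{[t]}_{n,\ep}=\BX^{[t]}_{n-1,\ep}\times\ov\BE$ of framing objects in the proof of Theorem~\ref{thm:framing-obj}, and the required conventions are those of \cite{S} (see also \cite{Luo24}).

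As a cross-check — and to see why (iii) forces $t\ne n-1$ — I would pass to $\BF$-points and use Lemma~\ref{lem:sspin-dieudonne module}: since $\iota_{\ov\BE}(\pi)$ acts as $0$ on $\Lie(\ov\BE)$ one has $\Pi M(\ov\BE)\subseteq\uV M(\ov\BE)$, so $M(X)=M(Y)\oplus M(\ov\BE)$ gives an isomorphism $\bigl(\uV M(X)+\Pi M(X)\bigr)/\uV M(X)\cong\bigl(\uV M(Y)+\Pi M(Y)\bigr)/\uV M(Y)$, whence the colength of $\uV M(X)$ in $\uV M(X)+\Pi M(X)$ equals that for $Y$. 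For $X$ this colength is $\le1$ exactly when $X$ satisfies the spin condition, and for $Y$ it is $\le1$, resp.\ exactly $1$, exactly when $Y$ does, according as $Y$ is not, resp.\ is, $\pi$-modular; thus both implications hold when $Y$ is not $\pi$-modular, while a colength-$0$ point shows (iii) fails for $t=n-1$. Finally, together with the flatness and normality of the RZ spaces (Theorem~\ref{thm:geo-RZ-canonical}), these pointwise computations already give the family statements, though the lattice argument above has the advantage of being valid over an arbitrary base $S$.
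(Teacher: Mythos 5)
Your part (i) and your treatment of part (ii) are essentially the paper's: the forward direction is exactly the statement that wedging with the extra rank-one filtration carries $L^{n-2,1}_{\Lambda^\flat,-1}(S)$ into $L^{n-1,1}_{\Lambda,-1}(S)$, which the paper establishes (Proposition \ref{prop:ori-red}, Lemma \ref{lem:sig-red}, Proposition \ref{prop:ss-red}, Corollary \ref{cor:ss-red}) by checking compatibility of the $\pi\otimes 1$-eigenspace and orientation decompositions with the orthogonal splitting $\CV=\CV^\flat\oplus\CV^\circ$.

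For part (iii), however, there is a genuine gap: your ``linear-algebra lemma'' asserting an \emph{equivalence} is precisely the hard content of the theorem, and in the form you state it, it does not follow from the compatibility of the decompositions with the orthogonal sum. The point is that $L^{r,s}_{\Lambda,\epsilon}(S)$ is the \emph{image} of $\prescript{n}{}{\Lambda}^{r,s}_\epsilon\otimes\CO_S$, where $\prescript{n}{}{\Lambda}^{r,s}_\epsilon$ is an intersection of lattices inside $\prescript{n}{}{\CV}$, and such intersections do not commute with base change: the paper notes explicitly that the inclusion \eqref{equ:ss-comparison-inc} ``can be strict in general''. Hence knowing $\bigwedge^n\Fil(X)\subseteq L^{n-1,1}_{\Lambda,-1}(S)$ does not formally yield $\bigwedge^{n-1}\Fil(Y)\subseteq L^{n-2,1}_{\Lambda^\flat,-1}(S)$; indeed for $t=n-1$ the equivalence is false at the worst point, and that failure is invisible to the eigenline bookkeeping you describe (over the special fibre $(\Pi+\pi)u=(\Pi-\pi)u=\Pi u$, so the ``distinguished isotropic line'' degenerates). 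To get the converse for $t\neq n-1$ the paper must (a) compute both sides over the special fibre for an \emph{arbitrary} $k$-algebra $R$ using the explicit generators of $L^{n-1,1}_{\bLambda_\kappa,-1}(R)$ from \cite{Luo24} (Theorems \ref{thm:ss-lattice} and \ref{thm:ss-comp-converse-special fiber}, with a nontrivial case analysis for $n$ even, where $u$ is not part of a standard basis), and then (b) upgrade from special plus generic fibre to the integral statement by a flatness argument on local models (Theorem \ref{thm:exc-iso-loc}, using flatness of $\bM^{[t]}_{n-1}$ and the wedge local model as an intermediary). Your Dieudonn\'e-module cross-check only establishes the claim at geometric points; a closed condition holding at all geometric points constrains only the reduced locus, so it does not give the statement for arbitrary $S$, and flatness or normality of the RZ spaces does not repair this — one needs the first-order (local model) comparison, which is where the real work of part (iii) lies.
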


As a consequence, we deduce the following:
\begin{theorem}\label{thm: Z unit}
		Let $u\in \BV(\BX^{[t]}_{n,\ep})$ be a unit length vector, with corresponding special cycle $\CZ(u)_{n,\ep}^{[t]}\subset \CN_{n, \ep}^{[t]}$. Set $\ep(u):=\eta(h(u,u))$ and $\ep^\flat=\ep\ep(u)\eta((-1)^{n-1})$. Then:
		\begin{altitemize}
			\item When $n$ is even and $t=n$, $\CZ(u)^{[t]}_{n,\ep}$ is empty (note that $\ep=1$ here).
			\item In the remaining cases, we have an isomorphism
			\begin{equation*}
				\CZ(u)_{n,\ep}^{[t]}\simeq \CN_{n-1,\ep^\flat}^{[t]},
			\end{equation*}
			except when:
			\item $n$ is odd, $t=n-1$, and $\ep^\flat=-1$, in which case the RHS is not defined  and  the special cycle $\CZ(u)^{[t]}_{n,\ep}$ is the disjoint union of  points $\WT(\Lambda)$ in $\Sing(\CN_{n-1, \ep}^{[t]})$ (indexed by all  almost $\pi$-modular $\Lambda\subset \BV(\BX_{n,\ep}^{[t]})$  containing $u$).
		\end{altitemize}
\end{theorem}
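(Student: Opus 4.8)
The plan is to deduce the statement from Theorem~\ref{thm:ss-comparison} by an explicit splitting-off construction applied to the universal object over $\CZ(u)$. \emph{Step 1 (reduction to a standard vector; the sign $\ep^\flat$).} By Definition~\ref{def:speccy} the group $\U(\BX_{n,\ep}^{[t]})$ acts on $\CN_{n,\ep}^{[t]}$ carrying $\CZ(x)_{n,\ep}^{[t]}$ to $\CZ(gx)_{n,\ep}^{[t]}$, and by Witt extension it acts transitively on vectors of a fixed length in $\BV_{n,\ep}\simeq C$; so it suffices to treat one $u$ in each norm class of unit-length vectors. Fix $\zeta\in O_{F_0}^\times$ with $\eta(\zeta)=\ep(u)$. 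The inductive construction of framing objects in the proof of Theorem~\ref{thm:framing-obj} lets us assume
\[
(\BX_{n,\ep}^{[t]},\iota,\lambda)=\bigl(\BX_{n-1,\ep^\flat}^{[t]}\times\ov{\BE},\ \iota_{\BX_{n-1,\ep^\flat}^{[t]}}\times\iota_{\ov{\BE}},\ \lambda_{\BX_{n-1,\ep^\flat}^{[t]}}\times\zeta\lambda_{\ov{\BE}}\bigr),
\]
with $u$ the tautological special quasi-homomorphism $\ov{\BE}=\{0\}\times\ov{\BE}\hookrightarrow\BX_{n,\ep}^{[t]}$. The value of $\ep^\flat$ is forced by matching Hasse invariants under the orthogonal decomposition $C=\langle u\rangle\perp\langle u\rangle^\perp$: a short discriminant computation shows that the Hasse invariant of $C$ equals that of $\langle u\rangle^\perp$ times $\eta((-1)^{n-1})\,\eta(h(u,u))$; since these equal $-\ep$ and $-\ep^\flat$ respectively (using $\langle u\rangle^\perp\simeq\BV(\BX_{n-1,\ep^\flat}^{[t]})$), one gets exactly $\ep^\flat=\ep\,\ep(u)\,\eta((-1)^{n-1})$.

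\emph{Step 2 (the two maps; the generic case $t\le n-2$).} A morphism $\CN_{n-1,\ep^\flat}^{[t]}\to\CZ(u)_{n,\ep}^{[t]}$ is given by $(Y,\iota_Y,\lambda_Y,\rho_Y)\mapsto(Y\times\ov{\CE},\iota_Y\times\iota_{\ov{\CE}},\lambda_Y\times\zeta\lambda_{\ov{\CE}},\rho_Y\times\id)$: by Theorem~\ref{thm:ss-comparison}(i),(ii) the target is a hermitian $O_F$-module of dimension $n$ and type $t$ satisfying the strengthened spin condition, the framing has height $0$, and the quasi-homomorphism attached to $u$ extends tautologically. Conversely, given an $S$-point $(X,\iota_X,\lambda_X,\rho)$ of $\CZ(u)_{n,\ep}^{[t]}$, the defining condition provides a lift $j\colon\ov{\CE}_S\to X$ of $\rho^{-1}\circ u\circ\rho_{\ov{\CE}}$. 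Since $u$ has \emph{unit} length, the $O_F$-linear composite $\ov{\CE}_S\xrightarrow{\,j\,}X\xrightarrow{\,\lambda_X\,}X^\vee\xrightarrow{\,j^\vee\,}\ov{\CE}_S^\vee$ is a unit multiple of the principal polarization of $\ov{\CE}_S$, hence an isomorphism; so $j$ is a locally split closed immersion whose image is a $\lambda_X$-nondegenerate $O_F$-stable direct summand of $X$, compatibly with the Hodge filtrations. Setting $Y:=j(\ov{\CE}_S)^{\perp_{\lambda_X}}$ gives a canonical orthogonal decomposition $X\simeq Y\times\ov{\CE}_S$ of hermitian $O_F$-modules; since $\zeta\lambda_{\ov{\CE}}$ is an isomorphism, $Y$ has type $t$, and a comparison of Lie algebras gives $Y$ signature $(1,n-2)$. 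For $t\neq n-1$, Theorem~\ref{thm:ss-comparison}(iii) shows $Y$ satisfies the strengthened spin condition, so $(Y,\iota_Y,\lambda_Y,\rho|_Y)\in\CN_{n-1,\ep^\flat}^{[t]}(S)$. The two constructions are mutually inverse, proving $\CZ(u)_{n,\ep}^{[t]}\simeq\CN_{n-1,\ep^\flat}^{[t]}$ for all even $t\le n-2$.

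\emph{Step 3 (the cases $t\in\{n-1,n\}$).} If $n$ is even and $t=n$: the splitting-off argument still goes through (it uses only unit length) and would produce a hermitian $O_F$-module $Y$ of dimension $n-1$ and type $n$, impossible since a type never exceeds the dimension; hence $\CZ(u)_{n,\ep}^{[n]}=\emptyset$. If $n$ is odd and $t=n-1$ (exotic smoothness), then Theorem~\ref{thm:ss-comparison}(iii) is unavailable, but the splitting-off construction still yields $X\simeq Y\times\ov{\CE}_S$ with $Y$ of dimension $n-1$, type $n-1$ (hence $\pi$-modular) and signature $(1,n-2)$, satisfying Kottwitz and wedge (which descend directly, cf.\ Remark~\ref{rmk:unitry-module}), and $\BV(Y)\simeq\langle u\rangle^\perp$ has Hasse invariant $-\ep^\flat$. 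When $\ep^\flat=+1$: here $\BV(Y)$ is non-split, hence contains no $\pi$-modular lattice, so via the lattice model~\eqref{equ:RZ-geo-pts-1} any such $Y$ over a field in fact satisfies the strict $\pi$-modular spin condition, i.e.\ defines a point of $\CN_{n-1}^{[n-1]}$; the closed immersion $\CN_{n-1}^{[n-1]}\hookrightarrow\CZ(u)_{n,\ep}^{[n-1]}$ of Step~2 is thus bijective on $\BF$-points, and since the source is smooth while (one checks via the local equations) $\CZ(u)_{n,\ep}^{[n-1]}$ is regular of the same relative dimension $n-2$, this immersion is an isomorphism. When $\ep^\flat=-1$: now $\BV(Y)$ would be split, and by Theorem~\ref{thm:framing-obj}(iii) there is no $\pi$-modular framing object with split $\BV$ — equivalently, no valid Dieudonné lattice in the split space satisfies the strict spin condition — so the complement $M'$ of the $\tau$-fixed line $O_{\breve F}u$ inside the Dieudonné lattice $M$ of $X$ must itself be $\tau$-stable; hence $M=O_{\breve F}u\oplus M'$ is $\tau$-stable and therefore equals $\WT(\Lambda)$ for the almost $\pi$-modular lattice $\Lambda=O_F u\oplus\Lambda'\subset\BV_{n,\ep}$, which contains $u$. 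Since only finitely many such $\Lambda$ occur and $\CZ(u)_{n,\ep}^{[n-1]}$ is a locally principal closed formal subscheme of $\CN_{n,\ep}^{[n-1]}$ whose support is exactly this finite set of reduced points, it is the asserted disjoint union of worst points.

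\emph{Expected main obstacle.} The heart of the matter is the splitting-off step over a general base $S$: one must verify that $j$ and its orthogonal retraction are compatible with the crystals and the Hodge filtrations of $X$ and $X^\vee$, so that the complement $Y$ is a bona fide hermitian $O_F$-module of signature $(1,n-2)$ and type $t$ — and it is precisely unit length of $u$ that makes this work. The other delicate point is the two cases $t=n-1$, where Theorem~\ref{thm:ss-comparison}(iii) gives nothing: there the analysis must be pushed through the explicit Dieudonné-lattice descriptions~\eqref{equ:RZ-geo-pts-1} together with the (non)existence of $\pi$-modular lattices in the relevant hermitian space, and then transported from $\BF$-points to the full formal scheme using the regularity and exotic smoothness already established for these RZ spaces.
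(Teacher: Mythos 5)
Your Steps 1--2 and the emptiness argument for $n$ even, $t=n$ follow essentially the same route as the paper: the unit length of $u$ produces an idempotent splitting $X\simeq Y\times\ov{\CE}$, and Theorem \ref{thm:ss-comparison}(ii),(iii) transports the strengthened spin condition in both directions when $t\neq n-1$. Your height count showing that a type-$n$ polarization cannot survive on the $(n-1)$-dimensional complement is a clean way to see emptiness.

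The genuine gaps are in Step 3, i.e.\ the two $t=n-1$ cases, where what is missing is control of the \emph{scheme structure}, not just of the geometric points. For $\ep^\flat=-1$ you conclude that $\CZ(u)^{[n-1]}_{n,\ep}$ equals the disjoint union of the points $\WT(\Lambda)$ because its support is that finite set; but a closed formal subscheme supported at finitely many points can perfectly well be non-reduced, so this does not prove the assertion. (The auxiliary claim that $\CZ(u)^{[n-1]}_{n,\ep}$ is locally principal is also false for $n\geq 3$: a locally principal closed subscheme of the regular formal scheme $\CN_{n,\ep}^{[n-1]}$ of relative dimension $n-1$ cannot have zero-dimensional support.) The paper closes this by showing on the local model that the worst point of $\bZ_{n}^{[n-1]}$ admits no first-order deformations (Theorem \ref{thm:exc-iso-loc}(ii), via \eqref{equ:wedge-pi-modular-lm}), whence by Grothendieck--Messing every point of $\CZ(u)^{[n-1]}_{n,\ep}$ is a reduced point. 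Similarly, for $\ep^\flat=+1$ your argument hinges on $\CZ(u)^{[n-1]}_{n,\ep}$ being regular of relative dimension $n-2$, which you defer to ``the local equations''; that is precisely the nontrivial deformation-theoretic input, not a routine check. The paper's route is to combine bijectivity on geometric points (your lattice argument, which is correct) with the identity $\bZ_{n}^{[n-1]}=\bM_{n-1}^{[n-1]}\amalg\{*\}$ of Theorem \ref{thm:exc-iso-loc}(ii), which shows that the closed immersion $\CN_{n-1,\ep^\flat}^{[n-1]}\hookrightarrow\CZ(u)^{[n-1]}_{n,\ep}$ induces an isomorphism on first-order deformation spaces at every point, hence is an isomorphism. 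Your proposal becomes complete once Step 3 is rerouted through (or supplies an equivalent of) that local-model computation.
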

\begin{proof}
By direct computation on Dieudonn\'e modules, we see that $\CZ(u)^{[t]}_{n,\ep}$ is empty when $n$ is even and $t=n$. In all other cases, $\CZ(u)^{[t]}_{n,\ep}$ is non-empty.

For any $(X,\iota,\lambda)\in\CZ(u)^{[t]}_{n,\ep}$, we define 
\begin{equation*}\begin{aligned}\xymatrix{
e:\ov\CE\ar[r]^-{u}&X\ar[r]^{\lambda}&X^\vee\ar[r]^{u^\vee}&\ov\CE^\vee\ar[r]^{\sim}&\ov\CE.
}\end{aligned}\end{equation*}
A standard computation shows that $e^2=e$. Define 
\begin{equation*}
	(X^\flat,\iota_{X^\flat},\lambda_{X^\flat}):=\left(
	(1-e)X,\iota_{(1-e)X},\lambda_{(1-e)X}
	\right).
\end{equation*}
It is a hermitian $O_F$-module of dimension $n-1$. Assume $t\neq n-1$ (we will consider the $t=n-1$ case in Corollary \ref{cor:pi-modular-iso}). 
By Theorem \ref{thm:ss-comparison}, $(X^\flat,\iota_{X^\flat},\lambda_{X^\flat})$ satisfies the strengthened spin condition. 
The framing object $(\BX^\flat,\iota_{\BX^\flat},\lambda_{\BX^\flat})$ is isomorphic to $(\BX_{n-1,\ep^\flat}^{[t]},\iota_{\BX_{n-1,\ep^\flat}^{[t]}},\lambda_{\BX_{n-1,\ep^\flat}^{[t]}})$, denote this isomorphism by $f$.  We have an isomorphism
\begin{equation*}\label{equ:exceptional-isomorphism}
\begin{aligned}\xymatrix{
\CZ(u)^{[t]}_{n,\ep}\ar[r]^-{\sim}&\CN_{n-1,\ep^\flat}^{[t]},&(X,\iota,\lambda,\rho)\ar@{|->}[r]& \Bigl((1-e)X, (1-e)\iota, (1-e)\lambda, f\circ(1-e)\eta, (1-e)\rho)\Bigr),
}\end{aligned}\end{equation*}
with the inverse given by
\begin{equation*}\begin{aligned}\xymatrix{
\CN_{n-1,\ep^\flat}^{[t]}\ar[r]^-{\sim}&\CZ(u)^{[t]}_{n,\ep},&(Y,\iota_Y,\lambda_Y,\rho_Y)\ar@{|->}[r]&(Y\times\CE,\iota_Y\times\iota_\CE,\lambda_Y\times\lambda_\CE,(\rho_Y\circ f^{-1})\times\zeta\rho_{\CE}),
}\end{aligned}\end{equation*}
where $\zeta=h(u,u)\in O_{F_0}^\times$.
\end{proof}
There is  one more exceptional isomorphism, as follows.
\begin{theorem}\label{thm:Y unit}
Let $n$ be even and $t=n$. Let $u\in \BV(\BX_{n}^{[n]})$ be a unit length vector (note that there is no need to mention the epsilon factor, as $\ep=1$). Set $\ep^\flat=\ep(u)\eta((-1)^{n-1})$.
Define $ \CN_{n-1, \ep^\flat}^{[n-2],\circ}$ by the following  fiber product diagram,
  \begin{equation*}
 \begin{aligned}
 \xymatrix{
 \CN_{n-1, \ep^\flat}^{[n-2],\circ}\ar@{}[rd]|{\square}\ar[d]\ar@{^(->}[r]&\CN_{n}^{[n-2,n]}\ar[d]\\
 \CN_{n-1, \ep^\flat}^{[n-2]}\ar@{^(->}[r]&\CN_{n}^{[n-2]} .&}
 \end{aligned}
 \end{equation*}
 Then the morphism $\CN_{n}^{[n-2,n]}\to \CN_{n}^{[n-2]}$ is a trivial double covering, cf. \cite[Prop. 6.4]{RSZ2}. Furthermore,  the composition $ \CN_{n-1,\ep^\flat}^{[n-2],\circ}\to \CN_{n}^{[n-2,n]}\to \CN_{n}^{[n]}$ factors through $\CY(u)^{[n]}_{n}$ and induces an isomorphism
 $$
 \CN_{n-1,\ep^\flat}^{[n-2],\circ}\simeq \CY(u)^{[n]}_{n}.
$$In particular, there is a natural morphism 
 $$
 \CY(u)^{[n]}_{n}\to  \CN_{n-1,\ep^\flat}^{[n-2]},
$$
which is a trivial double covering.
Furthermore, $\CY(u)^{[n]}_{n}=\CZ(\pi u)^{[n]}_{n}$. 
\end{theorem}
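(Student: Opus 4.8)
The plan is to establish, in this order: (1) the identity $\CY(u)^{[n]}_n=\CZ(\pi u)^{[n]}_n$; (2) that the composite $\CN_{n-1,\ep^\flat}^{[n-2],\circ}\to\CN_n^{[n-2,n]}\to\CN_n^{[n]}$ factors through $\CY(u)^{[n]}_n$; (3) that the resulting morphism is an isomorphism. The trivial double covering $\CY(u)^{[n]}_n\to\CN_{n-1,\ep^\flat}^{[n-2]}$ is then a formal consequence. \emph{Step 1.} In the $\pi$-modular case $\ker\lambda_X\subseteq X[\iota_X(\pi)]$ has height $q^n$, which is also the height of $X[\iota_X(\pi)]$, so $\ker\lambda_X=X[\iota_X(\pi)]$. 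Using the dual isogeny $\lambda_X^\vee$ with $\lambda_X^\vee\circ\lambda_X=\iota_X(\pi)$ (Definition~\ref{def:unitary-module}), one inclusion $\CY(u)^{[n]}_n\subseteq\CZ(\pi u)^{[n]}_n$ is immediate: if $\lambda_X\circ u$ extends to a homomorphism $\ov\CE\to X^\vee$, then $\iota_X(\pi)\circ u=\lambda_X^\vee\circ\lambda_X\circ u$ extends to a homomorphism $\ov\CE\to X$. For the reverse inclusion I would compare these two closed formal subschemes of the smooth $\CN_n^{[n]}$: both are flat over $\Spf O_{\breve F}$, and they have the same $\BF$-points, since by \eqref{equ:RZ-geo-pts-pi-modular} every $M\in\CN_n^{[n]}(\BF)$ satisfies $\Pi M^\vee=M$, hence $M^\vee=\Pi^{-1}M$ and $u\in M^\vee\iff\Pi u=\pi u\in M$, while $\CZ(v)^{[n]}_n(\BF)=\{M:v\in M\}$ and $\CY(v)^{[n]}_n(\BF)=\{M:v\in M^\vee\}$. (Alternatively one argues moduli-theoretically, using $X^\vee\simeq X/X[\iota_X(\pi)]$.)

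\emph{Step 2.} Apply Theorem~\ref{thm: Z unit} in dimension $n$ and type $n-2$; since $\ep(\BX_n^{[n-2]})=1$, the $\flat$-invariant produced there is again $\ep^\flat=\ep(u)\eta((-1)^{n-1})$, and we obtain the exceptional embedding $\CN_{n-1,\ep^\flat}^{[n-2]}\simeq\CZ(\alpha u)^{[n-2]}_n\hookrightarrow\CN_n^{[n-2]}$, which is the embedding in the fibre-product diagram; here $\alpha\colon\BX_n^{[n]}\to\BX_n^{[n-2]}$ is the fixed polarization-compatible isogeny and $\alpha u:=\alpha\circ u\in\BV(\BX_n^{[n-2]})$ is the transported vector. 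A point of $\CN_{n-1,\ep^\flat}^{[n-2],\circ}(S)$ is then a point of $\CN_n^{[n-2,n]}(S)$, i.e.\ an isogeny $\wit\alpha\colon X^{[n]}\to X^{[n-2]}$ lifting the framing isogeny, such that the quasi-homomorphism attached to $\alpha u$ extends to a homomorphism $\psi\colon\ov\CE_S\to X^{[n-2]}$. Since the type drops by $2$, $\ker\wit\alpha\subseteq X^{[n]}[\iota(\pi)]$, so there is a unique framing-level isogeny $\alpha'\colon\BX_n^{[n-2]}\to\BX_n^{[n]}$ with $\alpha'\circ\alpha=\iota_{\BX_n^{[n]}}(\pi)$, lifting to the unique $\wit\alpha'\colon X^{[n-2]}\to X^{[n]}$ with $\wit\alpha'\circ\wit\alpha=\iota_{X^{[n]}}(\pi)$; then $\wit\alpha'\circ\psi$ is a homomorphism $\ov\CE_S\to X^{[n]}$ extending the quasi-homomorphism attached to $\alpha'\circ(\alpha u)=\iota_{\BX_n^{[n]}}(\pi)\circ u=\pi u\in\BV(\BX_n^{[n]})$. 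Hence $X^{[n]}\in\CZ(\pi u)^{[n]}_n=\CY(u)^{[n]}_n$, which gives the factorization $\Psi\colon\CN_{n-1,\ep^\flat}^{[n-2],\circ}\to\CY(u)^{[n]}_n$.

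\emph{Step 3.} To see $\Psi$ is an isomorphism, I first check bijectivity on $\BF$-points. A point of $\CN_{n-1,\ep^\flat}^{[n-2],\circ}(\BF)$ is a chain $M^{[n]}\subseteq M^{[n-2]}$ of colength $1$ with $M^{[n]}$ a $\pi$-modular lattice and $\alpha u\in M^{[n-2]}$, and $\Psi$ forgets $M^{[n-2]}$. Given $M^{[n]}\in\CY(u)^{[n]}_n(\BF)$ we have $\alpha u\in (M^{[n]})^\vee$; moreover $\alpha u\notin M^{[n]}$, because $\CZ(u)^{[n]}_n$ is empty (Theorem~\ref{thm: Z unit}), so $M^{[n-2]}:=M^{[n]}+O_{\breve F}\,\alpha u$ is the unique colength-$1$ over-lattice containing $\alpha u$, and a direct check (using $\tau(\alpha u)=\alpha u$ and the lattice description of \S\ref{sec:RZ-deeper}) shows $(M^{[n]}\subseteq M^{[n-2]})$ is a point of $\CN_n^{[n-2,n]}(\BF)$ lying over $\CZ(\alpha u)^{[n-2]}_n$. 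Thus $\Psi$ is a bijection on $\BF$-points. To promote this to an isomorphism of formal schemes I would either (a) construct the inverse morphism directly, recovering $X^{[n-2]}$ from the $\CY$-datum $\phi\colon\ov\CE_S\to (X^{[n]})^\vee$ extending $\lambda_{X^{[n]}}\circ u$ as the quotient of $X^{[n]}$ by a canonical rank-$q$ subgroup of $X^{[n]}[\iota(\pi)]$ read off from $\phi$, and verify the two constructions are mutually inverse; or (b) use that $\CN_{n-1}^{[n-2]}$ — hence its trivial double cover $\CN_{n-1,\ep^\flat}^{[n-2],\circ}$ — is smooth over $\Spf O_{\breve F}$ by Theorem~\ref{thm:geo-RZ-canonical}(ii) (type $n-2$ is one less than the dimension $n-1$), reduce to showing $\Psi$ is a closed immersion which is bijective on $\BF$-points onto the flat $\CY(u)^{[n]}_n$, and conclude. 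Finally, transporting the left vertical arrow of the fibre-product square along $\Psi^{-1}$ identifies $\CY(u)^{[n]}_n\to\CN_{n-1,\ep^\flat}^{[n-2]}$ with the base change of the trivial double cover $\CN_n^{[n-2,n]}\to\CN_n^{[n-2]}$ (cf.~\cite[Prop.~6.4]{RSZ2}) along $\CN_{n-1,\ep^\flat}^{[n-2]}\hookrightarrow\CN_n^{[n-2]}$, hence a trivial double cover.

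\emph{Main obstacle.} The decisive point is the last part of Step~3: promoting the bijection on geometric points to an isomorphism of formal schemes, i.e.\ pinning down the non-reduced, mixed-characteristic structure of $\CY(u)^{[n]}_n$ inside the $\pi$-modular RZ space $\CN_n^{[n]}$. This is exactly the content of \cite[Thm.~5.5]{Yao24} and relies on a careful local analysis of the defining conditions via the strengthened spin / Kr\"amer conditions; the epsilon bookkeeping ($\ep^\flat$, and the matching of the two sheets of $\CN_{n-1,\ep^\flat}^{[n-2],\circ}$ with the two components of $\CN_n^{[n]}$) also requires care but is comparatively routine given Theorem~\ref{thm: Z unit}.
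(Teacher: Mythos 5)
Your proposal is essentially compatible with what the paper does: the paper's entire proof of Theorem~\ref{thm:Y unit} is a citation of \cite[Thm.~5.5]{Yao24}, plus the remark that Yao only treats $\ep^\flat=1$ and that the case $\ep^\flat=-1$ is reduced to it via the isomorphism $\zeta_*$ of \eqref{equ:odd-RZ-iso}. You reconstruct the skeleton that Yao's proof fills in (the identity $\CY(u)^{[n]}_n=\CZ(\pi u)^{[n]}_n$ via the degree count $\ker\lambda_X=X[\iota_X(\pi)]$, the factorization through $\CY(u)^{[n]}_n$ via the dual isogeny $\wit\alpha'$, bijectivity on $\BF$-points using the emptiness of $\CZ(u)^{[n]}_n$), and you correctly isolate the decisive step and attribute it to Yao. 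Two points deserve flagging. First, your fallback option (b) in Step~3 is not by itself a valid argument: a closed immersion from a smooth formal scheme into a flat one that is bijective on geometric points need not be an isomorphism; what is actually needed is the comparison of first-order deformation spaces via Grothendieck--Messing and the local model (this is exactly how the paper argues in the parallel Corollary~\ref{cor:pi-modular-iso} and in Theorem~\ref{thm:Yao-isom}, and it is the content of Yao's local analysis). Since you explicitly defer this step to \cite[Thm.~5.5]{Yao24}, this is a presentational rather than a logical gap, but option (b) as written should not be offered as a self-contained alternative. Second, you omit the reduction of the case $\ep^\flat=-1$ to Yao's case $\ep^\flat=1$; the paper does this by transporting everything along $\zeta_*:\CN_{n-1,1}^{[n-2]}\to\CN_{n-1,-1}^{[n-2]}$, and your ``epsilon bookkeeping'' remark should be made precise in exactly this way.
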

\begin{proof}

 This is proved in the paper of Yao \cite[Thm. 5.5]{Yao24}. In his paper, he only considers the situation where $\ep^\flat=1$, but the other case $\ep^\flat=-1$ follows from \eqref{equ:odd-RZ-iso}.
\end{proof}

In the splitting model, we have the following:
\begin{theorem}\label{thm: Z spt unit}
Let $u\in \BV(\BX^{[t]}_{n,\ep})$ be any non-zero vector, with corresponding special cycles $\CZ(u)_{n,\ep}^{[t], \spt}$ and  $\CY(u)_{n,\ep}^{[t], \spt}$ on $ \CN_{n, \ep}^{[t],\spt}$. 
\begin{altenumerate}
\item The special cycles $\CZ(u)^{[t],\spt}_{n,\ep}$ and $\CY(u)^{[t],\spt}_{n,\ep}$ are Cartier divisors.

\item Assume $u$ is a unit length vector, and set $\ep(u):=\eta(h(u,u))$ and $\ep^\flat=\ep\ep(u)\eta((-1)^{n-1})$. Then
\begin{altitemize}
\item When $n$ is even and $t=n$, $\CZ(u)^{[t], \spt}_{n,\ep}$ is empty.
\item In the remaining cases, we have  isomorphisms
\begin{equation*}
	\CZ(u)_{n,\ep}^{[t],\spt}\simeq \CN_{n-1, \varepsilon^\flat}^{[t],\spt},
\end{equation*}
except when
\item When $n$ is odd, $t=n-1$, and $\ep^\flat=-1$, then the RHS is not defined  and  $\CZ(u)^{[t], \spt}_{n,\ep}$ is  empty.
\end{altitemize}

\end{altenumerate}
\end{theorem}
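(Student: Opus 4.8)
The plan has two parts: (i) is proved by realizing $\CZ(u)_{n,\ep}^{[t],\spt}$ and $\CY(u)_{n,\ep}^{[t],\spt}$ as zero loci of sections of line bundles on the regular formal scheme $\CN_{n,\ep}^{[t],\spt}$; (ii) is deduced from Theorem~\ref{thm: Z unit} together with a comparison of the blow-up centres defining the two splitting models.

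For part (i), recall that $\CN_{n,\ep}^{[t],\spt}$ carries the universal hermitian $O_F$-module $(X,\iota_X,\lambda_X)$ with its splitting structure $(\Fil^0(X),\Fil^0(X^\vee))$, where $\Fil^0(X)$ and $\Fil^0(X^\vee)$ are line bundles. The first step is to produce a section $s_u$ of a suitable line bundle $\CL_u$ on $\CN_{n,\ep}^{[t],\spt}$, built from the framing $\rho_X$, the vector $u$, and the refined filtration via the crystal $\BD_X$, whose vanishing locus $V(s_u)$ is the locus on which the quasi-homomorphism $\rho_X^{-1}\circ u\circ\rho_{\ov\CE}$ extends to a genuine homomorphism $\ov\CE_S\to X$ \emph{compatibly with the splitting structure}. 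Following the Krämer-type analysis of the refined filtration in \cite{HLS1}, imposing this compatibility is exactly what singles out the strict transform $\CZ(u)_{n,\ep}^{[t],\spt}$ inside the full pull-back $\pi^{-1}(\CZ(u)_{n,\ep}^{[t]})$, so that $\CZ(u)_{n,\ep}^{[t],\spt}=V(s_u)$; likewise $\CY(u)_{n,\ep}^{[t],\spt}=V(s_u^\vee)$ for a section $s_u^\vee$ built from $\lambda_X$ and $X^\vee$. Since $\CN_{n,\ep}^{[t],\spt}$ is regular with integral connected components, since a strict transform has no component contained in $\Exc$, and since for $u\neq 0$ the cycles are proper subschemes (both facts being visible away from $\Exc$, where $\CZ(u)_{n,\ep}^{[t]}$ is already an effective Cartier divisor on the semi-stable locus $\CN_{n,\ep}^{[t]}\setminus\Sing$ by the usual deformation-theoretic description of $\CZ$-divisors), the sections $s_u$, $s_u^\vee$ are nonzerodivisors and their vanishing loci are effective Cartier divisors. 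The real work here — and the step I expect to be the main obstacle — is the explicit local computation in the blow-up charts near $\Exc$, using the description of the splitting model in \cite{HLS1}, that identifies $V(s_u)$ with the strict transform and not merely with $\pi^{-1}(\CZ(u)_{n,\ep}^{[t]})$.

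For part (ii), the degenerate cases are immediate. If $n$ is even and $t=n$, then $\CN_n^{[n],\spt}\simeq\CN_n^{[n]}$ by Theorem~\ref{thm:geo-RZ}(ii) and $\CZ(u)_n^{[n]}=\emptyset$ by Theorem~\ref{thm: Z unit}, so $\CZ(u)_n^{[n],\spt}=\emptyset$. If $n$ is odd, $t=n-1$ and $\ep^\flat=-1$, then by Theorem~\ref{thm: Z unit} the cycle $\CZ(u)_{n,\ep}^{[n-1]}$ is set-theoretically a disjoint union of worst points of $\CN_{n,\ep}^{[n-1]}$; hence $\CZ(u)_{n,\ep}^{[n-1]}\setminus\Sing=\emptyset$, and by Definition~\ref{def:speccy}(iii) the cycle $\CZ(u)_{n,\ep}^{[n-1],\spt}$ is the Zariski closure of this locus in $\CN_{n,\ep}^{[n-1],\spt}$, hence empty.

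For the main case of part (ii), Theorem~\ref{thm: Z unit} gives an isomorphism $\CZ(u)_{n,\ep}^{[t]}\simeq\CN_{n-1,\ep^\flat}^{[t]}$, while $\CN_{n,\ep}^{[t],\spt}$ and $\CN_{n-1,\ep^\flat}^{[t],\spt}$ are the blow-ups of $\CN_{n,\ep}^{[t]}$ and $\CN_{n-1,\ep^\flat}^{[t]}$ in their respective worst points. The key point is that this isomorphism carries the worst points of $\CN_{n,\ep}^{[t]}$ lying on $\CZ(u)_{n,\ep}^{[t]}$ bijectively onto the worst points of $\CN_{n-1,\ep^\flat}^{[t]}$: on Dieudonné lattices $\WT(\Lambda)\in\CZ(u)_{n,\ep}^{[t]}$ if and only if $u\in\Lambda$, and because $u$ has unit length the orthogonal splitting $\Lambda=O_F u\perp(\Lambda\cap u^\perp)$ gives a type-preserving bijection between vertex lattices of type $t$ in $\BV_{n,\ep}$ containing $u$ and vertex lattices of type $t$ in $u^\perp$, which Theorem~\ref{thm: Z unit} identifies with $\BV_{n-1,\ep^\flat}$, compatibly with the map $X\mapsto(1-e)X$. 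Granting this, I would construct a morphism $\CN_{n-1,\ep^\flat}^{[t],\spt}\to\CN_{n,\ep}^{[t],\spt}$ from the universal property of the flat closure, starting from $(Y,\dots)\mapsto(Y\times\CE,\dots)$ endowed with the induced splitting structure, show that it factors through $\CZ(u)_{n,\ep}^{[t],\spt}$ and restricts to the isomorphism of Theorem~\ref{thm: Z unit} away from $\Exc$, and then upgrade it to a global isomorphism: one inverts it by $(X,\dots)\mapsto((1-e)X,\dots)$ with the restricted splitting structure, using Theorem~\ref{thm:ss-comparison} and its refinement at the level of splitting structures (the Krämer condition analysis of \cite{HLS1}) to see that the resulting data is admissible, and using the flatness and semi-stability of $\CN_{n-1,\ep^\flat}^{[t],\spt}$ (Theorem~\ref{thm:geo-RZ}(i)) together with part (i). Once the splitting model and its special cycles are available in this moduli-theoretic form, this last argument is bookkeeping with vertex lattices plus a flat-descent, so the crux of the whole proof remains the local analysis near $\Exc$ in part (i).
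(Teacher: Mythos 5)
Your treatment of part (ii) is essentially the paper's argument: the paper simply invokes Theorem \ref{thm: Z unit} together with the definition of the splitting cycle as a strict transform and the fact (Remark \ref{rmk:split-model}(iii), \cite[Thm. 1.3.1]{HLS1}) that the splitting model is the blow-up in the worst points; your observation that the unit-length hypothesis makes $\langle u\rangle$ a direct summand of every vertex lattice containing $u$, so that worst points on $\CZ(u)^{[t]}_{n,\ep}$ match worst points of $\CN^{[t]}_{n-1,\ep^\flat}$, is exactly the point that makes ``strict transform of the $\CZ$-divisor'' equal to ``blow-up of $\CN^{[t]}_{n-1,\ep^\flat}$ in its worst points''. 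Your handling of the two degenerate cases is also fine.

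Part (i) is where your proposal has a genuine gap, and it is a gap you create for yourself by choosing a harder route. You propose to realize the \emph{strict transform} $\CZ(u)^{[t],\spt}_{n,\ep}$ as the zero locus $V(s_u)$ of a section of a line bundle encoding compatibility with the splitting structure, and you yourself flag the identification $V(s_u)=\CZ(u)^{[t],\spt}_{n,\ep}$ (rather than $=\pi^{-1}(\CZ(u)^{[t]}_{n,\ep})$) as ``the main obstacle''; this step is not carried out, and it is in fact the delicate point. What \cite[Prop. 1.5.1]{HLS1} gives — and what a Kr\"amer-type section naturally cuts out — is that the \emph{pull-back} $\pi^{-1}(\CZ(u)^{[t]}_{n,\ep})$ is a Cartier divisor on $\CN^{[t],\spt}_{n,\ep}$; this pull-back will in general contain components of $\Exc$ with multiplicities, so your claimed identity of $V(s_u)$ with the strict transform is at best a nontrivial multiplicity computation in the blow-up charts and at worst false as stated. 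The paper avoids all of this: once the pull-back is known to be Cartier, the strict transform is obtained locally by deleting the factors of a local equation supported on $\Exc$, and as a codimension-one closed formal subscheme of the \emph{regular} (hence locally factorial) formal scheme $\CN^{[t],\spt}_{n,\ep}$ it is again Cartier. No identification of the strict transform with a natural zero locus is needed. I would therefore replace your construction of $s_u$ by the citation of \cite[Prop. 1.5.1]{HLS1} plus this factoriality argument; as written, the crux of your part (i) remains unproved.
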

\begin{proof}
\begin{altenumerate}
\item By \cite[Prop. 1.5.1]{HLS1}, the pull-backs of special cycles $\CZ(u)$ and $\CY(u)$ into the splitting RZ spaces are Cartier divisors. Therefore $\CZ(u)^{[t],\spt}_{n,\ep}$ and  $\CY(u)^{[t],\spt}_{n,\ep}$, as codimension $1$ closed subschemes in the regular formal scheme $\CN_{n, \ep}^{[t],\spt}$, are also Cartier divisors.
\item This follows from Theorem \ref{thm: Z unit}, and the definition of splitting $\CZ$-cycles.\qedhere
\end{altenumerate}
\end{proof}

 The divisors appearing in Theorem \ref{thm: Z spt unit}  are exceptional in that they are isomorphic to a splitting RZ space of lower dimension for a maximal parahoric level. The following conjecture would tell us that there are no further exceptional special divisors. 

\begin{conjecture}\label{conj:reg-exceptional}
Let $u\in \BV(\BX^{[t]}_{n,\ep})$ be any non-zero vector, with corresponding special cycle $\CZ(u)_{n,\ep}^{[t]}$. Assume that $\CZ(u)_{n,\ep}^{[t],\spt}$ is a non-empty regular scheme (hence $t\neq n$). Then $u$ is a unit-length vector and hence   $\CZ(u)_{n,\ep}^{[t], \spt}\subset \CN_{n, \ep}^{[t],\spt}$ is an exceptional special divisor.

\end{conjecture}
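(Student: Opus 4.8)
The plan is to argue by contradiction: suppose $u\in\BV(\BX^{[t]}_{n,\ep})$ is non-zero with $v(h(u,u))=j>0$, and show that $\CZ(u)^{[t],\spt}_{n,\ep}$ cannot be regular. The key is to relate the divisor $\CZ(u)^{[t],\spt}_{n,\ep}$ to an exceptional special divisor of unit type by factoring $u=\pi^{k}u'$ where $u'$ is \emph{primitive} in a suitable sense, and then tracking how the $\CZ$-condition degenerates along the worst points. First I would set up the local analysis on the lattice model: by \eqref{equ:RZ-geo-pts} a point $M\in\CN_{n,\ep}^{[t]}(\BF)$ lies in $\CZ(u)^{[t]}_{n,\ep}$ iff $u\in M$ (under the identification $\BV\simeq C\subset N$ and the extension criterion of \cite[Prop. 2.9]{RZ}), so on geometric points $\CZ(u)$ depends only on the $O_F$-line $O_F u$; the scheme structure, however, sees the length. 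The strict transform to the splitting model replaces $\CZ(u)$ by its blow-up behavior at the $\WT(\Lambda)$ for $\Lambda$ a vertex lattice of type $t$ containing $u$.

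The heart of the argument is a local computation at such a worst point $\WT(\Lambda)$, using the He--Luo--Shi description of $\CN^{[t],\spt}_{n,\ep}$ as the blow-up of $\CN^{[t]}_{n,\ep}$ in $\Sing$ (\cite[Thm. 1.3.1]{HLS1}), together with the explicit equations for the splitting model near an exceptional fiber. When $v(h(u,u))=0$, Theorem \ref{thm: Z spt unit}(ii) identifies $\CZ(u)^{[t],\spt}_{n,\ep}$ with $\CN^{[t],\spt}_{n-1,\ep^\flat}$, which is regular (indeed semi-stable) by Theorem \ref{thm:geo-RZ}(i). When $j=v(h(u,u))>0$, I would show that $u$ lies in $\pi\Lambda^\vee$ for every type-$t$ vertex lattice $\Lambda$ containing $u$ with $\Lambda$ appearing in the worst-point locus of $\CZ(u)$, so that after blowing up, the proper transform of $\CZ(u)$ acquires a component structure forcing either (a) a non-reduced or reducible special fiber at the point of $\Exc$ lying over $\WT(\Lambda)$, or (b) the vanishing locus of the two local coordinates cutting out $\CZ(u)^{\spt}$ to be singular. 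Concretely: in the splitting model the additional filtration $\Fil^0(X)$ near a worst point is governed by the Kr\"amer-type equations, and the $\CZ$-cycle condition "$u$ extends to a homomorphism" translates into the vanishing of a section of the universal line bundle; when $v(h(u,u))>0$ this section factors through $\pi$ times a unit, producing a non-Cartier or singular locus after strict transform. The main obstacle will be making this degeneration precise for \emph{all} types $t$ simultaneously, since the splitting-model equations are type-dependent; I expect one must invoke the uniform moduli-theoretic description via the strengthened spin condition from \cite{Luo24} and argue at the level of the associated local model $\bM_n^{[t]}$, reducing the regularity question to a statement about the strict transform of a hyperplane section of the local model under blow-up at the worst point.

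An alternative, perhaps cleaner route — which I would pursue in parallel — is dimension-theoretic. Every exceptional special divisor $\CZ(u)^{[t],\spt}_{n,\ep}$ of unit type is flat over $\Spf O_{\breve F}$ with special fiber of the expected dimension $n-2$ and generic fiber non-empty (via Theorem \ref{thm: Z spt unit} and Theorem \ref{thm:geo-RZ}). If instead $v(h(u,u))>0$, one should show the generic fiber of $\CZ(u)^{[t],\spt}_{n,\ep}$ is still non-empty (the orthogonal complement $\langle u\rangle^\perp$ is non-degenerate of dimension $n-1$) but the special fiber picks up an extra component supported over $\Exc$, coming from the entire exceptional $\BP^{?}$-bundle over each $\WT(\Lambda)$ with $u\in\pi\Lambda^\vee$; two components of the special fiber meeting non-transversally, or a special fiber of dimension $>n-2$, contradicts regularity (a regular flat $O_{\breve F}$-scheme of relative dimension $n-2$ has equidimensional special fiber of that dimension). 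The crux is verifying the presence of this extra vertical component, which again comes down to the lattice-model bookkeeping: for $v(h(u,u))>0$ there exist type-$t$ vertex lattices $\Lambda$ with $u\in\pi\Lambda^\vee\subset\Lambda$ but $u\notin$ (the analogous "saturated" sublattice controlling the unit case), and the whole exceptional fiber over $\WT(\Lambda)$ satisfies the $\CZ(u)$-condition after strict transform. I would expect the final write-up to combine this dimension count with the flatness of $\CZ(u)^{[t],\spt}$ (Theorem \ref{thm: Z spt unit}(i), which gives that it is Cartier, hence pure of codimension one in the regular ambient $\CN^{[t],\spt}_{n,\ep}$) to conclude.
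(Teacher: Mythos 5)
First, a point of framing: the statement you are addressing is stated in the paper as an open conjecture (Conjecture \ref{conj:reg-exceptional}); the paper offers no proof of it, so there is nothing to compare your argument against. What you have written is a research plan, not a proof, and by your own phrasing (``I would show\ldots'', ``I expect one must invoke\ldots'', ``the crux is verifying\ldots'') the decisive steps are all deferred. The central claim of both of your routes --- that for $v(h(u,u))>0$ the strict transform $\CZ(u)_{n,\ep}^{[t],\spt}$ acquires either a singular point or an excess vertical component over $\Exc$ --- is never established by any computation; it is precisely the content of the conjecture.

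Beyond incompleteness, two steps as proposed would fail. (1) You suggest the degeneration manifests as ``a non-Cartier or singular locus after strict transform''; but Theorem \ref{thm: Z spt unit}(i) asserts that $\CZ(u)_{n,\ep}^{[t],\spt}$ is a Cartier divisor for \emph{every} non-zero $u$, so non-Cartierness cannot be the failure mode, and Cartier divisors in the regular ambient $\CN_{n,\ep}^{[t],\spt}$ are automatically pure of codimension one --- so a crude dimension count of the special fiber will not by itself detect non-regularity either; you would need to exhibit an actual non-regular local ring. (2) Your second route rests on the claim that ``the whole exceptional fiber over $\WT(\Lambda)$ satisfies the $\CZ(u)$-condition after strict transform'' and hence contributes an extra vertical component. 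This is in direct tension with Definition \ref{def:speccy}(iii): the splitting cycle is defined by discarding exactly the sections of $\CO_{\pi^{-1}(\CZ(u))}$ supported on $\Sing$ (equivalently, as the Zariski closure of $\CZ(u)\setminus\Sing$), so components contained in $\Exc$ are removed by construction. To make the argument work you would instead have to show that the closure of $\CZ(u)\setminus\Sing$ in $\CN_{n,\ep}^{[t],\spt}$ is itself singular when $v(h(u,u))>0$, which is a different and harder local statement that your sketch does not address. As it stands, the proposal neither proves the conjecture nor identifies a mechanism consistent with the paper's established results.
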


 \begin{remark}\label{rmk:char-regular-Y}
 To classify all cases when $\CY(u)_{n,\ep}^{[t], \spt}$ is non-empty regular seems more complicated.  Assume that $h(u,u)\in O_{F_0}$ and that $t\neq 0$.  Then $\CZ(u)_{n,\ep}^{[t], \spt}\subsetneq \CY(u)_{n,\ep}^{[t], \spt}$. This leads us to suspect  that if  $\CY(u)_{n,\ep}^{[t], \spt}$ is non-empty regular, then $t=n$ if $n$ is even and $t=n-1$ if $n$ is odd, and that $u$ is a unit-length vector. When $n$ is even, we conjecture that, if $\CY(u)_{n,\ep}^{[n],\spt}$ is a non-empty regular scheme, then $u$ is a unit-length vector and hence   $\CY(u)_{n,\ep}^{[n], \spt}\subset \CN_{n, \ep}^{[n],\spt}$ is an exceptional special divisor. When $n$ is odd, we conjecture that the divisor $\CY(u)_{n,\ep}^{[n-1], \spt}\subset \CN_{n, \ep}^{[n-1],\spt}$ is regular  if and only if $u$ has unit-length and $\ep^\flat=-1$. However, in the latter case we cannot relate this to an RZ space of dimension $n-1$. 
 \end{remark}

 The  special cycles $\CZ(u)_{n,\ep}^{[t]}\subset \CN_{n, \ep}^{[t]}$ and  $\CY(u)_{n,\ep}^{[t]}\subset \CN_{n, \ep}^{[t]}$ are not Cartier divisors, comp. \cite[Rem. 2.5.1]{BHKRY}. The following conjecture seems the best-possible replacement.

\begin{conjecture}
Let $u\in \BV(\BX^{[t]}_{n,\ep})$ be any non-zero vector. There exists a unique Cartier divisor $\fkZ(u)_{n,\ep}^{[t]}$ resp. $\fkY(u)_{n,\ep}^{[t]}$ on $\CN_{n, \ep}^{[t]}$ such that its restriction to $\CN_{n, \ep}^{[t]}\setminus \mathrm{Sing}$ equals $2\CZ(u)_{n,\ep}^{[t]}$ resp. $2\CY(u)_{n,\ep}^{[t]}$.
\end{conjecture}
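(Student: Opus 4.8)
The statement is non-trivial only when $n\geq 2$ and $0\leq t\leq n-2$: for $n=1$, or whenever $t\in\{n,n-1\}$, the space $\CN_{n,\ep}^{[t]}$ is regular by Theorem~\ref{thm:geo-RZ-canonical}, so $2\CZ(u)$ and $2\CY(u)$ are already Cartier and one takes $\fkZ(u):=2\CZ(u)$, $\fkY(u):=2\CY(u)$. Assume then $n\geq 2$ and $0\leq t\leq n-2$. Uniqueness is soft: $\Sing(\CN_{n,\ep}^{[t]})$ is a discrete set of closed points of codimension $n\geq 2$ and $\CN_{n,\ep}^{[t]}$ is normal, so two Cartier divisors agreeing over $\CN_{n,\ep}^{[t]}\setminus\Sing$ differ by a Cartier divisor supported on $\Sing$; on a normal scheme such a divisor, together with a trivialization of its associated line bundle, extends uniquely across a codimension-$\geq 2$ locus and hence is $0$. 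The same Hartogs reasoning reduces existence to a local question: $\fkZ(u)$ exists (and is then necessarily $\pi_*$ of an explicit divisor on the splitting model, see below) if and only if the rank-one reflexive sheaf on $\CN_{n,\ep}^{[t]}$ restricting to $\CO_{\CN_{n,\ep}^{[t]}}(-2\CZ(u))$ over $\CN_{n,\ep}^{[t]}\setminus\Sing$ is invertible; equivalently, for every worst point $x=\WT(\Lambda)$ the class of $\CZ(u)$ in the local class group $\Cl(\wh{\CO}_{\CN_{n,\ep}^{[t]},x})$ is $2$-torsion, and similarly for $\CY(u)$.

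To verify this $2$-torsion statement I would pass to the splitting model $\pi\colon\CN_{n,\ep}^{[t],\spt}\to\CN_{n,\ep}^{[t]}$, which is regular (Theorem~\ref{thm:geo-RZ}) and is the blow-up of $\CN_{n,\ep}^{[t]}$ along $\Sing$, with exceptional locus $\Exc=\coprod_\Lambda\Exc_\Lambda$ (Remark~\ref{rmk:split-model}). By \cite[Prop.~1.5.1]{HLS1} (cf.\ Theorem~\ref{thm: Z spt unit}(i)) the total transform of $\CZ(u)$ is a Cartier divisor on $\CN_{n,\ep}^{[t],\spt}$, of the form $\CZ(u)^{[t],\spt}+\sum_\Lambda a_\Lambda\Exc_\Lambda$. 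Since $\CN_{n,\ep}^{[t]}$ is normal and Cohen--Macaulay and $\CN_{n,\ep}^{[t],\spt}$ is a (semistable) resolution contracting the $\Exc_\Lambda$, one expects $R\pi_*\CO_{\CN_{n,\ep}^{[t],\spt}}=\CO_{\CN_{n,\ep}^{[t]}}$, so that by the contraction criterion a Cartier divisor on $\CN_{n,\ep}^{[t],\spt}$ descends to $\CN_{n,\ep}^{[t]}$ exactly when its restriction to each $\Exc_\Lambda$ is trivial in $\Pic(\Exc_\Lambda)$. Thus it would suffice to find integers $c_\Lambda$ with
\begin{equation*}
	\CO\bigl(\CZ(u)^{[t],\spt}\bigr)\big|_{\Exc_\Lambda}^{\otimes 2}\otimes\CO\bigl(\Exc_\Lambda\bigr)\big|_{\Exc_\Lambda}^{\otimes c_\Lambda}\cong\CO_{\Exc_\Lambda}\quad\text{for all }\Lambda,
\end{equation*}
and then put $\fkZ(u):=\pi_*\bigl(2\,\CZ(u)^{[t],\spt}+\sum_\Lambda c_\Lambda\Exc_\Lambda\bigr)$; the $\CY$-case is identical, or is reduced to a $\CZ$-case on a dual RZ space via the polarization $\lambda$.

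This turns the conjecture into an intersection-theoretic computation on the He--Luo--Shi exceptional divisor $\Exc_\Lambda$ over the worst point of the Pappas--Rapoport local model $\bM_n^{[t]}$. One needs: (i) $\Pic(\Exc_\Lambda)$, or rather the image of the Picard group of a formal neighborhood of $\Exc_\Lambda$; (ii) the normal bundle $\CO(\Exc_\Lambda)|_{\Exc_\Lambda}$, which along the relevant directions should be $\CO(-2)$ — this ``$2$'', forced by the relation $\pi_0=\pi^2$, being precisely the source of the factor $2$ in the statement, as is already visible in the baby case $\CN_{2,1}^{[0]}\simeq\Spf O_{\breve F}[[X,Y]]/(XY-\pi^2)$, where the worst point is an $A_1$ surface singularity with local class group $\BZ/2$ and $\CZ(u)$ for a unit-length $u$ is a non-Cartier ruling-type divisor whose double is Cartier; and (iii) the restriction $\CO(\CZ(u)^{[t],\spt})|_{\Exc_\Lambda}$ for an arbitrary non-zero $u$, to be read off from the moduli description of $\CZ(u)$ in terms of liftings of the quasi-homomorphism $u$. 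Whenever $\Exc_\Lambda$ has Picard rank one with normal bundle $\CO(-2)$ the displayed congruence is automatically solvable, so the conjecture holds; this covers type $t=0$, where the worst point has local class group $\BZ/2$ (Kr\"amer's model contracts an exceptional $\BP^{n-1}$ with normal bundle $\CO(-2)$), hence is entirely $2$-torsion.

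The hard part is ingredient (iii) in general, and more seriously the cases of larger $t$ in which $\Exc_\Lambda$ may be reducible or have Picard rank $>1$ (for instance when a relevant quadric is two-dimensional): then the descent congruence is a genuine constraint rather than an automatic divisibility, and establishing it requires a precise hold on the Pappas--Rapoport / He--Luo--Shi local model at its worst point for every even type $t$, together with control of how $\CZ(u)$ and $\CY(u)$ sit there for non-unit-length $u$, where $\CZ(u)$ acquires several components through the worst point. Obtaining this uniformly is what the present methods do not yet achieve, which is why the statement is posed as a conjecture.
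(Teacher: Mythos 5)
The statement you are trying to prove is posed in the paper as a \emph{conjecture}: the authors give no proof of it (they only remark that the analogous statement for Shimura varieties is known for $t=0$ by \cite[Thm. 2.5.3]{BHKRY}). So there is no ``paper proof'' to compare against, and your text, read as a proof attempt, does not close the statement either — you say so yourself in the last paragraph. Concretely, what you have is a (reasonable) reduction strategy, not a proof: existence is reduced to a descent statement on the splitting model, and that descent statement is left unverified. The unproved ingredients are exactly the load-bearing ones: (a) the contraction criterion you invoke ($R\pi_*\CO_{\CN^{[t],\spt}}=\CO_{\CN^{[t]}}$ plus the assertion that a line bundle on the splitting model descends iff it is trivial on each $\Exc_\Lambda$) is stated as ``one expects'' and would itself need a formal-functions/Grothendieck-existence argument on these formal schemes; (b) the identification of the obstruction with $2$-torsion of the class of $\CZ(u)$ in the local class group at a worst point, via the blow-up description, is plausible but not established; and (c) the actual computations — $\Pic$ of (a formal neighborhood of) $\Exc_\Lambda$, the conormal bundle, and the restriction of $\CO(\CZ(u)^{[t],\spt})$ for arbitrary non-zero $u$ — are only sketched in the baby case $\CN_{2,1}^{[0]}$ and asserted for $t=0$, while you concede that for larger $t$ (reducible $\Exc_\Lambda$, Picard rank $>1$) the congruence is a genuine constraint you cannot verify.

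The soft parts of your argument are fine: uniqueness does follow from normality of $\CN_{n,\ep}^{[t]}$ together with the fact that $\Sing$ has codimension $\geq 2$ (a Cartier divisor whose associated Weil divisor vanishes on a normal formal scheme is trivial), and the cases $t\in\{n,n-1\}$ are indeed trivial by exotic smoothness. But since the substantive cases $0\leq t\leq n-2$ are exactly where your argument stops at a list of ``one would need'' items, the proposal should be regarded as a programme for attacking an open conjecture, matching the status the statement has in the paper, rather than as a proof.
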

The analogous conjecture for Shimura varieties holds for $t=0$, cf. \cite[Thm. 2.5.3]{BHKRY}.

\subsection{Embeddings of RZ spaces}\label{sec:emb-RZ}

The arithmetic transfer conjecture concerns the embedding of a hermitian lattice of rank $n$ into a hermitian lattice of dimension $n+1$. We make the following definition to keep track of these data.

Let $\BX=(\BX,\iota_\BX,\lambda_{\BX})$ be a framing object of dimension $n+1$. Recall from \S \ref{sec:framing} that $\BV(\BX)=\Hom^\circ_{O_F}(\ov{\BE},\BX)$ is the space of special quasi-homomorphisms. It is a non-degenerate hermitian space of dimension $n$. Also recall that  $\ep(\BX)$ is the negative of the Hasse invariant of $\BV(\BX)$, cf. Thm. \ref{thm:framing-obj}.

\begin{definition}\label{defn:aligned-module}
An \emph{aligned triple} of dimension $n$ and type $t$ is a triple $(\BY^{[t]},\BX^{[t]},u)=(\BY,\BX,u)$  consisting of   a type $t$ framing object $\BY$ of dimension $n$,  a type $t$ framing object $\BX$ of dimension $n+1$, and  a unit-length vector $u\in\BV(\BX)$ such that there is an isomorphism
\begin{equation*}
(\BY\times \ov{\BE},\iota_{\BY}\times\iota_{\BE},\lambda_{\BY}\times\zeta\cdot\lambda_{\ov{\BE}})\simeq (\BX,\iota_{\BX},\lambda_{\BX}),
\end{equation*}
identifying the inclusion map of the second factor on the LHS with $u$. Here $\zeta=h(u,u)\in O^\times_{F_0}$. 
\end{definition}
In particular, we have an isomorphism between hermitian spaces
\begin{equation*}
	\BV(\BY)\obot \langle u\rangle\simeq \BV(\BX).
\end{equation*}
Here $\langle u\rangle$ is the one-dimensional hermitian space spanned by $u$.
  Let $\ep(u):=\eta(\zeta)=\eta(h(u,u))$, then we have the relation,
\begin{equation*}\label{eprel}
	\ep(\BX)=\eta((-1)^{n})\ep(\BY) \ep(u).
\end{equation*}
Starting from an aligned triple of dimension $n+1$ and type $t$, we obtain a closed embedding of formal schemes,
\begin{equation}\label{equ:embd-RZ}
\begin{aligned}
\xymatrix{
\CN_{n}^{[t]}\ar@{^(->}[r]& \CN_{n+1}^{[t]} ,
}
\end{aligned} 
\end{equation} 
cf. Theorem \ref{thm:ss-comparison}. Note that here $\CN_{n}^{[t]}=\CN_{n, \ep^\flat}^{[t]}$ and $\CN_{n+1}^{[t]}=\CN_{n+1, \ep}^{[t]}$, where $\ep=\ep(\BX)$ and $\ep^\flat=\ep(\BY)$. 

For $t\leq r$, using the isogeny $\BY^{[r]}\to \BY^{[t]}$ from \S \ref{sec:RZ-deeper}, this isogeny extends in the obvious way to an isogeny $\BX^{[r]}\to \BX^{[t]}$. We then obtain  commutative diagrams in which the horizontal arrows are closed embeddings,
\begin{equation*}\begin{aligned}
\xymatrix{
\CN_{n}^{[r,t]}\ar@{^(->}[r]\ar@{}[rd]\ar[d]	&	\CN_{n+1}^{[r,t]}\ar[d]\\
\CN_{n}^{[r]}\ar@{^(->}[r]&\CN_{n+1}^{[r]},
}
  \end{aligned}\hspace{2cm}
  \begin{aligned}
\xymatrix{
\CN_{n}^{[r,t]}\ar@{^(->}[r]\ar@{}[rd]\ar[d]	&	\CN_{n+1}^{[r,t]}\ar[d]\\
\CN_{n}^{[t]}\ar@{^(->}[r]&\CN_{n+1}^{[t]}.
}
  \end{aligned}
\end{equation*}

\begin{proposition}\label{prop:deep-exc}
Let $0\leq t\leq r\leq n$. Fix an aligned triple $(\BY^{[t]}, \BX^{[t]}, u)$ of dimension $n+1$ and type $t$ and consider the isogeny $\BY^{[r]}\to \BY^{[t]}$ and its canonical extension $\BX^{[r]}\to \BX^{[t]}$. Then the corresponding commutative diagram is cartesian, 
\begin{equation*}
\begin{aligned}
\xymatrix{
\CN_{n}^{[r,t]}\ar@{^(->}[r]\ar@{}[rd]|{\square}\ar[d]	&	\CN_{n+1}^{[r,t]}\ar[d]\\
\CN_{n}^{[r]}\ar@{^(->}[r]&\CN_{n+1}^{[r]},
}
  \end{aligned}
\end{equation*}
\end{proposition}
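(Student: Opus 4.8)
The plan is to prove the sharper assertion that the canonical morphism $\CN_n^{[r,t]}\to \CN_n^{[r]}\times_{\CN_{n+1}^{[r]}}\CN_{n+1}^{[r,t]}$ (which exists because the two commutative diagrams of \S\ref{sec:emb-RZ} fit together) is an isomorphism of formal schemes; I will do this by exhibiting mutually inverse natural transformations on $S$-points, for $S$ a $\Spf O_{\breve F}$-scheme. The case $t=r$ is trivial (then $\CN_\bullet^{[r,t]}=\CN_\bullet^{[r]}$ and the square is tautological), so I assume $t<r\le n$; in particular $t<n$, which is what will allow me to invoke Theorem \ref{thm: Z unit} and Theorem \ref{thm:ss-comparison}(iii) in ambient dimension $n+1$.

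\textbf{The easy direction.} Send $(Y^{[r]},Y^{[t]},\widetilde\beta)\in\CN_n^{[r,t]}(S)$ to the pair consisting of $(Y^{[r]},\iota,\lambda,\rho)\in\CN_n^{[r]}(S)$ together with the triple $(Y^{[r]}\times\ov\CE,\ Y^{[t]}\times\ov\CE,\ \widetilde\beta\times\mathrm{id}_{\ov\CE})$ and the evident identification of its image in $\CN_{n+1}^{[r]}$. By Theorem \ref{thm:ss-comparison}(i),(ii), each factor $Y^{[i]}\times\ov\CE$ is a hermitian $O_F$-module of dimension $n+1$ and type $i$ satisfying the strengthened spin condition; the isogeny $\widetilde\beta\times\mathrm{id}$ has the same kernel as $\widetilde\beta$, hence a $\pi_0$-torsion kernel of the right degree; and since, by Definition \ref{defn:aligned-module}, the canonical extension $\BX^{[r]}\to\BX^{[t]}$ is the product isogeny $\beta_\BY\times\mathrm{id}_{\ov\BE}$ and the induced framings are products, $\widetilde\beta\times\mathrm{id}$ reduces mod $\pi$ to the required framing‑compatible isogeny. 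Thus the triple lies in $\CN_{n+1}^{[r,t]}(S)$, and the assignment is natural.

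\textbf{The hard direction, and the inverse.} Conversely, given a point of the fiber product, after applying the given identification we may assume $X^{[r]}=Y^{[r]}\times\ov\CE$ with $(Y^{[r]},\dots)\in\CN_n^{[r]}(S)$, and we are handed an isogeny $\widetilde\alpha\colon Y^{[r]}\times\ov\CE\to X^{[t]}$ lifting $(\rho^{[t]})^{-1}\circ(\beta_\BY\times\mathrm{id}_{\ov\BE})\circ\rho^{[r]}$. Let $u^{[r]}\colon\ov\CE\hookrightarrow Y^{[r]}\times\ov\CE$ be the inclusion of the second factor and $u^{[t]}:=\widetilde\alpha\circ u^{[r]}$; because $\beta_\BY\times\mathrm{id}$ is the identity on the $\ov\BE$‑factor, a direct check shows $u^{[t]}$ is the honest homomorphism extending the framing‑compatible special quasi‑homomorphism attached to the unit‑length vector $u$, so $X^{[t]}\in\CZ(u)^{[t]}_{n+1}(S)$. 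Hence, by the construction in the proof of Theorem \ref{thm: Z unit}, the idempotents $e^{[r]}\in\End_{O_F}(Y^{[r]}\times\ov\CE)$ and $e^{[t]}\in\End_{O_F}(X^{[t]})$ associated to $u^{[r]},u^{[t]}$ and the respective polarizations are honest endomorphisms ($e^{[r]}$ being the projection onto $\ov\CE$), and one gets a decomposition $X^{[t]}=Y^{[t]}\times\ov\CE$ with $Y^{[t]}:=(1-e^{[t]})X^{[t]}\in\CN_n^{[t]}(S)$, using Theorem \ref{thm:ss-comparison}(iii) (valid since $t<n$). The crucial point is that $\widetilde\alpha$ intertwines the two idempotents, $\widetilde\alpha\circ e^{[r]}=e^{[t]}\circ\widetilde\alpha$: both sides are honest homomorphisms $X^{[r]}\to X^{[t]}$, so by rigidity of homomorphisms of $p$‑divisible groups along the thickening $\bar S\hookrightarrow S$ it suffices to check this mod $\pi$, where everything is pulled back from the framing objects over $\BF$ and $\beta_\BY\times\mathrm{id}_{\ov\BE}$ visibly commutes with the two projections onto the $\ov\BE$‑factors. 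Therefore $\widetilde\alpha$ respects the product decompositions, $\widetilde\alpha=\widetilde\beta\times\gamma$ with $\widetilde\beta\colon Y^{[r]}\to Y^{[t]}$ and $\gamma\colon\ov\CE\to\ov\CE$ an $O_F$‑linear isogeny of height $0$ reducing to $\mathrm{id}_{\ov\BE}$, whence $\gamma=\mathrm{id}_{\ov\CE}$ again by rigidity. The kernel, type and framing conditions for $\widetilde\beta$ follow by restricting those of $\widetilde\alpha$ to the $Y$‑factor, so $(Y^{[r]},Y^{[t]},\widetilde\beta)\in\CN_n^{[r,t]}(S)$; and this assignment is visibly inverse to the one of the previous paragraph. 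Hence the square is cartesian.

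\textbf{Expected main obstacle.} The only genuinely non‑formal input is the intertwining relation $\widetilde\alpha\circ e^{[r]}=e^{[t]}\circ\widetilde\alpha$, i.e. the compatibility of $\widetilde\alpha$ with the splitting off of the $\ov\CE$‑factor; this is precisely where the choice of the \emph{canonical}, ``diagonal'' extension $\BX^{[r]}\to\BX^{[t]}$ of $\beta_\BY$ (Definition \ref{defn:aligned-module}, \S\ref{sec:RZ-deeper}) and rigidity of homomorphisms of $p$‑divisible groups are used. Everything else — the bookkeeping of kernels, types and framings, and the verification that $X^{[t]}$ lands on the special cycle — is a routine unravelling of the moduli descriptions of \S\ref{sec:RZ-deeper} together with Theorems \ref{thm:ss-comparison} and \ref{thm: Z unit}.
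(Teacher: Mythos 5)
Your proof is correct and follows essentially the same route as the paper's: identify the bottom embedding with the special divisor $\CZ(u)^{[r]}_{n+1}$, observe that the lift of $u$ to $X^{[r]}$ propagates through the isogeny to $X^{[t]}$, and then split off the $\ov\CE$-factor compatibly with the isogeny. The only difference is that you make explicit (via the intertwining of idempotents and rigidity of homomorphisms of $p$-divisible groups) the compatibility of the two splittings, which the paper subsumes under "the standard splitting procedure"; this is a legitimate and welcome expansion of the same argument rather than a different one.
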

In practice, we will only use the cases $r=n$ when $n$ is even and $r=n-1$ when $n$ is odd.
\begin{proof}
	Using Theorem \ref{thm: Z unit}, we identify $\CN_{n}^{[r]}\subset\CN_{n+1}^{[r]}$ with the special divisor $\CZ(u)_{n+1}^{[r]}\subset \CN_{n+1}^{[r]}$.
	Then, the pull-back along the projection $\CN_{n+1}^{[r,t]}\to \CN_{n+1}^{[r]}$ parametrizes all  pairs $(X^{[r]},X^{[t]})$ with a lifting $X^{[r]}\to X^{[t]}$ of $\BX^{[r]}\to \BX^{[t]}$ such that there exists a lifting $u:\CE\to X^{[r]}$ of $\ov\BE\to \BX^{[r]}$.
	By composition with $X^{[r]}\to X^{[t]}$, this lift also exists for $X^{[t]}$, hence using the standard splitting procedure we get splittings $X^{[r]}\simeq Y^{[r]}\times \CE$ and  $X^{[t]}\simeq Y^{[t]}\times \CE$ compatible with the isogeny $X^{[r]}\to X^{[t]}$. We obtain  the desired object  $Y^{[r]}\to Y^{[t]}$  of  $\CN_{n}^{[r,t]}$.
\end{proof}

\begin{remark}
\begin{altenumerate}
\item The statement fails if we replace the bottom arrow by the other injection $\CN_{n}^{[t]}\hookrightarrow\CN_{n+1}^{[t]}$. This is best illustrated by the lattice model, see \S \ref{sec: t n lattice} below for the notation used. In the lattice model we have  that $\BN_{n}^{[t]}\times_{\BN_{n+1}^{[t]}}\BN_{n+1}^{[r, t]}$ is given as
\begin{equation*}
\{(\Lambda^\flat, \Lambda, \Lambda_0)\in {\rm Vert}^{[t]}(W^\flat)\times {\rm Vert}^{[t]}(W)\times {\rm Vert}^{[r]}(W)\mid \Lambda_0\subset \Lambda=\Lambda^\flat\oplus\langle u\rangle\}
\end{equation*}
In general, $u$ does not lie in $\Lambda_0$ and hence $\Lambda_0$ is not of the form $\Lambda_0=\Lambda_0^\flat\oplus\langle u\rangle$, where $\Lambda_0^\flat$ is a vertex lattice of type $r$ with $\Lambda_0^\flat\subset \Lambda^\flat$. 

\item Note that when $n$ is even, the existence of the isogeny $\BY^{[r]}\to\BY^{[t]}$ imposes for $r=n$ the condition that $\ep^\flat=1$. In particular, if $\ep^\flat=-1$, there is no obvious candidate for the fiber product $\CN^{[t]}_{n, \ep^\flat}\times_{\CN^{[t]}_{n+1, \ep}}\CN_{n+1, \ep}^{[n, t]}$. In \S \ref{sec:(t,n)-naive}, we denote this space by $\wt{\CN}_n^{[t]}$.
\end{altenumerate}
\end{remark}

 \section{Comparison of  strengthened spin conditions}\label{sec:exc-iso-proof}
 In this section we  prove Theorem \ref{thm:ss-comparison}. Part (i) is straightforward. 

\subsection{From smaller space to larger space}
 
In this subsection we will  prove part (ii) of Theorem \ref{thm:ss-comparison}. Let $(V,\phi)$ be a hermitian space of dimension $n$. Recall the notation in \S \ref{sec:spin}; in particular, we have $\CV=V\otimes_{F_0}F$ and $\prescript{n}{}{\CV}^{r,s}_\epsilon= \prescript{n}{}{\CV}_\ep\cap  \prescript{n}{}{\CV}^{r,s}\subset \prescript{n}{}{\CV}=\bigwedge_F^{n}\CV$.
 \begin{lemma}\label{lem:spin-rep}
 	The subspace $\prescript{n}{}{\CV}_\epsilon\subset \prescript{n}{}{\CV}$ is spanned by the pure tensors.
 \end{lemma}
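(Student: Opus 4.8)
The plan is to produce a concrete family of decomposable vectors lying inside $\prescript{n}{}{\CV}_\epsilon$ that is large enough to span it, and to verify ``large enough'' by an irreducibility argument. First I would recall from the discussion preceding the lemma that, since $(\cdot,\cdot)$ splits over $F$, the group $G:=\SO((\cdot,\cdot))$ is the split special orthogonal group of type $D_n$ acting on the $2n$-dimensional space $\CV$, and that for every maximal isotropic $F$-subspace $\CF\subset\CV$ the line $\bigwedge^n_F\CF\subset\prescript{n}{}{\CV}$ is contained in exactly one of the two $G$-submodules $\prescript{n}{}{\CV}_{+1}$, $\prescript{n}{}{\CV}_{-1}$; moreover this is precisely what distinguishes the two connected components of the orthogonal Grassmannian $\OGr(n,\CV)$ over $F$. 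Fixing the labels so that $\CF$ lying in the component $\OGr^\epsilon$ gives $\bigwedge^n_F\CF\subset\prescript{n}{}{\CV}_\epsilon$, each such line is spanned by a pure tensor, so all these pure tensors lie in $\prescript{n}{}{\CV}_\epsilon$.

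Next I would let $U_\epsilon\subseteq\prescript{n}{}{\CV}_\epsilon$ be the $F$-linear span of the lines $\bigwedge^n_F\CF$ with $\CF\in\OGr^\epsilon(F)$. It is nonzero, since the split space $\CV$ admits $F$-rational maximal isotropic subspaces in each of the two components, and it is by construction spanned by pure tensors. The key point is that $U_\epsilon$ is $G(F)$-stable: as $G$ is connected, $g\cdot\CF$ lies in the same component of $\OGr(n,\CV)$ as $\CF$ for every $g\in G(F)$, so $G(F)$ permutes the generating lines of $U_\epsilon$. Since $F$ is infinite and $G$ is smooth and connected, $G(F)$ is Zariski dense in $G$, hence $U_\epsilon$ is actually stable under the algebraic group $G$, i.e. it is a $G$-submodule of $\prescript{n}{}{\CV}_\epsilon$.

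It then suffices to show that $\prescript{n}{}{\CV}_\epsilon$ is an irreducible $G$-module, for then $U_\epsilon=\prescript{n}{}{\CV}_\epsilon$ and the lemma follows; this is the heart of the matter, and the step I expect to be the main obstacle. For it I would invoke classical highest-weight theory: over $\ov F$ the middle exterior power $\bigwedge^n\CV$ of the standard representation of the split group of type $D_n$ is the direct sum of exactly two irreducible $G$-submodules, the modules $V_{2\varpi_{n-1}}$ and $V_{2\varpi_n}$ of highest weights $2\varpi_{n-1}$ and $2\varpi_n$ (each of dimension $\tfrac12\binom{2n}{n}$), and these two are non-isomorphic for $n\ge 2$. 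Hence $\bigwedge^n_{\ov F}(\CV\otimes_F\ov F)$ is a multiplicity-free semisimple $G$-module with only two nonzero proper submodules, namely $V_{2\varpi_{n-1}}$ and $V_{2\varpi_n}$; since $\prescript{n}{}{\CV}_\epsilon\otimes_F\ov F$ is a nonzero proper $G$-submodule (the complement $\prescript{n}{}{\CV}_{-\epsilon}\otimes_F\ov F$ is nonzero), it must coincide with one of them, so $\prescript{n}{}{\CV}_\epsilon$ is absolutely irreducible, in particular irreducible over $F$. (For $n=1$ the statement is trivial, as $\prescript{1}{}{\CV}_\epsilon$ is then a line.) Applying this to the nonzero $G$-submodule $U_\epsilon$ gives $U_\epsilon=\prescript{n}{}{\CV}_\epsilon$, which is therefore spanned by pure tensors.

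Two points require care. One must make sure that the decomposition $\prescript{n}{}{\CV}=\prescript{n}{}{\CV}_1\oplus\prescript{n}{}{\CV}_{-1}$ of the excerpt becomes, after base change, exactly the decomposition $V_{2\varpi_{n-1}}\oplus V_{2\varpi_n}$, rather than something coarser or finer; this is forced once one knows that $\bigwedge^n\CV\otimes_F\ov F$ has no $G$-submodule of dimension strictly between $0$ and $\tfrac12\binom{2n}{n}$, which is precisely the cited decomposition into two irreducibles of equal dimension. For small $n$ one can instead verify irreducibility directly ($D_1$: a character of a split torus; $D_2$: a $\mathrm{Sym}^2$ of one factor of $\mathrm{SL}_2\times\mathrm{SL}_2$; $D_3\cong A_3$: a $\mathrm{Sym}^2$ of a half-spin representation of $\mathrm{SL}_4$). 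The only facts about $F$ used are that it is infinite of characteristic zero and that $(\cdot,\cdot)$ splits over it, all of which hold in the present $p$-adic setting.
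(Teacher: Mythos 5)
Your argument is correct and follows essentially the same route as the paper: the span of the pure tensors inside $\prescript{n}{}{\CV}_\epsilon$ is an $\SO((\cdot,\cdot))$-stable subspace (hence a subrepresentation), and one concludes from the irreducibility of $\prescript{n}{}{\CV}_\epsilon$. The only difference is that the paper takes that irreducibility as known from the setup it cites, whereas you additionally verify it via the classical decomposition $\bigwedge^n\CV\otimes_F\ov F\simeq V_{2\varpi_{n-1}}\oplus V_{2\varpi_n}$ into two non-isomorphic irreducibles — a correct but not strictly necessary supplement.
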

 \begin{proof}
 	 Denote by $\prescript{n}{}{\CV}^{\mathrm{pure}}_\ep\subset \prescript{n}{}{\CV}_\ep$ the subspace spanned by pure tensors.
 	In other words, this is the subspace spanned by $\bigwedge^{n}\CF$, where $\CF\subset \CV$ is an $n$-dimensional subspace such that $\bigwedge^{n}\CF\in \prescript{n}{}{\CV}_\epsilon$.
 	The subspace $\prescript{n}{}{\CV}^{\mathrm{pure}}_\ep$ is $\SO((\cdot,\cdot,))$-stable, and thus forms a sub-representation.
 	The assertion follows from the irreducibility of $\prescript{n}{}{\CV}_\epsilon$.
 \end{proof}
 
 Let $(V^\flat,\phi^\flat)$ be a hermitian space of dimension $n-1$ and let $\zeta\in O_{F_0}^\times$ be a unit.
 We choose the hermitian space $(V,\phi)$ with $V=V^\flat\obot F u$ and $\phi(u,u)=\zeta$. 
 Let $\CV=V\otimes_{F_0}F$, $\CV^\flat=V^\flat\otimes_{F_0}F$ and $\CV^\circ=Fu\otimes_{F_0}F$. 
 The symmetric forms on $\CV$, $\CV^\flat$ and $\CV^\circ$ split respectively. We have the following decomposition:
 \begin{equation*}
 	\prescript{n-1}{}{\CV^\flat}=\prescript{n-1}{}{\CV^\flat_{1}}\oplus \prescript{n-1}{}{\CV^\flat_{-1}},
 	\quad\text{and}\quad
 	\prescript{n}{}{\CV}=\prescript{n}{}{\CV_1}\oplus \prescript{n}{}{\CV_{-1}}.
 \end{equation*}
 
 Since $\CV=\CV^\flat\oplus \CV^\circ$, we have
 \begin{equation*}
 	\bigwedge^{n} \CV=\bigwedge^{n} \CV^\flat\oplus 
 	\Big(\bigwedge^{n-1}\CV^\flat\otimes \CV^\circ\Big)
 	\oplus 
 	\Big(\bigwedge^{n}\CV^\flat\oplus \bigwedge^2 \CV^\circ\Big).
 \end{equation*}

 Let $\Pi:=\pi\otimes 1$ and $\pi:=1\otimes \pi$ in $F\otimes_{F_0}F$. 
 For any $v\in V$, we have
 \begin{equation*}
 	\phi\Bigl( (\Pi-\pi)v,(\Pi-\pi)v\Bigr)=\phi \Bigl( (-\Pi-\pi)(\Pi-\pi)v,v \Bigr)=0 ,
 \end{equation*}
hence  $(\Pi-\pi)v$ is an isotropic vector.

 \begin{proposition}\label{prop:ori-red}
 	We have the following equality:
 	\begin{equation*}
 		\prescript{n}{}{\CV_{-1}}\bigcap \left(\prescript{n-1}{}{\CV}^\flat\otimes F(\Pi+\pi)u\right)
 		=\prescript{n-1}{}{\CV^\flat_{-1}}\otimes F(\Pi+\pi)u.
 	\end{equation*}
 \end{proposition}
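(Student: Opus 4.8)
The plan is to reduce the identity to a clean statement about wedge powers of the split orthogonal space $\CV^\flat$ together with a one-dimensional ``anisotropic-but-split'' line $F(\Pi+\pi)u$, and then invoke the characterization of $\prescript{\bullet}{}{\CV}_{\pm1}$ in terms of which connected component of the orthogonal Grassmannian a Lagrangian lies in.

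First I would set up coordinates. Write $\CV^\circ = Fu\otimes_{F_0}F$, which is two-dimensional over $F$ and carries a split symmetric form; its two isotropic lines are $F(\Pi-\pi)u$ and $F(\Pi+\pi)u$ (both are isotropic by the computation $\phi((\Pi\pm\pi)v,(\Pi\pm\pi)v)=0$ recorded just before the statement, and they are distinct and pair nontrivially since $\zeta\in O_{F_0}^\times$). So $\CV^\circ = F(\Pi-\pi)u \oplus F(\Pi+\pi)u$ as a sum of two transverse isotropic lines. A Lagrangian $\CF^\flat\subset\CV^\flat$ of dimension $n-1$ extends to a Lagrangian $\CF^\flat\oplus F(\Pi+\pi)u\subset\CV$ of dimension $n$, and $\bigwedge^n$ of this equals (up to scalar) $(\bigwedge^{n-1}\CF^\flat)\wedge(\Pi+\pi)u$. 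The content of the proposition is that the $\ep=-1$ component is cut out by exactly this recipe: taking $\bigwedge^{n-1}$ of the $(-1)$-component of $\OGr(n-1,\CV^\flat)$ and wedging with the fixed isotropic vector $(\Pi+\pi)u$ lands in, and spans the relevant slice of, $\prescript{n}{}{\CV_{-1}}$.

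The key steps, in order: (1) By Lemma \ref{lem:spin-rep}, $\prescript{n}{}{\CV_{-1}}$ is spanned by pure tensors $\bigwedge^n\CF$ with $\CF$ Lagrangian in the fixed component. Intersecting with the subspace $\prescript{n-1}{}{\CV}^\flat\otimes F(\Pi+\pi)u$, a dimension/transversality count shows the only pure tensors of the required shape are those where $\CF = \CF^\flat\oplus F(\Pi+\pi)u$ with $\CF^\flat\subset\CV^\flat$ Lagrangian — any Lagrangian $\CF$ whose top wedge lies in $\prescript{n-1}{}{\CV}^\flat\otimes F(\Pi+\pi)u$ must contain the line $F(\Pi+\pi)u$ and meet $\CV^\flat$ in a Lagrangian. (2) Identify, via the labeling conventions of \cite{S} adopted in \S\ref{sec:spin}, that adjoining the isotropic line $F(\Pi+\pi)u$ sends the $\ep$-component of $\OGr(n-1,\CV^\flat)$ to the $\ep$-component of $\OGr(n,\CV)$ — i.e. the parity is preserved by this particular choice of isotropic line (as opposed to $F(\Pi-\pi)u$, which would flip it); this is where one must be careful about which of the two sign conventions is in force, and it is essentially forced by the strengthened spin condition using $L^{n-1,1}_{\Lambda,-1}$ rather than $L^{n-1,1}_{\Lambda,+1}$. (3) Conclude: the right-hand side $\prescript{n-1}{}{\CV^\flat_{-1}}\otimes F(\Pi+\pi)u$ is spanned by $(\bigwedge^{n-1}\CF^\flat)\otimes(\Pi+\pi)u$ over $(-1)$-Lagrangians $\CF^\flat$, and by (1)–(2) these are exactly the pure tensors spanning $\prescript{n}{}{\CV_{-1}}\cap(\prescript{n-1}{}{\CV}^\flat\otimes F(\Pi+\pi)u)$; one inclusion is immediate, and the reverse follows since $\prescript{n}{}{\CV_{-1}}$ itself is spanned by pure tensors and intersecting with the coordinate subspace $\prescript{n-1}{}{\CV}^\flat\otimes F(\Pi+\pi)u$ respects the pure-tensor decomposition.

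The main obstacle I anticipate is step (2): pinning down that the particular isotropic line $F(\Pi+\pi)u$ (and not its companion $F(\Pi-\pi)u$) is the one that preserves the $\ep$-labeling, and doing so consistently with the conventions imported from Smithling's paper. This is not a deep point but it is the place where the proof could go wrong by a sign, and it is presumably why the proposition singles out $(\Pi+\pi)u$. I would handle it either by a direct computation in a convenient hyperbolic basis of $\CV^\circ$, checking how the spinor norm / component invariant behaves when one enlarges a Lagrangian by a chosen isotropic vector, or by reducing to the case $n=1$ (where $\prescript{1}{}{\CV^\flat_{-1}}$ is explicit) and bootstrapping, since everything is compatible with orthogonal direct sums. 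The remaining steps are routine multilinear algebra and dimension counting.
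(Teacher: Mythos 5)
Your overall strategy coincides with the paper's: reduce to pure tensors via Lemma \ref{lem:spin-rep}, use connectedness of $\BP(\prescript{n-1}{}{\CV^{\flat}_{-1}})$ to see that adjoining a fixed isotropic line sends each component of $\OGr(n-1,\CV^\flat)$ into a single component of $\OGr(n,\CV)$, and pin down the sign by exhibiting one explicit point (the paper does this via \cite[Lem. 3.1.1]{Luo24}, applied to the companion line $F(\Pi-\pi)u$, for which the label flips rather than is preserved — either computation suffices). Your step (1) is also correct as far as it goes: a pure tensor $\bigwedge^n\CF$ lies in $\prescript{n-1}{}{\CV}^\flat\otimes F(\Pi+\pi)u$ exactly when $\CF=\CF^\flat\oplus F(\Pi+\pi)u$ with $\CF^\flat\subset\CV^\flat$ Lagrangian.

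The gap is in step (3), in the sentence asserting that the reverse inclusion follows because ``intersecting with the coordinate subspace respects the pure-tensor decomposition.'' That principle is false in general: if $U$ is spanned by pure tensors and $W$ is a coordinate subspace, then $U\cap W$ need not be spanned by the pure tensors it contains — an element of $U\cap W$ is a linear combination of pure tensors of $U$ that may land in $W$ only after cancellation, with no individual summand lying in $W$. So classifying the pure tensors of $\prescript{n}{}{\CV_{-1}}$ that lie in $\prescript{n-1}{}{\CV}^\flat\otimes F(\Pi+\pi)u$ does not bound the intersection from above. The repair is already implicit in your step (2): the same connectedness-plus-sign argument gives not only $\prescript{n-1}{}{\CV^\flat_{-1}}\otimes F(\Pi+\pi)u\subseteq\prescript{n}{}{\CV_{-1}}$ but also $\prescript{n-1}{}{\CV^\flat_{1}}\otimes F(\Pi+\pi)u\subseteq\prescript{n}{}{\CV_{1}}$. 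Writing an element of the left-hand side of the proposition as $a+b$ with $a\in\prescript{n-1}{}{\CV^\flat_{1}}\otimes F(\Pi+\pi)u$ and $b\in\prescript{n-1}{}{\CV^\flat_{-1}}\otimes F(\Pi+\pi)u$, one gets $a=(a+b)-b\in\prescript{n}{}{\CV_{-1}}\cap\prescript{n}{}{\CV_{1}}=0$, which is exactly the reverse inclusion. This direct-sum argument (equivalently, the dimension count the paper invokes after locating all four summands of $\prescript{n-1}{}{\CV}^\flat\otimes\CV^\circ$) must replace the pure-tensor argument; you should make it explicit.
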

 \begin{proof}
 	We have $\CV^\circ=\mathrm{Span}_F\Bigl((\Pi-\pi)u,(\Pi+\pi)u\Bigr)$.
 	We further decompose
\begin{multline}\label{equ:ss-cond-decomp}
\prescript{n-1}{}{\CV}^\flat\otimes \CV^\circ=\left(\prescript{n-1}{}{\CV^\flat_{1}}\otimes F(\Pi-\pi)u\right)\oplus \left(\prescript{n-1}{}{\CV^\flat_{-1}}\otimes F(\Pi-\pi)u\right)\\
\oplus \left(\prescript{n-1}{}{\CV^\flat_{1}}\otimes F(\Pi+\pi)u\right)\oplus \left(\prescript{n-1}{}{\CV^\flat_{-1}}\otimes F(\Pi+\pi)u\right).
\end{multline}
 	We claim that
 	\begin{equation*}
 		\prescript{n-1}{}{\CV^\flat_{1}}\otimes F\Bigl( (\Pi-\pi)u \Bigr)\subset \prescript{n}{}{\CV_{-1}},
 		\quad\text{and}\quad
 		\prescript{n-1}{}{\CV^\flat_{-1}}\otimes F\Bigl( (\Pi-\pi)u \Bigr)\subset \prescript{n}{}{\CV_{1}}.
 	\end{equation*}
 	We prove the first inclusion.
 	Let $\CF^\flat\subset \CV^\flat$ be any subspace of dimension $n-1$ such that $\bigwedge^{n-1}\CF^\flat\subset \prescript{n}{}{\CV}^\flat$.
 	Let $\CF=\CF^\flat\oplus F(\Pi-\pi)u$. 
 	By Lemma \ref{lem:spin-rep}, we only need to show that 
 	\begin{equation*}
 		\bigwedge^{n} \CF=\bigwedge^{n-1}\CF^\flat\otimes F(\Pi-\pi)u\subset \prescript{n}{}{\CV_{-1}}.
 	\end{equation*}
 	We define a morphism
 	\begin{equation*}
 		\iota:\BP(\prescript{n-1}{}{\CV_{-1}^{\flat}})\to \BP(\prescript{n}{}{\CV_{1}})\amalg \BP(\prescript{n}{}{\CV_{1}}).
 	\end{equation*}
 	By Lemma \ref{lem:spin-rep}, we only need to define $\iota$ for all $\bigwedge^{n-1}\CF^\flat\in \BP(\prescript{n-1}{}{\CV_{-1}^{\flat}})$, where $\CF^\flat$ is a total isotropic subspace. For such $\CF^\flat$, we define
 	\begin{equation*}
 		\iota\Bigl(\bigwedge^{n-1}\CF^\flat\Bigr):=\iota\Bigl(\bigwedge^{n}\CF\Bigr),\quad\text{where}\quad \CF:=\CF^\flat\oplus F(\Pi-\pi)u.
 	\end{equation*}
 	Since $\CF$ is totally isotropic, the map is well-defined. 
We claim that $\iota(\BP(\prescript{n-1}{}{\CV_{-1}^{\flat}}))\subset \BP(\prescript{n}{}{\CV_{-1}})$.
Since $\BP(\prescript{n-1}{}{\CV_{-1}^{\flat}})$ is connected, we must have either 
 	\begin{equation*}
 		\iota(\BP(\prescript{n-1}{}{\CV_{-1}^{\flat}}))\subset \BP(\prescript{n}{}{\CV_{1}}) \quad\text{or}\quad \iota(\BP(\prescript{n-1}{}{\CV_{-1}^{\flat}}))\subset \BP(\prescript{n}{}{\CV_{-1}}).
 	\end{equation*}
	Thus, it suffices to find a single geometric point of $\BP(\prescript{n-1}{}{\CV_{-1}^\flat})$ that maps to $\BP(\prescript{n}{}{\CV_{-1}})$. Such a point is constructed  in \cite[Lem. 3.1.1]{Luo24}.
 	
As a consequence of the claim, we see that the intersection of the subspace
\begin{equation*}
\prescript{n}{}{\CV_{-1}}\bigcap \left(\prescript{n-1}{}{\CV}^\flat\otimes F(\Pi+\pi)u\right)\subseteq \prescript{n}{}{\CV}
\end{equation*}
with the direct factor in \eqref{equ:ss-cond-decomp}: 
\begin{equation*}
\prescript{n-1}{}{\CV^\flat_{1}}\otimes F(\Pi-\pi)u\oplus\prescript{n-1}{}{\CV^\flat_{-1}}\otimes F(\Pi-\pi)u\oplus\prescript{n-1}{}{\CV^\flat_{1}}\otimes F(\Pi+\pi)u\subset \prescript{n}{}{\CV}
\end{equation*}
is zero.
Moreover, since we have
 	\begin{equation*}
 		\prescript{n}{}{\CV_{-1}}\bigcap \left(\prescript{n-1}{}{\CV}^\flat\otimes F(\Pi+\pi)u\right)\supseteq \prescript{n-1}{}{\CV^\flat_{-1}}\otimes F(\Pi+\pi)u,
 	\end{equation*}
 the equality in the assertion follows by dimension counting.
 \end{proof}

 \begin{lemma}\label{lem:sig-red}
 	For $\CV=\CV^\flat\oplus \CV^\circ$, we have
 	\begin{equation*}
 		\prescript{n}{}{\CV}^{r,s}\bigcap \left(\prescript{n-1}{}{\CV}^\flat\otimes F(\Pi+\pi)u\right)=\prescript{n-1}{}{\CV}^{\flat,r-1,s}\otimes F(\Pi+\pi)u.
 	\end{equation*}
 \end{lemma}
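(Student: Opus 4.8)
The plan is to reduce the statement to the $\pm\pi$-eigenspace decomposition of the operator $\Pi=\pi\otimes 1$ together with a bookkeeping of wedge degrees; the finer $\epsilon=\pm1$ decomposition used in Proposition \ref{prop:ori-red} plays no role here.

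First I would record the elementary computation $(\Pi+\pi)u\in\CV_\pi$ and $(\Pi-\pi)u\in\CV_{-\pi}$: since $\Pi^2=\pi_0\cdot\id$ on $\CV$ and $\pi^2=\pi_0$, we have $(\Pi-\pi)(\Pi+\pi)u=(\Pi^2-\pi^2)u=0$, and likewise with the roles of $\Pi\pm\pi$ reversed. As $\CV^\circ=Fu\otimes_{F_0}F$ is two-dimensional over $F$ and $\Pi$ acts on it with eigenvalues $\pm\pi$, this gives $\CV^\circ_\pi=F(\Pi+\pi)u$ and $\CV^\circ_{-\pi}=F(\Pi-\pi)u$, each a line. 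Because the decomposition $\CV=\CV^\flat\oplus\CV^\circ$ is $\Pi$-stable, it refines the eigenspace decomposition: $\CV_\pi=\CV^\flat_\pi\oplus F(\Pi+\pi)u$ and $\CV_{-\pi}=\CV^\flat_{-\pi}\oplus F(\Pi-\pi)u$.

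Next I would use the canonical internal direct sums $\bigwedge^n_F\CV=\bigoplus_{p+q=n}\prescript{n}{}{\CV}^{p,q}$ and $\bigwedge^{n-1}_F\CV^\flat=\bigoplus_{a+b=n-1}\prescript{n-1}{}{\CV}^{\flat,a,b}$, where $\prescript{n-1}{}{\CV}^{\flat,a,b}:=\bigwedge^a_F\CV^\flat_\pi\otimes_F\bigwedge^b_F\CV^\flat_{-\pi}$. Fixing a basis of $\CV^\flat$ adapted to $\CV^\flat_\pi\oplus\CV^\flat_{-\pi}$ and completing it by $(\Pi\pm\pi)u$, one sees at once that the map $\omega\mapsto\omega\wedge(\Pi+\pi)u$ is injective on $\bigwedge^{n-1}_F\CV^\flat$ and carries the summand $\prescript{n-1}{}{\CV}^{\flat,a,b}$ into $\prescript{n}{}{\CV}^{a+1,b}$ (the new leg $(\Pi+\pi)u$ lies in $\CV^\circ_\pi\subset\CV_\pi$). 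Hence, writing $\omega=\sum_{a+b=n-1}\omega_{a,b}$, the identity $\omega\wedge(\Pi+\pi)u=\sum_{a+b=n-1}\omega_{a,b}\wedge(\Pi+\pi)u$ is precisely the decomposition of the left-hand side into its $\prescript{n}{}{\CV}^{p,q}$-components, with $(p,q)=(a+1,b)$.

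Both inclusions then follow: an element $\xi$ of $\prescript{n-1}{}{\CV}^\flat\otimes F(\Pi+\pi)u$ has the form $\omega\wedge(\Pi+\pi)u$, and $\xi\in\prescript{n}{}{\CV}^{r,s}$ holds iff every component $\omega_{a,b}\wedge(\Pi+\pi)u$ with $(a+1,b)\neq(r,s)$ vanishes, iff (by injectivity of wedging with $(\Pi+\pi)u$) $\omega=\omega_{r-1,s}\in\prescript{n-1}{}{\CV}^{\flat,r-1,s}$, which forces $\xi\in\prescript{n-1}{}{\CV}^{\flat,r-1,s}\otimes F(\Pi+\pi)u$; conversely any such $\xi$ manifestly has $r$ legs in $\CV_\pi$ and $s$ in $\CV_{-\pi}$, hence lies in $\prescript{n}{}{\CV}^{r,s}$. (When $r=0$ both sides are zero, since the $\prescript{n}{}{\CV}^{0,n}$-component of any $\omega\wedge(\Pi+\pi)u$ vanishes, requiring $a+1=0$.) I do not expect any genuine obstacle; the only points needing a line of care are the relation $\Pi^2=\pi_0$ on $\CV$, the mutual compatibility of the three direct-sum decompositions, and the injectivity and degree-shifting property of wedging with the fixed nonzero vector $(\Pi+\pi)u$, all of which are immediate from an adapted basis.
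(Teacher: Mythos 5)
Your proof is correct and follows essentially the same route as the paper: both rest on the refinement $\CV_\pi=\CV^\flat_\pi\oplus F(\Pi+\pi)u$, $\CV_{-\pi}=\CV^\flat_{-\pi}\oplus F(\Pi-\pi)u$ and elementary bidegree bookkeeping in $\bigwedge^n\CV$, the paper writing out the induced four-term decomposition of $\prescript{n}{}{\CV}^{r,s}$ where you instead argue componentwise via injectivity of wedging with $(\Pi+\pi)u$ — a purely presentational difference.
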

 \begin{proof}
 	We have
 	\begin{equation*}
 		\CV_{\pi}=\CV^\flat_{\pi}\oplus F(\Pi+\pi)u,
 		\quad\text{and}\quad
 		\CV_{-\pi}=\CV^\flat_{-\pi}\oplus F(\Pi-\pi)u.
 	\end{equation*}
 	This induces the decomposition of the eigenspaces,
\begin{multline*}
\prescript{n}{}{\CV}^{r,s}=\prescript{n-1}{}{\CV}^{\flat,r,s}\oplus\left(\prescript{n-1}{}{\CV}^{\flat,r-1,s}\otimes F(\Pi+\pi)u\right)\oplus \left(\prescript{n-1}{}{\CV}^{\flat,r,s-1}\otimes F(\Pi-\pi)u\right)\\ 
\oplus \left(\prescript{n-1}{}{\CV}^{\flat,r-1,s-1}\otimes F(\Pi-\pi)u\otimes F(\Pi+\pi)u\right).
\end{multline*}
 	Now the assertion follows.
 \end{proof}

 Let $\Lambda^\flat\subset V^\flat$ be a vertex lattice of type $t$ and let $\Lambda=\Lambda^\flat\oplus \langle u\rangle$. It is also a vertex lattice of type $t$. We have:
 \begin{equation*}
 	\bigwedge^{n}(\Lambda\otimes_{O_{F_0}}O_F)=\Bigl(\bigwedge^{n-1}\Lambda^\flat\Bigr)\otimes_{O_{F}}(O_F u\otimes_{O_{F_0}}O_F)\subset \bigwedge^{n-1}V^\flat\otimes F u.
 \end{equation*}
 
 \begin{proposition}\label{prop:ss-red}
 	The following equality of lattices in $L_{\Lambda,-1}^{n-1, 1}$ holds: 
 	\begin{equation*}
 		\left(\prescript{n-1}{}{\Lambda^\flat}\otimes O_F(\Pi+\pi)u\right)\cap \prescript{n}{}{\Lambda}^{n-1,1}_{-1}=\prescript{n-1}{}{\Lambda}^{\flat,n-2,1}_{-1}\otimes O_F(\Pi+\pi)u.
 	\end{equation*}
 \end{proposition}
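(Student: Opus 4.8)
The plan is to reduce Proposition \ref{prop:ss-red} to its rational counterpart and then to transport that counterpart through the injective map ``wedge with $(\Pi+\pi)u$''; no genuinely integral input beyond Lemma \ref{lem:sig-red} and Proposition \ref{prop:ori-red} will be needed. First I would prove the rational identity
\[
\prescript{n}{}{\CV}^{n-1,1}_{-1}\cap\bigl(\prescript{n-1}{}{\CV}^\flat\otimes F(\Pi+\pi)u\bigr)=\prescript{n-1}{}{\CV}^{\flat,n-2,1}_{-1}\otimes F(\Pi+\pi)u .
\]

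To establish it, recall $\prescript{n}{}{\CV}^{n-1,1}_{-1}=\prescript{n}{}{\CV}^{n-1,1}\cap\prescript{n}{}{\CV}_{-1}$ and take the intersection in two stages, in the right order: intersecting $\prescript{n-1}{}{\CV}^\flat\otimes F(\Pi+\pi)u$ first with the signature piece $\prescript{n}{}{\CV}^{n-1,1}$ produces $\prescript{n-1}{}{\CV}^{\flat,n-2,1}\otimes F(\Pi+\pi)u$ by Lemma \ref{lem:sig-red} (with $(r,s)=(n-1,1)$), and then intersecting with $\prescript{n}{}{\CV}_{-1}$ — which can be done inside $\prescript{n-1}{}{\CV}^\flat\otimes F(\Pi+\pi)u$, since the current term already lies there — yields $\bigl(\prescript{n-1}{}{\CV^\flat_{-1}}\otimes F(\Pi+\pi)u\bigr)\cap\bigl(\prescript{n-1}{}{\CV}^{\flat,n-2,1}\otimes F(\Pi+\pi)u\bigr)$ by Proposition \ref{prop:ori-red}. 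Because $(\Pi+\pi)u$ is a nonzero vector of $\CV^\circ$, hence lies outside $\CV^\flat$, the map $\eta\mapsto\eta\wedge(\Pi+\pi)u$ is injective on $\prescript{n-1}{}{\CV}^\flat$; injective maps commute with intersections, so this last expression equals $\prescript{n-1}{}{\CV}^{\flat,n-2,1}_{-1}\otimes F(\Pi+\pi)u$ by definition of $\prescript{n-1}{}{\CV}^{\flat,n-2,1}_{-1}$, proving the displayed identity.

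For the lattice statement I would argue both inclusions. For ``$\supseteq$'': $\prescript{n-1}{}{\Lambda}^{\flat,n-2,1}_{-1}\subseteq\prescript{n-1}{}{\Lambda^\flat}$ by definition, so the left-hand factor sits in $\prescript{n-1}{}{\Lambda^\flat}\otimes O_F(\Pi+\pi)u$; moreover, since $(\Pi+\pi)u\in O_F u\otimes_{O_{F_0}}O_F$ and $\Lambda=\Lambda^\flat\oplus O_F u$, wedging $\prescript{n-1}{}{\Lambda^\flat}=\bigwedge^{n-1}_{O_F}(\Lambda^\flat\otimes_{O_{F_0}}O_F)$ with $O_F(\Pi+\pi)u$ lands in $\prescript{n}{}{\Lambda}=\bigwedge^{n}_{O_F}(\Lambda\otimes_{O_{F_0}}O_F)$, and combining with the rational identity gives $\prescript{n-1}{}{\Lambda}^{\flat,n-2,1}_{-1}\otimes O_F(\Pi+\pi)u\subseteq\prescript{n}{}{\Lambda}\cap\prescript{n}{}{\CV}^{n-1,1}_{-1}=\prescript{n}{}{\Lambda}^{n-1,1}_{-1}$. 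For ``$\subseteq$'': given $x$ in the left-hand side, write $x=y'\wedge(\Pi+\pi)u$ with $y'\in\prescript{n-1}{}{\Lambda^\flat}$ (uniquely, by injectivity of the wedge map); since $x\in\prescript{n}{}{\Lambda}^{n-1,1}_{-1}\subseteq\prescript{n}{}{\CV}^{n-1,1}_{-1}$ and $x\in\prescript{n-1}{}{\CV}^\flat\otimes F(\Pi+\pi)u$, the rational identity gives $x=y\wedge(\Pi+\pi)u$ with $y\in\prescript{n-1}{}{\CV}^{\flat,n-2,1}_{-1}$; injectivity forces $y=y'$, so $y'\in\prescript{n-1}{}{\Lambda^\flat}\cap\prescript{n-1}{}{\CV}^{\flat,n-2,1}_{-1}=\prescript{n-1}{}{\Lambda}^{\flat,n-2,1}_{-1}$, whence $x\in\prescript{n-1}{}{\Lambda}^{\flat,n-2,1}_{-1}\otimes O_F(\Pi+\pi)u$.

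The argument is essentially bookkeeping and I do not expect a serious obstacle. The two points that require attention are: (a) in proving the rational identity the two intersections must be taken in the stated order — first the signature piece $\prescript{n}{}{\CV}^{n-1,1}$, so that Lemma \ref{lem:sig-red} is applied while one still sits inside $\prescript{n-1}{}{\CV}^\flat\otimes F(\Pi+\pi)u$, and only then $\prescript{n}{}{\CV}_{-1}$ via Proposition \ref{prop:ori-red}; and (b) the passage from the rational to the integral statement is free, because wedging with $(\Pi+\pi)u$ remains injective on $\prescript{n-1}{}{\Lambda^\flat}$ and $(\Pi+\pi)u$ lies in $O_F u\otimes_{O_{F_0}}O_F$, so both sides of the desired equality are images, under this one injection, of lattices in $\prescript{n-1}{}{\CV}^\flat$ that coincide by the rational identity.
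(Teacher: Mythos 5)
Your proposal is correct and follows essentially the same route as the paper: both reduce the integral equality to the rational identity obtained by combining Proposition \ref{prop:ori-red} with Lemma \ref{lem:sig-red}, and then conclude via the definition \eqref{eq:spin-lattice-sum} of $\prescript{n}{}{\Lambda}^{r,s}_\epsilon$ as the intersection of $\prescript{n}{}{\Lambda}$ with the rational eigenspace. Your write-up is merely more explicit about the order of the two intersections and the injectivity of wedging with $(\Pi+\pi)u$, which the paper leaves implicit.
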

 \begin{proof}
 	It is clear that we have
 	\begin{equation*}
 		\left(\prescript{n-1}{}{\Lambda}^\flat\otimes O_F(\Pi+\pi)u\right)\cap \prescript{n}{}{\Lambda}^{n-1,1}_{-1}\supseteq \prescript{n-1}{}{\Lambda}^{\flat,n-2,1}_{-1}\otimes O_F(\Pi+\pi)u.
 	\end{equation*}
 	We will show the converse inclusion. Note that
 	\begin{equation*}
 		\left(\prescript{n-1}{}{\Lambda}^\flat\otimes O_F(\Pi+\pi)u\right)\cap \prescript{n}{}{\Lambda}^{n-1,1}_{-1}\subset \prescript{n-1}{}{\Lambda}^\flat\otimes O_F(\Pi+\pi)u.
 	\end{equation*}
 	By the definition of the lattice \eqref{eq:spin-lattice-sum}, it suffices to show the inclusion
 	\begin{equation*}
 		\left(\prescript{n-1}{}{\Lambda}^\flat\otimes O_F(\Pi-\pi)u\right)\cap \prescript{n}{}{\Lambda}^{n-1,1}_{-1}\subset (\prescript{n-1}{}{\CV}^{\flat,n-2,1}_{-1})\otimes O_F(\Pi+\pi)u=\prescript{n-1}{}{\CV}^{\flat,n-2,1}_{-1}\otimes F(\Pi+\pi)u.
 	\end{equation*}
 	By Proposition \ref{prop:ori-red} and \ref{lem:sig-red}, we have
 	\begin{equation*}
 		\left(\prescript{n-1}{}{\CV^\flat}\otimes F(\Pi+\pi)u\right)\cap \prescript{n}{}{\CV}^{n-1,1}\cap \prescript{n}{}{\CV}_{-1}\subset \left(\prescript{n-1}{}{\CV}^{\flat,n-2,1}\cap \prescript{n-1}{}{\CV}^\flat_{-1}\right)\otimes F(\Pi+\pi)u.
 	\end{equation*}
 	The assertion now follows.
 \end{proof}
 
 Recall that for an $O_F$-lattice $\Lambda$ and an $O_F$-algebra $R$, we set $\Lambda_R:=\Lambda\otimes_{O_{F_0}}R$.
 \begin{corollary}\label{cor:ss-red}
 	For any $O_F$-algebra $R$, let $\CF^\flat\subset \Lambda^\flat_R$ be a totally isotropic subspace.
 	Let $\CF=\CF^\flat\oplus R(\Pi+\pi)u\subset \Lambda_R$.
 	If $\bigwedge^{n-1} \CF^\flat\in L^{n-2,1}_{\Lambda^\flat,-1}(R)$, then $\bigwedge^{n} \CF\in L^{n-1,1}_{\Lambda,-1}(R)$. In other words, if $\CF^\flat$ satisfies the strengthened spin condition, then so does $\CF$. 
 \end{corollary}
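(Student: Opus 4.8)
The plan is to deduce the corollary directly from Proposition \ref{prop:ss-red} by base change, using the explicit description of $\CF$ as an orthogonal-type direct sum. First I would fix the $O_F$-algebra $R$ and the totally isotropic $\CF^\flat\subset\Lambda^\flat_R$ with the hypothesis $\bigwedge^{n-1}\CF^\flat\in L^{n-2,1}_{\Lambda^\flat,-1}(R)$, i.e.\ the line $\bigwedge^{n-1}\CF^\flat$ lands in the image of $\prescript{n-1}{}{\Lambda}^{\flat,n-2,1}_{-1}\otimes_{O_F}R$ in $\prescript{n-1}{}{\Lambda^\flat}\otimes_{O_F}R$. Since $\CF=\CF^\flat\oplus R(\Pi+\pi)u$ and $\Lambda=\Lambda^\flat\oplus\langle u\rangle$, wedging gives the identification
\begin{equation*}
\bigwedge^n\CF=\Bigl(\bigwedge^{n-1}\CF^\flat\Bigr)\otimes R(\Pi+\pi)u\subset\Bigl(\prescript{n-1}{}{\Lambda^\flat}\otimes_{O_F}R\Bigr)\otimes R(\Pi+\pi)u\subset\prescript{n}{}{\Lambda}\otimes_{O_F}R.
\end{equation*}
So the hypothesis immediately places $\bigwedge^n\CF$ inside the image of $\prescript{n-1}{}{\Lambda}^{\flat,n-2,1}_{-1}\otimes_{O_F}R(\Pi+\pi)u$ in $\prescript{n}{}{\Lambda}\otimes_{O_F}R$.

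The remaining point is to check that this image lies in $L^{n-1,1}_{\Lambda,-1}(R)$, which is the image of $\prescript{n}{}{\Lambda}^{n-1,1}_{-1}\otimes_{O_F}R$. By Proposition \ref{prop:ss-red} we have, as lattices in $\prescript{n}{}{\CV}$, the inclusion $\prescript{n-1}{}{\Lambda}^{\flat,n-2,1}_{-1}\otimes O_F(\Pi+\pi)u\subseteq\prescript{n}{}{\Lambda}^{n-1,1}_{-1}$; applying $-\otimes_{O_F}R$ and composing with $\prescript{n}{}{\Lambda}\otimes_{O_F}R\to\prescript{n}{}{\Lambda}\otimes_{O_F}R$ gives exactly the desired containment of images. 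Hence $\bigwedge^n\CF\in L^{n-1,1}_{\Lambda,-1}(R)$, which is the assertion. The final sentence (``$\CF^\flat$ satisfies the strengthened spin condition $\implies$ $\CF$ does'') is then just unwinding the definition in \S\ref{sec:spin}: the strengthened spin condition for the submodule $\CF$ of $\Lambda\otimes_{O_F}R$ is precisely the statement $\bigwedge^n_R\CF\subset L^{n-1,1}_{\Lambda,-1}(R)$.

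The only mild subtlety — and the one step I would state carefully rather than skip — is that $L^{r,s}_{\Lambda,\epsilon}(R)$ is defined as the \emph{image} of $\prescript{n}{}{\Lambda}^{r,s}_\epsilon\otimes_{O_F}R$ in $\prescript{n}{}{\Lambda}\otimes_{O_F}R$, not as a submodule of the form $(\text{lattice})\otimes R$; since tensoring is right exact, an inclusion of the integral sublattices over $O_F$ does give an inclusion of the corresponding images over $R$, so the passage from Proposition \ref{prop:ss-red} to the statement over $R$ is legitimate. I do not expect a genuine obstacle here — all the real work (the eigenspace and $\prescript{}{}{\CV}_{\pm1}$-decomposition analysis, the connectedness argument producing a single well-chosen geometric point in $\BP(\prescript{n-1}{}{\CV^\flat_{-1}})$ mapping to $\BP(\prescript{n}{}{\CV_{-1}})$, and the dimension count) has already been carried out in Propositions \ref{prop:ori-red} and \ref{prop:ss-red} and in \cite[Lem.~3.1.1]{Luo24}; the corollary is the clean base-changed repackaging, and it is exactly part (ii) of Theorem \ref{thm:ss-comparison} once one notes that the hermitian $O_F$-module $X=Y\times\ov{\CE}$ has $\Fil(X)=\Fil(Y)\oplus(\Pi+\pi)\text{-part}$ after the trivialization \eqref{eq:lie-alg-trivilization}, so that $\Fil(X)$ is of the form $\CF$ above with $\CF^\flat=\Fil(Y)$.
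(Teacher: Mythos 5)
Your proposal is correct and is essentially the paper's own proof: the paper likewise writes $\bigwedge^{n}\CF=\bigwedge^{n-1}\CF^\flat\otimes R(\Pi+\pi)u$ and deduces the containment in $L^{n-1,1}_{\Lambda,-1}(R)$ by tensoring the lattice inclusion $\prescript{n-1}{}{\Lambda}^{\flat,n-2,1}_{-1}\otimes O_F(\Pi+\pi)u\subseteq\prescript{n}{}{\Lambda}^{n-1,1}_{-1}$ from Proposition \ref{prop:ss-red} with $R$. Your remark on passing from an inclusion of integral lattices to an inclusion of images after $-\otimes_{O_F}R$ is the only point the paper leaves implicit, and you handle it correctly.
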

 \begin{proof}
 	By Proposition \ref{prop:ss-red}, we have an inclusion
 	\begin{equation}\label{equ:ss-comparison-inc}
 		\left(\prescript{n-1}{}{\Lambda}^{\flat,n-2,1}_{-1}\otimes R\right)\otimes R(\Pi+\pi)u\subset  
 		\left(
 		\left(\prescript{n-1}{}{\Lambda^\flat}\otimes R\right)\otimes R(\Pi+\pi)u\right)\cap \left(\prescript{n}{}{\Lambda}^{n-1,1}_{-1}\right)\otimes R.
 	\end{equation}
 	Since $\bigwedge^{n} \CF=\bigwedge^{n-1}\CF^\flat\otimes R(\Pi+\pi)u$, the assertion follows.
	\end{proof}

This corollary implies Theorem \ref{thm:ss-comparison}, (ii). Indeed, recall that, by definition, a hermitian $O_F$-module  $(Y,\iota_Y,\lambda_Y)$ of dimension $n-1$ and type $t$ satisfies the strengthened spin condition if after some \'etale extension and the choice of a trivialization
\begin{equation*}
			\Bigl[\cdots\xrightarrow{\lambda_*^\vee}D(Y)\xrightarrow{\lambda_*}D(Y^\vee)\xrightarrow{\lambda_*^\vee}\cdots\Bigr]\xrightarrow{\sim}\Lambda^\flat_{[t],\CO_S},
		\end{equation*}
the induced  filtration $\Fil(Y)$ satisfies the strengthened spin condition. In the case of Theorem \ref{thm:ss-comparison}(ii), we choose a trivialization for the de Rham homology of as above, then we have a trivialization
\begin{equation*}
\Bigl[\cdots\xrightarrow{\lambda_*^\vee}D(X)\xrightarrow{\lambda_*}D(X^\vee)\xrightarrow{\lambda_*^\vee}\cdots\Bigr]\xrightarrow{\sim}
\Bigl[\cdots \left(\Lambda^\flat_{t}\oplus\langle u\rangle\right)_{\CO_S}\to\left(\Lambda^{\flat,\vee}_{t}\oplus\langle u\rangle\right)_{\CO_S}\to\cdots\Bigr].
\end{equation*}
We can identify $\Fil(Y)$ and $\Fil(X)$ with $\CF^\flat$ and $\CF$ above.  Hence Theorem \ref{thm:ss-comparison}(ii) follows from Corollary \ref{cor:ss-red}.

\subsection{From larger space to smaller space}

 The inclusion \eqref{equ:ss-comparison-inc} can be strict in general. To study the converse, we need a more careful study of the strengthened spin condition. For this, it is convenient to introduce the local model and some auxiliary conditions.
 
  \begin{definition}\label{defn:lm}
 Let $\Lambda_t$ be a vertex lattice of type $t$. Denote by $\lambda_{t}:\Lambda_{t}\to\Lambda_{t}^\vee$ and $\lambda_{t}^\vee:\Lambda_{t}^\vee\to \pi^{-1}\Lambda_{t}$ the natural inclusions.
\begin{altenumerate}
\item The \emph{wedge local model} $\bM^{[t],\wedge}_n$  is a projective scheme over $\Spec  O_{F}$. It represents the moduli problem that sends each $ O_{F}$-algebra $R$ to the set of filtrations:
\begin{equation}\label{equ:lm-filtrations}
\begin{aligned}
\xymatrix{
\Lambda_{t,R}\ar[r]^-{\lambda_{t}}&\Lambda^\vee_{t,R}\ar[r]^-{\lambda^\vee_{t}}&\pi^{-1}\Lambda_{t,R}
\\
\CF_{\Lambda_t}\ar@{^(->}[u]\ar[r]&\CF_{\Lambda_t^\vee}\ar@{^(->}[u]\ar[r]&\ar@{^(->}[u]\CF_{\pi^{-1}\Lambda_t}
}
\end{aligned}
\end{equation}
such that:
\begin{altenumerate2}		
 \item For each lattice $\Lambda\in \{\Lambda_t, \Lambda_t^\vee, \pi^{-1}\Lambda_t\}$, the filtration  $\CF_\Lambda$ is an $ O_F \otimes_{ O_{F_0}} R$-submodule of $\Lambda_{R}$, and an $R$-direct summand of rank $n$;
 \item The natural arrow $\lambda_t:\Lambda_{t,R} \to \Lambda_{t,R}^\vee$ carries $\CF_{\Lambda_t}$
 into $\CF_{\Lambda_t^\vee}$, and the natural arrow $\lambda_t^\vee:\Lambda^\vee_{t,R} \to \pi^{-1}\Lambda_{t,R}$ carries $\CF_{\Lambda^\vee_t}$ into $\CF_{\pi^{-1}\Lambda_t}$.
 The isomorphism $\pi^{-1}\Lambda_{t,R}\overset{\pi}{\to}\Lambda_{t,R}$ identifies $\CF_{\pi^{-1}\Lambda_t}$ with $\CF_{\Lambda_t}$;
 \item The perfect $R$-bilinear pairing
 \begin{equation*}
 	\Lambda_{t,R} \times \Lambda^\vee_{t,R}
 	\xrightarrow{\langle-,-\rangle \otimes R} R
 \end{equation*}
 	identifies $\CF_{\Lambda^\vee_t}^\perp$ with $\CF_{\Lambda_t}$ inside $\Lambda_{t,R}$; and
 	\item For each lattice $\Lambda\in \{\Lambda_t, \Lambda_t^\vee, \pi^{-1}\Lambda_t\}$, the element $\pi \otimes 1 \in  O_F\otimes_{ O_{F_0}} R$ acting on $\CF_\Lambda$ satisfies the following signature conditions:
 	\begin{altitemize}
 	\item(Kottwitz condition) There is an equality of polynomials 
 	\begin{equation*}
 	\mathrm{char}(\Pi\mid\CF_\Lambda)=(T-\pi)(T+\pi)^{n-1};
 	\end{equation*}
 	\item(Wedge condition) We have
 	\begin{equation*}
 	\bigwedge^2(\Pi-\pi \mid \CF_\Lambda)=0;
 	\quad\text{and}\quad
 	\bigwedge^n(\Pi+\pi \mid \CF_\Lambda)=0.
 	\end{equation*}
 	\item(Spin condition) When $n$ is even and $t=n$, we further require that the operator $\Pi-\pi$ is nowhere vanishing in $\CF_\Lambda$.
 	\end{altitemize}
 			
 \end{altenumerate2}
 \item The \emph{canonical local model} $\bM_n^{[t]}$ is a flat projective scheme over $\Spec  O_{F}$, cf. \cite{Luo24}. It represents the moduli problem that sends each $ O_{F}$-algebra $R$ to the set of filtrations in \eqref{equ:lm-filtrations} satisfying the axioms (a)(b)(c) in (i), and the requirement that for all lattices $\Lambda\in \{\Lambda_t, \Lambda_t^\vee, \pi^{-1}\Lambda_t\}$, the element $\Pi \in  O_F\otimes_{ O_{F_0}} R$ acting on $\CF_\Lambda$ satisfies the strengthened spin condition.
 \item For $t\neq n$, we define the \emph{worst point} as the filtration $(\CF_\Lambda:=\Pi\Lambda\subset \Lambda_R)$. This defines a point $*\in \bM_n^{[t]}\in \bM_n^{[t]}(k)$ by \cite[Lem. 3.1.1]{Luo24}.
 	\end{altenumerate}
 \end{definition}

 The Kottwitz condition and the wedge condition are easier to handle than the strengthened spin condition, and it is not hard to check the following:
 
 \begin{lemma}\label{lem:KW-induction}
 	Let $\Lambda=\Lambda^\flat\oplus \langle u\rangle$ and let $\CF=\CF^\flat\oplus R(\Pi-\pi)u
 	\subset \Lambda_R$ be a filtration. Then $\CF^\flat$ satisfies the Kottwitz condition (resp. the wedge condition) if and only if $\CF$ satisfies the Kottwitz condition (resp. the wedge condition).\qed
 \end{lemma}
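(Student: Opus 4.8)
The plan is to reduce both equivalences to a single computation on the extra rank-one summand $R(\Pi-\pi)u$. Write $\ell:=(\Pi-\pi)u$. Since $\Pi^{2}=\pi_{0}=\pi^{2}$ as operators on $O_{F}\otimes_{O_{F_0}}R$, one computes directly that
\[
\Pi\ell=(\Pi^{2}-\pi\Pi)u=(\pi^{2}-\pi\Pi)u=-\pi\ell ,
\]
so $\Pi$ acts on $R\ell$ by the scalar $-\pi$; in particular $(\Pi+\pi)\ell=0$ and $(\Pi-\pi)\ell=-2\pi\ell$. As $\ell$ has unit coordinate against $\Pi u$ in the $R$-basis $\{u,\Pi u\}$ of $(O_{F}u)_{R}$, the line $R\ell$ is an $R$-direct summand of $(O_{F}u)_{R}$ and is stable under $O_{F}\otimes_{O_{F_0}}R$. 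Moreover $\CF^\flat=\CF\cap\Lambda^\flat_{R}$, so $\CF^\flat$ is automatically $O_F\otimes_{O_{F_0}}R$-stable, and $\CF=\CF^\flat\oplus R\ell\subset\Lambda_{R}$ is an $R$-direct summand of rank $n$ precisely when $\CF^\flat\subset\Lambda^\flat_{R}$ is one of rank $n-1$. Thus $\Pi$ and $\Pi\pm\pi$ act on $\CF$ block-diagonally for the decomposition $\CF=\CF^\flat\oplus R\ell$, with scalar blocks $-\pi$, $-2\pi$, $0$ respectively on $R\ell$.

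For the Kottwitz condition, multiplicativity of characteristic polynomials along this $\Pi$-stable decomposition gives $\mathrm{char}(\Pi\mid\CF)(T)=\mathrm{char}(\Pi\mid\CF^\flat)(T)\,(T+\pi)$; since multiplication by the monic polynomial $T+\pi$ is injective on $R[T]$, the identity $\mathrm{char}(\Pi\mid\CF)(T)=(T-\pi)(T+\pi)^{n-1}$ holds if and only if $\mathrm{char}(\Pi\mid\CF^\flat)(T)=(T-\pi)(T+\pi)^{n-2}$, which is the Kottwitz identity in dimension $n-1$. For the wedge condition one uses the decomposition $\bigwedge^{j}(\CF^\flat\oplus R\ell)=\bigwedge^{j}\CF^\flat\oplus\bigl(\bigwedge^{j-1}\CF^\flat\otimes R\ell\bigr)$, under which the $j$-th exterior power of an operator acting as $B$ on $\CF^\flat$ and as a scalar $c$ on $R\ell$ becomes $\bigwedge^{j}B\oplus c\,\bigwedge^{j-1}B$. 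Applied to $\Pi+\pi$, whose block on $R\ell$ vanishes, the $\bigwedge^{2}$-part of the wedge condition on $\CF$ reduces summand-by-summand to the same condition on $\CF^\flat$; applied to $\Pi-\pi$, the remaining top-degree part of the wedge condition is automatic in both ranks, since $\mathrm{char}(\Pi-\pi\mid M)(T)=\mathrm{char}(\Pi\mid M)(T+\pi)$ is divisible by $T$ once the Kottwitz identity holds, so that $\det(\Pi-\pi\mid M)=0$. The hypothesis $p\neq2$ enters only through $2\in R^{\times}$, which is what allows the scalar $-2\pi$ on $R\ell$ to be discarded.

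I expect the wedge condition to be the only step needing genuine care: the exterior-power identities must be run over an arbitrary $O_{F}$-algebra $R$ rather than over a field, and one must keep straight which of $\Pi\pm\pi$ annihilates the new line $R(\Pi-\pi)u$ (namely $\Pi+\pi$) and which exterior powers are being imposed. Once the block-diagonal structure of the first paragraph is recorded, the Kottwitz equivalence is immediate and the wedge equivalence is a short piece of bookkeeping, which is presumably what the phrase ``it is not hard to check'' refers to.
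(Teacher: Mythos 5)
The paper omits the proof of this lemma, and your block-diagonal argument is surely the intended one: the computation $\Pi(\Pi-\pi)u=-\pi(\Pi-\pi)u$, the factorization $\mathrm{char}(\Pi\mid\CF)=\mathrm{char}(\Pi\mid\CF^\flat)\cdot(T+\pi)$ together with cancellation of the monic factor, and the splitting of exterior powers along $\CF=\CF^\flat\oplus R\ell$ are all correct. I have only two caveats, one of which concerns the paper rather than you.

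First, your wedge argument pairs $\bigwedge^2$ with $\Pi+\pi$ and the top exterior power with $\Pi-\pi$, whereas Definition \ref{defn:lm}(i)(d) as printed imposes $\bigwedge^2(\Pi-\pi\mid\CF_\Lambda)=0$ and $\bigwedge^n(\Pi+\pi\mid\CF_\Lambda)=0$. With the printed pairing the lemma would be false: since $\Pi-\pi$ acts on $R(\Pi-\pi)u$ by $-2\pi$, the operator $\bigwedge^2(\Pi-\pi\mid\CF)$ restricts on the summand $\CF^\flat\otimes R\ell$ to $-2\pi\,(\Pi-\pi)|_{\CF^\flat}$, which over the generic fiber is nonzero for every $\CF^\flat$ satisfying the Kottwitz condition once $n\ge 3$. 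Indeed the printed wedge condition is already incompatible with the printed Kottwitz condition $(T-\pi)(T+\pi)^{n-1}$, which forces $\Pi-\pi$ to have rank $n-1$ generically; so Definition \ref{defn:lm} must contain a sign slip and your pairing is the consistent one, but you should say so explicitly rather than silently switching conventions. Second, you dispose of the top-degree wedge condition by invoking the Kottwitz identity. That is fine for the application (Proposition \ref{prop:exc-iso-wedge} concerns the wedge local model, where both conditions hold simultaneously), but it means you prove the equivalence of the conjunctions, not the two ``resp.''\ equivalences separately: without Kottwitz, the top wedge of the relevant operator on $\CF$ equals $\pm 2\pi$ times the determinant of its restriction to $\CF^\flat$, and the vanishing of the former is strictly weaker when $\pi$ is a zero divisor in $R$. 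A one-line remark that the wedge equivalence is asserted in the presence of the Kottwitz condition closes this. Your final sentence about $p\neq 2$ is a red herring: in the argument as you actually run it the scalar $-2\pi$ never needs to be cancelled, and inverting $2$ would not repair the $\bigwedge^2(\Pi-\pi)$ computation in any case because of the residual factor $\pi$.
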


Next, we study the strengthened spin condition over the special fiber. The key input is the following computation:
\begin{theorem}\label{thm:ss-lattice}{\cite[Cor. 3.4.7]{Luo24}}
Let $V=F^n=\mathrm{Span}_F(\fke_1,\cdots,\fke_n)$ be the split hermitian space with basis such that $h(\fke_i,\fke_j)=\delta_{i,n+1-j}$. For any integer $\kappa$ such that $0\leq \kappa\leq n$, we let $\bLambda_\kappa$ be the standard integral lattice:
\begin{equation*}
\bLambda_\kappa:=\mathrm{Span}_{O_F}\left(\pi^{-1}\fke_1,\cdots,\pi^{-1}\fke_{\kappa},\fke_{\kappa+1},\cdots \fke_n\right)\subset V.
\end{equation*}
Consider $\bLambda_\kappa\otimes_{O_{F_0}}O_F\subset \CV:=V\otimes_{F_0}F$, it is spanned by the following basis:
\begin{equation*}
\pi^{-1}\fke_1\otimes 1,\cdots,\pi^{-1}\fke_{\kappa}\otimes 1,\fke_{\kappa+1}\otimes 1,\cdots ,\fke_n\otimes 1;\quad 
 \fke_1\otimes 1,\cdots,\fke_{\kappa}\otimes 1,\pi \fke_{\kappa+1}\otimes 1,\cdots,\pi \fke_n\otimes 1.
\end{equation*}
We denote them by order as $e_1,\cdots,e_{2n}$, hence $\bLambda_{\kappa,O_F}=\mathrm{Span}_{O_F}(e_1,\cdots,e_{2n})$.
Denote by $e_{[i,\wh{n+j}]}$ the vector
 	\begin{equation*}
 		e_{[i,\wh{n+j}]}:=e_{\{i,n+1,\cdots,\wh{n+j},\cdots,2n\}}:=e_i\wedge e_{n+1}\wedge\cdots\wedge\wh{e_{n+j}}\wedge\cdots \wedge e_{2n}.
 	\end{equation*}
For any $k$-algebra $R$, the standard lattice $L^{n-1,1}_{\bLambda_{\kappa},-1}(R)$ is generated by the following elements, where we set $i^\vee:=n+1-i$:
 	
 	\begin{altenumerate}
 		\item $e_{\{n+1,\cdots,2n\}}$;
 		\item $e_{[i,\wh{n+i^\vee}]}$ for $i\neq i^\vee$;
 		\item $e_{[i,\wh{n+j}]}-(-1)^{n+i+j}e_{[j^\vee,\wh{n+i^\vee}]}$ for $i<j^\vee\leq \kappa,i\neq j$;
 		\item $e_{[j^\vee,\wh{n+i^\vee}]}$ for $i\leq \kappa<j^\vee<n-\kappa+1$;
 		\item $e_{[i,\wh{n+j}]}+(-1)^{n+i+j}e_{[j^\vee,\wh{n+i^\vee}]}$ for $i\leq \kappa,j^\vee\leq n-\kappa+1,i\neq j$;
 		\item $e_{[i,\wh{n+j}]}-(-1)^{n+i+j}e_{[j^\vee,\wh{n+i^\vee}]}$ for $\kappa<i<j^\vee<n-\kappa+1,i\neq j$;
 		\item  $e_{[i,\wh{n+j}]}$ for $\kappa<i<n-k+1\leq j^\vee,i\neq j$;
 		\item $e_{[i,\wh{n+j}]}-(-1)^{n+i+j}e_{[j^\vee,\wh{n+i^\vee}]}$ for $n-\kappa+1\leq i<j^\vee,i\neq j$;
 		\item  $e_{[i,\wh{n+i}]}+(-1)^ne_{[i^\vee,\wh{n+i^\vee}]}$ for $i\leq \kappa$;
 		\item  $e_{[i,\wh{n+i}]}+(-1)^ne_{[i^\vee,\wh{n+i^\vee}]}$ for $\kappa< i\leq M$;
 		\item Let $w=\sum_{i=1}^M c_i e_{[i,\wh{n+i}]}\in W(\bLambda_\kappa)\otimes R$. Then $w$ lies in the image if and only if 
 		\begin{altenumerate2}
 			\item When $n=2m$, we have $\sum_{i=\kappa}^m (-1)^ic_i=0$;
 			\item When $n=2m+1$, we have $\sum_{i=\kappa}^m (-1)^ic_i+\frac{1}{2}(-1)^{m+1}c_{m+1}=0$.\\
 			All the $w$ of the form (a) and (b) generate a free submodule, a basis of which can be completed to a basis of $L_{-1}^{n-1,1}(\bLambda_\kappa)(R)$ by the elements (i)-(x).
 		\end{altenumerate2}
 	\end{altenumerate}
\qed
 
\end{theorem}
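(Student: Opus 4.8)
Since this statement is \cite[Cor. 3.4.7]{Luo24}, I only sketch the approach one would take. By \eqref{eq:spin-def} the module $L^{n-1,1}_{\bLambda_\kappa,-1}(R)$ is the image of $\prescript{n}{}{\bLambda_\kappa}^{n-1,1}_{-1}\otimes_{O_F}R$ inside $\prescript{n}{}{\bLambda_\kappa}\otimes_{O_F}R$, and $\prescript{n}{}{\bLambda_\kappa}^{n-1,1}_{-1}=\prescript{n}{}{\bLambda_\kappa}\cap\prescript{n}{}{\CV}^{n-1,1}_{-1}$ is a direct summand of $\prescript{n}{}{\bLambda_\kappa}$. Hence the formation of $L^{n-1,1}_{\bLambda_\kappa,-1}$ commutes with base change, and the whole statement reduces to exhibiting an explicit $O_F$-basis of the lattice $\prescript{n}{}{\bLambda_\kappa}^{n-1,1}_{-1}$ and reducing it modulo $\pi$. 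The plan is to carry this out in three steps.

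First I would fix the eigenspace decomposition $\CV=\CV_\pi\oplus\CV_{-\pi}$ of $\Pi=\pi\otimes1$, spanned respectively by the vectors $(\Pi+\pi)(\fke_i\otimes1)$ and $(\Pi-\pi)(\fke_i\otimes1)$, and rewrite everything in this basis. Then $\prescript{n}{}{\CV}^{n-1,1}=\bigwedge^{n-1}_F\CV_\pi\otimes_F\bigwedge^1_F\CV_{-\pi}$ has an evident $F$-basis indexed by an omitted index on the $\CV_\pi$-side and a chosen index on the $\CV_{-\pi}$-side; translating this basis back to the $e_1,\dots,e_{2n}$ of $\bLambda_{\kappa,O_F}$ — and keeping careful track of the powers of $\pi$ (up to units) that distinguish indices $\le\kappa$ from indices $>\kappa$ — identifies the basis vectors with scalar multiples of the $e_{[i,\wh{n+j}]}$ together with $e_{\{n+1,\dots,2n\}}$.

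Next I would pin down the half-spin subspace $\prescript{n}{}{\CV}_{-1}$. By Lemma \ref{lem:spin-rep} it is spanned by the pure tensors $\bigwedge^n_F\CF$ over Lagrangians $\CF\subset\CV$ lying in the $(-1)$-component of $\OGr(n,\CV)$; using the explicit half-spin formulas of \cite[\S 2.5]{S}, normalized (as in \cite[Lem. 3.1.1]{Luo24}) so that the worst-point filtration $\Pi\bLambda_\kappa$ lies in this component, one cuts out $\prescript{n}{}{\CV}^{n-1,1}_{-1}$ as the space of combinations $\sum a_{ij}e_{[i,\wh{n+j}]}+c\,e_{\{n+1,\dots,2n\}}$ obeying the symmetry relations $a_{ij}=\pm(-1)^{n+i+j}a_{j^\vee i^\vee}$ (with $i^\vee=n+1-i$) and the single linear relation among the diagonal coefficients $a_{ii}$ recorded in (xi). The delicate point here is getting the signs $(-1)^{n+i+j}$ and the $\vee$-flips exactly right.

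Finally I would intersect $\prescript{n}{}{\CV}^{n-1,1}_{-1}$ with the lattice $\prescript{n}{}{\bLambda_\kappa}$: since $e_{[i,\wh{n+j}]}$ lies in $\prescript{n}{}{\bLambda_\kappa}$ only after clearing a power of $\pi$ determined by the positions of $i,j$ and of $i^\vee,j^\vee$ relative to $\kappa$ and to $n-\kappa+1$, within each symmetry pair the integral vectors are either a single $e_{[\cdot]}$ (one term drops out of the lattice, or both are separately integral) or only the relevant $\pm$-combination; sorting the pairs into the three regions $\le\kappa$, strictly between $\kappa$ and $n-\kappa+1$, and $\ge n-\kappa+1$ produces exactly the families (i)--(x), while the diagonal relation of the previous step cuts the span of the $e_{[i,\wh{n+i}]}$ down to the submodule in (xi), whose freeness and completion to a basis follow from a rank count. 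Reduction modulo $\pi$ is then immediate because every generator was chosen in $\prescript{n}{}{\bLambda_\kappa}$. I expect the main obstacle to be precisely this last step: coordinating the half-spin normalization with the $\pi$-adic valuation bookkeeping across the three regions, and confirming that no further integral relation survives mod $\pi$ — in particular that (xi) genuinely gives a basis of its submodule.
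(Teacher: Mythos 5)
The paper gives no proof of this statement: it is imported verbatim from \cite[Cor. 3.4.7]{Luo24} and stated without argument, so there is no internal proof to compare against. Your sketch --- reduce to an integral basis of $\prescript{n}{}{\bLambda_\kappa}^{n-1,1}_{-1}$ via the direct-summand/base-change observation, cut out $\prescript{n}{}{\CV}^{n-1,1}_{-1}$ by the half-spin symmetry relations $a_{ij}=\pm(-1)^{n+i+j}a_{j^\vee i^\vee}$ plus one diagonal relation in the $\Pi$-eigenbasis, then intersect with the lattice by tracking $\pi$-valuations across the regions determined by $\kappa$ and $n-\kappa+1$ --- is an accurate outline of how the cited reference (following Smithling's method) actually carries out the computation, and correctly identifies the sign bookkeeping and the freeness of the submodule in (xi) as the only delicate points.
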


 \begin{theorem}\label{thm:ss-comp-converse-special fiber}
 	Suppose $t=2\fkt \neq n-1$. Let $(V^\flat,\phi^\flat)$ be a hermitian space of dimension $n$ and let $\Lambda_t^\flat\subset V^\flat$ be a vertex lattice of type $t$.
 	For any $k$-algebra $R$, let
 \begin{equation*}
 	\CF_{\Lambda^\flat_t}\subset \Lambda^\flat_{t,R}
 	\quad\text{and}\quad
 	\CF_{\Lambda^{\flat,\vee}_t}\subset \Lambda^{\flat,\vee}_{t,R}
 \end{equation*}
be $R$-submodules satisfying axioms (a)-(c) in Definition \ref{defn:lm}(i).
Denote by $\Lambda_t$ the vertex lattice $\Lambda_t^\flat\oplus\langle u\rangle$, where $u$ is a unit length vector.
Define
\begin{equation*}
\CF_{\Lambda_t}=\CF_{\Lambda_t^\flat}\oplus R(\Pi+\pi)u\subset \Lambda_{t,R},
\quad\text{and}\quad 
\CF_{\Lambda^\vee_t}=\CF_{\Lambda_t^{\flat,\vee}}\oplus R(\Pi+\pi)u\subset \Lambda^\vee_{t,R}.
\end{equation*}
Then $\CF_{\Lambda_t}$ and $\CF_{\Lambda^\vee_t}$ satisfies the axioms (a)-(c) in Definition \ref{defn:lm}(i).
Moreover, if $\bigwedge^{n} \CF_{\Lambda_t}\subset L^{n-1,1}_{\Lambda_t,-1}(R)$, then $\bigwedge^{n-1} \CF_{\Lambda_t^\flat}\subset L^{n-2,1}_{\Lambda_t^\flat,-1}(R)$.
In other words, if $\CF_{\Lambda_t}$ satisfies the strengthened spin condition, then so does  $\CF_{\Lambda_t^\flat}$.
 \end{theorem}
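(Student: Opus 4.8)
The plan is to reduce the converse direction to the explicit generator description of the lattice $L^{n-1,1}_{\bLambda_\kappa,-1}(R)$ provided by Theorem \ref{thm:ss-lattice}, applied to both $\Lambda_t$ and $\Lambda_t^\flat$. First I would observe that, just as in the proof of Theorem \ref{thm:ss-comparison}(ii), the question is \'etale-local on $\Spec R$, so I may assume a trivialization is chosen; and since $\Lambda_t$ and $\Lambda_t^\flat$ are vertex lattices of the same even type $t=2\fkt$, after base change to $O_F$ they become standard lattices $\bLambda_\kappa$ with $\kappa = n - t$ (resp.\ $\kappa' = (n-1) - t$), so the wedge-power lattices are the explicit ones from Theorem \ref{thm:ss-lattice}. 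The key compatibility to exploit is the decomposition $\Lambda_{t} = \Lambda_t^\flat \oplus \langle u\rangle$ together with the identification (from Proposition \ref{prop:ss-red}) of $\left(\prescript{n-1}{}{\Lambda^\flat_t}\otimes O_F(\Pi+\pi)u\right)\cap \prescript{n}{}{\Lambda_t}^{n-1,1}_{-1}$ with $\prescript{n-1}{}{\Lambda_t^\flat}{}^{,n-2,1}_{-1}\otimes O_F(\Pi+\pi)u$: in the explicit basis $e_1,\dots,e_{2n}$ this says that the vector $e_{(\Pi+\pi)u}$ appears as a ``last'' basis vector and the wedge generators listed for $\bLambda_\kappa$ that contain it are precisely the images of the corresponding generators for $\bLambda_{\kappa'}$ wedged with $e_{(\Pi+\pi)u}$.

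The core argument then runs as follows. Write $\CF_{\Lambda_t} = \CF_{\Lambda_t^\flat}\oplus R(\Pi+\pi)u$. By Lemma \ref{lem:KW-induction}, $\CF_{\Lambda_t}$ and $\CF_{\Lambda_t^\flat}$ satisfy the Kottwitz and wedge conditions simultaneously, which (with axioms (a)--(c)) handles the easy part of the statement. For the strengthened spin condition, pick a local generator: $\bigwedge^{n}\CF_{\Lambda_t} = \bigl(\bigwedge^{n-1}\CF_{\Lambda_t^\flat}\bigr)\wedge e_{(\Pi+\pi)u}$, up to a unit, because $(\Pi+\pi)u$ is a fixed direct summand of $\CF_{\Lambda_t}$ complementary to $\CF_{\Lambda_t^\flat}$ inside $\Lambda_{t,R}$. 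Now suppose $\bigwedge^{n}\CF_{\Lambda_t}\in L^{n-1,1}_{\Lambda_t,-1}(R)$. Expand this element in the explicit generators from Theorem \ref{thm:ss-lattice} for $\bLambda_\kappa$. Every generator not involving $e_{(\Pi+\pi)u}$ contributes, after pairing against the (dual) functional that projects onto the $e_{(\Pi+\pi)u}$-direction, a term lying outside $\bigl(\prescript{n-1}{}{\Lambda^\flat}\bigr)\wedge e_{(\Pi+\pi)u}$; by Proposition \ref{prop:ss-red} (its proof shows the intersection with the complementary summands in \eqref{equ:ss-cond-decomp} is zero) these cannot contribute to $\bigwedge^n\CF_{\Lambda_t}$, so $\bigwedge^n\CF_{\Lambda_t}$ is an $R$-combination of exactly those generators of the form $g\wedge e_{(\Pi+\pi)u}$ with $g$ a generator of $L^{n-2,1}_{\Lambda_t^\flat,-1}(R)$. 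Dividing out the (injective, by freeness of the relevant summand) map $\xi\mapsto \xi\wedge e_{(\Pi+\pi)u}$, we get $\bigwedge^{n-1}\CF_{\Lambda_t^\flat}\in L^{n-2,1}_{\Lambda_t^\flat,-1}(R)$, which is the strengthened spin condition for $\CF_{\Lambda_t^\flat}$.

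A subtle point deserving care is the behavior of the generators of type (xi) in Theorem \ref{thm:ss-lattice}, i.e.\ the relation $\sum_i (-1)^i c_i = 0$ on the coefficients of the ``diagonal'' wedge vectors $e_{[i,\wh{n+i}]}$: one must check that this linear relation for $\bLambda_\kappa$ restricts correctly to the analogous relation for $\bLambda_{\kappa'}$ under the passage $\Lambda_t\rightsquigarrow\Lambda_t^\flat$, and in particular that the extra index introduced by $u$ (which lands in the $(\Pi\pm\pi)u$ directions, not in a ``diagonal'' slot) does not disturb the sum. This is where the hypothesis $t\neq n-1$ is used: when $t=n-1$ the vector $u$ of unit length forces $\Lambda_t^\flat$ to be \emph{almost $\pi$-modular}, the index $\kappa'$ hits the boundary case $n-\kappa'+1$ overlapping $\kappa'$, and the middle generator of type (xi)(b) acquires the factor $\tfrac12(-1)^{m+1}c_{m+1}$ in a position that is genuinely sensitive to whether one is in dimension $n$ or $n-1$; there the implication fails, consistent with the exceptional behavior recorded in Theorems \ref{thm:ss-comparison} and \ref{thm: Z unit}. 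I expect this verification of the type-(xi) relation across the dimension shift to be the main obstacle; everything else is bookkeeping with the explicit bases.
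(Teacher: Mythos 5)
Your overall strategy coincides with the paper's: reduce to the standard lattices of Theorem \ref{thm:ss-lattice} and compare the explicit generating sets for $L^{n-1,1}_{\Lambda_t,-1}(R)$ and $L^{n-2,1}_{\Lambda_t^\flat,-1}(R)$. However, the central step of your argument is asserted rather than proved, and the justification you give for it does not work. You claim that the generators of $L^{n-1,1}_{\Lambda_t,-1}(R)$ split into those ``involving'' $e_{(\Pi+\pi)u}$ and those not, that the former are exactly the generators of $L^{n-2,1}_{\Lambda_t^\flat,-1}(R)$ wedged with $e_{(\Pi+\pi)u}$, and that the latter cannot contribute to $\bigwedge^n\CF_{\Lambda_t}$ ``by Proposition \ref{prop:ss-red}.'' But Proposition \ref{prop:ss-red} is an equality of $O_F$-lattices, and the whole difficulty of the converse direction — stated explicitly in the paper right after Corollary \ref{cor:ss-red} — is that the resulting inclusion \eqref{equ:ss-comparison-inc} of $R$-modules can be \emph{strict} after base change to a $k$-algebra: intersections of direct summands do not commute with reduction mod $\pi$. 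The statement you need, namely
\[
\Bigl(\bigl(\prescript{n-1}{}{\Lambda^\flat_t}\otimes R\bigr)\wedge R\,e_{(\Pi+\pi)u}\Bigr)\cap L^{n-1,1}_{\Lambda_t,-1}(R)\ \subseteq\ L^{n-2,1}_{\Lambda_t^\flat,-1}(R)\wedge R\,e_{(\Pi+\pi)u},
\]
is precisely what Theorem \ref{thm:ss-comp-converse-special fiber} is about and cannot be deduced formally from the generic-fiber or $O_F$-level results; it must be checked against the explicit mod-$\pi$ generators.

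Concretely, two things are missing. First, you never put $u$ into standard position; the paper uses Witt's theorem to replace $u$ by $u_0=\fke_{m+1}$ ($n$ odd) or $u_0=\tfrac{1}{\sqrt2}(\fke_m+\fke_{m+1})$ ($n$ even). In the even case $(\Pi\pm\pi)u_0=e_{n+m}\pm e_{n+m+1}$ is \emph{not} a standard basis vector, and the basis $e_i^\flat$ of $\Lambda_t^{\flat,\vee}$ is not a sub-basis of the $e_i$ (it contains $e_m-e_{m+1}$ and $e_{n+m}-e_{n+m+1}$), so your dichotomy ``generators involving / not involving $e_{(\Pi+\pi)u}$'' does not exist in the form you need. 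One must expand $e_I^\flat\wedge(e_{n+m}+e_{n+m+1})$ in the standard basis and match the cross terms against the list (i)--(xi) of Theorem \ref{thm:ss-lattice}; this is the four-case analysis in the paper's proof (in particular, showing that pure tensors containing both $e_m^\flat$ and $e_{n+m-1}^\flat$ cannot occur, and that a term of type (ii) forces its partner to occur). Second, your explanation of where $t\neq n-1$ enters is not the right one: the hypothesis is needed so that Theorem \ref{thm:ss-lattice} applies to $\Lambda_t^\flat$ at all, and the actual failure at $t=n-1$ is that the type (i) generator $e_{\{n+1,\dots,2n\}}$ (the worst point) lies in $L^{n-1,1}_{\Lambda_t^\vee,-1}(R)$ but its ``restriction'' $e_{\{n+1,\dots,\wh{n+m-1},\dots,2n\}}$ does not lie in $L^{n-2,1}_{\Lambda_t^{\flat,\vee},-1}(R)$ — not an issue with the type (xi) relation.
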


 \begin{proof}
The verification of axioms (a)-(c) for $\CF_{\Lambda_t}$ and $\CF_{\Lambda^\vee_t}$ follows standard arguments, so we focus on proving the strengthened spin condition.
By \cite[Prop. 2.4.3]{Luo24}, it suffices to verify the assertion for $\CF_{\Lambda^\vee_t}$.
By \cite[Thm. 3.16]{RZ}, after passing to some \'etale cover, we may assume that $\Lambda_t=\bLambda_{-\fkt}$ and $\Lambda_t^\vee=\bLambda_{\fkt}$, where  $\bLambda_{-\fkt} $ and $\bLambda_{\fkt}$ are standard lattices defined in Theorem \ref{thm:ss-lattice} with the following basis (see \cite[\S 3.1.1]{Luo24}):
 	\begin{align*}
 		\begin{split}\label{equ_chart:standard-basis}
 			\bLambda_{-\fkt,O_{\breve{F}}}\colon&
 			\fke_1\otimes 1,\cdots, \fke_{n-\fkt}\otimes 1,\pi \fke_{n+1-\fkt}\otimes 1,\cdots , \pi \fke_{n}\otimes 1;\\
 			&\pi \fke_1\otimes 1,\cdots,\pi \fke_{n-\fkt}\otimes 1,\pi_0 \fke_{n+1-\fkt}\otimes 1,\cdots,\pi_0 \fke_n\otimes 1.\\
 			\bLambda_{\fkt,O_{\breve{F}}}\colon  &
 			\pi^{-1}\fke_1\otimes 1,\cdots,\pi^{-1}\fke_{\fkt}\otimes 1,\fke_{\fkt+1}\otimes 1,\cdots ,\fke_{n}\otimes 1;\\ 
 			& \fke_1\otimes 1,\cdots,\fke_{\fkt}\otimes 1,\pi \fke_{\fkt+1}\otimes 1,\cdots,\pi \fke_{n}\otimes 1.
 		\end{split}
 	\end{align*}
	
We denote the basis of $\bLambda_{\fkt}\otimes_{O_{F_0}}O_F$ by order as $e_1,\cdots, e_{2n}$, this is also the ordered basis we chose for $\bLambda_{\kappa,O_F}$ in Theorem \ref{thm:ss-lattice}.

After  scaling, we may assume that $(u,u)=1$. Recall that the loop group acts on the local model, and its action on the quotient $\Lambda_t/\pi\Lambda^\vee_{t}$ factors through the orthogonal group $O(\Lambda_t/\pi\Lambda^\vee_{t})$. By Witt's theorem, there exists $g\in O(\Lambda_t/\pi\Lambda^\vee_{t})$ such that 
 	\begin{equation*}
 		gu\equiv u_0 \mod \pi\Lambda^\vee_{t}, \quad \text{where}\quad
		u_0=
		 \left\{
		\begin{array}{ll}
		\fke_{m+1} & n=2m+1; \\
		\frac{1}{\sqrt{2}}(\fke_{m}+\fke_{m+1}) & n=2m.
		\end{array}\right.
 	\end{equation*}
	
Therefore, without loss of generality, we may assume that $u=u_0+\pi \aleph$ for some gadget term $\aleph\in \Lambda$. 
 	Furthermore, for a $k$-algebra $R$, since $\pi_0=0$, we have the equality $\CF_{\Lambda_t^{\flat,\vee}}\oplus R\Pi u=\CF_{\Lambda_t^{\flat,\vee}}\oplus R\Pi u_0$.
Consequently, we may further assume $u=u_0$.
We choose a basis $\fke_1^\flat,\cdots,\fke_{n}^\flat$ for $V^\flat$ as follows:
\begin{itemize}
\item When $n=2m+1$ is odd, we set 
\begin{equation*}
	\fke_1^\flat=\fke_1,\cdots, \fke_m^\flat=\fke_m;\quad
	\fke_{m+1}^\flat=\fke_{m+2},\cdots, \fke_{2m}^\flat=\fke_{2m+1}.
\end{equation*}
\item When $n=2m$ is even, we set
\begin{equation*}
	\fke_1^\flat=\fke_1,\cdots, \fke_{m-1}^\flat=\fke_{m-1};\quad 
	\fke^\flat_{m}=\frac{1}{\sqrt{2}}(\fke_{m}-\fke_{m+1});\quad 
	\fke^\flat_{m+1}=\fke_{m+2},\cdots,\fke^\flat_{2m-1}=\fke_{2m}.
\end{equation*}
\end{itemize}
Then $\Lambda_t^{\flat,\vee}$ is spanned by the basis
\begin{equation*}
\Lambda_t^{\flat,\vee}=\mathrm{Span}_{O_F}(\pi^{-1}\fke_1^\flat,\cdots,\pi^{-1}\fke^\flat_\fkt,\fke^\flat_{\fkt+1},\cdots,\fke^\flat_{n}).
\end{equation*}
We now distinguish cases, according to the parity of $n$.

\bigskip

Suppose $n=2m+1$ is odd.
The lattice $\Lambda^{\flat,\vee}_{t,O_F}$ is generated by 
\begin{equation*}
 		e_1,\cdots,\wh{e_{m+1}},\ldots,e_{n},e_{n+1},\cdots,\wh{e_{n+m+1}},\ldots,e_{2n}.
 \end{equation*}
Since $t\neq n-1$, we can apply Theorem \ref{thm:ss-lattice} and find the basis of the standard lattice $L^{n-2,1}_{\Lambda_t^\flat,-1}(R)$ and $L_{\Lambda_t,-1}^{n-1,1}(R)$. The former is of the same form as the latter, except that $e_{[i,\wh{n+j}]}$ is defined by taking $(n-1)$-th wedge power, with vectors $e_{m+1}$ and $e_{n+m+1}$ being omitted.
Furthermore, since 
\begin{equation*}
\CF_{\Lambda_t^\vee}=\CF_{\Lambda^{\flat,\vee}_t}\oplus R(\Pi-\pi)u=\CF_{\Lambda^{\flat,\vee}_t}\oplus R e_{n+m+1},
\end{equation*}
we have the equality
 	\begin{equation*}
 		\bigwedge^{n}\CF_{\Lambda_t^\vee}=\bigwedge^{n-1}\CF_{\Lambda^{\flat,\vee}_t}\otimes Re_{n+m+1}.
 	\end{equation*}
By comparing the bases of $L^{n-1,1}_{\Lambda_t^\flat,-1}(R)$ and of $L^{n-2,1}_{\Lambda_t^{\flat,\vee},-1}(R)$, we directly conclude that $\bigwedge^{n}\CF_{\Lambda_t^\vee}\in L^{n-1,1}_{\Lambda_t^\flat,-1}(R)$ if and only if  $\CF_{\Lambda^{\flat,\vee}_t}\in L^{n-2,1}_{\Lambda^{\flat,\vee}_t,-1}(R)$.

\bigskip

Suppose $n=2m$ is even. The lattice $\Lambda^{\flat,\vee}_{t,O_F}$ is generated by 
\begin{align*}
	&e_1^\flat:=e_1,\ldots, e_{m-1}^\flat:=e_{m-1};\hspace{1.63cm} e_{m}^{\flat}:=e_{m}-e_{m+1};\hspace{2cm}
	e_{m+1}^\flat=e_{m+2},\ldots,e_{n-1}^\flat:=e_{n};\\
	&e_{n}^\flat:=e_{n+1},\ldots, e_{n+m-2}^\flat:=e_{n+m-1};\quad
	e_{n+m-1}^\flat:=e_{n+m}-e_{n+m+1};\quad
	e_{n+m}^\flat:=e_{n+m+2},\ldots,e_{2n-2}^\flat:=e_{2n}.
\end{align*}
Since 
\begin{equation*}
\CF_{\Lambda^{\vee}_t}=\CF_{\Lambda^{\flat,\vee}_t}\oplus R(\Pi-\pi)u=\CF_{\Lambda^{\flat,\vee}_t}\oplus R\Pi(\fke_{m}+\fke_{m+1})=\CF_{\Lambda^{\flat,\vee}_t}\oplus R(e_{n+m}+e_{n+m+1}),
\end{equation*}
we have the equality
\begin{equation}\label{equ:F-and-F-flat}
\bigwedge^{n+1}\CF_{\Lambda^{\vee}_t}=\bigwedge^{n}\CF_{\Lambda^{\flat,\vee}_t}\otimes(e_{n+m}+e_{n+m+1}).
\end{equation}
Let us write $\bigwedge^{n}\CF_{\Lambda^{\flat,\vee}_t}$  as a sum of pure tensors. For any subset $I\subset \{1,\cdots,2n\}$, let $e^\flat_I:=\bigwedge_{i\in I}e^\flat_i$. We consider the following cases: 
\begin{enumerate}
\item If the pure tensor has the form $e^\flat_I$, where $\{m,n+m-1\}\cap I=\emptyset$, then as in the even $n$ case, it is straightforward to verify that if $e^\flat_I\wedge (e_{n+m}+e_{n+m+1})\in L_{\Lambda^{\vee}_t,-1}^{n-1,1}$, then $e^\flat_I\in L_{\Lambda^{\flat,\vee}_t,-1}^{n-2,1}$.

\item If the pure tensor has the form $e^\flat_I\wedge e^\flat_{m}$, where $\{m,n+m-1\}\cap I=\emptyset$, the expansion becomes:
\begin{equation*}
e_I^\flat\wedge e_{m}^\flat\wedge u_0=e_I^\flat\wedge (e_{m}\wedge e_{n+m}-e_{m+1}\wedge e_{n+m+1})+e_I^\flat\wedge (e_{m}\wedge e_{n+m+1}-e_{m+1}\wedge e_{n+m}).
\end{equation*}
Consider the term $e_I^\flat\wedge (e_{m}\wedge e_{n+m}-e_{m+1}\wedge e_{n+m+1})$. 
Since $m^\vee:=n+1-m=m+1$, this term lies in $L^{n,1}_{\bLambda_{\fkt},-1}(R)$ if and only if
\begin{equation*}
	e_I^\flat\wedge (e_{m}\wedge e_{n+m}-e_{m+1}\wedge e_{n+m+1})=e_{[m,\wh{n+m^\vee}]}-e_{[m+1,\wh{n+(m+1)^\vee}]},
\end{equation*}
which is the case (ii) in the list of Theorem \ref{thm:ss-lattice}. In particular, we have $I\subset \{1,\cdots,n\}$, and thus $e^\flat_I\wedge e^\flat_{m+1}$ also belongs to case (ii).
\item If the pure tensor has the form $e^\flat_I\wedge e^\flat_{n+m-1}$, where $\{m,n+m-1\}\cap I=\emptyset$, we have
\begin{equation*}
e^\flat_I\wedge e^\flat_{n+m-1}\wedge u_0=2 e^\flat_I \wedge e_{n+m}\wedge e_{n+m+1}.
\end{equation*}
This term lies in $L^{n-1,1}_{\bLambda_{\fkt},-1}(R)$ if and only if
\begin{equation*}
	e^\flat_I \wedge e_{n+m}\wedge e_{n+m+1}=e_{\{n+1,\cdots,2n\}}
\end{equation*}
which is the case (i) of Theorem \ref{thm:ss-comp-converse-special fiber}: since $n+m$ and $n+m+1$ lie in $\{n+1,\cdots,2n\}$, and case (i) is the only case which allows more than one index in $\{n+1,\cdots,2n\}$ to appear.
As a consequence, we have $e^\flat_I\wedge e_{n+m-1}^\flat=e^\flat_{\{n,\cdots,2n-2\}}$ and is spanned by $L^{n-2,1}_{\Lambda^{\flat,\vee},-1}$.

\item If the pure tensor has the form $e^\flat_I\wedge e^\flat_{m}\wedge e^\flat_{n+m-1}$, where $\{m,n+m-1\}\cap I=\emptyset$. Then the generator of the space \eqref{equ:F-and-F-flat} will have a factor of the form $e_J\wedge e_{m}\wedge e_{n+m}\wedge e_{n+m+1}$ for some $J$, but none of the basis in Theorem \ref{thm:ss-lattice} contains a vector of such form.\qedhere
\end{enumerate}

 \end{proof}

 \begin{remark}
 	We can see from the proof of Theorem \ref{thm:ss-comp-converse-special fiber}(iii) that the reason why the exceptional isomorphism fails in the almost $\pi$-modular case is the following:
 	when $t=2m=n-1$, by \cite[Cor. 5.2.3]{Luo24}, the standard lattice $L^{n-2,1}_{\Lambda_t^\flat,-1}(R)$ is generated by the basis of the form (ii)-(xi) (and no $e_{m}$ and $e_{n+m-1}$ appear).
 	Therefore, if $\bigwedge^{n-1}\CF_{\Lambda^{\flat,\vee}_t}\subset L^{n-2,1}_{\Lambda^{\flat,\vee}_t,-1}(R)$, then $\bigwedge^{n}\CF_{\Lambda^{\vee}_t}\subset L^{n-1,1}_{\Lambda^{\vee}_t,-1}(R)$. 
 	But the converse is not true: for instance, if $\bigwedge^{n}\CF_{\Lambda^{\vee}_t}$ is spanned by $e_{\{n+1,\cdots,2n\}}$ (e.g. the worst point $*\in \bM_n^{[t]}$), then $\bigwedge^{n-1}\CF_{\Lambda^{\flat,\vee}_t}$ is spanned by $e_{\{n+1,\cdots,\wh{n+m-1},\cdots,2n\}}$, but this vector is not in $L^{n-2,1}_{\Lambda^{\flat,\vee}_t,-1}(R)$, see the proof of \cite[Cor. 5.2.3]{Luo24}.
 \end{remark}

The description of the basis of $L^{n-2,1}_{\Lambda^\flat_{t},-1}(R)$ is much more complicated when $\pi R\neq 0$.
In order to extend  Theorem \ref{thm:ss-comp-converse-special fiber} from the special fiber to the integral model,  and also study the case when $t=n-1$, we will use  properties of the wedge local models $\bM_{n}^{[t],\wedge}$ studied in \cite{PR,S-topflatI,S-topflatII}.

\begin{definition}
 Let $\Lambda_t^\flat\subset V^\flat$ be a vertex lattice of type $t$ and let $\Lambda_t=\Lambda_t^\flat\oplus O_F u$, such that $(u,u)=1$.
 We define $\bZ(u)^{[t],\wedge}_{n}\subset \bM_{n}^{[t],\wedge}$ as the closed subscheme of $\bM_{n}^{[t],\wedge}$ that sends each $O_F$-algebra $R$ to the set of all families $(\CF_{\Lambda_t}\subset \Lambda_{t,R},\CF_{\Lambda^\vee_t}\subset \Lambda^\vee_{t,R})$ in $\bM_{n}^{[t],\wedge}$ such that $\CF_{\Lambda_t}=\CF_{\Lambda^\flat_t}\oplus R(\Pi-\pi)u$. Similarly, we define\footnote{Despite the suggestive notation, the space $\bZ(u)$ is not a local model of the special cycle $\CZ(u)$! However, they do share the same first-order deformation space, even though their higher-order deformations may differ.} $\bZ(u)_{n}^{[t]}\subset \bM_{n}^{[t]}$.
\end{definition}

 Consider the map
 
 \begin{equation*}\begin{aligned}\xymatrix{
\bM_{n-1}^{[t],\wedge}\ar[r]& \bM_{n}^{[t],\wedge},&(\CF_{\Lambda^\flat})\ar@{|->}[r]&(\CF_{\Lambda^\flat}\oplus R(\Pi-\pi)u).
}\end{aligned}\end{equation*}
This map is well-defined by Lemma \ref{lem:KW-induction}, and it factors through $\bZ(u)_{n}^{[t],\wedge}\subset \bM_{n}^{[t],\wedge}$ by definition. We denote the resulting morphism by $\iota^\wedge:\bM_{n-1}^{[t],\wedge}\to \bZ_{n}^{[t],\wedge}$.
 	Similarly, by Theorem \ref{thm:ss-comp-converse-special fiber}, we have morphisms
 	\begin{equation*}
 		\begin{aligned}
 			\xymatrix{
 				\bM_{n-1}^{[t]}\ar@{^{(}->}[r]^-{{\iota}}&\bZ_{n}^{[t]}\subset \bM_{n}^{[t]}.
 			}
 		\end{aligned}
 	\end{equation*}

An immediate consequence of Lemma \ref{lem:KW-induction} is the following:
\begin{proposition}\label{prop:exc-iso-wedge}
Assume $t\neq n-1$, keep the notation as above. The inclusion
\begin{equation*}
\begin{aligned}
\xymatrix{
\iota^\wedge:\bM_{n-1}^{[t],\wedge}\ar@{^(->}[r] &\bZ_{n}^{[t],\wedge}
}
\end{aligned}
\end{equation*}
is an isomorphism.\qed
\end{proposition}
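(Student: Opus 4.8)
The plan is to produce an explicit inverse morphism. By construction $\bZ_n^{[t],\wedge}$ is the closed subscheme of $\bM_n^{[t],\wedge}$ on which the universal filtration splits as $\CF_{\Lambda_t}=\CF_{\Lambda_t^\flat}\oplus R(\Pi-\pi)u$ with $\CF_{\Lambda_t^\flat}\subset\Lambda^\flat_{t,R}$, and $\iota^\wedge$ is the morphism $(\CF_{\Lambda^\flat},\CF_{\Lambda^{\flat,\vee}})\mapsto(\CF_{\Lambda^\flat}\oplus R(\Pi-\pi)u,\ \CF_{\Lambda^{\flat,\vee}}\oplus R(\Pi-\pi)u)$. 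So I would reduce the claim to three points: (1) for an $R$-point of $\bM_{n-1}^{[t],\wedge}$, the pair $(\CF_{\Lambda_t^\flat}\oplus R(\Pi-\pi)u,\ \CF_{\Lambda^{\flat,\vee}_t}\oplus R(\Pi-\pi)u)$ is indeed an $R$-point of $\bM_n^{[t],\wedge}$; (2) for an $R$-point of $\bZ_n^{[t],\wedge}$, the assignment $\CF_{\Lambda_t^\flat}:=\CF_{\Lambda_t}\cap\Lambda^\flat_{t,R}$, $\CF_{\Lambda_t^{\flat,\vee}}:=\CF_{\Lambda^\vee_t}\cap\Lambda^{\flat,\vee}_{t,R}$ is an $R$-point of $\bM_{n-1}^{[t],\wedge}$; (3) the two assignments are mutually inverse — which is tautological on $\Lambda_{t,R}=\Lambda^\flat_{t,R}\oplus\langle u\rangle_R$ once (1) and (2) hold.

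Step (1) is where the structure gets used. The three conditions of Definition \ref{defn:lm}(i) decouple along the orthogonal decomposition $V=V^\flat\obot\langle u\rangle$: the chain maps $\lambda_t,\lambda_t^\vee$ are block-diagonal, the $\langle u\rangle$-block is an isomorphism because $(u,u)$ is a unit, and $(\Pi-\pi)u$ spans an isotropic line that equals its own orthogonal complement inside $\langle u\rangle_R$; hence axioms (a)--(c) for $\CF_{\Lambda_t^\flat}\oplus R(\Pi-\pi)u$ follow from axioms (a)--(c) for $\CF_{\Lambda_t^\flat}$, and the Kottwitz and wedge conditions follow from Lemma \ref{lem:KW-induction}. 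No spin condition intervenes: on $\bM_n^{[t],\wedge}$ it is imposed only when $n$ is even and $t=n$, which is impossible since $t=t(\Lambda_t^\flat)\le n-1$. For step (2) I would first check the linear algebra: on an $R$-point of $\bZ_n^{[t],\wedge}$ one has $\Lambda_{t,R}/\CF_{\Lambda_t}\cong(\Lambda^\flat_{t,R}/\CF_{\Lambda_t^\flat})\oplus(\langle u\rangle_R/R(\Pi-\pi)u)$, so $\CF_{\Lambda_t^\flat}=\CF_{\Lambda_t}\cap\Lambda^\flat_{t,R}$ is a finite projective $R$-module and a subbundle of $\Lambda^\flat_{t,R}$ of rank $n-1$ (the rank being forced by passage to fibers); the dual piece behaves the same way because axiom (c) forces $\CF_{\Lambda^\vee_t}$ to be the orthogonal complement of $\CF_{\Lambda_t}$, hence also to split off $R(\Pi-\pi)u$. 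Then axioms (a)--(c) for the restricted data are inherited, and the Kottwitz and wedge conditions follow from the reverse implication in Lemma \ref{lem:KW-induction}. Here the hypothesis $t\ne n-1$ enters precisely to avoid a spin condition on the $(n-1)$-dimensional side: such a condition ($\Pi-\pi$ nowhere vanishing on $\CF_{\Lambda_t^\flat}$) is imposed exactly when $n-1$ is even and $t=n-1$, i.e. $t=n-1$ with $n$ odd — and there the statement genuinely fails, because that condition is not detected by membership of $\CF_{\Lambda_t^\flat}\oplus R(\Pi-\pi)u$ in $\bM_n^{[n-1],\wedge}$ (for instance the worst point of $\bM_n^{[n-1],\wedge}$ lies in $\bZ_n^{[n-1],\wedge}$ but not in the image of $\iota^\wedge$, since $\Pi-\pi$ annihilates $R(\Pi-\pi)u$ on the special fiber).

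The main obstacle I anticipate is the bookkeeping in step (2): making precise that intersecting with the lattice chain of $\Lambda_t^\flat$ commutes with the chain maps $\lambda_t$, $\lambda_t^\vee$ and with the periodicity isomorphism $\pi^{-1}\Lambda_{t,R}\to\Lambda_{t,R}$ given by multiplication by $\pi$, so that the restricted data really forms a three-step filtration as in Definition \ref{defn:lm}(i), and that all these module splittings are simultaneously compatible. Everything else — Lemma \ref{lem:KW-induction}, the block structure of the forms, the irrelevance of the spin condition — is formal, which is exactly why the proposition is an immediate consequence of Lemma \ref{lem:KW-induction}.
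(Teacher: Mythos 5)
Your proposal is correct and is exactly the argument the paper has in mind: the paper states this proposition with no proof, declaring it an immediate consequence of Lemma \ref{lem:KW-induction}, and your write-up simply makes explicit the inverse morphism, the block-diagonal decoupling of axioms (a)--(c) along $V=V^\flat\obot\langle u\rangle$, and the observation that no spin condition intervenes on either side under the hypotheses $t\le n-1$ and $t\ne n-1$. Your identification of where the hypothesis $t\ne n-1$ is used (and the worst-point example showing what goes wrong there) is consistent with the paper's Theorem \ref{thm:exc-iso-loc}(ii).
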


 The main result in the context of local models  is the following:
 \begin{theorem}\label{thm:exc-iso-loc}
 	Assume $t\neq n$, keep the notation as above.
 	\begin{altenumerate}
 		\item When $t\neq n-1$, there is an equality of closed subschemes of $\bM_{n}^{[t]}$: 
 		\begin{equation*}
 			\bM_{n-1}^{[t]}= \bZ_{n}^{[t]}
			 		\end{equation*}
 		\item When $n$ is odd and $t=n-1$, there is an equality of closed subschemes of $\bM_{n}^{[n-1]}$:
 		\begin{equation*}
 			\bZ_{n}^{[n-1]}= \bM^{[n-1]}_{n-1}\amalg \{*\},
 		\end{equation*}
 		where $*$ is the worst point of $\bM_{n}^{[n-1]}$.
 	\end{altenumerate}
 \end{theorem}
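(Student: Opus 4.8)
Here $t$ is even with $0\le t\le n-2$ (so $t\neq n-1,n$). The plan is to identify $\bM_{n-1}^{[t]}$ and $\bZ_n^{[t]}$ as closed subschemes of $\bM_n^{[t]}$ by comparing them after base change to $F$ and to $k$ and then invoking flatness. First, the morphism $\iota\colon\bM_{n-1}^{[t]}\to\bM_n^{[t]}$, $\CF^\flat\mapsto\CF^\flat\oplus R(\Pi-\pi)u$, is well defined by Lemma \ref{lem:KW-induction}, and by Corollary \ref{cor:ss-red}, which is functorial in the $O_F$-algebra $R$, it factors through $\bZ_n^{[t]}$; this gives a closed immersion $\bM_{n-1}^{[t]}\hookrightarrow\bZ_n^{[t]}$. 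On special fibres, Theorem \ref{thm:ss-comp-converse-special fiber} (again functorial, now in the $k$-algebra $R$) gives the reverse inclusion $\bZ_n^{[t]}\otimes k\subseteq\bM_{n-1}^{[t]}\otimes k$, hence equality of special fibres. On generic fibres, the strengthened spin conditions on $\CF^\flat$ and on $\CF^\flat\oplus R(\Pi-\pi)u$ become equivalent: this is the characteristic-zero content of the subspace identities in Proposition \ref{prop:ori-red}, Lemma \ref{lem:sig-red} and Proposition \ref{prop:ss-red}, and can also be read off from the isomorphism $\bM_{n-1}^{[t],\wedge}\simeq\bZ_n^{[t],\wedge}$ of Proposition \ref{prop:exc-iso-wedge} after $\otimes F$, so $\bM_{n-1}^{[t]}\otimes F=\bZ_n^{[t]}\otimes F$. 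Finally one uses the following routine fact: a closed immersion of finite-type $O_F$-schemes with $O_F$-flat source that is an isomorphism after $\otimes F$ and after $\otimes k$ is an isomorphism; indeed the kernel ideal $I$ satisfies $I[1/\pi]=0$ (from the generic fibre) and $I/\pi I=0$ (from the special fibre, using flatness of the source so that the relevant $\mathrm{Tor}_1$ vanishes), whence $\pi^NI=0$ for some $N$ and then $I=\pi I=\dots=\pi^NI=0$. Since $\bM_{n-1}^{[t]}$ is $O_F$-flat by \cite{Luo24}, this proves (i).

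\textbf{Strategy for (ii).} Now $n$ is odd and $t=n-1$, so $\bM_n^{[n-1]}$ is smooth over $O_F$ (Theorem \ref{thm:geo-RZ-canonical}), and $\bM_{n-1}^{[n-1]}$ is the $\pi$-modular local model in the even dimension $n-1$, which is smooth and for which no worst point occurs. Corollary \ref{cor:ss-red} and Lemma \ref{lem:KW-induction} still provide a closed immersion $\bM_{n-1}^{[n-1]}\hookrightarrow\bZ_n^{[n-1]}$. In addition, the worst point $*$ of $\bM_n^{[n-1]}$ lies in $\bZ_n^{[n-1]}$: over $k$ the uniformizer $\pi$ acts by $0$, so $(\Pi-\pi)u=\Pi u$ on $\Lambda_{t,k}$, and the worst-point filtration $\Pi\Lambda_{t,k}=\Pi\Lambda^\flat_{t,k}\oplus k\,\Pi u$ has the shape required for membership in $\bZ_n^{[n-1]}$. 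This point is not in the image of $\bM_{n-1}^{[n-1]}$: its $\flat$-part $\Pi\Lambda^\flat_{t,k}$ violates the spin condition of the $\pi$-modular local model ($\Pi-\pi\equiv\Pi$ vanishes on $\Pi\Lambda^\flat_{t,k}$), and since $\bM_{n-1}^{[n-1]}$ is $O_F$-flat its image is its own closure, hence disjoint from the special closed point $*$. Thus $\bM_{n-1}^{[n-1]}\amalg\{*\}\hookrightarrow\bZ_n^{[n-1]}$ is a closed immersion. For the reverse inclusion, write $\bZ_n^{[n-1]}=\bZ^\circ\amalg\{*\}$ with $\bZ^\circ$ the part lying over $\bM_n^{[n-1]}\setminus\{*\}$. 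On $\bZ^\circ$ the argument of (i) applies with no change: away from the worst point the converse of the spin implication holds even for $t=n-1$, because the single extra generator $e_{\{n+1,\dots,2n\}}$ distinguishing $L^{n-1,1}_{\Lambda_t,-1}$ from $L^{n-2,1}_{\Lambda^\flat_t,-1}$ (cf. Theorem \ref{thm:ss-lattice} and \cite[Cor. 5.2.3]{Luo24}, and the Remark following Theorem \ref{thm:ss-comp-converse-special fiber}) is supported precisely at $*$; hence $\bZ^\circ\simeq\bM_{n-1}^{[n-1]}$. At $*$ one computes in an explicit affine chart of the smooth $\bM_n^{[n-1]}$ around $*$, as in \cite{Luo24}, that the equations cutting out $\bZ_n^{[n-1]}$ leave exactly the reduced point $\Spec k$. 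Combining, $\bZ_n^{[n-1]}=\bM_{n-1}^{[n-1]}\amalg\{*\}$.

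\textbf{The main obstacle.} Everything in (i), and the portion of (ii) away from the worst point, is ``generic'' and reduces cleanly to Corollary \ref{cor:ss-red}, Theorem \ref{thm:ss-comp-converse-special fiber}, the identities of Propositions \ref{prop:ori-red}, \ref{prop:ss-red} and Lemma \ref{lem:sig-red}, and the flatness argument above. The genuinely delicate part is the behaviour at the worst point in (ii): one must establish the \emph{scheme-theoretic} disjoint-union decomposition, i.e.\ that $\bZ_n^{[n-1]}$ is reduced at $*$, equal there to $\Spec k$, and that $*$ is an isolated (open and closed) point of $\bZ_n^{[n-1]}$, with no embedded components or nilpotents accumulating at $*$. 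This requires upgrading the explicit lattice identities of Theorem \ref{thm:ss-lattice} and \cite[Cor. 5.2.3]{Luo24}, which are cleanest over fields, to arbitrary (in particular Artinian, non-reduced) $k$-algebras, and using the smoothness of the ambient $\bM_n^{[n-1]}$ to control the infinitesimal structure of $\bZ_n^{[n-1]}$ at $*$; this is where the bulk of the work lies.
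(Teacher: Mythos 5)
Your treatment of part (i) is correct and is essentially the paper's own argument: the closed immersion $\bM_{n-1}^{[t]}\hookrightarrow\bZ_n^{[t]}$ from Corollary \ref{cor:ss-red}, equality of special fibers from Theorem \ref{thm:ss-comp-converse-special fiber}, equality of generic fibers read off from the wedge local model, and flatness of $\bM_{n-1}^{[t]}$ to conclude (the paper simply cites \cite[Prop.~14.17]{GW} where you spell out the ideal-theoretic argument; your version of that argument is sound).

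Part (ii) is where there is a genuine gap. You correctly identify that the whole difficulty is the scheme-theoretic structure of $\bZ_n^{[n-1]}$ at the worst point $*$ --- that $*$ is open and closed in $\bZ_n^{[n-1]}$ and that $\bZ_n^{[n-1]}$ is reduced there, equal to $\Spec k$ --- but you then defer precisely this step to an unexecuted ``explicit computation in an affine chart,'' and you say yourself that this is where the bulk of the work lies. As written, the decomposition $\bZ_n^{[n-1]}=\bZ^\circ\amalg\{*\}$ already presupposes that $*$ is isolated in $\bZ_n^{[n-1]}$, and the claim that the converse spin implication holds on all of $\bZ^\circ$ over arbitrary (possibly non-reduced) $k$-algebras is asserted rather than proved; the Remark following Theorem \ref{thm:ss-comp-converse-special fiber} only exhibits the failure \emph{at} $*$, it does not establish the converse away from $*$. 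So the proposal does not close part (ii).

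The step you are missing is that no local computation at $*$ is needed at all: the scheme-theoretic disjoint-union structure can be imported wholesale from the \emph{wedge} local model. By \cite[Prop.~3.10]{RSZ1} one has the identification of closed subschemes $\bM_{n-1}^{[n-1],\wedge}=\bM_{n-1}^{[n-1]}\amalg\{*\}$ (equation \eqref{equ:wedge-pi-modular-lm}), and combining this with $\bZ_n^{[n-1],\wedge}\simeq\bM_{n-1}^{[n-1],\wedge}$ gives the upper bound $\bZ_n^{[n-1]}\subseteq\bZ_n^{[n-1],\wedge}=\bM_{n-1}^{[n-1]}\amalg\{*\}$. Together with the lower bound $\bM_{n-1}^{[n-1]}\amalg\{*\}\subseteq\bZ_n^{[n-1]}$ (your verification that $*\in\bZ_n^{[n-1]}$ and that $*$ avoids $\bM_{n-1}^{[n-1]}$ is the right one), the sandwich forces equality, and the isolatedness and reducedness of $*$ are inherited from the known structure of $\bM_{n-1}^{[n-1],\wedge}$. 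If you insist on your own route, you must actually carry out the chart computation at $*$ over Artinian $k$-algebras; otherwise, cite \cite[Prop.~3.10]{RSZ1} and argue as above.
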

 \begin{proof} 
 For any $t\neq n$, by Proposition \ref{prop:exc-iso-wedge}, we have the inclusions
 	\begin{equation}\label{equ:inc-lmspecialcycle}
 		\bM_{n-1}^{[t]}\subseteq\bZ_{n}^{[t]}\subseteq \bZ_{n}^{[t],\wedge}\simeq \bM_{n-1}^{[t],\wedge}\subseteq \bM_{n}^{[t],\wedge}.
 	\end{equation}
 For part (i), when $t\neq n-1$, by Theorem \ref{thm:ss-comp-converse-special fiber}, we have an isomorphism over the special fiber,
\begin{equation*}\begin{aligned}\xymatrix{
\bM_{n-1,s}^{[t]}\ar[r]_-{\iota_s}^-\sim&\bZ_{n,s}^{[t]}\subset \bM_{n,s}^{[t]}.
}\end{aligned}\end{equation*}
On the other hand, \eqref{equ:inc-lmspecialcycle} induces  isomorphisms over the generic fiber,
\begin{equation*}\begin{aligned}\xymatrix{
\bM_{n-1,\eta}^{[t]}\ar@{^(->}[r]^-{\sim}&\bZ_{n,\eta}^{[t]}\ar@{^(->}[r]^-{\sim}&\bM_{n-1,\eta}^{[t],\wedge}.
}\end{aligned}\end{equation*}
 Since $\bM_{n-1}^{[t]}$ is flat, we conclude that $\iota$ is an isomorphism by \cite[Prop. 14.17]{GW}.

 	For part (ii), when $t=n-1$,  by \cite[Prop. 3.10]{RSZ1}, we have the identification of closed subschemes of $\bM_n^{[n-1],\wedge}$
 	\begin{equation}\label{equ:wedge-pi-modular-lm}
 		\bM_{n-1}^{[n-1],\wedge}= \bM_{n-1}^{[n-1]}\amalg \{*\}.
 	\end{equation}
 	Since the worst point is in the local model, i.e., $*\in \bM_{n}^{[n-1]}$, it also lies in the following intersection:
 	\begin{equation*}
 		*\in \bM_{n}^{[n-1]}\cap \bZ_{n}^{[n-1],\wedge}=\bZ_{n}^{[n-1]}.
 	\end{equation*}
Therefore, we have 
 	\begin{equation*}
 		\bM_{n-1}^{[n-1]}\amalg\{*\}\subseteq \bZ_{n}^{[n-1]}\subseteq \bZ_{n}^{[n-1],\wedge}=\bM_{n-1}^{[n-1],\wedge}= \bM_{n-1}^{[n-1]}\amalg \{*\}.
 	\end{equation*}
This proves (ii).
 \end{proof}
 
As a consequence, we deduce the following:
\begin{corollary}\label{cor:ss-red-converse}
Let $\zeta\in O_F^\times$ be a unit and let $S$ be a scheme over $\Spf O_{\breve{F}}$.
Let  $(Y,\iota_Y,\lambda_Y)$ be a hermitian $O_F$-module of dimension $n-1$ and type $t$ over $S$.
Define
\begin{equation*}
(X,\iota_X,\lambda_X):=(Y\times\ov{\CE},\iota_Y\times\iota_{\ov{\CE}},\lambda_Y\times \zeta\lambda_{\ov{\CE}}).
\end{equation*}
Suppose $t\neq n-1$. If $(X,\iota_X,\lambda_X)$ satisfies the strengthened spin condition, then so does $(Y,\iota_Y,\lambda_Y)$.
\end{corollary}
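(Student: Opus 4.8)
The plan is to deduce the statement from the local-model computation of Theorem~\ref{thm:exc-iso-loc}(i) by means of the local model diagram, in the same spirit as the derivation of Theorem~\ref{thm:ss-comparison}(ii) from Corollary~\ref{cor:ss-red} above.

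First I would translate the problem into a statement about filtrations in a standard chain. The strengthened spin condition of Definition~\ref{def:unitary-module}(ii) is \'etale-local on $S$ and is tested after a choice of trivialization \eqref{eq:lie-alg-trivilization} of the polarized chain of de Rham homologies. Since $X=Y\times\ov{\CE}$ with $\lambda_X=\lambda_Y\times\zeta\lambda_{\ov{\CE}}$, after passing to an \'etale cover of $S$ I may choose such a trivialization compatibly with this product decomposition: it identifies $D(X)\xrightarrow{\sim}\Lambda_{t}\otimes_{O_{F_0}}\CO_S$ with $\Lambda_{t}=\Lambda_t^\flat\oplus O_Fu$ for a unit-length vector $u$ spanning the $\ov{\CE}$-factor, restricts to a trivialization $D(Y)\xrightarrow{\sim}\Lambda_t^\flat\otimes_{O_{F_0}}\CO_S$, and carries the Hodge filtration to $\Fil(X)=\Fil(Y)\oplus\Fil(\ov{\CE})$. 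A direct check with the $O_F$-action on the rank-one module $\ov{\CE}$ of signature $(0,1)$ shows that under this trivialization $\Fil(\ov{\CE})=\CO_S\cdot(\Pi-\pi)u$; the unit $\zeta$ plays no role here, as it only rescales the polarization and is absorbed into the trivialization of the polarized chain.

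Now suppose $(X,\iota_X,\lambda_X)$ satisfies the strengthened spin condition. By Definitions~\ref{def:unitary-module}(ii) and~\ref{defn:lm}(ii) the filtration $\Fil(X)$, viewed as a submodule of $\Lambda_{t}\otimes_{O_{F_0}}\CO_S$, then defines a point of the canonical local model $\bM_n^{[t]}$; being of the shape $\Fil(X)=\CF_{\Lambda_t^\flat}\oplus\CO_S(\Pi-\pi)u$ with $\CF_{\Lambda_t^\flat}=\Fil(Y)$, this point lies in the closed subscheme $\bZ(u)_n^{[t]}\subset\bM_n^{[t]}$. Since $Y$ has dimension $n-1$ and type $t$, one has $0\le t\le n-1$, hence $t\neq n$; together with the hypothesis $t\neq n-1$ this lets me invoke Theorem~\ref{thm:exc-iso-loc}(i), which yields the equality of closed subschemes $\bZ(u)_n^{[t]}=\bM_{n-1}^{[t]}$ inside $\bM_n^{[t]}$. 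Therefore the point determined by $\Fil(Y)$ lies in $\bM_{n-1}^{[t]}$, i.e.\ $\Fil(Y)$ satisfies the strengthened spin condition \eqref{eq:spin-def}. As this holds after an \'etale cover of $S$ and the condition is \'etale-local, $(Y,\iota_Y,\lambda_Y)$ satisfies the strengthened spin condition, as desired.

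The only delicate point is the bookkeeping in the second step: one must arrange the trivialization of the polarized chain for $X$ to be compatible with the orthogonal decomposition coming from $X=Y\times\ov{\CE}$, and then identify $\Fil(\ov{\CE})$ with the line $\CO_S(\Pi-\pi)u$, so that the local-model point attached to $X$ is literally of the shape appearing in the definition of $\bZ(u)_n^{[t]}$. Once this is in place, the rest is a formal consequence of Theorem~\ref{thm:exc-iso-loc}(i); this is also exactly where the hypothesis $t\neq n-1$ is used, in accordance with the failure of the exceptional isomorphism in the almost $\pi$-modular case described in the remark following Theorem~\ref{thm:ss-comp-converse-special fiber}.
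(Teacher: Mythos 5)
Your proposal is correct and follows essentially the same route as the paper: the paper's proof of Corollary~\ref{cor:ss-red-converse} is precisely "pass to an \'etale cover to normalize $\zeta$, translate the strengthened spin condition into the local model via the trivialization of the polarized chain, and apply Theorem~\ref{thm:exc-iso-loc}(i), i.e.\ $\bZ_n^{[t]}=\bM_{n-1}^{[t]}$." Your write-up just makes explicit the bookkeeping (the identification of $\Fil(X)$ with $\CF_{\Lambda_t^\flat}\oplus\CO_S(\Pi-\pi)u$, matching the definition of $\bZ(u)_n^{[t]}$) that the paper leaves implicit.
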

\begin{proof}
This follows from the definition of the strengthened spin condition and the local model result in Theorem \ref{thm:exc-iso-loc}(i). Note that by passing to some \'etale local extension, we may assume that $\zeta=1\in O_{F_0}^\times$.
\end{proof}

 We also deduce the remaining part of Theorem \ref{thm: Z unit}:
 \begin{corollary}\label{cor:pi-modular-iso}
 	Let $u\in \BV(\BX^{[t]}_{n,\ep})$ be a unit-length vector, with corresponding special cycle $\CZ(u)_{n,\ep}^{[t]}\subset \CN_{n, \ep}^{[t]}$. Set $\ep(u):=\eta(h(u,u))$ and $\ep^\flat=\ep\ep(u)\eta((-1)^{n-1})$. 
 	Suppose $n$ is odd and $t=n-1$.
 	\begin{altenumerate}
 		\item When $\ep^\flat=1$, there is the  exceptional isomorphism
 		\begin{equation*}
 			\CZ(u)_{n,\ep}^{[n-1]}\simeq \CN_{n-1,\ep^\flat}^{[n-1]} .
 		\end{equation*}
 		\item When $\ep^\flat=-1$, the space  $ \CN_{n-1,\ep^\flat}^{[n-1]}$ is not defined  and the special cycle $\CZ(u)^{[n-1]}_{n,\ep}$ is the disjoint union of  points in $\Sing(\CN_{n, \ep}^{[t]})$ (indexed by all almost $\pi$-modular lattices $\Lambda\subset \BV(\BX^{[t]}_{n,\ep})$ containing $u$).
 	\end{altenumerate}
 \end{corollary}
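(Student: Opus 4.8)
The plan is to deduce Corollary \ref{cor:pi-modular-iso} from the local-model computation in Theorem \ref{thm:exc-iso-loc}(ii), exactly mirroring how Theorem \ref{thm: Z unit} was deduced (in the case $t\neq n-1$) from Theorem \ref{thm:exc-iso-loc}(i) via Corollary \ref{cor:ss-red-converse}. First I would set up, as in the proof of Theorem \ref{thm: Z unit}, the idempotent $e\colon\ov\CE\to X\to X^\vee\to\ov\CE^\vee\to\ov\CE$ attached to a point $(X,\iota,\lambda,\rho)\in\CZ(u)^{[n-1]}_{n,\ep}$, so that $X^\flat:=(1-e)X$ is a hermitian $O_F$-module of dimension $n-1$; by Theorem \ref{thm:ss-comparison}(ii) the ``only if'' direction still works, i.e. gluing back a dimension-$(n-1)$ module with the right spin condition does produce a point of $\CZ(u)^{[n-1]}_{n,\ep}$. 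The point of the corollary is that in the almost $\pi$-modular case $t=n-1$ the converse (Theorem \ref{thm:ss-comparison}(iii)) \emph{fails}, and the failure is exactly governed by the worst point, as recorded in the local model statement $\bZ_n^{[n-1]}=\bM_{n-1}^{[n-1]}\amalg\{\ast\}$.

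Next I would translate this. Since $\CN_{n,\ep}^{[n-1]}$ is étale-locally modelled on $\bM_n^{[n-1]}$ (via the identification \eqref{eq:lie-alg-trivilization} of de Rham homology with the standard lattice chain and the definition of the strengthened spin condition), and the special cycle $\CZ(u)$ corresponds, on first-order deformations, to the closed subscheme $\bZ_n^{[n-1]}$ of the local model (this is the content of the footnote attached to the definition of $\bZ(u)$ — same first-order deformation space), Theorem \ref{thm:exc-iso-loc}(ii) identifies the formal completion of $\CZ(u)^{[n-1]}_{n,\ep}$ at each of its points with either the formal completion of $\CN_{n-1}^{[n-1]}$ (at a non-worst point) or with the completion of $\Spec O_{\breve F}$ at the worst point $\ast$ of $\bM_n^{[n-1]}$. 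The latter completion is $\Spec\BF$ (the worst point is a $k$-point of the \emph{special} fiber that does not deform, since the local model $\bM_{n-1}^{[n-1],\wedge}$ is, by \eqref{equ:wedge-pi-modular-lm}, the disjoint union of the flat $\bM_{n-1}^{[n-1]}$ with the isolated reduced point $\{\ast\}$, and $\bM_n^{[n-1]}$ is the flat closure, which cannot contain this isolated point in its flat part). Hence $\CZ(u)^{[n-1]}_{n,\ep}$ decomposes as a disjoint union of an open-closed piece isomorphic to a union of copies of $\CN_{n-1}^{[n-1]}$-type spaces, plus isolated reduced $\BF$-points. Because $\tau$-invariant lattices in the Dieudonné-module description \eqref{equ:RZ-geo-pts} correspond bijectively to vertex lattices and the worst points $\WT(\Lambda)$ are precisely the $\tau$-invariant ones, I would identify these isolated points with the $\WT(\Lambda)$ for $\Lambda$ an almost $\pi$-modular lattice in $\BV(\BX_{n,\ep}^{[t]})$ containing $u$; passing to the orthogonal complement $\Lambda\cap\langle u\rangle^\perp$ identifies them with the worst points of $\Sing(\CN_{n-1,\ep}^{[t]})$, as in Theorem \ref{thm:ex iso-intro}.

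Finally, the dichotomy on $\ep^\flat$: by Proposition \ref{prop:RSZ-frame} and the induction of Theorem \ref{thm:framing-obj}, the framing object $\BX^\flat$ of dimension $n-1$ (which is even, so $t=n-1$ is its maximal/$\pi$-modular type) exists only when its associated hermitian space is non-split, i.e. when $\ep(\BX^\flat)=\ep^\flat=+1$; in that case the exceptional isomorphism $\CZ(u)^{[n-1]}_{n,\ep}\simeq\CN_{n-1,\ep^\flat}^{[n-1]}$ holds on the flat part and there are in fact no extra worst points (a $\pi$-modular lattice in a split space $\BV(\BX^\flat)$ would be needed, but then everything already lies in $\CN_{n-1}^{[n-1]}$ — this matches part (i)). When $\ep^\flat=-1$ there is no dimension-$(n-1)$ $\pi$-modular RZ space at all, the flat part of $\bZ_n^{[n-1]}$ is empty, and $\CZ(u)^{[n-1]}_{n,\ep}$ reduces to the disjoint union of the worst points, giving part (ii).

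The main obstacle I anticipate is making precise the comparison between the special cycle $\CZ(u)^{[t]}_{n,\ep}$ on the RZ space and the local-model subscheme $\bZ_n^{[t]}$: the footnote explicitly warns that $\bZ(u)$ is \emph{not} a local model of $\CZ(u)$, only that they share the first-order deformation space. So the argument cannot be a naive ``local model $\Rightarrow$ RZ space'' statement; one must argue that since $\CN_{n-1,\ep^\flat}^{[n-1]}$ is flat (indeed smooth, by Theorem \ref{thm:geo-RZ-canonical}(ii), as $n-1$ is even of type $n-1$) and the generic fibers agree, any discrepancy between $\CZ(u)^{[n-1]}_{n,\ep}$ and $\CN_{n-1,\ep^\flat}^{[n-1]}\amalg\{\text{worst points}\}$ must be concentrated at worst points in the special fiber, and there the local-model computation $\bZ_n^{[n-1]}=\bM_{n-1}^{[n-1]}\amalg\{\ast\}$ combined with the fact that $X^\flat$ at a worst point is forced to be the $\tau$-invariant (hence non-deformable) module pins it down exactly. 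Handling the ``isolated reduced point'' claim carefully — that $\ast$ really is reduced and isolated in $\CZ(u)^{[n-1]}_{n,\ep}$, not just in the local model — is the delicate step, and I would do it by the same flatness/Zariski-closure argument used in the proof of Theorem \ref{thm:exc-iso-loc}(ii) together with \cite[Prop. 14.17]{GW}.
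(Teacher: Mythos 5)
Your proposal follows essentially the same route as the paper's proof: the point-set dichotomy is decided by whether the $\pi$-modular lattice $M^\flat$ extracted from $M=M^\flat\oplus\langle u\rangle$ can avoid being $\tau$-invariant (forced to be $\tau$-invariant in the split case $\ep^\flat=-1$, since a non-$\tau$-invariant such $M^\flat$ would be a $\pi$-modular framing object with split $\BV$, which does not exist; forced to be non-$\tau$-invariant in the case $\ep^\flat=+1$, since non-split spaces contain no $\pi$-modular vertex lattices), and the scheme structure is then pinned down by Theorem \ref{thm:exc-iso-loc}(ii) together with Grothendieck--Messing, exactly as you outline. One phrasing to correct: the identity $\bZ_{n}^{[n-1]}=\bM_{n-1}^{[n-1]}\amalg\{*\}$ holds for the local model independently of $\ep^\flat$, so when $\ep^\flat=-1$ it is not ``the flat part of $\bZ_n^{[n-1]}$'' that is empty but rather the set of geometric points of $\CZ(u)^{[n-1]}_{n,\ep}$ mapping to that component --- and establishing this emptiness is precisely the Dieudonn\'e-lattice argument above, not something read off from the local model.
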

 \begin{proof}
 The worst point $*$ of $\bM_{n}^{[n-1]}$ is represented by the filtrations $(\Pi\Lambda\subset \Lambda\otimes_{O_{F_0}}\BF)$.
By \cite[Prop.  3.4]{HLS2}, this is the only closed point $(\CF_\Lambda\subset\Lambda\otimes_{O_{F_0}}R)$ in the special fiber of the local model satisfying $\Pi\CF_{\Lambda}=(0)$ for $\Lambda=\Lambda_t$ and $\Lambda_t^\vee$.

Let $(X,\iota_X,\lambda_X,\rho_X)\in \CN_{n+1,\ep}^{[n-1]}(\BF)$. It lies in $\Sing(\CN_{n}^{[n-1]})$ if and only if its Hodge filtration satisfies $\Pi D(X)=\Fil(X)\subset D(X)$. 
Equivalently, if we choose $M(X)\subset M(X)[\frac{1}{\pi_0}]$ as the almost $\pi$-modular lattice and use it to define the local model $\bM_{n}^{[n-1]}$, then $(X,\iota_X,\lambda_X,\rho_X)$ lies in $\Sing(\CN_{n}^{[t]})$ if and only if its Hodge filtration $\Fil(X)\subset D(X)\simeq M(X)\otimes_{O_{F_0}}\BF$ defines the worst point $*$ of $\bM_{n}^{[n-1]}$.

Recall that in \S \ref{sec:framing}, we define $N$ as the rational Dieudonn\'e module of the framing object $\BX:=\BX_{n,\ep}^{[n-1]}$, equipped with a hermitian form $\phi$ and a $\sigma$-linear operator $\tau:N\to N$.
Recall from \eqref{equ:RZ-geo-pts} that
the geometric points of the RZ space are given as follows by $O_{\breve F}$-lattices:
\begin{equation*}
\CN_{n,\ep}^{[n-1]}(\BF)=\Big\{M\subset N\mid M\stackrel{n-1}{\subset}M^\vee,\quad\Pi M\subset \tau^{-1}(M)\subset\Pi^{-1}M, \quad M\stackrel{\leq 1}{\subset}(M+\tau(M))\Big\}.
\end{equation*}
By the isometry $\BV(\BX)\otimes_{F_0}\breve{F}_0\simeq C\otimes_{F_0}\breve{F}_0\simeq N$, the unit-length element $u\in \BV(\BX)$ corresponds to a unit-length element in $N$, which we will still denote by $u$.
Under the identification \eqref{equ:RZ-geo-pts}, we have
\begin{equation}\label{equ:special-div-pts}
\CZ(u)_{n,\ep}^{[n-1]}(\BF)=\left\{
M\in \CN_{n,\ep}^{[n-1]}(\BF)\mid u\in M
\right\}=\left\{
M\in \CN_{n,\ep}^{[n-1]}(\BF)\mid M=M^\flat\oplus \langle u\rangle
\right\}.
\end{equation}

Now we prove  (i). By Theorem \ref{thm:ss-comp-converse-special fiber}, we have a closed embedding 
$\iota: \CN_{n-1,\ep^\flat}^{[n-1]}\subseteq \CZ(u)_{n,\ep}^{[n-1]}$. We will prove that this embedding is an isomorphism by proving that it induces a bijection on geometric points and is infinitesimally \'etale.

We first check the bijectivity. Denote by $N^\flat$  the rational Dieudonn\'e module of $\BX_{n-1}^{[n-1]}$, then we have $N=N^\flat\oplus\breve{F}u$ as hermitian spaces. 
This  identification is compatible with the hermitian form $h^\flat$,  the action $\Pi^\flat$, and the $\sigma$-linear operator $\tau^\flat$ in $N^\flat$. We will drop those flat symbols for simplicity.

Recall from \eqref{equ:RZ-geo-pts-pi-modular} that the space of geometric points of $\CN_{n-1}^{[n-1]}$ is given as follows by $O_{\breve F}$-lattices,
\begin{equation*}
\CN_{n-1}^{[n-1]}(\BF)=\Big\{M^\flat\subset N^\flat\mid M^\flat\stackrel{n-1}{\subset}M^{\flat,\vee},\quad\Pi M^\flat\subset \tau^{-1}(M^\flat)\subset\Pi^{-1}M^\flat, \quad M^\flat\stackrel{ 1}{\subset}(M^\flat+\tau(M^\flat))\Big\}.
\end{equation*}
One can rewrite \eqref{equ:special-div-pts} as 
\begin{equation*}
\CZ(u)_{n,\ep}^{[n-1]}(\BF)=\Big\{
M^\flat\subset N^\flat\mid M=M^\flat\oplus \langle u\rangle\in \CN_{n,\ep}^{[n-1]}(\BF)
\Big\} .
\end{equation*}
It is straightforward to verify that such a lattice $M^\flat$ satisfies
\begin{equation*}
M^\flat\stackrel{n-1}{\subset}M^{\flat,\vee},\quad\Pi M^\flat\subset \tau^{-1}(M^\flat)\subset\Pi^{-1}M^\flat, \quad M^\flat\stackrel{ \leq 1}{\subseteq}(M^\flat+\tau(M^\flat)).
\end{equation*}
In the last relation, we must  have $M^\flat\stackrel{1}{\subset}(M^\flat+\tau^\flat(M^\flat))$: otherwise, we have $M^\flat=\tau^\flat(M^\flat)$, and the $\tau^\flat$-invariants $M^{\flat,\tau}\subset N^{\flat,\tau}$ would be a $\pi$-modular vertex lattice in $C$.
This contradicts the fact that $C$ is non-split. 

Indeed, this computation extends naturally from $\BF$ to any algebraically closed field $\kappa$.
Therefore, the closed immersion $\iota: \CN_{n-1,\ep^\flat}^{[n-1]}\subset \CZ(u)_{n,\varepsilon}^{[n-1]}$ induces an equality on geometric points, i.e.,  $\CN_{n-1,\ep^\flat}^{[n-1]}(\kappa)=\CZ(u)_{n,\varepsilon}^{[n-1]}(\kappa)$ for any algebraically closed field $\kappa$ over $k$.

Moreover, by \cite[Lem. 3.3]{RSZ2}, these points are disjoint from $\Sing(\CN_{n}^{[n-1]})$. 
By Grothendieck-Messing theory and Theorem \ref{thm:exc-iso-loc}(ii), for each geometric point $x\in\CN_{n-1,\ep^\flat}^{[n-1]}(\BF)=\CZ(u)_{n,\ep}^{[n-1]}(\BF)$  the first order deformation theory of $x$ in $\CN_{n-1,\ep^\flat}^{[n-1]}$ equals the first order deformation theory  of $\CZ(u)_{n,\ep}^{[n-1]}$, hence $\iota$ is infinitesimally \'etale. 
Since the closed embedding $\iota:\CN_{n-1,\ep^\flat}^{[n-1]}\hookrightarrow \CZ(u)_{n,\ep}^{[n-1]}$ is both surjective and infinitesimally étale, it is an isomorphism.
Part (i) is proved.

For part (ii), we define $N^\flat$ as the orthogonal complement of $\breve{F}u\subset N$. Then $N^\flat$ inherits from $N$ the hermitian form $\phi^\flat$,  the action $\Pi^\flat$, and the $\sigma$-linear operator $\tau^\flat$. By assumption, the  hermitian space $C^\flat$ is split.
One can rewrite \eqref{equ:special-div-pts} as 
\begin{equation*}
\CZ(u)_{n,\ep}^{[n-1]}(\BF)=\Big\{
M^\flat\subset N^\flat\mid M=M^\flat\oplus \langle u\rangle\in \CN_{n,\ep}^{[n-1]}(\BF)
\Big\}.
\end{equation*}
The relation $M\stackrel{ \leq 1}{\subseteq}(M+\tau(M))$ implies
the relation $M^\flat\stackrel{ \leq 1}{\subseteq}(M^\flat+\tau(M^\flat))$.

Since $C^\flat$ is split, by \cite[Lem. 3.3]{RSZ1}, we deduce that  $M^\flat=(M^\flat+\tau(M^\flat))$, i.e., $M^\flat=\tau(M^\flat)$. Hence $M=\tau(M)$. 
This implies that $\CZ(u)^{[n-1]}_{n,\ep}(\BF)=\Sing(\CN_{n, \ep}^{[n-1]})(\BF)$, and hence is in bijection with the set of  almost $\pi$-modular lattices $\Lambda\subset C$ containing $u$, see \S \ref{sec:geo-RZ}.

Recall that  $\tau:=\Pi \uV^{-1}$. Therefore, $M=\tau(M)$ implies that $\Pi M=\uV M\subset M$. Equivalently, the Hodge filtration
\begin{equation*}
\Bigl[\Fil(X)\subset D(X)\Bigr]=
\Bigl[
\uV M/\pi_0 M\subset M/\pi_0 M
\Bigr]=
\Bigl[
\Pi M/\pi_0 M\subset M/\pi_0 M
\Bigr],
\end{equation*}
defines the worst point $*$ of the local model $\bM_{n}^{[n-1]}$.
By Theorem \ref{thm:exc-iso-loc}(ii), the worst point $*\in \bZ(u)_{n,\ep}^{[n-1]}$ has trivial  deformation for any first order infinitesimal thickening. Therefore, by Grothendieck-Messing, each point in $\CZ(u)^{[n-1]}_{n,\ep}(\BF)$ has trivial deformation theory. This proves that the special cycle $\CZ(u)^{[n-1]}_{n,\ep}$ is a disjoint union of discrete geometric points.
 \end{proof}

 \section{AT conjecture of type $(n, t)$}
 \label{sec:type-(n,t)}

 Let $n=2m$ be even. Let $0\leq t\leq n$ be even. We fix an aligned triple of framing objects $(\BY, \BX, u)=(\BY_{\ep^\flat}^{[t]}, \BX_\ep^{[t]}, u)$ of dimension $n+1$ and type $t$, with corresponding embedding of RZ spaces $\CN^{[t]}_{n, \ep^\flat}=\CN^{[t]}_n\hookrightarrow \CN^{[t]}_{n+1}=\CN^{[t]}_{n+1, \ep}$, cf. \eqref{equ:embd-RZ}.
 We also fix an isogeny $\BY^{[n]}\to \BY^{[t]}$ as in \S \ref{sec:RZ-deeper} and extend this in the obvious way to an isogeny $\BX^{[n]}\to \BX^{[t]}$. Note that the existence of this isogeny forces $\ep^\flat=1$ (otherwise $\BY^{[n]}$ is not defined).  
 This defines embeddings of RZ spaces $\CN^{[n]}_n\hookrightarrow \CN^{[n]}_{n+1}$ and  $\CN^{[n,t]}_n\hookrightarrow \CN^{[n,t]}_{n+1}$, cf. \S \ref{sec:emb-RZ}.  
 We will formulate AT conjectures for cycles on  $\CN_n^{[n]} \times \CN_{n+1}^{[t],\spt}$, by using the exceptional special divisor $\CN_n^{[n]}$ on $\CN_{n+1}^{[n]}$. We also use the notation $W_1=\BV(\BX)$ and $W_1^\flat=\BV(\BY)$ so that $u\in W_1$ and $W_1^\flat=\langle u\rangle^\perp$. We denote by $W_0$ the hermitian space of dimension $n+1$ with opposite Hasse invariant of $W_1$, and fix a vector $u_0\in W_0$ of the same length as $u$ and set $W_0^\flat  =\langle u_0\rangle^\perp$. Then $W_0^\flat $ has the opposite Hasse invariant of $W_1^\flat$. We therefore depart from the conventions in \S \ref{sec:thesetting} and \S \ref{sec:AFL}, where $W_0$ and $W_0^\flat$ denoted split spaces and $W_1$ and $W_1^\flat$ non-split spaces.

\subsection{The naive version}\label{sec:(n,t)-naive}

We will use  correspondences to obtain cycles on the product space $ \CN_{n}^{[n]} \times \CN_{n+1}^{[t]}$. Here the adjective \emph{naive} refers to the fact that this product space is not always regular. For simplicity we use a product of two correspondences one of which is trivial, i.e., the identity.   There are two ways to do so. The first one is to use the correspondence $\CN^{[n,t]}_n $ on the smaller space 
 \begin{equation}\label{pismall}
 \begin{aligned}
 \xymatrix{&&\CN^{[n,t]}_n \times \CN_{n+1}^{[t]}  \ar[rd]   \ar[ld] &
 \\ \CN_n^{[t]}\ar@{^(->}[r]& \CN_n^{[t]} \times \CN_{n+1}^{[t]} &&  \CN_{n}^{[n]} \times \CN_{n+1}^{[t]} .
 }
 \end{aligned}
 \end{equation}

 The second one  is to use the correspondence $\CN^{[n,t]}_{n+1}$  on the bigger space,
 \begin{equation}\label{pi1pi2711}
 \begin{aligned}
 \xymatrix{&&\CN^{[n]}_n \times \CN_{n+1}^{[n,t]}  \ar[rd]   \ar[ld] &
 \\ \CN_n^{[n]}\ar@{^(->}[r]& \CN_n^{[n]} \times \CN_{n+1}^{[n]} &&  \CN_{n}^{[n]} \times \CN_{n+1}^{[t]} .
 }
 \end{aligned}
 \end{equation}
The first one leads  us (by taking fiber products) to the following diagram  
 \begin{equation*}
 \begin{aligned}
 \xymatrix{&\CN^{[n,t]}_n \ar[rd]   \ar[ld]_{\pi_1} \ar[rrd]^{\pi_2} &&
 \\ \CN_n^{[n]} &&  \CN_{n}^{[t]}\ar@{^(->}[r]&\CN_{n+1}^{[t]}.
 }
 \end{aligned}
 \end{equation*}
 Indeed, we complete the left oblique arrow in \eqref{pismall} to a fiber square. Then the new vertex can be identified with $\CN^{[n,t]}_n$ (Proposition \ref{prop:deep-exc}), compatibly with its projection to $\CN^{[n]}_n$. The composition of the map to $\CN^{[n,t]}_n \times \CN_{n+1}^{[t]}$ with the projection to the right factor yields $\pi_2$. 
 
 The second one leads us to define $\wh{\CN}_n^{[t]}$ by the cartesian square in the following diagram,
  \begin{equation*}
 \begin{aligned}
 \xymatrix{
 &\wh{\CN}_n^{[t]}\ar@{}[d]|{\square}\ar[dl]_{\pi_1}\ar@{^(->}[r]\ar[rrd]^{\pi_2}
 &\CN_{n+1}^{[n,t]}\ar[rd]\ar[ld]|!{[l];[rd]}\hole&\\
 \CN_n^{[n]}\ar@{^(->}[r]&\CN_{n+1}^{[n]}&&\CN_{n+1}^{[t]}.}
 \end{aligned}
 \end{equation*}

\begin{lemma}
The morphism $(\pi_1,\pi_2): \wh{\CN}_n^{[t]}\to \CN_n^{[n]}\times \CN_{n+1}^{[t]}$ is a closed immersion.
\end{lemma}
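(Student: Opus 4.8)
The claim is that the map $(\pi_1,\pi_2)\colon \wh{\CN}_n^{[t]}\to \CN_n^{[n]}\times \CN_{n+1}^{[t]}$ is a closed immersion. The strategy is to factor this morphism and reduce to known closed-immersion statements. First I would recall that $\wh{\CN}_n^{[t]}$ is defined by the cartesian square
\begin{equation*}
\begin{aligned}
\xymatrix{
\wh{\CN}_n^{[t]}\ar@{}[rd]|{\square}\ar[d]_{\pi_1}\ar@{^(->}[r]&\CN_{n+1}^{[n,t]}\ar[d]\\
\CN_n^{[n]}\ar@{^(->}[r]&\CN_{n+1}^{[n]},}
\end{aligned}
\end{equation*}
where the bottom arrow is the closed embedding of RZ spaces coming from the aligned triple $(\BY,\BX,u)$ (cf. \eqref{equ:embd-RZ} and Theorem \ref{thm: Z unit}). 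Since closed immersions are stable under base change, the top horizontal arrow $\wh{\CN}_n^{[t]}\hookrightarrow \CN_{n+1}^{[n,t]}$ is a closed immersion. Composing with the projection $\CN_{n+1}^{[n,t]}\to \CN_{n+1}^{[n]}\times\CN_{n+1}^{[t]}$ (which is a closed immersion because the RZ space of parahoric level $\CN_{n+1}^{[n,t]}$ is a closed subfunctor of the product $\CN_{n+1}^{[n]}\times\CN_{n+1}^{[t]}$, the condition being that the given framed isogeny lifts — a closed condition by \cite[Prop. 2.9]{RZ}), we obtain a closed immersion $\wh{\CN}_n^{[t]}\hookrightarrow \CN_{n+1}^{[n]}\times\CN_{n+1}^{[t]}$.

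Next I would identify the map $(\pi_1,\pi_2)$ with this closed immersion followed by $\delta\times\id$, where $\delta\colon \CN_n^{[n]}\hookrightarrow\CN_{n+1}^{[n]}$ is the closed embedding from the aligned triple. Concretely: the composite $\wh{\CN}_n^{[t]}\hookrightarrow \CN_{n+1}^{[n,t]}\to \CN_{n+1}^{[n]}$ factors through $\CN_n^{[n]}$ by the defining cartesian square and gives $\pi_1$; the composite $\wh{\CN}_n^{[t]}\hookrightarrow \CN_{n+1}^{[n,t]}\to \CN_{n+1}^{[t]}$ gives $\pi_2$. Therefore $(\pi_1,\pi_2)$ is the composite
\begin{equation*}
\wh{\CN}_n^{[t]}\;\hookrightarrow\;\CN_{n+1}^{[n]}\times\CN_{n+1}^{[t]}
\end{equation*}
of the closed immersion just constructed, which happens to land in the closed subscheme $\CN_n^{[n]}\times\CN_{n+1}^{[t]}\subset \CN_{n+1}^{[n]}\times\CN_{n+1}^{[t]}$ (here $\CN_n^{[n]}\hookrightarrow\CN_{n+1}^{[n]}$ is closed by Theorem \ref{thm: Z unit}, since $\CN_n^{[n]}\simeq \CZ(u)_{n+1}^{[n]}$, a special cycle, hence a closed formal subscheme). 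A closed immersion into $Z$ that set-theoretically and scheme-theoretically factors through a closed subscheme $Z'\subset Z$ is still a closed immersion onto its image in $Z'$; hence $(\pi_1,\pi_2)$ is a closed immersion.

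The only genuinely non-formal input is the identification of $\wh{\CN}_n^{[t]}$, which is defined via the fiber product over $\CN_{n+1}^{[n]}$, with a closed subscheme of $\CN_n^{[n]}\times\CN_{n+1}^{[t]}$ in a way compatible with $\pi_1$ and $\pi_2$; this is a diagram chase using the universal properties of the fiber product and of the parahoric RZ space $\CN_{n+1}^{[n,t]}$ as a closed subfunctor of $\CN_{n+1}^{[n]}\times\CN_{n+1}^{[t]}$. I expect the main (mild) obstacle to be bookkeeping: making sure the two projections out of $\CN_{n+1}^{[n,t]}$ restrict correctly under the base change, i.e. that the composite $\wh{\CN}_n^{[t]}\to\CN_{n+1}^{[n,t]}\to\CN_{n+1}^{[t]}$ really is the $\pi_2$ of the diagram and not some other map. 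Once the morphisms are matched up, the closed-immersion property is immediate from stability of closed immersions under base change and composition, together with the fact that $\CN_{n+1}^{[n,t]}\hookrightarrow\CN_{n+1}^{[n]}\times\CN_{n+1}^{[t]}$ and $\CN_n^{[n]}\hookrightarrow\CN_{n+1}^{[n]}$ are closed immersions.
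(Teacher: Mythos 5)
Your argument is correct and in substance the same as the paper's: the paper simply writes down directly the closed moduli condition on $\CN_n^{[n]}\times \CN_{n+1}^{[t]}$ — namely that the quasi-isogeny $\rho_{X^{[t]}}^{-1}\circ\alpha\circ(\rho_{Y^{[n]}}\times\rho_{\ov{\CE}})$ lifts to an isogeny $Y^{[n]}\times\ov{\CE}\to X^{[t]}$, a closed condition by \cite[Prop. 2.9]{RZ} — which is exactly what your factorization through $\CN_{n+1}^{[n,t]}\hookrightarrow\CN_{n+1}^{[n]}\times\CN_{n+1}^{[t]}$ combined with the closed embedding $\CN_n^{[n]}\simeq\CZ(u)^{[n]}\hookrightarrow\CN_{n+1}^{[n]}$ unwinds to. Your more formal decomposition into base change, composition, and factoring through a closed subscheme is valid and involves no gap.
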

\begin{proof}
Indeed, $\wh{\CN}_n^{[t]}$ is the closed formal subscheme of $\CN_n^{[n]}\times \CN_{n+1}^{[t]}$ parameterizing pairs $(Y^{[n]}, X^{[t]})\in \CN_n^{[n]}\times \CN_{n+1}^{[t]}(S)$ such that the quasi-isogeny $\rho_{X^{[t]}}^{-1}\circ\alpha\circ(\rho_{Y^{[n]}}\times\rho_{\ov{\CE}})$ lifts to an isogeny $Y^{[n]}\times\ov{\CE}\to X^{[t]}$ over $S$, 
\begin{equation*}
\begin{aligned}
\xymatrix{
Y^{[n]}_{\ov{S}}\times \ov{\CE}_{\ov{S}}\ar[d]_{\rho_{Y^{[n]}}\times \rho_{\ov{\CE}}}\ar@{-->}[r]&X^{[t]}_{\ov{S}}\ar[d]^{\rho_{X^{[t]}}}\\
\BY^{[n]}_{\ov{S}}\times \ov{\BE}_{\ov{S}}\ar[r]^-{\alpha}&\BX^{[t]}_{\ov{S}}
}
\end{aligned}
\end{equation*}
Here $\ov{S}$ is the special fiber of $S$.
\end{proof}
  
 There is a natural morphism which is a closed embedding,
 $$\CN^{[n,t]}_n\to  \wh{\CN}_n^{[t]}
 $$
 
\begin{theorem}\label{th:wh}
There  is an equality of closed formal subschemes of $\CN_n^{[n]}\times\CN_{n+1}^{[t]}$
$$\wh{\CN}^{[t]}_n={\CN}^{[n, t]}_n.$$  
\end{theorem}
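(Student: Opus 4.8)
\textbf{Proof proposal for Theorem \ref{th:wh}.}

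The plan is to prove the equality $\wh{\CN}^{[t]}_n = {\CN}^{[n,t]}_n$ as closed formal subschemes of $\CN_n^{[n]}\times\CN_{n+1}^{[t]}$ by exhibiting a chain of closed immersions between them both ways, or more efficiently by showing that the natural closed immersion $\CN^{[n,t]}_n \hookrightarrow \wh{\CN}^{[t]}_n$ is an isomorphism. First I would unwind the moduli descriptions of both sides. By definition, $\wh{\CN}^{[t]}_n$ is the fiber product $\CN_n^{[n]}\times_{\CN_{n+1}^{[n]}}\CN_{n+1}^{[n,t]}$, which by the displayed commutative-with-cartesian-square diagram and the lemma just proven parametrizes pairs $(Y^{[n]}, X^{[t]})$ together with a lift of $\rho_{X^{[t]}}^{-1}\circ\alpha\circ(\rho_{Y^{[n]}}\times\rho_{\ov{\CE}})$ to an isogeny $Y^{[n]}\times\ov{\CE}\to X^{[t]}$. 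On the other hand, $\CN^{[n,t]}_n$ parametrizes pairs $(Y^{[n]}, Y^{[t]})\in\CN_n^{[n,t]}$ with the deeper-level isogeny $Y^{[n]}\to Y^{[t]}$, mapped into the product via $Y^{[n]}\mapsto Y^{[n]}$ on the first factor and $Y^{[t]}\mapsto Y^{[t]}\times\ov{\CE}$ (the exceptional embedding $\CN_n^{[t]}\hookrightarrow\CN_{n+1}^{[t]}$ of Theorem \ref{thm:ss-comparison}) on the second. So I must show these two functors agree.

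The key step is that the cartesian square in Proposition \ref{prop:deep-exc} (with $r=n$, which is legitimate here since $\ep^\flat=1$) gives exactly the identification
\begin{equation*}
\CN_n^{[n,t]} \simeq \CN_n^{[n]}\times_{\CN_{n+1}^{[n]}}\CN_{n+1}^{[n,t]}
\end{equation*}
of closed formal subschemes of $\CN_n^{[n]}\times\CN_{n+1}^{[n,t]}$, compatibly with the projections. Composing with the (closed) projection $\CN_{n+1}^{[n,t]}\to\CN_{n+1}^{[t]}$ identifies the image with $\wh{\CN}_n^{[t]}$ inside $\CN_n^{[n]}\times\CN_{n+1}^{[t]}$. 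Concretely: given a point of $\wh{\CN}_n^{[t]}$, one has an isogeny $Y^{[n]}\times\ov{\CE}\to X^{[t]}$ extending the framed one; since there exists a lift $u\colon\CE\to X^{[n]}$ — indeed $X^{[n]}\simeq Y^{[n]}\times\CE$ comes from the point of $\CN_n^{[n]}\subset\CN_{n+1}^{[n]}$ — the standard splitting procedure (as used in the proof of Proposition \ref{prop:deep-exc}) produces a splitting $X^{[t]}\simeq Y^{[t]}\times\CE$ compatible with $X^{[n]}\to X^{[t]}$, hence a deeper-level isogeny $Y^{[n]}\to Y^{[t]}$, i.e.\ a point of $\CN_n^{[n,t]}$. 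This is inverse to the evident map $\CN_n^{[n,t]}\to\wh{\CN}_n^{[t]}$, and both are morphisms of functors, so the bijection is an isomorphism of formal schemes. I would then note that both inclusions into $\CN_n^{[n]}\times\CN_{n+1}^{[t]}$ are closed immersions and the isomorphism is compatible with them, giving the asserted equality of closed formal subschemes.

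The main obstacle is verifying that the splitting $X^{[t]}\simeq Y^{[t]}\times\CE$ produced from the lift on $X^{[n]}$ is genuinely compatible with the isogeny $X^{[n]}\to X^{[t]}$ and that the resulting $Y^{[t]}$ is of type $t$ with the correct polarization and satisfies the strengthened spin condition — but all of this is exactly the content of Proposition \ref{prop:deep-exc} together with Theorem \ref{thm:ss-comparison}(iii) (applicable since $t\le n < n+1$, so $t\neq (n+1)-1$ is automatic unless $t=n$; in the edge case $t=n$ one invokes Remark \ref{rmk:split-model}(i) instead, where the splitting structure in the $\pi$-modular case is unique). So the proof is essentially a bookkeeping exercise chaining Proposition \ref{prop:deep-exc} with the definition of $\wh{\CN}_n^{[t]}$, and the only place requiring care is the uniqueness of the splitting (so that the two constructions are mutually inverse on the nose, not just up to isomorphism), which follows because lifts of a fixed quasi-isogeny along a $p$-divisible group are unique when they exist.
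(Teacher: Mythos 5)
Your proposal is correct and follows essentially the same route as the paper: identify $\wh{\CN}_n^{[t]}$ as the fiber product $\CN_n^{[n]}\times_{\CN_{n+1}^{[n]}}\CN_{n+1}^{[n,t]}$, use Theorem \ref{thm: Z unit} (i.e.\ Theorem \ref{thm:ss-comparison}) to identify $\CN_n^{[n]}$ with $\CZ(u)^{[n]}\subset\CN_{n+1}^{[n]}$, and conclude by the cartesian square of Proposition \ref{prop:deep-exc} with $r=n$. The only slight imprecision is your handling of the edge case $t=n$: the relevant statement there is Corollary \ref{cor:pi-modular-iso}(i) (applicable since $\ep^\flat=1$ is forced in this setting), which is already built into Theorem \ref{thm: Z unit}, rather than Remark \ref{rmk:split-model}(i); this does not affect the argument.
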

\begin{proof}
Indeed, we have a cartesian diagram 
 \begin{equation*}
 \begin{aligned}
  \xymatrix{\wh{\CN}^{[t]}_n \ar[r] \ar[d] \ar@{}[rd]|{\square}  & \CN^{[n,t]}_{n+1}\ar[d] 
  \\ \CZ(u)^{[ n]} \ar[r] &\CN_{n+1}^{[n]}.}
  \end{aligned}
\end{equation*}
By Theorem \ref{thm:ss-comparison}, we may identify $\CZ(u)^{[n]}$ with $\CN^{[n]}_{n,\ep^\flat}=\CN_n^{[n]}$.  Now we apply  Proposition \ref{prop:deep-exc}. 
\end{proof}
\begin{corollary}
$\wh{\CN}^{[t]}_n$ is flat. \qed
\end{corollary}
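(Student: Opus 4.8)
The plan is to read off flatness of $\wh{\CN}^{[t]}_n$ directly from Theorem \ref{th:wh} together with the basic properties of Rapoport--Zink spaces of parahoric level recalled in \S\ref{sec:RZ-deeper}. First I would invoke Theorem \ref{th:wh}, which identifies $\wh{\CN}^{[t]}_n$ with $\CN^{[n,t]}_n$ as closed formal subschemes of $\CN_n^{[n]}\times\CN_{n+1}^{[t]}$, hence in particular as formal schemes over $\Spf O_{\breve F}$. Next I would recall that $\CN^{[s,t]}_{n,\ep}$ is flat over $\Spf O_{\breve F}$ for every pair of even integers $0\le t\le s\le n$, as stated in \S\ref{sec:RZ-deeper} and established in \cite{Luo24} (it is in general not regular, but always flat). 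Specializing to $s=n$ gives flatness of $\CN^{[n,t]}_n$, and transporting along the isomorphism of Theorem \ref{th:wh} yields flatness of $\wh{\CN}^{[t]}_n$. This is the entire argument; there is nothing further to compute.

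Accordingly I do not anticipate a genuine obstacle here: the substantive work has already been carried out in Theorem \ref{th:wh}, whose proof rests on the exceptional isomorphism $\CZ(u)^{[n]}\simeq\CN^{[n]}_n$ (Theorem \ref{thm:ss-comparison}, see also Theorem \ref{thm: Z unit}) and on the cartesianness of the square in Proposition \ref{prop:deep-exc}, together with the flatness of $\CN^{[n,t]}_n$ imported from \cite{Luo24}. If anything merits a remark, it is only that the equality of closed formal subschemes in Theorem \ref{th:wh} is an isomorphism of $O_{\breve F}$-formal schemes compatible with the projection to $\CN_n^{[n]}$ --- equivalently, that the natural closed immersion $\CN^{[n,t]}_n\to\wh{\CN}_n^{[t]}$ recorded just before the theorem is surjective --- which is precisely what that theorem asserts. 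Hence the corollary follows at once.
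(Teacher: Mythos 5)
Your argument is exactly the paper's: the corollary is stated with no proof precisely because it follows immediately from Theorem \ref{th:wh} (the identification $\wh{\CN}^{[t]}_n=\CN^{[n,t]}_n$) together with the flatness of $\CN^{[n,t]}_n$ recorded in \S\ref{sec:RZ-deeper} from \cite{Luo24}. Your proposal is correct and adds nothing beyond what the paper intends, which is fine.
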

This set-up leads one to consider the intersection of $\wh{\CN}^{[t]}_n$ with its translate under an automorphism of $\CN_n^{[n]}\times\CN_{n+1}^{[t]}$. However, the product space $\CN_n^{[n]}\times\CN_{n+1}^{[t]}$ is not regular in general and, therefore, it seems impossible to deduce a finite intersection number in this way. In general, we bypass this problem by passing to $\CN_n^{[n]}\times\CN_{n+1}^{[t], \spt}$. However, there is one case, in which this product space is regular, and we consider this case in the next subsection.

 \subsection{The exotic case $t =n$}
 
In this subsection, we consider the case $t=n$. Here we have regularity without passing to the splitting model. Namely, in the case $t=n$, the formal scheme  $ \CN_{n+1}^{[n]}$ is formally smooth (\emph{exotic smoothness}). In this case, we  have the AT conjecture in \cite{RSZ1}, which we recall briefly. 

 The RZ spaces $\CN^{[n]}_{n}$ and $ \CN^{[n]}_{n+1} $ are both smooth (exotic smoothness).
 There is a natural   closed immersion\footnote{Note that the meaning of   $\CN_{n+1}^{[n]}$ in \cite{RSZ1} is different: due to the spin condition imposed in \cite[\S 3]{RSZ1}, the space in loc.~cit.   is an open subscheme of our RZ space; it is, however, large enough to contain the image of $\CN^{[n]}_{n}$.} $\CN^{[n]}_{n}\incl \CN^{[n]}_{n+1} $, cf. \cite[Lem. 4.2]{RSZ1}, comp. Lemma \ref{thm: Z unit}. Note that here $\ep(\BX_{}^{[n]})$ is uniquely determined, since $\CN^{[n]}_{n}$ is only defined for  $\varepsilon (\BY^{[n]})=1$. Consider   its graph
\begin{equation*}
 \xymatrix{\Delta:\CN^{[n]}_n \ar@{^(->}[r]& \CN_n^{[n]} \times \CN_{n+1}^{[n]}}.
\end{equation*}
We define the arithmetic intersection number
\begin{equation*}
  \label{eq:Int1nevent}
  \left\langle \CN^{[n]}_n , g\CN^{[n]}_n \right\rangle_{\CN_n^{[n]} \times \CN_{n+1}^{[n]} }:=\chi(\CN_n^{[n]} \times \CN_{n+1}^{[n]}, \CN^{[n]}_n\cap^\BL g\CN^{[n]}_n ),
\end{equation*}
 which is a finite number when $g\in G_{W_1}(F_0)$ is regular semisimple, cf. \cite[Remark 4.5]{RSZ1}. Recall that   $W_1=\BV(\BX)$ and $W_1^\flat=\BV(\BY)$.

  Let $\Lambda^\flat_{0}$ be a vertex lattice of type $n$ in $W^\flat_0$. Denote its stabilizer by $K_n^{[n]}$.   We normalize the Haar measure such that $
 \vol(K_n^{[n]})=1.
 $
  Fix a special vector $u_0$ of unit norm in $W_0$, and let  $\Lambda_0=\Lambda^\flat_{0}\obot \langle u_0\rangle \in \Ver^n(W_0)$. 
   Denote by $K_{n+1}^{[n]}$ the stabilizer of $\Lambda_0$.
 \begin{conjecture}[\cite{RSZ1}, Conj. 5.6]\label{conj (n even,t=n)}
 Let $n=2m$ be even.
\begin{altenumerate}
\item\label{item:conjrodd1} There exists $\varphi'\in C_c^\infty(G')$ with transfer $( {\bf 1}_{K_n^{[n]}\times K^{[n]}_{n+1}},0)\in C_c^\infty(G_{W_0})\times C_c^\infty(G_{W_1})$ such that, if $\gamma\in G'(F_0)_\rs$ is matched with  $g\in G_{W_1}(F_0)_\rs$, then
 \begin{equation*}
  \left\langle \CN^{[n]}_n , g\CN^{[n]}_n \right\rangle_{\CN_n^{[n]} \times \CN_{n+1}^{[n]} }  \cdot\log q=- \del\big(\gamma,  \varphi' \big).
\end{equation*}
\item\label{item:conjrodd2} For any $\varphi'\in C_c^\infty(G')$ transferring to  $( {\bf 1}_{K_n^{[n]}\times K^{[n]}_{n+1}},0)\in C_c^\infty(G_{W_0})\times C_c^\infty(G_{W_1})$, there exists $\fp'_\corr\in C_c^\infty(G')$  such that, if $\gamma\in G'(F_0)_\rs$ is matched with  $g\in G_{W_1}(F_0)_\rs$, then
\begin{equation*}\pushQED{\qed}
  \left\langle \CN^{[n]}_n , g\CN^{[n]}_n \right\rangle_{\CN_n^{[n]} \times \CN_{n+1}^{[n]} } \cdot\log q= -\del\big(\gamma,  \varphi' \big) - \Orb\big(\gamma,  \fp'_\corr \big).
\end{equation*}
\end{altenumerate}
\end{conjecture}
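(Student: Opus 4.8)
The statement in question is Conjecture \ref{conj (n even,t=n)}, which is quoted verbatim from \cite[Conj. 5.6]{RSZ1}; accordingly there is nothing new to prove here, and the ``proof proposal'' amounts to explaining how this particular instance fits the general template and what would be needed to settle it. Since part \eqref{item:conjrodd1} follows from part \eqref{item:conjrodd2} by \cite[Prop. 5.14]{RSZ1}, and conversely \eqref{item:conjrodd2} follows from \eqref{item:conjrodd1} by the density conjecture \cite[Conj. 5.16]{RSZ1}, it suffices to produce one matched identity; I would aim for \eqref{item:conjrodd1}. The overall shape is the same as for the AFL (Theorem \ref{conjAFL}): reduce a global arithmetic intersection problem on a Shimura variety to the local intersection number $\langle \CN^{[n]}_n, g\CN^{[n]}_n\rangle$ via $p$-adic uniformization, and match it against a derivative of a weighted orbital integral coming from the relative trace formula.

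\textbf{Step 1: existence of the analytic transfer.} First I would invoke the smooth transfer theorem of the third author \cite{Z14} (in the form relevant to the ramified quadratic case, compare \cite{RSZ1,RSZ2}) to guarantee that the pair $({\bf 1}_{K_n^{[n]}\times K_{n+1}^{[n]}}, 0)$ admits some transfer $\varphi'\in C_c^\infty(G')$, and moreover that one can arrange $\varphi'$ to vanish on the relevant non-regular locus so that $\Orb(\gamma,\varphi')$ and $\del(\gamma,\varphi')$ are well-behaved. This is purely a harmonic-analysis input and presents no geometric difficulty; the content of the conjecture is that some such $\varphi'$ realizes the precise identity with the intersection number.

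\textbf{Step 2: globalization and $p$-adic uniformization.} I would choose a CM extension $F/F_0$ of global fields localizing to the given ramified quadratic extension at one place $v_0$, together with a global hermitian space, level structure $K$, and special vector so that: (a) at $v_0$ the local datum is exactly $(\BY^{[n]},\BX^{[n]},u)$ with level $K_n^{[n]}\times K_{n+1}^{[n]}$; (b) away from $v_0$ everything is ``good'' (unramified or already covered by the AFL/known AT cases). Using the incoherent RSZ Shimura variety $\CM$ and the arithmetic diagonal cycle $\hat z_K$ of \cite{RSZ3}, the Gillet--Soulé pairing $\Int(f)$ for $f={\bf 1}_{K}$ decomposes into local terms $\Int_v(f)$, and the non-archimedean uniformization of $\CM$ at $v_0$ identifies $\Int_{v_0}(f)$ with a sum over regular semisimple orbits of the local intersection numbers $\langle \CN^{[n]}_n, g\CN^{[n]}_n\rangle_{\CN_n^{[n]}\times\CN_{n+1}^{[n]}}$; the finiteness of each such number is \cite[Rem. 4.5]{RSZ1}. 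In parallel, the analytic side of the RTF (the first derivative of the global relative trace $J(f',s)$ at $s=0$) decomposes into $\del(\gamma,\varphi'_v)$ terms, one per place, with $\varphi'_{v_0}$ a transfer of $f_{v_0}$. Matching the two spectral decompositions --- exactly as in \cite{Zha21,MZ,ZZha} --- reduces the desired local identity at $v_0$ to the AFL/known-AT identities at all other places plus an ``arithmetic modularity'' comparison of the generating series.

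\textbf{Main obstacle.} The hard part is \emph{not} the bookkeeping of the comparison but establishing, in the ramified-at-$v_0$ setting, the two global inputs the above reduction needs: (i) the modularity of the arithmetic theta/generating series attached to the cycles $\CZ(u)^{[n]}$, $\CY(u)^{[n]}$ on the relevant (splitting or exotically smooth) integral models --- the ramified local structure and the presence of the worst points / splitting models make the Kudla--Rapoport-type local density identities genuinely harder than in the unramified case; and (ii) controlling the contributions at the archimedean and the ramified auxiliary places so that they are absorbed into known terms. In the present paper this full program is \emph{not} carried out for general $n$; only the case $n=1$ (Theorem \ref{thmn=1-intro}) and $n=2$ for $t=n=2$ (via Proposition \ref{prop: (n even,t=n)} reducing to \cite{RSZ1}) are established, and the conjecture for $t=n$, $n\geq 2$ general, is left open. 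So the realistic ``proof'' at this level is: cite \cite[Conj. 5.6]{RSZ1} as the source of the statement, record that \eqref{item:conjrodd1}$\Leftrightarrow$\eqref{item:conjrodd2} via \cite[Prop. 5.14, Conj. 5.16]{RSZ1}, and note that it is known for $n=1$ by \cite{RSZ1} and for $n=2$ by Proposition \ref{prop: (n even,t=n)} together with \cite{RSZ1}, while the general case remains conjectural pending the global input (i).
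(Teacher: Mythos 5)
Your proposal correctly recognizes that this statement is an open conjecture recalled verbatim from \cite[Conj. 5.6]{RSZ1}: the paper gives no proof of it, and your account of the surrounding logic (the equivalence of parts (i) and (ii) via \cite[Prop. 5.14]{RSZ1} and the density conjecture \cite[Conj. 5.16]{RSZ1}, the globalization/uniformization template, and the fact that only low-dimensional cases are known) matches what the paper actually asserts. One small correction: since $n$ is required to be even here, the first nontrivial case is $n=2$, which is proved directly in \cite{RSZ1}; the case ``$n=1$'' does not apply to this particular conjecture, and Proposition \ref{prop: (n even,t=n)} is used in the opposite direction, to deduce Conjecture \ref{conj (n even,t<n)} for $t=n=2$ from this one rather than vice versa.
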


\subsection{Lattice models} \label{sec:(n,t)-lat}
Now let us return to the case of general $t$. To get the test functions corresponding to the intersection problems, we now follow \cite[\S9.1]{LRZ2}.

  Let $\Lambda^\flat_{0}$ be the lattice of type $n$ in $W^\flat_0$. Note that since $W^\flat_0$ is defined  to be the hermitian space opposite to $W^\flat_1=\BV(\BY)$, which has Hasse invariant $-1$, the hermitian space $W^\flat_0$ is split and hence does contain $\pi$-modular lattices. Fix a special vector $u_0$ of unit norm in $W_0$, and let  $\Lambda_0=\Lambda^\flat_{0}\obot \langle u_0\rangle \in \Ver^n(W_0)$.  Denote $K_{n+1}^{[n]}$ (resp. $K_{n}^{[n]}$)  the stabilizer of  $\Lambda_0$ (resp.  $\Lambda_0^\flat$).  
  We also fix a lattice   $\Lambda\in \Ver^t(W_0)$ such  that   $\Lambda_0\subset\Lambda$. Then the unit normed vector $u_0$ belongs to $\Lambda$ and hence $\pair{u_0}$ is a direct summand of $\Lambda$ with its orthogonal complement  denoted by $\Lambda^\flat$. Denote by  $K_{n+1}^{[t]}$ (resp $K_{n}^{[t]}$) the stabilizer of $\Lambda$ (resp. $\Lambda^\flat$). Denote by  $K_{n+1}^{[n,t]}$ (resp $K_{n}^{[n,t]}$) the stabilizer of the chain $\Lambda_0\subset \Lambda$ (resp. $\Lambda^\flat_0\subset \Lambda^\flat$).

We have the lattice models for the spaces defined earlier: for a hermitian space $W$ of dimension $n$, let $\BN^{[t]}(W)$ be the space of vertex lattices of type $t$  in  $W$, and let ${\BN}^{[s,t]}(W)$ be the space of pairs of vertex lattices of type $s$, resp. $t$, which are included one in the other. In our situation, we have two hermitian spaces $W_0^\flat$ of dimension $n$ and $W_0=W_0^\flat\oplus \langle u_0 \rangle$ of dimension $n+1$, and let $\BN^{[s, t]}_n=\BN^{[s, t]}(W_0^\flat)$ and $\BN^{[s, t]}_{n+1}=\BN^{[s, t]}(W_0)$.  Besides ${\BN}^{[t, n]}_n$, we consider  
the cartesian product
  \begin{equation*}
 \begin{aligned}
 \xymatrix{
 \wh{\BN}_n^{[t]}\ar@{}[rd]|{\square}\ar[d]_{\pi_1}\ar@{^(->}[r]
 &\BN_{n+1}^{[n,t]}\ar[d]\\
 \BN_n^{[n]}\ar@{^(->}[r]&\BN_{n+1}^{[n]}.}
 \end{aligned}
 \end{equation*}

  Again, there are two ways to obtain cycles on the product space $ \BN_{n}^{[n]} \times \BN_{n+1}^{[t]}$, using correspondences. The first one is to use a correspondence $\BN^{[n,t]}_n $ on the smaller space
 \begin{equation*}
 \begin{aligned}
 \xymatrix{&&\BN^{[n,t]}_n \times \BN_{n+1}^{[t]}  \ar[rd]   \ar[ld] &
 \\ \BN_n^{[t]}\ar@{^(->}[r]& \BN_n^{[t]} \times \BN_{n+1}^{[t]} &&  \BN_{n}^{[n]} \times \BN_{n+1}^{[t]} .
 }
 \end{aligned}
 \end{equation*}
 The second one  is to use a correspondence $\BN^{[n,t]}_{n+1}$  on the bigger space
 \begin{equation*}
 \begin{aligned}
 \xymatrix{&&\BN^{[n]}_n \times \BN_{n+1}^{[n,t]}  \ar[rd]   \ar[ld] &
 \\ \BN_n^{[n]}\ar@{^(->}[r]& \BN_n^{[n]} \times \BN_{n+1}^{[n]} &&  \BN_{n}^{[n]} \times \BN_{n+1}^{[t]} .
 }
 \end{aligned}
 \end{equation*}
 
 We obtain embeddings 
  \begin{equation*}
 \begin{aligned}
 \xymatrix{\BN^{[n,t]}_n \ar@{^(->}[r]& \BN_{n}^{[n]} \times \BN_{n+1}^{[t]}
 }
 \end{aligned}
 \end{equation*}
 and
   \begin{equation*}
 \begin{aligned}
 \xymatrix{\wh\BN^{[t]}_n \ar@{^(->}[r]& \BN_{n}^{[n]} \times \BN_{n+1}^{[t]}
.}
 \end{aligned}
 \end{equation*}
 We thus have two intersection problems in the ambient space $\BN_{n}^{[n]} \times \BN_{n+1}^{[t]}$, namely 
 $$ \#(\BN^{[n,t]}_n \cap g \BN^{[n,t]}_n), 
 \quad  \#(\wh\BN^{[t]}_n \cap g \wh\BN^{[t]}),
 $$
 where $g$ is regular semisimple in $\U(W_0^\flat)\times\U(W_0)$.

For the first one, we have the Hecke correspondence $\BT_{} $   consisting of the triples  $(\Lambda^\flat,\Lambda^\flat_{0},\Lambda'^\flat_{0}  )\in \BN_n^{[n]}\times \BN_n^{[t]}\times \BN_n^{[t]}$ such that  $ \Lambda^\flat\subset \Lambda^\flat_{0}\cap  \Lambda'^\flat_{0}$. In other words,  $\BT_{}$  is the composition of 
  the obvious correspondence with its transpose
   \begin{equation*}
   \label{Hklat}
\begin{aligned}
\xymatrix{&&\BT_{} \ar[rd]  \ar[ld] &
 \\ &\BN_n^{[t,n]} \ar[rd]  \ar[ld] & &\BN_n^{[n,t]} \ar[rd]  \ar[ld] &
\\ \BN_n^{[t]} &&  \BN_n^{[n]}&&  \BN_n^{[t]}}
\end{aligned}
\end{equation*}
Associated to the correspondence is the bi-$K_n^{[t]}$-invariant Hecke function  (cf. \cite[\S4.1]{LRZ1},
   \begin{equation}\label{eq: phi(t,n)}
\varphi^{[t,n]}_{n}:=\vol(K^{[n]}_n)^{-1} {\bf 1}_{K^{[t]}_nK^{[n]}_n}\ast {\bf 1}_{K^{[n]}_n K^{[t]}_n}.
\end{equation}
Due to the volume factor, the function is independent of the choice of Haar measure used to define the convolution. 
We form the cartesian product $\BN_{n}^{[n,t]}(g)$,
\begin{equation*}\label{cartdi}
\begin{aligned}\xymatrix{ \BN_{n}^{[t,n]}(g)\ar[d]\ar[r] & \BT_{ }\times \Delta_{  \BN^{[t]}_{n+1}} \ar[d]\\
\BN^{[t]}_n \times \BN^{[t]}_n\,\,  \ar[r]^-{(\id, g)}& \,\,(\BN^{[t]}_n\times \BN^{[t]}_{n+1})\times (\BN^{[t]}_n\times \BN^{[t]}_{n+1}) .
}
\end{aligned}
\end{equation*}

Similarly to \cite[Lem. 9.1.3]{LRZ2}, we have an interpretation of orbital integrals in terms of lattice counting,
$$
\Orb(g,\vol(K^{[t]}_n)^{-2}\varphi^{[t,n]}_{n}\otimes{\bf 1}_{ K_{n+1}^{[t]} } )= \#\BN_{n}^{[n,t]}(g)= \#(\BN^{[n,t]}_n \cap g \BN^{[n,t]}_n).
$$
Note that the orbital integral on the product of unitary groups depends on the choice of a Haar measure; but the factor $\vol(K^{[t]}_n)^{-2}$ in our formula above makes the orbital integral independent of such a choice.

Analogously, we define the bi-$K_{n+1}^{[n]}$-invariant Hecke function 
   \begin{equation}\label{eq: phi n+1 (n,t)}
\varphi^{[n,t]}_{n+1}:=\vol(K^{[t]}_{n+1})^{-1} {\bf 1}_{K^{[n]}_{n+1}K^{[t]}_{n+1}}\ast {\bf 1}_{K^{[t]}_{n+1}K^{[n]}_{n+1}} ,
\end{equation}
where  in the definition of convolution we normalize the Haar measure such that $\vol(K^{[n]}_{n+1})=1$. Then we have an interpretation of the second intersection problem,
 $$
\Orb(g,\vol(K_n^{[n]})^{-2} {\bf 1}_{K_n^{[n]}}\otimes \varphi^{[n,t]}_{n+1})= \#(\wh\BN^{[t]}_n \cap g \wh\BN^{[t]}_n).
$$
 
\begin{lemma}\label{lem: lat (n,t)}  
Let $0\leq t\leq n$ be even.
 \begin{altenumerate}
 \item  We have $\wh{\BN}^{[t]}_n={\BN}^{[t, n]}_n$  (viewed as subsets in $ \BN_{n}^{[n]} \times \BN_{n+1}^{[t]}$).
\item We have $K_{n+1}^{[n]} K_{n+1}^{[t]}=K_n^{[n]} K_{n+1}^{[t]}$.
\item  We have
  $$
  \vol( K_n^{[n,t]})^{-2}{\bf 1}_{ K_n^{[n]}} \otimes{\bf 1}_{ K_{n+1}^{[t]} }\sim \vol(K^{[t]}_n)^{-2}\varphi^{[t,n]}_{n}\otimes{\bf 1}_{ K_{n+1}^{[t]} }\sim \vol(K_n^{[n]})^{-2} {\bf 1}_{K_n^{[n]}}\otimes \varphi^{[n,t]}_{n+1}.
  $$
  {\rm Here $\varphi_1\sim\varphi_2$ means that  $\varphi_1$ and $\varphi_2$ have identical regular semi-simple orbital integrals.}
\end{altenumerate}
\end{lemma}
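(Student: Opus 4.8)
The plan is to handle the three assertions in order, with (i) a direct lattice manipulation that then feeds both (ii) and the orbital-integral bookkeeping in (iii).

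\emph{Part (i).} This is the lattice-model shadow of Theorem~\ref{th:wh}, and I would argue combinatorially. Unwinding the fiber product, the image of $\wh\BN_n^{[t]}$ in $\BN_n^{[n]}\times\BN_{n+1}^{[t]}$ is the set of pairs $(\Lambda_0^\flat,\Lambda)$ with $\Lambda_0^\flat$ a type-$n$ vertex lattice in $W_0^\flat$, $\Lambda$ a type-$t$ vertex lattice in $W_0$, and $\Lambda_0^\flat\obot\langle u_0\rangle\subseteq\Lambda$. Given such a pair, use that $\Lambda_0^\flat$ is $\pi$-modular ($\Lambda_0^{\flat,\vee}=\pi^{-1}\Lambda_0^\flat$) and $u_0$ has unit norm ($\langle u_0\rangle^\vee=\langle u_0\rangle$) to get
\[
\Lambda_0^\flat\obot\langle u_0\rangle\ \subseteq\ \Lambda\ \subseteq\ \Lambda^\vee\ \subseteq\ \bigl(\Lambda_0^\flat\obot\langle u_0\rangle\bigr)^\vee\ =\ \pi^{-1}\Lambda_0^\flat\obot\langle u_0\rangle .
\]
Projecting to the line $F u_0$ along $W_0^\flat$: the upper bound forces the image of $\Lambda$ into $O_Fu_0$, while $u_0\in\Lambda$ forces it to contain $O_Fu_0$, so the projection $\Lambda\to O_Fu_0$ is split surjective with kernel $\Lambda^\flat:=\Lambda\cap W_0^\flat$; hence $\Lambda=\Lambda^\flat\obot\langle u_0\rangle$ with $\Lambda^\flat\supseteq\Lambda_0^\flat$. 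Dualizing shows $\Lambda^\flat$ is a vertex lattice of type $t$, so $(\Lambda^\flat,\Lambda_0^\flat)\in\BN_n^{[t,n]}$ maps to $(\Lambda_0^\flat,\Lambda)$. The reverse inclusion is immediate, since $\Lambda^\flat\obot\langle u_0\rangle$ is of type $t$ and contains $\Lambda_0^\flat\obot\langle u_0\rangle$. This proves (i).

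\emph{Part (ii).} Since an element of $\U(W_0^\flat)$ fixing $\Lambda_0^\flat$ fixes $\Lambda_0=\Lambda_0^\flat\obot\langle u_0\rangle$, we have $K_n^{[n]}\subseteq K_{n+1}^{[n]}$, and $K_n^{[n]}$ preserves the set $\mathcal S$ of type-$t$ vertex lattices in $W_0$ containing $\Lambda_0$. I claim $K_n^{[n]}$ acts transitively on $\mathcal S$. Granting this, $\mathcal S=K_n^{[n]}\cdot\Lambda=K_{n+1}^{[n]}\cdot\Lambda$, whence $K_{n+1}^{[n]}=K_n^{[n]}\cdot\mathrm{Stab}_{K_{n+1}^{[n]}}(\Lambda)=K_n^{[n]}\,K_{n+1}^{[n,t]}$, and multiplying by $K_{n+1}^{[t]}$ on the right (and using $K_{n+1}^{[n,t]}\subseteq K_{n+1}^{[t]}$) gives the two inclusions defining (ii). For the claim: by (i) every lattice in $\mathcal S$ is $\Lambda'^\flat\obot\langle u_0\rangle$ with $\Lambda'^\flat\supseteq\Lambda_0^\flat$ a type-$t$ vertex lattice in $W_0^\flat$, so it suffices that $K_n^{[n]}=\mathrm{Stab}_{\U(W_0^\flat)}(\Lambda_0^\flat)$ acts transitively on such $\Lambda'^\flat$; passing to $\Lambda_0^{\flat,\vee}/\Lambda_0^\flat$, this is the set of isotropic subspaces of dimension $(n-t)/2$ in a non-degenerate quadratic $k$-space, $K_n^{[n]}$ surjects onto its (full) orthogonal group by the structure of the $\pi$-modular parahoric, cf.\ \cite{PR}, and transitivity follows from Witt's extension theorem.

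\emph{Part (iii).} The middle equivalence is immediate from (i) together with the lattice-counting interpretations recalled just before the lemma: they give $\Orb(g,\vol(K_n^{[t]})^{-2}\varphi_n^{[t,n]}\otimes{\bf 1}_{K_{n+1}^{[t]}})=\#(\BN_n^{[n,t]}\cap g\BN_n^{[n,t]})$ and $\Orb(g,\vol(K_n^{[n]})^{-2}{\bf 1}_{K_n^{[n]}}\otimes\varphi_{n+1}^{[n,t]})=\#(\wh\BN_n^{[t]}\cap g\wh\BN_n^{[t]})$, and these two counts agree by (i). For the first equivalence I would establish, exactly as in \cite[Lem.~9.1.3]{LRZ2}, the remaining interpretation $\Orb(g,\vol(K_n^{[n,t]})^{-2}{\bf 1}_{K_n^{[n]}}\otimes{\bf 1}_{K_{n+1}^{[t]}})=\#(\BN_n^{[n,t]}\cap g\BN_n^{[n,t]})$: unfolding the integral over $H\times H$, the condition $h_1^{-1}(g^\flat,g)h_2\in K_n^{[n]}\times K_{n+1}^{[t]}$ prescribes a pair of points of $\BN_n^{[n,t]}$ in fixed relative position under $(g^\flat,g)$; the $H$-stabilizer of the base chain $\Lambda_0^\flat\subseteq\Lambda^\flat$ is $K_n^{[n,t]}$ and $\BN_n^{[n,t]}$ is a single $H$-orbit, which absorbs the factor $\vol(K_n^{[n,t]})^{-2}$, while part (ii) identifies the two ways of encoding the relative-position condition. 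Combining the three interpretations with (i) yields (iii).

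The routine part is (i); the real work is in (ii), and specifically in verifying that $K_n^{[n]}$ surjects onto the \emph{full} orthogonal group of $\Lambda_0^{\flat,\vee}/\Lambda_0^\flat$ (so Witt's theorem applies even when $t=0$, where one is dealing with maximal isotropic subspaces) and in treating the ramified-form subtleties of that residual quadratic space; once the requisite transitivity and single-orbit statements are in place, the normalization bookkeeping in (iii) proceeds in parallel with \cite{LRZ2}.
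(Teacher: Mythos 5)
Your part (i) is correct and is essentially the paper's argument (the paper is terser but does exactly the same splitting $\Lambda_1=\Lambda_1^\flat\obot\langle u_0\rangle$). The one point that needs repair is in (ii): you identify $\Lambda_0^{\flat,\vee}/\Lambda_0^\flat$ as a nondegenerate \emph{quadratic} $k$-space and appeal to surjectivity of $K_n^{[n]}$ onto its full orthogonal group. In the ramified setting the residual pairing on $\Lambda^\vee/\Lambda$, namely $(x,y)\mapsto \pi h(x,y)\bmod \pi$, satisfies $\pi h(y,x)=-\overline{\pi h(x,y)}\equiv -\pi h(x,y)\pmod{\pi}$, so it is \emph{alternating}: $\Lambda_0^{\flat,\vee}/\Lambda_0^\flat$ is a symplectic $k$-space and the reductive quotient of the $\pi$-modular parahoric is a symplectic group (the paper says "with the induced symplectic pairing"; the quadratic space and the one-versus-two-orbit issue you worry about live on the other quotient $\Lambda/\pi\Lambda^\vee$ and show up in the type-$(t,n)$ lemma, not here). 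Once corrected, your argument actually becomes easier: $\Sp$ is transitive on totally isotropic subspaces of every dimension, including Lagrangians at $t=0$, so the transitivity of $K_n^{[n]}$ on $\mathcal{S}$ holds and your stronger identity $K_{n+1}^{[n]}=K_n^{[n]}K_{n+1}^{[n,t]}$ follows. This is a mildly different route from the paper, which does not prove transitivity of the smaller group on $\mathcal S$ but instead counts right $K_{n+1}^{[t]}$-cosets on both sides via $K_{n+1}^{[n]}K_{n+1}^{[t]}/K_{n+1}^{[t]}\simeq K_{n+1}^{[n]}/K_{n+1}^{[n,t]}$, $K_n^{[n]}K_{n+1}^{[t]}/K_{n+1}^{[t]}\simeq K_n^{[n]}/K_n^{[n,t]}$ and the isomorphism $\Lambda_0^\vee/\Lambda_0\simeq\Lambda_0^{\flat,\vee}/\Lambda_0^\flat$; both versions rest on the same transitivity input for the reductive quotient.

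For (iii) your route is genuinely different from the paper's. The paper never passes through the lattice-model counts: it applies $\Orb(g,f)=\Orb(g,e_{\Delta(M)}\ast f\ast e_{\Delta(M)})$ with $M=K_n^{[t]}$ and $M=K_n^{[n]}$ and evaluates the convolutions, using (ii) only to replace ${\bf 1}_{K_n^{[n]}K_{n+1}^{[t]}}$ by ${\bf 1}_{K_{n+1}^{[n]}K_{n+1}^{[t]}}$ so that $\varphi_{n+1}^{[n,t]}$ appears. Your plan instead equates all three orbital integrals with $\#(\BN_n^{[n,t]}\cap g\BN_n^{[n,t]})=\#(\wh\BN_n^{[t]}\cap g\wh\BN_n^{[t]})$ and quotes (i). That works, and it makes the geometric content transparent, but note that the counting identity for ${\bf 1}_{K_n^{[n]}}\otimes\varphi_{n+1}^{[n,t]}$ is only asserted by analogy in the text preceding the lemma; in a self-contained treatment you would have to carry out that unfolding as well (and your parenthetical that (ii) is needed to unfold $f_1={\bf 1}_{K_n^{[n]}}\otimes{\bf 1}_{K_{n+1}^{[t]}}$ is not right --- the support condition there directly describes the $H$-orbit of the base chain, with stabilizer $K_n^{[n]}\cap K_{n+1}^{[t]}=K_n^{[n,t]}$, no coset identity required). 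The paper's Hecke-algebra computation buys self-containedness at the cost of some bookkeeping; your version buys conceptual clarity at the cost of leaning on the unproved counting interpretation.
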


\begin{proof}
Part (i). Write the cartesian product explicitly:
$$
\wh{\BN}^{[t]}_n=\{(\Lambda^\flat, \Lambda_1, \Lambda)\in {\rm Vert}^n(W^\flat)\times {\rm Vert}^t(W)\times{\rm Vert}^n(W)\mid \Lambda=\Lambda^\flat\oplus\langle u\rangle\subset\Lambda_1\}. 
$$
But then $\Lambda_1=\Lambda^\flat_1\oplus\langle u\rangle$ with $\Lambda^\flat\subset\Lambda_1^\flat$. Hence
$$
\wh{\BN}^{[t]}_n={\BN}^{[t, n]}_n
$$
and part (i) is proved.

Part (ii). Clearly we have $K_{n+1}^{[n]} K_{n+1}^{[t]}\supset K_n^{[n]} K_{n+1}^{[t]}$. It suffices to show both have the same number of right $K_{n+1}^{[t]}$-cosets. We have natural bijections
$$
K_{n+1}^{[n]} K_{n+1}^{[t]}/K_{n+1}^{[t]}\simeq K_{n+1}^{[n]}/K_{n+1}^{[n,t]},
$$
and
$$
K_n^{[n]} K_{n+1}^{[t]}/K_{n+1}^{[t]}\simeq K_{n}^{[n]}/K_{n}^{[n,t]}
$$
where we note that $K_{n}^{[n]}\cap K_{n+1}^{[t]}=K_{n}^{[n,t]}$ under our choice of lattices used to define these compact open subgroups. Now note that $ K_{n+1}^{[n]}/K_{n+1}^{[n,t]}$ is bijective to the set of  lattices in $W_0$ of type $t$ containing a fixed lattice $\Lambda$ of type $n$, which is bijective to the set of isotropic subspaces of dimension $\frac{n-t}{2}$ in $\Lambda^\vee/\Lambda$ with the induced symplectic pairing. Similarly  $ K_{n}^{[n]}/K_{n}^{[n,t]}$ is  bijective to the set of isotropic subspaces of dimension $\frac{n-t}{2}$ in $\Lambda_0^\vee/\Lambda_0$. But we have  $\Lambda^\vee/\Lambda\simeq \Lambda_0^\vee/\Lambda_0$ since  $\Lambda_0=\Lambda^\flat_{0}\obot \langle u_0\rangle$ with $u_0$  a unit norm vector.

Part (iii). We follow the proof of \cite[Lem. 9.1.2]{LRZ2}. We recall from \cite[(9.1.4)]{LRZ2}
\begin{align}\label{eq: eM}
   \Orb(g, f)=    \Orb(g, e_{\Delta(M)}\ast f \ast e_{\Delta(M)}), \quad e_M=\vol(M)^{-1} {\bf 1}_{\Delta(M)},
\end{align}
for any compact open subgroup $M$ of $\U(W_0^\flat)$. We apply this to  $f={\bf 1}_{ K_n^{[n]}} \otimes{\bf 1}_{ K_{n+1}^{[t]} }$ and $M=K_n^{[t]}$. By the bi-$K_{n}^{[t]}$-invariance of ${\bf 1}_{ K_{n+1}^{[t]} }$ we obtain
\begin{align*}
&e_{\Delta( K_n^{[t]})}\ast({\bf 1}_{ K_n^{[n]}} \otimes{\bf 1}_{ K_{n+1}^{[t]} })\ast e_{\Delta( K_n^{[t]})}
\\
=&(e_{ K_n^{[t]}}\ast{\bf 1}_{ K_n^{[n]}}  \ast e_{ K_n^{[t]}}) \otimes{\bf 1}_{ K_{n+1}^{[t]} }
\\=& (\vol( K_n^{[n]})^{-1}  e_{ K_n^{[t]}}\ast{\bf 1}_{ K_n^{[n]}} \ast{\bf 1}_{ K_n^{[n]}}  \ast e_{ K_n^{[t]}})\otimes{\bf 1}_{ K_{n+1}^{[t]} }
\\=&\vol( K_n^{[n]})^{-1} \vol( K_n^{[t]})^{-2} \vol( K_n^{[n,t]})^{2} {\bf 1}_{K_n^{[t]} K_n^{[n]}} \ast{\bf 1}_{ K_n^{[n]}K_n^{[t]}}  \otimes{\bf 1}_{ K_{n+1}^{[t]}}
\\=& \vol( K_n^{[n,t]})^{2}  \vol( K_n^{[t]})^{-2} \varphi_{n}^{[t,n]}\otimes{\bf 1}_{ K_{n+1}^{[t]}}.
\end{align*}
This proves that $\vol( K_n^{[n,t]})^{-2}{\bf 1}_{ K_n^{[n]}} \otimes{\bf 1}_{ K_{n+1}^{[t]} }\sim  \vol( K_n^{[t]})^{-2} \varphi^{[t,n]}_{n}\otimes{\bf 1}_{ K_{n+1}^{[t]} }$. 

Next we apply \eqref{eq: eM} to $f={\bf 1}_{ K_n^{[n]}} \otimes{\bf 1}_{ K_{n+1}^{[t]} }$ and $M=K_n^{[n]}$:
\begin{align*}
&e_{ \Delta(K_n^{[n]})}\ast({\bf 1}_{ K_n^{[n]}} \otimes {\bf 1}_{ K_{n+1}^{[t]} })\ast e_{ \Delta(K_n^{[n]})}
\\
=&{\bf 1}_{ K_n^{[n]}}  \otimes (e_{ K_n^{[n]}}\ast {\bf 1}_{ K_{n+1}^{[t]} }\ast e_{ K_n^{[n]}})
\\
=&{\bf 1}_{ K_n^{[n]}}  \otimes (\vol( K_n^{[n]})^{-2} \vol(K_n^{[n,t]})^2 \vol( K_{n+1}^{[t]})^{-1}  {\bf 1}_{ K_n^{[n]}K_{n+1}^{[t]} } \ast {\bf 1}_{ K_{n+1}^{[t]}K_n^{[n]} } ) .
\end{align*}
Here, for $\phi \in C_c^\infty(\U(W^\flat)), \varphi\in  C_c^\infty(\U(W))$, the convolution $\phi \ast  \varphi$ is defined as the function on $\U(W)$ given by
$$
(\phi \ast  \varphi)(g)=\int_{h\in \U(W^\flat)}\phi(h)\varphi(h^{-1}g)\,dh.
$$

By part (ii) we rewrite it as
\begin{align*}
&e_{ \Delta(K_n^{[n]})}\ast({\bf 1}_{ K_n^{[n]}} \otimes {\bf 1}_{ K_{n+1}^{[t]} })\ast e_{ \Delta(K_n^{[n]})}
\\
=&{\bf 1}_{ K_n^{[n]}}  \otimes (\vol( K_n^{[n]})^{-2} \vol(K_n^{[n,t]})^2  \vol( K_{n+1}^{[t]})^{-1} {\bf 1}_{ K_{n+1}^{[n]}K_{n+1}^{[t]} } \ast {\bf 1}_{ K_{n+1}^{[t]}K_{n+1}^{[n]} } )
\\=&{\bf 1}_{ K_n^{[n]}}  \otimes (\vol( K_n^{[n]})^{-2} \vol(K_n^{[n,t]})^2 \varphi^{[n,t]}_{n+1}).
\end{align*}
This proves that $
  \vol( K_n^{[n,t]})^{-2}{\bf 1}_{ K_n^{[n]}} \otimes{\bf 1}_{ K_{n+1}^{[t]} }\sim \vol(K_n^{[n]})^{-2} {\bf 1}_{K_n^{[n]}}\otimes \varphi^{[n,t]}_{n+1}.  $
\end{proof}

 \subsection{Intersection numbers on the splitting model for $0\leq t\leq n$}

Let $\wh\CN^{[t], \spt}_{n}$ be the flat closure of  the base change of $\wh\CN^{[t]}_{n}$ along the morphism $\CN_n^{[n]}\times\CN_{n+1}^{[t],\spt}\to \CN_n^{[n]}\times\CN_{n+1}^{[t]}$. Then $\wh\CN^{[t], \spt}_{n}$ is a closed formal subscheme of $\CN_n^{[n]}\times\CN_{n+1}^{[t],\spt}$, flat over $\Spf O_{\breve{F}}$ of relative dimension $n-1$.  We have the commutative diagram 
  \begin{equation}\label{splcy}
 \begin{aligned}
  \xymatrix{
\wh\CN^{[t], \spt}_{n}\ar[r] \ar[d]  &\CN_n^{[n]}\times\CN_{n+1}^{[t],\spt} \ar[d]   \\
\wh\CN^{[t]}_{n}  \ar[r] & \CN^{[n]}_{n}\times\CN^{[t]}_{n+1}.
}
\end{aligned}
\end{equation}

Since $\CN_{n+1}^{[t],\spt}$ is regular and $ \CN_n^{[n]} $ is formally smooth over $\Spf\OFb$, we know that the product $\CN_n^{[n]} \times \CN_{n+1}^{[t],\spt}$ is regular. Hence it makes sense to define arithmetic intersection numbers of closed formal subschemes of the ambient space $\CN_n^{[n]} \times \CN_{n+1}^{[t],\spt}$,
\begin{equation*}
  \label{eq:Int1nevent}
  \left\langle \wh\CN^{[t],\spt}_n , g\wh\CN^{[t],\spt}_n \right\rangle_{\CN_n^{[n]} \times \CN_{n+1}^{[t],\spt}}:=\chi(\CN_n^{[n]} \times \CN_{n+1}^{[t],\spt}, \wh\CN^{[t],\spt}_n\cap^\BL g\wh\CN^{[t],\spt}_n ),
\end{equation*}
 where $g\in \U(W^\flat_1)(F_0)\times \U(W_1)(F_0)$. The arithmetic intersection number is finite as long as $ \wh\CN^{[t],\spt}_n\cap g \wh\CN^{[t],\spt}_n$ is a proper scheme over $\Spf\OFb$, which is the case if $g$ is regular semisimple by the standard argument, cf. \cite[proof of Lem. 6.1]{M-Th}.

\subsection{The AT conjecture $0\leq t\leq n$}  
 The considerations on lattice models lead us (by following the heuristic principles of \cite[\S 9.1]{LRZ2}) to state the following conjecture. 
 \begin{conjecture}\label{conj (n even,t<n)}
 Let $n=2m$ be even, and $0\leq t\leq n$ even.
There exists $\varphi'\in C_c^\infty(G')$ with transfer $(\vol( K_n^{[n,t]})^{-2}  {\bf 1}_{K_n^{[n]}\times K^{[t]}_{n+1}},0)\in C_c^\infty(G_{W_0})\times C_c^\infty(G_{W_1})$ such that, if $\gamma\in G'(F_0)_\rs$ is matched with  $g\in G_{W_1}(F_0)_\rs$, then
 \begin{equation*}
  \left\langle \wh\CN^{[t],\spt}_n , g\wh\CN^{[t],\spt}_n \right\rangle_{\CN_n^{[n]} \times \CN_{n+1}^{[t],\spt}} \cdot\log q=- \del\big(\gamma,  \varphi' \big).
\end{equation*}

\end{conjecture}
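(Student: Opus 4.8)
The statement is an existence assertion for a smooth transfer $\varphi'$ on the $\GL$-side, matching a prescribed test function on the unitary side, subject to an intersection-number identity. Following the paradigm of the AFL and its variants, I would \emph{not} attempt to produce $\varphi'$ directly; instead I would reduce Conjecture \ref{conj (n even,t<n)} to an already-established AT/AFL statement via the geometric comparisons proved earlier in the paper. The key observation is Theorem \ref{th:wh}, which identifies the naive cycle $\wh\CN^{[t]}_n$ inside $\CN_n^{[n]}\times\CN_{n+1}^{[t]}$ with the RZ space $\CN^{[n,t]}_n$ of deeper parahoric level, sitting over $\CN_n^{[n]}$ via the projection from the chain. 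Passing to the splitting model on the second factor, $\wh\CN^{[t],\spt}_n$ is the flat closure of the base change along $\CN_{n+1}^{[t],\spt}\to\CN_{n+1}^{[t]}$; since $\CN_n^{[n]}\times\CN_{n+1}^{[t],\spt}$ is regular (the first factor is formally smooth by exotic smoothness, the second is regular and semi-stable by Theorem \ref{thm:geo-RZ}), the derived intersection is perfect and the intersection number $\langle\wh\CN^{[t],\spt}_n,g\wh\CN^{[t],\spt}_n\rangle$ is finite for $g$ regular semisimple.

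\textbf{Main steps.} First I would establish a geometric comparison between the intersection numbers on $\CN_n^{[n]}\times\CN_{n+1}^{[t],\spt}$ and those on the ``exotic'' ambient space $\CN_n^{[n]}\times\CN_{n+1}^{[n]}$ of Conjecture \ref{conj (n even,t=n)}. Concretely, the projection $\CN_{n+1}^{[n,t]}\to\CN_{n+1}^{[n]}$ is finite flat, and $\wh\CN^{[t]}_n$ is the preimage of the exceptional divisor $\CZ(u)^{[n]}\simeq\CN_n^{[n]}$; a projection-formula argument (together with the fact that the splitting model $\CN_{n+1}^{[t],\spt}$ is obtained from $\CN_{n+1}^{[t]}$ by blowing up the worst points, cf. Remark \ref{rmk:split-model}, hence agrees with $\CN_{n+1}^{[t]}$ away from $\Sing$, which is disjoint from $\CZ(u)^{[n]}$) should express $\langle\wh\CN^{[t],\spt}_n,g\wh\CN^{[t],\spt}_n\rangle$ as a sum of the exotic intersection number $\langle\CN_n^{[n]},g\CN_n^{[n]}\rangle_{\CN_n^{[n]}\times\CN_{n+1}^{[n]}}$ (for the appropriate translate) over a fiber. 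Second, on the analytic side, Lemma \ref{lem: lat (n,t)}(iii) shows that the test function $\vol(K_n^{[n,t]})^{-2}\mathbf{1}_{K_n^{[n]}\times K^{[t]}_{n+1}}$ has the same regular semisimple orbital integrals as $\vol(K_n^{[n]})^{-2}\mathbf{1}_{K_n^{[n]}}\otimes\varphi_{n+1}^{[n,t]}$, and the Hecke operator $\varphi_{n+1}^{[n,t]}$ is, by \eqref{eq: phi n+1 (n,t)}, a convolution $\mathbf{1}_{K^{[n]}_{n+1}K^{[t]}_{n+1}}\ast\mathbf{1}_{K^{[t]}_{n+1}K^{[n]}_{n+1}}$ implementing exactly the ``push-pull'' along $K^{[n]}_{n+1}\supset K^{[n,t]}_{n+1}\subset K^{[t]}_{n+1}$ that matches the geometric fiber decomposition. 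Third, I would feed the AFL/AT conjecture for the exotic case (Conjecture \ref{conj (n even,t=n)}, known for $n=1$ by \cite{RSZ1} and, where applicable, in general) through the convolution: given $\varphi'_0$ transferring to $(\mathbf{1}_{K_n^{[n]}\times K^{[n]}_{n+1}},0)$ with $\langle\CN_n^{[n]},g\CN_n^{[n]}\rangle\log q=-\partial\Orb(\gamma,\varphi'_0)$, one builds $\varphi'$ on the $\GL$-side as the image of $\varphi'_0$ under the Hecke operator on $G'$ corresponding (under matching of RTF's, cf. \cite{Z14,LRZ1}) to the unitary Hecke operator $\varphi_{n+1}^{[n,t]}$; linearity of $\partial\Orb$ in $\varphi'$ and the compatibility of matching with the Hecke action then yield the desired identity.

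\textbf{The main obstacle.} The hard part is Step 1: proving the \emph{geometric} comparison of intersection numbers rigorously. One must show that the derived self-intersection of $\wh\CN^{[t],\spt}_n$ in the regular scheme $\CN_n^{[n]}\times\CN_{n+1}^{[t],\spt}$ decomposes, via the finite flat map $\CN_{n+1}^{[n,t]}\to\CN_{n+1}^{[n]}$ and the exceptional isomorphism $\CN_{n+1}^{[n,t]}\to\CN_n^{[n]}$ over $\CZ(u)^{[n]}$ (Theorem \ref{thm:Y unit} and Proposition \ref{prop:deep-exc}), into the exotic intersection number twisted by the Hecke correspondence. The subtlety is that $\wh\CN^{[t],\spt}_n$ lives on the splitting model, which is \emph{not} the pullback of $\wh\CN^{[t]}_n$ but its flat closure; one must control the difference (supported over $\Exc=\pi^{-1}(\Sing)$) and verify it contributes nothing to the intersection with a generic translate $g\wh\CN^{[t],\spt}_n$. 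This is where a careful analysis in the spirit of \cite{ZZha} and \cite{LRZ2} is needed; I expect that for $g$ regular semisimple the relevant cycles meet away from $\Exc$, so the flat-closure correction is invisible, but making this precise (e.g. via the lattice-model count in Lemma \ref{lem: lat (n,t)}(i), which already shows $\wh\BN^{[t]}_n=\BN^{[t,n]}_n$ set-theoretically, and transporting it to the formal scheme) is the technical heart of the argument. On the analytic side, the compatibility of the $\GL$-side Hecke action with matching of regular semisimple orbits — precisely the statement that smooth transfer intertwines the relevant Hecke algebras — is known from \cite{Z14} and would be invoked rather than reproved.
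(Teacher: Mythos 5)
The statement you are trying to prove is a \emph{conjecture} in the paper (Conjecture \ref{conj (n even,t<n)}), not a theorem: the paper formulates it on the basis of the lattice-model heuristic of Lemma \ref{lem: lat (n,t)} and \cite[\S 9.1]{LRZ2} and offers no proof. The only adjacent result the paper establishes is Proposition \ref{prop: (n even,t=n)}, showing that for $t=n$ the conjecture is equivalent to Conjecture \ref{conj (n even,t=n)} (i.e.\ \cite[Conj.\ 5.6]{RSZ1}), which is itself known only in low dimension; the case $(n,t)=(2,0)$ is explicitly listed in the introduction among the open ``next test cases.'' So there is no proof to compare yours against, and your proposal should be judged as a reduction strategy.

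As such a strategy it has genuine gaps. First, even if every step were carried out, you would at best reduce to Conjecture \ref{conj (n even,t=n)}, which is open for $n\geq 4$, so the argument cannot close. Second, your Step 1 rests on the assertion that $\CN_{n+1}^{[n,t]}\to\CN_{n+1}^{[n]}$ is finite flat and that a projection formula decomposes the derived self-intersection of $\wh\CN^{[t],\spt}_n$ into exotic intersection numbers; neither is established in the paper, and the projection is in general only proper (its special fibers can be positive-dimensional), so the push-pull decomposition of the Euler characteristic $\chi(\CO\otimes^{\BL}\CO)$ is precisely the hard open content, not a routine consequence. You also cannot simply discard the flat-closure correction over $\Exc$: the set-theoretic identity $\wh\BN^{[t]}_n=\BN^{[t,n]}_n$ of Lemma \ref{lem: lat (n,t)}(i) controls neither multiplicities nor higher Tor terms on the formal scheme. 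Third, on the analytic side your construction of $\varphi'$ as ``the image of $\varphi'_0$ under the corresponding Hecke operator on $G'$'' presupposes a base-change/transfer compatibility of Hecke algebras in the \emph{ramified} setting; the reference \cite{Z14} gives existence of smooth transfer, not this compatibility, and the known Hecke-algebra AFL \cite{LRZ1} is an unramified result. Finally, $\del(\gamma,\cdot)$ is a derivative at $s=0$ and does not commute with Hecke convolution without picking up value terms $\Orb(\gamma,\cdot)$ — this is exactly why the conjectures in the paper carry the correction term $\fp'_{\corr}$ in their variant (ii) — so your last step as stated would not yield the clean identity even granting everything else.
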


\begin{remark}
By Lemma \ref{lem: lat (n,t)}  part (iii), one could replace the function $\vol( K_n^{[n,t]})^{-2}  {\bf 1}_{K_n^{[n]}\times K^{[t]}_{n+1}}$ by either of the other two.
\end{remark}

\subsection{Relation of two versions when $t=n$}

The map $\CN_{n+1}^{[n],\spt}\rightarrow \CN_{n+1}^{[n]}$ is a blow-up morphism and hence the two horizontal maps in  \eqref{splcy} are genuinely different.

\begin{proposition}\label{prop: (n even,t=n)}
Conjecture \ref{conj (n even,t=n)} is equivalent to Conjecture \ref{conj (n even,t<n)} in the case $t=n$.
\end{proposition}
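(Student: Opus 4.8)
The plan is to show that when $t=n$ all of the geometric data in Conjecture~\ref{conj (n even,t<n)} degenerate to those in Conjecture~\ref{conj (n even,t=n)}, so that the two statements become identical. The starting observation is that, by Theorem~\ref{thm:geo-RZ}(ii) (cf. Remark~\ref{rmk:split-model}(i)), the blow-up morphism $\CN_{n+1}^{[n],\spt}\to\CN_{n+1}^{[n]}$ is an isomorphism; hence the ambient space $\CN_n^{[n]}\times\CN_{n+1}^{[n],\spt}$ of Conjecture~\ref{conj (n even,t<n)} is canonically identified with the ambient space $\CN_n^{[n]}\times\CN_{n+1}^{[n]}$ of Conjecture~\ref{conj (n even,t=n)}, compatibly with the action of $g\in G_{W_1}(F_0)$.

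Next I would identify the cycle. In the cartesian square of \S\ref{sec:(n,t)-naive} defining $\wh\CN_n^{[t]}$, set $t=n$: the isogeny $\BX^{[n]}\to\BX^{[n]}$ then has degree $q^{(n-n)/2}=1$, so it is an isomorphism, the relative Rapoport--Zink space $\CN_{n+1}^{[n,n]}$ equals $\CN_{n+1}^{[n]}$, and both of its projections to $\CN_{n+1}^{[n]}$ are the identity. Therefore $\wh\CN_n^{[n]}=\CN_n^{[n]}\times_{\CN_{n+1}^{[n]}}\CN_{n+1}^{[n]}=\CN_n^{[n]}$, sitting inside $\CN_n^{[n]}\times\CN_{n+1}^{[n]}$ as the graph $\Delta$ of the closed immersion $\CN_n^{[n]}\hookrightarrow\CN_{n+1}^{[n]}$; alternatively one quotes Theorem~\ref{th:wh} together with $\CN_n^{[n,n]}\simeq\CN_n^{[n]}$. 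Since the identification of ambient spaces above is an isomorphism and $\wh\CN_n^{[n]}$ is already flat over $\Spf\OFb$, its flat closure after base change is again $\wh\CN_n^{[n]}$, so $\wh\CN_n^{[n],\spt}=\Delta$. Consequently
\[
\bigl\langle \wh\CN_n^{[n],\spt},\, g\wh\CN_n^{[n],\spt}\bigr\rangle_{\CN_n^{[n]}\times\CN_{n+1}^{[n],\spt}}
=\bigl\langle \CN_n^{[n]},\, g\CN_n^{[n]}\bigr\rangle_{\CN_n^{[n]}\times\CN_{n+1}^{[n]}}
\qquad\text{for every }g\in G_{W_1}(F_0)_\rs .
\]

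It then remains to match the test functions. Since a vertex lattice of type $n$ is $\pi$-modular, an inclusion $\Lambda_0^\flat\subseteq\Lambda^\flat$ of two type-$n$ vertex lattices is forced to be an equality (by the duality relation $[\Lambda^\flat:\Lambda_0^\flat]=[\Lambda_0^{\flat,\vee}:\Lambda^{\flat,\vee}]$ together with an elementary index count), so $K_n^{[n,n]}=K_n^{[n]}$. With the Haar measure normalized by $\vol(K_n^{[n]})=1$ as in Conjecture~\ref{conj (n even,t=n)}, the factor $\vol(K_n^{[n,n]})^{-2}$ equals $1$, and the test function $\vol(K_n^{[n,n]})^{-2}\mathbf{1}_{K_n^{[n]}\times K_{n+1}^{[n]}}$ of Conjecture~\ref{conj (n even,t<n)} coincides with the test function $\mathbf{1}_{K_n^{[n]}\times K_{n+1}^{[n]}}$ of Conjecture~\ref{conj (n even,t=n)}. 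Hence the transfer condition imposed on $\varphi'$ and the right-hand side $-\del(\gamma,\varphi')$ are identical in the two conjectures, and combined with the displayed equality above this shows that they are verbatim the same assertion; in particular they are equivalent.

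The one point I would take care to spell out is the following: Conjecture~\ref{conj (n even,t=n)}, following \cite{RSZ1}, is phrased with $\CN_{n+1}^{[n]}$ denoting the RSZ space cut out by the spin condition, which is only an \emph{open} formal subscheme of the $\CN_{n+1}^{[n]}$ used throughout the present paper. As this open subscheme contains the image of $\CN_n^{[n]}\hookrightarrow\CN_{n+1}^{[n]}$ and, for $g$ regular semisimple, the whole (proper) support of $\CN_n^{[n]}\cap^{\BL}g\CN_n^{[n]}$, the Euler characteristic defining $\langle\CN_n^{[n]},g\CN_n^{[n]}\rangle$ is unchanged under passage from the RSZ model to ours. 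Once this is noted, the argument is purely a matter of unwinding definitions, and I do not expect any genuine obstacle; the only real content is the bookkeeping just described.
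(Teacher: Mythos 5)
Your argument breaks down at the very first step. You claim that, by Theorem~\ref{thm:geo-RZ}(ii) and Remark~\ref{rmk:split-model}(i), the morphism $\CN_{n+1}^{[n],\spt}\to\CN_{n+1}^{[n]}$ is an isomorphism, and you then identify the two ambient spaces. But those results say that $\CN_{m}^{[m],\spt}\simeq\CN_m^{[m]}$ when the \emph{type equals the dimension} (the $\pi$-modular case, where there are no worst points). The second factor in Conjecture~\ref{conj (n even,t<n)} for $t=n$ is $\CN_{n+1}^{[n],\spt}$: a space of dimension $n+1$ (odd) and type $n$, i.e.\ the almost $\pi$-modular case. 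Here $\Sing(\CN_{n+1}^{[n]})$ is nonempty (almost $\pi$-modular vertex lattices exist in the $(n+1)$-dimensional hermitian space), and the paper states explicitly, immediately before the proposition, that $\CN_{n+1}^{[n],\spt}\to\CN_{n+1}^{[n]}$ is a genuine blow-up, so that the two ambient spaces — and the two horizontal maps in \eqref{splcy} — are genuinely different. The comment after the table in \S\ref{ss:table} makes the same point: the third row with $t=n$ has a \emph{different} ambient space from the first row. So the premise on which your identification of cycles, flat closures, and intersection numbers rests is false.

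The conclusion is nevertheless correct, and the missing ingredient is the one the paper uses: by \cite[Lem.~4.2]{RSZ1}, the embedding $\CN_n^{[n]}\hookrightarrow\CN_{n+1}^{[n]}$ factors through $\CN_{n+1}^{[n]}\setminus\Sing(\CN_{n+1}^{[n]})$. Hence the cycle $\wh\CN_n^{[n]}=\Delta$, its strict transform in $\CN_n^{[n]}\times\CN_{n+1}^{[n],\spt}$, and the support of $\wh\CN_n^{[n]}\cap^{\BL} g\wh\CN_n^{[n]}$ all live in the open locus over which the blow-up \emph{is} an isomorphism; since the derived intersection localizes, the two Euler characteristics agree. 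Your identification of the cycle with the graph $\Delta$ via Theorem~\ref{th:wh}, the computation $K_n^{[n,n]}=K_n^{[n]}$ matching the test functions, and the remark about the RSZ spin-condition open subscheme are all fine and would complete the proof once the support statement replaces the false global isomorphism.
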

\begin{proof}
The embedding $\CN^{[n]}_n\hookrightarrow \CN^{[n]}_{n+1}$ factors through $ \CN^{[n]}_{n+1}\setminus \Sing(\CN^{[n]}_{n+1})$, comp. \cite[Lem. 4.2]{RSZ1}. Hence the support of 
the intersection $ \wh\CN^{[n]}_n \cap g\wh\CN^{[n]}_n$ is away from the worst points. Hence we get
$$
\left\langle \wh\CN^{[n]}_n , g\wh\CN^{[n]}_n \right\rangle_{\CN_n^{[n]} \times \CN_{n+1}^{[n]}}=\left\langle \wh\CN^{[n]}_n , g\wh\CN^{[n],\spt}_n \right\rangle_{\CN_n^{[n],\spt} \times \CN_{n+1}^{[n],\spt}} ,
$$
since the map $\CN_{n+1}^{[n],\spt}\rightarrow \CN_{n+1}^{[n]}$ is an isomorphism away from the worst points. 
\end{proof}

\begin{remark}
We cannot make a graph version AT conjecture in this case, since the space $\wh{\CN}_n^{[t],\spt} \times \CN_{n+1}^{[t],\spt}$ is not regular. In general, the diagonal embedding of regular (even semi-stable) schemes is not locally complete intersection.
\end{remark}

  \section{AT conjecture of type $(n-1, t)$}\label{sec:type-(n-1,t)}

 Let $n=2m+1$ be odd. Let $0\leq t\leq  n+1$ be even.  There are two versions now: one for an exceptional special $\CZ$-divisor on $\CN_{n+1, \ep}^{[n-1]}$, and one for an exceptional special $\CY$-divisor on $\CN_{n+1, \ep}^{[n+1]}$. Both  lead to an intersection product on $\CN_n^{[n-1]}\times \CN_{n+1}^{[t], \spt}$. 
 
 When $t\leq n-1$, we fix an aligned triple of framing objects $(\BY, \BX, u)=(\BY_{\ep^\flat}^{[t]}, \BX_\ep^{[t]}, u)$ of dimension $n+1$ and type $t$, with corresponding embedding of RZ spaces $\CN^{[t]}_{n, \ep^\flat}=\CN^{[t]}_n\hookrightarrow \CN^{[t]}_{n+1}=\CN^{[t]}_{n+1, \ep}$, cf. \S \ref{sec:emb-RZ}. We also fix an isogeny $\BY^{[n-1]}\to \BY^{[t]}$ as in \S \ref{sec:RZ-deeper}, and extend this in the obvious way to an isogeny $\BX^{[n-1]}\to \BX^{[t]}$.   This defines embeddings of RZ spaces $\CN^{[n-1]}_n\hookrightarrow \CN^{[n-1]}_{n+1}$ and  $\CN^{[n-1,t]}_n\hookrightarrow \CN^{[n-1,t]}_{n+1}$, cf. \S \ref{sec:emb-RZ}. We will formulate an AT conjecture for cycles on  $\CN_n^{[n-1]} \times \CN_{n+1}^{[t],\spt}$, by using the exceptional special divisor $\CN_n^{[n-1]}$ on $\CN_{n+1}^{[n-1]}$. 
 
 We again use the notation $W_1=\BV(\BX)$ and $W_1^\flat=\BV(\BY)$, so that $u\in W_1$ and $W_1^\flat=\langle u\rangle^\perp$. We denote by $W_0$ the hermitian space of dimension $n+1$ with opposite Hasse invariant of $W_1$, and fix a vector $u_0\in W_0$ of the same length as $u$ and set $W_0^\flat  =\langle u_0\rangle^\perp$. Then $W_0^\flat $ has the opposite Hasse invariant of $W_1^\flat$.

  \subsection{The naive version via $\CZ$-divisors}

 Let  $0\leq t\leq n-1$. 
Similarly to \S\ref{sec:type-(n,t)}, there are two ways to use correspondences to obtain natural cycles on the ambient space  $\CN_{n}^{[n-1]} \times \CN_{n+1}^{[t]}$.  The first one is to use a correspondence $\CN^{[n-1,t]}_n $ on the smaller space
 \begin{equation*}
 \begin{aligned}
 \xymatrix{&&\CN^{[n-1,t]}_n \times \CN_{n+1}^{[t]}  \ar[rd]   \ar[ld] &
 \\ \CN_n^{[t]}\ar@{^(->}[r]& \CN_n^{[t]} \times \CN_{n+1}^{[t]} &&  \CN_{n}^{[n-1]} \times \CN_{n+1}^{[t]} .
 }
 \end{aligned}
 \end{equation*}
 The second one  is to use a correspondence $\CN^{[n-1,t]}_{n+1}$  on the bigger space
 \begin{equation}\label{eq:type (n-1,t) big}
 \begin{aligned}
 \xymatrix{&&\CN^{[n-1]}_n \times \CN_{n+1}^{[n-1,t]}  \ar[rd]   \ar[ld] &
 \\ \CN_n^{[n-1]}\ar@{^(->}[r]& \CN_n^{[n-1]} \times \CN_{n+1}^{[n-1]} &&  \CN_{n}^{[n-1]} \times \CN_{n+1}^{[t]} .
 }
 \end{aligned}
 \end{equation}

  The first leads to the small correspondence,
 \begin{equation*}
 \begin{aligned}
 \xymatrix{&\CN^{[n-1,t]}_n \ar[rd]   \ar[ld]_{\pi_1}  \ar[rrd]^{\pi_2}&&
 \\ \CN_n^{[n-1]} &&  \CN_{n}^{[t]}\ar@{^(->}[r]&\CN_{n+1}^{[t]}.
 }
 \end{aligned}
 \end{equation*}
 The second leads to the big correspondence,

  \begin{equation}\label{pi1pi281}
 \begin{aligned}
 \xymatrix{
 &\wh{\CN}_n^{[t]}\ar@{}[d]|{\square}\ar[dl]_{\pi_1}\ar@{^(->}[r]\ar[rrd]^{\pi_2}
 &\CN_{n+1}^{[n-1,t]}\ar[rd]\ar[ld]|!{[l];[rd]}\hole&\\
 \CN_n^{[n-1]}\ar@{^(->}[r]&\CN_{n+1}^{[n-1]}&&\CN_{n+1}^{[t]}.}
 \end{aligned}
 \end{equation}
\begin{lemma}
 The morphism $(\pi_1, \pi_2)\colon \wh{\CN}_n^{[t]}\to \CN_n^{[n-1]}\times\CN_{n+1}^{[t]}$ is a closed embedding. 
 \end{lemma}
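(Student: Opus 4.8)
The plan is to mimic the proof of the analogous statement in \S\ref{sec:type-(n,t)} (the lemma just before Theorem \ref{th:wh}), since the only change is that $\CN_n^{[n]}$ is replaced by $\CN_n^{[n-1]}$ and $n$ is now odd. First I would unwind the cartesian square in \eqref{pi1pi281}: by definition, $\wh{\CN}_n^{[t]}$ is the fiber product of $\CN_{n+1}^{[n-1,t]}\to\CN_{n+1}^{[n-1]}$ with the closed embedding $\CN_n^{[n-1]}\hookrightarrow\CN_{n+1}^{[n-1]}$, where the latter is the embedding attached to the fixed aligned triple, i.e.\ it identifies $\CN_n^{[n-1]}$ with the exceptional special divisor $\CZ(u)^{[n-1]}_{n+1}\subset\CN_{n+1}^{[n-1]}$ via Theorem \ref{thm: Z unit} (here $n+1$ is even and $t=n-1<n+1$, so we are in the middle case of that theorem and the divisor is genuinely $\CN_{n,\ep^\flat}^{[n-1]}$). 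Thus $\wh{\CN}_n^{[t]}$ sits inside $\CN_n^{[n-1]}\times\CN_{n+1}^{[t]}$ via $(\pi_1,\pi_2)$, where $\pi_1$ is the projection to $\CN_n^{[n-1]}$ coming from the cartesian square and $\pi_2$ is the composite of the top horizontal closed embedding $\wh{\CN}_n^{[t]}\hookrightarrow\CN_{n+1}^{[n-1,t]}$ with the projection $\CN_{n+1}^{[n-1,t]}\to\CN_{n+1}^{[t]}$.

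Next I would give the modular description of the image, exactly as in the proof of the corresponding lemma in \S\ref{sec:(n,t)-naive}. A point of $\CN_n^{[n-1]}\times\CN_{n+1}^{[t]}$ over $S$ is a pair $(Y^{[n-1]},X^{[t]})$ of objects, and $(\pi_1,\pi_2)$ identifies $\wh{\CN}_n^{[t]}$ with the closed locus where the quasi-isogeny $\rho_{X^{[t]}}^{-1}\circ\alpha\circ(\rho_{Y^{[n-1]}}\times\rho_{\ov\CE})$ over the special fiber $\ov S$ lifts to an isogeny $Y^{[n-1]}\times\ov\CE\to X^{[t]}$ over $S$; here $\alpha\colon\BY^{[n-1]}\times\ov\BE\to\BX^{[t]}$ is the isogeny obtained by composing the aligned-triple isomorphism $\BY^{[n-1]}\times\ov\BE\simeq\BX^{[n-1]}$ with the fixed isogeny $\BX^{[n-1]}\to\BX^{[t]}$. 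Since such a lift, if it exists, is unique (it is the unique lift of a fixed quasi-isogeny, and liftability is a closed condition by \cite[Prop.\ 2.9]{RZ}), the assignment $(Y^{[n-1]},X^{[t]})\mapsto$ (the data of that lift) is represented by a closed formal subscheme of $\CN_n^{[n-1]}\times\CN_{n+1}^{[t]}$, and this closed formal subscheme is precisely $\wh{\CN}_n^{[t]}$ with $(\pi_1,\pi_2)$ as its inclusion. Hence $(\pi_1,\pi_2)$ is a closed embedding.

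I do not expect any genuine obstacle here: the argument is formally identical to the even case, and the oddness of $n$ and the switch from type $n$ to type $n-1$ play no role in this particular statement (they only matter later, when one wants to apply Proposition \ref{prop:deep-exc} to identify $\wh{\CN}_n^{[t]}$ with $\CN_n^{[n-1,t]}$). The one point to be careful about is that the identification of the bottom embedding $\CN_n^{[n-1]}\hookrightarrow\CN_{n+1}^{[n-1]}$ with the special divisor $\CZ(u)^{[n-1]}_{n+1}$ is via Theorem \ref{thm: Z unit} rather than the $\CY$-version, and that this divisor is honestly isomorphic to an RZ space (we are not in the degenerate sub-case $n+1$ odd, $t=(n+1)-1$, $\ep^\flat=-1$ of that theorem, since $n+1$ is even); but this is only needed to make sense of the fiber product, not for the closed-embedding claim, which follows purely from uniqueness of lifts of quasi-isogenies. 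One can therefore write the proof in a couple of lines by referring back to the proof of the lemma preceding Theorem \ref{th:wh}, merely substituting $\CN_n^{[n-1]}$ for $\CN_n^{[n]}$ throughout.
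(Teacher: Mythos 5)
Your proof is correct and follows the same route as the paper's: the paper's own (one-sentence) proof likewise identifies $\wh{\CN}_n^{[t]}$ with the closed formal subscheme of $\CN_n^{[n-1]}\times\CN_{n+1}^{[t]}$ of pairs $(Y^{[n-1]},X^{[t]})$ for which $\rho_{X^{[t]}}^{-1}\circ\alpha\circ(\rho_{Y^{[n-1]}}\times\rho_{\ov\CE})$ lifts to an isogeny. Your additional remarks (uniqueness of the lift, closedness of the lifting condition via \cite[Prop.~2.9]{RZ}) merely make explicit what the paper leaves implicit.
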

 \begin{proof}
Indeed, $\wh{\CN}_n^{[t]}$ is the closed formal subscheme of $\CN_n^{[n-1]}\times \CN_{n+1}^{[t]}$ parameterizing pairs $(Y^{[n-1]}, X^{[t]})$ such that the quasi-isogeny $\rho_{X^{[t]}}^{-1}\circ\alpha\circ(\rho_Y^{[n-1]}\times\rho_{\ov{\CE}})$
lifts to an isogeny $Y^{[n-1]}\times\ov{\CE}\to X^{[t]}$. Here $\alpha:\BY^{[n-1]}\times \ov{\BE}\to \BX^{[t]}$ is the isogeny between framing objects.
\end{proof}
 
 There is a natural morphism which is a closed embedding,
 $$\CN^{[n-1,t]}_n\hookrightarrow  \wh{\CN}_n^{[t]}.
 $$
 
\begin{theorem} \label{th: wt N=N(t,n-1)}
Let $0\leq t\leq n-1$.
There  is an equality of closed formal subschemes of $\CN_n^{[n-1]}\times\CN_{n+1}^{[t]}$,
$$\wh{\CN}^{[t]}_n={\CN}^{[t, n-1]}_n.$$  
\end{theorem}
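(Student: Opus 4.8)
The plan is to mimic exactly the proof of Theorem~\ref{th:wh}, which handled the even case, the only change being that the exotic-smooth divisor on $\CN_{n+1}^{[n]}$ is replaced by the one on $\CN_{n+1}^{[n-1]}$. First I would exhibit the cartesian diagram
\begin{equation*}
\begin{aligned}
  \xymatrix{\wh{\CN}^{[t]}_n \ar[r] \ar[d] \ar@{}[rd]|{\square}  & \CN^{[n-1,t]}_{n+1}\ar[d]
  \\ \CZ(u)^{[n-1]}_{n+1} \ar[r] &\CN_{n+1}^{[n-1]},}
  \end{aligned}
\end{equation*}
which is just the defining cartesian square \eqref{pi1pi281} after observing that the closed embedding $\CN_n^{[n-1]}\hookrightarrow\CN_{n+1}^{[n-1]}$ coming from the aligned triple $(\BY,\BX,u)$ is precisely the exceptional isomorphism identifying $\CN_n^{[n-1]}$ with the special $\CZ$-divisor $\CZ(u)^{[n-1]}_{n+1}$. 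Here one must note that $n+1$ is even and $t\le n-1 < n+1$, so Theorem~\ref{thm:ss-comparison} (equivalently Theorem~\ref{thm: Z unit}) applies and gives $\CZ(u)^{[n-1]}_{n+1}\simeq\CN_{n,\ep^\flat}^{[n-1]}=\CN_n^{[n-1]}$ without landing in the degenerate ``$n$ odd, $t=n-1$'' exception (that exception is for the $n$ on the RHS, i.e.\ here would require $n-1$ odd, which is false).

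Next, I would invoke Proposition~\ref{prop:deep-exc} with the roles played by the integers $n\rightsquigarrow n+1$, $r\rightsquigarrow n-1$, $t\rightsquigarrow t$: since $0\le t\le n-1\le n+1$ is in the admissible range and we are in the even case where $r=n-1$ is exactly the value singled out in the remark following that proposition (``$r=n-1$ when $n+1$ is even''), the diagram
\begin{equation*}
\begin{aligned}
\xymatrix{
\CN_{n}^{[n-1,t]}\ar@{^(->}[r]\ar@{}[rd]|{\square}\ar[d]	&	\CN_{n+1}^{[n-1,t]}\ar[d]\\
\CN_{n}^{[n-1]}\ar@{^(->}[r]&\CN_{n+1}^{[n-1]}
}
  \end{aligned}
\end{equation*}
is cartesian. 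Comparing this cartesian square with the one above, both realize $\wh{\CN}^{[t]}_n$ as the same fiber product $\CN_{n+1}^{[n-1,t]}\times_{\CN_{n+1}^{[n-1]}}\CN_n^{[n-1]}$ inside $\CN_n^{[n-1]}\times\CN_{n+1}^{[t]}$, and the universal property of fiber products yields the asserted equality $\wh{\CN}^{[t]}_n=\CN_n^{[n-1,t]}$ as closed formal subschemes. The identification is compatible with the projections to $\CN_n^{[n-1]}$ and to $\CN_{n+1}^{[t]}$, as one checks directly on the moduli descriptions (the lift $Y^{[n-1]}\times\ov{\CE}\to X^{[t]}$ splits off $\ov{\CE}$ by the standard splitting procedure used in the proof of Proposition~\ref{prop:deep-exc}, producing the object $Y^{[n-1]}\to Y^{[t]}$ of $\CN_n^{[n-1,t]}$).

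The one point requiring genuine care — and the main obstacle — is checking that the closed embedding $\CN_n^{[n-1]}\hookrightarrow\CN_{n+1}^{[n-1]}$ furnished by the aligned triple really does coincide with the exceptional $\CZ$-divisor embedding of Theorem~\ref{thm: Z unit}, so that the hypotheses of Proposition~\ref{prop:deep-exc} (which is phrased in terms of $\CZ(u)$) are literally applicable; this is the content of the discussion in \S\ref{sec:emb-RZ} around \eqref{equ:embd-RZ}, where one uses that the aligned-triple embedding and the $\CZ(u)$-embedding both arise from the same splitting $\BX\simeq\BY\times\ov{\BE}$, $\CN_n^{[n-1]}=\CN_{n,\ep^\flat}^{[n-1]}$, $\CN_{n+1}^{[n-1]}=\CN_{n+1,\ep}^{[n-1]}$. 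Once that compatibility is in hand, everything else is the formal manipulation of cartesian squares above, identical to the proof of Theorem~\ref{th:wh}. As in that case, we obtain as an immediate corollary that $\wh{\CN}^{[t]}_n$ is flat over $\Spf O_{\breve F}$, since $\CN_n^{[n-1,t]}$ is flat by \cite{Luo24}.
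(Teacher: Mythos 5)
Your argument is exactly the paper's: identify the defining fiber square of $\wh{\CN}^{[t]}_n$ with the fiber product of $\CN_{n+1}^{[n-1,t]}\to\CN_{n+1}^{[n-1]}$ over the exceptional divisor $\CZ(u)^{[n-1]}\simeq\CN_n^{[n-1]}$ (Theorem \ref{thm: Z unit}), then apply Proposition \ref{prop:deep-exc} with $r=n-1$. The extra checks you flag (that the aligned-triple embedding is the $\CZ$-divisor embedding, and that the $t=n-1$ exception is avoided because the ambient dimension $n+1$ is even) are correct and only make explicit what the paper leaves implicit.
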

\begin{proof}
Indeed, we have a cartesian diagram 
 \begin{equation*}
 \begin{aligned}
  \xymatrix{\wh{\CN}^{[t]}_n \ar[r] \ar[d] \ar@{}[rd]|{\square}  & \CN^{[t,n-1]}_{n+1}\ar[d] 
  \\ \CZ(u)^{[ n-1]} \ar[r] &\CN_{n+1}^{[n-1]}.}
  \end{aligned}
\end{equation*}
By Theorem \ref{thm: Z unit}, we may identify $\CZ(u)^{[n-1]}$ with $\CN_n^{[n-1]}$. Now we apply  Proposition \ref{prop:deep-exc}. 
\end{proof}
\begin{corollary}
The space $\wh{\CN}^{[t]}_n$ is flat.\qed
\end{corollary}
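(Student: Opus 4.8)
The plan is to read off flatness from the moduli-theoretic identification already obtained in Theorem~\ref{th: wt N=N(t,n-1)}, rather than arguing with the defining cartesian square directly. That theorem shows that, as a closed formal subscheme of $\CN_n^{[n-1]}\times\CN_{n+1}^{[t]}$, the space $\wh{\CN}^{[t]}_n$ coincides with $\CN^{[t,n-1]}_n$, i.e.\ with the parahoric Rapoport--Zink space $\CN^{[n-1,t]}_{n,\ep^\flat}$ attached to $\U(W_1^\flat)$ (with $\ep^\flat=\ep(\BY)$) and the chain of vertex lattices of types $n-1$ and $t$. Since flatness over $\Spf O_{\breve F}$ is an absolute property of the formal scheme, independent of any embedding into a product, it then suffices to invoke the fact recalled in \S\ref{sec:RZ-deeper}: the spaces $\CN_{n,\ep}^{[s,t]}$, while in general not regular, are always $O_{\breve F}$-flat by \cite{Luo24}. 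This yields the corollary at once, so the proof is essentially a one-line citation once Theorem~\ref{th: wt N=N(t,n-1)} is in place.

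In the write-up I would also indicate why one cannot argue directly from the cartesian square \eqref{pi1pi281}: a fiber product of $O_{\breve F}$-flat formal schemes over a flat base need not be flat, so the flatness of $\CN_n^{[n-1]}$, $\CN_{n+1}^{[n-1,t]}$ and $\CN_{n+1}^{[n-1]}$ carries no information on its own. The substance is entirely absorbed into Theorem~\ref{th: wt N=N(t,n-1)}, which itself rests on the exceptional isomorphism $\CZ(u)^{[n-1]}\simeq\CN_n^{[n-1]}$ of Theorem~\ref{thm: Z unit} together with the cartesian-ness of the diagram in Proposition~\ref{prop:deep-exc}; these identify the fiber product with a space having no \emph{excess} — it is exactly the parahoric RZ space, whose flatness is already in the literature. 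Thus there is no genuine obstacle here beyond having those two inputs available; all the work has been pushed into them.
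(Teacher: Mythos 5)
Your argument is exactly the paper's intended one: the corollary is stated with an immediate \qed because Theorem \ref{th: wt N=N(t,n-1)} identifies $\wh{\CN}^{[t]}_n$ with the parahoric RZ space $\CN^{[t,n-1]}_n$, whose flatness is recalled in \S\ref{sec:RZ-deeper} from \cite{Luo24}. Your additional remark that the cartesian square alone would not suffice is correct but not needed for the proof.
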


   \subsection{Lattice models for the $\CZ$-divisors}
 \label{ss: Lat(n-1,t) Z}
 
  We continue to assume $t\leq n-1$. Let $\Lambda^\flat_{0}$ be the lattice of type $n-1$ in $W^\flat_0$. Fix  a special vector $u_0\in W_0$ of the same unit norm as $u$. Then   $\Lambda_0=\Lambda^\flat_{0}\obot \langle u_0 \rangle \in \Ver^{n-1}(W_0)$.  Denote by  $K_{n+1}^{[n]}$ (resp. $K_n^{[n-1]}$) the stabilizer of $\Lambda_0$ (resp.  $\Lambda^\flat_{0}$).
 We also fix a lattice   $\Lambda\in \Ver^t(W_0)$ such  that   $\Lambda_0\subset\Lambda$. Then $u_0\in \Lambda$ and hence $\Lambda= \Lambda^\flat\obot\pair{u_0}$ for a lattice $\Lambda^\flat \in \Ver^{t}(W_0)$. Denote by  $K_{n+1}^{[t]}$ (resp. $K_{n}^{[t]}$)  the stabilizer of $\Lambda$ (resp. $\Lambda^\flat$).

We have the lattice models for the RZ spaces defined earlier, similarly to \S\ref{sec:(n,t)-lat}.
  There are again two ways to define intersection problems: the first one uses the correspondence $\BN_n^{[n-1, t]}$ 
\begin{equation*}
 \begin{aligned}
 \xymatrix{&&\BN^{[n-1,t]}_n \times \BN_{n+1}^{[t]}  \ar[rd]   \ar[ld] &
 \\ \BN_n^{[t]}\ar@{^(->}[r]& \BN_n^{[t]} \times \BN_{n+1}^{[t]} &&  \BN_{n}^{[n-1]} \times \BN_{n+1}^{[t]} .
 }
 \end{aligned}
 \end{equation*}
 The second one  is to use the correspondence $\BN^{[n-1,t]}_{n+1}$
  \begin{equation*}
 \begin{aligned}
 \xymatrix{&&\BN^{[n-1]}_n \times \BN_{n+1}^{[n-1,t]}  \ar[rd]   \ar[ld] &
 \\ \BN_n^{[n-1]}\ar@{^(->}[r]& \BN_n^{[n-1]} \times \BN_{n+1}^{[n-1]} &&  \BN_{n}^{[n-1]} \times \BN_{n+1}^{[t]} .
 }
 \end{aligned}
 \end{equation*}
   The first one leads to ${\BN}^{[t, n-1]}_n$, considered as a subset of $ \BN_{n}^{[n]} \times \BN_{n+1}^{[t]}$; the second leads to  
the cartesian product
  \begin{equation*}
 \begin{aligned}
 \xymatrix{
 \wh{\BN}_n^{[t]}\ar@{}[rd]|{\square}\ar[d]_{\pi_1}\ar@{^(->}[r]
 &\BN_{n+1}^{[n-1,t]}\ar[d]\\
 \BN_n^{[n-1]}\ar@{^(->}[r]&\BN_{n+1}^{[n-1]}.}
 \end{aligned}
 \end{equation*}

  The argument in \S\ref{sec:(n,t)-lat} applies verbatim and we only record the test functions and results and omit the details of the proof.
 We introduce the
bi-$K_n^{[t]}$-invariant Hecke function,
   \begin{equation}\label{phih}
\varphi^{[t,n-1]}_{n}:=\vol(K^{[n-1]}_n)^{-1} {\bf 1}_{K^{[t]}_nK^{[n-1]}_n}\ast {\bf 1}_{K^{[n-1]}_n K^{[t]}_n} .
\end{equation}
Then we have 
$$
\Orb(g,\vol( K_n^{[t]})^{-2} \varphi^{[t,n-1]}_{n}\otimes{\bf 1}_{ K_{n+1}^{[t]} } )= \#(\BN^{[n-1,t]}_n \cap g \BN^{[n-1,t]}_n).
$$
We define the bi-$K_{n+1}^{[n-1]}$-invariant Hecke function 
   \begin{equation*}
\varphi^{[n-1,t]}_{n+1}:=\vol(K^{[t]}_{n+1})^{-1} {\bf 1}_{K^{[n-1]}_{n+1}K^{[t]}_{n+1}}\ast {\bf 1}_{K^{[t]}_{n+1}K^{[n-1]}_{n+1}}
\end{equation*}
and we obtain
 $$
\Orb(g,\vol(K_n^{[n-1]})^{-2} {\bf 1}_{K_n^{[n-1]}}\otimes \varphi^{[n-1,t]}_{n+1})= \#(\wh\BN^{[t]}_n \cap g \wh\BN^{[t]}_n).
$$

\begin{lemma}
\label{lem: lat (n-1,t)}  
Let $0\leq t\leq n-1$ be even.
 \begin{altenumerate}
 \item $\wh{\BN}^{[t]}_n={\BN}^{[t, n-1]}_n$ (viewed as subsets in $ \BN_{n}^{[n]} \times \BN_{n+1}^{[t]}$).
 \item
 $K_{n}^{[n-1]} K_{n+1}^{[t]}=K_{n+1}^{[n-1]} K_{n+1}^{[t]}$.

 \item
 $
  \vol( K_n^{[n-1,t]})^{-2}{\bf 1}_{ K_n^{[n-1]}} \otimes{\bf 1}_{ K_{n+1}^{[t]} }\sim  \vol( K_n^{[t]})^{-2}  \varphi^{[t,n-1]}_{n}\otimes{\bf 1}_{ K_{n+1}^{[t]} }\sim \vol(K_n^{[n-1]})^{-2} {\bf 1}_{K_n^{[n-1]}}\otimes \varphi^{[n-1,t]}_{n+1}.
  $
 \end{altenumerate}
 
 \end{lemma}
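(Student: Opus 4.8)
The statement is the analogue of Lemma \ref{lem: lat (n,t)} with the role of the $\pi$-modular level $[n]$ (even $n$) replaced by the almost $\pi$-modular level $[n-1]$ (odd $n$), and the proof should run along exactly the same lines; the paper itself signals this when it says ``the argument in \S\ref{sec:(n,t)-lat} applies verbatim.'' So the plan is to reproduce that argument, being careful at the two places where the odd/almost $\pi$-modular situation behaves slightly differently.

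\smallskip

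\emph{Part (i).} I would write out $\wh\BN^{[t]}_n$ explicitly as the fiber product, i.e.\ as the set of triples $(\Lambda^\flat,\Lambda_1,\Lambda)\in {\rm Vert}^{n-1}(W_0^\flat)\times {\rm Vert}^t(W_0)\times {\rm Vert}^{n-1}(W_0)$ with $\Lambda=\Lambda^\flat\oplus\langle u_0\rangle\subset\Lambda_1$. The key observation, just as in the proof of Lemma \ref{lem: lat (n,t)}(i), is that $u_0$ has unit norm, so $u_0\in\Lambda\subset\Lambda_1$ forces $\Lambda_1=\Lambda_1^\flat\oplus\langle u_0\rangle$ with $\Lambda_1^\flat\in{\rm Vert}^t(W_0^\flat)$ and $\Lambda^\flat\subset\Lambda_1^\flat$; hence the datum is the same as a chain $\Lambda^\flat\subset\Lambda_1^\flat$ of types $(n-1,t)$ in $W_0^\flat$, which is precisely $\BN^{[t,n-1]}_n$. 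This is a purely lattice-theoretic splitting identical to the even case; nothing about the parity of $n$ enters.

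\smallskip

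\emph{Part (ii).} I would argue that $K_n^{[n-1]}K_{n+1}^{[t]}\supseteq$ and $\subseteq$ its partner are both equalities by a coset count. Using the bijections $K_{n+1}^{[n-1]}K_{n+1}^{[t]}/K_{n+1}^{[t]}\simeq K_{n+1}^{[n-1]}/K_{n+1}^{[n-1,t]}$ and $K_n^{[n-1]}K_{n+1}^{[t]}/K_{n+1}^{[t]}\simeq K_n^{[n-1]}/K_n^{[n-1,t]}$ (the latter needs $K_n^{[n-1]}\cap K_{n+1}^{[t]}=K_n^{[n-1,t]}$, which holds for our choice of lattices since $\Lambda_0=\Lambda_0^\flat\oplus\langle u_0\rangle$, $\Lambda=\Lambda^\flat\oplus\langle u_0\rangle$), one identifies each quotient with the set of isotropic subspaces of dimension $\tfrac{(n-1)-t}{2}$ in $\Lambda^\vee/\Lambda$, resp.\ in $\Lambda_0^\vee/\Lambda_0$, for the induced (alternating) pairing. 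Since $\Lambda_0=\Lambda_0^\flat\obot\langle u_0\rangle$ with $u_0$ of unit norm, $\langle u_0\rangle$ is unimodular and $\Lambda^\vee/\Lambda\simeq \Lambda_0^\vee/\Lambda_0$ as spaces with pairing, so the two counts agree and the inclusion is an equality. Here I should pay attention to the Kottwitz-map subtlety: for odd $n$ the groups $K_n^{[n-1]}$ etc.\ are \emph{not} parahoric and differ from their neutral components (Proposition \ref{prop:parahoric}(ii)), but the stabilizer groups $K_n^{[\cdot]}$ used in the Hecke algebra and the lattice-counting statement are the full stabilizers, so the coset-counting identity is unaffected — this is the one point where I would write an extra sentence rather than copy verbatim.

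\smallskip

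\emph{Part (iii).} I would apply the smoothing identity \eqref{eq: eM}, $\Orb(g,f)=\Orb(g,e_{\Delta(M)}\ast f\ast e_{\Delta(M)})$, to $f={\bf 1}_{K_n^{[n-1]}}\otimes {\bf 1}_{K_{n+1}^{[t]}}$ with $M=K_n^{[t]}$ to get the equivalence with $\vol(K_n^{[t]})^{-2}\varphi^{[t,n-1]}_n\otimes{\bf 1}_{K_{n+1}^{[t]}}$, using bi-$K_n^{[t]}$-invariance of ${\bf 1}_{K_{n+1}^{[t]}}$, the idempotent relation ${\bf 1}_{K_n^{[n-1]}}\ast{\bf 1}_{K_n^{[n-1]}}=\vol(K_n^{[n-1]}){\bf 1}_{K_n^{[n-1]}}$, and the volume bookkeeping $e_{K_n^{[t]}}\ast{\bf 1}_{K_n^{[n-1]}}\ast{\bf 1}_{K_n^{[n-1]}}\ast e_{K_n^{[t]}}=\vol(K_n^{[t]})^{-2}\vol(K_n^{[n-1,t]})^2\,{\bf 1}_{K_n^{[t]}K_n^{[n-1]}}\ast{\bf 1}_{K_n^{[n-1]}K_n^{[t]}}$, together with $\vol(K_n^{[n-1,t]})^{-2}\vol(K_n^{[n-1]})^{-1}$ matching the normalization of $\varphi^{[t,n-1]}_n$ in \eqref{phih}. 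Then I would apply \eqref{eq: eM} again with $M=K_n^{[n-1]}$, pushing the smoothing onto the $W_0$-factor: this produces $\vol(K_n^{[n-1]})^{-2}\vol(K_n^{[n-1,t]})^2\,{\bf 1}_{K_n^{[n-1]}}\otimes\big({\bf 1}_{K_n^{[n-1]}K_{n+1}^{[t]}}\ast{\bf 1}_{K_{n+1}^{[t]}K_n^{[n-1]}}\big)$, at which point part (ii) lets me replace $K_n^{[n-1]}$ by $K_{n+1}^{[n-1]}$ inside the convolution, yielding $\varphi^{[n-1,t]}_{n+1}$ up to the same volume factor. This is the one step requiring (ii) as input, and it is otherwise a transcription of the even-$n$ computation.

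\smallskip

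\textbf{Main obstacle.} There is no genuinely hard step; the whole lemma is a formal consequence of the machinery already in place. The only thing to watch — and where I would be most careful — is the non-parahoric nature of the level subgroups when $n$ is odd (Proposition \ref{prop:parahoric}(ii)): one must check that nothing in the coset bijections or the convolution identities of \eqref{eq: eM} secretly used that the relevant compact open subgroups were parahoric or equal to their neutral components. They do not — \eqref{eq: eM} is valid for \emph{any} compact open subgroup, and the lattice-stabilizer interpretation of the double cosets goes through verbatim — so the verbatim transcription from \S\ref{sec:(n,t)-lat} is legitimate, and I would simply flag this once at the start of the proof of (iii).
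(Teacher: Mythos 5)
Your proposal is correct and follows exactly the route the paper takes: the paper's own proof simply states that the argument of Lemma \ref{lem: lat (n,t)} applies verbatim and then sketches part (iii) via the two applications of \eqref{eq: eM} (first with $M=K_n^{[t]}$, then with $M=K_n^{[n-1]}$, invoking part (ii) to rewrite ${\bf 1}_{K_n^{[n-1]}K_{n+1}^{[t]}}$ as ${\bf 1}_{K_{n+1}^{[n-1]}K_{n+1}^{[t]}}$), which is precisely your computation. Your extra remark that \eqref{eq: eM} and the coset bijections are valid for arbitrary compact open subgroups, parahoric or not, is a reasonable point of care but introduces nothing beyond what the paper implicitly relies on.
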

 \begin{proof}
 The proof of Lemma
\ref{lem: lat (n,t)} still applies and we only sketch the proof of part (iii). 
 By the bi-$ K_n^{[t]}$-invariance of $ {\bf 1}_{ K_{n+1}^{[t]} }$  and  \eqref{eq: eM} we have 
\begin{align*}
 {\bf 1}_{ K_n^{[n-1]}} \otimes {\bf 1}_{ K_{n+1}^{[t]} }
 \sim & ( e_{ K_n^{[t]}}\ast {\bf 1}_{ K_n^{[n-1]}}\ast  e_{ K_n^{[t]}}) \otimes {\bf 1}_{ K_{n+1}^{[t]} } \\ 
 =& ( \vol( K_n^{[t]})^{-2}  \vol( K_n^{[t,n-1]})^2  \varphi_n^{[t,n-1]} \otimes {\bf 1}_{ K_{n+1}^{[t]} } .
 \end{align*}
 Similarly, by the bi-$ K_n^{[n-1]}$-invariance of $ {\bf 1}_{ K_{n}^{[n-1]} }$  and  \eqref{eq: eM} we have 
\begin{align*}
 {\bf 1}_{ K_n^{[n-1]}} \otimes {\bf 1}_{ K_{n+1}^{[t]} }
 \sim &  {\bf 1}_{ K_n^{[n-1]}} \otimes( e_{ K_n^{[n-1]}}\ast {\bf 1}_{ K_{n+1}^{[t]} }\ast  e_{ K_n^{[n-1]}}) \\ 
 =& ( \vol( K_n^{[n-1]})^{-2}  \vol( K_n^{[t,n-1]})^2   {\bf 1}_{ K_n^{[n-1]}}  \otimes  \varphi^{[n-1,t]}_{n+1}.\qedhere
 \end{align*}
 \end{proof}

   \subsection{Intersection numbers on the splitting model for $\CZ$-divisors}

We continue to assume $0\leq t\leq n-1$. Let $\wh\CN^{[t], \spt}_{n}={\CN}^{[t, n-1], \spt}_n$ be the flat closure of  the base change of $\wh\CN^{[t]}_{n}={\CN}^{[t, n-1]}_n$ along the morphism $\CN_n^{[n-1]}\times\CN_{n+1,\ep}^{[t],\spt}\to \CN_n^{[n-1]}\times\CN_{n+1,\ep}^{[t]}$. Then $\wh\CN^{[t], \spt}_{n}={\CN}^{[t, n-1], \spt}_n$ is a closed formal subscheme of $\CN_n^{[n-1]}\times\CN_{n+1,\ep}^{[t],\spt}$, flat over $\Spf O_{\breve{F}}$ of relative dimension $n-1$.  We have the commutative diagram
  \begin{equation*}
 \begin{aligned}
  \xymatrix{
\wh\CN^{[t], \spt}_{n} \ar@{^(->}[r] \ar[d]  &\CN_n^{[n-1]}\times\CN_{n+1}^{[t],\spt} \ar[d]   \\
\wh\CN^{[t]}_{n}  \ar@{^(->}[r] & \CN^{[n-1]}_{n}\times\CN^{[t]}_{n+1} . 
}
\end{aligned}
\end{equation*}
Now the product $\CN_{n}^{[n-1]} \times \CN_{n+1,\ep}^{[t],\spt}$ is regular since $ \CN_{n}^{[n-1]}$ is formally smooth over $\Spf\OFb$ and $  \CN_{n+1,\ep}^{[t],\spt}$ is regular. 
We form the arithmetic intersection numbers 
\begin{equation}
  \label{eq:Int183}
  \left\langle\wh\CN^{[t], \spt}_{n}, g\wh\CN^{[t], \spt}_{n} \right\rangle_{ \CN_{n}^{[n-1]} \times \CN_{n+1,\ep}^{[t],\spt} }:=\chi(  \CN_{n}^{[n-1]} \times \CN_{n+1,\ep}^{[t],\spt} , \wh\CN^{[t], \spt}_{n} \cap^\BL g\wh\CN^{[t], \spt}_{n}),
\end{equation}
for regular semisimple $g\in G_{W_1}(F_0)$. 

\subsection{The AT conjecture via $\CZ$-divisors}
We now come to the AT conjecture. 
 \begin{conjecture}\label{conj (n odd,t<n) Z}
 Let $n=2m+1$ be odd, and let $t$ be even with $0\leq t\leq n-1$.
 There exists $\varphi'\in C_c^\infty(G')$ with transfer $(\vol( K_n^{[n-1,t]})^{-2} {\bf 1}_{K_n^{[n-1]}\times K^{[t]}_{n+1}},0)\in C_c^\infty(G_{W_0})\times C_c^\infty(G_{W_{1}})$ such that, if $\gamma\in G'(F_0)_\rs$ is matched with  $g\in G_{W_1}(F_0)_\rs$, then
 \begin{equation*}
  \left\langle\wh\CN^{[t], \spt}_{n}, g\wh\CN^{[t], \spt}_{n} \right\rangle_{ \CN_{n}^{[n-1]} \times \CN_{n+1,\ep}^{[t],\spt} }\cdot\log q=- \del\big(\gamma,  \varphi' \big).
\end{equation*}
\end{conjecture}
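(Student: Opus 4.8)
The plan is to establish Conjecture~\ref{conj (n odd,t<n) Z} in stages, with only the base case $n=1$ accessible in full at present; for larger $n$ I would lay out the global strategy and flag where it stalls. Much of the reduction is already in place: by Theorem~\ref{th: wt N=N(t,n-1)} the cycle $\wh\CN^{[t]}_n$ is the genuine parahoric-level RZ space $\CN^{[t,n-1]}_n$, so $\wh\CN^{[t],\spt}_n$ is a flat closed formal subscheme of the \emph{regular} ambient space $\CN_n^{[n-1]}\times\CN_{n+1,\ep}^{[t],\spt}$ and the arithmetic intersection number $\langle\wh\CN^{[t],\spt}_n,g\wh\CN^{[t],\spt}_n\rangle$ is defined and, for $g$ regular semi-simple, finite. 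By Lemma~\ref{lem: lat (n-1,t)} the test function $\vol(K_n^{[n-1,t]})^{-2}{\bf 1}_{K_n^{[n-1]}\times K^{[t]}_{n+1}}$ is characterized, up to orbital-integral equivalence (harmless for the conjecture by \cite[Prop.~5.14]{RSZ1}), by having weighted orbital integrals equal to the naive set-theoretic intersection numbers on the lattice model $\wh\BN^{[t]}_n=\BN^{[t,n-1]}_n$. So the remaining content is an identity between an arithmetic intersection number on the splitting model and the first derivative at $s=0$ of a weighted orbital integral on the $\GL$-side, for matching regular semi-simple $(\gamma,g)$.

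I would first settle $n=1$, where necessarily $t=0$ and the datum is $\CN_1^{[0]}\hookrightarrow\CN_1^{[0]}\times\CN_{2,\ep}^{[0],\spt}$. Here $\CN_{2,\ep}^{[0],\spt}$ is the Kr\"amer model, a regular formal scheme with semi-stable reduction obtained by blowing up $\CN_{2,\ep}^{[0]}$ at its worst points, and $\wh\CN_1^{[0],\spt}$ is the strict transform of the exceptional special divisor $\CZ(u)^{[0]}\simeq\CN_1^{[0]}$ of Theorem~\ref{thm: Z unit}. Both sides of the conjectured identity are then explicit: the intersection multiplicity of $\wh\CN_1^{[0],\spt}$ with a regular semi-simple translate is computed by deformation theory on the Kr\"amer model (relating, as in Remark~\ref{rmk:small-dim}, the model over $\Spf O_{\breve F}$ to the model over $\Spf O_{\breve F_0}$), and on the analytic side $\varphi'$ is produced by Jacquet--Rallis smooth transfer and $\del(\gamma,\varphi')$ evaluated directly. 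The matching is the finite case analysis of \cite[Thm.~13.2 and Thm.~13.4]{RSZ2}, carried out according to the sign $\ep=\pm1$ and the corresponding explicit semi-stable model (the Drinfeld model, resp.\ the Lubin--Tate model at Iwahori level); thus Conjecture~\ref{conj (n odd,t<n) Z} holds for $n=1$, which is the content of Theorem~\ref{thmn=1-intro}.

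For general $n$ I would pursue the global route developed for the arithmetic fundamental lemma and its transfer variants (\cite{Zha21,MZ,ZZha}, within the global set-up of \cite{RSZ3}). One picks a CM extension realizing the local datum at the ramified place, an integral model $\CM$ of a product of unitary Shimura varieties with level $K$ read off from the test function of \S\ref{ss: Lat(n-1,t) Z}, and the arithmetic diagonal cycle $\wh z_K$; after the appropriate Kr\"amer-type blow-up, $\CM$ is locally uniformized by $\CN_n^{[n-1]}\times\CN_{n+1,\ep}^{[t],\spt}$ along the supersingular locus, and $\wh z_K$ transports to the global incarnation of $\wh\CN^{[t],\spt}_n$. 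One then compares the Gillet--Soul\'e height $\Int(f)$ with the derivative of a Jacquet--Rallis relative trace formula identity on the $\GL$-side; the local terms at the places where everything is unramified are governed by the Jacquet--Rallis fundamental lemma and smooth transfer, and the place above the ramified prime contributes precisely the local intersection number in the conjecture.

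The main obstacle, and the reason this remains a conjecture, is twofold. Geometrically, outside the exotically smooth cases the splitting model is only semi-stable, so $\CO_{\wh\CN^{[t],\spt}_n}\cap^{\BL}g\CO_{\wh\CN^{[t],\spt}_n}$ is a genuine object of the derived category, the intersection number is an Euler characteristic, and even its expected flatness for $t>0$ requires pushing the special-cycle analysis of \cite{HLS1,Luo24} considerably further. Analytically, the derivative of a weighted orbital integral at a ramified place is at present only understood through low-rank computations, so a structural proof would have to wait for an arithmetic relative trace formula comparison adapted both to ramified data and to semi-stable (rather than formally smooth) integral models. I would therefore, as in the text, present $n=1$ as the theorem and leave the general statement as the displayed conjecture.
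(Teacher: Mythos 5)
The statement you are addressing is stated in the paper only as a conjecture: the paper gives no general proof, establishes exactly the $n=1$ (hence $t=0$) case in Theorem \ref{thmn=1-intro} by citing \cite[Thms.~13.2, 13.4]{RSZ2}, and supports the choice of cycle and test function through Theorem \ref{th: wt N=N(t,n-1)} and Lemma \ref{lem: lat (n-1,t)} --- which is precisely the structure of your proposal. Your additional sketch of a global relative-trace-formula strategy for general $n$ (and your honest flagging of the geometric and analytic obstructions) goes beyond what the paper contains but is consistent with its framing, so there is nothing to fault: you take essentially the same approach as the paper.
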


\begin{remark}
By Lemma \ref{lem: lat (n-1,t)}  part (iii), one could replace the function $\vol( K_n^{[n-1,t]})^{-2}  {\bf 1}_{K_n^{[n-1]}\times K^{[t]}_{n+1}}$ by either of the other two.
\end{remark}

\subsection{The exotic case $t=n+1$} 
\label{ss: exotic t=n+1}
When $t=n+1$, we do not have an aligned triple of type $t+1$. Still, we may formally extrapolate the previous definitions to the case $t=n+1$. For this we fix an aligned triple $(\BY, \BX, u)$ of dimension $n+1$ and type $n-1$. This defines an embedding of RZ spaces $\CN_{n, \ep^\flat}^{[n-1]}\hookrightarrow\CN_{n+1, \ep}^{[n-1]}$. We can form the analogue of the ``big diagram'' \eqref{pi1pi281}. Note, however,  that the space $\CN_{n+1}^{[n+1]}$ is only defined when $\ep(\BX_{n+1}^{[n+1]})=1$, which we assume in this subsection. Define $ \CN_n^{[n-1],\circ}$ by the  cartesian product 
  \begin{equation}\label{eq: N circ}
 \begin{aligned}
 \xymatrix{
 \CN_n^{[n-1],\circ}\ar@{}[rd]|{\square}\ar[d]\ar@{^(->}[r]&\CN_{n+1}^{[n-1,n+1]}\ar[d]&\\
 \CN_n^{[n-1]}\ar@{^(->}[r]&\CN_{n+1}^{[n-1]},&}
 \end{aligned}
 \end{equation}
 comp. Theorem \ref{thm:Y unit}. Then  $ \CN_n^{[n-1],\circ}$ is the disjoint union of two copies of $ \CN_n^{[n-1]}$, cf. \cite[Prop. 6.4]{RSZ2}. It is the analogue of $\wh\CN^{[t]}_{n} $ in this context. Note that  $\CN_{n+1}^{[n+1]}=\CN_{n+1}^{[n+1], \spt}$ (recall that  $\CN_{n+1}^{[n+1]}$ does not contain worst points, cf. \S \ref{sec:geo-RZ}). Hence $ \CN_n^{[n-1],\circ}$ is equal to the splitting cycle in this context. This leads us to consider the intersection number which is the analogue of \eqref{eq:Int183} in this context, 
 \begin{equation}
  \left\langle\CN_n^{[n-1],\circ} , g\CN_n^{[n-1],\circ} \right\rangle_{ \CN_{n}^{[n-1]} \times \CN_{n+1}^{[n+1]} }= \chi(  \CN_{n}^{[n-1]} \times \CN_{n+1}^{[n+1]} , \CN_n^{[n-1],\circ} \cap^\BL g\CN_n^{[n-1],\circ}).
  \end{equation}
 But this intersection number coincides precisely with the one occurring in \cite[\S 12]{RSZ2}, and  the analogue of Conjecture \ref{conj (n odd,t<n) Z}  is identical  to the conjecture in  \cite[\S 12]{RSZ2}. We will encounter the conjecture of \cite{RSZ2} again in the context of $\CY$-divisors (when $t= n+1$ (Remark \ref{rmk:MA-Y-comp})). A closely related conjecture arises  in the context of $\CZ$-divisors (ATC of type $(n-1, n+1 )$ in the sense of \S \ref{ss: case (n odd, n+1)}, comp. \S\ref{sec:MA-Z-comp}.

\subsection{The naive version via the $\CY$-divisor}\label{sec:naive-Y-cycle}
In \eqref{eq:type (n-1,t) big} we used the graph of the embedding exhibiting an exceptional $\CZ$-divisor as an RZ space. We can replace it by the graph of an embedding of an exceptional $\CY$-divisor, namely $ \CN_n^{[n-1],\circ}\incl \CN_{n+1}^{[n+1]}$. We fix an  aligned triple $(\BY, \BX, u)$ of dimension $n+1$ and type $n-1$. We also assume $\ep=1$ so that $\CN_{n+1}^{[n+1]}$ is defined. We have the following correspondence

\begin{equation}\label{eq:type (n-1,t) big Y}
 \begin{aligned}
 \xymatrix{&&\CN^{[n-1],\circ}_n \times \CN_{n+1}^{[n+1,t]}  \ar[rd]   \ar[ld] &
 \\ \CN_n^{[n-1],\circ}\ar@{^(->}[r]& \CN_n^{[n-1],\circ} \times \CN_{n+1}^{[n+1]} &&  \CN_{n}^{[n-1],\circ} \times \CN_{n+1}^{[t]} .
 }
 \end{aligned}
 \end{equation}
Then we are led to define   $ \wt\CM^{[t]}_n$ as the fiber product
 \begin{equation}\label{eq:def Mt}
  \begin{aligned}
  \xymatrix{
   \wt\CM^{[t]}_n \ar@{^(->}[r] \ar@{^(->}[r] \ar[d] \ar@{}[rd]|{\square}  &\CN^{[n+1,t]}_{n+1} \ar[d] 
  \\ \CN^{[n-1],\circ}_n \ar@{^(->}[r] & \CN^{[n+1]}_{n+1}.}
  \end{aligned}
\end{equation}

\begin{conjecture}\label{conjMflat}
$\wt\CM^{[t]}_n$ is flat. 
\end{conjecture}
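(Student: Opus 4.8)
The plan is to identify $\wt\CM^{[t]}_n$ with a disjoint union of deeper‑parahoric Rapoport--Zink spaces, which are flat over $\Spf \OFb$ by \cite{Luo24}. The starting point is to unfold the defining fiber product \eqref{eq:def Mt} using the presentation \eqref{eq: N circ} of the $\CY$‑cycle $\CN_n^{[n-1],\circ}\simeq\CY(u)^{[n+1]}_{n+1}$. Combining \eqref{eq: N circ} with \eqref{eq:def Mt} and comparing moduli problems — a chain $X^{[n-1]}\to X^{[n+1]}$ together with an isogeny $X^{[n+1]}\to X^{[t]}$ glued over $\CN_{n+1}^{[n+1]}$ is exactly the datum of a single chain $X^{[n-1]}\to X^{[n+1]}\to X^{[t]}$ — one obtains an identification of formal schemes
\begin{equation*}
	\wt\CM^{[t]}_n=\CN_n^{[n-1]}\times_{\CN_{n+1}^{[n-1]}}\CN_{n+1}^{[n-1,n+1,t]},
\end{equation*}
where $\CN_{n+1}^{[n-1,n+1,t]}$ is the three‑index parahoric Rapoport--Zink space attached to a lattice chain of types $n-1\subset n+1\subset t$ in a hermitian space of dimension $n+1$, and $\CN_n^{[n-1]}\hookrightarrow \CN_{n+1}^{[n-1]}$ is the exceptional $\CZ$‑divisor embedding of Theorem \ref{thm: Z unit} (cf. \S\ref{sec:emb-RZ}). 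This step is essentially formal: each identification is an equality of functors of points, so no flatness input is needed. In the degenerate cases $t=n-1$ and $t=n+1$ the three‑index space collapses, $\wt\CM^{[t]}_n=\CN_n^{[n-1],\circ}$ is a disjoint union of two copies of the formally smooth space $\CN_n^{[n-1]}$, and flatness is immediate, cf. \S\ref{ss: exotic t=n+1} and \cite[Prop.~6.4]{RSZ2}.

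Next I would use that the projection $\CN_{n+1}^{[n-1,n+1,t]}\to \CN_{n+1}^{[n-1,t]}$ is a trivial double covering, compatibly with the projections down to $\CN_{n+1}^{[n-1]}$. Set‑theoretically this follows from the computation of the connected components of the associated affine Deligne--Lusztig varieties in the style of Proposition \ref{prop:ADLV-comp} (via \cite{He-Zhou}), and the two‑index version is \cite[Prop.~9.12]{RSZ2}. Granting this, base change along the closed immersion $\CN_n^{[n-1]}\hookrightarrow \CN_{n+1}^{[n-1]}$ gives $\wt\CM^{[t]}_n\simeq Z\amalg Z$ with $Z:=\CN_n^{[n-1]}\times_{\CN_{n+1}^{[n-1]}}\CN_{n+1}^{[n-1,t]}$. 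Finally, Proposition \ref{prop:deep-exc}, applied with $r=n-1$ (using the aligned triple $(\BY,\BX,u)$ of dimension $n+1$ and type $t\le n-1$ together with the isogeny $\BY^{[n-1]}\to \BY^{[t]}$), says precisely that the square defining $Z$ is cartesian, so $Z=\CN_n^{[n-1,t]}$. Hence $\wt\CM^{[t]}_n\simeq \CN_n^{[n-1,t]}\amalg\CN_n^{[n-1,t]}$, which is flat by \cite{Luo24}.

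The step I expect to be the main obstacle is the second one: upgrading ``$\CN_{n+1}^{[n-1,n+1,t]}\to \CN_{n+1}^{[n-1,t]}$ is a trivial double covering'' from a statement about underlying reduced schemes (which is what the affine Deligne--Lusztig input gives) to a statement about formal schemes, and verifying its compatibility with the further projection to $\CN_{n+1}^{[n-1]}$, uniformly in the even integer $t$. A natural route is a local‑model analysis of $\bM_{n+1}^{[n-1,n+1,t]}$ in the spirit of \S\ref{sec:exc-iso-proof}, or a first‑order deformation comparison along the lines of \cite[\S 6, \S 9]{RSZ2}; either way this seems to be where the genuine work lies.

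An alternative is to prove flatness first for the splitting cycle. There $\CN_n^{[n-1],\circ}\simeq \CY(u)_{n+1}^{[n+1]}=\CZ(\pi u)_{n+1}^{[n+1]}$ becomes a Cartier divisor after the worst‑point blow‑up (Theorem \ref{thm: Z spt unit}(i)), so $\wt\CM^{[t],\spt}_n$ is an effective Cartier divisor in a regular formal scheme with non‑empty generic fiber, hence flat. However, descending flatness back from $\wt\CM^{[t],\spt}_n$ to $\wt\CM^{[t]}_n$ along the blow‑up map requires controlling the exceptional locus, and therefore does not obviously bypass the difficulty above.
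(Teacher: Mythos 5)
Your first step—the ``essentially formal'' identification $\wt\CM^{[t]}_n=\CN_n^{[n-1]}\times_{\CN_{n+1}^{[n-1]}}\CN_{n+1}^{[n-1,n+1,t]}$—is where the argument breaks, and everything after it depends on it. Unfolding \eqref{eq:def Mt} and \eqref{eq: N circ} gives $\wt\CM^{[t]}_n=\CN_n^{[n-1]}\times_{\CN_{n+1}^{[n-1]}}\bigl(\CN_{n+1}^{[n-1,n+1]}\times_{\CN_{n+1}^{[n+1]}}\CN_{n+1}^{[n+1,t]}\bigr)$, but the inner fiber product is \emph{not} the three-index space: it records liftings of $X^{[n+1]}\to X^{[n-1]}$ and of the composite $X^{[n+1]}\to X^{[t]}$, and this does not force the intermediate quasi-isogeny $X^{[n-1]}\to X^{[t]}$ to be an isogeny; on Dieudonn\'e or vertex lattices, $\Lambda_{n+1}\subset\Lambda_{n-1}$ and $\Lambda_{n+1}\subset\Lambda_t$ do not imply $\Lambda_{n-1}\subset\Lambda_t$. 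In fact, since $\CN_n^{[n-1]}\times_{\CN_{n+1}^{[n-1]}}\CN_{n+1}^{[n+1,n-1,t]}\simeq\CN_n^{[n-1,t],\circ}$ (Proposition \ref{prop:deep-exc} and the cartesian faces of \eqref{eq:(t,n)-cubic}), your claim amounts to asserting that the closed embedding \eqref{relNcirctilde}, $\CN_n^{[n-1,t],\circ}\hookrightarrow\wt\CM_n^{[t]}$, is an isomorphism. Remark \ref{rmk:(n-1,t)-Y-smaller} shows it is not: on lattice models $\BN_n^{[n-1,t],\circ}$ is a \emph{proper} subset of $\wt\BM_n^{[t]}$, cut out by the extra condition $\Lambda_{n+1}\subset\Lambda^\flat_{n-1}\obot\langle u\rangle\subset\Lambda_t$, and by Remark \ref{notRZ9} the set $\wt\BM_n^{[t]}$ is not even homogeneous under $K_n^{[n-1],\circ}$, so it cannot be two copies of $\BN_n^{[n-1,t]}$. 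This is also why the paper pairs $\wt\CM_n^{[t]}$ with the Hecke function $\varphi^{[n+1,t]}_{n+1}$ rather than with ${\bf 1}_{K^{[t]}_{n+1}}$, and why it stresses that the large and small correspondences give genuinely different AT conjectures here.

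Consequently the conclusion $\wt\CM^{[t]}_n\simeq\CN_n^{[n-1,t]}\amalg\CN_n^{[n-1,t]}$ is false, already in the one case where the paper verifies the conjecture: for $n=1$, $t=0$ one has $\wt\CM_1^{[0],\pm}=\CY_{1,\pm}+\CY_0$ by \eqref{equ:pull-back}, the union of the canonical divisor with a level-one quasi-canonical divisor, not a single copy of $\CN_1^{[0,0]}=\Spf O_{\breve F}$; your claim for the degenerate case $t=n-1$ fails for the same reason (only $t=n+1$ collapses to $\CN_n^{[n-1],\circ}$). Be aware that the statement is left as a conjecture in the paper, proved there only for $(n,t)=(1,0)$ via the Iwahori-level Lubin--Tate description and miracle flatness in \S\ref{sec:(0,0)-case}; Theorem \ref{sm=big10} concerns the fiber product over $\CN_{n+1}^{[t]}$ (the space $\wh\CN_n^{[t]}$), not over $\CN_{n+1}^{[n+1]}$, and does not apply. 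Your closing observation that flatness of the splitting cycle $\wt\CM^{[t],\spt}_n$ does not descend through the blow-up is correct, but it does not repair the main reduction, so the conjecture remains open on your argument.
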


\begin{lemma}\label{lem:naive-Y-cycle-emb}
The natural map, given by the left vertical arrow in \eqref{eq:def Mt} and the upper horizontal map in \eqref{eq:def Mt} composed with the projection map $\CN^{[n+1,t]}_{n+1}\to \CN^{[n+1]}_{n+1}$,
$$
   \wt\CM^{[t]}_n\rightarrow \CN^{[n-1],\circ}_n\times \CN^{[t]}_{n+1}
$$ 
 is a closed immersion.
 \end{lemma}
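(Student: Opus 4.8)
The plan is to realize the morphism as a base change of a closed immersion. The key preliminary observation is that the natural morphism
\[
j\colon \CN^{[n+1,t]}_{n+1}\longrightarrow \CN^{[n+1]}_{n+1}\times \CN^{[t]}_{n+1},
\]
given by the two projections, is a closed immersion. This follows from the moduli description recalled in \S\ref{sec:RZ-deeper}: an $S$-point of $\CN^{[n+1,t]}_{n+1}$ consists of an $S$-point $(X^{[n+1]},\iota^{[n+1]},\lambda^{[n+1]},\rho^{[n+1]})$ of $\CN^{[n+1]}_{n+1}$, an $S$-point $(X^{[t]},\iota^{[t]},\lambda^{[t]},\rho^{[t]})$ of $\CN^{[t]}_{n+1}$, and a lift $\wt\alpha\colon X^{[n+1]}\to X^{[t]}$ of the quasi-isogeny $(\rho^{[t]})^{-1}\circ\alpha\circ\rho^{[n+1]}$, where $\alpha\colon \BX^{[n+1]}\to\BX^{[t]}$ is the fixed isogeny of framing objects; such a lift is unique if it exists, and the locus where it exists is closed by \cite[Prop. 2.9]{RZ}. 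Hence $\CN^{[n+1,t]}_{n+1}$ is a closed subfunctor of the product, so $j$ is a closed immersion.

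Next I would base change $j$ along the morphism $\CN^{[n-1],\circ}_n\to \CN^{[n+1]}_{n+1}$ arising from the definition \eqref{eq: N circ} of $\CN^{[n-1],\circ}_n$ composed with the projection $\CN^{[n-1,n+1]}_{n+1}\to \CN^{[n+1]}_{n+1}$ (equivalently, the embedding of $\CN^{[n-1],\circ}_n$ as the $\CY$-divisor $\CY(u)^{[n+1]}_{n+1}\subset\CN^{[n+1]}_{n+1}$, cf. Theorem \ref{thm:Y unit}). Since a base change of a closed immersion is a closed immersion, the induced morphism
\[
\CN^{[n-1],\circ}_n\times_{\CN^{[n+1]}_{n+1}}\CN^{[n+1,t]}_{n+1}\;\hookrightarrow\; \CN^{[n-1],\circ}_n\times_{\CN^{[n+1]}_{n+1}}\bigl(\CN^{[n+1]}_{n+1}\times \CN^{[t]}_{n+1}\bigr)
\]
is a closed immersion. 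By \eqref{eq:def Mt} the source is $\wt\CM^{[t]}_n$, and by the canonical isomorphism $Z\times_X(X\times Y)\simeq Z\times Y$ the target is $\CN^{[n-1],\circ}_n\times \CN^{[t]}_{n+1}$. Tracing through the identifications, the first component of this closed immersion is the projection $\wt\CM^{[t]}_n\to\CN^{[n-1],\circ}_n$, i.e. the left vertical arrow of \eqref{eq:def Mt}, and the second component is the composition of the upper horizontal arrow $\wt\CM^{[t]}_n\hookrightarrow\CN^{[n+1,t]}_{n+1}$ with the projection $\CN^{[n+1,t]}_{n+1}\to\CN^{[t]}_{n+1}$, which is exactly the morphism in the statement.

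The argument is entirely formal once the preliminary observation about $j$ is granted, so I do not expect a genuine obstacle; the observation itself is the $\CY$-divisor analogue of the fact used in the proof that $(\pi_1,\pi_2)\colon\wh\CN^{[t]}_n\to\CN^{[n-1]}_n\times\CN^{[t]}_{n+1}$ is a closed immersion. If one prefers to avoid the fibre-product bookkeeping, one can argue directly on the moduli level: $\wt\CM^{[t]}_n(S)$ is the set of pairs $(z,X^{[t]})\in(\CN^{[n-1],\circ}_n\times\CN^{[t]}_{n+1})(S)$ such that, writing $X^{[n+1]}$ for the image of $z$ in $\CN^{[n+1]}_{n+1}(S)$, the quasi-isogeny $(\rho_{X^{[t]}})^{-1}\circ\alpha\circ\rho_{X^{[n+1]}}$ lifts to an isogeny $X^{[n+1]}\to X^{[t]}$; this is a closed condition by \cite[Prop. 2.9]{RZ}, and the lift, when it exists, is unique, so the full datum of an $S$-point of $\wt\CM^{[t]}_n$ is recovered from the pair $(z,X^{[t]})$.
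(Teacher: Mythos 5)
Your proposal is correct and is essentially the paper's argument: the paper also realizes $\wt\CM^{[t]}_n$ as a closed subfunctor of $\CN^{[n-1],\circ}_n\times\CN^{[t]}_{n+1}$ by the condition that the relevant quasi-isogenies lift to isogenies, which is closed (cf. \cite[Prop. 2.9]{RZ}) with unique lifts, exactly as in your final moduli-level paragraph; your base-change packaging via the closed immersion $\CN^{[n+1,t]}_{n+1}\hookrightarrow\CN^{[n+1]}_{n+1}\times\CN^{[t]}_{n+1}$ is just a formal restatement of this. You also correctly read the second component as the projection to $\CN^{[t]}_{n+1}$ (the statement's ``$\CN^{[n+1,t]}_{n+1}\to\CN^{[n+1]}_{n+1}$'' is a typo), so there is nothing to fix.
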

 \begin{proof}
By \eqref{eq:type (n-1,t) big Y}, the space $\CN_n^{[n-1],\circ}$ is the closed sublocus of $(Y^{[n-1]},X^{[n+1]})\in \CN_n^{[n-1]}\times \CN_{n+1}^{[n+1]}$ where the quasi-isogeny $(\rho_{Y^{[n-1]}}\times\rho_{\ov{\CE}})^{-1}\circ\alpha_{n+1}\circ\rho_{X^{[n+1]}}$ over the special fiber lifts to an isogeny $X^{[n+1]}\to Y^{[n-1]}\times \ov{\CE}$:
\begin{equation*}
\begin{aligned}
\xymatrix{
X^{[n+1]}\ar[d]_{\rho_{X^{[n+1]}}}\ar@{-->}[r]&Y^{[n-1]}\times \ov{\CE}\ar[d]^{\rho_{Y^{[n-1]}}\times \rho_{\ov{\CE}}}\\
\BX^{[n+1]}\ar[r]^-{\alpha_{n+1}}&\BY^{[n-1]}\times \ov{\BE} .
}
\end{aligned}
\end{equation*}

By \eqref{eq:def Mt}, the space $\wt{\CM}_n^{[t]}$ is the closed sublocus of $(Y^{[n-1]},X^{[n+1]},X^{[t]})\in \CN_n^{[n-1]}\times \CN_{n+1}^{[n+1]}\times \CN_{n+1}^{[t]}$  where the quasi-isogenies $(\rho_{Y^{[n-1]}}\times\rho_{\ov{\CE}})^{-1}\circ\alpha_{n+1}\circ\rho_{X^{[n+1]}}$ and $\rho^{-1}_{X^{[t]}}\circ\alpha_{n-1}\circ\alpha_{n+1}\circ\rho_{X^{[n+1]}}$ over the special fiber lift to isogenies $X^{[n+1]}\to Y^{[n-1]}\times\ov{\CE}$ and $X^{[n+1]}\to X^{[t]}$:
\begin{equation*}
\begin{aligned}
\xymatrix{
X^{[n+1]}\ar[d]_{\rho_{X^{[n+1]}}}\ar@{-->}[r]&Y^{[n-1]}\times \ov{\CE}\ar[d]^{\rho_{Y^{[n-1]}}\times \rho_{\ov{\CE}}}\ar@{-->}[r] &X^{[t]}\ar[d]^{\rho_{X^{[t]}}}\\
\BX^{[n+1]}\ar[r]^-{\alpha_{n+1}}&\BY^{[n-1]}\times \ov{\BE}\ar[r]^-{\alpha_{n-1}}&\BX^{[t]}
}
\end{aligned}
\end{equation*}
 By description, we have closed embedddings
\begin{equation*}
    \wt{\CM}_n^{[t]}\hookrightarrow \CN_n^{[n-1],\circ}\times\CN_{n+1}^{[t]}\hookrightarrow \CN_n^{[n-1]}\times \CN_{n+1}^{[n+1]}\times \CN_{n+1}^{[t]}.
\end{equation*}
 \end{proof}
 
Note that there is a non-trivial involution $\sigma$ acting on $\CN^{[n-1],\circ}_n$, induced by an involution $\wt\sigma$ on $\CN_{n+1}^{[n-1, n+1]}$.  Indeed, $\CN_{n+1}^{[n-1, n+1]}$ is the parameter space of tuples $(X, \iota,\lambda, \rho, X', \iota', \lambda',\phi)$ where $\phi: X'\to X$ lifts the given quasi-isogeny $\BX'\to \BX$. However, as shown in \cite[Thm. 9.3]{RSZ2}, given $(X, \iota,\lambda, \rho)\in \CN_{n+1}^{[n-1]}$, there are exactly two ways to complete it into an object of $\CN_{n+1}^{[n-1, n+1]}$. The involution $\wt\sigma$ by definition interchanges these two possibilities. The involution $\sigma$ commutes with the action of $\U({W_1^\flat})(F_0)$. Using the  involution $(\sigma,1)$ on the product $\CN^{[n-1],\circ}_n\times \CN^{[t]}_{n+1}$, we obtain another cycle $(\sigma,1)   \wt\CM^{[t]}_n$.

 There is a closely related construction. Since $\CN^{[n-1],\circ}_n$ itself may be viewed as built from $\CN^{[n-1]}_n$ via a correspondence \eqref{eq: N circ}, we may  interpret the above as a composition of two correspondences. More precisely, we consider the $\CZ$-divisor embedding $ \CN_n^{[n-1]}\to \CN_{n+1}^{[n-1]}$, and its graph $ \CN_n^{[n-1]}\to \CN_n^{[n-1]}\times \CN_{n+1}^{[n-1]}$. We then apply the composition of the following two correspondences:
 {\small
 \begin{equation}\label{eq:type (n-1,t) big Y alt}
 \begin{aligned}
 \xymatrix{&\CN^{[n-1]}_n \times \CN_{n+1}^{[n-1,n+1]}  \ar[rd]   \ar[ld] &&\CN^{[n-1]}_n \times \CN_{n+1}^{[n+1,t]}  \ar[rd]   \ar[ld] &
 \\ 
 \CN_n^{[n-1]} \times \CN_{n+1}^{[n-1]}  && \CN_n^{[n-1]} \times \CN_{n+1}^{[n+1]} &&  \CN_{n}^{[n-1]} \times \CN_{n+1}^{[t]} .
 }
 \end{aligned}
 \end{equation}
 }
 The resulting cycle is again $\wt\CM^{[t]}_n$ mapping naturally to  $\CN_{n}^{[n-1]} \times \CN_{n+1}^{[t]} $ (not necessarily by an embedding).

 These two versions are related in the following way. Let us denote by the same symbol  the cycle  in $\CN_{n}^{[n-1]} \times \CN_{n+1}^{[t]} $ arising by push-forward of the cycle $ \wt\CM^{[t]}_n$ along the \'etale double covering 
 $$
 \pi: \CN_{n}^{[n-1],\circ} \times \CN_{n+1}^{[t]} \to  \CN_{n}^{[n-1]} \times \CN_{n+1}^{[t]}.
 $$ 
 Note that the morphism $ \pi$ is compatible with the action of $G_{W_1}(F_0)$.
It follows that we can recover the pull-back cycle on $ \CN_{n}^{[n-1],\circ} \times \CN_{n+1}^{[t]}$ with the help of the involution,
 $$
 \pi^{-1}(  \wt\CM^{[t]}_n)= \wt\CM^{[t]}_n\amalg (\sigma,1) \wt\CM^{[t]}_n.
 $$

Moreover, since $\CN_{n}^{[n-1],\circ}\to \CN_{n}^{[n-1]}  $ is a trivial double covering, we may write $\CN_{n}^{[n-1],\circ}=\CN_{n}^{[n-1],+}\coprod \CN_{n}^{[n-1],-}$ as a disjoint union, where each of $\CN_{n}^{[n-1],\pm}$ maps isomorphically to $\CN_{n}^{[n-1]}$. Via the map $\wt\CM^{[t]}_n\to \CN_n^{[n-1],\circ}$, we have an induced decomposition 
$$
\wt\CM^{[t]}_n=\wt\CM^{[t],+}_n\amalg \wt\CM^{[t],-}_n.
$$
Then the natural maps $\wt\CM^{[t],\pm}_n\to  \CN_{n}^{[n-1],\circ} \times \CN_{n+1}^{[t]}$ and $ \wt\CM^{[t],\pm}_n\to  \CN_{n}^{[n-1]} \times \CN_{n+1}^{[t]}$ are both closed immersions. We have on $\CN_{n}^{[n-1],\circ}\times \CN_{n+1}^{[t]}  $,
$$ 
 (\sigma,1)\wt\CM^{[t],\pm}_n=\wt\CM^{[t],\mp}_n
$$ and
$$
\pi^{-1}  (\wt\CM^{[t],\pm}_n)=\wt\CM^{[t],\pm}_n\amalg  (\sigma,1)\wt\CM^{[t],\pm}_n.
$$ 
 Note, however, that the names given to each summand of $\wt\CM^{[t]}_n$ is not canonical. It will turn out that for intersection numbers this non-canonicality plays no role. 
 \subsection{Lattice model for  the $\CY$-divisor}\label{ss: Lat(n-1,t) Y}
We continue from \S\ref{ss: Lat(n-1,t) Z} to
let $\Lambda^\flat_{0}$ be a vertex lattice of type $n-1$ in $W^\flat_0$ with stabilizer $K_n^{[n-1]}$. Let $K_n^{[n-1],\circ}$ be the index two subgroup of  $K_n^{[n-1]}$, being the kernel of the map $\det\!\!\!\mod \pi_0: K_n^{[n-1]}\to \mu_2(k)=\{\pm 1\}$. Fix a special vector $u_0$ of the same unit norm as $u$. Then   $\Lambda_0=\Lambda^\flat_{0}\obot \langle u_0 \rangle \in \Ver^{n-1}(W_0)$. We assume that $W_0$ is the split hermitian space.  There are exactly two vertex lattices of type $n+1$  contained in  $ \Lambda_0$, see \cite[\S 9]{RSZ2} or Lemma \ref{lem: lat (n-1,t) Y} below; we fix one of them, called $\Lambda$. The choice of $\Lambda$ will play no role, see Remark \ref{rem: ind Lbd} below.
We fix a lattice   $\Lambda^{[t]}\in \Ver^{t}(W_0)$ such  that   $\Lambda^{[t]}\supset\Lambda$.   Denote by  $K_{n+1}^{[n+1]}$ (resp. $K_{n+1}^{[t]}$) the stabilizer of $\Lambda$ (resp. $\Lambda^{[t]}$), and by $K_{n+1}^{[n-1]}$ the stabilizer of $\Lambda_0$. Let $K_{n+1}^{[n-1,n+1]}=K_{n+1}^{[n-1]}\cap K_{n+1}^{[n+1]}$ be the 
stabilizer of the lattice chain $\Lambda\subset\Lambda_0 $.

We now consider the lattice models of the RZ spaces. 
We have the following cartesian diagrams: one analogous to \eqref{eq: N circ},
  \begin{equation}\label{}
 \begin{aligned}
 \xymatrix{\BN^{[n-1],\circ}_n\ar@{}[rd]|{\square}\ar[r] \ar[d]&  \BN_{n+1}^{[n-1,n+1]} \ar[d]\\
 \BN_n^{[n-1]}\ar[r]& \BN_{n+1}^{[n-1]} }
 \end{aligned}
 \end{equation}
 and the other one analogous to \eqref{eq:def Mt}
   \begin{equation}\label{}
 \begin{aligned}
 \xymatrix{\wt\BM^{[t]}_n\ar@{}[rd]|{\square}\ar[r] \ar[d]&  \BN_{n+1}^{[n+1,t]} \ar[d]\\
 \BN_n^{[n-1],\circ}\ar[r]& \BN_{n+1}^{[n+1]} }
 \end{aligned}
 \end{equation}
 More explicitly we have
 $$
 \wt\BM^{[t]}_n=\{(\Lambda^\flat,\Lambda,\Lambda^{[t]}) \in {\rm Vert}^{n-1}(W_0^\flat)\times {\rm Vert}^{n+1}(W_0) \times {\rm Vert}^{t}(W_0)\mid \Lambda^\flat \obot \langle u_0 \rangle\supset \Lambda\subset \Lambda^{[t]} \}.
 $$
\begin{remark}\label{notRZ9}In general the set
$ \wt\BM^{[t]}_n$ is not an RZ space, in the sense that the action of $H=G_{W^\flat}$ is not transitive.  To see this, we first note that the action on the set of pairs $(\Lambda^\flat,\Lambda) \in {\rm Vert}^{n-1}(W_0^\flat)\times {\rm Vert}^{n+1}(W_0)$ such that $ \Lambda^\flat \obot \langle u_0 \rangle\supset \Lambda$ is  transitive. Fix such a pair $(\Lambda^\flat,\Lambda)$. Its stabilizer is $K_n^{[n-1],\circ}$,  a subgroup of index two of the stabilizer $K_n^{[n-1]}$ of $\Lambda^\flat$. Then the set of $\Lambda^{[t]}\in  {\rm Vert}^{t}(W_0)$ such that $ \Lambda\subset \Lambda^{[t]} $ is bijective to the set of isotropic subspaces $\BL$ of dimension $\frac{n+1-t}{2}$ in $\BW=\Lambda^\vee/\Lambda$.  Let $\ov u_0$ denote the reduction of $u_0$ in $\BW$.  By Witt's  theorem there are  several orbits under $K_n^{[n-1],\circ}$, characterized as follows: (1) $\ov u_0\in \BL$ (this case does not arise if $t=n+1$), (2) $\ov u_0\notin \BL$ and $\ov u_0\perp \BL$, (3)  $\ov u_0\notin \BL$ and $\ov u_0$ is not perpendicular to $\BL$.
\end{remark}
  
Note that when $t=n+1$ we have $\wt\BM^{[n+1]}_n=\BN^{[n-1],\circ}_n$. We have the lattice model of \eqref{eq:type (n-1,t) big Y}
 \begin{equation*}
 \begin{aligned}
 \xymatrix{&&\BN^{[n-1],\circ}_n \times \BN_{n+1}^{[n+1,t]}  \ar[rd]   \ar[ld] &
 \\ \BN_n^{[n-1],\circ}\ar@{^(->}[r]& \BN_n^{[n-1],\circ} \times \BN_{n+1}^{[n+1]} &&  \BN_{n}^{[n-1],\circ} \times \BN_{n+1}^{[t]} 
 }
 \end{aligned}
 \end{equation*}

 We introduce the bi-$K_{n+1}^{[n+1]}$-invariant Hecke function  
   \begin{equation}\label{varphis}
\varphi^{[n+1,t]}_{n+1}:=\vol(K^{[t]}_{n+1})^{-1} {\bf 1}_{K^{[n+1]}_{n+1}K^{[t]}_{n+1}}\ast {\bf 1}_{K^{[t]}_{n+1}K^{[n+1]}_{n+1}}.
\end{equation}
Then we have
 $$
\Orb(g,\vol(K_n^{[n-1],\circ})^{-2} {\bf 1}_{K_n^{[n-1],\circ}}\otimes \varphi^{[n+1,t]}_{n+1})=\#(\wt\BM^{[t]}_n \cap g \wt\BM^{[t]}_n)_{ \BN_{n}^{[n-1],\circ} \times \BN_{n+1}^{[t] }}.
$$
There is an involution $\sigma: \BN_{n}^{[n-1],\circ}\to \BN_{n}^{[n-1],\circ}$ over $  \BN_{n}^{[n-1]}$ and we can form intersection numbers of two different cycles (interchanged by the involution):
 $$
\Orb(g,\vol(K_n^{[n-1],\circ})^{-2} {\bf 1}_{K_n^{[n-1],\circ} h}\otimes \varphi^{[n+1,t]}_{n+1})=\#((\sigma,1)\wt\BM^{[t]}_n \cap g \wt\BM^{[t]}_n)_{ \BN_{n}^{[n-1],\circ} \times \BN_{n+1}^{[t] }}
$$
where $h$ is any element in $K_n^{[n-1]}\setminus K_n^{[n-1],\circ}$ (see also Remark \ref{rem: ind Lbd}). We may push-forward the cycle $\wt\BM^{[t]}_n$ along the \'etale double covering map $\pi: \BN_{n}^{[n-1],\circ}\to \BN_{n}^{[n-1]}$ down to $\BN_{n}^{[n-1]} \times \BN_{n+1}^{[t]}$, still denoted by $\wt\BM^{[t]}_n$. Then the projection formula shows that the intersection number is then the sum of the two above, hence 
 $$
\Orb(g,\vol(K_n^{[n-1],\circ})^{-2} {\bf 1}_{K_n^{[n-1]}}\otimes \varphi^{[n+1,t]}_{n+1})=\#(\wt\BM^{[t]}_n \cap g \wt\BM^{[t]}_n)_{ \BN_{n}^{[n-1]} \times \BN_{n+1}^{[t] }}.
$$

There is an alternative interpretation of the last one, namely  
via the lattice model of
 \eqref{eq:type (n-1,t) big Y alt},  involving a composition of two correspondences on the larger space:
  \begin{equation*} \begin{aligned}
 \xymatrix{&  \BN_n^{[n-1]}\times \BN_{n+1}^{[n-1,n+1]}\ar[dr] \ar[ld]_{} && \ar[dl]  \BN_n^{[n-1]}\times \BN_{n+1}^{[n+1,t]} \ar[dr]& \\
  \BN_n^{[n-1]}\times  \BN_{n+1}^{[n-1]} && \BN_n^{[n-1]}\times  \BN_{n+1}^{[n+1]}&&  \BN_n^{[n-1]}\times \BN_{n+1}^{[t]}  }
 \end{aligned}
 \end{equation*}
Correspondingly  we define the Hecke function
    \begin{equation*}
\varphi^{[n-1,n+1,t]}_{n+1}:=\vol(K^{[t]}_{n+1})^{-1}\vol(K^{[n+1]}_{n+1})^{-2} {\bf 1}_{K^{[n-1]}_{n+1}K^{[n+1]}_{n+1}}\ast {\bf 1}_{K^{[n+1]}_{n+1}K^{[t]}_{n+1}}\ast {\bf 1}_{K^{[t]}_{n+1}K^{[n+1]}_{n+1}}\ast {\bf 1}_{K^{[n+1]}_{n+1}K^{[n-1]}_{n+1}}
\end{equation*}
and we have
 $$
\Orb(g,\vol(K_n^{[n-1]})^{-2} {\bf 1}_{K_n^{[n-1]}}\otimes \varphi^{[n-1,n+1,t]}_{n+1})=\#(\wt\BM^{[t]}_n \cap g \wt\BM^{[t]}_n)_{ \BN_{n}^{[n-1]} \times \BN_{n+1}^{[t] }}.
$$
As the following lemma part (iii) shows, the two  interpretations are equivalent.

\begin{lemma}
\label{lem: lat (n-1,t) Y} 
 \begin{altenumerate}
 \item We have $K_{n+1}^{[n-1]}/K_{n+1}^{[n-1,n+1]}\simeq \mu_2(k)$ and 
the composition map  
$$
\xymatrix{K_n^{[n-1]} \ar[r]&K_{n+1}^{[n-1]}\ar[r]& K_{n+1}^{[n-1]}/K_{n+1}^{[n-1,n+1]}\simeq \mu_2(k)&  }$$
is surjective with kernel $K_n^{[n-1],\circ}$.

\item
We have
$K_n^{[n-1]} K_{n+1}^{[n+1]}=K_{n+1}^{[n-1]} K_{n+1}^{[n+1]}.$

\item We have 
$$
\vol(K_n^{[n-1],\circ})^{-2} {\bf 1}_{K_n^{[n-1]}}\otimes \varphi^{[n+1,t]}_{n+1}\sim \vol(K_n^{[n-1]})^{-2} {\bf 1}_{K_n^{[n-1]}}\otimes \varphi^{[n-1,n+1,t]}_{n+1}.
$$
\end{altenumerate}
\end{lemma}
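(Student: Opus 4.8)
The plan is to prove (i) and (ii) by direct bookkeeping with the vertex lattices involved, and then to deduce (iii) by the convolution manipulation already used in the proof of Lemma~\ref{lem: lat (n,t)}.

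For (i): recall that $W_0$ is split and $\Lambda_0=\Lambda^\flat_0\obot\langle u_0\rangle\in\mathrm{Vert}^{n-1}(W_0)$ with $u_0$ of unit norm. By \cite[\S 9]{RSZ2} there are exactly two vertex lattices of type $n+1$ contained in $\Lambda_0$, namely $\Lambda=\Lambda'$ and some $\Lambda''$. The group $K_{n+1}^{[n-1]}=\mathrm{Stab}_{\U(W_0)}(\Lambda_0)$ preserves $\{\Lambda',\Lambda''\}$, and the stabilizer of $\Lambda'$ in this action is $K_{n+1}^{[n-1]}\cap K_{n+1}^{[n+1]}=K_{n+1}^{[n-1,n+1]}$; hence by orbit--stabilizer it suffices to show the action is transitive. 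This follows from Remark~\ref{notRZ9}: an element $h\in K_n^{[n-1]}\subset\U(W_0^\flat)$ fixes $u_0$, hence stabilizes $\Lambda_0$, and the stabilizer in $K_n^{[n-1]}$ of the pair $(\Lambda_0^\flat,\Lambda')$ --- equivalently, of $\Lambda'$ --- is the index-two subgroup $K_n^{[n-1],\circ}$. So the $K_n^{[n-1]}$-orbit of $\Lambda'$ already has size two, and a fortiori the $K_{n+1}^{[n-1]}$-action on $\{\Lambda',\Lambda''\}$ is transitive; thus $K_{n+1}^{[n-1]}/K_{n+1}^{[n-1,n+1]}$ has order two, and we identify it with $\mu_2(k)$ (via the reduction mod $\pi_0$ of the determinant, as in \S\ref{ss: Lat(n-1,t) Y}). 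The same computation shows the composite $K_n^{[n-1]}\to K_{n+1}^{[n-1]}\to K_{n+1}^{[n-1]}/K_{n+1}^{[n-1,n+1]}$ is surjective with kernel $K_n^{[n-1],\circ}$.

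For (ii): the inclusion $K_n^{[n-1]}K_{n+1}^{[n+1]}\subseteq K_{n+1}^{[n-1]}K_{n+1}^{[n+1]}$ is clear since $K_n^{[n-1]}\subseteq K_{n+1}^{[n-1]}$. Both sides are unions of right $K_{n+1}^{[n+1]}$-cosets; by (i) we have $|K_{n+1}^{[n-1]}K_{n+1}^{[n+1]}/K_{n+1}^{[n+1]}|=|K_{n+1}^{[n-1]}/K_{n+1}^{[n-1,n+1]}|=2$, while $|K_n^{[n-1]}K_{n+1}^{[n+1]}/K_{n+1}^{[n+1]}|=|K_n^{[n-1]}/(K_n^{[n-1]}\cap K_{n+1}^{[n+1]})|=|K_n^{[n-1]}/K_n^{[n-1],\circ}|=2$, using that an element of $K_n^{[n-1]}$ which also stabilizes $\Lambda$ stabilizes $\Lambda_0$ and hence lies in $K_n^{[n-1],\circ}$ by Remark~\ref{notRZ9}. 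Since the two coset spaces have equal cardinality and one is contained in the other, they coincide.

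For (iii): I would apply \eqref{eq: eM} with $M=K_n^{[n-1]}$ to $f={\bf 1}_{K_n^{[n-1]}}\otimes\varphi^{[n+1,t]}_{n+1}$. As in the proof of Lemma~\ref{lem: lat (n,t)}, the $\U(W_0^\flat)$-factor ${\bf 1}_{K_n^{[n-1]}}$ is unchanged, while the $\U(W_0)$-factor becomes $e_{K_n^{[n-1]}}\ast\varphi^{[n+1,t]}_{n+1}\ast e_{K_n^{[n-1]}}$. Expanding $\varphi^{[n+1,t]}_{n+1}=\vol(K^{[t]}_{n+1})^{-1}{\bf 1}_{K^{[n+1]}_{n+1}K^{[t]}_{n+1}}\ast{\bf 1}_{K^{[t]}_{n+1}K^{[n+1]}_{n+1}}$ and averaging its two bi-$K_{n+1}^{[n+1]}$-invariant factors over $K_n^{[n-1]}$ on the outer side, part (ii) turns ${\bf 1}_{K^{[n+1]}_{n+1}K^{[t]}_{n+1}}$ (resp. ${\bf 1}_{K^{[t]}_{n+1}K^{[n+1]}_{n+1}}$) into a scalar multiple of ${\bf 1}_{K^{[n-1]}_{n+1}K^{[n+1]}_{n+1}K^{[t]}_{n+1}}$ (resp. ${\bf 1}_{K^{[t]}_{n+1}K^{[n+1]}_{n+1}K^{[n-1]}_{n+1}}$); rewriting these in turn as convolutions ${\bf 1}_{K^{[n-1]}_{n+1}K^{[n+1]}_{n+1}}\ast{\bf 1}_{K^{[n+1]}_{n+1}K^{[t]}_{n+1}}$, etc., exhibits $e_{K_n^{[n-1]}}\ast\varphi^{[n+1,t]}_{n+1}\ast e_{K_n^{[n-1]}}$ as a scalar multiple of $\varphi^{[n-1,n+1,t]}_{n+1}$. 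The only delicate point is the bookkeeping of the resulting volume constants: the two-fold outer averaging contributes a factor $[K_{n+1}^{[n-1]}:K_{n+1}^{[n-1,n+1]}]^2=4$, which together with $[K_n^{[n-1]}:K_n^{[n-1],\circ}]=2$ from (i) accounts exactly for the passage from $\vol(K_n^{[n-1]})^{-2}$ to $\vol(K_n^{[n-1],\circ})^{-2}$; this is carried out just as in the second half of the proof of Lemma~\ref{lem: lat (n,t)} and in Lemma~\ref{lem: lat (n-1,t)}(iii). I expect this constant-chasing to be the main (if routine) obstacle, the geometric content having been imported through Remark~\ref{notRZ9} and \cite[\S 9]{RSZ2}.
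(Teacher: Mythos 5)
Your part (i) is circular. The entire non-trivial content of (i) --- that $K_n^{[n-1]}$ meets every coset of $K_{n+1}^{[n-1,n+1]}$ in $K_{n+1}^{[n-1]}$, equivalently that the stabilizer in $K_n^{[n-1]}$ of one of the two $\pi$-modular lattices $\Lambda'\subset\Lambda_0$ is exactly the index-two subgroup $K_n^{[n-1],\circ}=\ker(\det\bmod\pi_0)$ --- is imported from Remark \ref{notRZ9}. But that remark states this stabilizer claim without proof, and the paper's justification for it \emph{is} the proof of Lemma \ref{lem: lat (n-1,t) Y}(i); citing the remark to prove the lemma begs the question. What is actually needed (and what the paper supplies) is the reduction to the split two-dimensional quadratic space $\Lambda_0^\vee/\Lambda_0$ over $k$: the reduction map $K_{n+1}^{[n-1]}\to \mathrm{O}(2)(k)$ is surjective, the two $\pi$-modular lattices in $\Lambda_0$ correspond to the two isotropic lines, whose common stabilizer is $\SO(2)(k)=\ker(\det)$, and the image of $K_n^{[n-1]}$ is the stabilizer $\mathrm{O}(1)\simeq\mu_2$ of the anisotropic image of $u_0$, which surjects onto $\mathrm{O}(2)/\SO(2)\simeq\mu_2$ with preimage of the kernel equal to $K_n^{[n-1],\circ}$. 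Without some such argument you have not proved which index-two subgroup of $K_n^{[n-1]}$ occurs, nor even that the $K_n^{[n-1]}$-action on the two lattices is transitive (a priori $K_n^{[n-1]}$ could fix both, in which case the composite in (i) would not be surjective). Relatedly, your identification of the quotient with $\mu_2(k)$ "via $\det\bmod\pi_0$" does not quite parse, since that map is defined on $K_n^{[n-1]}\subset\U(W_0^\flat)$ and not on $K_{n+1}^{[n-1]}$.

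Parts (ii) and (iii) are fine modulo (i). Your coset count in (ii) is the same as the paper's argument. For (iii) you run the convolution manipulation in the direction $\varphi^{[n+1,t]}_{n+1}\rightsquigarrow\varphi^{[n-1,n+1,t]}_{n+1}$, whereas the paper expands $\varphi^{[n-1,n+1,t]}_{n+1}$, substitutes ${\bf 1}_{K^{[n-1]}_{n+1}K^{[n+1]}_{n+1}}={\bf 1}_{K^{[n-1]}_{n}K^{[n+1]}_{n+1}}=\vol(K_n^{[n-1],\circ})^{-1}{\bf 1}_{K_n^{[n-1]}}\ast{\bf 1}_{K_{n+1}^{[n+1]}}$ using (ii), and collapses to $\vol(K_n^{[n-1],\circ})^{-2}\,{\bf 1}_{K_n^{[n-1]}}\ast\varphi^{[n+1,t]}_{n+1}\ast{\bf 1}_{K_n^{[n-1]}}$ before invoking \eqref{eq: eM}; the two directions are equivalent, but your stated accounting of the constants (a factor $4$ from the outer averaging "together with" a factor $2$ from the index) does not visibly produce the needed ratio $\vol(K_n^{[n-1],\circ})^{-2}/\vol(K_n^{[n-1]})^{-2}=4$, so this step should be written out rather than asserted.
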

\begin{proof}

(i)
The hermitian form induces a non-degenerate split quadratic form on the $2$-dimensional vector space $\Lambda^{\vee}_0/\Lambda_0$ over the residue field $k=O_{F}/(\pi)$. 
Then the reduction induces a surjective homomorphism $K^{[n-1]}_{n+1}\to \mathrm{O}(\Lambda^{\vee}_0/\Lambda_0)=\mathrm{O}(2)(k)$. The two $\pi$-modular lattices $\Lambda^\pm$ correspond to the two isotropic lines and hence their stabilizers are the same and can be identified as $\SO(2)(k)\simeq k^\times$, the kernel of $\det: \mathrm{O}(2)(k)\to\mu_2= \{\pm 1\}$. We summarize these facts in the following commutative diagram
$$
\xymatrix{ & K_{n}^{[n-1]}\ar[d]&&\\ K_{n+1}^{[n-1,n+1]} \ar[d]\ar[r]&K_{n+1}^{[n-1]}\ar[d]\ar[r]& K_{n+1}^{[n-1]}/K_{n+1}^{[n-1,n+1]}\ar[d]& \\
\SO(2)\ar[r]&\mathrm{O}(2)\ar[r]& \mu_2 .& }
$$
Note that  the image of the vector $u$ in  $\Lambda^{\vee}_0/\Lambda_0$ is anisotropic.
The image of $K_{n}^{[n-1]}$ in $\mathrm{O}(2)$, being the stabilizer of this vector, is therefore isomorphic to $\mathrm{O}(1)\simeq\mu_2$, which therefore maps onto the quotient $\mathrm{O}(2)/\SO(2)\simeq \mu_2$. 

(ii)
By (i), $K_{n+1}^{[n-1]}\cap K_{n+1}^{[n+1], +}=K_{n+1}^{[n+1,n-1]}$ is an index two subgroup of $K_{n+1}^{[n-1]}$. There are exactly two cosets in $K_{n+1}^{[n-1]} K_{n+1}^{[n+1], +}/K_{n+1}^{[n+1], +}\simeq K_{n+1}^{[n-1]}/ K_{n+1}^{[n+1,n-1]} $ and since by (i), $K_{n}^{[n-1]}$ maps onto this last quotient, the assertion follows.

(iii)
By (ii) we have $ {\bf 1}_{K^{[n-1]}_{n+1}K^{[n+1]}_{n+1}}= {\bf 1}_{K^{[n-1]}_{n}K^{[n+1]}_{n+1}}$ and hence
    \begin{align*}
&\varphi^{[n-1,n+1,t]}_{n+1}
\\=&\vol(K^{[t]}_{n+1})^{-1}\vol(K^{[n+1]}_{n+1})^{-2} {\bf 1}_{K^{[n-1]}_{n}K^{[n+1]}_{n+1}}\ast {\bf 1}_{K^{[n+1]}_{n+1}K^{[t]}_{n+1}}\ast {\bf 1}_{K^{[t]}_{n+1}K^{[n+1]}_{n+1}}\ast {\bf 1}_{K^{[n+1]}_{n+1}K^{[n-1]}_{n}}\\
=&\vol(K^{[t]}_{n+1})^{-1}\vol(K^{[n+1]}_{n+1})^{-2} \vol(K^{[n-1],\circ}_{n})^{-2}{\bf 1}_{K^{[n-1]}_{n}} \ast{\bf 1}_{K^{[n+1]}_{n+1}}\ast {\bf 1}_{K^{[n+1]}_{n+1}K^{[t]}_{n+1}}\ast {\bf 1}_{K^{[t]}_{n+1}K^{[n+1]}_{n+1}}\ast {\bf 1}_{K^{[n+1]}_{n+1}}\ast {\bf 1}_{K^{[n-1]}_{n}}\\
=&\vol(K^{[n-1],\circ}_{n})^{-2}{\bf 1}_{K^{[n-1]}_{n}} \ast  \varphi^{[n+1,t]}_{n+1} \ast {\bf 1}_{K^{[n-1]}_{n}}\\
=&\vol(K^{[n-1],\circ}_{n})^{-2} \vol(K^{[n-1]}_{n})^{2}(e_{K^{[n-1]}_{n}} \ast  \varphi^{[n+1,t]}_{n+1} \ast e_{K^{[n-1]}_{n}}) .
\end{align*}
By the bi-$ K_n^{[n-1]}$-invariance of $ {\bf 1}_{ K_{n}^{[n-1]} }$  and  \eqref{eq: eM} we have 
\begin{align*}
 {\bf 1}_{K_n^{[n-1]}}\otimes \varphi^{[n+1,t]}_{n+1}
\sim&{\bf 1}_{K_n^{[n-1]}}\otimes (e_{ K_n^{[n-1]}}\ast  \varphi^{[n+1,t]}_{n+1}\ast e_{ K_n^{[n-1]}}) .
\end{align*}
It follows that $\vol(K^{[n-1],\circ}_{n})^{-2}{\bf 1}_{K_n^{[n-1]}}\otimes \varphi^{[n+1,t]}_{n+1}\sim  \vol(K^{[n-1]}_{n})^{-2} {\bf 1}_{K_n^{[n-1]}}\otimes \varphi^{[n-1,n+1,t]}_{n+1}$.
\end{proof}
 
 \begin{remark}
 \label{rem: ind Lbd}
 We comment on the (independence of the) choice of the type $n+1$ lattice $\Lambda$ contained in $\Lambda_0$. Let $\Lambda^\pm$ be the two choices and add the superscript $\pm$ for the various groups in the lemma above. Then 
for any element $h$ in $K_n^{[n-1]}\setminus K_n^{[n-1],\circ}$ we have $\Lambda^-=h\Lambda^+$ and 
$K_{n+1}^{[n+1], +}=h K_{n+1}^{[n+1], -}h^{-1}, K^{[t],-}_{n+1}=h K^{[t],+}_{n+1}h^{-1} $. Then in the function   $\varphi^{[n+1,t]}_{n+1}$ we have
$$ {\bf 1}_{K^{[n+1],-}_{n+1}K^{[t],-}_{n+1}}\ast {\bf 1}_{K^{[t],-}_{n+1}K^{[n+1],-}_{n+1}}={\bf 1}_{hK^{[n+1],+}_{n+1}K^{[t],+}_{n+1}h^{-1}}\ast {\bf 1}_{hK^{[t],+}_{n+1}K^{[n+1],+}_{n+1}h^{-1}}
$$ 
and hence $\varphi^{[n+1,t],-}_{n+1}= \varphi^{[n+1,t],+}_{n+1}\circ \Ad(h)$ where $\Ad(h) $ denotes the conjugation by $h$. Then we have
$$
{\bf1}_{K_n^{[n-1],\circ}}\otimes \varphi^{[n+1,t],-}_{n+1}\sim ({\bf1}_{K_n^{[n-1],\circ}}\circ  \Ad(h^{-1}))\otimes \varphi^{[n+1,t],+}_{n+1}.
$$Since $ K_n^{[n-1],\circ}$ is a normal subgroup of $K_n^{[n-1]}$, we have ${\bf1}_{K_n^{[n-1],\circ}}\circ  \Ad(h^{-1})={\bf1}_{K_n^{[n-1],\circ}}$.

 \end{remark}

  \subsection{Intersection numbers on the splitting model for $\CY$-divisors} \label{sec:n-1,t Y-cycle}
  Recall that we defined  $\wt{\CM}_n^{[t]}$ in \eqref{eq:def Mt} and that we have a closed immersion 
  $$
   \wt\CM^{[t]}_n\hookrightarrow \CN^{[n-1],\circ}_n\times \CN^{[t]}_{n+1}.
$$
 Let $\wt\CM^{[t], \spt}_{n}$ be the flat closure of  the base change of $\wt\CM^{[t]}_{n}$ along the morphism $\CN_n^{[n-1],\circ}\times\CN_{n+1}^{[t],\spt}\to \CN_n^{[n-1],\circ}\times\CN_{n+1}^{[t]}$. Then $\wt\CM^{[t], \spt}_{n}$ is a closed formal subscheme of $\CN_n^{[n-1],\circ}\times\CN_{n+1}^{[t],\spt}$, flat over $\Spf O_{\breve{F}}$ of relative dimension $n-1$.  If  Conjecture \ref{conjMflat} on the flatness of $ \wt\CM^{[t]}_n$ holds, then $\wt\CM^{[t], \spt}_{n}$ coincides with $\wt\CM^{[t]}_{n}$ outside the worst points. We have the commutative diagram
  \begin{equation*}
 \begin{aligned}
  \xymatrix{
\wt\CM^{[t], \spt}_{n} \ar@{^(->}[r] \ar[d]  &\CN_n^{[n-1],\circ}\times\CN_{n+1}^{[t],\spt} \ar[d]   \\
\wt\CM^{[t]}_{n}  \ar@{^(->}[r] & \CN^{[n-1],\circ}_{n}\times\CN^{[t]}_{n+1}  .
}
\end{aligned}
\end{equation*}
Now the product  $\CN_n^{[n-1],\circ}\times\CN_{n+1}^{[t],\spt}$ is regular since $ \CN_{n}^{[n-1],\circ}$ is formally smooth over $\Spf\OFb$ and $  \CN_{n+1}^{[t],\spt}$ is regular. 
 We define the arithmetic intersection numbers 
\begin{equation*}
  \left\langle\wt\CM^{[t],\spt}_{n} , g\wt\CM^{[t],\spt}_{n} \right\rangle_{ \CN_{n}^{[n-1],\circ} \times \CN_{n+1}^{[t],\spt} }:=\chi(  \CN_{n}^{[n-1]} \times \CN_{n+1}^{[t],\spt} , \wt\CM^{[t],\spt}_{n}  \cap^\BL g\wt\CM^{[t],\spt}_{n}  ),
\end{equation*}
for regular semisimple $g\in G_{W_1}(F_0)$.

\subsection{Refinement in terms of $\wt\CM^{[t],\pm}_{n}$}
\label{ss: refine}
 We have a decomposition
\begin{equation*}
\wt{\CM}_n^{[t]}=\wt{\CM}_n^{[t],+}\amalg \wt{\CM}_n^{[t],-}
\end{equation*}
and, correspondingly, the splitting version
\begin{equation*}
\wt{\CM}_n^{[t],\spt}=\wt{\CM}_n^{[t],+,\spt}\amalg \wt{\CM}_n^{[t],-,\spt} .
\end{equation*}
 We define the arithmetic intersection numbers 
\begin{equation}
  \label{eq:Int1}
  \left\langle\wt\CM^{[t],\pm,\spt}_{n} , g\wt\CM^{[t],\pm,\spt}_{n} \right\rangle_{ \CN_{n}^{[n-1]} \times \CN_{n+1}^{[t],\spt} }:=\chi(  \CN_{n}^{[n-1]} \times \CN_{n+1}^{[t],\spt} , \wt\CM^{[t],\pm,\spt}_{n}  \cap^\BL g\wt\CM^{[t],\pm,\spt}_{n}  ),
\end{equation}
for regular semisimple $g\in G_{W_1}(F_0)$.

\begin{lemma}\label{lem (n odd,t) Y}
There are the following equalities of  intersection numbers:  
  \begin{altenumerate}
  \item
$$\left\langle\wt\CM^{[t],+,\spt}_{n} , g\wt\CM^{[t],+,\spt}_{n} \right\rangle_{ \CN_{n}^{[n-1]} \times \CN_{n+1}^{[t],\spt} }=\left\langle\wt\CM^{[t],-,\spt}_{n} , g\wt\CM^{[t],-,\spt}_{n} \right\rangle_{ \CN_{n}^{[n-1]} \times \CN_{n+1}^{[t],\spt} }.
$$
\item 
$$\left\langle\wt\CM^{[t],+,\spt}_{n} , g\wt\CM^{[t],-,\spt}_{n} \right\rangle_{ \CN_{n}^{[n-1]} \times \CN_{n+1}^{[t],\spt} }=\left\langle\wt\CM^{[t],-,\spt}_{n} , g\wt\CM^{[t],+,\spt}_{n} \right\rangle_{ \CN_{n}^{[n-1]} \times \CN_{n+1}^{[t],\spt} }.
$$
\item  $$
  \left\langle\wt\CM^{[t],\spt}_{n} , g\wt\CM^{[t],\spt}_{n} \right\rangle_{ \CN_{n}^{[n-1],\circ} \times \CN_{n+1}^{[t],\spt}}=2\left\langle\wt\CM^{[t],+,\spt}_{n} , g\wt\CM^{[t],+,\spt}_{n} \right\rangle_{ \CN_{n}^{[n-1]} \times \CN_{n+1}^{[t],\spt} }.
 $$
 \item
 $$
  \left\langle(\sigma,1)\wt\CM^{[t],\spt}_{n} , g\wt\CM^{[t],\spt}_{n} \right\rangle_{ \CN_{n}^{[n-1],\circ} \times \CN_{n+1}^{[t],\spt}}=2\left\langle\wt\CM^{[t],+,\spt}_{n} , g\wt\CM^{[t],-,\spt}_{n} \right\rangle_{ \CN_{n}^{[n-1]} \times \CN_{n+1}^{[t],\spt} }.
 $$
\end{altenumerate}
 \emph{Here $\wt\CM^{[t],+,\spt}_{n}$ means both the cycle on $ \CN_{n}^{[n-1],\circ} \times \CN_{n+1}^{[t],\spt}$ and on $ \CN_{n}^{[n-1]} \times \CN_{n+1}^{[t],\spt}$.}
\end{lemma}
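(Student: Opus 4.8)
The plan is to deduce all four identities from two structural facts established earlier in the text: first, the decomposition $\CN_n^{[n-1],\circ}=\CN_n^{[n-1],+}\amalg\CN_n^{[n-1],-}$ into two copies each mapping isomorphically to $\CN_n^{[n-1]}$, together with the compatible decomposition $\wt\CM_n^{[t]}=\wt\CM_n^{[t],+}\amalg\wt\CM_n^{[t],-}$; and second, the existence of the involution $\sigma$ on $\CN_n^{[n-1],\circ}$ which interchanges the two copies, commutes with the action of $\U(W_1^\flat)(F_0)$ (hence with the $G_{W_1}(F_0)$-action), and satisfies $(\sigma,1)\wt\CM_n^{[t],\pm}=\wt\CM_n^{[t],\mp}$ on $\CN_n^{[n-1],\circ}\times\CN_{n+1}^{[t],\spt}$. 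All of these pass to the splitting models by taking flat closures compatibly with the \'etale double covering $\CN_n^{[n-1],\circ}\times\CN_{n+1}^{[t],\spt}\to\CN_n^{[n-1]}\times\CN_{n+1}^{[t],\spt}$, since $\sigma$ lifts to an automorphism of the ambient regular formal scheme $\CN_n^{[n-1],\circ}\times\CN_{n+1}^{[t],\spt}$ and a flat closure is preserved under automorphisms.

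For part (i): applying the automorphism $(\sigma,1)$ of $\CN_n^{[n-1],\circ}\times\CN_{n+1}^{[t],\spt}$, which commutes with the action of $g$, transports the derived intersection $\wt\CM_n^{[t],+,\spt}\cap^\BL g\wt\CM_n^{[t],+,\spt}$ isomorphically onto $\wt\CM_n^{[t],-,\spt}\cap^\BL g\wt\CM_n^{[t],-,\spt}$; since the Euler--Poincar\'e characteristic $\chi$ is invariant under such an isomorphism, the two intersection numbers on $\CN_n^{[n-1]}\times\CN_{n+1}^{[t],\spt}$ (which are computed on either copy of $\CN_n^{[n-1],\circ}$, identified via $\pm$) agree. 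Part (ii) is identical, with $(\sigma,1)$ carrying $\wt\CM_n^{[t],+,\spt}\cap^\BL g\wt\CM_n^{[t],-,\spt}$ onto $\wt\CM_n^{[t],-,\spt}\cap^\BL g\wt\CM_n^{[t],+,\spt}$; one uses here that $(\sigma,1)$ commutes with $g$ so that $(\sigma,1)(g\wt\CM_n^{[t],-,\spt})=g\wt\CM_n^{[t],+,\spt}$. Parts (iii) and (iv) then follow by additivity of $\chi$ along the disjoint decomposition: since $\wt\CM_n^{[t],\spt}=\wt\CM_n^{[t],+,\spt}\amalg\wt\CM_n^{[t],-,\spt}$ and intersecting with $g\wt\CM_n^{[t],\spt}=g\wt\CM_n^{[t],+,\spt}\amalg g\wt\CM_n^{[t],-,\spt}$ breaks into four summands, one gets
\begin{equation*}
\left\langle\wt\CM^{[t],\spt}_{n} , g\wt\CM^{[t],\spt}_{n} \right\rangle=\sum_{\varepsilon_1,\varepsilon_2\in\{+,-\}}\left\langle\wt\CM^{[t],\varepsilon_1,\spt}_{n} , g\wt\CM^{[t],\varepsilon_2,\spt}_{n} \right\rangle,
\end{equation*}
and one must observe that the cross terms with $\varepsilon_1\neq\varepsilon_2$ are supported on the locus where the first factor lies in $\CN_n^{[n-1],\varepsilon_1}$ and the second in $\CN_n^{[n-1],\varepsilon_2}$; pushing to $\CN_n^{[n-1]}\times\CN_{n+1}^{[t],\spt}$ along the double cover, which is \'etale and degree two onto each copy, the diagonal terms $(\varepsilon_1=\varepsilon_2)$ contribute and by part (i) both equal the single number $\langle\wt\CM_n^{[t],+,\spt},g\wt\CM_n^{[t],+,\spt}\rangle$, while the cross terms vanish after pushforward because the two factors project into distinct sheets of the cover — hence $\langle\wt\CM_n^{[t],\spt},g\wt\CM_n^{[t],\spt}\rangle_{\CN_n^{[n-1],\circ}\times\CN_{n+1}^{[t],\spt}}$, computed before pushforward, equals $2\langle\wt\CM_n^{[t],+,\spt},g\wt\CM_n^{[t],+,\spt}\rangle$. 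Part (iv) is the same bookkeeping applied to $(\sigma,1)\wt\CM_n^{[t],\spt}=\wt\CM_n^{[t],-,\spt}\amalg\wt\CM_n^{[t],+,\spt}$ (i.e. the factors are swapped), so that intersecting with $g\wt\CM_n^{[t],\spt}$ picks out exactly the two cross terms, each equal by part (ii) to $\langle\wt\CM_n^{[t],+,\spt},g\wt\CM_n^{[t],-,\spt}\rangle$.

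The one point requiring genuine care — and the place I expect subtlety rather than difficulty — is the precise relationship between the intersection number computed ``on $\CN_n^{[n-1],\circ}\times\CN_{n+1}^{[t],\spt}$'' and ``on $\CN_n^{[n-1]}\times\CN_{n+1}^{[t],\spt}$'' in the statement, i.e. pinning down that the symbol $\wt\CM_n^{[t],\pm,\spt}$ really denotes the same derived intersection datum whether one views the cycle inside the double cover or inside its base, and that the convention $\langle-,-\rangle_{\CN_n^{[n-1]}\times\CN_{n+1}^{[t],\spt}}$ for the $\pm$-cycles means: compute on either connected sheet $\CN_n^{[n-1],\pm}\times\CN_{n+1}^{[t],\spt}\simeq\CN_n^{[n-1]}\times\CN_{n+1}^{[t],\spt}$. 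Once this is fixed unambiguously (the remark following the lemma statement does exactly this), all four identities are formal consequences of the $\sigma$-symmetry and of additivity of the Euler characteristic over disjoint unions; no moduli-theoretic input beyond what is already recorded in \S\ref{sec:naive-Y-cycle} is needed.
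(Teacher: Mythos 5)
Your overall strategy (use the deck involution $(\sigma,1)$, which commutes with the $G_{W_1}(F_0)$-action and swaps $\wt\CM^{[t],\pm,\spt}_n$, plus additivity of $\chi$ over the disjoint decomposition) is the right one and matches the paper's in spirit. But there is a genuine gap at exactly the point you flag at the end: the relation between intersection numbers computed on the cover $\CN_{n}^{[n-1],\circ}\times\CN_{n+1}^{[t],\spt}$ and on the base $\CN_{n}^{[n-1]}\times\CN_{n+1}^{[t],\spt}$. Your parenthetical claim that $\langle\wt\CM^{+,\spt},g\wt\CM^{+,\spt}\rangle_{\CN_n^{[n-1]}\times\CN_{n+1}^{[t],\spt}}$ is ``computed on either copy of $\CN_n^{[n-1],\circ}$'' is false in general: the element $g_1\in\U(W_1^\flat)(F_0)$ need not preserve the two sheets of the trivial double cover $\CN_n^{[n-1],\circ}\to\CN_n^{[n-1]}$ (it swaps them whenever its Kottwitz invariant is nontrivial; see the explicit $n=1$ analysis in the proof of Proposition \ref{prop:compute-hecke-1}). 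The correct relation is the projection formula along the \'etale degree-two map $\pi$: since $\pi^*\pi_*\wt\CM^{+,\spt}=\wt\CM^{+,\spt}\amalg(\sigma,1)\wt\CM^{+,\spt}$, one has
\begin{equation*}
\bigl\langle\wt\CM^{+,\spt},g\wt\CM^{+,\spt}\bigr\rangle_{\text{base}}
=\bigl\langle\wt\CM^{+,\spt},g\wt\CM^{+,\spt}\bigr\rangle_{\text{cover}}
+\bigl\langle\wt\CM^{+,\spt},g\wt\CM^{-,\spt}\bigr\rangle_{\text{cover}},
\end{equation*}
i.e.\ the base number is always a sum of a diagonal and a cross term on the cover, one of which may well vanish but not always the same one.

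The same oversight invalidates your accounting for (iii) and (iv): you assert that the cross terms $\langle\wt\CM^{\varepsilon_1,\spt},g\wt\CM^{\varepsilon_2,\spt}\rangle_{\text{cover}}$ with $\varepsilon_1\neq\varepsilon_2$ vanish because the factors lie in distinct sheets, but if $g_1$ interchanges the sheets then it is the diagonal terms that vanish and the cross terms that carry the whole intersection. Once you insert the projection formula displayed above (and its three analogues), your $(\sigma,1)$-symmetries $\langle+,+\rangle_{\text{cover}}=\langle-,-\rangle_{\text{cover}}$ and $\langle+,-\rangle_{\text{cover}}=\langle-,+\rangle_{\text{cover}}$ — which you prove correctly, using $(\sigma,1)g(\sigma,1)=g$ — do yield all four identities: (i) and (ii) by summing the symmetric pairs, and (iii), (iv) because the four-term expansion of $\langle\wt\CM^{\spt},g\wt\CM^{\spt}\rangle_{\text{cover}}$ (resp.\ of $\langle(\sigma,1)\wt\CM^{\spt},g\wt\CM^{\spt}\rangle_{\text{cover}}$) regroups as twice the right-hand side of the projection formula. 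This is precisely the paper's argument.
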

\begin{proof}
Note that $\wt\CM^{[t],+,\spt}_{n}=\pi_\ast \wt\CM^{[t],+,\spt}_{n}$ along the \'etale double covering map $\pi:  \CN_{n}^{[n-1],\circ} \times \CN_{n+1}^{[t],\spt} \to  \CN_{n}^{[n-1]} \times \CN_{n+1}^{[t],\spt} $. By the projection formula and the fact that $\pi$ commutes with the action of $g\in G_{W_1}(F_0)$, we have
\begin{align*}
&\left\langle\wt\CM^{[t],+,\spt}_{n} , g\wt\CM^{[t],+,\spt}_{n} \right\rangle_{ \CN_{n}^{[n-1]} \times \CN_{n+1}^{[t],\spt} }\\
=&\left\langle\wt\CM^{[t],+,\spt}_{n} ,\pi^\ast( g\wt \CM^{[t],+,\spt}_{n}) \right\rangle_{ \CN_{n}^{[n-1],\circ} \times \CN_{n+1}^{[t],\spt} }\\
=&\left\langle\wt\CM^{[t],+,\spt}_{n} , g \pi^\ast \wt \CM^{[t],+,\spt}_{n} \right\rangle_{ \CN_{n}^{[n-1],\circ} \times \CN_{n+1}^{[t],\spt} }\\
=&\left\langle\wt\CM^{[t],+,\spt}_{n} , g \wt \CM^{[t],+,\spt}_{n} \right\rangle_{ \CN_{n}^{[n-1],\circ} \times \CN_{n+1}^{[t],\spt} }+\left\langle\wt\CM^{[t],+,\spt}_{n} , g (\sigma,1) \wt \CM^{[t],+,\spt}_{n} \right\rangle_{ \CN_{n}^{[n-1],\circ} \times \CN_{n+1}^{[t],\spt} }.
\end{align*}

Similarly, we have 
\begin{align*}
&\left\langle\wt\CM^{[t],-,\spt}_{n} , g\wt\CM^{[t],-,\spt}_{n} \right\rangle_{ \CN_{n}^{[n-1]} \times \CN_{n+1}^{[t],\spt} }\\
=&\left\langle\wt\CM^{[t],-,\spt}_{n} , g \wt \CM^{[t],-,\spt}_{n} \right\rangle_{ \CN_{n}^{[n-1],\circ} \times \CN_{n+1}^{[t],\spt} }+\left\langle\wt\CM^{[t],-,\spt}_{n} , g (\sigma,1) \wt \CM^{[t],-,\spt}_{n} \right\rangle_{ \CN_{n}^{[n-1],\circ} \times \CN_{n+1}^{[t],\spt} }.
\end{align*}

Finally we have 
\begin{align*}&\left\langle\wt\CM^{[t],+,\spt}_{n} , g \wt \CM^{[t],+,\spt}_{n} \right\rangle_{ \CN_{n}^{[n-1],\circ} \times \CN_{n+1}^{[t],\spt} }\\
=&\left\langle (\sigma,1)\wt\CM^{[t],-,\spt}_{n} , g  (\sigma,1) \wt \CM^{[t],-,\spt}_{n} \right\rangle_{ \CN_{n}^{[n-1],\circ} \times \CN_{n+1}^{[t],\spt} }\\
=&\left\langle\wt\CM^{[t],-,\spt}_{n} , g \wt \CM^{[t],-,\spt}_{n} \right\rangle_{ \CN_{n}^{[n-1],\circ} \times \CN_{n+1}^{[t],\spt} } ,
\end{align*}
since $ (\sigma,1)g  (\sigma,1)=g  (\sigma,1)^2=g$. Similarly, 
$$
\left\langle\wt\CM^{[t],+,\spt}_{n} , g (\sigma,1) \wt \CM^{[t],-,\spt}_{n} \right\rangle_{ \CN_{n}^{[n-1],\circ} \times \CN_{n+1}^{[t],\spt} }=\left\langle\wt\CM^{[t],-,\spt}_{n} , g (\sigma,1) \wt \CM^{[t],+,\spt}_{n} \right\rangle_{ \CN_{n}^{[n-1],\circ} \times \CN_{n+1}^{[t],\spt} }.
$$
This proves (i) and (ii). The remaining equalities are proved similarly.
\end{proof}

\begin{remark}
The identities (i) and (ii) in Lemma \ref{lem (n odd,t) Y} show that for the intersection numbers, the labeling of the two summands of $\wt\CM^{[t]}_{n} $ (which is not canonical) is unimportant: if  the meaning of $+$ and $-$ are switched, the intersection numbers are unchanged.
\end{remark}

\subsection{AT conjecture: the case of the $\CY$-divisor}
We now come to the AT conjecture.  

 \begin{conjecture}\label{conj (n odd,t) Y}
 Let $n=2m+1$ be odd, and let $t$ be even with $0\leq t\leq n+1$. Recall from \eqref{varphis} the function $\varphi^{[n+1,t]}_{n+1}\in C_c^\infty(G_{W_{1}})$. 
 
  \begin{altenumerate}
  \item There exists $\varphi'\in C_c^\infty(G')$ with transfer $( \vol(K_n^{[n-1],\circ})^{-2} {\bf 1}_{K_n^{[n-1]}}\otimes \varphi^{[n+1,t]}_{n+1},0)\in C_c^\infty(G_{W_0})\times C_c^\infty(G_{W_{1}})$ such that, if $\gamma\in G'(F_0)_\rs$ is matched with  $g\in G_{W_1}(F_0)_\rs$, then
 \begin{equation*}
  \left\langle\wt\CM^{[t],\spt}_{n} , g\wt\CM^{[t],\spt}_{n} \right\rangle_{ \CN_{n}^{[n-1]} \times \CN_{n+1}^{[t],\spt} }\cdot\log q=- \del\big(\gamma,  \varphi' \big).
\end{equation*}
 \item 
There exists $\varphi'\in C_c^\infty(G')$ with transfer $(\vol(K_n^{[n-1],\circ})^{-2} {\bf 1}_{K_n^{[n-1],\circ}}\otimes \varphi^{[n+1,t]}_{n+1},0)\in C_c^\infty(G_{W_0})\times C_c^\infty(G_{W_{1}})$ such that, if $\gamma\in G'(F_0)_\rs$ is matched with  $g\in G_{W_1}(F_0)_\rs$, then
 \begin{equation*}
2 \left\langle\wt\CM^{[t],+,\spt}_{n} , g\wt\CM^{[t],+,\spt}_{n} \right\rangle_{ \CN_{n}^{[n-1]} \times \CN_{n+1}^{[t],\spt} }\cdot\log q=- \del\big(\gamma,  \varphi' \big).
\end{equation*}
Similarly there exists $\varphi'\in C_c^\infty(G')$ with transfer $(\vol(K_n^{[n-1],\circ})^{-2} {\bf 1}_{K_n^{[n-1],\circ}h}\otimes \varphi^{[n+1,t]}_{n+1},0)\in C_c^\infty(G_{W_0})\times C_c^\infty(G_{W_{1}})$ such that, if $\gamma\in G'(F_0)_\rs$ is matched with  $g\in G_{W_1}(F_0)_\rs$, then
 \begin{equation*}
2\left\langle\wt\CM^{[t],+,\spt}_{n} , g\wt\CM^{[t],-,\spt}_{n} \right\rangle_{ \CN_{n}^{[n-1]} \times \CN_{n+1}^{[t],\spt} }\cdot\log q=- \del\big(\gamma,  \varphi' \big).
\end{equation*}
Here $h\in \U(W_0)(F_0)$ is an element in $K_{n}^{[n-1]}\setminus K_{n}^{[n-1],\circ}$. 
\end{altenumerate}
\end{conjecture}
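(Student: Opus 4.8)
The plan is to reduce Conjecture~\ref{conj (n odd,t) Y} to the exotically smooth case $t=n+1$ together with a Hecke-equivariance statement, and then to bootstrap. By the transfer formalism \cite[Prop. 5.14]{RSZ1}, it suffices to exhibit one $\varphi'$ transferring to $(\vol(K_n^{[n-1],\circ})^{-2}{\bf 1}_{K_n^{[n-1]}}\otimes\varphi^{[n+1,t]}_{n+1},0)$ and to check the displayed identity in part~(i); part~(ii) will follow formally. The existence of $\varphi'$, and the vanishing $\Orb(\gamma,\varphi')=0$ for matched $\gamma\leftrightarrow g\in G_{W_1}(F_0)_\rs$, is the homogeneous Jacquet--Rallis fundamental lemma for the parahoric Hecke element $\varphi^{[n+1,t]}_{n+1}$; it follows from \cite{Z14} together with the compatibility of smooth transfer with the Hecke algebras in play. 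This reduces everything to matching the arithmetic intersection number with $-\del(\gamma,\varphi')$.

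Next I would linearize the geometric side. By \eqref{eq:def Mt}, $\wt\CM^{[t]}_n$ is the image of the graph cycle $\CN_n^{[n-1],\circ}\subset\CN_n^{[n-1],\circ}\times\CN_{n+1}^{[n+1]}$ under the atomic correspondence attached to the projections $\CN_{n+1}^{[n+1]}\leftarrow\CN_{n+1}^{[n+1,t]}\to\CN_{n+1}^{[t]}$; by Theorem~\ref{thm:Y unit} the graph is the exceptional $\CY$-divisor $\CY(u)^{[n+1]}_{n+1}\subset\CN_{n+1}^{[n+1]}$, and the ambient product $\CN_n^{[n-1],\circ}\times\CN_{n+1}^{[n+1]}$ is regular by exotic smoothness, with $\CN_{n+1}^{[n+1],\spt}=\CN_{n+1}^{[n+1]}$. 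The first technical step is to prove, granting Conjecture~\ref{conjMflat} (flatness of $\wt\CM^{[t]}_n$), that the splitting intersection number of \eqref{eq:Int1} equals
\[
\left\langle \CN_n^{[n-1],\circ},\;R(\varphi^{[n+1,t]}_{n+1})\,g\,\CN_n^{[n-1],\circ}\right\rangle_{\CN_n^{[n-1],\circ}\times\CN_{n+1}^{[n+1]}},
\]
where $R(\varphi^{[n+1,t]}_{n+1})$ is the action of the parahoric Hecke correspondence on the regular formal scheme $\CN_{n+1}^{[n+1]}$. This amounts to: (a) for $g$ regular semisimple, showing the intersection is supported away from the exceptional locus over $\CN_{n+1}^{[t],\spt}$, so that the blow-up may be ignored and $\wt\CM^{[t],\spt}_n$ agrees with $\wt\CM^{[t]}_n$ there; and (b) a projection-formula/kernel-composition manipulation for the correspondence $\CN_{n+1}^{[n+1,t]}$, which is flat over $\Spf O_{\breve F}$ and finite flat over the smooth $\CN_{n+1}^{[n+1]}$. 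The lattice computations of \S\ref{ss: Lat(n-1,t) Y}, especially Lemma~\ref{lem: lat (n-1,t) Y}, are the combinatorial shadow of (b) and pin down the volume normalizations.

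The heart of the proof is then a \emph{Hecke-equivariant arithmetic transfer identity} on the exotically smooth pair $(\CN_n^{[n-1],\circ}\times\CN_{n+1}^{[n+1]},\,\CY(u)^{[n+1]}_{n+1})$: for every element $h$ of the parahoric Hecke algebra of $\U(W_1)(F_0)$ one should have
\[
\left\langle \CN_n^{[n-1],\circ},\;R(h)\,g\,\CN_n^{[n-1],\circ}\right\rangle\cdot\log q=-\,\del(\gamma,\varphi'_h),
\]
where $\varphi'_h\in C_c^\infty(G')$ transfers to $(\vol(K_n^{[n-1],\circ})^{-2}{\bf 1}_{K_n^{[n-1]}}\otimes h,0)$; taking $h=\varphi^{[n+1,t]}_{n+1}$ then gives part~(i) for all even $0\le t\le n+1$ simultaneously. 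The case $h={\bf 1}_{K_{n+1}^{[n+1]}}$ (i.e.\ $t=n+1$) is precisely the arithmetic transfer conjecture of \cite[\S12]{RSZ2}, cf.\ \S\ref{ss: exotic t=n+1}, known for $n=1$ by Theorem~\ref{thmn=1-intro}. To pass to general $h$ I would show that both sides intertwine the Hecke action: on the geometric side, that arithmetic intersection with $\CY(u)^{[n+1]}_{n+1}$ and its $g$-translate commutes, at the level of $K$-groups of the regular formal scheme $\CN_{n+1}^{[n+1]}$, with the parahoric self-correspondences; on the analytic side, that the derived fundamental lemma is itself Hecke-linear, so that $\del(\gamma,\varphi'_h)$ is obtained from the $t=n+1$ value by applying the same Hecke element. \emph{The main obstacle is exactly this step for general odd $n\ge 3$.} Already the base case $t=n+1$ of \cite[\S12]{RSZ2} is open for $n\ge 3$ and seems to need a global, uniformization-based argument in the style of the proof of the AFL \cite{Zha21,MZ,ZZha}; and the Hecke-equivariance of the arithmetic intersection past the transversal range is delicate precisely because the intermediate RZ spaces $\CN_{n+1}^{[t]}$ are not regular --- this is why the splitting models were introduced, and controlling their contribution uniformly in $g$ is where the real work lies.

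Finally, part~(ii) is deduced from part~(i) and the symmetries already recorded. By Lemma~\ref{lem (n odd,t) Y}(iii),(iv) the two $\circ$-intersection numbers on $\CN_n^{[n-1],\circ}\times\CN_{n+1}^{[t],\spt}$ equal twice $\langle\wt\CM^{[t],+,\spt}_n,g\wt\CM^{[t],+,\spt}_n\rangle$ and twice $\langle\wt\CM^{[t],+,\spt}_n,g\wt\CM^{[t],-,\spt}_n\rangle$, respectively, the latter taken on $\CN_n^{[n-1]}\times\CN_{n+1}^{[t],\spt}$; by Lemma~\ref{lem: lat (n-1,t) Y}(i) the functions $\vol(K_n^{[n-1],\circ})^{-2}{\bf 1}_{K_n^{[n-1],\circ}}$ and $\vol(K_n^{[n-1],\circ})^{-2}{\bf 1}_{K_n^{[n-1],\circ}h}$ (for $h\in K_n^{[n-1]}\setminus K_n^{[n-1],\circ}$) are the two ``halves'' of $\vol(K_n^{[n-1],\circ})^{-2}{\bf 1}_{K_n^{[n-1]}}$ corresponding under $\pi$ to these two geometric quantities. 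Feeding these into part~(i), applied on $\CN_n^{[n-1],\circ}\times\CN_{n+1}^{[t],\spt}$ via push-forward along the trivial double cover $\pi$ (projection formula, and $\pi\circ g=g\circ\pi$, exactly as in the proof of Lemma~\ref{lem (n odd,t) Y}), yields the two displayed identities of part~(ii). By Lemma~\ref{lem (n odd,t) Y}(i),(ii) the intersection numbers do not depend on the labeling of $\wt\CM^{[t],\pm}_n$, so the non-canonicality of that labeling is harmless.
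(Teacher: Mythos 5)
The statement you are addressing is a \emph{conjecture} in the paper (it is formulated, not proved, except in the lowest case), and your proposal does not prove it either: both pivotal steps of your strategy are left open, and they carry essentially all of the content. The base case $t=n+1$ of your ``Hecke-equivariant arithmetic transfer identity'' is exactly the conjecture of \cite[\S 12]{RSZ2} (cf.\ \S\ref{ss: exotic t=n+1}), known only for $n=1$; and the equivariance statement that would propagate it to all even $t$ --- that intersection against $\CY(u)^{[n+1]}_{n+1}$ and its $g$-translate intertwines the parahoric Hecke correspondences through the non-regular intermediate spaces $\CN^{[t]}_{n+1}$ --- is precisely the kind of assertion the conjecture encodes, not a tool available to prove it. Moreover your step (a), that for $g$ regular semisimple the intersection on $\CN_n^{[n-1],\circ}\times\CN_{n+1}^{[t],\spt}$ is supported away from the exceptional locus so the blow-up can be ignored, is unjustified and in general contrary to the paper's expectations: the whole point of passing to splitting models is that the intersections do interact with the worst points, and Conjecture \ref{conj: t=n-1 corr} explicitly predicts a nontrivial correction term (an orbital integral function) coming from the exceptional fibers when one compares splitting-model and plain intersections. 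So, beyond the formal existence of transfers (which is smooth transfer \cite{Z14}, not a fundamental lemma), the proposal is a program resting on open inputs at least as deep as the statement itself.

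There is also a concrete logical error in your final step: part (ii) cannot be deduced from part (i). The paper states the opposite implication --- (ii) and (iii) are \emph{refinements} of (i): summing the two identities of (ii), using Lemma \ref{lem (n odd,t) Y} and Lemma \ref{lem: lat (n-1,t) Y}, recovers (i), but the identity for the sum does not determine how either side splits between the pieces attached to ${\bf 1}_{K_n^{[n-1],\circ}}\otimes \varphi^{[n+1,t]}_{n+1}$ and ${\bf 1}_{K_n^{[n-1],\circ}h}\otimes \varphi^{[n+1,t]}_{n+1}$. In the only case the paper does prove (type $(n-1,t)=(0,0)$, \S\ref{sec:(0,0)-case}), the refined identities are established directly and separately: after the inhomogeneous reduction (Corollary \ref{cor:inhomo-version}), the exceptional isomorphism with the Iwahori-level Lubin--Tate space identifies $\wt\CM_1^{[0],\pm,\spt}$ with sums of quasi-canonical lifting divisors, each of the two intersection numbers is computed by Gross's formula (Propositions \ref{prop:compute-hecke-1} and \ref{prop:compute-hecke-2}), and $\varphi'$ is produced via the germ-expansion method of \cite{Mih-AT} and Theorem \ref{thm:germ-expan} --- an explicit local computation, quite different from the global Hecke-equivariance bootstrap you outline.
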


\begin{remark}
In (i), by Lemma \ref{lem: lat (n-1,t) Y}  part (iii), one could replace the function $\vol(K_n^{[n-1],\circ})^{-2} {\bf 1}_{K_n^{[n-1]}}\otimes \varphi^{[n+1,t]}_{n+1}$ by $ \vol(K_n^{[n-1]})^{-2} {\bf 1}_{K_n^{[n-1]}}\otimes \varphi^{[n-1,n+1,t]}_{n+1}$. 
\end{remark}

\begin{remark}\label{rmk:MA-Y-comp}
When $t=n+1$, we have identifications  $\wt\CM^{[t],-,\spt}_n= \wt\CM^{[t],+,\spt}_n = \CN^{[n-1]}_n$ and an identification
\begin{equation*}
	\CN_n^{[n-1]}\times \CN_{n+1}^{[n+1],\spt}=\CN_n^{[n-1]}\times \CN_{n+1}^{[n+1]}.
\end{equation*}
Therefore, thanks to Lemma \ref{lem (n odd,t) Y} and \cite[Rem. 12.5]{RSZ2},  Conjecture \ref{conj (n odd,t) Y}(i) is equivalent to \cite[Conj. 12.4]{RSZ2}. On the other hand, (ii) and (iii) are refinements of  (i). Compare also  \S\ref{ss: exotic t=n+1}.
\end{remark}

\begin{remark}\label{rmk:(n-1,t)-Y-smaller}
For $t\leq n-1$, we can also consider the small correspondence for $\CY$-cycles (see also \S \ref{ss: naive (n odd, n+1)}). Let $$ \CN^{[n-1,t],\circ}_n:=\CN^{[n-1,t]}_n\times_{\CN^{[n-1]}_n}\CN_n^{[n-1],\circ}.$$ 
Then we may consider the following correspondence:
 \begin{equation*}
	\begin{aligned}
		\xymatrix{&&\CN^{[n-1,t],\circ}_n \times \CN_{n+1}^{[t]}  \ar[rd]   \ar[ld] &
			\\ \CN_n^{[t]}\ar@{^(->}[r]& \CN_n^{[t]} \times \CN_{n+1}^{[t]} &&  \CN_{n}^{[n-1],\circ} \times \CN_{n+1}^{[t]} .
		}
	\end{aligned}
\end{equation*}
Then $\CN^{[n-1,t],\circ}_n$ is a closed formal subscheme of $\CN_n^{[n-1],\circ}\times \CN^{[t]}_{n+1}$.
Recall from Lemma \ref{lem:naive-Y-cycle-emb} that $\wt{\CM}_n^{[t]}$ is also a  closed formal subscheme of $\CN_n^{[n-1],\circ}\times \CN^{[t]}_{n+1}$.
We claim that there is a natural closed embedding 
\begin{equation}\label{relNcirctilde}
	\CN^{[n-1,t],\circ}_n\hookrightarrow 
	\wt\CM^{[t]}_n.
\end{equation}
Indeed,  we have a cartesian diagram by Theorem \ref{th: wt N=N(t,n-1)},
\begin{equation*}
	\begin{aligned}
		\xymatrix{
			\CN^{[n-1,t]}_n \ar@{^(->}[r] \ar@{^(->}[r] \ar[d] \ar@{}[rd]|{\square}  &\CN^{[n-1,t]}_{n+1} \ar[d] 
			\\ \CN^{[n-1]}_n \ar@{^(->}[r] & \CN^{[n-1]}_{n+1} ,}
	\end{aligned}
\end{equation*}
and the base change $\CN^{[n-1,t]}_n\times_{\CN^{[n-1]}_n} \CN_n^{[n-1],\circ}$ is isomorphic to the fiber product 
\begin{equation*}
	\begin{aligned}
		\xymatrix{
			\CN^{[n-1,t],\circ}_n \ar@{^(->}[r] \ar[d] \ar@{}[rd]|{\square}  &\CN^{[n+1,n-1,t]}_{n+1} \ar[d]   \\
			\CN^{[n-1,t]}_n \ar@{^(->}[r]   &\CN^{[n-1,t]}_{n+1}  .
		}
	\end{aligned}
\end{equation*}
Hence we obtain a natural morphism  $\CN^{[n-1,t]}_n\times_{\CN^{[n-1]}_n} \CN_n^{[n-1],\circ}\simeq  \CN^{[n-1,t],\circ}_n  \to \CN^{[n+1,t]}_{n+1}$. We also have a natural morphism $\CN^{[n-1,t]}_n\times_{\CN^{[n-1]}_n} \CN_n^{[n-1],\circ}\to \CN_n^{[n-1],\circ}$, and it is easy to verify that the resulting diagram 
\begin{equation*}
	\begin{aligned}
		\xymatrix{
			\CN^{[n-1,t],\circ}_n \ar@{^(->}[r] \ar[d] \ar@{}[rd]|{\square}  & \CN^{[n+1,t]}_{n+1}\ar[d]   \\
			\CN^{[n-1],\circ}_n \ar@{^(->}[r]   &\CN^{[n+1]}_{n+1}  
		}
	\end{aligned}
\end{equation*}
is cartesian. Therefore  by the definition \eqref{eq:def Mt} of $\wt\CM^{[t]}_n$, we obtain the map \eqref{relNcirctilde}.

We do not expect \eqref{relNcirctilde} to be an isomorphism. This is based on the computation on the corresponding lattice models. Indeed, using \eqref{eq: N circ}, we have
\begin{equation*}
\BN_n^{[n-1],\circ}=\{
(\Lambda^\flat_{n-1},\Lambda_{n+1})\in \mathrm{Vert}^{n-1}(W^\flat)\times \mathrm{Vert}^{n+1}(W)\mid \Lambda_{n+1}\subset \Lambda_{n-1}^\flat\oplus\langle u\rangle
\}.
\end{equation*}
Using \eqref{eq:def Mt}, we have
$$
\wt{\BM}_n^{[t]}=\{
(\Lambda_t,\Lambda_{n-1}^\flat,\Lambda_{n+1})\in \mathrm{Vert}^t(W)\times\mathrm{Vert}^{n-1}(W^\flat)\times \mathrm{Vert}^{n+1}(W)\mid \Lambda_{n+1}\subset\Lambda_{n-1}^\flat\oplus\langle u\rangle,\Lambda_{n+1}\subset\Lambda_t
\}.
$$
On the other hand, we have (see also \eqref{eq:lattice-t n-1 circ})
\begin{equation*}
\BN_n^{[n-1,t],\circ}=\{
(\Lambda^\flat_t,\Lambda_{n-1}^\flat,\Lambda_{n+1}\in \mathrm{Vert}^t(W^\flat)\times \mathrm{Vert}^{n-1}(W^\flat)\times \mathrm{Vert}^{n+1}(W)\mid \Lambda_{n-1}^\flat\subset \Lambda_t^\flat,\Lambda_{n+1}\subset \Lambda_{n-1}^\flat\oplus\langle u\rangle
\}.
\end{equation*}
One immediately sees that $\BN_n^{[n-1,t],\circ}\subset \wt{\BM}_n^{[t]}$ is the proper subset consisting of triples $(\Lambda_t,\Lambda_{n-1}^\flat,\Lambda_{n+1})$ such that $\Lambda_{n+1}\subset \Lambda_{n-1}^\flat \oplus\langle u\rangle\subset \Lambda_t$.

Therefore, we do not expect that the AT conjecture for the smaller correspondence is equivalent to the AT conjecture for the larger correspondence in Conjecture \ref{conj (n odd,t) Y}. On the other hand, the AT conjecture for the smaller correspondence for the $\CY$-cycle is equivalent to Conjecture \ref{conj (n odd,t<n) Z}.
This is straightforward from the following cartesian diagram:
\begin{equation*}
\begin{aligned}
\xymatrix{
\CN_n^{[n-1,t],\circ}\ar@{^(->}[r]\ar[d]^-{p}\ar@{}[rd]|{\square}&\CN_n^{[n-1],\circ}\times \CN_{n+1}^{[t]}\ar[d]\\
\CN_n^{[n-1,t]}\ar@{^(->}[r]&\CN_n^{[n-1]}\times \CN_{n+1}^{[t]} .
}
\end{aligned}
\end{equation*}
Indeed, using the projection formula, we have
\begin{align*}
\Bigl\langle \CN_n^{[n-1,t],\circ}, g\CN_n^{[n-1,t],\circ}\Bigr\rangle_{\CN_n^{[n-1],\circ}\times \CN_{n+1}^{[t]}}={}&\Bigl\langle \CN_n^{[n-1,t],\circ}, p^*(g\CN_n^{[n-1,t]})\Bigr\rangle_{\CN_n^{[n-1],\circ}\times \CN_{n+1}^{[t]}},\\
={}&\Bigl\langle p_*(\CN_n^{[n-1,t],\circ}), g\CN_n^{[n-1,t]}\Bigr\rangle_{\CN_n^{[n-1]}\times \CN_{n+1}^{[t]}},\\
={}&2\Bigl\langle \CN_n^{[n-1,t]}, g\CN_n^{[n-1,t]}\Bigr\rangle_{\CN_n^{[n-1]}\times \CN_{n+1}^{[t]}}.
\end{align*}
On the analytic side, we replace the transfer function  $(\vol( K_n^{[n-1,t]})^{-2}{\bf 1}_{K_n^{[n-1]}\times K^{[t]}_{n+1}},0)\in C_c^\infty(G_{W_0})\times C_c^\infty(G_{W_{1}})$ by $(\vol( K_n^{[n-1,t],\circ})^{-2}{\bf 1}_{K_n^{[n-1],\circ}\times K^{[t]}_{n+1}},0)$. One can use  \eqref{eq: eM} to show that  the orbit integral of the latter function is twice that of the former.

\end{remark}

  \section{AT conjecture of type $(t, n)$}\label{sec:type-(t,n)}

 Let $n=2m$ be even.  
Let $0\leq t\leq n$ be even. We use the exceptional special divisor $\CZ(u)^{[t], \spt}$ on $\CN_{n+1}^{[t],\spt}$,  for a unit length vector $u\in \BV_{n+1}$. This can be identified with $  \CN^{[t], \spt}_{n}$, cf. Theorem \ref{thm: Z unit}. The intersection product takes place on $\CN_{n}^{[t],\spt} \times \CN_{n+1}^{[n]}$.

 We fix an aligned triple of framing objects $(\BY, \BX, u)=(\BY_{\ep^\flat}^{[t]}, \BX_\ep^{[t]}, u)$ of dimension $n+1$ and type $t$, with corresponding embedding of RZ spaces $\CN^{[t]}_{n, \ep^\flat}=\CN^{[t]}_n\hookrightarrow \CN^{[t]}_{n+1}=\CN^{[t]}_{n+1, \ep}$, cf. \S \ref{sec:emb-RZ}. When $t=n$, we impose that $\ep^\flat=1$, since otherwise $\CN^{[t]}_n$ is not defined.  As in \S \ref{sec:type-(n,t)}, we have a small correspondence and a big correspondence. For the small correspondence, we  fix an isogeny $\BY^{[n]}\to \BY^{[t]}$ as in \S \ref{sec:RZ-deeper}, and extend this in the obvious way to an isogeny $\BX^{[n]}\to \BX^{[t]}$.   Again, this is only possible when $\ep^\flat=1$. This defines additional embeddings of RZ spaces  $\CN^{[n]}_n\hookrightarrow \CN^{[n]}_{n+1}$ and  $\CN^{[n,t]}_n\hookrightarrow \CN^{[n,t]}_{n+1}$, cf. \S \ref{sec:emb-RZ}. For the big correspondence, we fix an isogeny $\BX^{[n]}\to \BX^{[t]}$, as in \S \ref{sec:RZ-deeper}, which allows us to write $\CN^{[n, t]}_{n+1}$. 
 
 We again use the notation $W_1=\BV(\BX)$ and $W_1^\flat=\BV(\BY)$, so that $u\in W_1$ and $W_1^\flat=\langle u\rangle^\perp$. We denote by $W_0$ the hermitian space of dimension $n+1$ with opposite Hasse invariant of $W_1$, and fix a vector $u_0\in W_0$ of the same length as $u$ and set $W_0^\flat  =\langle u_0\rangle^\perp$. Then $W_0^\flat $ has the opposite Hasse invariant of $W_1^\flat$.

\subsection{The naive version}\label{sec:(t,n)-naive}

Similar to \S \ref{sec:type-(n,t)}, there are two ways to use correspondences to obtain natural cycles on the ambient space  $\CN_{n,\ep}^{[t]} \times \CN_{n+1}^{[n]}$. The first one is to use a correspondence $\CN^{[n,t]}_n $ on the smaller space
 \begin{equation*}
 \begin{aligned}
 \xymatrix{&&\CN^{[n,t]}_{n} \times \CN_{n+1}^{[n]}  \ar[rd]   \ar[ld] &
 \\ \CN_{n}^{[n]}\ar@{^(->}[r]& \CN_n^{[n]} \times \CN_{n+1}^{[n]} &&  \CN_{n}^{[t]} \times \CN_{n+1}^{[n]} .
 }
 \end{aligned}
 \end{equation*}
  Recall that here $\ep^\flat=1$.
 The second one  is to use a correspondence $\CN^{[n,t]}_{n+1}$  on the bigger space
 \begin{equation*}
 \begin{aligned}
 \xymatrix{&&\CN_{n}^{[t]} \times \CN_{n+1}^{[n,t]}  \ar[rd]   \ar[ld] &
 \\ \CN_{n}^{[t]}\ar@{^(->}[r]& \CN_{n}^{[t]} \times \CN_{n+1}^{[t]} &&  \CN_{n}^{[t]} \times \CN_{n+1}^{[n]} .
 }
 \end{aligned}
 \end{equation*}
Here is the resulting small correspondence (for which we assume $\ep^\flat=1$):
 \begin{equation*}
 \begin{aligned}
 \xymatrix{&\CN^{[n,t]}_n \ar[rd]   \ar[ld]_{\pi_1} &&
 \\ \CN_n^{[t]} &&  \CN_{n}^{[n]}\ar@{^(->}[r]&\CN_{n+1}^{[n]}.
 }
 \end{aligned}
 \end{equation*}
 Here is the resulting big correspondence:

  \begin{equation*}
 \begin{aligned}
 \xymatrix{
 &\wt{\CN}_n^{[t]}\ar@{}[d]|{\square}\ar[dl]_{\pi_1}\ar@{^(->}[r]\ar[rrd]^{\pi_2}
 &\CN_{n+1}^{[n,t]}\ar[rd]\ar[ld]|!{[l];[rd]}\hole&\\
 \CN_n^{[t]}\ar@{^(->}[r]&\CN_{n+1}^{[t]}&&\CN_{n+1}^{[n]}.}
 \end{aligned}
 \end{equation*}
 There is a natural morphism 
 $$\iota\colon\CN^{[n,t]}_n\to  \wt{\CN}_n^{[t]}.
 $$
 
 \begin{lemma}\label{lem:(t,n)-naive-emb}
 Both morphisms
 $$
\iota\colon  \CN^{[n,t]}_n\to  \wt{\CN}_n^{[t]} , \quad (\pi_1, \pi_2)\colon \wt{\CN}_n^{[t]}\to \CN_n^{[t]}\times\CN_{n+1}^{[n]}
 $$
 are closed embeddings.
 \end{lemma}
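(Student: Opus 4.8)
I would first treat $(\pi_1,\pi_2)$, reducing both statements to the construction of parahoric Rapoport--Zink spaces recalled in \S\ref{sec:RZ-deeper} together with the cartesian diagram of Proposition \ref{prop:deep-exc}. Recall from \S\ref{sec:RZ-deeper} that $\CN_{n+1}^{[n,t]}$ is by definition a closed formal subscheme of $\CN_{n+1}^{[n]}\times\CN_{n+1}^{[t]}$, namely the locus where the framing quasi-isogeny lifts to an isogeny (such a lift being unique when it exists, cf. \cite[Prop. 2.9]{RZ}), and likewise $\CN_n^{[n,t]}$ is a closed formal subscheme of $\CN_n^{[n]}\times\CN_n^{[t]}$. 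Write $j^{[t]}\colon\CN_n^{[t]}\hookrightarrow\CN_{n+1}^{[t]}$ and $j^{[n]}\colon\CN_n^{[n]}\hookrightarrow\CN_{n+1}^{[n]}$ for the closed embeddings of \eqref{equ:embd-RZ}; by Proposition \ref{prop:deep-exc} in the case $r=n$ (which is the relevant one here, $n$ being even) one has $\CN_n^{[n,t]}=\CN_n^{[n]}\times_{\CN_{n+1}^{[n]}}\CN_{n+1}^{[n,t]}$.

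Now by its very definition $\wt\CN_n^{[t]}=\CN_{n+1}^{[n,t]}\times_{\CN_{n+1}^{[t]}}\CN_n^{[t]}$, the map $\CN_{n+1}^{[n,t]}\to\CN_{n+1}^{[t]}$ being the parahoric projection and $\CN_n^{[t]}\to\CN_{n+1}^{[t]}$ being $j^{[t]}$. Base-changing the closed immersion $\CN_{n+1}^{[n,t]}\hookrightarrow\CN_{n+1}^{[n]}\times\CN_{n+1}^{[t]}$ along $\id\times j^{[t]}\colon\CN_{n+1}^{[n]}\times\CN_n^{[t]}\to\CN_{n+1}^{[n]}\times\CN_{n+1}^{[t]}$ then produces a closed immersion $\wt\CN_n^{[t]}\hookrightarrow\CN_{n+1}^{[n]}\times\CN_n^{[t]}$, and unwinding the definitions shows that this agrees with $(\pi_2,\pi_1)$. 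Equivalently, and in the moduli-theoretic style used for the analogous statement about $\wh\CN_n^{[t]}$ in \S\ref{sec:type-(n,t)}, $\wt\CN_n^{[t]}$ is the closed formal subscheme of $\CN_n^{[t]}\times\CN_{n+1}^{[n]}$ parameterizing pairs $(Y^{[t]},X^{[n]})$ such that the quasi-isogeny $(\rho_{Y^{[t]}}\times\rho_{\ov\CE})^{-1}\circ\alpha\circ\rho_{X^{[n]}}$ over the special fiber lifts to an isogeny $X^{[n]}\to Y^{[t]}\times\ov\CE$, where $\alpha\colon\BX^{[n]}\to\BX^{[t]}$ is the fixed isogeny of framing objects.

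Next I would treat $\iota$, which is defined precisely when $\ep^\flat=1$, so that the framing isogeny $\BY^{[n]}\to\BY^{[t]}$ and its extension $\BX^{[n]}\to\BX^{[t]}$ exist. Since $(\pi_1,\pi_2)$ has just been shown to be a closed immersion, hence separated, the cancellation property for closed immersions reduces us to verifying that the composite $(\pi_1,\pi_2)\circ\iota\colon\CN_n^{[n,t]}\to\CN_n^{[t]}\times\CN_{n+1}^{[n]}$ is a closed immersion. Tracing through the construction of $\iota$ from the cartesian square $\CN_n^{[n,t]}=\CN_n^{[n]}\times_{\CN_{n+1}^{[n]}}\CN_{n+1}^{[n,t]}$, an $S$-point $(Y^{[n]},Y^{[t]},\wit\beta)$ of $\CN_n^{[n,t]}$ is carried by this composite to $(Y^{[t]},j^{[n]}(Y^{[n]}))$; thus $(\pi_1,\pi_2)\circ\iota$ factors as the defining closed immersion $\CN_n^{[n,t]}\hookrightarrow\CN_n^{[n]}\times\CN_n^{[t]}$, followed by the closed immersion $\CN_n^{[n]}\times\CN_n^{[t]}\hookrightarrow\CN_{n+1}^{[n]}\times\CN_n^{[t]}$ induced by $j^{[n]}$, followed by the interchange of factors. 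A composite of closed immersions is a closed immersion, so $(\pi_1,\pi_2)\circ\iota$, and therefore $\iota$, is a closed immersion.

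I do not expect a genuine obstacle here: the argument is formal once Proposition \ref{prop:deep-exc} and the moduli description of parahoric RZ spaces are available. The only points demanding care are bookkeeping ones: checking that $\iota$ is well defined, i.e. that the two maps $\CN_n^{[n,t]}\to\CN_{n+1}^{[t]}$ obtained via $\CN_n^{[t]}\xrightarrow{\,j^{[t]}\,}\CN_{n+1}^{[t]}$ and via $\CN_{n+1}^{[n,t]}\to\CN_{n+1}^{[t]}$ coincide, and keeping the several framing isogenies $\BX^{[n]}\to\BX^{[t]}$, $\BY^{[n]}\to\BY^{[t]}$ matched under the embedding of aligned triples of \S\ref{sec:emb-RZ}; both follow at once from the compatibility of the choices fixed in \S\ref{sec:type-(t,n)}. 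In the boundary case $t=n$ the statement degenerates, since then $\CN_n^{[n,t]}=\wt\CN_n^{[n]}=\CN_n^{[n]}$ and $\iota=\id$; the case $\ep^\flat=-1$ with $t<n$ does not arise, as $\CN_n^{[n,t]}$ is then undefined.
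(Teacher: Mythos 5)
Your proof is correct and follows essentially the same route as the paper: the paper also establishes that $(\pi_1,\pi_2)$ is a closed embedding by identifying $\wt{\CN}_n^{[t]}$ with the closed locus in $\CN_n^{[t]}\times\CN_{n+1}^{[n]}$ where the framing quasi-isogeny lifts, and then deduces the claim for $\iota$ from the fact that $\CN_n^{[n,t]}$ is likewise a closed formal subscheme of the same ambient product. Your version merely makes explicit the base-change and cancellation arguments that the paper leaves implicit.
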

 \begin{proof}
We first show that $(\pi_1,\pi_2)$ is a closed embedding. Indeed, $\wt{\CN}_n^{[t]}$ is the closed formal subscheme of $\CN_n^{[t]}\times \CN_{n+1}^{[n]}$ parameterizing pairs $(Y^{[t]}, X^{[n]})$ such that the quasi-isogeny 
$(\rho_{Y^{[t]}}\times \rho_{\ov{\CE}})^{-1}\circ\alpha \circ \rho_{X^{[t]}}$ over the special fiber lifts to an isogeny $X^{[n]}\to Y^{[t]}\times \ov{\CE}$, where $\alpha:\BX^{[n]}\to \BY^{[t]}\times \ov{\BE}$ is the isogeny between the framing objects.
Since $\CN_n^{[n,t]}$ is also a closed subscheme of $\CN_n^{[t]}\times \CN_{n+1}^{[n]}$, we conclude that $\iota$ is a closed embedding.
 \end{proof}

 \begin{conjecture}\label{conj-wtNfl}
 The space $\wt{\CN}_n^{[t]}$ is flat. 
  \end{conjecture}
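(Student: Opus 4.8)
The plan is to reduce Conjecture \ref{conj-wtNfl} to a statement that we can actually verify, namely the flatness of a space that is already known (or provably) flat, by comparing $\wt{\CN}_n^{[t]}$ to the splitting-model construction. First I would observe that, just as in the case of $\wh{\CN}_n^{[t]}$ treated in Theorem \ref{th:wh} and Theorem \ref{th: wt N=N(t,n-1)}, the space $\wt{\CN}_n^{[t]}$ sits in a cartesian square
\begin{equation*}
\begin{aligned}
\xymatrix{
\wt{\CN}_n^{[t]}\ar[r]\ar[d]\ar@{}[rd]|{\square}&\CN_{n+1}^{[n,t]}\ar[d]\\
\CN_n^{[t]}\ar@{^(->}[r]&\CN_{n+1}^{[t]},
}
\end{aligned}
\end{equation*}
where the bottom arrow is the $\CZ$-divisor embedding $\CN_n^{[t]}=\CZ(u)^{[t]}_{n+1,\ep}\hookrightarrow\CN_{n+1}^{[t]}$ of Theorem \ref{thm: Z unit}. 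The key point is that this embedding is \emph{not} the one covered by Proposition \ref{prop:deep-exc} (there the lower arrow was $\CN_n^{[r]}\hookrightarrow\CN_{n+1}^{[r]}$ with $r=n$ the deep level), so $\wt{\CN}_n^{[t]}$ genuinely differs from $\CN_n^{[n,t]}$, which is exactly the content of the first Remark after Proposition \ref{prop:deep-exc}. Hence the flatness cannot be deduced merely by identifying $\wt{\CN}_n^{[t]}$ with another RZ space.

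Instead, I would pass to the splitting model. Pulling back the cartesian square along $\CN_{n+1}^{[t],\spt}\to\CN_{n+1}^{[t]}$ and using that $\CZ(u)^{[t],\spt}_{n+1,\ep}$ is a Cartier divisor on the regular formal scheme $\CN_{n+1}^{[t],\spt}$ (Theorem \ref{thm: Z spt unit}(i)), one gets a candidate $\wt{\CN}_n^{[t],\spt}$ that is cut out by a regular sequence inside the regular ambient $\CN_n^{[t],\spt}\times_{\CN_{n+1}^{[t]}}\CN_{n+1}^{[n,t],\spt}$, provided one first establishes flatness of the parahoric-level splitting space and the relevant intersection-theoretic transversality. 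More concretely, the plan is: (i) show $\CN_{n+1}^{[n,t]}\to\CN_{n+1}^{[t]}$ is a proper flat morphism with fibers of the expected dimension — this follows from the flatness of $\CN_{n+1}^{[n,t]}$ established in \cite{Luo24} and the surjectivity statements of \S\ref{sec:RZ-deeper}; (ii) deduce that $\wt{\CN}_n^{[t]}=\CZ(u)^{[t]}_{n+1}\times_{\CN_{n+1}^{[t]}}\CN_{n+1}^{[n,t]}$ has the expected relative dimension $n-1$ everywhere, by computing dimensions of geometric fibers using the explicit lattice description
\begin{equation*}
\CN_{n+1}^{[n,t]}(\BF)=\bigl\{M_n\subseteq M_t\subset N\mid \pi M_i^\vee\subseteq M_i\stackrel{i}{\subseteq}M_i^\vee,\ \ldots,\ i=n,t\bigr\};
\end{equation*}
(iii) apply the miracle flatness criterion (e.g. \cite[Prop. 14.17]{GW}, as used in the proof of Theorem \ref{thm:exc-iso-loc}) once one knows $\wt{\CN}_n^{[t]}$ is Cohen--Macaulay with the correct dimension and that its generic fiber is smooth of that dimension.

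The main obstacle, I expect, will be step (ii)–(iii): controlling the local structure of $\wt{\CN}_n^{[t]}$ at the worst points of $\CN_n^{[t]}$. Away from $\Sing(\CN_n^{[t]})$ the $\CZ$-divisor embedding is regular (by Theorem \ref{thm: Z unit} and the semi-stability in Theorem \ref{thm:geo-RZ-canonical}(iii)), so the fiber product with the flat morphism $\CN_{n+1}^{[n,t]}\to\CN_{n+1}^{[t]}$ is flat there; the difficulty is entirely concentrated over the singular locus, where both $\CN_n^{[t]}$ and $\CN_{n+1}^{[t]}$ fail to be regular and the Cartier property of $\CZ(u)^{[t]}_{n+1}$ is lost. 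The natural remedy is to prove flatness after the blow-up, i.e. to show $\wt{\CN}_n^{[t],\spt}$ is flat (for instance via a local model computation analogous to the strengthened-spin analysis of \S\ref{sec:exc-iso-proof}, realizing the local model of $\wt{\CN}_n^{[t]}$ as an appropriate $\bZ(u)$-type subscheme of $\bM_{n+1}^{[n,t]}$), and then to argue that flatness of the blow-up forces flatness of $\wt{\CN}_n^{[t]}$ itself, since $\wt{\CN}_n^{[t]}\to\wt{\CN}_n^{[t],\spt}$ is proper and an isomorphism on generic fibers. I anticipate this last descent step, together with the bookkeeping of which lattice configurations occur over $\WT(\Lambda)$, to be where the real work lies; in the body of the paper one would presumably prove it only in the cases where it is needed (compare Theorem \ref{sm=big10} and the cases I.(i), (ii) cited in \S\ref{ss:correspcyc}) and leave the general statement as the stated conjecture.
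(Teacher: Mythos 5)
First, a point of calibration: the paper does not prove this statement at all — it is stated and left as an open conjecture (Conjecture \ref{conj-wtNfl}), the only settled case being $t=n$, where Remark \ref{rem: t=n even wtN} shows $\wt{\CN}_n^{[n]}\simeq\CN_n^{[n]}$ and flatness is automatic. So there is no "paper proof" to match; what you have written is a strategy outline, and as such it correctly identifies the definition of $\wt{\CN}_n^{[t]}$ as the fiber product of the exceptional $\CZ$-divisor embedding $\CN_n^{[t]}\hookrightarrow\CN_{n+1}^{[t]}$ with $\CN_{n+1}^{[n,t]}\to\CN_{n+1}^{[t]}$, the reason Proposition \ref{prop:deep-exc} does not apply, and the fact that the difficulty is concentrated over $\Sing(\CN_{n}^{[t]})$.

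However, several of your intermediate steps are either unestablished or would fail. (i) You assert that $\CN_{n+1}^{[n,t]}\to\CN_{n+1}^{[t]}$ is flat; the paper only gives flatness of $\CN_{n+1}^{[n,t]}$ over $\Spf O_{\breve F}$, which is much weaker, and flatness of a level-changing projection between parahoric RZ spaces is in general false (the fiber dimension jumps over the worst points — compare the non-cartesian lattice diagram \eqref{eq: wtNt -} and the fiber description there). (ii)–(iii) Your appeal to miracle flatness presupposes that $\wt{\CN}_n^{[t]}$ is Cohen--Macaulay of the expected dimension, neither of which is addressed, and over a DVR base the real content of flatness is simply the absence of components or embedded components supported in the special fiber — exactly what is hard to control at points lying over $\Sing$. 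Most seriously, the final descent step is backwards: the morphism goes $\wt{\CN}_n^{[t],\spt}\to\wt{\CN}_n^{[t]}$, and $\wt{\CN}_n^{[t],\spt}$ is \emph{defined} as a flat closure, so it is flat by construction for trivial reasons; flatness of a proper modification (or of the flat closure) never implies flatness of the original formal scheme, since passing to the flat closure is precisely the operation that discards $\pi$-power torsion. So the proposed route does not close the gap, and the conjecture remains open beyond the cases the paper itself settles (e.g.\ the analogous flatness statements proved via identification with RZ spaces in Theorems \ref{th:wh}, \ref{th: wt N=N(t,n-1)} and \ref{sm=big10}, and the $n=1$ case of Conjecture \ref{conjMflat} in \S\ref{sec:(0,0)-case}). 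A genuinely new input — most plausibly a local-model computation of $\wt{\CN}_n^{[t]}$ at the worst points, in the spirit of \S\ref{sec:exc-iso-proof} — would be needed.
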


 \begin{remark}\label{rem: t=n even wtN}
 Note that when $t=n$ (and hence $\ep^\flat=1$), the cartesian diagram defining $ \wt{\CN}_n^{[t]}$  shows that the morphism $\CN^{[n]}_n=\CN^{[n,t]}_n\to  \wt{\CN}_n^{[t]}=\wt{\CN}_n^{[n]}$ is an isomorphism. In general we expect $\CN^{[n,t]}_n$ and $  \wt{\CN}_n^{[t]}$ to be non-isomorphic. 
 \end{remark}
 
 \begin{proposition}
 There is a cartesian diagram, where in the bottom line appear  special cycles on $\CN_{n+1}^{[n]}$, 
  \begin{equation}
  \begin{aligned}
  \xymatrix{\CN^{[n,t]}_n \ar@{^(->}[r] \ar[d] \ar@{}[rd]|{\square}  & \wt{\CN}^{[t]}_n\ar[d] 
  \\ \CZ(u)^{[n]} \ar@{^(->}[r] &\CY(u)^{[n]}.}
  \end{aligned}
\end{equation}

\end{proposition}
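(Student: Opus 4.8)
The plan is to establish the cartesian diagram by exhibiting all four corners as moduli functors over $\Spf O_{\breve F}$ and checking that the universal property of the fiber product holds on the nose. First I would make explicit what each corner parametrizes, following the conventions already set up in \S\ref{sec:emb-RZ} and \S\ref{sec:(t,n)-naive}. The cycle $\wt\CN_n^{[t]}$ parametrizes pairs $(Y^{[t]},X^{[n]})\in \CN_n^{[t]}\times\CN_{n+1}^{[n]}$ such that the quasi-isogeny $(\rho_{Y^{[t]}}\times\rho_{\ov\CE})^{-1}\circ\alpha\circ\rho_{X^{[n]}}$ lifts to an isogeny $X^{[n]}\to Y^{[t]}\times\ov\CE$ (Lemma \ref{lem:(t,n)-naive-emb}); the cycle $\CN_n^{[n,t]}\hookrightarrow\wt\CN_n^{[t]}$ is the further closed locus where, in addition, the $[t]$-component of $Y^{[t]}\times\ov\CE$ lifts to $X^{[t]}$ compatibly, i.e. where the lift of $X^{[n]}\to X^{[t]}$ in the $\CN_{n+1}$-tower is respected. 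For the bottom row, I would use Theorem \ref{thm:Y unit}: the exceptional $\CY$-divisor $\CY(u)^{[n]}\subset\CN_{n+1}^{[n]}$ is identified with $\CN_{n}^{[n-2],\circ}$ via the fiber-product description there, and $\CZ(u)^{[n]}\subset\CY(u)^{[n]}$ sits inside it as the locus where the special quasi-homomorphism $u$ itself (not just $\pi u$) lifts; recall from the last sentence of Theorem \ref{thm:Y unit} that $\CY(u)^{[n]}=\CZ(\pi u)^{[n]}$.

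The heart of the argument is then a local, module-theoretic verification. Given an $S$-point of $\wt\CN_n^{[t]}$, i.e. a pair $(Y^{[t]},X^{[n]})$ together with the lift $\phi\colon X^{[n]}\to Y^{[t]}\times\ov\CE$, the existence of the $\CZ(u)$-structure downstairs (the map $u$ lifting on $X^{[n]}$) should be shown to be \emph{equivalent} to the existence of a compatible object of $\CN_n^{[n,t]}$ upstairs. In one direction, an $S$-point of $\CN_n^{[n,t]}$ provides the isogeny $Y^{[n]}\to Y^{[t]}$, hence via the standard splitting procedure (as in the proof of Proposition \ref{prop:deep-exc}) a compatible system of splittings $X^{[i]}\simeq Y^{[i]}\times\CE$ for $i\in\{n,t\}$, which in particular exhibits the lift of $u$ on $X^{[n]}$ and simultaneously the lift of $u$ on $X^{[t]}$, placing the image point in $\CZ(u)^{[n]}\subset\CN_{n+1}^{[n]}$ after applying Theorem \ref{thm: Z unit}. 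Conversely, starting from a point of $\wt\CN_n^{[t]}$ mapping into $\CZ(u)^{[n]}$, one has on $X^{[n]}$ both the lift $\phi$ to $Y^{[t]}\times\ov\CE$ and the lift of $u$ to a map $\CE\to X^{[n]}$; composing and using the standard splitting as in Proposition \ref{prop:deep-exc} one recovers $Y^{[n]}\simeq(1-e)X^{[n]}$ together with the isogeny $Y^{[n]}\to Y^{[t]}$ compatible with the chain, i.e. a point of $\CN_n^{[n,t]}$. The key input making both directions work is that the idempotent $e$ associated to $u$ (as in the proof of Theorem \ref{thm: Z unit}) is compatible with the isogeny $\phi$, which follows from $O_F$-linearity and the fact that all maps respect the polarizations up to the fixed units.

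I would organize the write-up so that the cartesian property is reduced to the statement that the two closed immersions $\CN_n^{[n,t]}\hookrightarrow\wt\CN_n^{[t]}$ and $\CZ(u)^{[n]}\hookrightarrow\CY(u)^{[n]}$, together with the map $\wt\CN_n^{[t]}\to\CN_{n+1}^{[n]}$ landing in $\CY(u)^{[n]}$ (which needs to be checked: any point of $\wt\CN_n^{[t]}$ has $X^{[n]}$ receiving $\pi u$ from $\ov\CE$ because $\phi$ composed with projection to $\ov\CE$ and then $\lambda$ realizes it — this is exactly where $\CY(u)^{[n]}=\CZ(\pi u)^{[n]}$ enters), fit into a square in which both horizontal arrows are cut out by the \emph{same} condition, namely the liftability of $u$ itself. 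The main obstacle I anticipate is the bookkeeping around the $\CY$ versus $\CZ$ distinction and the trivial double cover in Theorem \ref{thm:Y unit}: one must be careful that the map $\wt\CN_n^{[t]}\to\CN_{n+1}^{[n]}$ genuinely factors through $\CY(u)^{[n]}$ and not merely through its image in $\CN_{n+1}^{[n-2]}$, and that the $\CZ$-locus inside $\CY(u)^{[n]}$ is the correct pullback — this is the place where I would need to invoke the precise moduli description in Yao's theorem rather than a soft argument, and where the interplay between the polarization scalar $\zeta=h(u,u)$ and the distinction between $\CZ(u)$ and $\CZ(\pi u)$ must be tracked carefully. Flatness of the corners is not needed for the cartesian statement itself (it is the separate Conjecture \ref{conj-wtNfl}), so I would phrase everything at the level of functors of points over arbitrary $S\in(\mathrm{Sch}/\Spf O_{\breve F})$.
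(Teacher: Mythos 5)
Your approach is essentially the paper's: you establish that $\wt\CN_n^{[t]}\to\CN_{n+1}^{[n]}$ factors through $\CY(u)^{[n]}$ by composing the inclusion of $\ov\CE$ with the polarization and the dual of the lifted isogeny, and you identify the fiber product over $\CZ(u)^{[n]}$ with $\CN_n^{[n,t]}$ via the splitting-idempotent argument. The paper packages this second step as a formal chase through a $2\times 2$ grid of squares, quoting Proposition \ref{prop:deep-exc} for the outer rectangle --- and the proof of that proposition is exactly the splitting argument you spell out --- so the two write-ups differ only in presentation. One correction to your framing: Theorem \ref{thm:Y unit} is neither applicable nor needed for the bottom row. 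That theorem concerns $\CY(u)^{[n]}_n\subset\CN_n^{[n]}$ with $n$ even (the $\pi$-modular case), whereas here the ambient space is $\CN_{n+1}^{[n]}$ with $n+1$ odd (the almost $\pi$-modular case), so the identification with a space of the form $\CN^{[n-2],\circ}$ and the equality $\CY(u)=\CZ(\pi u)$ quoted from that theorem do not transfer. All the argument requires is the plain moduli definition of the $\CY$-cycle (Definition \ref{def:speccy}(ii)) together with the evident inclusion $\CZ(u)^{[n]}\subset\CY(u)^{[n]}$; your anticipated reliance on ``the precise moduli description in Yao's theorem'' is unnecessary, and dropping it removes the only shaky step in your plan.
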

\begin{proof}
Let $R$ be an algebra over $\Spf O_{\breve{F}}$ and let $(X^{[n]},Y^{[t]})\in \wt{\CN}_n^{[t]}(R)$ be any $R$-point. By definition, we have an isogeny 
$X^{[n]}\to Y^{[t]}\times \ov{\CE}$ lifting the quasi-isogeny between the framing objects. The composition 
$$\ov{\CE}\longrightarrow Y^{[t]}\times\ov{\CE}\longrightarrow Y^{[t],\vee}\times\ov{\CE}\longrightarrow X^{[n],\vee}$$
defines a lifting of the quasi-isogeny $u:\ov{\BE}\to \BX^{[n],\vee}$. Therefore, the composition $\wt{\CN}_n^{[t]}\hookrightarrow \CN_{n+1}^{[n,t]}\rightarrow \CN_{n+1}^{[n]}$ factors through $\CY(u)^{[n]}$.
Consider the following commutative diagram,
\begin{equation*}
\begin{aligned}
\xymatrix{
\CN_n^{[n,t]}\ar@{^(->}[r]\ar@{_(->}[d]&\wt{\CN}_n^{[t]}\ar@{_(->}[d]&\\
\CN_n^{[n,t]}\ar@{^(->}[r]\ar[d]&\CY(u)^{[n]}\times_{\CN_{n+1}^{[n]}}\CN_{n+1}^{[n,t]}\ar[d]\ar@{^(->}[r]\ar@{}[rd]|{\square}&\CN_{n+1}^{[n,t]}\ar[d]\\
\CZ(u)^{[n]}\ar@{^(->}[r]&\CY(u)^{[n]}\ar@{^(->}[r]&\CN_{n+1}^{[n]}.
}
\end{aligned}
\end{equation*}
The bottom left square is cartesian since the bottom rectangle is cartesian. The top left square is cartesian since $\CN_n^{[n,t]}\subset \wt{\CN}_n^{[t]}\subset \CY(u)^{[n]}\times_{\CN_{n+1}^{[n]}}\CN_{n+1}^{[n,t]}\subset \CN_{n+1}^{[n,t]}$ as closed formal subschemes. Therefore, the left rectangle is cartesian.
\end{proof}

 \subsection{Lattice models}\label{sec: t n lattice}
This subsection will again be modeled on  \S\ref{sec:(n,t)-lat}.
  Let $\Lambda^\flat_{\ep}$ be a vertex lattice of type $t$ in $W^\flat_\ep$ and denote its stabilizer by $K_{n}^{[t]}$.  Fix a special vector $u_0$ of unit norm such that the lattice $\Lambda_0=\Lambda^\flat_{\ep}\obot \langle u_0 \rangle \in \Ver^{t}(W_0)$ (here $W_0$ is the split hermitian space). We fix a lattice   $\Lambda\in \Ver^{n}(W_0)$ such  that   $\Lambda \subset\Lambda_0$.  Denote by  $K_{n+1}^{[n]}$ (resp. $K_{n+1}^{[t]}$)  the stabilizer of $\Lambda$ (resp. $\Lambda_0$).

   We have the lattice model of RZ spaces: $\BN_n^{[t,n]}$ and
 $$
 \wt{\BN}_n^{[t]}=\{(\Lambda^\flat, \Lambda)\in {\rm Vert}^t(W^\flat)\times {\rm Vert}^n(W)\mid \Lambda\subset \Lambda^\flat\oplus\langle u_0\rangle\} .
 $$
 We have a disjoint union induced by the  two possibilities that either $u_0\in\Lambda$ or $u_0\notin\Lambda$ :
  $$
 \wt{\BN}_n^{[t]}= \wt{\BN}_n^{[t],+}\coprod \wt{\BN}_n^{[t],-}.
  $$
 
 We first assume  $\ep^\flat=+1$.  In the case when $u_0\in\Lambda$, we have $\Lambda=\Lambda_1^\flat\obot\langle u_0\rangle$, with $\Lambda_1^\flat\subset\Lambda^\flat$. Therefore for this part of $\wt{\BN}_n^{[t]}$, we obtain an identification
$$
\wt{\BN}_n^{[t],+}=\BN_n^{[t,n]} .
$$
 We then look at $\wt{\BN}_n^{[t],-}$.
 If $u_0\notin\Lambda$, then $\Lambda+\langle u_0\rangle$ is a vertex lattice of type $n-2$, and is of the form $\Lambda+\langle u_0\rangle=\Lambda_0^\flat\obot\langle u_0\rangle$, for a unique $\Lambda_0^\flat\in{\rm Vert}^{n-2}(W^\flat)$ with $\Lambda_0^\flat\subset \Lambda^\flat$. Hence we obtain a map from this part of $\wt{\BN}_n^{[t]}$ to $\BN_n^{[n-2, t]}$. Hence this part of $\wt{\BN}_n^{[t]}$ appears in a commutative diagram
  \begin{equation}\label{eq: wtNt -}
 \begin{aligned}
  \xymatrix{\wt{\BN}^{[t], -}_n \ar[r] \ar[d] \ar@{}[rd]  & \BN^{[n-2,n]}_{n+1}\ar[d] 
  \\ \BN_n^{[n-2, t]} \ar[r] &\BN_{n+1}^{[n-2]}.}
  \end{aligned}
\end{equation}
However, we caution the reader that the diagram is not cartesian! To see this, we note that the right downward arrow has fibers of the form $\BP^1(k)$, parametrizing all isotropic lines in the three-dimensional quadratic space $\BW:=(\Lambda_0^\flat\oplus\langle u_0\rangle)/\pi(\Lambda_0^\flat\oplus\langle u_0\rangle)^\vee $, while the image of the fiber over a point $(\Lambda_0^\flat,\Lambda_{\ep}^{\flat}) \in  \BN_n^{[n-2, t]}$ under the top map omits exactly two of the $q+1$ points in the fiber of the right downward map (the two  isotropic lines orthogonal to the image of $u_0$ in $\BW$).

 We now assume $\ep^\flat=-1$. Then $W^\flat$ is not split. The argument above shows that $\wt{\BN}_n^{[t], +}$ is empty and we have 
$$
\wt{\BN}_n^{[t]}=\wt{\BN}_n^{[t], -}.
$$
Now the diagram \eqref{eq: wtNt -} is cartesian! The difference is that now the orthogonal complement of the image of $u_0$ in $\BW$ is a non-split 2-dimensional quadratic space.

Similar to \S\ref{sec:(n,t)-lat},
 we have the
bi-$K_n^{[n]}$-invariant Hecke function, cf. \eqref{eq: phi(t,n)}
   \begin{equation}\label{phiz}
\varphi^{[n,t]}_{n}:=\vol(K^{[t]}_n)^{-1} {\bf 1}_{K^{[n]}_nK^{[t]}_n}\ast {\bf 1}_{K^{[t]}_n K^{[n]}_n} .
\end{equation}
 Then  we have 
 $$
\Orb(g,\vol(K^{[n]}_n)^{-2}\varphi^{[n,t]}_{n}\otimes{\bf 1}_{ K_{n+1}^{[n]} } )= \#(\BN^{[n,t]}_n \cap g \BN^{[n,t]}_n).
$$

Similarly we have the bi-$K_{n+1}^{[t]}$-invariant Hecke function, cf.\eqref{eq: phi n+1 (n,t)}
   \begin{equation} \label{phiz big}
\varphi^{[t,n]}_{n+1}:=\vol(K^{[n]}_{n+1})^{-1} {\bf 1}_{K^{[t]}_{n+1}K^{[n]}_{n+1}}\ast {\bf 1}_{K^{[n]}_{n+1}K^{[t]}_{n+1}},
\end{equation}
and we have
 $$
\Orb(g,\vol(K_n^{[t]})^{-2} {\bf 1}_{K_n^{[t]}}\otimes \varphi^{[t,n]}_{n+1})= \#(\wt\BN^{[t]}_n \cap g \wt\BN^{[t]}_n).
$$

\begin{lemma}
\label{lem: lat (t,n)} 
 \begin{altenumerate}
 
 \item When $0\leq t\leq n-2$, we have $$ \# K_{n}^{[t]} \bs K_{n+1}^{[t]}/K_{n+1}^{[t,n]}=\begin{cases} 2,& \ep^\flat=+1\\
 1,& \ep^\flat=-1
 \end{cases}
 $$
 \item We have 
 $$
 \vol(K^{[n]}_n)^{-2}\varphi^{[n,t]}_{n}\otimes {\bf 1}_{K^{[n]}_{n+1}}  \sim  \vol(K_n^{[n,t]})^{-2}  {\bf 1}_{ K_{n}^{[t]}  \times K^{[n]}_{n+1} }
 $$ when $\ep^\flat=+1$, and 
 $$
 \vol(K_n^{[t]})^{-2} {\bf 1}_{K_n^{[t]}}\otimes \varphi^{[t,n]}_{n+1} \sim  \vol(K_n^{[n,t]})^{-2}  {\bf 1}_{ K_{n}^{[t]}  \times K^{[n]}_{n+1} }
 $$ 
  when $\ep^\flat=-1$ (this case only makes sense when $t\leq n-2$).

\end{altenumerate}
\end{lemma}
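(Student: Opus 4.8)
\textbf{Proof plan for Lemma \ref{lem: lat (t,n)}.}

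The plan is to reduce everything to counting isotropic subspaces in the quadratic spaces attached to the relevant vertex lattices, exactly as in the proof of Lemma \ref{lem: lat (n,t)}, but keeping careful track of the Hasse invariant $\ep^\flat$ of $W^\flat$ since that is what governs the two cases. For part (i), I would first set up natural bijections $K_{n+1}^{[t]}/K_{n+1}^{[t,n]}\simeq \{\Lambda\in\Ver^n(W_0)\mid \Lambda\subset\Lambda_0\}$ and, using $K_n^{[t]}\cap K_{n+1}^{[t,n]}=K_n^{[n-2,t]}$ or the appropriate stabilizer under our choice of lattices, identify $K_n^{[t]}K_{n+1}^{[t,n]}/K_{n+1}^{[t,n]}$ with a $K_n^{[t]}$-orbit inside that set. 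The set of type-$n$ vertex lattices $\Lambda$ with $\Lambda\subset\Lambda_0$ is in bijection with the set of isotropic lines in the $3$-dimensional quadratic $k$-space $\BW:=\Lambda_0^\vee/\pi\Lambda_0$ (equivalently, codimension-one isotropic-type sublattices), and $K_n^{[t]}$ acts through the stabilizer of the anisotropic vector $\ov u_0\in\BW$. Counting the number of $K_n^{[t]}$-orbits on these isotropic lines is then a Witt-theorem computation: the orbits are distinguished by whether the isotropic line is orthogonal to $\ov u_0$ or not, equivalently by whether it lies in the $2$-dimensional space $\ov u_0^\perp\subset\BW$. The $2$-dimensional space $\ov u_0^\perp$ is split precisely when $\ep^\flat=+1$ and non-split when $\ep^\flat=-1$ (this is where the Hasse invariant enters, via the discriminant of $\Lambda^\flat_\ep$ being that of $W^\flat_\ep$). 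When $\ep^\flat=+1$ one gets one orbit of lines inside $\ov u_0^\perp$ (the two isotropic lines there form a single orbit since the stabilizer $\mathrm O(1)\times\mathrm O(2)$-type group acts transitively on them, being essentially $\mathrm O(\ov u_0^\perp)/\SO$) and one orbit of lines not contained in it, giving $2$; when $\ep^\flat=-1$ the space $\ov u_0^\perp$ has no isotropic lines, so every isotropic line in $\BW$ is transverse to $\ov u_0$ and these form a single orbit, giving $1$. I should double-check the transitivity claims against the fiber description given after \eqref{eq: wtNt -} (where the image of the fiber omits exactly two of the $q+1$ isotropic lines when $\ep^\flat=+1$), which is consistent: the two omitted points are the two isotropic lines in $\ov u_0^\perp$, forming the second orbit, and the remaining $q-1$ transverse lines form the first.

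For part (ii), the method is the convolution identity \eqref{eq: eM}, applied to $f={\bf 1}_{K_n^{[t]}}\otimes{\bf 1}_{K_{n+1}^{[n]}}$ and to $f={\bf 1}_{K_n^{[n]}}\otimes{\bf 1}_{K_{n+1}^{[n]}}$ respectively, exactly parallel to the proof of Lemma \ref{lem: lat (n,t)}(iii). In the case $\ep^\flat=+1$, I would take $M=K_n^{[t]}$ in the $\CN_n^{[n]}$-analogue, expanding $e_{\Delta(K_n^{[t]})}\ast({\bf 1}_{K_n^{[n]}}\otimes{\bf 1}_{K_{n+1}^{[n]}})\ast e_{\Delta(K_n^{[t]})}$ using bi-$K_n^{[t]}$-invariance of ${\bf 1}_{K_{n+1}^{[n]}}$ (which holds because $K_n^{[t]}\subset K_{n+1}^{[n]}$ does \emph{not} hold in general, so one must instead use that ${\bf 1}_{K_{n+1}^{[n]}}$ is bi-$K_n^{[n,t]}$-invariant and be careful; actually the correct route here is the same as the first half of Lemma \ref{lem: lat (n,t)}(iii), which uses that $\varphi^{[n,t]}_n$ has the stated convolution form and that $K_n^{[n,t]}=K_n^{[t]}\cap K_{n+1}^{[n]}$). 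This yields $\vol(K_n^{[n]})^{-2}\varphi^{[n,t]}_n\otimes{\bf 1}_{K_{n+1}^{[n]}}\sim\vol(K_n^{[n,t]})^{-2}{\bf 1}_{K_n^{[t]}}\otimes{\bf 1}_{K_{n+1}^{[n]}}$ up to tracking the volume factors $\vol(K_n^{[n,t]})^2\vol(K_n^{[t]})^{-2}\vol(K_n^{[n]})^{-1}$ as in loc.~cit. For the $\ep^\flat=-1$ statement, one instead takes $M=K_n^{[t]}$ and expands $e_{\Delta(K_n^{[t]})}\ast({\bf 1}_{K_n^{[t]}}\otimes{\bf 1}_{K_{n+1}^{[n]}})\ast e_{\Delta(K_n^{[t]})}$, moving the $e$'s onto the ${\bf 1}_{K_{n+1}^{[n]}}$ factor to produce $e_{K_n^{[t]}}\ast{\bf 1}_{K_{n+1}^{[n]}}\ast e_{K_n^{[t]}}=\vol(K_n^{[t]})^{-2}\vol(K_n^{[n,t]})^2\vol(K_{n+1}^{[n]})^{-1}{\bf 1}_{K_n^{[t]}K_{n+1}^{[n]}}\ast{\bf 1}_{K_{n+1}^{[n]}K_n^{[t]}}$, and then invoke part (i): when $\ep^\flat=-1$ we have $K_n^{[t]}K_{n+1}^{[n]}=K_{n+1}^{[t]}K_{n+1}^{[n]}$ (a single double coset on the relevant side, as in Lemma \ref{lem: lat (n,t)}(ii) and Lemma \ref{lem: lat (n-1,t)}(ii)), which rewrites the convolution as $\varphi^{[t,n]}_{n+1}$ up to the explicit volume factor, giving the claimed equivalence $\vol(K_n^{[t]})^{-2}{\bf 1}_{K_n^{[t]}}\otimes\varphi^{[t,n]}_{n+1}\sim\vol(K_n^{[n,t]})^{-2}{\bf 1}_{K_n^{[t]}}\otimes{\bf 1}_{K_{n+1}^{[n]}}$.

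The main obstacle I anticipate is part (i) in the $\ep^\flat=+1$ case: making the orbit count airtight requires knowing precisely which subgroup of $\mathrm O(\BW)$ the image of $K_n^{[t]}$ is (the stabilizer of the anisotropic vector $\ov u_0$, which is of type $\mathrm O(1)\times\mathrm O(\ov u_0^\perp)$ up to the center) and verifying that this group acts with exactly two orbits on the isotropic lines of $\BW$ — one orbit of size $2$ inside $\ov u_0^\perp$ and one of size $q-1$ transverse to it. The subtlety is that the stabilizer need not act transitively a priori on the two isotropic lines of the split plane $\ov u_0^\perp$; one must use that $\mathrm O(2)^{\mathrm{split}}$ acting on its two isotropic lines has the reflection swapping them inside the image. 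I would resolve this by citing Witt's extension theorem in the form used elsewhere in the paper (and the already-stated fiber description following \eqref{eq: wtNt -}, which effectively asserts the orbit sizes $q-1$ and $2$), and for $\ep^\flat=-1$ simply noting $\ov u_0^\perp$ is anisotropic hence contains no isotropic line, so transitivity on the $q+1$ isotropic lines of $\BW$ is immediate from Witt. A secondary bookkeeping nuisance will be checking that $K_n^{[t]}\cap K_{n+1}^{[n]}=K_n^{[n,t]}$ (and, when $\ep^\flat=-1$, the analogue with the type-$(n-2)$ lattice) holds for our explicit choice of $\Lambda\subset\Lambda_0=\Lambda^\flat_\ep\obot\langle u_0\rangle$, but this is routine given $u_0$ has unit norm, exactly as in \S\ref{sec:(n,t)-lat}.
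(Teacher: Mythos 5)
Your overall strategy is the paper's: Witt's theorem for the double-coset count in (i), and the convolution identity \eqref{eq: eM} for (ii). Part (ii), despite the mid-stream second-guessing, lands on the correct manipulations: for $\ep^\flat=+1$ one sandwiches ${\bf 1}_{K_n^{[t]}}\otimes{\bf 1}_{K_{n+1}^{[n]}}$ by $e_{\Delta(K_n^{[n]})}$ (legitimate because $K_n^{[n]}\subset K_{n+1}^{[n]}$ for the aligned choice of lattices, so ${\bf 1}_{K_{n+1}^{[n]}}$ is bi-$K_n^{[n]}$-invariant, and ${\bf 1}_{K_n^{[n]}}\ast{\bf 1}_{K_n^{[t]}}=\vol(K_n^{[n,t]}){\bf 1}_{K_n^{[n]}K_n^{[t]}}$ produces $\varphi_n^{[n,t]}$ up to volumes), and for $\ep^\flat=-1$ one sandwiches by $e_{\Delta(K_n^{[t]})}$ and invokes the single-coset statement $K_n^{[t]}K_{n+1}^{[n]}=K_{n+1}^{[t]}K_{n+1}^{[n]}$ coming from part (i). This is exactly what the paper does.

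Part (i), however, contains a concrete dimensional error. The set of type-$n$ vertex lattices $\Lambda\subset\Lambda_0$ is parametrized by \emph{maximal isotropic subspaces of dimension $(n-t)/2$} in the quadratic $k$-space $\BW=\Lambda_0/\pi\Lambda_0^\vee$, which has dimension $n+1-t$ (note also that you wrote the quotient the wrong way around: $\Lambda_0^\vee/\pi\Lambda_0$ has dimension $n+1+t$). This space is $3$-dimensional, and the relevant isotropic subspaces are lines, only in the extreme case $t=n-2$; your explicit orbit sizes ($2$ and $q-1$) and your appeal to the fiber description following \eqref{eq: wtNt -} (which concerns the type-$(n-2)$ lattice $\Lambda_0^\flat\obot\langle u_0\rangle$, a genuinely $3$-dimensional situation, not the type-$t$ lattice $\Lambda_0$) are therefore valid only in that boundary case. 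The correct general form of the argument, which is what the paper runs, is the identification
\begin{equation*}
K_n^{[t]}\bs K_{n+1}^{[t]}/K_{n+1}^{[t,n]}\;\simeq\;O(\BW^\flat)\bs O(\BW)/P,
\end{equation*}
where $P$ is the stabilizer of the maximal isotropic subspace $\pi\Lambda^\vee/\pi\Lambda_0^\vee$ of dimension $(n-t)/2$ and $\BW^\flat=\ov u_0^{\perp}$ has even dimension $n-t$. Witt's theorem then gives two orbits, distinguished by whether the maximal isotropic $L$ is contained in $\BW^\flat$ or not, when $\BW^\flat$ is split (i.e. $\ep^\flat=+1$), and a single orbit when $\BW^\flat$ is non-split, since a non-split space of dimension $n-t$ contains no isotropic subspace of dimension $(n-t)/2$. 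Your dichotomy ``orthogonal to $\ov u_0$ or not'' is the right invariant; it just has to be applied to $(n-t)/2$-dimensional subspaces rather than lines, and the transitivity within each stratum is again Witt (as in the cited argument of \cite[Lem.~6.1.1]{LRZ2}), not a hands-on count of points on a conic.
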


\begin{proof}
(i) Denote by $\BW=\Lambda_0/\pi\Lambda_0^\vee$ the vector space over $k$ of dimension $(n+1)-t\geq 3$ with  the induced quadratic form.  Then $\pi\Lambda^\vee\subset \BW$ is an isotropic subspace of dimension $(n-t)/2$, and hence defines a parabolic subgroup $P\subset O(\BW)$.
The reduction of the vector $u_0$ is anisotropic and its orthogonal complement in $\BW$ is denoted by  $\BW^\flat$. 
Then we have a natural bijection:
$$K_{n}^{[t]} \bs K_{n+1}^{[t]}/K_{n+1}^{[t,n]}\simeq 
O(\BW^\flat)\bs O(\BW)/P\simeq O(\BW)\bs [ \big(O(\BW)/ O(\BW^\flat)\big)\times \big(O(\BW)/P\big)]
$$
where in the last quotient $O(\BW)$ acts diagonally. Now by Witt's theorem (cf. the argument in the proof of Lemma 6.1.1 in \cite{LRZ2}), there are two (resp. one) orbits if $\BW^\flat$ is split (resp. non-split), corresponding to $\ep^\flat=+1$ (resp. $\ep^\flat=-1$).

(ii) The proof is similar to that of Lemma \ref{lem: lat (n,t)}  part (iii). We sketch the proof for the case $\ep=+1$; the other case is similar using part (i) $\ep=-1$.
We use \eqref{eq: eM}, and $ {\bf 1}_{K_{n}^{[n]} } \ast {\bf 1}_{ K_{n}^{[t]} }=\vol(K_{n}^{[n,t]}) {\bf 1}_{K_{n}^{[n]} K_{n}^{[t]} }$. Then 
\begin{align*}
{\bf 1}_{ K_{n}^{[t]}}  \otimes {\bf 1}_{K^{[n]}_{n+1} }\sim &  (e_{ K_{n}^{[n]}} \ast {\bf 1}_{ K_{n}^{[t]}} \ast e_{ K_{n}^{[n]}}) \otimes {\bf 1}_{K^{[n]}_{n+1} }\\
=& \vol(K_{n}^{[n]})^{-2} \vol(K_{n}^{[t]})^{-1}\vol(K_{n}^{[n,t]})^2    ( {\bf 1}_{K_{n}^{[n]} K_{n}^{[t]} }  \ast {\bf 1}_{K_{n}^{[t]} K_{n}^{[n]} })  \otimes {\bf 1}_{K^{[n]}_{n+1} }\\
=&\vol(K_{n}^{[n]})^{-2} \vol(K_{n}^{[n,t]})^2  \varphi^{[n,t]}_{n}\otimes {\bf 1}_{K^{[n]}_{n+1}}.\qedhere
\end{align*}
\end{proof}
 \begin{remark}
 \label{rem: ind Lbd (t,n)}We comment that
the product $K_{n+1}^{[t]} K^{[n]}_{n+1} K_{n+1}^{[t]}$ depends only on $\Lambda_0=\Lambda^\flat_{\ep}\obot \langle u_0 \rangle \in \Ver^{t}(W_0)$ but not on the choice of $\Lambda\in \Ver^{n}(W_0)$ such  that   $\Lambda \subset\Lambda_0$.
 \end{remark}
 
 \begin{remark}
 \label{rem: no RZ} Let $0\leq t\leq n-2$. When $\ep^\flat=+1$, part (i)  shows that $K_{n}^{[t]}$ acts on $K_{n+1}^{[t]}/K_{n+1}^{[t,n]}$ with exactly two orbits. The first one is $K_{n}^{[t]}K_{n+1}^{[t,n]}$ with the stabilizer being the subgroup $K_n^{[n,t]}$ of $K_n^{[t]}$. For the second one, we choose any representative $h\in K_{n+1}^{[t]}\setminus K_{n}^{[t]}K_{n+1}^{[t,n]}$. Then the stabilizer is the subgroup $$K_n^{[t],-}\colon =K_n^{[t]}\cap h K_{n+1}^{[n,t]} h^{-1}$$ of $K_n^{[t]}$,
 which is independent of the choice of $h$.
  Then in the disjoint union  $\wt{\BN}_n^{[t]}= \wt{\BN}_n^{[t],+}\coprod \wt{\BN}_n^{[t],-}.
$, we may naturally identify $ \wt{\BN}_n^{[t],+}$ with $U(W_\ep^\flat)/K_n^{[n,t]}$, and  identify $ \wt{\BN}_n^{[t],-}$ with $U(W_\ep^\flat)/K_n^{[t],-}$.
 Similarly, when  $\ep^\flat=-1$,  we may naturally identify $ \wt{\BN}_n^{[t]}= \wt{\BN}_n^{[t],-}$ with $U(W_\ep^\flat)/(K_n^{[t]}\cap K_{n+1}^{[n,t]})$. 

 \end{remark}

\subsection{Intersection numbers on the splitting model for the smaller correspondence }
Assume that $\ep^\flat=+1$, so that $\CN_n^{[n]}$ is defined.

Let $ \wh\CN_{n}^{[t],\spt}$ be the flat closure of  the base change of $ \CN^{[t,n]}_{n}$ along the morphism $\CN_n^{[t],\spt}\times\CN_{n+1}^{[n]}\to  \CN^{[t]}_{n}\times\CN^{[n]}_{n+1} $. Then $\wh\CN_{n}^{[t],\spt}$ is a closed formal subscheme of $\CN_n^{[t],\spt}\times\CN_{n+1}^{[n]}$, flat over $\Spf O_{\breve{F}}$ of relative dimension $n-1$.  We have the commutative diagram
  \begin{equation*}
 \begin{aligned}
  \xymatrix{
  \wh\CN_{n}^{[t],\spt} \ar@{^(->}[r] \ar[d]  &\CN_n^{[t],\spt}\times\CN_{n+1}^{[n]} \ar[d]   \\
\CN^{[t,n]}_{n}  \ar@{^(->}[r] & \CN^{[t]}_{n}\times\CN^{[n]}_{n+1} 
}
\end{aligned}
\end{equation*}
Now the product $\CN_n^{[t],\spt}\times\CN_{n+1}^{[n]}$ is regular since $ \CN_{n+1}^{[n]}$ is formally smooth over $\Spf\OFb$ and $  \CN_n^{[t],\spt}$ is regular. 
We form the arithmetic intersection numbers 
\begin{equation*}
  \left\langle\wh\CN_{n}^{[t],\spt} , g\wh\CN_{n}^{[t],\spt} \right\rangle_{\CN_n^{[t],\spt} \times\CN_{n+1}^{[n]}}:=\chi( \wh\CN_{n}^{[t],\spt} \times \CN_{n+1}^{[n]} , \wh\CN_{n}^{[t],\spt}\cap^\BL g\wh\CN_{n}^{[t],\spt} ),
\end{equation*}
for regular semisimple $g$.

\subsection{The AT conjecture for the smaller correspondence}
We now come to the AT conjecture. 

 \begin{conjecture}\label{conj (t,n even) sm}
 Let $n=2m$ be even, $0\leq t\leq n$ even. Assume that $\ep^\flat=1$.
There exists $\varphi'\in C_c^\infty(G')$ with transfer $(  \vol(K_n^{[n,t]})^{-2}  {\bf 1}_{ K_{n}^{[t]}  \times K^{[n]}_{n+1}  },0)\in C_c^\infty(G_{W_0})\times C_c^\infty(G_{W_{1}})$ such that, if $\gamma\in G'(F_0)_\rs$ is matched with  $g\in G_{W_1}(F_0)_\rs$, then
 \begin{equation*}
  \left\langle \wh\CN_{n}^{[t],\spt}   , g \wh\CN_{n}^{[t],\spt}  \right\rangle_{ \CN_{n}^{[t],\spt} \times \CN_{n+1}^{[n]} }\cdot\log q=- \del\big(\gamma,  \varphi' \big).
\end{equation*}
\end{conjecture}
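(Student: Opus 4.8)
\textbf{Proof strategy for Conjecture \ref{conj (t,n even) sm} in low dimension.}
The plan is to reduce the conjecture, in the first non-trivial instances, to either the arithmetic transfer results already in the literature or to the lattice-counting identities established in Lemma \ref{lem: lat (t,n)}. The starting point is the observation that, on the geometric side, by Lemma \ref{lem:(t,n)-naive-emb} and the definition of $\wh\CN_n^{[t],\spt}$ as a flat closure, the intersection number $\langle \wh\CN_{n}^{[t],\spt} , g\wh\CN_{n}^{[t],\spt}\rangle_{\CN_n^{[t],\spt}\times\CN_{n+1}^{[n]}}$ is an arithmetic intersection number of a flat cycle of relative dimension $n-1$ inside a regular formal scheme, and hence is well-defined and finite for $g$ regular semisimple (by the standard properness argument, cf. \cite[proof of Lem. 6.1]{M-Th}). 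One would first treat the boundary case $t=n$: here $\CN_n^{[n],\spt}\simeq\CN_n^{[n]}$ by Theorem \ref{thm:geo-RZ}(ii), the morphism $\CN_n^{[n,t]}\to\wt\CN_n^{[t]}$ is an isomorphism by Remark \ref{rem: t=n even wtN}, and the embedding $\CN_n^{[n]}\hookrightarrow\CN_{n+1}^{[n]}$ avoids $\Sing(\CN_{n+1}^{[n]})$ (comp. \cite[Lem. 4.2]{RSZ1}); thus $\wh\CN_n^{[n],\spt}=\CN_n^{[n]}$ as a graph cycle, and the conjecture becomes exactly Conjecture \ref{conj (n even,t=n)}, i.e. \cite[Conj. 5.6]{RSZ1}. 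The test-function side matches since $\vol(K_n^{[n,t]})^{-2}\mathbf 1_{K_n^{[t]}\times K_{n+1}^{[n]}}=\vol(K_n^{[n]})^{-2}\mathbf 1_{K_n^{[n]}\times K_{n+1}^{[n]}}$ in this case.

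For the genuinely new cases ($t<n$, with $n$ small, e.g. $n=2$, $t=0$, as in the list following Theorem \ref{thmn=1-intro}), the plan is to follow the template of \cite{RSZ2} and \cite{LRZ2}. First, one needs to identify the cycle $\CN_n^{[n,t]}$ inside $\CN_n^{[t],\spt}\times\CN_{n+1}^{[n]}$ more explicitly: using Theorem \ref{thm: Z unit} to identify $\CN_n^{[t]}\simeq\CZ(u)^{[t]}$ and Proposition \ref{prop:deep-exc} to describe the deeper parahoric refinement, one obtains $\wh\CN_n^{[t],\spt}$ as the strict transform of $\CN_n^{[n,t]}$ under the blow-up $\CN_{n+1}^{[t],\spt}$ — wait, here the splitting model sits on the left factor, so one uses instead that $\CN_n^{[t],\spt}$ is the Krämer-style blow-up of $\CN_n^{[t]}$ in its worst points (Theorem \ref{thm:geo-RZ}, Remark \ref{rmk:split-model}) and that the relevant special cycles pull back to Cartier divisors there (Theorem \ref{thm: Z spt unit}(i)). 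Second, one computes the arithmetic intersection explicitly in local coordinates of the semi-stable scheme $\CN_n^{[t],\spt}\times\CN_{n+1}^{[n]}$; in the lowest case $n=2$, $t=0$, both $\CN_2^{[0],\spt}$ (the Krämer model) and $\CN_3^{[0]}$ have completely explicit descriptions (comp. Remark \ref{rmk:small-dim}), so the intersection multiplicities along each component can be read off directly. Third, on the analytic side one evaluates $\partial\Orb(\gamma,\varphi')$ for a suitable $\varphi'$ transferring the indicated function; here one uses the density/comparison results \cite[Prop. 5.14, Conj. 5.16]{RSZ1} to pass between parts (i) and (ii) as noted in the introduction.

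The main obstacle will be the geometric computation of the intersection number on the splitting model: unlike the exotic-smooth case, $\CN_n^{[t],\spt}\times\CN_{n+1}^{[n]}$ is only semi-stable, the cycle $\wh\CN_n^{[t],\spt}$ is not a graph, and — as flagged in Remark \ref{rem: no RZ} — the correspondence $\wt\CN_n^{[t]}$ is not in general a member of an RZ tower, so one cannot simply invoke an AFL-type identity. One must instead carry out a hands-on deformation-theoretic analysis: stratify $\wh\CN_n^{[t],\spt}\cap g\wh\CN_n^{[t],\spt}$ according to the position relative to $\Exc$ and $\Sing$, use Grothendieck–Messing theory together with the local-model results of \S\ref{sec:exc-iso-proof} (in particular Theorem \ref{thm:exc-iso-loc}) to compute the length of the derived tensor product at each point, and then match the resulting sum against the weighted orbital integral. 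A secondary difficulty is bookkeeping the volume normalizations so that the identity $\vol(K^{[n]}_n)^{-2}\varphi^{[n,t]}_n\otimes\mathbf 1_{K^{[n]}_{n+1}}\sim\vol(K_n^{[n,t]})^{-2}\mathbf 1_{K_n^{[t]}\times K^{[n]}_{n+1}}$ of Lemma \ref{lem: lat (t,n)}(ii) is used consistently on the analytic side; this part is routine but error-prone. Finally, since the cases $n=1$ all reduce to \cite{RSZ2} as recorded in Theorem \ref{thmn=1-intro}, the present conjecture is already known there, and one only needs the above program for $n=2$ and beyond.
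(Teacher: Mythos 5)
The statement you are trying to prove is a \emph{conjecture}: the paper does not prove Conjecture \ref{conj (t,n even) sm}, and explicitly lists its first non-degenerate instance (type $(t,n)=(0,2)$, i.e.\ $n=2$, $t=0$) among the ``next test cases'' that are left open. The only part of your text that actually closes an argument is the reduction of the case $t=n$ to Conjecture \ref{conj (n even,t=n)} ($=$ \cite[Conj.~5.6]{RSZ1}); this is correct and is exactly the content of Remark \ref{rem: even t=n} in the paper, but it settles the conjecture only to the extent that \cite[Conj.~5.6]{RSZ1} is known, namely for $n\le 2$. Everything you write about the cases $t<n$ is a research plan, not a proof: the two steps that carry all the mathematical weight --- the explicit computation of $\langle \wh\CN_{n}^{[t],\spt}, g\wh\CN_{n}^{[t],\spt}\rangle$ on the semi-stable ambient space $\CN_n^{[t],\spt}\times\CN_{n+1}^{[n]}$, and the evaluation and matching of $\del(\gamma,\varphi')$ for a transfer of $\vol(K_n^{[n,t]})^{-2}\mathbf 1_{K_n^{[t]}\times K_{n+1}^{[n]}}$ --- are never carried out, and you yourself flag them as the ``main obstacle.'' A plan that defers precisely the steps where the difficulty lies is not a proof.

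Two further concrete errors. First, your closing claim that ``the cases $n=1$ all reduce to \cite{RSZ2} as recorded in Theorem \ref{thmn=1-intro}, [so] the present conjecture is already known there'' is vacuous for this conjecture: Conjecture \ref{conj (t,n even) sm} requires $n=2m$ even, so $n=1$ does not occur, and Theorem \ref{thmn=1-intro} concerns the type-$(n-1,t)$ conjectures for $n$ odd. The first genuine case is $n=2$, $t=0$, which is open. Second, your description of the cycle wavers (``strict transform of $\CN_n^{[n,t]}$ under the blow-up $\CN_{n+1}^{[t],\spt}$ --- wait\dots''): the correct definition, which you should fix before attempting any computation, is that $\wh\CN_n^{[t],\spt}$ is the flat closure of the base change of $\CN_n^{[t,n]}\subset \CN_n^{[t]}\times\CN_{n+1}^{[n]}$ along $\CN_n^{[t],\spt}\times\CN_{n+1}^{[n]}\to\CN_n^{[t]}\times\CN_{n+1}^{[n]}$; the blow-up happens on the $n$-dimensional factor, and the cycle is a small correspondence, not a pullback of a divisor. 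If you want to make progress on $n=2$, $t=0$, the honest task is the deformation-theoretic computation you sketch in your penultimate paragraph, together with a germ-expansion argument on the analytic side in the style of \S\ref{sec:(0,0)-case}; until that is done there is nothing to verify.
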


\begin{remark}\label{rem: even t=n}
When $t=n$,  since  $\CN_{n}^{[n],\spt}$ is identical to $\CN_{n}^{[n]}$, Conjecture \ref{conj (t,n even) sm} is identical with Conjecture \ref{conj (n even,t=n)} in \cite{RSZ1}.
\end{remark}

\subsection{Intersection numbers on the splitting model for the larger  correspondence}

Let $\wt\CN^{[t], \spt}_{n}$ be the flat closure of  the base change of $\wt\CN^{[t]}_{n}$ along the morphism $\CN_n^{[t],\spt}\times\CN_{n+1}^{[n]}\to \CN_n^{[t]}\times\CN_{n+1}^{[n]}$. Then $\wt\CN^{[t], \spt}_{n}$ is a closed formal subscheme of $\CN_{n+1}^{[t],\spt}\times \CN_{n+1}^{[n]}$, flat over $\Spf O_{\breve{F}}$ of relative dimension $n-1$. If  Conjecture \ref{conj-wtNfl}  on the flatness of $ \wt\CN^{[t]}_n$ holds, then $\wt\CN^{[t], \spt}_{n}$ coincides with $\wt\CN^{[t]}_{n}$ outside the worst points. We have the commutative diagram
  \begin{equation}\label{splcy95}
 \begin{aligned}
  \xymatrix{
  \wt\CN^{[t], \spt}_{n} \ar@{^(->}[r] \ar[d]  &\CN_n^{[t],\spt}\times\CN_{n+1}^{[n]} \ar[d]   \\
\wt\CN^{[t]}_{n}  \ar@{^(->}[r] & \CN^{[t]}_{n}\times\CN^{[n]}_{n+1} 
}
\end{aligned}
\end{equation}

By  Lemma \ref{lem:(t,n)-naive-emb}, the horizontal maps  are closed embeddings. 
We define the arithmetic intersection numbers 
\begin{equation*}
  \left\langle\wt\CN^{[t],\spt}_{n,\ep} , g\wt\CN^{[t],\spt}_{n,\ep} \right\rangle_{ \CN_{n,\ep}^{[t],\spt} \times \CN_{n+1}^{[n]} }:=\chi(\CN_{n,\ep}^{[t],\spt} \times \CN_{n+1}^{[n]} , \wt\CN^{[t],\spt}_{n,\ep} \cap^\BL g\wt\CN^{[t],\spt}_{n,\ep}  ),
\end{equation*}
for regular semisimple $g\in G_{W_1}(F_0)$.

\subsection{The AT conjecture for  the larger correspondence}

We now come to the AT conjecture.

 \begin{conjecture}\label{conj (t,n even)}
 Let $n=2m$ be even, $0\leq t\leq n$ even. Recall from \eqref{phiz big} the function $\varphi^{[t,n]}_{n+1} \in C_c^\infty(G_{W_{0}})$. 
 
There exists $\varphi'\in C_c^\infty(G')$ with   transfer $(\vol(K_n^{[t]})^{-2} {\bf 1}_{K_n^{[t]}}\otimes \varphi^{[t,n]}_{n+1} ,0)\in C_c^\infty(G_{W_0})\times C_c^\infty(G_{W_{1}})$ such that, if $\gamma\in G'(F_0)_\rs$ is matched with  $g\in G_{W_1}(F_0)_\rs$, then
 \begin{equation*}
  \left\langle\wt\CN^{[t],\spt}_{n} , g\wt\CN^{[t],\spt}_{n} \right\rangle_{ \CN_{n}^{[t],\spt} \times \CN_{n+1}^{[n]} }\cdot\log q=- \del\big(\gamma,  \varphi' \big).
\end{equation*}
\end{conjecture}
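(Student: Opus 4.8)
The plan is to follow the standard strategy for arithmetic transfer identities, adapting the arguments of \cite{RSZ1,RSZ2,LRZ2} to the present large-correspondence setting. Working in the homogeneous formulation with the pair $(\varphi,0)$, the density of regular semisimple orbital integrals \cite[Prop. 5.14, Conj. 5.16]{RSZ1} reduces the assertion to a pointwise comparison: for one fixed matching pair $(\gamma,g)$ with $g\in G_{W_1}(F_0)_\rs$ one must produce $\varphi'\in C_c^\infty(G')$ transferring to the prescribed function with $\langle\wt\CN^{[t],\spt}_n, g\wt\CN^{[t],\spt}_n\rangle_{\CN_n^{[t],\spt}\times\CN_{n+1}^{[n]}}\cdot\log q = -\del(\gamma,\varphi')$. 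The first step is to unwind the analytic side: using the convolution presentation \eqref{phiz big} of $\varphi^{[t,n]}_{n+1}$ and the atomic-function formalism of \cite[\S4.1]{LRZ1}, one expresses $\del(\gamma, {\bf 1}_{K_n^{[t]}}\otimes\varphi^{[t,n]}_{n+1})$ as a finite combination of derivatives of orbital integrals attached to the pieces $K_{n+1}^{[t]}K_{n+1}^{[n]}$ and $K_{n+1}^{[n]}K_{n+1}^{[t]}$, so that the comparison becomes one between several explicit lattice-counting-type quantities.

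On the geometric side, the first task is to establish Conjecture \ref{conj-wtNfl}, the flatness of $\wt\CN^{[t]}_n=\CN_n^{[t]}\times_{\CN_{n+1}^{[t]}}\CN_{n+1}^{[t,n]}$, the pull-back of the exceptional divisor $\CZ(u_0)^{[t]}\simeq\CN_n^{[t]}$ of Theorem \ref{thm: Z unit} along $\CN_{n+1}^{[t,n]}\to\CN_{n+1}^{[t]}$; I would do this by a Dieudonn\'e-module analysis in the style of \S\ref{sec:exc-iso-proof}, comparing $\wt\CN^{[t]}_n$ with a genuine RZ space on its generic fiber and applying a flatness criterion as in \cite[Prop. 14.17]{GW}. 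Granting flatness, $\wt\CN^{[t],\spt}_n$ agrees with $\wt\CN^{[t]}_n$ away from the worst points, so the arithmetic intersection can be computed on the formally smooth $\CN_{n+1}^{[n]}$ — in which $\CN_n^{[n]}\hookrightarrow\CN_{n+1}^{[n]}$ is the exotic-smooth embedding of \cite[Lem. 4.2]{RSZ1} — up to a correction supported over $\Sing$ controlled via the Cartier-divisor assertions of Theorem \ref{thm: Z spt unit} and the semi-stability of $\CN_n^{[t],\spt}$ (Theorem \ref{thm:geo-RZ}). Then, using the semi-stable reduction of $\CN_n^{[t],\spt}$ to split the derived tensor product into contributions of irreducible components, and the lattice model $\wt\BN^{[t]}_n$ of \S\ref{sec: t n lattice} together with Lemma \ref{lem: lat (t,n)}, one matches these contributions term by term with the analytic side. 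The decomposition $\wt\BN^{[t]}_n=\wt\BN^{[t],+}_n\amalg\wt\BN^{[t],-}_n$ is pivotal here: when $\ep^\flat=+1$ the $+$-part equals $\BN^{[n,t]}_n$, so its contribution is governed by the smaller-correspondence statement Conjecture \ref{conj (t,n even) sm} (which itself reduces to \cite{RSZ1} when $t=n$, by Remark \ref{rem: even t=n}), and the $-$-part lives over $\CN_{n+1}^{[n-2]}$; when $\ep^\flat=-1$ only the $-$-part occurs and the relevant square \eqref{eq: wtNt -} is cartesian, which should make that case closest to a direct base-change argument.

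The main obstacle will be the $-$-part when $\ep^\flat=+1$. There the square \eqref{eq: wtNt -} is \emph{not} cartesian — over each point of $\BN_n^{[n-2,t]}$ the top map hits all but two of the $q+1$ isotropic lines in the corresponding $\mathbb{P}^1$-fiber — so $\wt\CN^{[t]}_n$ is not simply a base change of an RZ space and no previously known AT identity applies directly. Quantifying this defect and showing that it contributes exactly an honest orbital-integral correction compatible with the convolution structure of $\varphi^{[t,n]}_{n+1}$ is the crux; I expect this to demand a local analysis of $\CN_{n+1}^{[t,n]}$ near the worst points analogous to, but more delicate than, the treatment of $\wt\CM_n^{[r]}$ in \cite[\S3.10]{LRZ2}. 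Since the first genuinely new instance is $n=2$, $t=0$ — an intersection on $\CN_2^{[0],\spt}\times\CN_3^{[2]}$ with $\CN_2^{[0],\spt}$ the Kr\"amer resolution — I would carry out that case explicitly first, both as a consistency check and to calibrate the correction term, before attempting an induction on $n$.
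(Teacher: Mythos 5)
The statement you are addressing is Conjecture \ref{conj (t,n even)}, and the paper offers no proof of it: it is formulated as a conjecture, proved nowhere in the text except in the degenerate case $t=n$, where Remark \ref{rem:t=n alt} identifies it with Conjecture \ref{conj (n even,t=n)} of \cite{RSZ1}. The first genuinely new instances ($n=2$, $t=0$, for either value of $\ep^\flat$) are explicitly listed in \S\ref{Intro section} as open ``next test cases.'' So there is no proof in the paper to compare yours against, and what you have written is a research program rather than a proof. Within that frame, your outline is broadly consistent with the paper's own framework (the lattice model of \S\ref{sec: t n lattice}, Lemma \ref{lem: lat (t,n)}, the $\pm$-decomposition of $\wt\BN_n^{[t]}$, and the correct observation that the square \eqref{eq: wtNt -} fails to be cartesian when $\ep^\flat=+1$), but it cannot be accepted as a proof for the following concrete reasons.

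First, two of your key inputs are themselves open conjectures of the paper: the flatness of $\wt\CN_n^{[t]}$ is exactly Conjecture \ref{conj-wtNfl}, and the ``$+$-part'' of your decomposition is governed by Conjecture \ref{conj (t,n even) sm}, which is likewise unproven for $t<n$. Invoking these does not reduce the problem; it relocates it. Second, your opening reduction is incorrect as stated: density of regular semisimple orbital integrals (\cite[Prop.~5.14, Conj.~5.16]{RSZ1}) relates the two variants (i) and (ii) of the conjecture to each other, but it does not reduce the existence of a single $\varphi'$ working for \emph{all} matched pairs to a verification at ``one fixed matching pair $(\gamma,g)$.'' The identity must be established as an identity of functions on the regular semisimple locus (e.g.\ via a germ expansion as in \S\ref{sec:(0,0)-case}, or by global methods), not at a point. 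Third, the sentence ``the arithmetic intersection can be computed on the formally smooth $\CN_{n+1}^{[n]}$ up to a correction supported over $\Sing$'' conflates the two factors: the ambient space is $\CN_n^{[t],\spt}\times\CN_{n+1}^{[n]}$, the blow-up occurs in the \emph{first} (dimension-$n$) factor, and there is no mechanism in the paper for pushing the computation onto the second factor alone. Finally, the hardest analytic point — showing that the non-cartesian defect of \eqref{eq: wtNt -} contributes precisely the convolution structure of $\varphi^{[t,n]}_{n+1}$ at the level of derived intersection multiplicities, not merely of point counts in the lattice model — is exactly where a proof would have to live, and your proposal correctly identifies it but does not supply it.
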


\begin{remark}\label{rem:t=n alt}
When $t=n$,  we have $ \wt{\CN}_n^{[t]}=\wt{\CN}_n^{[n]}\simeq  {\CN}_n^{[n]}$.  Since  $\CN_{n}^{[n],\spt}$ is identical to $\CN_{n}^{[n]}$ by Remark \ref{rem: t=n even wtN}, Conjecture \ref{conj (t,n even)} is identical to Conjecture \ref{conj (n even,t=n)}, i.e.,  \cite[Conj. 5.6]{RSZ1}.
\end{remark}

\begin{remark}\label{rem:t=n alt}For each $t\leq n-2$, there are two cases depending on $\ep^\flat=\pm 1$. When $\ep^\flat=-1$ we could replace the test function by the simpler $ \vol(K_n^{[n,t]})^{-2}  {\bf 1}_{ K_{n}^{[t]}  \times K^{[n]}_{n+1} }$, by Lemma  \ref{lem: lat (t,n)}  part (ii).
\end{remark}

    \section{AT conjecture of type $(t, n+1)$}\label{sec:type-(t,n+1)}

 Let $n=2m+1$ be odd. Let $0\leq t\leq n-1$ be even. 
 We use the exceptional special divisor $\CZ(u)^{[t], \spt}$ on $\CN_{n+1}^{[t],\spt}$,  for a unit length vector $u\in \BV_{n+1}$. This can be identified with $  \CN^{[t], \spt}_{n}$, cf. Theorem  \ref{thm: Z unit}. The intersection product takes place on $\CN_{n}^{[t],\spt} \times \CN_{n+1}^{[n+1]}$.

 We fix an aligned triple of framing objects $(\BY, \BX, u)=(\BY_{\ep^\flat}^{[t]}, \BX_\ep^{[t]}, u)$ of dimension $n+1$ and type $t$, with corresponding embedding of RZ spaces $\CN^{[t]}_{n, \ep^\flat}=\CN^{[t]}_n\hookrightarrow \CN^{[t]}_{n+1}=\CN^{[t]}_{n+1, \ep}$, cf. \S \ref{sec:emb-RZ}. We also fix an isogeny $\BY^{[n-1]}\to \BY^{[t]}$ as in \S \ref{sec:RZ-deeper}, and extend this in the obvious way to an isogeny $\BX^{[n-1]}\to \BX^{[t]}$.    This defines embeddings of RZ spaces $\CN^{[t]}_n\hookrightarrow \CN^{[t]}_{n+1}$ and $\CN^{[n-1]}_n\hookrightarrow \CN^{[n-1]}_{n+1}$ and  $\CN^{[n-1,t]}_n\hookrightarrow \CN^{[n-1,t]}_{n+1}$, cf. \S \ref{sec:emb-RZ}. 
 
We also fix an isogeny $\BX^{[n+1]}\to \BX^{[n-1]}$. This is only possible when $\ep=1$, which we assume throughout this section. It allows us to consider the RZ space $\CN^{[n+1]}_{n+1}$, and $\CN^{[n-1, n+1]}_{n+1}$ with its natural morphisms to $\CN^{[n+1]}_{n+1}$ and $\CN^{[n-1]}_{n+1}$.

 We again use the notation $W_1=\BV(\BX)$ and $W_1^\flat=\BV(\BY)$, so that $u\in W_1$ and $W_1^\flat=\langle u\rangle^\perp$. We denote by $W_0$ the hermitian space of dimension $n+1$ with opposite Hasse invariant of $W_1$, and fix a vector $u_0\in W_0$ of the same length as $u$ and set $W_0^\flat  =\langle u_0\rangle^\perp$. Then $W_0^\flat $ has the opposite Hasse invariant of $W_1^\flat$. Note that $W_0$ is the split hermitian space.

  \subsection{The naive version}\label{ss: naive (n odd, n+1)}
  
  Similar to \S\ref{sec:type-(n,t)}, there are two ways to use correspondences to obtain natural cycles on the ambient space  $\CN_{n}^{[t]} \times \CN_{n+1}^{[n+1]}$. The first one is to use a correspondence $\CN^{[n-1,t]}_n $ on the smaller space
 \begin{equation*}
 \begin{aligned}
 \xymatrix{&&\CN^{[n-1,t]}_{n} \times \CN_{n+1}^{[n-1,n+1]}  \ar[rd]   \ar[ld] &
 \\ \CN_{n}^{[n-1]}\ar@{^(->}[r]& \CN_n^{[n-1]} \times \CN_{n+1}^{[n-1]} &&  \CN_{n}^{[t]} \times \CN_{n+1}^{[n+1]} .
 }
 \end{aligned}
 \end{equation*}
  Note that, since there is no (natural) embedding from $ \CN_{n}^{[n-1]}$ to $ \CN_{n+1}^{[n+1]}$,  we are forced to use  an almost trivial but nevertheless essential  correspondence on the larger space.
 
 The second one  is to use a correspondence $\CN^{[n,t]}_{n+1}$  on the bigger space
 \begin{equation*}
 \begin{aligned}
 \xymatrix{&&\CN_{n}^{[t]} \times \CN_{n+1}^{[n+1,t]}  \ar[rd]   \ar[ld] &
 \\ \CN_{n}^{[t]}\ar@{^(->}[r]& \CN_{n}^{[t]} \times \CN_{n+1}^{[t]} &&  \CN_{n}^{[t]} \times \CN_{n+1}^{[n+1]} .
 }
 \end{aligned}
 \end{equation*}

 We define the spaces $\CN_{n}^{[n-1,t],\circ}$ and $\CN_{n}^{[n-1],\circ}$ by the following cartesian diagram (see also Theorem \ref{thm:Y unit} and Remark \ref{rmk:(n-1,t)-Y-smaller}),
 \begin{equation}\label{eq:N n-1 t circ def}
 \begin{aligned}
 \xymatrix{
 \CN_{n}^{[n-1,t],\circ} \ar@{}[rd]|{\square}\ar[d]\ar[r]
 &\CN_{n}^{[n-1],\circ}\ar[d]\ar@{^(->}[r] \ar@{}[rd]|{\square}&\CN_{n+1}^{[n+1,n-1]}\ar[d]  \\
 \CN_n^{[n-1,t]}\ar[r]&\CN_{n}^{[n-1]}\ar@{^(->}[r]& \CN_{n+1}^{[n-1]} .}
 \end{aligned}
 \end{equation}
 
 We can factorize the outer cartesian diagram
 \begin{equation}\label{eq:N n-1 t circ def out}
 \begin{aligned}
 \xymatrix{
 \CN_{n}^{[n-1,t],\circ} \ar@{}[rd]|{\square}\ar[d]\ar[r]
 &\CN_{n+1}^{[n+1,n-1]}\ar[d]  \\
 \CN_n^{[n-1,t]}\ar[r]& \CN_{n+1}^{[n-1]} }
 \end{aligned}
 \end{equation}
  in an alternative way,
   \begin{equation}\label{biggerdi}
 \begin{aligned}
 \xymatrix{
 \CN_{n}^{[n-1,t],\circ} \ar@{}[rd]|{\square}\ar[d]\ar@{^(->}[r]
 &\CN_{n+1}^{[n+1,n-1,t]}\ar[d]\ar[r] \ar@{}[rd]|{\square}&\CN_{n+1}^{[n+1,n-1]}\ar[d]  \\
 \CN_n^{[n-1,t]}\ar@{^(->}[r]&\CN_{n+1}^{[n-1,t]}\ar[r]& \CN_{n+1}^{[n-1]}. }
 \end{aligned}
 \end{equation}
  To see these cartesian diagrams,
we construct all of these spaces from the following skeleton, 
 \begin{equation*}\label{eq:(t,n)-skeleton}
\xymatrix{  &&  && \\
& &&  \CN_{n+1}^{[n+1,n-1]}\ar[ddd]&& \\
\\
 && \CN_{n+1}^{[n-1,t]}\ar[dr]^{} &&\\
&\CN_{n}^{[n-1]} \ar[rr]_{}&& \CN_{n+1}^{[n-1]} .&& 
}
\end{equation*}
We arrive at the following cube by taking successive cartesian products
 \begin{equation}\label{eq:(t,n)-cubic}
 \begin{aligned}
\xymatrix{ \CN_{n}^{[n-1,t],\circ} \ar[ddd]\ar[dr]_{} \ar@{^(->}[rr]_{} && \CN_{n+1}^{[n+1,n-1,t]}  \ar[ddd]|!{[d]}\hole \ar[dr]_{}&& \\
&    \CN_{n}^{[n-1],\circ} \ar[ddd] \ar@{^(->}[rr]_{} &&  \CN_{n+1}^{[n+1,n-1]}\ar[ddd]&& \\
\\
  \CN_{n}^{[n-1,t]} \ar[dr]_{} \ar@{^(->}[rr]_{}|!{[r]}\hole && \CN_{n+1}^{[n-1,t]}\ar[dr]^{} &&\\
&\CN_{n}^{[n-1]} \ar@{^(->}[rr]_{}&& \CN_{n+1}^{[n-1]} .&& 
}
\end{aligned}
\end{equation}
In particular, all of the faces are cartesian, and all of the vertical morphisms are \'etale of degree two.   Note that the bottom cartesian square follows from Proposition \ref{prop:deep-exc} and the right vertical face is obvious. Then the two factorizations \eqref{eq:N n-1 t circ def} and  \eqref{biggerdi} come from the front-and-left faces and the right-and-back faces respectively.

From  the big correspondence, we have the following fiber product diagram  
  \begin{equation}\label{pi1pi2}
 \begin{aligned}
 \xymatrix{
 &\wh{\CN}_n^{[t]}\ar@{}[d]|{\square}\ar[dl]_{\pi_1}\ar@{^(->}[r]\ar[rrd]^{\pi_2}
 &\CN_{n+1}^{[n+1,t]}\ar[rd]\ar[ld]|!{[l];[rd]}\hole&\\
 \CN_n^{[t]}\ar@{^(->}[r]&\CN_{n+1}^{[t]}&&\CN_{n+1}^{[n+1]}.}
 \end{aligned}
 \end{equation}

\begin{lemma}
The proper morphism
\[
\xymatrix{ (\pi_1,\pi_2):&
\wh\CN^{[t]}_{n} \ar[r]& \CN_{n}^{[t]} \times \CN_{n+1}^{[n+1]}
}
\]
is a closed embedding of formal schemes.
\end{lemma}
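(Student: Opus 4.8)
The plan is to follow the same pattern used for the analogous closed-embedding lemmas earlier in the paper, namely Lemma~\ref{lem:naive-Y-cycle-emb} and Lemma~\ref{lem:(t,n)-naive-emb}: realize $\wh\CN^{[t]}_n$ explicitly as a locus inside the product cut out by a lifting condition on quasi-isogenies, and then observe that this locus is a closed formal subscheme of $\CN_n^{[t]}\times\CN_{n+1}^{[n+1]}$. Concretely, $\wh\CN^{[t]}_n$ is defined by the cartesian square \eqref{pi1pi2}, so an $S$-point of $\wh\CN^{[t]}_n$ is a triple consisting of $(Y^{[t]},\iota,\lambda,\rho_{Y^{[t]}})\in\CN_n^{[t]}(S)$, an object $(X^{[n+1]},\iota,\lambda,\rho_{X^{[n+1]}})\in\CN_{n+1}^{[n+1]}(S)$, and a point of $\CN_{n+1}^{[n+1,t]}$ lying over the image of $Y^{[t]}$ in $\CN_{n+1}^{[t]}$ (via the embedding $\CN_n^{[t]}\hookrightarrow\CN_{n+1}^{[t]}$ of \eqref{equ:embd-RZ}) and over $X^{[n+1]}$. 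First I would spell out what the extra datum of the $\CN_{n+1}^{[n+1,t]}$-point amounts to: by the moduli description in \S\ref{sec:RZ-deeper}, it is exactly the requirement that the quasi-isogeny $\rho_{X^{[t]}}^{-1}\circ\alpha_{n+1,t}\circ\rho_{X^{[n+1]}}$ over $\ov S$ (where $X^{[t]}:=Y^{[t]}\times\ov\CE$ is the image of $Y^{[t]}$ under the embedding, and $\alpha_{n+1,t}\colon\BX^{[n+1]}\to\BX^{[t]}$ is the fixed isogeny between framing objects) lifts to an isogeny $X^{[n+1]}\to Y^{[t]}\times\ov\CE$ over $S$; and such a lift, if it exists, is unique. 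Hence the forgetful map $\wh\CN^{[t]}_n\to\CN_n^{[t]}\times\CN_{n+1}^{[n+1]}$ is injective on points and on deformations, i.e. is a monomorphism of formal schemes.

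To upgrade ``monomorphism'' to ``closed immersion'' I would invoke properness together with the standard criterion: $(\pi_1,\pi_2)$ is proper (the statement already asserts this, and it follows because it factors through the proper projection $\CN_{n+1}^{[n+1,t]}\to\CN_{n+1}^{[n+1]}$, which is proper as the RZ spaces involved are formally locally of finite type and the level-lowering maps $\CN^{[s,t]}\to\CN^{[s]}$ are projective), and a proper monomorphism which is, in addition, unramified is a closed immersion. Unramifiedness is checked on the level of first-order deformations via Grothendieck--Messing theory: once $(Y^{[t]},X^{[n+1]})$ is fixed, the isogeny lifting it to a point of $\wh\CN^{[t]}_n$ is unique, so the relative cotangent complex vanishes in degree zero. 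Alternatively, and perhaps more cleanly in the spirit of the earlier proofs, I would argue directly: the lifting condition is a closed condition (by \cite[Prop.~2.9]{RZ}, exactly as in Definition~\ref{def:speccy}), so the sub\emph{functor} of $\CN_n^{[t]}\times\CN_{n+1}^{[n+1]}$ parametrizing pairs for which the relevant quasi-isogeny lifts is represented by a closed formal subscheme $Z$; and $\wh\CN^{[t]}_n$ maps to $Z$. It remains to see this map is an isomorphism, which follows because both are closed subschemes of $\CN_{n+1}^{[n+1,t]}$ cut out by the same conditions — on $Z$ the lift, being unique, automatically equips the pair with a canonical $\CN_{n+1}^{[n+1,t]}$-structure compatible with the $\CN_{n+1}^{[t]}$-structure coming from $Y^{[t]}$, giving an inverse.

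Thus the key steps, in order, are: (1) unwind the cartesian square \eqref{pi1pi2} into the explicit lifting description of $\wh\CN^{[t]}_n$ inside $\CN_n^{[t]}\times\CN_{n+1}^{[n+1]}\times\CN_{n+1}^{[n+1,t]}$, following the templates of Lemmas~\ref{lem:naive-Y-cycle-emb} and \ref{lem:(t,n)-naive-emb}; (2) observe that the $\CN_{n+1}^{[n+1,t]}$-datum is redundant because the lifting isogeny is unique when it exists, so $\wh\CN^{[t]}_n$ is identified with the closed sublocus of $\CN_n^{[t]}\times\CN_{n+1}^{[n+1]}$ where the quasi-isogeny $\rho_{X^{[t]}}^{-1}\circ\alpha_{n+1,t}\circ\rho_{X^{[n+1]}}$ lifts (a closed condition by \cite[Prop.~2.9]{RZ}); (3) conclude that $(\pi_1,\pi_2)$ is a closed immersion. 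The main obstacle I anticipate is purely bookkeeping rather than conceptual: one must carefully match up the two framing isogenies $\alpha_{n+1,n-1}\colon\BX^{[n+1]}\to\BX^{[n-1]}$ and $\alpha_{n-1,t}\colon\BX^{[n-1]}\to\BX^{[t]}$ (whose composition is used to define the $[n+1,t]$-level datum in this section) with the embedding $\CN_n^{[t]}\hookrightarrow\CN_{n+1}^{[t]}$ of \eqref{equ:embd-RZ}, and verify that the uniqueness of isogeny lifts is preserved under this composition — i.e.\ that an object of $\CN_{n+1}^{[n+1,t]}$ over a given pair is determined by the pair. This is exactly the content of the uniqueness clause in the definition of $\CN_{n,\ep}^{[s,t]}$ in \S\ref{sec:RZ-deeper} (``if $\wit\alpha$ exists then it is unique''), so no new input is needed; one only has to apply it twice, once for each leg of the composition. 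No serious difficulty is expected beyond this.
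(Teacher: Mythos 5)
Your proposal is correct and follows essentially the same route as the paper: the paper's proof simply identifies $\wh\CN^{[t]}_n$ with the closed formal subscheme of $\CN_n^{[t]}\times\CN_{n+1}^{[n+1]}$ parameterizing pairs $(Y^{[t]},X^{[n+1]})$ for which the quasi-isogeny $X^{[n+1]}\to Y^{[t]}\times\ov\CE$ lifts to an isogeny, exactly your step (2), the lifting condition being closed by \cite[Prop.~2.9]{RZ} and the lift unique when it exists. The additional machinery you sketch (properness, monomorphism plus unramifiedness via Grothendieck--Messing) is not needed and is not used in the paper.
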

\begin{proof}
Indeed, $\wh{\CN}_n^{[t]}$ is the closed formal subscheme of $\CN_n^{[t]}\times \CN_{n+1}^{[n+1]}$ parameterizing pairs $(Y^{[t]}, X^{[n+1]})$ such that the quasi-isogeny $X^{[n+1]}\to Y^{[t]}\times\ov{\CE}$
lifts to an isogeny.
\end{proof}

  Note that the cartesian square in the back face of \eqref{eq:(t,n)-cubic} can be enlarged into a commutative diagram
  \begin{equation}\label{equ:three-squares}
 \begin{aligned}
 \xymatrix{
 \CN_{n}^{[n-1,t],\circ} \ar@{}[rd]|{\square}\ar[d]\ar@{^(->}[r]
 &\CN_{n+1}^{[t,n-1,n+1]}\ar[d]\ar[r]&\CN_{n+1}^{[t,n+1]}\ar[dd]  \\
 \CN_n^{[t,n-1]}\ar[d]\ar@{^(->}[r]&\CN_{n+1}^{[t,n-1]}& \\
 \CN_n^{[t]}\ar[rr] &&  \CN_{n+1}^{[t]} .}
 \end{aligned}
 \end{equation}
 Therefore, from the outer square there is an induced morphism
 $$
 \CN_{n}^{[n-1,t],\circ}\to \wh{\CN}_n^{[t]},
  $$
which is a closed embedding.   
 \begin{theorem}\label{sm=big10}
 The natural map 
 $$
 \CN_n^{[n-1,t],\circ}\to \wh{\CN}_n^{[t]}
 $$
 is an isomorphism. In other words, the big correspondence and the small correspondence coincide as formal subschemes of $\CN_{n}^{[t]} \times \CN_{n+1}^{[n+1]}$. 
 \end{theorem}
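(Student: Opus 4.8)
The plan is to prove that the closed immersion $\CN_n^{[n-1,t],\circ}\hookrightarrow \wh\CN_n^{[t]}$ is an isomorphism by exhibiting a cartesian square that identifies $\wh\CN_n^{[t]}$ with $\CN_n^{[n-1,t],\circ}$. The starting point is the fiber-product description \eqref{pi1pi2} of $\wh\CN_n^{[t]}$, namely
\begin{equation*}
\wh\CN_n^{[t]}=\CZ(u)^{[n+1]}\times_{\CN_{n+1}^{[n+1]}}\CN_{n+1}^{[n+1,t]},
\end{equation*}
where $\CZ(u)^{[n+1]}\subset\CN_{n+1}^{[n+1]}$ is the special $\CZ$-divisor cut out by the fixed unit-length vector $u$. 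By Theorem \ref{thm:Y unit} (Yao's theorem), $\CZ(u)^{[n+1]}=\CY(\pi^{-1}u)^{[n+1]}$ (equivalently $\CY(u)^{[n+1]}=\CZ(\pi u)^{[n+1]}$), and $\CY(u)^{[n+1]}$ is identified, via the trivial double cover $\CN_{n+1}^{[n-1,n+1]}\to\CN_{n+1}^{[n-1]}$, with $\CN_n^{[n-1],\circ}$; more precisely the composition $\CN_n^{[n-1],\circ}\to\CN_{n+1}^{[n-1,n+1]}\to\CN_{n+1}^{[n+1]}$ factors through $\CY(u)^{[n+1]}$ and induces an isomorphism $\CN_n^{[n-1],\circ}\simeq\CY(u)^{[n+1]}$. (One must check here that the relevant divisor in Yao's theorem — stated for $\CY(u)$ — matches the $\CZ(u)$ appearing in \eqref{pi1pi2}; this is exactly the content of the identity $\CY(u)^{[n+1]}=\CZ(\pi u)^{[n+1]}$ together with the fact that $u$ and $\pi u$ span the same special divisor up to the obvious relabeling, since the role of $u$ in defining $\wh\CN_n^{[t]}$ is through the lifting condition on $\ov\CE\to X^{[n+1],\vee}$, i.e. a $\CY$-type condition.)

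Granting that identification, I would form the base change of $\CN_n^{[n-1],\circ}\simeq\CY(u)^{[n+1]}\subset\CN_{n+1}^{[n+1]}$ along the projection $\CN_{n+1}^{[n+1,t]}\to\CN_{n+1}^{[n+1]}$, obtaining
\begin{equation*}
\wh\CN_n^{[t]}\simeq \CN_n^{[n-1],\circ}\times_{\CN_{n+1}^{[n+1]}}\CN_{n+1}^{[n+1,t]}.
\end{equation*}
Now I claim this last fiber product is precisely $\CN_n^{[n-1,t],\circ}$. This is where the cube \eqref{eq:(t,n)-cubic} enters: the right-hand vertical face of the cube is the square
\begin{equation*}
\xymatrix{
\CN_{n+1}^{[n+1,n-1,t]}\ar[r]\ar[d] & \CN_{n+1}^{[n+1,n-1]}\ar[d]\\
\CN_{n+1}^{[n-1,t]}\ar[r] & \CN_{n+1}^{[n-1]}
}
\end{equation*}
which is cartesian essentially by definition (deeper-parahoric RZ spaces with nested index sets), and the front-left face identifies $\CN_n^{[n-1,t],\circ}$ with $\CN_n^{[n-1,t]}\times_{\CN_{n+1}^{[n-1,t]}}\CN_{n+1}^{[n+1,n-1,t]}$. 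Combining the cartesian squares around the cube (together with Proposition \ref{prop:deep-exc} for the bottom face), I would trace that $\CN_n^{[n-1,t],\circ}$ is canonically isomorphic to $\CY(u)^{[n+1]}\times_{\CN_{n+1}^{[n+1]}}\CN_{n+1}^{[n+1,t]}$, because passing from level $[n-1,t]$ to level $[n+1,n-1,t]$ and then projecting to level $[n+1,t]$ amounts exactly to imposing the same lifting data that cut out $\wh\CN_n^{[t]}$ inside $\CN_n^{[t]}\times\CN_{n+1}^{[n+1]}$. The morphism $\CN_n^{[n-1,t],\circ}\to\wh\CN_n^{[t]}$ already constructed via \eqref{equ:three-squares} is then seen to be the comparison isomorphism between these two descriptions, so it is an isomorphism, and the equality of the two cycles inside $\CN_n^{[t]}\times\CN_{n+1}^{[n+1]}$ follows.

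The main obstacle I anticipate is the bookkeeping needed to verify that the isomorphism $\CN_n^{[n-1],\circ}\simeq\CY(u)^{[n+1]}$ from Theorem \ref{thm:Y unit} is compatible with the projections to $\CN_{n+1}^{[t]}$ (resp. with the extra level structure of type $t$), i.e. that the two fiber products $\CN_n^{[n-1],\circ}\times_{\CN_{n+1}^{[n+1]}}\CN_{n+1}^{[n+1,t]}$ and $\CN_n^{[n-1,t],\circ}$ really receive the isomorphism functorially and not merely abstractly. Concretely, one needs to check that the isogeny data $\BX^{[n+1]}\to\BX^{[n-1]}$, $\BX^{[n-1]}\to\BX^{[t]}$ that define the various deeper-parahoric RZ spaces are compatible with the splitting $X^{[n+1]}\simeq Y^{[n-1]}\times\ov\CE$ (resp. $Y^{[n-1]}\times\ov\CE\to X^{[t]}$) supplied by Theorem \ref{thm:Y unit} and by the standard splitting procedure used in the proof of Proposition \ref{prop:deep-exc}. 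This is a moduli-theoretic compatibility, and the cleanest route is to argue purely at the level of functors of points (as in the proof of Proposition \ref{prop:deep-exc}): given an $S$-point of one side, produce the other side's data and check the two constructions are mutually inverse, using that all the vertical maps in the cube \eqref{eq:(t,n)-cubic} are finite étale of degree two so that the comparison map $\CN_n^{[n-1,t],\circ}\to\wh\CN_n^{[t]}$, being a monomorphism (a closed immersion) that is bijective on the relevant fiber products, is forced to be an isomorphism. An alternative, slightly softer finish: having the closed immersion $\CN_n^{[n-1,t],\circ}\hookrightarrow\wh\CN_n^{[t]}$ from \eqref{equ:three-squares}, and knowing both sides are flat over $\Spf O_{\breve F}$ of the same relative dimension $n-1$ (flatness of $\wh\CN_n^{[t]}$ should follow as in Theorem \ref{th: wt N=N(t,n-1)} and its corollary, and flatness of $\CN_n^{[n-1,t],\circ}$ from that of $\CN_{n+1}^{[n+1,n-1,t]}$, which is a deeper-parahoric RZ space, hence flat by \cite{Luo24}), it suffices to check the immersion is surjective on geometric points, which is a direct computation with lattices using the descriptions of $\wt\BM_n^{[t]}$ and $\BN_n^{[n-1,t],\circ}$ already recorded in Remark \ref{rmk:(n-1,t)-Y-smaller} — except that here the outer level is $[t]$ rather than $[n-1]$, so the lattice $\Lambda_{n+1}\subset\Lambda_{n-1}^\flat\oplus\langle u\rangle$ automatically satisfies $\Lambda_{n+1}\subset\Lambda_t$ as well, closing the gap that prevented the analogous statement in Remark \ref{rmk:(n-1,t)-Y-smaller} from being an isomorphism.
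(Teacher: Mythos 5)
Your opening identification is where the argument breaks. The cartesian square \eqref{pi1pi2} is taken over $\CN_{n+1}^{[t]}$, with left leg the level-$t$ divisor $\CZ(u)^{[t]}\simeq \CN_n^{[t]}$; it is \emph{not} $\CZ(u)^{[n+1]}\times_{\CN_{n+1}^{[n+1]}}\CN_{n+1}^{[n+1,t]}$. Read literally, your right-hand side is empty: by Theorem \ref{thm: Z unit}, $\CZ(u)$ of a unit-length vector on a $\pi$-modular space is empty. Read charitably as $\CY(u)^{[n+1]}\times_{\CN_{n+1}^{[n+1]}}\CN_{n+1}^{[n+1,t]}$, the right-hand side is by definition \eqref{eq:def Mt} the space $\wt\CM_n^{[t]}$ (after Yao's identification $\CY(u)^{[n+1]}\simeq\CN_n^{[n-1],\circ}$), and Remark \ref{rmk:(n-1,t)-Y-smaller} shows that $\CN_n^{[n-1,t],\circ}$ is a \emph{proper} closed subscheme of $\wt\CM_n^{[t]}$ already on lattice models. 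So your route, carried through, would prove a false identity. The point is that the condition ``$u$ lifts to $X^{[t]}$'' (which cuts out $\wh\CN_n^{[t]}$ inside $\CN_{n+1}^{[n+1,t]}$, pulled back from level $t$) and the condition ``$\lambda\circ u$ lifts to $X^{[n+1],\vee}$'' (which cuts out $\wt\CM_n^{[t]}$, pulled back from level $n+1$) do not define the same locus; you cannot move the base of the fiber product from $\CN_{n+1}^{[t]}$ to $\CN_{n+1}^{[n+1]}$. Your closing parenthetical — that because the outer level of the first factor is $[t]$ on the $n$-dimensional space, the type-$t$ lattice is forced to split off $u$, which closes the gap of Remark \ref{rmk:(n-1,t)-Y-smaller} — is the correct observation, but it is precisely what your main construction discards.

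Your fallback finish is also not sufficient as stated: a closed immersion of $O_{\breve F}$-flat formal schemes of the same relative dimension that is surjective on geometric points need not be an isomorphism (consider $\Spf O\hookrightarrow\Spf O[x]/(x^2)$). Moreover, flatness of $\wh\CN_n^{[t]}$ is deduced in the paper as a corollary \emph{of} Theorem \ref{sm=big10}, so invoking it as an input is circular. What is genuinely needed, and what the paper supplies, is control of first-order deformations: the paper proves the local-model analogue $\bM_n^{[n-1,t]}\xrightarrow{\sim}\wh\bM_n^{[t]}$ (Proposition \ref{prop:local-model}), whose key input is the local-model version of Yao's theorem ($\bM_n^{[n-1]}\simeq\bY^{[n+1]}(u)$, Theorem \ref{thm:Yao-isom}) together with the uniqueness statement $\CF_{\Lambda_{n-1}^{\flat,\vee}}=\CF_{\Lambda_{n+1}^\vee}\cap\lambda_{n-1}(\Lambda_{n-1,R}^{\flat,\vee})$ from \cite[Prop.~12.1]{RSZ2}; it then checks bijectivity on $\BF$-points by a Dieudonn\'e-lattice computation and concludes via Grothendieck--Messing that the closed immersion is infinitesimally \'etale, hence an isomorphism. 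Your instinct that Yao's theorem plus a point count is the engine is right, but the deformation-theoretic step is indispensable and is where the real work lies.
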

 When $t=n-1$ this holds trivially and both spaces are isomorphic to $ \CN_n^{[n-1],\circ}$.
 The proof of Theorem \ref{sm=big10} is given in \S \ref{s:pfofthmBS}. In particular, we obtain from the structure of $\CN_n^{[n-1, t]}$ the following statement.
 \begin{corollary}
 The formal scheme $\wh{\CN}_n^{[t]}$ is flat. \qed
 \end{corollary}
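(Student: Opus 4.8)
The plan is to deduce this from Theorem~\ref{sm=big10} together with the flatness of Rapoport--Zink spaces of parahoric level. By Theorem~\ref{sm=big10} the natural map $\CN_n^{[n-1,t],\circ}\to \wh{\CN}_n^{[t]}$ is an isomorphism, so it is enough to prove that $\CN_n^{[n-1,t],\circ}$ is flat over $\Spf O_{\breve F}$. I would do this by showing that the structure morphism factors as $\CN_n^{[n-1,t],\circ}\to \CN_n^{[n-1,t]}\to \Spf O_{\breve F}$, with the first map finite étale and the second map flat; since an étale morphism is flat and flatness is preserved under composition, this gives the claim.

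For the first map, I would trace the (trivial) double-cover structure through the two cartesian squares of \eqref{eq:N n-1 t circ def}, both of which are ordinary fibre products (marked by $\square$). The morphism $\CN_{n+1}^{[n-1,n+1]}\to \CN_{n+1}^{[n-1]}$ is a trivial double covering by \cite[Prop.~6.4]{RSZ2} (note that $n+1$ is even here; this is exactly the input behind Theorem~\ref{thm:Y unit} and the discussion in \S\ref{ss: exotic t=n+1}), hence in particular finite étale. As finite étale morphisms --- indeed trivial double coverings --- are stable under base change, the right-hand square in \eqref{eq:N n-1 t circ def} shows that $\CN_n^{[n-1],\circ}\to \CN_n^{[n-1]}$ is a trivial double covering, and then the left-hand square shows the same for $\CN_n^{[n-1,t],\circ}\to \CN_n^{[n-1,t]}$. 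In particular $\CN_n^{[n-1,t],\circ}$ is (non-canonically) a disjoint union of two copies of $\CN_n^{[n-1,t]}$.

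Finally, $\CN_n^{[n-1,t]}$ is flat over $\Spf O_{\breve F}$: this holds for all unitary Rapoport--Zink spaces of parahoric level by \cite{Luo24}, as recorded in \S\ref{sec:RZ-deeper}. Since a disjoint union of $O_{\breve F}$-flat formal schemes is $O_{\breve F}$-flat, $\CN_n^{[n-1,t],\circ}$ is flat, and therefore so is $\wh{\CN}_n^{[t]}$. There is no real obstacle in this argument once Theorem~\ref{sm=big10} is granted: the only external inputs are the flatness of $\CN_n^{[n-1,t]}$ and the triviality of the double covering $\CN_{n+1}^{[n-1,n+1]}\to \CN_{n+1}^{[n-1]}$, and the remainder is a formal manipulation of cartesian diagrams; the one point worth verifying is that the squares in \eqref{eq:N n-1 t circ def} are genuine fibre products of formal schemes, so that base-change stability of étale morphisms applies directly, which is automatic since in each square one of the two morphisms is étale.
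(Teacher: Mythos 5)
Your proposal is correct and is essentially the argument the paper intends: Theorem \ref{sm=big10} identifies $\wh{\CN}_n^{[t]}$ with $\CN_n^{[n-1,t],\circ}$, which by the cartesian squares of \eqref{eq:N n-1 t circ def} (equivalently, the cube \eqref{eq:(t,n)-cubic}, where the vertical maps are noted to be \'etale of degree two) is a trivial double cover of the flat RZ space $\CN_n^{[n-1,t]}$, whence flatness. The only cosmetic remark is that the squares in \eqref{eq:N n-1 t circ def} are fibre products by definition of $\CN_n^{[n-1],\circ}$ and $\CN_n^{[n-1,t],\circ}$, so there is nothing to verify there.
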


 \subsection{Lattice models}\label{sec:lattice (n odd, n+1)}

 Let $\Lambda^\flat$ be a vertex lattice of type $t$ in $W^\flat_0$ and denote its stabilizer by $K_{n}^{[t]}$.  Fix a special vector $u_0$ of unit norm such that the lattice $\Lambda_0=\Lambda^\flat\obot \langle u_0 \rangle \in \Ver^{t}(W_0)$. We fix a lattice   $\Lambda\in \Ver^{n+1}(W_0)$ such  that   $\Lambda \subset\Lambda_0$ and  denote by  $K_{n+1}^{[n+1]}$  the stabilizer of $\Lambda$ (recall that  $W_0$ is the split hermitian space).

We have the lattice models $ \BN_{n}^{[n-1,t],\circ}$ and $ \wh{ \BN}_{n}^{[n-1,t]}$ of RZ spaces.
Similar to \S\ref{sec:(n,t)-lat},
 we have a
bi-$K_n^{[n-1]}$-invariant Hecke function, cf. \eqref{eq: phi(t,n)},
   \begin{equation}\label{phie}
\varphi^{[n-1,t]}_{n}:=\vol(K^{[t]}_n)^{-1} {\bf 1}_{K^{[n-1]}_nK^{[t]}_n}\ast {\bf 1}_{K^{[t]}_n K^{[n-1]}_n},
\end{equation}
and a
bi-$K_{n+1}^{[n-1]}$-invariant function 
   \begin{equation}\label{phif}
\varphi^{[n-1,n+1]}_{n+1} =\vol(K^{[n+1]}_{n+1})^{-1} {\bf 1}_{ K_{n+1}^{[n-1]}  K_{n+1}^{[n+1]}} \ast {\bf 1}_{K_{n+1}^{[n+1]}  K_{n+1}^{[n-1]}}.
\end{equation}
We have 
 $$
\Orb(g,\vol(K^{[n-1]}_n)^{-2}\varphi^{[n-1,t]}_{n}\otimes \varphi^{[n-1,n+1]}_{n+1} )= \#(\BN^{[n-1,t],\circ}_n \cap g \BN^{[n-1,t],\circ}_n).
$$

Similarly we have the bi-$K_{n+1}^{[t]}$-invariant Hecke function, cf.\eqref{eq: phi n+1 (n,t)}
   \begin{equation}\label{phig}
\varphi^{[t,n+1]}_{n+1}:=\vol(K^{[n+1]}_{n+1})^{-1} {\bf 1}_{K^{[t]}_{n+1}K^{[n+1]}_{n+1}}\ast {\bf 1}_{K^{[n+1]}_{n+1}K^{[t]}_{n+1}}.
\end{equation}
Then we have
 $$
\Orb(g,\vol(K_n^{[t]})^{-2} {\bf 1}_{K_n^{[t]}}\otimes \varphi^{[t,n+1]}_{n+1})= \#(\wt\BN^{[t]}_n \cap g \wt\BN^{[t]}_n).
$$

\begin{lemma}\label{lem: lat (t,n+1)} Let $t\leq n-1$.
 \begin{altenumerate}
  \item
 We have
 $$
 \BN_{n}^{[n-1,t],\circ}\simeq \wh \BN_{n}^{[n-1,t]}.
 $$
 Both are ``finite \'etale  double" coverings of  $ \BN_{n}^{[n-1,t]}$ (namely, every fiber consists of two elements).
 
 \item We have 
 $$
\# K_{n}^{[t]}\bs K_{n+1}^{[t]}/ K_{n+1}^{[t,n+1]}=1,
$$
and $$
\# K_{n}^{[t],\circ}\bs K_{n+1}^{[t]}/ K_{n+1}^{[t,n+1]}=2.
$$
\item We have 
\begin{align*}
\vol(K_n^{[n-1,t],\circ})^{-2} {\bf 1}_{K_{n}^{[t]}}\otimes{\bf 1}_{ K_{n+1}^{[n+1]} }\sim  \vol(K_n^{[t]})^{-2} {\bf 1}_{K_{n}^{[t]}}\otimes\varphi^{[t,n+1]}_{n+1} \sim \vol(K^{[n-1]}_n)^{-2}\varphi^{[n-1,t]}_{n}\otimes \varphi^{[n-1,n+1]}_{n+1} .
\end{align*}

\end{altenumerate}
\end{lemma}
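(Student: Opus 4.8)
The plan is to prove the three parts of Lemma \ref{lem: lat (t,n+1)} in sequence, following the template of the previous lattice-model lemmas (Lemma \ref{lem: lat (n,t)}, Lemma \ref{lem: lat (n-1,t)}, Lemma \ref{lem: lat (t,n)}) but taking care of the new feature here, namely the appearance of the $\circ$-covers coming from $\CY$-divisors.

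For part (i), I would first write down explicit descriptions of all three lattice sets. Using the cube \eqref{eq:(t,n)-cubic} translated to lattices, one gets
\begin{equation*}
\BN_n^{[n-1,t],\circ}=\{(\Lambda^\flat_t,\Lambda^\flat_{n-1},\Lambda_{n+1})\mid \Lambda^\flat_{n-1}\subset\Lambda^\flat_t,\ \Lambda_{n+1}\subset\Lambda^\flat_{n-1}\oplus\langle u_0\rangle\},
\end{equation*}
while $\wh\BN_n^{[n-1,t]}$ is the fiber product of $\BN_n^{[t]}\hookrightarrow\BN_n^{[t]}\times\BN_{n+1}^{[t]}\hookleftarrow\BN_n^{[n-1,t],?}\times\Delta$ realized inside $\BN_n^{[t]}\times\BN_{n+1}^{[n+1]}$ via \eqref{pi1pi2}; explicitly $\wh\BN_n^{[t]}=\{(\Lambda^\flat_t,\Lambda_{n+1})\mid\Lambda_{n+1}\subset\Lambda^\flat_t\oplus\langle u_0\rangle\}$ and then $\wh\BN_n^{[n-1,t]}$ adds the intermediate $\Lambda^\flat_{n-1}$. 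The identification $\BN_n^{[n-1,t],\circ}\simeq\wh\BN_n^{[n-1,t]}$ is the lattice-model shadow of Theorem \ref{sm=big10}; alternatively it follows directly from the cube \eqref{eq:(t,n)-cubic} since all faces are cartesian. The double-cover statement over $\BN_n^{[n-1,t]}$ is exactly the statement that $\CN_{n+1}^{[n+1,n-1]}\to\CN_{n+1}^{[n-1]}$ is a trivial double cover (\cite[Prop. 6.4]{RSZ2}), restricted to lattices: for a fixed $(\Lambda^\flat_t,\Lambda^\flat_{n-1})$, the $\pi$-modular lattices $\Lambda_{n+1}$ with $\Lambda_{n+1}\subset\Lambda^\flat_{n-1}\oplus\langle u_0\rangle$ are exactly the two isotropic lines in the split binary quadratic space $(\Lambda^\flat_{n-1}\oplus\langle u_0\rangle)^\vee/(\Lambda^\flat_{n-1}\oplus\langle u_0\rangle)$, and both of them automatically satisfy $\Lambda_{n+1}\subset\Lambda^\flat_{n-1}\oplus\langle u_0\rangle$, so there really are two preimages, never one (this is where $W_0$ being split is used).

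For part (ii), set $\BW=\Lambda_0^\vee/\Lambda_0$, a split quadratic $k$-space of dimension $(n+1)-t\geq 2$, and let $\ov u_0\in\BW$ be the (anisotropic) image of $u_0$, with orthogonal complement $\BW^\flat$. Then $\pi\Lambda^\vee/\Lambda_0\subset\BW$ is a maximal isotropic subspace (dimension $(n+1-t)/2$), stabilized by a parabolic $P\subset\mathrm{O}(\BW)$, and
\begin{equation*}
K_n^{[t]}\backslash K_{n+1}^{[t]}/K_{n+1}^{[t,n+1]}\simeq \mathrm{O}(\BW^\flat)\backslash\mathrm{O}(\BW)/P .
\end{equation*}
Since $\BW$ is split of even dimension and contains a maximal isotropic of dimension $(n+1-t)/2$, Witt's theorem shows $\mathrm{O}(\BW)$ acts transitively on maximal isotropics modulo $\mathrm{O}(\BW^\flat)$ — more precisely, any two maximal isotropics are conjugate by an element fixing $\ov u_0$, because $\BW^\flat$ still has Witt index $(n+1-t)/2-1\geq(n-1-t)/2$, which suffices — giving a single double coset. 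For $K_n^{[t],\circ}$ one passes to $\mathrm{SO}(\BW^\flat)$ (index two in $\mathrm{O}(\BW^\flat)$, recalling that $K_n^{[t],\circ}$ is the kernel of $\det\bmod\pi_0$ as in \S\ref{sec:parahoric}), and the extra determinant obstruction splits the single orbit into two; one checks the two orbits are genuinely distinct by a determinant/spinor-norm computation, or simply by counting, using that the reduction maps are surjective. This mirrors the analysis in Lemma \ref{lem: lat (n-1,t) Y}(i)(ii) and Lemma \ref{lem: lat (t,n)}(i).

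For part (iii), I would argue exactly as in the proofs of Lemma \ref{lem: lat (n,t)}(iii), Lemma \ref{lem: lat (n-1,t)}(iii) and Lemma \ref{lem: lat (n-1,t) Y}(iii): use the averaging identity \eqref{eq: eM}, $\Orb(g,f)=\Orb(g,e_{\Delta(M)}\ast f\ast e_{\Delta(M)})$, applied to $f=\mathbf 1_{K_n^{[t]}}\otimes\mathbf 1_{K_{n+1}^{[n+1]}}$ with $M=K_n^{[n-1]}$ on one side and (using the bi-$K_n^{[t]}$-invariance of $\mathbf 1_{K_{n+1}^{[n+1]}}$) with $M=K_n^{[t]}$ on the other, together with the coset identities $\mathbf 1_{K_{n}^{[a]}}\ast\mathbf 1_{K_n^{[b]}}=\vol(K_n^{[a,b]})\mathbf 1_{K_n^{[a]}K_n^{[b]}}$ and the key products $K_n^{[t]}K_{n+1}^{[n+1]}=K_{n+1}^{[t]}K_{n+1}^{[n+1]}$ and $K_n^{[n-1]}K_{n+1}^{[n+1]}=K_{n+1}^{[n-1]}K_{n+1}^{[n+1]}$, the latter being the analogue of part (ii) of the earlier lemmas and provable by the same Witt-theorem coset count (or deduced from part (ii) here). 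Bookkeeping the volume factors $\vol(K_n^{[n-1,t],\circ})$, $\vol(K_n^{[t]})$, $\vol(K_n^{[n-1]})$, $\vol(K_{n+1}^{[n+1]})$ then yields the asserted $\sim$-equivalences. I expect part (ii), specifically the clean proof that the $\circ$-cover produces \emph{exactly} two double cosets (and not one or more), to be the main obstacle — it is the one place where one must be careful about spinor norms/the Kottwitz map rather than applying Witt's theorem formally, and where the splitness hypothesis on $W_0$ and the anisotropy of $\ov u_0$ are both essential; parts (i) and (iii) are then essentially formal consequences of the cube \eqref{eq:(t,n)-cubic}, Theorem \ref{sm=big10}, \cite[Prop. 6.4]{RSZ2}, and the convolution manipulations already used repeatedly in the paper.
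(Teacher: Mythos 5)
Your proposal follows essentially the same route as the paper: explicit lattice descriptions and a forgetful-map bijection for (i), the Witt-theorem double-coset count for (ii) (which the paper itself omits, citing the analogous earlier lemma), and the convolution manipulations via \eqref{eq: eM} together with the coset identities $K_n^{[t]}K_{n+1}^{[n+1]}=K_{n+1}^{[t]}K_{n+1}^{[n+1]}$ and $K_{n}^{[t]}\cap K_{n+1}^{[n+1]}=K_{n}^{[n-1,t],\circ}$ for (iii). The one point in (i) that neither Theorem \ref{sm=big10} nor the cartesianness of the cube supplies for free — and which is the entire content of the paper's proof there — is that the intermediate lattice $\Lambda^\flat_{n-1}$ is uniquely recovered from $(\Lambda^\flat_t,\Lambda_{n+1})$ via $\Lambda_{n+1}\stackrel{1}{\subset}\Lambda_{n+1}+\langle u_0\rangle=\Lambda^\flat_{n-1}\oplus\langle u_0\rangle$ (using that $u_0$ has unit length), so you should state this explicitly; also note the quadratic space in (ii) is $\Lambda_0/\pi\Lambda_0^\vee$, not $\Lambda_0^\vee/\Lambda_0$.
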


\begin{proof}
(i)
For the big correspondence we have
 \begin{equation}
 \begin{aligned}
 \wh{\BN}_n^{[t]}&=\{(\Lambda^\flat, \Lambda', \Lambda)\in  {\rm Vert}^t(W^\flat)\times{\rm Vert}^t(W)\times {\rm Vert}^{n+1}(W)\mid \Lambda\subset \Lambda'=\Lambda^\flat\oplus \langle u \rangle \}=\\
&= \{(\Lambda^\flat,  \Lambda)\in  {\rm Vert}^t(W^\flat)\times{\rm Vert}^{n+1}(W)\mid \Lambda\subset\Lambda^\flat\oplus \langle u \rangle \} .
 \end{aligned}
 \end{equation}
For the small correspondence we have
 \begin{equation}\label{eq:lattice-t n-1 circ}
 \begin{aligned}
 {\BN}_n^{[t, n-1], \circ}=\{(\Lambda^\flat,& \Lambda_1, \Lambda',\Lambda_1, \Lambda)\in  {\rm Vert}^t(W^\flat)\times  {\rm Vert}^{n-1}(W^\flat)\times{\rm Vert}^t(W)\times  {\rm Vert}^{n-1}(W)\times{\rm Vert}^{n+1}(W)\mid\\
 &\Lambda'=\Lambda^\flat\oplus\langle u \rangle, \Lambda_1=\Lambda_1^\flat\oplus \langle u \rangle,   \Lambda_1^\flat\subset \Lambda^\flat,\Lambda\subset \Lambda_1 \}\\
= \{(\Lambda^\flat,& \Lambda^\flat_1,  \Lambda)\in  {\rm Vert}^t(W^\flat)\times  {\rm Vert}^{n-1}(W^\flat)\times{\rm Vert}^{n+1}(W)\mid \Lambda_1^\flat\subset \Lambda^\flat, \Lambda\subset \Lambda_1^\flat \oplus\langle u \rangle\}.
 \end{aligned}
 \end{equation}
 We get a bijection because given $(\Lambda^\flat,  \Lambda)\in  \wh{\BN}_n^{[t]}$, we can reconstruct uniquely the missing entry $\Lambda_1^\flat$ by the chain of inclusions $\Lambda\subset^1 \Lambda+\langle u \rangle\subset \Lambda^\flat\oplus \langle u \rangle$, which implies that $\Lambda_1^\flat$ is the unique solution of $ \Lambda+\langle u \rangle= \Lambda_1^\flat\oplus \langle u \rangle$ (we use that $u$ is a vector of unit length).

(ii) The proof is similar to that of Lemma \ref{lem: lat (t,n)}   part (ii) as an application of Witt's theorem and we omit the details.

(iii)
The proof is similar to that of Lemma \ref{lem: lat (n,t)}  part (iii). By the bi-$ K_{n}^{[t]}$-invariance of ${\bf 1}_{ K_{n}^{[t]}}$ we obtain
\begin{align*}
{\bf 1}_{ K_{n}^{[t]}}  \otimes {\bf 1}_{K^{[n+1]}_{n+1} }\sim &  {\bf 1}_{ K_{n}^{[t]}} \otimes (e_{K_{n}^{[t]} } \ast  {\bf 1}_{K^{[n+1]}_{n+1} } \ast e_{ K_{n}^{[t]}}) .
\end{align*}
Now note 
$$e_{K_{n}^{[t]} } \ast  {\bf 1}_{K^{[n+1]}_{n+1} }= \vol(K_{n}^{[t]})^{-1}  {\bf 1}_{K^{[t]}_{n} }\ast  {\bf 1}_{K^{[n+1]}_{n+1} }=\vol(K_{n}^{[t]})^{-1}  \vol(K_{n}^{[t]}\cap  K_{n+1}^{[t,n+1]})  {\bf 1}_{K^{[t]}_{n} K^{[n+1]}_{n+1} } ,
$$ where $K_{n}^{[t]}\cap  K_{n+1}^{[n+1]}=K_{n}^{[n-1,t],\circ}$. By part (ii), we have $K^{[t]}_{n} K^{[n+1]}_{n+1}=K_{n}^{[t]} K_{n+1}^{[t,n+1]} K^{[n+1]}_{n+1}=K_{n+1}^{[t]} K^{[n+1]}_{n+1}$. Hence
\begin{align*}
{\bf 1}_{ K_{n}^{[t]}}  \otimes {\bf 1}_{K^{[n+1]}_{n+1} }
\sim &{\bf 1}_{ K_{n}^{[t]}} \otimes (\vol(K_{n}^{[n-1,t],\circ})^{2}  \vol(K_{n}^{[t]})^{-2} \vol(K^{[n+1]}_{n+1})^{-1}{\bf 1}_{K_{n+1}^{[t]} K^{[n+1]}_{n+1} } \ast {\bf 1}_{K^{[n+1]}_{n+1} K_{n+1}^{[t]}})\\
= & \vol(K_{n}^{[n-1,t],\circ})^{2}  \vol(K_{n}^{[t]})^{-2}  {\bf 1}_{ K_{n}^{[t]}} \otimes  \varphi^{[t,n+1]}_{n+1}.
\end{align*}

Similarly, by the bi-$K_{n}^{[n-1],\circ}$-invariance of ${\bf 1}_{K^{[n+1]}_{n+1} }$ we have
\begin{align*}
{\bf 1}_{ K_{n}^{[t]}}  \otimes {\bf 1}_{K^{[n+1]}_{n+1} }
\sim &  (e_{ K_{n}^{[n-1],\circ}} \ast {\bf 1}_{ K_{n}^{[t]}}\ast e_{ K_{n}^{[n-1],\circ}}) \otimes   {\bf 1}_{K^{[n+1]}_{n+1} } \\
= & \vol( K_{n}^{[n-1],\circ})^{-2} ({\bf 1}_{ K_{n}^{[n-1],\circ}} \ast {\bf 1}_{ K_{n}^{[t]}}\ast {\bf 1}_{ K_{n}^{[n-1],\circ}}) \otimes   {\bf 1}_{K^{[n+1]}_{n+1} } \\
= &\vol( K_{n}^{[n-1],\circ})^{-2} \vol(K_{n}^{[t]})^{-1}\vol(K_{n}^{[n-1,t],\circ} )^2 ({\bf 1}_{ K_{n}^{[n-1],\circ} K_{n}^{[t]}}\ast {\bf 1}_{  K_{n}^{[t]} K_{n}^{[n-1],\circ}})\otimes  {\bf 1}_{K^{[n+1]}_{n+1} }\\
= &\vol( K_{n}^{[n-1],\circ})^{-2} \vol(K_{n}^{[n-1,t],\circ} )^2 \varphi^{[n-1,t]}_{n}\otimes  {\bf 1}_{K^{[n+1]}_{n+1} } ,
\end{align*}
where we have used $K_{n}^{[n-1],\circ} K_{n}^{[t]}=K_{n}^{[n-1]} K_{n}^{[t]}$. Next by the bi-$K_n^{[n-1]}$-invariance of $\varphi^{[n-1,t]}_{n}$,  we continue to obtain
\begin{align*}
{\bf 1}_{ K_{n}^{[t]}}  \otimes {\bf 1}_{K^{[n+1]}_{n+1} }\sim &\vol( K_{n}^{[n-1],\circ})^{-2} \vol(K_{n}^{[n-1,t],\circ} )^2 \varphi^{[n-1,t]}_{n}\otimes  (e_{K_n^{[n-1]}}\ast {\bf 1}_{K^{[n+1]}_{n+1} }\ast e_{K_n^{[n-1]}})\\
= & \vol(K_{n}^{[n-1,t],\circ} )^2 \vol( K_{n}^{[n-1]})^{-2}  \varphi^{[n-1,t]}_{n}\otimes   \varphi^{[n-1,n+1]}_{n+1}.\qedhere
\end{align*}
\end{proof}

 \subsection{The exotic case $t=n-1$}\label{ss: case (n odd, n+1)}
 In this subsection,  we consider the case $t=n-1$, in which we have regularity of $\CN_{n}^{[t]} \times \CN_{n+1}^{[n+1]}$ without passing to the splitting model. Namely, in the case $t=n-1$, the formal scheme  $ \CN_{n}^{[n-1]}$ is formally smooth (\emph{exotic smoothness}). In this case, we  have the AT conjecture in \cite{RSZ2}, which we recall briefly.
 The RZ spaces $\CN^{[n-1]}_{n}$ and $ \CN^{[n+1]}_{n+1} $ are both smooth (exotic smoothness), and hence it makes sense to consider the intersection number 
  \begin{equation}
  \left\langle\CN_n^{[n-1],\circ} , g\CN_n^{[n-1],\circ} \right\rangle_{ \CN_{n}^{[n-1]} \times \CN_{n+1}^{[n+1]} }=\chi(  \CN_{n}^{[n-1]} \times \CN_{n+1}^{[n+1]} , \CN_n^{[n-1],\circ} \cap^\BL g\CN_n^{[n-1],\circ} ) .
  \end{equation}
 But this intersection number coincides precisely with the one occurring in \cite[\S 12]{RSZ2}, and  the analogue of Conjecture \ref{conj (n odd,t<n) Z} is identical  with the conjecture in  \cite[\S 12]{RSZ2}.
 Note that in this case we also obtain the identical conjecture with  that in \S\ref{ss: exotic t=n+1}.

\subsection{Intersection numbers on the splitting model}
Let $\wh{\CN}_n^{[t], \spt}=\CN^{[n-1,t], \circ,\spt}_{n}$ be the flat closure of  the base change of $\wh{\CN}_n^{[t]}= \CN_{n}^{[n-1,t],\circ}$ along the morphism $\CN_n^{[t],\spt}\times\CN_{n+1}^{[n+1]}\to \CN_n^{[t]}\times\CN_{n+1}^{[n+1]}$. Then $ \wh{\CN}_n^{[t], \spt}$ is a closed formal subscheme of $\CN_n^{[t],\spt}\times\CN_{n+1}^{[n+1]}$, flat over $\Spf O_{\breve{F}}$ of relative dimension $n-1$.  We have the commutative diagram

 \[
\xymatrix{
 \wh{\CN}_n^{[t], \spt} \ar[d]  \ar@{^(->}[r]& \CN_n^{[t],\spt} \times \CN_{n+1}^{[n+1]}\ar[d]\\
  \CN_{n}^{[n-1,t],\circ}   \ar@{^(->}[r]& \CN_n^{[t]} \times \CN_{n+1}^{[n+1]}.
 }
 \]
We define the intersection numbers
 $$
  \left\langle   \wh{\CN}_n^{[t], \spt}  , g   \wh{\CN}_n^{[t], \spt}  \right\rangle_{ \CN_{n}^{[t],\spt} \times \CN_{n+1}^{[n+1]} }=\chi(  \CN_{n}^{[t], \spt} \times \CN_{n+1}^{[n+1]} , \wh{\CN}_n^{[t], \spt}  \cap^\BL g\wh{\CN}_n^{[t], \spt}  ) .
  $$

 \subsection{The AT conjecture}
We now come to the AT conjecture.  Let $\Lambda^\flat$ (resp. $\Lambda^{'\flat}$) be a vertex lattice of type $t$ (resp. type $n-1$) in $W^\flat_0$. Denote by $K_{n}^{[t]}$ (resp.  $K_{n}^{[n-1]}$) the stabilizer of  $\Lambda^\flat$ (resp. $\Lambda^{'\flat}$).  Fix a special vector $u_0$ of unit norm, and let $\Lambda_0=\Lambda^\flat\obot \langle u_0 \rangle$ and $\Lambda_0'= \Lambda^{'\flat}\obot \langle u_0 \rangle $; they are vertex lattices of type $t$ and $n-1$ respectively. We fix a lattice   $\Lambda\in \Ver^{n+1}(W_0)$ such  that   $\Lambda \subset\Lambda'_0$ (recall that $W_0$ is the split hermitian space).  Denote by  $K_{n+1}^{[n+1]}$ (resp.  $K_{n+1}^{[n-1]}$)  the stabilizer of $\Lambda$ (resp. $\Lambda'_0$).
 
  \begin{conjecture}\label{conj (t,n odd) sm}
 Let $n=2m+1$ be odd, and let $0\leq t\leq n-1$ even.
There exists $\varphi'\in C_c^\infty(G')$ with transfer $(
\vol(K_n^{[n-1,t],\circ})^{-2} {\bf 1}_{K_{n}^{[t]}}\otimes{\bf 1}_{ K_{n+1}^{[n+1]} } ,0)\in C_c^\infty(G_{W_0})\times C_c^\infty(G_{W_{1}})$ such that, if $\gamma\in G'(F_0)_\rs$ is matched with  $g\in G_{W_1}(F_0)_\rs$, then
 \begin{equation*}
  \left\langle   \wh{\CN}_n^{[t], \spt} , g   \wh{\CN}_n^{[t], \spt}  \right\rangle_{ \CN_{n}^{[t],\spt} \times \CN_{n+1}^{[n+1]} }\cdot\log q=- \del\big(\gamma,  \varphi' \big).
\end{equation*}
\end{conjecture}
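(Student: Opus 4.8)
The plan is to adapt the two-step strategy of \cite{RSZ1,RSZ2}: a geometric reduction of the intersection number on the regular ambient space $\CN_n^{[t],\spt}\times\CN_{n+1}^{[n+1]}$ to an intersection number that is already understood, followed by a matching with the analytic side through the lattice-model identities of Lemma~\ref{lem: lat (t,n+1)}. The entry point is Theorem~\ref{sm=big10}, which identifies the cycle $\wh\CN_n^{[t]}$ with $\CN_n^{[n-1,t],\circ}$; combined with the cube~\eqref{eq:(t,n)-cubic} this exhibits $\wh\CN_n^{[t]}$ as the \'etale double cover of $\CN_n^{[n-1,t]}$ obtained by base change along the trivial double cover $\CN_{n+1}^{[n+1,n-1]}\to\CN_{n+1}^{[n-1]}$, and, via \eqref{biggerdi}, as the pullback of the exotic cycle $\CN_n^{[n-1],\circ}\hookrightarrow\CN_{n+1}^{[n+1]}$ along the Hecke correspondence $\CN_{n+1}^{[n+1,t]}\to\CN_{n+1}^{[n+1]}$. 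Passing to splitting models, $\wh\CN_n^{[t],\spt}=\CN_n^{[n-1,t],\circ,\spt}$ is flat of relative dimension $n-1$; since $\CN_{n+1}^{[n+1]}$ is formally smooth and $\CN_n^{[t],\spt}$ is regular, the ambient product is regular, so the complex computing $\langle\wh\CN_n^{[t],\spt},g\wh\CN_n^{[t],\spt}\rangle$ is perfect and, for $g$ regular semisimple, the intersection is proper over $\Spf O_{\breve F}$, hence the number is finite (cf. \cite[proof of Lem.~6.1]{M-Th}).

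First I would try to reduce to the exotic case $t=n-1$ of \S\ref{ss: case (n odd, n+1)}, i.e.\ to \cite[\S12]{RSZ2}. The idea is that \eqref{biggerdi} realizes $\wh\CN_n^{[t],\spt}$ as a Hecke translate of the cycle $\CN_n^{[n-1],\circ}$ inside $\CN_{n+1}^{[n+1]}$, the Hecke operator being the composition of the projections through $\CN_{n+1}^{[n-1,t]}$ and $\CN_{n+1}^{[n+1,n-1,t]}$. Applying the projection formula along the \'etale double cover $\CN_n^{[n-1,t],\circ}\to\CN_n^{[n-1,t]}$ and along $\CN_{n+1}^{[n+1,n-1]}\to\CN_{n+1}^{[n-1]}$ should convert $\langle\wh\CN_n^{[t],\spt},g\wh\CN_n^{[t],\spt}\rangle$ into a Hecke-weighted version of $\langle\CN_n^{[n-1],\circ},g\CN_n^{[n-1],\circ}\rangle_{\CN_n^{[n-1]}\times\CN_{n+1}^{[n+1]}}$, the weight being precisely the convolution kernel defining $\varphi^{[n-1,t]}_n$ and $\varphi^{[n-1,n+1]}_{n+1}$. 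On the analytic side this is exactly what Lemma~\ref{lem: lat (t,n+1)}(iii) provides: the test function $\vol(K_n^{[n-1,t],\circ})^{-2}{\bf 1}_{K_n^{[t]}}\otimes{\bf 1}_{K_{n+1}^{[n+1]}}$ has the same regular semisimple orbital integrals as $\vol(K_n^{[n-1]})^{-2}\varphi^{[n-1,t]}_n\otimes\varphi^{[n-1,n+1]}_{n+1}$, hence produces the same derivative $\del(\gamma,\varphi')$. Thus the geometric Hecke reduction and the analytic Hecke reduction would be compatible, and the conjecture for general $t$ would follow from the case $t=n-1$, which is the content of \cite[\S12]{RSZ2} (and is a theorem for $n=1$ by \cite[\S13]{RSZ2}).

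The main obstacle, and the reason the statement is only conjectural, is that this clean projection-formula reduction is not legitimate as written when $t<n-1$: the intermediate spaces $\CN_n^{[n-1,t]}$, $\CN_n^{[t]}$ and the Hecke correspondences linking them are not regular, so cycles cannot be transported across them while keeping derived intersections perfect and additive, and the blow-up $\CN_n^{[t],\spt}\to\CN_n^{[t]}$ introduces an exceptional divisor $\Exc$ whose strict-transform contributions must be tracked. A rigorous treatment would instead have to work entirely inside the regular product $\CN_n^{[t],\spt}\times\CN_{n+1}^{[n+1]}$, compute the strict transform there of the exotic reduction, and control the defect supported on $\Exc$. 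For $n=1$ this difficulty is vacuous: then $t=0=n-1$, $\CN_1^{[0],\spt}=\CN_1^{[0]}$, $\wh\CN_1^{[0]}\simeq\CN_1^{[0],\circ}$, and the statement coincides with the conjecture of \cite[\S12--13]{RSZ2}, so Conjecture~\ref{conj (t,n odd) sm} holds for $n=1$ and is part of Theorem~\ref{thmn=1-intro}; the first genuinely new instance (with the splitting model really intervening) is $n=3$, $t=0$, which would have to be handled by hand on the Kr\"amer model $\CN_3^{[0],\spt}$, reducing the intersection to the AFL along $\CN_3\hookrightarrow\CN_4$ and then invoking the comparison of test functions above. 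A uniform proof for all $n$ would presumably require a local--global argument in the spirit of \cite{Zha21,MZ,ZZha}: deform the local identity to a global arithmetic intersection on a unitary Shimura variety, use the modularity of the arithmetic generating series together with the density of the analytically defined test functions, and thereby reduce to the already established (unramified) AFL.
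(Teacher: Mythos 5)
The statement you are addressing is a \emph{conjecture}; the paper contains no proof of it, so there is no argument to compare yours against. What the paper does supply is exactly the supporting scaffolding you identify: Theorem \ref{sm=big10} (identification of $\wh{\CN}_n^{[t]}$ with $\CN_n^{[n-1,t],\circ}$ and hence flatness), the regularity of $\CN_n^{[t],\spt}\times\CN_{n+1}^{[n+1]}$, and the calibration of the test function via Lemma \ref{lem: lat (t,n+1)}(iii). Your sketch reads these correctly, and you are also right that the only case currently within reach is $n=1$, $t=0$, where $\CN_1^{[0],\spt}=\CN_1^{[0]}$ and the statement collapses onto known results. So as an account of \emph{why} the conjecture is plausible and what evidence the paper marshals for it, your proposal is consistent with the paper; as a proof it is, as you yourself concede, incomplete.

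One concrete inaccuracy in your reduction strategy: you claim that the case $t=n-1$ ``is the content of \cite[\S 12]{RSZ2}'' and that the general case would follow from it by a Hecke/projection-formula argument. The paper explicitly warns against this in \S\ref{sec:MA-Z-comp}: for $t=n-1$ the ambient space in Conjecture \ref{conj (t,n odd) sm} is $\CN_n^{[n-1],\spt}\times\CN_{n+1}^{[n+1]}$, whereas \cite[Conj. 12.4]{RSZ2} uses $\CN_n^{[n-1]}\times\CN_{n+1}^{[n+1]}$; since $\CN_n^{[n-1],\spt}$ is the blow-up of the (already smooth) $\CN_n^{[n-1]}$ at its worst points, the two intersection numbers genuinely differ, and the paper only \emph{conjectures} (Conjecture \ref{conj: t=n-1 corr}) that the discrepancy is an orbital-integral term. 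Moreover \cite[Conj. 12.4]{RSZ2} is itself open beyond low dimension. So even if your Hecke reduction across $\CN_{n+1}^{[n+1,t]}$ could be made rigorous on the regular ambient space, it would not terminate at a known statement; the base case of your induction is itself conjectural and not literally the statement of \cite[\S 12]{RSZ2}. Your final paragraph correctly diagnoses the remaining obstruction (non-regularity of the intermediate correspondences and the contributions supported on $\Exc$), and a genuine proof would most plausibly proceed by the global methods you mention rather than by the local reduction you outline.
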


\begin{remark}
By Lemma \ref{lem: lat (t,n+1)}  part (iii), one could replace the function $
\vol(K_n^{[n-1,t],\circ})^{-2} {\bf 1}_{K_{n}^{[t]}}\otimes{\bf 1}_{ K_{n+1}^{[n+1]} } $ by either of the other two.
\end{remark}

\subsection{Comparison when $t=n-1$}\label{sec:MA-Z-comp}
Conjecture \ref{conj (t,n odd) sm} for $t=n-1$ is {\em not} identical to \cite[Conj. 12.4]{RSZ2} which concerns the intersection number $\langle   \wh{\CN}_n^{[n-1]} , g   \wh{\CN}_n^{[n-1]}  \big\rangle_{ \CN_{n}^{[n-1]} \times \CN_{n+1}^{[n+1]} }$, cf. also Remark \ref{rmk:MA-Y-comp}.  We expect the difference between these conjectures  to be  given on the analytic side by an orbital integral function. 
To be more precise, consider the following commutative diagram,
 \[
\xymatrix{
	\wh{\CN}_n^{[n-1],\spt}\ar@{}[dr] \ar[d]  \ar@{^(->}[r]& \CN_n^{[n-1],\spt} \times \CN_{n+1}^{[n+1]}\ar[d]\\
	\wh{\CN}_{n}^{[n-1]}   \ar@{^(->}[r]& \CN_n^{[n-1]} \times \CN_{n+1}^{[n+1]} .
}
\]

The vertical arrows are  isomorphisms away from the worst points of $\CN_{n}^{[n-1]} $. Each automorphism  $g\in \U(\BY)$ induces a permutation of these worst points.
Let $\Lambda\in C^\flat$ be a type $n-1$ vertex lattice, with corresponding exceptional divisor $\Exc_\Lambda\subset \CN_n^{[n-1],\spt}$. Then $\mathrm{WT}(\Lambda)\in \CN_n^{[n-1]}\cap g\CN_n^{[n-1]}$ if and only if $g\in \mathrm{Stab}_{\U(\BY)}(\Lambda)$. In this case, the automorphism induces an automorphism of the exceptional divisor $g:\Exc_\Lambda\to\Exc_\Lambda$. 

\begin{conjecture}\label{conj: t=n-1 corr}
Let $n=2m+1$ be odd.
There exists $\varphi_{\mathrm{corr}}\in C_c^\infty(G')$ such that, if $\gamma\in G'(F_0)_\rs$ is matched with  $g\in G_{W_1}(F_0)_\rs$, then
 \begin{equation*}
  \Big(\big\langle   \wh{\CN}_n^{[n-1], \spt} , g   \wh{\CN}_n^{[n-1], \spt}  \big\rangle_{ \CN_{n}^{[n-1],\spt} \times \CN_{n+1}^{[n+1]} }-\big\langle   \wh{\CN}_n^{[n-1]} , g   \wh{\CN}_n^{[n-1]}  \big\rangle_{ \CN_{n}^{[n-1]} \times \CN_{n+1}^{[n+1]} }\Big)\cdot\log q=- \del\big(\gamma,  \varphi_{\mathrm{corr}} \big).
\end{equation*}
\end{conjecture}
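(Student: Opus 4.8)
The plan is to reduce the comparison to a purely local analysis at the worst points of $\CN_n^{[n-1]}$, and then translate the resulting correction term into an orbital integral via the same lattice-model bookkeeping used throughout the paper. First I would note that the blow-up morphism $\CN_n^{[n-1],\spt}\to\CN_n^{[n-1]}$ is an isomorphism away from $\Sing(\CN_n^{[n-1]})$, and likewise the map $\wh{\CN}_n^{[n-1],\spt}\to\wh{\CN}_n^{[n-1]}$ is an isomorphism away from the preimage of the worst points under the projection $\wh{\CN}_n^{[n-1]}=\CN_n^{[n-1],\circ}\to\CN_n^{[n-1]}$ (here I use $t=n-1$, so $\wh{\CN}_n^{[n-1]}\simeq\CN_n^{[n-1],\circ}$, the trivial double cover). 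Hence, for regular semisimple $g$, the difference of the two intersection numbers is supported at the finitely many points $\mathrm{WT}(\Lambda)$ with $g\in\mathrm{Stab}_{\U(\BY)}(\Lambda)$, $\Lambda$ a type $n-1$ vertex lattice in $C^\flat$. The first key step is therefore to write
\begin{equation*}
\big\langle \wh{\CN}_n^{[n-1],\spt}, g\wh{\CN}_n^{[n-1],\spt}\big\rangle - \big\langle \wh{\CN}_n^{[n-1]}, g\wh{\CN}_n^{[n-1]}\big\rangle = \sum_{\Lambda} \delta_\Lambda(g),
\end{equation*}
where $\delta_\Lambda(g)$ is a local contribution at $\mathrm{WT}(\Lambda)$, depending only on the action of $g$ on the exceptional divisor $\Exc_\Lambda$ and on the (completed) local ring of $\wh{\CN}_n^{[n-1]}$ at that point together with its $g$-action.

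The second step is to compute $\delta_\Lambda(g)$ explicitly. Since $\CN_n^{[n-1]}$ is formally smooth and $\CN_{n+1}^{[n+1]}=\CN_{n+1}^{[n+1],\spt}$ is formally smooth, the ambient space $\CN_n^{[n-1]}\times\CN_{n+1}^{[n+1]}$ is regular, and so is the splitting-model ambient $\CN_n^{[n-1],\spt}\times\CN_{n+1}^{[n+1]}$; the exceptional divisor of the first blow-up is the only place where the two computations diverge. I would use the description $\CN_n^{[n-1],\spt}$ as the blow-up of $\CN_n^{[n-1]}$ in the worst points (Remark~\ref{rmk:split-model}, Theorem~\ref{thm:geo-RZ}) together with the fact (from \cite{Yao24} and Theorem~\ref{thm:Y unit}, or from the structure of $\CN_n^{[n-1],\circ}$ recalled in \S\ref{ss: naive (n odd, n+1)}) that near a worst point the cycle $\wh{\CN}_n^{[n-1]}$ is, after the trivial double cover, a smooth divisor passing through $\mathrm{WT}(\Lambda)$ transversally or with a controlled tangency. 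The strict transform in the splitting model then differs from the total transform by a multiple of $\Exc_\Lambda$, and the excess-intersection contribution is an Euler characteristic of a Koszul-type complex on the projective space $\Exc_\Lambda\simeq\BP_\Lambda$ (a projective space of dimension $m-1$), twisted by the line bundle governing the blow-up and by the $g$-action. This is a finite, computable number; the point is that it is a \emph{function of $g$} which is locally constant on the regular semisimple locus, invariant under $H\times H$ in the appropriate sense, and compactly supported modulo conjugacy — exactly the shape of an orbital integral.

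The third step is the translation to the analytic side. Following the heuristic of \S\S\ref{sec:(n,t)-lat}, \ref{sec:lattice (n odd, n+1)}, the worst points of $\CN_n^{[n-1]}$ correspond bijectively to the almost $\pi$-modular lattices $\Lambda\subset C^\flat$, i.e. to a single $\U(W_1^\flat)(F_0)$-orbit of lattices with stabilizer a parahoric of type $n-1$; the condition $g\in\mathrm{Stab}_{\U(\BY)}(\Lambda)$ is a lattice-fixing condition. One then builds, by the standard recipe (orbital integrals of convolutions of indicator functions of double cosets, with the volume normalizations as in Lemma~\ref{lem: lat (t,n+1)}), a function $\varphi_{\mathrm{corr}}\in C_c^\infty(G')$ whose weighted orbital integral $\del(\gamma,\varphi_{\mathrm{corr}})$ matches $-\sum_\Lambda\delta_\Lambda(g)\cdot(\log q)^{-1}$ for all matched pairs $(\gamma,g)$; the transfer factor $\omega(\gamma)$ must be incorporated exactly as in the AT conjectures already stated. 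Existence of such $\varphi_{\mathrm{corr}}$ follows once one checks that the target function $g\mapsto\sum_\Lambda\delta_\Lambda(g)$ lies in the image of the orbital-integral map on the $\U$-side (it is compactly supported, smooth, and its orbital-integral expansion is visibly a finite sum of Hecke-type functions), and then invokes smooth transfer \cite{Z14} to move it to $G'$. The main obstacle I anticipate is the precise determination of the local correction $\delta_\Lambda(g)$: one must pin down the $g$-equivariant structure of the exceptional divisor and of the normal cone of $\wh{\CN}_n^{[n-1]}$ at $\mathrm{WT}(\Lambda)$ finely enough to evaluate the excess Euler characteristic as an explicit arithmetic function of $g$, and to see that this function is exactly (a multiple of) the orbital integral of an explicit compactly supported Hecke function rather than something more exotic. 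Everything else — the localization, the volume normalizations, the appeal to smooth transfer, the comparison with \cite[Conj. 12.4]{RSZ2} — is routine given the machinery already assembled in this paper.
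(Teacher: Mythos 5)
You should be aware that the statement you are proving is stated in the paper only as Conjecture \ref{conj: t=n-1 corr}: the paper offers no proof of it, just the heuristic remark that the discrepancy between the two intersection numbers should come from the automorphisms of the exceptional divisors $\Exc_\Lambda$ and their formal neighbourhoods, and should be expressible as an orbital integral. So there is no proof in the paper to compare with, and your text has to be judged as an attempt at an open statement; as such it has genuine gaps rather than being a complete argument along a different route.

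Concretely, your second step --- pinning down the local term $\delta_\Lambda(g)$ as an excess-intersection Euler characteristic on $\Exc_\Lambda$ and verifying that, as a function of $g$, it is an orbital-integral function --- is precisely the open content of the conjecture, and you explicitly defer it. Moreover, the criterion you offer (invariant, locally constant, compactly supported modulo conjugacy, hence ``exactly the shape of an orbital integral'') is not sufficient: realizing a given invariant function of $g$ as $\Orb(\cdot,f')$ requires control of its germ behaviour near irregular semisimple elements, which is why the paper, in the one case it does prove (type $(0,0)$ of Conjecture \ref{conj (n odd,t) Y} in \S\ref{sec:proof of 00}), has to invoke the germ expansion (Theorem \ref{thm:germ-expan}) and the representability criterion of \cite[Cor. 3.8]{Mih-AT}. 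Your analytic step is also mis-calibrated: smooth transfer \cite{Z14} matches orbital integrals with orbital integrals, whereas the right-hand side of the conjecture is the \emph{derivative} $\del\big(\gamma,\varphi_{\mathrm{corr}}\big)$; the standard mechanism is first to show the geometric discrepancy equals $\Orb(\gamma,f'_{\mathrm{corr}})$ for some $f'_{\mathrm{corr}}\in C_c^\infty(G')$ and then to apply \cite[Lem. 5.13]{RSZ1} to produce $\varphi_{\mathrm{corr}}$ transferring to $(0,0)$ with $\del(\gamma,\varphi_{\mathrm{corr}})=\Orb(\gamma,f'_{\mathrm{corr}})$, and your appeal to smooth transfer does not by itself produce the derivative. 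Finally, even the localization of the difference of Euler characteristics to the worst points needs an argument: the two ambient regular formal schemes differ, the blow-up $\CN_n^{[n-1],\spt}\to\CN_n^{[n-1]}$ is proper but not flat, and the strict transform differs from the derived pullback by complexes supported on $\Exc$, so the claimed equality with a sum of local terms $\delta_\Lambda(g)$ is plausible but not automatic.
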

In other words, the additional contribution to the intersection number in the splitting model (compared to the intersection occurring in \cite[Conj. 12.4]{RSZ2}) comes from the automorphisms of the exceptional divisors and their formal neighbourhoods, and we expect this will contribute an error term expressed by the orbit integral on the RHS.

\section{Proof of Theorem \ref{sm=big10} }\label{s:pfofthmBS}
In this section, we prove  Theorem \ref{sm=big10}. 

\subsection{Local model for $\CN_n^{[s,t]}$}
Let $F/F_0$ be a ramified quadratic extension with uniformizers $\pi^2=\pi_0$.
Let $(V,\phi)$ be a hermitian space of dimension $n$ over $F$. Let $\Lambda_s\subset \Lambda_t$ be vertex lattices of type $s$ and $t$, resp., where $s$ and $t$ are even numbers.
We have a natural lattice chain:
\begin{equation*}
    \Lambda_{s}\subset\Lambda_{t}\subset\Lambda^\vee_{t}\subset\Lambda_{s}^\vee\subset \pi^{-1}\Lambda_s.
\end{equation*}
We can further complete it into a polarized lattice chain $\Lambda_{[s,t]}$, see \S \ref{sec:parahoric}.

\begin{definition}\label{def:two-ind-lm}
The local model $\bM_{n}^{[s,t]}$ is a projective scheme over $\Spec O_{F}$. It represents the moduli problem that sends each $O_{F}$-algebra $R$ to the set of filtrations
\begin{equation}\label{equ:LM-Mst}
\begin{aligned}
\xymatrix{
\Lambda_{s,R}\ar[r]^{\lambda_{s}}&
\Lambda_{t,R}\ar[r]^{\lambda_{t}}&
\Lambda^\vee_{t,R}\ar[r]^{\lambda^\vee_{t}}&
\Lambda^\vee_{s,R}\ar[r]^{\lambda^\vee_{s}}&
\pi^{-1}\Lambda_{s,R}\\
\CF_{\Lambda_s}\ar@^{^(->}[u]\ar[r]&
\CF_{\Lambda_t}\ar@^{^(->}[u]\ar[r]&
\CF_{\Lambda_t^\vee}\ar@^{^(->}[u]\ar[r]&
\CF_{\Lambda_s^\vee}\ar@^{^(->}[u]\ar[r]&
\CF_{\pi^{-1}\Lambda_s}\ar@^{^(->}[u].&
}
\end{aligned}
\end{equation}
such that the following axioms are satisfied:
\begin{enumerate}[label=(\alph*)\,]
\item For all lattices $\Lambda$ occurring in \eqref{equ:LM-Mst}, $\CF_\Lambda$ is an $O_F\otimes_{O_{F_0}}R$-submodule of $\Lambda_{R}$, and an $R$-direct summand of rank $n$;
\item Any arrow $\lambda:\Lambda\to \Lambda'$ in \eqref{equ:LM-Mst} carries $\CF_\Lambda$ into $\CF_{\Lambda'}$. 
The isomorphism $\pi^{-1}\Lambda_{s,R}\overset{\pi}{\to}\Lambda_{s,R}$ identifies $\CF_{\pi^{-1}\Lambda_s}$ with $\CF_{\Lambda_s}$;
\item For $i=s$ and $t$, the perfect $R$-bilinear pairing
\begin{equation*}
\Lambda_{i,R} \times \Lambda^\vee_{i,R}
\xrightarrow{\langle-,-\rangle \otimes R} R
\end{equation*}
identifies $\CF_{\Lambda_i}^\perp$ with $\CF_{\Lambda_i^\vee}$ inside $\Lambda^\vee_{i,R}$; and
\item For all lattices $\Lambda$ occurring in \eqref{equ:LM-Mst},  $\CF_{\Lambda}$ satisfies the strengthened spin condition, see \S \ref{sec:spin}.
\end{enumerate}
\end{definition}

By \cite{Luo24}, the local model $\bM_n^{[s,t]}$ is flat. From the definition, we have natural projections
\begin{equation*}
\begin{aligned}
\xymatrix{
&\bM_n^{[s,t]}\ar[dl]_{p_s}\ar[dr]^{p_t}&\\
\bM_n^{[s]}&&\bM_n^{[t]},
}
\end{aligned}
\end{equation*}
which are isomorphisms over the generic fiber. When $s=t$, the projection $p_t:\bM_n^{[s,t]}\to \bM_n^{[t]}$ is an isomorphism.

When $n=2m$ is even, we will also consider the local model $\bM_{n}^{[n,n-2,t]}$ with three indices, this relates to the RZ space $\CN_{n}^{[n,n-2,t]}$.
By \cite[Prop. 9.12]{RSZ2}, the projection $\bM_{n}^{[n,n-2,t]}\to \bM_{n}^{[n-2,t]}$
is an isomorphism. But the corresponding map between RZ spaces is a trivial double cover, see \S \ref{sec:RZ-deeper}.

\subsection{Auxiliary space of the $\CY$-cycle}
In this subsection, we first recall the auxiliary spaces constructed in \cite{RSZ2}, then relate them with $\CY$-cycles. A similar  construction occurs in \cite{Yao24}.

Suppose from now on that $n=2m+1$ is an odd number and that $V$ is a split hermitian space of dimension $n+1$ over $F$.
Let $\Lambda_{n+1}\subset V$ be a $\pi$-modular lattice, i.e., a vertex lattice such that
\begin{equation*}
\pi\Lambda_{n+1}^\vee=\Lambda_{n+1}\stackrel{n+1}{\subset}\Lambda_{n+1}^\vee.
\end{equation*}

Let $u\in\Lambda^\vee_{n+1}\subset V$ be a unit-length vector. We have the orthogonal decomposition of the hermitian space $V=V^\flat\obot Fu$.

\begin{lemma}\label{lem:pi-modular-submmand}
The vector $\pi u\in \Lambda_{n+1}$ is primitive.
\end{lemma}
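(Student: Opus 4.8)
The statement to prove is Lemma \ref{lem:pi-modular-submmand}: if $\Lambda_{n+1}$ is a $\pi$-modular lattice in the split hermitian space $V$ of dimension $n+1$ (with $n=2m+1$ odd), and $u\in\Lambda_{n+1}^\vee$ is a unit-length vector, then $\pi u\in\Lambda_{n+1}$ is a primitive vector, i.e.\ $\pi u$ generates an $O_F$-direct summand of $\Lambda_{n+1}$. First note that $\pi u$ does lie in $\Lambda_{n+1}$: since $\pi\Lambda_{n+1}^\vee=\Lambda_{n+1}$ by $\pi$-modularity and $u\in\Lambda_{n+1}^\vee$, we get $\pi u\in\pi\Lambda_{n+1}^\vee=\Lambda_{n+1}$. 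So the only content is primitivity. Recall that a vector $v$ in a free $O_F$-module $M$ is primitive iff its image in $M/\pi M$ is nonzero (equivalently, $v\notin\pi M$).

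The approach I would take is to argue by contradiction using the hermitian pairing and the self-duality relation $\pi\Lambda_{n+1}^\vee=\Lambda_{n+1}$. Suppose $\pi u\in\pi\Lambda_{n+1}$, i.e.\ $u\in\Lambda_{n+1}$. Combined with $u\in\Lambda_{n+1}^\vee$ this would force $u$ to lie in $\Lambda_{n+1}$, and then I would extract a contradiction from the fact that $h(u,u)$ is a unit. Concretely: for any $x\in\Lambda_{n+1}$ we have $\phi(u,x)\in\phi(\Lambda_{n+1}^\vee,\Lambda_{n+1})\subseteq O_F$ since $u\in\Lambda_{n+1}^\vee$; but if moreover $u\in\Lambda_{n+1}=\pi\Lambda_{n+1}^\vee$, write $u=\pi w$ with $w\in\Lambda_{n+1}^\vee$, so $h(u,u)=\phi(\pi w,u)=\pi\,\phi(w,u)$. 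Now $\phi(w,u)=\overline{\phi(u,w)}$ and $\phi(u,w)\in\phi(\Lambda_{n+1}^\vee,\Lambda_{n+1}^\vee)$. The key observation is that for a $\pi$-modular lattice $\Lambda_{n+1}$ one has $\phi(\Lambda_{n+1}^\vee,\Lambda_{n+1}^\vee)\subseteq\pi^{-1}O_F$, hence $\pi\phi(w,u)\in O_F$ only with valuation $\geq 0$; I would show in fact that $h(u,u)=\pi\phi(w,u)\in\pi O_F$, i.e.\ $v(h(u,u))\geq 1$, contradicting that $u$ has unit length. The cleanest route to $v(h(u,u))\geq 1$ is: $u\in\Lambda_{n+1}$ and $u\in\Lambda_{n+1}^\vee=\pi^{-1}\Lambda_{n+1}$, so $u$ lies in $\Lambda_{n+1}$ and simultaneously $\pi u\in\Lambda_{n+1}$ trivially; what I actually want is to reduce mod $\pi$ and use the induced form on $\Lambda_{n+1}^\vee/\Lambda_{n+1}$ (a nondegenerate space over $k=O_F/\pi$) — the vector $u$ maps to a vector there, and its length is a nonzero element of $k$ precisely because $h(u,u)$ is a unit; if $u\in\Lambda_{n+1}$ then $u$ maps to $0$ in $\Lambda_{n+1}^\vee/\Lambda_{n+1}$, so its length would be $0$, a contradiction.

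So the skeleton is: (1) reduce the claim to showing $u\notin\pi\Lambda_{n+1}^\vee=\Lambda_{n+1}$; (2) observe that the hermitian form $\phi$ on $\Lambda_{n+1}^\vee$ induces, after scaling by $\pi$, a nondegenerate $k$-valued form on $W:=\Lambda_{n+1}^\vee/\Lambda_{n+1}$ of dimension $n+1$ (this is exactly the standard picture for $\pi$-modular lattices, as used in \S\ref{sec:geo-RZ} and in the proof of Lemma \ref{lem: lat (n-1,t) Y}); (3) the reduction $\bar u\in W$ has nonzero length precisely because $h(u,u)\in O_{F_0}^\times$ — more carefully, the induced quadratic/hermitian form on $W$ sends $\bar u$ to the class of $\pi^{-1}h(u,u)\cdot(\text{appropriate normalization})$, which is a unit, hence nonzero in $k$; (4) therefore $\bar u\neq 0$ in $W$, i.e.\ $u\notin\Lambda_{n+1}$, so $\pi u\notin\pi\Lambda_{n+1}$, i.e.\ $\pi u$ is primitive in $\Lambda_{n+1}$. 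I expect the main obstacle — really a bookkeeping point rather than a conceptual one — to be keeping the normalizations straight: one must be precise about how $\phi$, the associated forms $(\cdot,\cdot)$ and $\langle\cdot,\cdot\rangle$, and the uniformizer $\pi$ interact on $\Lambda_{n+1}^\vee$ versus $\Lambda_{n+1}$, so that the induced form on the one-dimensional quotient $\Lambda_{n+1}^\vee/\Lambda_{n+1}$ genuinely detects the unit $h(u,u)$; once that is set up correctly the contradiction is immediate. An alternative, slightly more hands-on version avoids the quotient form entirely: choose an $O_F$-basis of $\Lambda_{n+1}$ adapted to $u$ (using Witt's theorem to put $u$ in a standard position modulo $\pi\Lambda_{n+1}^\vee$, exactly as in the proof of Theorem \ref{thm:ss-comp-converse-special fiber}), and read off primitivity of $\pi u$ directly from the matrix of $\phi$; I would fall back on this if the quotient-form argument gets notationally heavy.
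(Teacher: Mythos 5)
Your reduction is right, and the direct valuation argument you sketch in your first paragraph is essentially the paper's entire proof: if $\pi u$ were not primitive then $u\in\pi\Lambda_{n+1}^\vee=\Lambda_{n+1}$, so $\pi^{-1}u\in\Lambda_{n+1}^\vee$ and hence $\phi(\pi^{-1}u,u)\in O_F$ by the very definition of the dual lattice, while this quantity equals $\pm\pi^{-1}h(u,u)$ and has valuation $-1$ because $h(u,u)$ is a unit --- contradiction. (Equivalently, in your notation: with $u=\pi w$, $w\in\Lambda_{n+1}^\vee$, you have $\phi(w,u)\in\phi(\Lambda_{n+1}^\vee,\Lambda_{n+1})\subseteq O_F$ --- not merely $\phi(w,u)\in\pi^{-1}O_F$ as you wrote --- so $h(u,u)=\pi\phi(w,u)\in\pi O_F\cap F_0=\pi_0O_{F_0}$, the desired contradiction.) Had you stopped there, you would have reproduced the paper's proof.

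However, the route you then declare "cleanest" and adopt as your official skeleton --- detecting $u\notin\Lambda_{n+1}$ via a nondegenerate quadratic form on $W=\Lambda_{n+1}^\vee/\Lambda_{n+1}$ --- has a genuine gap: no such quadratic form exists. For a $\pi$-modular lattice the form that descends to $\Lambda^\vee/\Lambda$ is the reduction of $\pi\phi$, which is \emph{alternating} (this is exactly the ``induced symplectic pairing'' on $\Lambda^\vee/\Lambda$ used in the lattice-counting arguments of the paper), so it assigns length $0$ to every vector and cannot detect the unit $h(u,u)$. The naive quadratic form $\bar v\mapsto\phi(v,v)\bmod\pi$ is not well defined on the quotient: for $\lambda\in\Lambda_{n+1}$ the cross term $\phi(v,\lambda)+\phi(\lambda,v)=\tr_{F/F_0}\phi(v,\lambda)$ lies only in $2O_{F_0}$, not in $\pi_0O_{F_0}$, so it need not vanish mod $\pi$. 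The nondegenerate \emph{quadratic} form you are thinking of (the one invoked in the proof of Lemma \ref{lem: lat (n-1,t) Y}) lives on the other graded piece $\Lambda/\pi\Lambda^\vee$, which is zero when $\Lambda$ is $\pi$-modular. So this is not a normalization issue to be fixed by bookkeeping; the object your steps (2)--(3) rely on does not exist, and you should run the direct argument above instead (your Witt-theorem fallback would also work, but is much heavier than necessary).
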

\begin{proof}
We cannot have $\pi^{-1} u\in \Lambda_{n+1}^\vee$: otherwise, we would have $u\in \pi\Lambda_{n+1}^\vee=\Lambda_{n+1}$, but then $(\pi^{-1}u,u)=\pi^{-1}$, contradicting the definition of the dual lattice. 
\end{proof}

Define  $\Lambda_{n-1}:=\Lambda_{n+1}+\langle u\rangle$, which is a vertex lattice of type $n-1$. 
The lattice chain $\Lambda_{n+1}\subset\Lambda_{n-1}$ defines the local model $\bM_{n+1}^{[n+1,n-1]}$. 
By \cite[Prop. 9.12]{RSZ2}, the natural projection $p_{n-1}:\bM_{n+1}^{[n+1,n-1]}\to \bM_{n+1}^{[n-1]}$ is an isomorphism.

The submodule $\langle u\rangle\subset \Lambda_{n-1}$ is a direct summand with orthogonal decomposition $\Lambda_{n-1}=\Lambda_{n-1}^\flat\obot\langle u\rangle$, where $\Lambda_{n-1}^\flat\subset V^{\flat}$ is a vertex lattice of type $n-1$. 
We define the local model $\bM_n^{[n-1]}$ using $\Lambda_{n-1}^\flat$.
Let $\iota:\bM_n^{[n-1]}\to \bM_{n+1}^{[n+1]}$ be the composition of the following maps
\begin{equation}\label{equ:define-y-cycle}
\begin{aligned}
\xymatrix{
\iota:\bM_n^{[n-1,t]}\ar@{^(->}[r]&\bM_{n+1}^{[n-1,t]}\simeq \bM_{n+1}^{[n+1,n-1,t]}\ar[r]&\bM_{n+1}^{[n+1,t]} ,
}
\end{aligned}
\end{equation}
where the closed immersion $\bM_n^{[n-1]}\hookrightarrow\bM_{n+1}^{[n-1]}$ is defined by sending 
\begin{equation*}
(\CF_{\Lambda_{n-1}^\flat},\CF_{\Lambda_{n-1}^{\flat,\vee}})\mapsto (\CF_{\Lambda_{n-1}},\CF_{\Lambda_{n-1}^\vee})=\Bigl(\CF_{\Lambda_{n-1}^\flat}\oplus R(\Pi-\pi) u,\CF_{\Lambda_{n-1}^{\flat,\vee}}\oplus R(\Pi-\pi) u\Bigr) ,
\end{equation*}
and where the identification in the middle is via the isomorphism $p_{n-1}$. 

\begin{proposition}[{\cite[Prop. 12.1]{RSZ2}}]\label{prop:hecke-imbedding}
The composition $\iota$ is a closed embedding.
\end{proposition}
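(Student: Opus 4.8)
\textbf{Proof plan for Proposition \ref{prop:hecke-imbedding} (the statement that $\iota$ is a closed embedding).}

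The map $\iota$ is the composition of three morphisms in \eqref{equ:define-y-cycle}: a morphism $\bM_n^{[n-1,t]}\hookrightarrow\bM_{n+1}^{[n-1,t]}$, the isomorphism $\bM_{n+1}^{[n-1,t]}\simeq\bM_{n+1}^{[n+1,n-1,t]}$ coming from $p_{n-1}$ (here invoking \cite[Prop. 9.12]{RSZ2}), and the forgetful projection $\bM_{n+1}^{[n+1,n-1,t]}\to\bM_{n+1}^{[n+1,t]}$. Since the middle arrow is an isomorphism, it suffices to show that the first arrow is a closed immersion and that the last arrow restricts to a closed immersion on the image of the first two. The plan is therefore in two parts: first establish that the ``add a copy of $u$'' map $\bM_n^{[n-1,t]}\to\bM_{n+1}^{[n-1,t]}$ is a well-defined closed immersion, and second show that forgetting the $\Lambda_{n-1}$-filtration is injective (with closed image) on the relevant subfunctor.

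For the first part, I would argue as follows. The map sends a filtration $(\CF_{\Lambda_{n-1}^\flat},\CF_{\Lambda_{n-1}^{\flat,\vee}})$ (together with the $\Lambda_t^\flat$-part) to the filtration obtained by setting $\CF_{\Lambda_{n-1}}=\CF_{\Lambda_{n-1}^\flat}\oplus R(\Pi-\pi)u$ and similarly at the dual and at type $t$; well-definedness (axioms (a)--(d) of Definition \ref{def:two-ind-lm}) follows from Lemma \ref{lem:KW-induction} for the Kottwitz and wedge conditions, from Corollary \ref{cor:ss-red} for the strengthened spin condition, and from a direct check of the rank, compatibility, and perpendicularity axioms exactly as in the proof of Theorem \ref{thm:ss-comp-converse-special fiber}. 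That this map is a \emph{closed immersion} I would prove by exhibiting the image as a closed subscheme cut out by an explicit condition: a point of $\bM_{n+1}^{[n-1,t]}(R)$ lies in the image precisely when the filtration $\CF_{\Lambda_{n-1}}$ contains the line $R(\Pi-\pi)u$ and splits off this line compatibly with its intersection with $\Lambda_{n-1}^\flat$. The containment $R(\Pi-\pi)u\subseteq\CF_{\Lambda_{n-1}}$ is a closed condition (it is the vanishing of the image of $(\Pi-\pi)u$ in the locally free quotient $\Lambda_{n-1,R}/\CF_{\Lambda_{n-1}}$), and on this closed locus one recovers $\CF_{\Lambda_{n-1}^\flat}$ as $\CF_{\Lambda_{n-1}}\cap\Lambda_{n-1,R}^\flat$; one checks this intersection is a direct summand of the expected rank, so the map is a closed immersion onto this locus with inverse given by the intersection. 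This is where the bulk of the (routine) bookkeeping lives.

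For the second part, I would show that on $\bM_{n+1}^{[n+1,n-1,t]}$ the projection to $\bM_{n+1}^{[n+1,t]}$ becomes injective once restricted to the image of $\iota$. The key point is that $\Lambda_{n-1}=\Lambda_{n+1}+\langle u\rangle$ sits between $\Lambda_{n+1}$ and $\Lambda_{n+1}^\vee$, so the filtration $\CF_{\Lambda_{n-1}}$ is squeezed by $\CF_{\Lambda_{n+1}}$ and $\CF_{\Lambda_{n+1}^\vee}$ via the transition maps; Lemma \ref{lem:pi-modular-submmand} (primitivity of $\pi u$) ensures that the quotient $\Lambda_{n-1}/\Lambda_{n+1}$ is free of rank one, which forces $\CF_{\Lambda_{n-1}}$ to be determined by $\CF_{\Lambda_{n+1}}$ together with the data at type $t$. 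Hence the projection is a monomorphism on this locus, and being proper (both local models are projective over $\Spec O_F$), it is a closed immersion by \cite[Prop. 14.17]{GW} or standard arguments. Composing the three, $\iota$ is a closed immersion. The main obstacle I anticipate is not any single deep input but rather making the splitting $\CF_{\Lambda_{n-1}}=\CF_{\Lambda_{n-1}^\flat}\oplus R(\Pi-\pi)u$ functorially canonical and checking it is compatible with \emph{all} the lattice-chain transition maps and the perfect pairings simultaneously — i.e. verifying that the closed subscheme carved out in part one is exactly the scheme-theoretic image, with no extra nilpotents — which requires carefully tracking the integral structures at the ``worst point'' where $\Pi$ kills the filtration.
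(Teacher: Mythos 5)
Your route is genuinely different from the paper's, and it contains a real gap. The paper's entire proof is a descent reduction: being a closed immersion can be checked after an unramified (\'etale) base change, after which one may assume $u$ has length $-1$ and $V^\flat$ is split, and the statement is then literally \cite[Prop. 12.1]{RSZ2}. You instead attempt a direct proof, and the crucial first step is wrong as stated. You claim the image of the ``add $R(\Pi-\pi)u$'' map is the closed locus where $R(\Pi-\pi)u\subseteq\CF_{\Lambda_{n-1}}$, with inverse $\CF\mapsto\CF\cap\Lambda^\flat_{n-1,R}$, the rest being ``routine bookkeeping''. Mere containment of the line does not force $\CF\subseteq\Lambda^\flat_{n-1,R}\oplus R(\Pi-\pi)u$: a filtration in the local model can have a component along $(\Pi+\pi)u$, and then it is not of the form $\CF^\flat\oplus R(\Pi-\pi)u$ at all. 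The locus cut out by containment alone is a $\bZ$/$\bY$-type locus, and identifying such loci with a lower-dimensional local model is precisely the nontrivial content of Theorem \ref{thm:exc-iso-loc} and Theorem \ref{thm:Yao-isom} (proved via flatness, the special-fiber computation of Theorem \ref{thm:ss-comp-converse-special fiber}, resp.\ Yao's results), and in the almost-$\pi$-modular configuration it genuinely fails (an extra worst point appears). Even after you add the missing closed condition (vanishing of the projection of $\CF_{\Lambda_{n-1}}$ to the $(\Pi+\pi)u$-line, after which the splitting $\CF=(\CF\cap\Lambda^\flat_{n-1,R})\oplus R(\Pi-\pi)u$ is indeed automatic), you must still verify that $\CF\cap\Lambda^\flat_{n-1,R}$ satisfies the strengthened spin condition for your inverse to land in $\bM_n^{[n-1,t]}$; that is Corollary \ref{cor:ss-red-converse}/Theorem \ref{thm:exc-iso-loc}(i) (applicable here since the type $n-1$ differs from $(n+1)-1$), i.e.\ a substantive result of the paper, not a formal check.

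The second half of your plan — injectivity of the forgetful projection $\bM_{n+1}^{[n+1,n-1,t]}\to\bM_{n+1}^{[n+1,t]}$ on the image — is asserted (``squeezed'', ``determined'') rather than proved, and it is exactly where the content of \cite[Prop. 12.1]{RSZ2} lies: one must show that for points in the image the forgotten filtration is recovered as $\CF_{\Lambda_{n-1}^{\flat,\vee}}=\CF_{\Lambda^\vee_{n+1}}\cap\lambda^\vee_{n-1}(\Lambda^{\flat,\vee}_{n-1,R})$, using that $\langle u\rangle$ is an $O_F$-direct summand of $\Lambda^\vee_{n+1}$ (the use made of Lemma \ref{lem:pi-modular-submmand} in the proofs of Lemma \ref{lem:Y-cycle} and Theorem \ref{thm:Yao-isom}); this recovery is the computation carried out in loc.\ cit. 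A cleaner repair of your strategy is to observe that both local models are projective over $\Spec O_F$, so it suffices to show $\iota$ is a monomorphism (a proper monomorphism is a closed immersion — note \cite[Prop. 14.17]{GW} is not the right reference for this); but proving the monomorphism property is precisely the recovery argument above, so you would in effect be re-proving \cite[Prop. 12.1]{RSZ2} rather than, as the paper does, reducing to it by unramified descent.
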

\begin{proof}
By descent, it suffices to verify the statement after base change along an unramified extension.
This allows us to assume that $u$ has length $-1$ and $V^\flat$ is split. The assertion now follows from \cite[Prop. 12.1]{RSZ2}.
\end{proof}
Let $(\CF_{\Lambda_{n-1}^\flat},\CF_{\Lambda_{n-1}^{\flat,\vee}})$ be an $R$-point of $\bM_n^{[n-1]}$, we denote by $\bigl(\iota(\CF_{\Lambda_{n-1}^\flat}),\iota(\CF_{\Lambda_{n-1}^{\flat,\vee}})\bigr)\in \bM_{n+1}^{[n+1]}(R)$ its image under $\iota$.

\begin{definition}
Define the closed subscheme
\begin{equation*}
\bY^{[n+1]}(u)=\bZ^{[n+1]}(\pi u)\subset \bM^{[n+1]}_{n+1}
\end{equation*}
as the closed subscheme of $\bM^{[n+1]}_{n+1}$ which parametrizes filtrations $(\CF_\Lambda\subset \Lambda_R)\in \bM_{n+1}^{[n+1]}(R)$ that satisfy $R(\Pi-\pi) u \subset \CF_{\Lambda^\vee_{n+1}}$.
\end{definition}

\begin{lemma}\label{lem:Y-cycle}
The closed embedding $\iota$ factors through $\bY^{[n+1]}(u)$, 
\begin{equation*}
\begin{aligned}
\xymatrix{
\bM_n^{[n-1]}\ar@{^(->}[rr]^-{\iota}\ar@{-->}[dr]&&\bM_{n+1}^{[n+1]}\\
&\bY^{[n+1]}(u)\ar@{^(->}[ur]&
}
\end{aligned}
\end{equation*}
We also denote by $\iota:\bM_{n}^{[n-1]}\hookrightarrow \bY(u)$ the resulting map. 
\end{lemma}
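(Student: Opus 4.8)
The plan is to verify the factorization on the level of $R$-points and then conclude by the usual closedness argument. First I would spell out what it means for an $R$-point $(\CF_{\Lambda_{n-1}^\flat},\CF_{\Lambda_{n-1}^{\flat,\vee}})$ of $\bM_n^{[n-1]}$ to land in $\bY^{[n+1]}(u)$ after applying $\iota$. By construction in \eqref{equ:define-y-cycle}, the image filtration at the lattice $\Lambda_{n+1}^\vee$ is obtained from the chain $\Lambda_{n+1}\subset\Lambda_{n-1}$ together with the identification $p_{n-1}\colon\bM_{n+1}^{[n+1,n-1]}\simeq\bM_{n+1}^{[n-1]}$, and from the description of the $\bM_n^{[n-1]}\hookrightarrow\bM_{n+1}^{[n-1]}$ map we have $\CF_{\Lambda_{n-1}^\vee}=\CF_{\Lambda_{n-1}^{\flat,\vee}}\oplus R(\Pi-\pi)u$. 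Since $\Lambda_{n+1}^\vee\supset\Lambda_{n-1}^\vee$ and the transition maps in the local model carry filtration pieces into filtration pieces (axiom (b) in Definition \ref{def:two-ind-lm}), the element $(\Pi-\pi)u$, which lies in $\CF_{\Lambda_{n-1}^\vee}$, is carried into $\CF_{\Lambda_{n+1}^\vee}$; thus $R(\Pi-\pi)u\subset\CF_{\Lambda_{n+1}^\vee}$, which is exactly the defining condition of $\bY^{[n+1]}(u)$.

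The one genuine point to check is that $(\Pi-\pi)u$ (equivalently $\pi u$, up to the action of $\Pi$) really survives as a nonzero direct summand inside the relevant lattices, i.e.\ that the formula $\CF_{\Lambda_{n-1}^\vee}=\CF_{\Lambda_{n-1}^{\flat,\vee}}\oplus R(\Pi-\pi)u$ makes sense and that $(\Pi-\pi)u\in\Lambda_{n+1,R}^\vee$. This is where Lemma \ref{lem:pi-modular-submmand} enters: $\pi u$ is primitive in $\Lambda_{n+1}$, hence $u$, $\Pi u$, $(\Pi\pm\pi)u$ generate rank-one direct summands of the relevant lattices, and reduction mod $\pi$ behaves well. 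I would record this primitivity fact once and then use it to justify all the direct-sum decompositions. The isotropy of $(\Pi-\pi)u$ (noted in \S\ref{sec:exc-iso-proof}: $\phi((\Pi-\pi)v,(\Pi-\pi)v)=0$) guarantees compatibility with the perfect-pairing axiom (c).

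Having checked the set-theoretic factorization on $R$-points functorially in $R$, the factorization of morphisms of schemes is automatic: $\bY^{[n+1]}(u)\hookrightarrow\bM_{n+1}^{[n+1]}$ is a closed immersion, and a morphism from a scheme to $\bM_{n+1}^{[n+1]}$ whose image (on $R$-points, functorially) lies in the closed subscheme $\bY^{[n+1]}(u)$ factors uniquely through it. Since $\iota$ itself is a closed embedding by Proposition \ref{prop:hecke-imbedding}, the induced map $\bM_n^{[n-1]}\to\bY^{[n+1]}(u)$ is a monomorphism which is proper (source is projective over $\Spec O_F$), hence a closed immersion. I expect no serious obstacle here; the only mildly delicate bookkeeping is keeping straight which filtration piece of which lattice in the chain $\Lambda_{n+1}\subset\Lambda_{n-1}\subset\Lambda_{n-1}^\vee\subset\Lambda_{n+1}^\vee$ contains $(\Pi-\pi)u$, and confirming that the identification $p_{n-1}$ does not disturb this — both of which follow directly from the explicit description of the transition maps and Lemma \ref{lem:pi-modular-submmand}.
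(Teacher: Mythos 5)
Your proposal is correct and follows essentially the same route as the paper: both verify on $R$-points that the image filtration at $\Lambda_{n+1}^\vee$ contains $R(\Pi-\pi)u$, using the explicit form $\CF_{\Lambda_{n-1}^\vee}=\CF_{\Lambda_{n-1}^{\flat,\vee}}\oplus R(\Pi-\pi)u$, the fact that the transition map $\lambda_{n-1}^\vee$ carries filtrations into filtrations, and Lemma \ref{lem:pi-modular-submmand} to ensure $\langle u\rangle$ is a direct summand of $\Lambda_{n+1}^\vee$ so that $R(\Pi-\pi)u$ is carried isomorphically onto the corresponding submodule of $\Lambda_{n+1,R}^\vee$. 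Your closing remarks about axiom (c) and the closed-immersion formalities are harmless but not needed, since the image is already a point of $\bM_{n+1}^{[n+1]}$ by Proposition \ref{prop:hecke-imbedding} and only the extra condition defining $\bY^{[n+1]}(u)$ requires checking.
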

\begin{proof}
Let $(\CF_{\Lambda_{n-1}^\flat},\CF_{\Lambda_{n-1}^{\flat,\vee}})$ be an $R$-point of $\bM_n^{[n-1]}$. By Proposition \ref{prop:hecke-imbedding}, it is a closed subfunctor of $\bM_{n+1}^{[n+1]}$ characterized by the subset of filtrations $\bM_{n+1}^{[n+1,n-1]}$ of the form:
\begin{equation}\label{equ:Mnn-1filtration}
\begin{aligned}
\xymatrix{
\Lambda_{n+1,R}\ar[r]^{\lambda_{n+1}}&
\Lambda_{n-1,R}\ar[r]^{\lambda_{n-1}}&
\Lambda_{n-1,R}^\vee\ar[r]^{\lambda_{n-1}^\vee}&
\Lambda^\vee_{n+1,R}
\\
\iota(\CF_{\Lambda_{n-1}^\flat})\ar@{^(->}[u]\ar[r]&
\CF_{\Lambda^\flat_{n-1}}\oplus R(\Pi-\pi)u\ar@{^(->}[u]\ar[r]&
\CF_{\Lambda_{n-1}^{\flat,\vee}}\oplus R(\Pi-\pi) u\ar@{^(->}[u]\ar[r]&
\iota(\CF_{\Lambda_{n-1}^{\flat,\vee}})\ar@{^(->}[u] .
}
\end{aligned}
\end{equation}
By Lemma \ref{lem:pi-modular-submmand}, the sublattice $\langle u\rangle \subset \Lambda_{n+1}^\vee$ is an $O_F$-direct summand. 
Hence, the transition map restricts to an isomorphism $\lambda_{n-1}^{\vee}:\langle u\rangle\otimes_{O_{F_0}}R\to \langle u\rangle\otimes_{O_{F_0}}R$ under which
$$\lambda_{n-1}^\vee(R(\Pi-\pi)u)=R(\Pi-\pi)u\subset \Lambda_{n+1,R}^\vee.$$
Therefore, the filtration in \eqref{equ:Mnn-1filtration} satisfies $R(\Pi-\pi)u\subset \iota(\CF_{\Lambda_{n-1}^{\flat,\vee}})$ and thus defines a point in $\bY^{[n+1]}(u)$.
\end{proof}

\begin{theorem}\label{thm:Yao-isom}
The induced map $\iota:\bM_n^{[n-1]}\hookrightarrow \bY^{[n+1]}(u)$ is an isomorphism.
\end{theorem}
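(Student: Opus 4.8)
The plan is to prove that the closed embedding $\iota\colon \bM_n^{[n-1]}\hookrightarrow \bY^{[n+1]}(u)$ of Lemma \ref{lem:Y-cycle} is an isomorphism by showing it is bijective on $R$-points for every $O_F$-algebra $R$, or equivalently by exhibiting an explicit inverse. Since $\bY^{[n+1]}(u)\subset \bM_{n+1}^{[n+1]}$ and the latter local model is flat over $\Spec O_F$, and $\bM_n^{[n-1]}$ is also flat, it suffices by \cite[Prop. 14.17]{GW} to check that $\iota$ is an isomorphism over the generic fiber (where everything is a Grassmannian-type variety and this is a standard linear-algebra computation already implicit in \cite{RSZ2}) and that it is surjective on geometric points of the special fiber. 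The heart of the argument is therefore the special fiber: given a $k$-point $(\CF_{\Lambda_{n+1}},\CF_{\Lambda_{n+1}^\vee})$ of $\bM_{n+1}^{[n+1]}$ satisfying $k(\Pi-\pi)u=k\Pi u\subset \CF_{\Lambda_{n+1}^\vee}$, one must reconstruct the datum $(\CF_{\Lambda_{n-1}^\flat},\CF_{\Lambda_{n-1}^{\flat,\vee}})$ on $\bM_n^{[n-1]}$ mapping to it.

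First I would use the isomorphism $p_{n-1}\colon \bM_{n+1}^{[n+1,n-1]}\xrightarrow{\sim}\bM_{n+1}^{[n-1]}$ of \cite[Prop. 9.12]{RSZ2} to transport a point of $\bY^{[n+1]}(u)$ to a compatible pair of filtrations $\CF_{\Lambda_{n+1}}\subset \Lambda_{n+1,R}$ and $\CF_{\Lambda_{n-1}}\subset \Lambda_{n-1,R}$, together with their duals, fitting into a diagram as in \eqref{equ:Mnn-1filtration} but now only known to have the ``$\flat$-part plus $R\Pi u$'' shape over the generic fiber. The key step is then to show that the condition $R\Pi u\subset \CF_{\Lambda_{n+1}^\vee}$, combined with the polarization axiom and the strengthened spin condition, forces the splitting $\CF_{\Lambda_{n-1}} = \CF_{\Lambda_{n-1}}^{\flat}\oplus R(\Pi-\pi)u$ and $\CF_{\Lambda_{n-1}^\vee}=\CF_{\Lambda_{n-1}^\vee}^{\flat}\oplus R(\Pi-\pi)u$ with the complementary summands $\CF^\flat$ lying in $\Lambda_{n-1,R}^\flat$ and satisfying axioms (a)--(c) and the strengthened spin condition for $\bM_n^{[n-1]}$. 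The containment in the $\flat$-part is where Lemma \ref{lem:pi-modular-submmand} (primitivity of $\pi u$ in $\Lambda_{n+1}$, hence that $\langle u\rangle$ is an $O_F$-direct summand of $\Lambda_{n+1}^\vee$ and of $\Lambda_{n-1}$) is essential: it guarantees that $(\Pi-\pi)u$ spans a rank-one $R$-direct summand of $\CF_{\Lambda_{n-1}}$, so that one may take the quotient or the orthogonal complement to obtain $\CF^\flat$. That the resulting $\CF^\flat$ satisfies the strengthened spin condition is precisely the converse direction supplied by Corollary \ref{cor:ss-red-converse} and Theorem \ref{thm:exc-iso-loc}(i) applied in the type $n-1$ case with the $\CY$-twist; one should note that here $t\neq n-1$ in the ambient $(n+1)$-dimensional space since the relevant type is $n+1$ against dimension $n+1$, i.e. the $\pi$-modular case, so the obstruction of Theorem \ref{thm:ss-comp-converse-special fiber}(iii) does not arise and the argument of Theorem \ref{thm:exc-iso-loc} runs without the exceptional-point correction.

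The step I expect to be the main obstacle is verifying that the reconstruction of $\CF^\flat$ is well-defined \emph{integrally} — that the naive generic-fiber splitting $\CF_{\Lambda_{n-1}}=\CF^\flat\oplus R(\Pi-\pi)u$ actually has $\CF^\flat$ an $R$-submodule of $\Lambda_{n-1,R}^\flat$ (not just of $\Lambda_{n-1,R}\otimes \mathrm{Frac}$) and an $R$-direct summand of the correct rank. Over a field this is the content of the special-fiber computation, and I would handle it by the same loop-group/Witt-theorem normalization used in the proof of Theorem \ref{thm:ss-comp-converse-special fiber}: after an étale extension, reduce to $u=u_0$ a fixed coordinate vector, make the decomposition \eqref{equ:ss-cond-decomp}-style explicit in the standard basis, and check the splitting directly. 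The passage from the special fiber to general $R$ is then exactly the flatness argument: $\bY^{[n+1]}(u)$ need not a priori be flat, but it contains the flat closed subscheme $\iota(\bM_n^{[n-1]})$, the two agree on the generic fiber, and $\iota$ is surjective on geometric points of the special fiber; since $\bM_n^{[n-1]}$ is reduced and flat, $\iota$ being a closed immersion that is surjective and an isomorphism over the dense open generic fiber, together with the special-fiber surjectivity, yields $\bY^{[n+1]}(u)=\iota(\bM_n^{[n-1]})$ as closed subschemes of $\bM_{n+1}^{[n+1]}$. I would also record, as a byproduct, that $\bY^{[n+1]}(u)$ is itself flat, which is needed downstream for the flatness of $\wh{\CN}_n^{[t]}$ and hence for Theorem \ref{sm=big10}.
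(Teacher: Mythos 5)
Your skeleton (closed immersion, isomorphism on generic fibers, analysis of the special fiber, then a flatness argument) matches the paper's strategy, but your concluding step has a genuine gap. You claim that a closed immersion $\iota\colon \bM_n^{[n-1]}\hookrightarrow \bY^{[n+1]}(u)$ which is an isomorphism on generic fibers and \emph{surjective on geometric points} of the special fiber must be an isomorphism because the source is flat and reduced. This is false: the obstruction lives in the target, not the source. The subscheme $\bY^{[n+1]}(u)\subset\bM_{n+1}^{[n+1]}$ is cut out by the closed condition $R(\Pi-\pi)u\subset\CF_{\Lambda_{n+1}^\vee}$ and could a priori carry nilpotents along its special fiber (compare $\Spec O_F\hookrightarrow \Spec O_F[x]/(x^2,\pi x)$, which is a closed immersion, bijective on points, and an isomorphism on generic fibers). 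The paper closes exactly this hole by proving that the special fiber of $\bY^{[n+1]}(u)$ is \emph{smooth} --- via the tangent-space computation of \cite[Lem.~5.11]{Yao24} at every geometric point --- so that it is reduced; only then does surjectivity of a closed immersion onto a reduced scheme force the ideal sheaf to vanish on the special fiber, and Nakayama (or the flatness criterion \cite[Prop.~14.17]{GW}, which in the proof of Theorem \ref{thm:exc-iso-loc} is applied with a scheme-theoretic isomorphism of special fibers, not mere point-surjectivity) finishes the argument. You need to add this tangent-space bound, or else upgrade your special-fiber statement from geometric points to arbitrary $k$-algebras $R$.

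A secondary remark: for the reconstruction of $(\CF_{\Lambda_{n-1}^\flat},\CF_{\Lambda_{n-1}^{\flat,\vee}})$ from $(\CF_{\Lambda_{n+1}},\CF_{\Lambda_{n+1}^\vee})$, the results you invoke (Corollary \ref{cor:ss-red-converse}, Theorem \ref{thm:exc-iso-loc}(i)) compare the strengthened spin conditions for $\CF$ and $\CF^\flat$ inside one and the same vertex lattice of type $t$; they do not by themselves recover the missing filtration $\CF_{\Lambda_{n-1}}$ across the chain $\Lambda_{n+1}\subset\Lambda_{n-1}$, which is the actual content here. The paper takes $\CF_{\Lambda_{n-1}^{\flat,\vee}}:=\CF_{\Lambda_{n+1}^\vee}\cap\lambda_{n-1}^\vee(\Lambda_{n-1,\BF}^{\flat,\vee})$ and verifies the direct-summand, isotropy, and $\rank\Pi\le 1$ conditions by the explicit Dieudonn\'e-lattice computations of \cite[Prop.~5.14, Lem.~5.10]{Yao24}. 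You do correctly flag this as the main obstacle and your proposed explicit-basis computation is a reasonable substitute, but as written it is a plan rather than a proof.
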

\begin{proof}
In \cite[Thm. 5.5]{Yao24}, Yao proves that the closed immersion $\CN_n^{[n-1]}\hookrightarrow \CY^{[n+1]}(u)$ inside the RZ space $\CN_{n+1}^{[n+1]}$ is an isomorphism. The proof of Theorem \ref{thm:Yao-isom} follows the same strategy, and we briefly sketch the main ideas below.

First, in \cite[Lem. 5.11]{Yao24}, Yao shows that every point of $\CY^{[n+1]}(u)(\BF)$ is smooth by computing its tangent space using the local model (see footnote \ref{footnote:GM-of-special-cycle}); his argument implies that the special fiber of $\bY^{[n+1]}(u)$ is smooth.

Next, it is straightforward to verify that the closed immersion $\iota$ induces an isomorphism on the generic fiber, since both spaces are isomorphic to the Grassmannian $\Gr(1, F^{n-1})$. 

Finally, we claim that $\iota$ induces a bijection $\iota:\bM_n^{[n-1]}(\BF)\overset{\sim}{\to} \bY^{[n+1]}(u)(\BF)$ between the geometric points of the special fiber.
To be more precise, given any filtration $(\CF_{\Lambda_{n+1}},\CF_{\Lambda_{n+1}^{\vee}})\in \bY^{[n+1]}(u)(\BF)$, consider the intersection (cf. the proof of \cite[Prop. 12.1]{RSZ2}):
\begin{equation*}
\CF_{\Lambda_{n-1}^{\flat,\vee}}:=\CF_{\Lambda^\vee_{n+1}}\cap \lambda_{n-1}^{\vee}(\Lambda^{\flat,\vee}_{n-1,\BF})\subseteq \lambda^\vee_{n-1}(\Lambda^{\flat,\vee}_{n-1,\BF})\simeq \Lambda^{\flat,\vee}_{n-1,\BF}.
\end{equation*}
The last isomorphism is due to the fact that $\Lambda^{\flat,\vee}_{n-1}\subset \Lambda^{\vee}_{n-1}\subset \Lambda^\vee_{n+1}$ presents $\Lambda_{n-1}^{\flat,\vee}$ as a direct summand of $\Lambda^\vee_{n+1}$.
We aim to show that $\CF_{\Lambda_{n-1}^{\flat,\vee}}$ together with its isotropic complement $\CF_{\Lambda_{n-1}^{\flat}}$ defines a point in $\bM_n^{[n-1]}(\BF)$. This boils down to verifying the following conditions:
\begin{enumerate}
\item $\CF_{\Lambda_{n-1}^{\flat,\vee}}$ is orthogonal to itself with respect to the symmetric form induced from $\Lambda_{n+1,\BF}$ on $\Lambda_{n-1,\BF}^{\flat,\vee}$, this follows directly from the construction.
\item $\CF_{\Lambda_{n-1}^{\flat,\vee}}\subset \Lambda_{n-1}^{\flat,\vee}$ is a direct summand of rank $n-1$.
\item $\Pi(\CF_{\Lambda_{n-1}^{\flat,\vee}})$ is locally free of rank $\leq 1$, and hence satisfies the strengthened spin condition.
\end{enumerate}
Both (2) and (3) essentially follow from \cite[Prop. 5.14]{Yao24}, where Yao shows that the map $\CN_n^{[n-1]}(\BF)\hookrightarrow \CY^{[n+1]}(u)(\BF)$ is a bijection. He reduces the problem to a question about Dieudonn\'e lattices and carries out an explicit computation after fixing a basis using \cite[Lem. 5.10]{Yao24}.
Since for a Dieudonn\'e lattice $M$, the quotient $\uV M/\pi_0M\subset M/\pi_0M$ defines a geometric point in the local model after trivialization, his computation applies directly to the local model.
\end{proof}

\subsection{Auxiliary space of $\wh{\CN}_n^{[t]}$}
In this subsection, we construct and study the local model of the space $\wh{\CN}_n^{[t]}$ defined in \S \ref{ss: naive (n odd, n+1)}.
Let us recall the notations and assumptions.
We denote by $V=W_0$ the split hermitian space of dimension $n+1=2m+2$. Let $u\in V$ be a unit length vector.
We have the orthogonal decomposition $V=V^\flat\obot Fu$.
Let $t$ be an even integer with $0\leq t\leq n-1$.

Recall the lattice model $\wh{\BN}_n^{[t]}$ defined in \S \ref{sec:lattice (n odd, n+1)}.
Let $(\Lambda_{t}^\flat,\Lambda_{n+1})\in\wh{\BN}_n^{[t]}$.
By definition, we have $\Lambda_{n+1}\stackrel{1}{\subset}\Lambda_{n-1}:=\Lambda_{n+1}+\langle u\rangle$. 
 We have orthogonal decompositions
\begin{equation}\label{equ:orthogonal-decompo-of-lattice-chain}
\Lambda_{n-1}=\Lambda_{n-1}^\flat\obot\langle u\rangle, \quad\text{and}\quad \Lambda_{t}=\Lambda_{t}^\flat\obot\langle u\rangle .
\end{equation}
Here $\Lambda_{n-1}^\flat$ and $\Lambda_{t}^\flat$ are vertex lattices in $V^\flat$.

We define $\wh{\bM}_n^{[t]}$ as the closed subscheme of $\bM_{n+1}^{[n+1,t]}$ given by following condition:
\begin{equation*}
\CF_{\Lambda_t}=\CF_{\Lambda^\flat_{t}}\oplus R(\Pi-\pi)u,\quad\text{where}\quad [\CF_{\Lambda^\flat_{t}}\subset\Lambda^\vee_{t,R}]\in \bM_{n}^{[t]} .
\end{equation*}
In other words, it is characterized by the following cartesian diagram:
\begin{equation*}
\begin{aligned}
\xymatrix{
\wh{\bM}_n^{[t]}\ar[r]\ar[d]\ar@{}[rd]|{\square}	&\bM_{n+1}^{[n+1,t]}\ar[d]\\
\bM_n^{[t]}\cong \bZ^{[t]}(u)\ar@{^(->}[r]		& \bM_{n+1}^{[t]}.
}
\end{aligned}
\end{equation*}

On the other hand, the chain of lattices 
\begin{equation*}
    \Lambda_{n-1}^\flat\subset\Lambda_{t}^\flat\subset\Lambda_{t}^{\flat \vee}\subset\Lambda_{n-1}^{\flat \vee},
\end{equation*}
defines the local model $\bM_{n}^{[n-1,t]}$ as in Definition \ref{def:two-ind-lm}. 
Similar to \eqref{equ:define-y-cycle}, we define the map $\iota:\bM_n^{[n-1,t]}\to \bM_{n+1}^{[n+1,t]}$ as the composition:
\begin{equation*}
\begin{aligned}
\xymatrix{
\iota:\bM_n^{[n-1,t]}\ar@{^(->}[r]&\bM_{n+1}^{[n-1,t]}\simeq \bM_{n+1}^{[n+1,n-1,t]}\ar[r]&\bM_{n+1}^{[n+1,t]}.
}
\end{aligned}
\end{equation*}

\begin{lemma}
The map $\iota$ is a closed embedding, and it factors through  $\wh{\bM}_n^{[t]}\subset \bM_{n+1}^{[n+1,t]}$, which we will still denote by $\iota$.
\end{lemma}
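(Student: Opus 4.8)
The plan is to reduce the statement to the already-established embedding results for the two-index and three-index local models. First I would observe that the map $\iota$ is built by composing three morphisms: the closed immersion $\bM_n^{[n-1,t]}\hookrightarrow\bM_{n+1}^{[n-1,t]}$ (sending $\CF_{\Lambda^\flat}\mapsto\CF_{\Lambda^\flat}\oplus R(\Pi-\pi)u$ for each lattice in the chain), the isomorphism $\bM_{n+1}^{[n-1,t]}\simeq\bM_{n+1}^{[n+1,n-1,t]}$ coming from \cite[Prop. 9.12]{RSZ2}, and the projection $\bM_{n+1}^{[n+1,n-1,t]}\to\bM_{n+1}^{[n+1,t]}$. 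The first map is a closed embedding by the same argument as Proposition \ref{prop:hecke-imbedding} (indeed by Lemma \ref{lem:KW-induction} and Corollary \ref{cor:ss-red}, the filtration $\CF_{\Lambda^\flat}\oplus R(\Pi-\pi)u$ still satisfies the strengthened spin condition, so it lands in $\bM_{n+1}^{[n-1,t]}$, and it is cut out by the closed condition that $\CF_{\Lambda}$ split off the line $R(\Pi-\pi)u$); the second is an isomorphism; so it suffices to show that the composite of the last two, restricted to the image of $\bM_n^{[n-1,t]}$, is a closed immersion. For this I would note that the projection $\bM_{n+1}^{[n+1,n-1,t]}\to\bM_{n+1}^{[n+1,t]}$ becomes a closed immersion once restricted to the locus where $\CF_{\Lambda_{n-1}}$ (the missing middle term) is forced, and the content of Proposition \ref{prop:hecke-imbedding} (and its chain-level variant in the proof of \cite[Prop. 12.1]{RSZ2}) is precisely that on the image of $\bM_n^{[n-1]}$ this term is uniquely determined by $\CF_{\Lambda_{n+1}}$ via the intersection $\CF_{\Lambda^\vee_{n+1}}\cap\lambda^\vee_{n-1}(\Lambda^{\vee}_{n-1,R})$ together with its orthogonal complement. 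The same unique-reconstruction argument applies verbatim after adjoining the auxiliary index $t$, since the transition maps in the lattice chain restrict compatibly to the summand $\langle u\rangle$ by Lemma \ref{lem:pi-modular-submmand}.

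Next I would verify the factorization through $\wh{\bM}_n^{[t]}$. By construction $\wh{\bM}_n^{[t]}$ is the closed subscheme of $\bM_{n+1}^{[n+1,t]}$ where $\CF_{\Lambda_t}=\CF_{\Lambda^\flat_t}\oplus R(\Pi-\pi)u$ with $[\CF_{\Lambda^\flat_t}\subset\Lambda^{\vee}_{t,R}]\in\bM_n^{[t]}$; equivalently, $\wh{\bM}_n^{[t]}=\bM_n^{[t]}\times_{\bM_{n+1}^{[t]}}\bM_{n+1}^{[n+1,t]}$ where $\bM_n^{[t]}\cong\bZ^{[t]}(u)\hookrightarrow\bM_{n+1}^{[t]}$. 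So I need to check that for an $R$-point $(\CF_{\Lambda^\flat_{n-1}},\CF_{\Lambda^\flat_t},\dots)$ of $\bM_n^{[n-1,t]}$, the image of its type-$t$ part under $\iota$ lies in $\bZ^{[t]}(u)$, i.e.\ that $\iota(\CF_{\Lambda^\flat_t})=\CF_{\Lambda^\flat_t}\oplus R(\Pi-\pi)u$. This is immediate from the explicit description of the first map in the composition defining $\iota$: adjoining $R(\Pi-\pi)u$ to each lattice in the chain is compatible with all the transition maps, and the passage through $\bM_{n+1}^{[n+1,n-1,t]}$ and the projection to $\bM_{n+1}^{[n+1,t]}$ do not alter the type-$t$ filtration. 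Hence $\iota$ factors through $\wh{\bM}_n^{[t]}$, and being a closed immersion into $\bM_{n+1}^{[n+1,t]}$, it is a closed immersion onto its image inside the closed subscheme $\wh{\bM}_n^{[t]}$.

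The main obstacle I anticipate is making the chain-level analogue of the unique-reconstruction argument fully rigorous: in \cite[Prop. 12.1]{RSZ2} and in Lemma \ref{lem:Y-cycle} the key point is that the transition map $\lambda^\vee_{n-1}$ restricts to an isomorphism on the line $\langle u\rangle$, which forces the intermediate filtration; with the extra index $t$ inserted, one must track that the additional transition maps $\Lambda_{n+1}\to\Lambda_t$ (and their duals) also restrict compatibly to the orthogonal decompositions \eqref{equ:orthogonal-decompo-of-lattice-chain}, so that the reconstruction of $\CF_{\Lambda_{n-1}}$ from $\CF_{\Lambda_{n+1}}$ is unchanged by the presence of $\CF_{\Lambda_t}$. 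Since $\pi u\in\Lambda_{n+1}$ is primitive (Lemma \ref{lem:pi-modular-submmand}) and all the lattices $\Lambda_{n-1}^\flat,\Lambda_t^\flat$ sit between $\Lambda_{n+1}$ and $\Lambda_{n+1}^\vee$ as direct summands adapted to $V=V^\flat\obot Fu$, this compatibility is formal, but it is the step that requires the most care. Once this is in place, closedness of $\iota$ and the factorization follow as above with only routine bookkeeping.
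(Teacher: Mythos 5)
Your argument is correct and follows essentially the same route as the paper: the paper's proof likewise decomposes $\iota$ as the explicit embedding $\bM_n^{[n-1,t]}\hookrightarrow\bM_{n+1}^{[n-1,t]}$ followed by $\bM_{n+1}^{[n-1,t]}\simeq\bM_{n+1}^{[n+1,n-1,t]}\to\bM_{n+1}^{[n+1,t]}$, observes that this composition leaves $\CF_{\Lambda_t}$ and $\CF_{\Lambda_t^\vee}$ unchanged (giving the factorization through $\wh{\bM}_n^{[t]}$), and deduces closedness from Proposition \ref{prop:hecke-imbedding} and Lemma \ref{lem:Y-cycle}. You simply spell out in more detail the unique-reconstruction step that the paper delegates to those earlier results.
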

\begin{proof}
The embedding $\bM_n^{[n-1,t]}\hookrightarrow \bM_{n+1}^{[n-1,t]}$ sends $(\CF_{\Lambda})_\Lambda$ to $\bigl(\CF_{\Lambda}\oplus R(\Pi-\pi)\bigr)_\Lambda$, and the composition $\bM_{n+1}^{[n-1,t]}\simeq \bM_{n+1}^{[n+1,n-1,t]}\to \bM_{n+1}^{[n+1,t]}$ does not change the filtrations $\CF_{\Lambda_t}$ and $\CF_{\Lambda_t^\vee}$. The assertion then follows from Proposition \ref{prop:hecke-imbedding} and Lemma \ref{lem:Y-cycle}.
\end{proof}

\begin{proposition}\label{prop:local-model}
The closed immersion $\iota:\bM_n^{[n-1,t]}\hookrightarrow \wh{\bM}_n^{[t]}$ is an isomorphism.
\end{proposition}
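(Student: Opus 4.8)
The plan is to prove Proposition \ref{prop:local-model} by the standard flatness-plus-fiberwise-isomorphism argument, following the template of the proof of Theorem \ref{thm:exc-iso-loc}(i). Both $\bM_n^{[n-1,t]}$ and $\wh{\bM}_n^{[t]}$ are closed subschemes of $\bM_{n+1}^{[n+1,t]}$, and $\iota$ is already known to be a closed immersion, so it suffices to check: (1) $\bM_n^{[n-1,t]}$ is flat over $\Spec O_F$; (2) $\iota$ induces an isomorphism on generic fibers; and (3) $\iota$ induces a surjection (equivalently, an isomorphism, since source and target will both turn out to be flat of the same dimension, or — more robustly — a bijection on geometric points of the special fiber plus an infinitesimal étaleness statement) on special fibers. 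For (1) we invoke the flatness of local models with two parahoric indices from \cite{Luo24}, as already recorded in \S\ref{s:pfofthmBS}. For (2), both local models with two indices are isomorphic to their one-index counterparts over the generic fiber, which are Grassmannians; concretely, on the generic fiber $\wh{\bM}_{n,\eta}^{[t]}$ is cut out inside $\bM_{n+1,\eta}^{[n+1,t]}\simeq \bM_{n+1,\eta}^{[t]}$ by the splitting condition $\CF_{\Lambda_t}=\CF_{\Lambda_t^\flat}\oplus R(\Pi-\pi)u$, which is exactly the $\CZ(u)$-condition identifying it with $\bM_{n,\eta}^{[t]}\simeq \Gr(1,F^{n-1})$, matching the generic fiber of $\bM_n^{[n-1,t]}$.

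\textbf{Special fiber.} The heart of the argument is the special fiber. I would first reduce to the analogous statement on the $\pi$-modular/almost-$\pi$-modular "exotic" local models that has already been done: namely, project $\wh{\bM}_n^{[t]}$ and $\bM_n^{[n-1,t]}$ along $p_{n+1}$ and $p_{n-1}$ respectively to recover, in the fibers over $\bM_n^{[n-1]}$ and $\bY^{[n+1]}(u)=\bZ^{[n+1]}(\pi u)$, precisely the situation of Theorem \ref{thm:Yao-isom}. The point is that $\wh{\bM}_n^{[t]}$ sits in the cartesian square over $\bM_{n+1}^{[n+1]}$ with $\bZ^{[t]}(u)$, while $\bM_n^{[n-1,t]}$ sits in the analogous square over $\bM_{n+1}^{[n+1]}$ with $\bM_n^{[n-1]}\simeq\bY^{[n+1]}(u)$; by Theorem \ref{thm:Yao-isom} the two bases agree, and the fiber products are taken along the same maps from $\bM_{n+1}^{[n+1,t]}$, so the two spaces coincide. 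I would spell this out by exhibiting the cube
\begin{equation*}
\begin{aligned}
\xymatrix{
\bM_n^{[n-1,t]}\ar[rr]\ar[dd]\ar[dr] && \bM_{n+1}^{[n+1,n-1,t]}\ar[dd]|\hole\ar[dr] &\\
& \bM_n^{[n-1]}\ar[rr]\ar[dd] && \bM_{n+1}^{[n+1]}\ar[dd]\\
\wh{\bM}_n^{[t]}\ar[rr]|\hole\ar[dr] && \bM_{n+1}^{[n+1,t]}\ar[dr]&\\
& \bY^{[n+1]}(u)\ar[rr] && \bM_{n+1}^{[n+1]},
}
\end{aligned}
\end{equation*}
where the back face is the defining square of $\bM_n^{[n-1,t]}$ (using $\bM_{n+1}^{[n-1,t]}\simeq\bM_{n+1}^{[n+1,n-1,t]}$ from \cite[Prop.~9.12]{RSZ2}), the front face is the defining square of $\wh{\bM}_n^{[t]}$, and the left face is the isomorphism $\iota:\bM_n^{[n-1]}\xrightarrow{\sim}\bY^{[n+1]}(u)$ of Theorem \ref{thm:Yao-isom}. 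Chasing the cube gives the claimed identification.

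\textbf{Expected obstacle.} The main subtlety is verifying that $\wh{\bM}_n^{[t]}$ is genuinely the fiber product displayed on the front face — i.e. that imposing the splitting condition $\CF_{\Lambda_t}=\CF_{\Lambda_t^\flat}\oplus R(\Pi-\pi)u$ is equivalent to requiring the image in $\bM_{n+1}^{[t]}$ to land in $\bZ^{[t]}(u)\simeq\bM_n^{[t]}$ compatibly with the rest of the chain, and that this fiber product meets $\bM_{n+1}^{[n+1,n-1,t]}$ correctly so that the back-face square is literally the same fiber product. This is where one must be careful with the almost-$\pi$-modular phenomenon: at $t=n-1$ the assertion is trivial (both spaces are $\bM_n^{[n-1]}$, handled directly), but for $t<n-1$ one needs to know that the splitting of $\Lambda_t$ in \eqref{equ:orthogonal-decompo-of-lattice-chain} propagates through the whole polarized chain $\Lambda_{n+1}\subset\Lambda_{n-1}\subset\Lambda_t\subset\cdots$, and that the transition map $\lambda_{n-1}^\vee$ restricts to an isomorphism on the $\langle u\rangle$-summand exactly as in Lemma \ref{lem:pi-modular-submmand} and Lemma \ref{lem:Y-cycle}. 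Once these compatibilities are in place, flatness of $\bM_n^{[n-1,t]}$ plus the generic-fiber isomorphism plus \cite[Prop.~14.17]{GW} (as used in the proof of Theorem \ref{thm:exc-iso-loc}(i)) upgrades the closed immersion $\iota$ to an isomorphism, and Theorem \ref{sm=big10} follows by passing from local models to RZ spaces via Grothendieck--Messing theory exactly as in Corollary \ref{cor:pi-modular-iso}.
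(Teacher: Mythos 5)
Your overall architecture is reasonable and you have correctly identified the two essential inputs (Theorem \ref{thm:Yao-isom} and a flatness/fiberwise upgrade), but the step you defer as the ``expected obstacle'' is not a routine compatibility check --- it is the entire content of the proposition, and your cube does not close without it. Concretely: $\wh{\bM}_n^{[t]}$ is \emph{defined} as the fiber product of $\bM_{n+1}^{[n+1,t]}$ with $\bZ^{[t]}(u)$ over $\bM_{n+1}^{[t]}$, i.e.\ by the splitting condition $\CF_{\Lambda_t}=\CF_{\Lambda_t^\flat}\oplus R(\Pi-\pi)u$ at the level-$t$ lattice; it is \emph{not} defined as a fiber product over $\bM_{n+1}^{[n+1]}$ with $\bY^{[n+1]}(u)$. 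The inclusion of $\wh{\bM}_n^{[t]}$ into that second fiber product is the easy direction (one checks $\lambda_t^\vee(R(\Pi-\pi)u)=R(\Pi-\pi)u\subset\CF_{\Lambda_{n+1}^\vee}$), but the reverse inclusion --- that a condition imposed only at the $\pi$-modular lattice forces the splitting of $\CF_{\Lambda_t}$ --- does not follow from the chain axioms: the periodicity map $\Lambda_{n+1}^\vee\to\pi^{-1}\Lambda_{n+1}\cong\Lambda_{n+1}$ only transports $(\Pi-\pi)u$ to $\Pi(\Pi-\pi)u=-\pi(\Pi-\pi)u$, which is no information modulo $\pi$. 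The same issue afflicts the back face: $\bM_n^{[n-1,t]}$ is defined directly as a two-index local model (Definition \ref{def:two-ind-lm}), not as $\bM_n^{[n-1]}\times_{\bM_{n+1}^{[n+1]}}\bM_{n+1}^{[n+1,n-1,t]}$. So both faces you need to be cartesian are unproven claims each essentially equivalent to the proposition.

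The paper's proof avoids the cube entirely and argues directly on $R$-points, for all $R$ at once (so no flatness or fiberwise reduction is needed): starting from a point of $\wh{\bM}_n^{[t]}$, it first descends to the $\pi$-modular level and applies Theorem \ref{thm:Yao-isom} to write $(\CF_{\Lambda_{n+1}},\CF_{\Lambda_{n+1}^\vee})=\bigl(\iota(\CF_{\Lambda_{n-1}^\flat}),\iota(\CF_{\Lambda_{n-1}^{\flat,\vee}})\bigr)$, and then --- this is the step missing from your proposal --- \emph{reconstructs} the intermediate level-$(n-1)$ filtration using the explicit formula $\CF_{\Lambda_{n-1}^{\flat,\vee}}=\CF_{\Lambda_{n+1}^\vee}\cap\lambda_{n-1}(\Lambda_{n-1,R}^{\flat,\vee})$ from the proof of \cite[Prop.~12.1]{RSZ2}, verifying by a transition-map computation that $\lambda_t^{\flat,\vee}$ carries $\CF_{\Lambda_t^{\flat,\vee}}$ into $\CF_{\Lambda_{n-1}^{\flat,\vee}}$ and dually, so that the resulting tuple is a point of $\bM_n^{[n-1,t]}$. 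Combined with the already-established closed immersion $\iota$, this gives the isomorphism. To repair your argument you would need to supply exactly this reconstruction, at which point the cube and the flatness/generic-fiber/special-fiber scaffolding become superfluous.
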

\begin{proof}
Since both of them are closed subschemes of $\bM_{n+1}^{[n+1,t]}$, we only need to show that any $R$-point of $\wh{\bM}_n^{[t]}$ lies in $\bM_n^{[n-1,t]}$.
Let $R$ be an $O_F$-algebra, any $R$-point of $\wh{\bM}_n^{[t]}$ represents a filtration of the form:
\begin{equation*}
\begin{aligned}
	\xymatrix{
		\Lambda_{n+1,R}\ar[r]^{\lambda_{n+1}}&
		\Lambda_{t,R}\ar[r]^{\lambda_{t}}&
		\Lambda^\vee_{t,R}\ar[r]^{\lambda^\vee_{t}}&
		\Lambda^\vee_{n+1,R}
		\\
		\CF_{\Lambda_{n+1}}\ar@{^(->}[u]\ar[r]&
		\CF_{\Lambda^\flat_{t}}\oplus R(\Pi-\pi)u\ar@{^(->}[u]\ar[r]&
		\CF_{\Lambda^{\flat,\vee}_{t}}\oplus R(\Pi-\pi)u\ar@{^(->}[u]\ar[r]&
		\CF_{\Lambda_{n+1}^\vee}\ar@{^(->}[u].
	}
\end{aligned}
\end{equation*}
Since $\lambda_t^\vee(R(\Pi-\pi)u)=R(\Pi-\pi)u\subset \CF_{\Lambda_{n+1}^\vee}$, we see that $(\CF_{\Lambda_{n+1}},\CF_{\Lambda_{n+1}^\vee})$ defines a point in $\bY^{[n+1]}(u)(R)\subset \bM_{n+1}^{[n+1]}(R)$ under the projection $\bM_{n+1}^{[n+1,t]}\to \bM_{n+1}^{[n+1]}$.
By Theorem \ref{thm:Yao-isom}, we have $(\CF_{\Lambda_{n+1}},\CF_{\Lambda^\vee_{n+1}})=\bigl(\iota(\CF_{\Lambda_{n-1}^\flat}),\iota(\CF_{\Lambda_{n-1}^{\flat,\vee}})\bigr)$ for some $(\CF_{\Lambda_{n-1}^\flat},\CF_{\Lambda_{n-1}^{\flat,\vee}})\in \bM_{n}^{[n-1]}(R)$. 

By the proof of \cite[Prop. 12.1]{RSZ2}, we further have
\[
\CF_{\Lambda_{n-1}^{\flat,\vee}}=\CF_{\Lambda_{n+1}^\vee}\cap \lambda_{n-1}(\Lambda_{n-1,R}^{\flat,\vee}),
\]
where $\lambda_{n-1}:\Lambda^\vee_{n-1,R}\to \Lambda^\vee_{n+1,R}$ is the natural transition map. 
Since 
\[
\lambda_t^\vee(\CF_{\Lambda^{\flat,\vee}_{t}}\oplus R(\Pi-\pi)u)\subseteq\CF_{\Lambda_{n+1}^\vee}\quad\text{and}\quad 
\CF_{\Lambda_t^{\flat,\vee}}\subset \Lambda_{t,R}^{\flat,\vee} ,
\] we conclude that the transition map $\lambda^{\flat,\vee}_{t}:\Lambda_{t}^{\flat,\vee}\to \Lambda_{n-1}^{\flat,\vee}$ carries $\CF_{\Lambda_t^{\flat,\vee}}$ to $\CF_{\Lambda_{n-1}^{\flat,\vee}}$. 
By duality, the transition map 
$\lambda^\flat_{n-1}:\Lambda_{n-1}^{\flat}\to \Lambda_t^\flat$ carries $\CF_{\Lambda_{n-1}^\flat}$ to $\CF_{\Lambda_t^\flat}$, hence the filtrations $(\CF_{\Lambda_{n-1}^\flat},\CF_{\Lambda_t^\flat},\CF_{\Lambda_t^{\flat,\vee}},\CF_{\Lambda_{n-1}^{\flat,\vee}})$ define an $R$-point of $\bM_n^{[n-1,t]}$. 
\end{proof}

\subsection{Proof of Theorem \ref{sm=big10}}
The proof of Theorem \ref{sm=big10} proceeds along the same lines as the proof of Corollary \ref{cor:pi-modular-iso}: we will show that the map $ \CN_n^{[n-1,t],\circ}\to \wh{\CN}_n^{[t]}$ induces a bijection on geometric points; then,  Proposition \ref{prop:local-model} shows that each point in $ \CN_n^{[n-1,t],\circ}(\BF)=\wh{\CN}_n^{[t]}(\BF)$ has the same first order deformation theory in either formal scheme, hence the map is also infinitesimally \'etale, and the theorem follows.

Recall from \S \ref{sec:RZ-deeper} that  $N=M(\BX)[\frac{1}{\pi_0}]$ is the common rational Dieudonn\'e module of the framing objects $\BX^{[n+1]}$ and $\BX^{[t]}$ of $\CN_{n+1}^{[n+1,t]}$. It is equipped with a hermitian form $h$ and a $\sigma$-linear operator $\tau: N\to N$.
By the isometry $\BV(\BX^{[n+1]})\otimes_{F_0}\breve{F}_0\simeq C\otimes_{F_0}\breve{F}_0\simeq N$, the unit length vector $u\in \BV(\BX^{[n+1]})$ corresponds to a unit length element in $N$, which we will still denote by $u$. We have a decomposition of the hermitian space $N=N^\flat\oplus Fu$, where $N^\flat$ is the rational Dieudonn\'e module of the framing object $\BY^{[n-1]}$ of $\CN_{n+1}^{[t,n-1]}$.

First of all, by computation on Dieudonn\'e modules, the geometric points $\CN_{n+1}^{[n+1,n-1,t]}(\BF)$ are given as follows by $O_{\breve{F}}$-lattices
\begin{equation*}
M_{n+1}\subset M_{n-1}\subset M_{t}\subset N
\end{equation*}
such that
\begin{itemize}
\item We have $\Pi M_i\subset\tau^{-1}(M_i)\subset\Pi^{-1}M_i$ for $i=n+1$, $n-1$ and $t$;
\item We have relations
\begin{equation*}
M_{t}\stackrel{\leq 1}{\subset}(M_t+\tau(M_t)),\quad\text{and}\quad M_{n-1}\stackrel{\leq 1}{\subset}(M_{n-1}+\tau(M_{n-1})),\quad\text{and}\quad
M_{n+1}\stackrel{ 1}{\subset}(M_{n+1}+\tau(M_{n+1}));
\end{equation*}
\item We have chains
\begin{equation*}
M_{n+1}\,\stackrel{1}{\subset}\,M_{n-1}\stackrel{\frac{n-t-1}{2}}{\subset}M_t\,\stackrel{t}{\subset}\, M_t^\vee\stackrel{\frac{n-t-1}{2}}{\subset}\,M_{n-1}^\vee\stackrel{1}{\subset}\,M_{n+1}^\vee=\Pi^{-1}M_{n+1}.
\end{equation*}
\end{itemize}
By definition, the vector $u\in N$ satisfies $\tau(u)=u$. By the definition of $\CN_n^{[n-1,t],\circ}$ (see \eqref{equ:three-squares}), we can write 
\begin{equation*}
	\CN_{n}^{[n-1,t],\circ}(\BF)=\left\{
	(M_{n+1},M_{n-1},M_t)\in \CN_{n+1}^{[t,n-1,n+1]}(\BF)\mid M_{t}=M_t^\flat\oplus\langle u\rangle, M_{n-1}=M_{n-1}^\flat\oplus\langle u\rangle
	\right\}.
\end{equation*}
The following lemma is straigthforward and ensures that $M^\flat$ are still Dieudonn\'e modules:
\begin{lemma}\label{lem:die-red}
Let $M\subset N$ be a vertex lattice such that $M=M^\flat\oplus\langle u\rangle$.
Then
\begin{enumerate}
\item The relation $\Pi M\subset\tau^{-1}(M)\subset \Pi^{-1}M$ is equivalent to $\Pi M^\flat\subset\tau^{-1}(M^\flat)\subset \Pi^{-1}M^\flat$.
\item The relation $M\subset M+\tau(M)$ is equivalent to $M^\flat\subset M^\flat+\tau(M^\flat)$. Moreover, we have 
\begin{equation*}
    \big(M+\tau(M)\big)/M\simeq \big(M^\flat+\tau(M^\flat)\big)/M^\flat.
\end{equation*}
\end{enumerate}
In particular, the submodule $M\subset M+\tau(M)$ has the same colength as the submodule $M^\flat\subset M^\flat+\tau(M^\flat)$.
\qed
\end{lemma}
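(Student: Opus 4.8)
The plan is to reduce the statement to the orthogonal decomposition $N=N^\flat\perp\breve{F}u$ and to observe that every operator entering the definitions respects it. The first step is to record the relevant stability facts. Since $\Pi=\iota(\pi)$ is $F$-linear, it is in particular $\breve{F}$-linear, hence preserves each of the summands $N^\flat$ and $\breve{F}u$. The operator $\tau=\Pi\uV^{-1}$ is $\sigma$-linear, is bijective on $N$ (because $\uV$ becomes invertible on the rational Dieudonn\'e module), and is compatible with the form in the sense recalled in \S\ref{sec:framing}, so that $h(\tau x,\tau y)=\sigma\bigl(h(x,y)\bigr)$; since moreover $\tau(u)=u$ (because $u\in C=N^{\tau=1}$ under the identification $\BV\simeq C$ of \S\ref{sec:framing}), it follows that $\tau$ stabilises $\breve{F}u$ and hence also $N^\flat=(\breve{F}u)^\perp$. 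Finally, $u$ having unit length, $\langle u\rangle=O_{\breve{F}}u$ is self-dual inside $\breve{F}u$, so that $M^\vee=(M^\flat)^\vee\oplus\langle u\rangle$; reading off the vertex condition $\Pi M^\vee\subseteq M\subseteq M^\vee$ summand by summand shows that $M^\flat\subset N^\flat$ is a vertex lattice, so that the right-hand assertions make sense.

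Next I would compute the effect of the operators on the rank-one piece. Since $\langle u\rangle=O_{\breve{F}}u$ and $\pi\in O_F\subset O_{\breve{F}}$, one has $\Pi\langle u\rangle=\pi\langle u\rangle$ and $\Pi^{-1}\langle u\rangle=\pi^{-1}\langle u\rangle$; and since $\sigma$ is a bijection of $O_{\breve{F}}$ and $\tau(u)=u$, one has $\tau^{\pm 1}\langle u\rangle=\sigma^{\pm 1}(O_{\breve{F}})u=\langle u\rangle$. Writing $M=M^\flat\oplus\langle u\rangle$ with $M^\flat\subset N^\flat$, each of $\Pi M$, $\tau^{-1}(M)$, $\Pi^{-1}M$ is then a direct sum along $N=N^\flat\perp\breve{F}u$ whose $\breve{F}u$-component is $\pi\langle u\rangle$, $\langle u\rangle$, $\pi^{-1}\langle u\rangle$ respectively. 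Because an inclusion of direct sums along $N^\flat\perp\breve{F}u$ holds if and only if it holds on each component, the chain $\Pi M\subseteq\tau^{-1}(M)\subseteq\Pi^{-1}M$ is equivalent to the conjunction of $\Pi M^\flat\subseteq\tau^{-1}(M^\flat)\subseteq\Pi^{-1}M^\flat$ with the tautological chain $\pi\langle u\rangle\subseteq\langle u\rangle\subseteq\pi^{-1}\langle u\rangle$; this proves part~(1).

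Then, for part~(2), the same bookkeeping yields $\tau(M)=\tau(M^\flat)\oplus\langle u\rangle$, whence
\[
M+\tau(M)=\bigl(M^\flat+\tau(M^\flat)\bigr)\oplus\langle u\rangle .
\]
Both inclusions $M\subseteq M+\tau(M)$ and $M^\flat\subseteq M^\flat+\tau(M^\flat)$ are automatic, and dividing the displayed equality by $M=M^\flat\oplus\langle u\rangle$ produces a canonical isomorphism $\bigl(M+\tau(M)\bigr)/M\simeq\bigl(M^\flat+\tau(M^\flat)\bigr)/M^\flat$; in particular the two submodules have the same colength.

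The lemma is genuinely light, so I do not expect a serious obstacle. The only point that requires a little care is the claim that $\tau$ stabilises the orthogonal decomposition $N=N^\flat\perp\breve{F}u$, which rests on the explicit $\sigma$-compatibility of $\tau$ with the hermitian form recalled in \S\ref{sec:framing} together with the bijectivity of $\tau$ on $N$; once this is in place, parts~(1) and~(2) are purely formal manipulations with orthogonal direct sums.
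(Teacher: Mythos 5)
Your proof is correct. The paper states this lemma without proof (it is flagged as straightforward and closed with \qed), and your componentwise verification along the decomposition $N=N^\flat\perp \breve{F}u$ — including the one non-trivial point, that $\tau$ stabilises both summands because $\tau(u)=u$ and $\tau$ is $\sigma$-semilinearly compatible with the hermitian form, so that $N^\flat=(\breve{F}u)^\perp$ inherits $\tau$ — is exactly the intended routine argument.
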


Next, by computation on Dieudonn\'e modules, the geometric points $\wh{\CN}_{n}^{[t]}(\BF)$
are given as follows by $O_{\breve{F}}$-lattices (see again \eqref{equ:three-squares})
\begin{equation*}
M_{n+1}\subset M_{t}\subset N
\end{equation*}
such that
\begin{itemize}
\item We have $\Pi M_t\subset\tau^{-1}(M_t)\subset\Pi^{-1}M_t$ and  $\Pi M_{n+1}\subset\tau^{-1}(M_{n+1})\subset\Pi^{-1}M_{n+1};$
\item We have relations
\begin{equation*}
M_{t}\stackrel{\leq 1}{\subset}(M_t+\tau(M_t)),\quad\text{and}\quad  M_{n+1}\stackrel{ 1}{\subset}(M_{n+1}+\tau(M_{n+1}))\quad\text{and}\quad
M_{t}=M_{t}^\flat\oplus\langle u\rangle;  
\end{equation*}
\item We have decomposition 
\begin{equation*}
M_{t}=M_{t}^\flat\oplus\langle u\rangle,
\end{equation*}
where $M_t^\flat\subset N^\flat$ is a lattice which defines a geometric point of $\CN_n^{[t]}(\BF)$;
\item We have chains
\begin{equation*}
M_{n+1}\stackrel{\frac{n-t+1}{2}}{\subset}M_t\,\stackrel{t}{\subset}\, M_t^\vee\stackrel{\frac{n-t+1}{2}}{\subset} M_{n+1}^\vee=\Pi^{-1}M_{n+1}
\end{equation*}
\end{itemize}

The induced map $\iota:\CN_n^{[n-1,t],\circ}(\BF)\to \wh{\CN}_n^{[t]}(\BF)$ forgets the lattice $M_{n-1}$. We show that $\iota$ is a bijection.

For any $(M_{n+1},M_t)\in \wh{\CN}_n^{[t]}(\BF)$, since $M_t=M_t^\flat\oplus \langle u\rangle$, we also have a decomposition $M_t^\vee=M_t^{\flat,\vee}\oplus\langle u\rangle$. By Lemma \ref{lem:pi-modular-submmand}, $u\in M_{n+1}^\vee$ is a primitive vector.
Therefore, we have a decomposition
\begin{equation*}
    M_{n+1}=M_{n-1}^\flat\oplus\langle \pi u\rangle\quad\text{and}\quad M^\vee_{n+1}=\Pi^{-1}M_{n-1}^\flat\oplus\langle u\rangle,
\end{equation*}
where $M_{n-1}^\flat\subset N^\flat$ is a lattice. 
The inclusion $M_{n+1}\subset M_t$ has colength $\frac{n-t+1}{2}$. Hence, the inclusion $M_{n-1}^{\flat}\subset M_t^\flat$ is a inclusion of colength $\frac{n-t-1}{2}$.
Therefore, $M_{n-1}^\flat\subset N^\flat$ is a vertex lattice of type $n-1$. 
Define $M_{n-1}:=M_{n-1}^\flat\oplus\langle u\rangle$. By Lemma \ref{lem:die-red}, the triple $(M_{n+1},M_{n-1},M_t)$ defines a point in $\CN_n^{[n-1,t],\circ}(\BF)$.

Conversely, for any $(M_{n+1},M_{n-1},M_t)$ in $\CN_n^{[n-1,t],\circ}(\BF)$, it is straightforward to verify that $(M_{n+1},M_t)$ defines a point in $\wh{\CN}_n^{[t]}(\BF)$. Moreover, from the construction we see that this defines a bijection.\qed

\section{The proof of Conjecture \ref{conj (n odd,t) Y} for type $(n-1,t)=(0,0)$ }
\label{sec:(0,0)-case}

In this section, we prove the Conjecture \ref{conj (n odd,t) Y} for type $(n-1,t)=(0,0)$. The proof proceeds as follows. In \S \ref{sec:inhomogeneous}, we first reduce Conjecture \ref{conj (n odd,t) Y} to the inhomogeneous setting, allowing us to apply the germ expansion results from \cite{Mih-AT} and \cite{RSZ2}; see Theorem \ref{thm:germ-expan}. Then, in \S \ref{sec:iwahori-exceptional-isom}, we recall the exceptional isomorphisms: one between $\CN_{2,1}^{[0]}$ and the Iwahori level Lubin–Tate moduli space $\CM_{\Gamma_0(\pi_0)}$, and another between $\CN_2^{[2]}$ and the (hyperspecial level) Lubin–Tate moduli space $\CM$, as constructed in \cite{RSZ1} and \cite{RSZ2}. These isomorphisms allow us to relate the cycles $\wt{\CM}_1^{[0],\pm}$ on $\CN_{2,1}^{[0]}$ to canonical and quasi-canonical lifts on $\CM_{\Gamma_0(\pi_0)}$, as studied in \S \ref{sec:describe cycles}.
We then compute the intersection multiplicity by reducing to calculations involving canonical lifts in \S \ref{sec:intersection numbers}, and compare the outcome with the corresponding analytic computation in \S \ref{sec:proof of 00}.

\subsection{Inhomogeneous setting}\label{sec:inhomogeneous}
We reduce Conjecture \ref{conj (n odd,t) Y} to the inhomogeneous setting.
First, since the bottom row of the diagram \eqref{eq: N circ} is $H(F_0)$-equivariant, the embedding $\CN_n^{[n-1],\circ}\hookrightarrow \CN_{n+1}^{[n-1,n+1]}$ is $H(F_0)$-equivariant. As the projection $\CN_{n+1}^{[n-1,n+1]}\to \CN_{n+1}^{[n+1]}$ is also $H(F_0)$-equivariant, the induced embedding $\wt{\CM}_{n}^{[t]}\hookrightarrow \CN_{n+1}^{[n+1,t]}$ is $H(F_0)$-equivariant. 

Since the group action on the splitting models is defined via changes in the framing object (just as for RZ spaces), it follows that the intersection number in Conjecture \ref{conj (n odd,t) Y} remains unchanged if we replace $g = (g_1, g_2) \in G_{W_1}(F_0)$ with $g’ = (1, g_1^{-1} g_2)$.
The same reduction applies on the analytic side via a change of variables. Thus, without loss of generality, we may assume that 
$$G'(F_0)_{\rs}\ni \gamma=(1,\gamma)\longleftrightarrow g = (1, g) \in G_{W_i}(F_0)_{\rs}$$ in Conjecture \ref{conj (n odd,t) Y}.

For $f\in C_c^{\infty}(G_W)$, recall from \cite[\S 4]{LRZ2} the definition of the function $f^\sharp\in C_c^{\infty}(\U(W))$ given by
\begin{equation}\label{equ:define-inhomo-transfer}
f^{\sharp}(g)=\int_{\U(W^\flat)}f(h,hg)dh,\quad g\in \U(W)(F_0).
\end{equation}

One also has $\varphi^\sharp$ for $\varphi\in C_c^{\infty}(G')$, whose precise definition is not needed here, see \cite[(4.2.16)]{LRZ2}. We recall the following result:
\begin{proposition}\label{prop:homo-to-inhomo}
\begin{altenumerate}
\item The function $\varphi'\in C_c^\infty(G')$ is a transfer of $(f_1,f_2)\in C_c^{\infty}(G_{W_0})\times C_c^{\infty}(G_{W_1})$ if and only if $\varphi^{\prime,\sharp}$ is a transfer of $(f_0^\sharp,f_1^\sharp)\in C_c^{\infty}(\U(W_0))\times C_c^{\infty}(\U(W_1))$.
\item Let $\varphi'\in C_c^{\infty}(G')$ and $\gamma\in \U(W_1)(F_0)_{\rs}$, then
$$
\partial\!\Orb((1,\gamma),\varphi')=2\partial\!\Orb(\gamma,\varphi^{\prime,\sharp})
$$
\end{altenumerate}
\end{proposition}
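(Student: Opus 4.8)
The statement to prove is Proposition \ref{prop:homo-to-inhomo}, relating transfer and derivatives of (weighted) orbital integrals on the homogeneous group $G'=\GL_n(F)\times\GL_{n+1}(F)$ (and its unitary counterparts $G_{W_i}$) to those on the inhomogeneous groups $\U(W_i)$ via the operations $f\mapsto f^\sharp$ and $\varphi'\mapsto \varphi'^\sharp$ of \eqref{equ:define-inhomo-transfer}. This is essentially a bookkeeping statement comparing orbital integrals in two standard models of the Jacquet--Rallis situation, and the plan is to reduce it entirely to the analogous statement already established in the literature --- specifically \cite[\S 4]{LRZ2} (whose conventions we are following) and the original comparison in \cite{Z12,Z14,RSZ1} --- after checking that the weighted/transfer-factor normalizations used in \S\ref{sec:thesetting} of the present paper are compatible.

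First I would set up the orbit comparison on the geometric side. On $G_W=\U(W^\flat)\times\U(W)$ the relevant action is that of $H\times H=\U(W^\flat)\times\U(W^\flat)$ by $(h_1,h_2)\cdot(g_1,g_2)=(h_1 g_1 h_2^{-1}, h_1 g_2 h_2^{-1})$, while on $\U(W)$ alone the action is $H\times H\ni(h_1,h_2)\colon g\mapsto h_1 g h_2^{-1}$ (in the guise relevant to $f^\sharp$, conjugation by $H$ after the substitution $g=g_1^{-1}g_2$). The key elementary fact is that the map $(g_1,g_2)\mapsto g_1^{-1}g_2$ induces a bijection between $H\times H$-orbits on $G_W$ (with the above action) and $H$-conjugacy orbits on $\U(W)$, carrying regular semisimple elements to regular semisimple elements and preserving the matching relation with $G'$; this is exactly the content recalled in \cite[\S 2]{RSZ1} and the change of variables in \cite[\S 4.2]{LRZ2}. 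Granting this, the definition \eqref{equ:define-inhomo-transfer} integrates out the ``diagonal'' $H$-direction, and a direct computation with the Haar measures fixed in \S\ref{sec:thesetting} gives, for regular semisimple $g\in\U(W)(F_0)$ matched with $(1,g)\in G_W(F_0)$,
\[
\Orb\bigl((1,g),f\bigr)=\Orb(g,f^\sharp),
\]
where on the left is the orbital integral of \S\ref{sec:thesetting} over $H(F_0)\times H(F_0)$ and on the right the inhomogeneous orbital integral over $H(F_0)$ (via the conjugation action). The same Fubini-type manipulation on the $\GL$-side, keeping track of the transfer factor $\omega$ and the character $\eta(h_2)$ in the weighted integral $\Orb(\gamma,f',s)$, yields
\[
\Orb\bigl((1,\gamma),\varphi',s\bigr)=\Orb(\gamma,\varphi'^\sharp,s)
\]
as functions of $s$; here one must check that the transfer factor $\omega$ chosen in \S\ref{sec:thesetting} descends to the inhomogeneous transfer factor used in \cite[\S 4]{LRZ2}, which is where the particular normalization (involving $\det(\wt\gamma^i e)_{i}$ and the substitution $\wt\gamma=s(\gamma)$) is used. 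Part (i) of the proposition is then immediate from the definition of transfer (equality of orbital integrals for matching elements), since both sides of the transfer condition are simply related by applying $(\cdot)^\sharp$.

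For part (ii), I would differentiate the identity $\Orb\bigl((1,\gamma),\varphi',s\bigr)=\Orb(\gamma,\varphi'^\sharp,s)$ at $s=0$. The only subtlety is the factor of $2$: it arises because in the homogeneous weighted orbital integral on $G'$ the weight $|\det h_1|^s$ is integrated over a group $H_1'=\Res_{F/F_0}\GL_{n-1}$ of rank $2(n-1)$ over $F_0$ (i.e.\ $\GL_{n-1}(F)$), whereas after passing to the inhomogeneous model the relevant weighting occurs on $\GL_{n-1}(F_0)$, so the logarithmic derivative picks up the index $[F:F_0]=2$; this matches the factor $2$ appearing in the comparison of \cite[\S 4]{LRZ2} and, on the geometric side, the factor $2$ in the AFL (cf.\ Theorem \ref{conjAFL}). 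Concretely I would invoke \cite[Prop.\ 4.2.x]{LRZ2} (the precise inhomogeneous comparison lemma there), whose proof applies verbatim in the present ramified setting since it is purely about the measures, the change of variables $g=g_1^{-1}g_2$, and differentiation in $s$, none of which sees the ramification of $F/F_0$. Writing $\partial\!\Orb((1,\gamma),\varphi')=\frac{d}{ds}|_{s=0}\Orb((1,\gamma),\varphi',s)$ and similarly for $\varphi'^\sharp$, the chain of identities gives
\[
\partial\!\Orb\bigl((1,\gamma),\varphi'\bigr)=\frac{d}{ds}\Big|_{s=0}\Orb(\gamma,\varphi'^\sharp,s)=2\,\partial\!\Orb(\gamma,\varphi'^\sharp),
\]
which is the claim.

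The main obstacle --- really the only place requiring care rather than routine checking --- is verifying that the specific transfer factor $\omega$ fixed in \S\ref{sec:thesetting} is the ``correct'' one making the descent to the inhomogeneous transfer factor of \cite[\S 4]{LRZ2} hold \emph{on the nose}, including the sign/normalization subtleties coming from the choice of $\wt\eta(\pi)$ (with $\wt\eta(\pi)^2=\eta(-1)$) in the ramified case. I would handle this by comparing the explicit formula for $\omega(\gamma)$ given in \S\ref{sec:thesetting} with the inhomogeneous transfer factor in \cite[(4.x)]{LRZ2} under the substitution $\wt\gamma=s(\gamma)$, using that both are built from $\wt\eta$ applied to the same Gram-type determinants; any discrepancy would be a locally constant function of $\gamma$ invariant under the relevant group action, hence a constant, and one pins down the constant to be $1$ by testing on the unit element or on a single convenient orbit. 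Once this normalization lemma is in place, everything else is a formal consequence of the measure-theoretic identities above, and the proposition follows.
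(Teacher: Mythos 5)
Your proposal is correct and takes essentially the same route as the paper: the paper's proof is simply the citation ``(i) is [LRZ2, Cor.\ 4.2.5], (ii) is [LRZ2, Cor.\ 4.2.4], see also [RSZ1, \S 5]'', and your unwinding of the change of variables $(g_1,g_2)\mapsto g_1^{-1}g_2$, the Fubini argument for $(\cdot)^\sharp$, the compatibility of transfer factors, and the origin of the factor $2$ in the $s$-derivative is exactly the content behind those cited results, which as you note are purely measure-theoretic and apply verbatim in the ramified case.
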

\begin{proof}
(i) is \cite[Cor. 4.2.5]{LRZ2}, (ii) is \cite[Cor. 4.2.4]{LRZ2}, see also \cite[\S 5]{RSZ1}.
\end{proof}

With this, we may now state the inhomogeneous version of Conjecture \ref{conj (n odd,t) Y}. 

 \begin{corollary}\label{cor:inhomo-version}
 Let $n$ be odd, and let $t=n-1$. 
 Conjecture \ref{conj (n odd,t) Y}, (ii) is equivalent to the following statement:
 
$\bullet$  there exists $\varphi'\in C_c^\infty(S_n(F_0))$ with transfer $(\vol(K_n^{[n-1],\circ})^{-1} \varphi^{[n+1,t]}_{n+1},0)\in C_c^\infty(\U(W_0))\times C_c^\infty(\U(W_1))$ such that, if $\gamma\in S_n(F_0)_\rs$ is matched with  $g\in \U(W_1)(F_0)_\rs$, then
 \begin{equation*}
\left\langle\wt\CM^{[t],+,\spt}_{n} , g\wt\CM^{[t],+,\spt}_{n} \right\rangle_{ \CN_{n}^{[n-1]} \times \CN_{n+1}^{[t],\spt} }\cdot\log q=- \del\big(\gamma,  \varphi' \big).
\end{equation*}
Similarly, Conjecture \ref{conj (n odd,t) Y}, (iii) is equivalent to the following statement: 

$\bullet$ there exists $\varphi'\in C_c^\infty(S_n)$ with transfer $(\vol(K_n^{[n-1],\circ})^{-1} h_0\cdot\varphi^{[n+1,t]}_{n+1},0)\in C_c^\infty(\U(W_0))\times C_c^\infty(\U(W_1))$ such that, if $\gamma\in G'(F_0)_\rs$ is matched with  $g\in \U(W_1)(F_0)_\rs$, then
 \begin{equation*}
\left\langle\wt\CM^{[t],+,\spt}_{n} , g\wt\CM^{[t],-,\spt}_{n} \right\rangle_{ \CN_{n}^{[n-1]} \times \CN_{n+1}^{[t],\spt} }\cdot\log q=- \del\big(\gamma,  \varphi' \big).
\end{equation*}
Here $h_0\in \U(W_0)(F_0)$ is an element in $K_{n}^{[n-1]}\setminus K_{n}^{[n-1],\circ}$, and $h_0\cdot\varphi_{n+1}^{[n+1,t]}(g):=\varphi_{n+1}^{[n+1,t]}(h_0g).$
\end{corollary}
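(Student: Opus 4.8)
The plan is to reduce Corollary \ref{cor:inhomo-version} directly to Proposition \ref{prop:homo-to-inhomo} together with the flatness/regularity facts already established for the cycles $\wt\CM^{[t],\pm,\spt}_{n}$. First I would note that the statement is a purely formal consequence of the homogeneous Conjecture \ref{conj (n odd,t) Y}, (ii) and (iii), once one tracks the effect of passing from the diagonal cycle $\wt\CM^{[t],+,\spt}_n$ on the product $\CN_n^{[n-1]}\times\CN_{n+1}^{[t],\spt}$ to the inhomogeneous model. The main input is that the intersection numbers on the left-hand side are invariant under replacing $g=(g_1,g_2)\in G_{W_1}(F_0)$ by $(1,g_1^{-1}g_2)$: this follows because the $H(F_0)$-action on all the spaces in play (the RZ spaces, their splitting models, the deeper-parahoric spaces $\CN_{n+1}^{[n-1,n+1]}$ and $\CN_{n+1}^{[n+1,t]}$, and the cycles $\CN_n^{[n-1],\circ}$, $\wt\CM^{[t]}_n$, and their $\pm$-components) is defined through the framing object, exactly as recalled in \S\ref{sec:RZ-defn} and \S\ref{sec:splitting-model}, and the embeddings $\CN_n^{[n-1],\circ}\hookrightarrow\CN_{n+1}^{[n-1,n+1]}$ and $\wt\CM^{[t]}_n\hookrightarrow\CN_{n+1}^{[n+1,t]}$ are $H(F_0)$-equivariant since they are built from $H(F_0)$-equivariant diagrams (\eqref{eq: N circ} and \eqref{eq:def Mt}). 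Hence the Euler characteristic $\chi(\cdots,\wt\CM^{[t],+,\spt}_n\cap^\BL g\wt\CM^{[t],+,\spt}_n)$ only depends on $g_1^{-1}g_2$.

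Next I would carry out the translation on the analytic side. By Proposition \ref{prop:homo-to-inhomo}(i), a function $\varphi'\in C_c^\infty(G')$ transfers to $(f_0,f_1)\in C_c^\infty(G_{W_0})\times C_c^\infty(G_{W_1})$ if and only if $\varphi^{\prime,\sharp}$ transfers to $(f_0^\sharp,f_1^\sharp)$. Applying this with $f_0=\vol(K_n^{[n-1],\circ})^{-2}{\bf 1}_{K_n^{[n-1],\circ}}\otimes\varphi^{[n+1,t]}_{n+1}$ and $f_1=0$, I would compute $f_0^\sharp$ using the definition \eqref{equ:define-inhomo-transfer}: the integral over $\U(W_0^\flat)$ of ${\bf 1}_{K_n^{[n-1],\circ}}(h)\cdot(\varphi^{[n+1,t]}_{n+1})(hg)$ equals $\vol(K_n^{[n-1],\circ})\cdot\varphi^{[n+1,t]}_{n+1}(g)$, since $\varphi^{[n+1,t]}_{n+1}$ is left-$K_{n+1}^{[n+1]}$-invariant and $K_n^{[n-1],\circ}\subset K_{n+1}^{[n+1]}$ (this last inclusion uses that $\Lambda\subset\Lambda_0$ forces the stabilizer of $\Lambda_0$ intersected with the determinant-one condition to preserve $\Lambda$, as in the proof of Lemma \ref{lem: lat (n-1,t) Y}). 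This yields $f_0^\sharp=\vol(K_n^{[n-1],\circ})^{-1}\varphi^{[n+1,t]}_{n+1}$, matching the transfer target in the corollary. For part (iii) the same computation with the left translate ${\bf 1}_{K_n^{[n-1],\circ}h}$ in place of ${\bf 1}_{K_n^{[n-1],\circ}}$ produces the translate $h_0\cdot\varphi^{[n+1,t]}_{n+1}$, where one identifies $h_0$ with a representative of the nontrivial coset; here one must check that $K_n^{[n-1],\circ}$ being normal in $K_n^{[n-1]}$ makes the answer independent of the choice of representative, exactly the point already made in Remark \ref{rem: ind Lbd}.

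Finally I would invoke Proposition \ref{prop:homo-to-inhomo}(ii): $\del((1,\gamma),\varphi')=2\del(\gamma,\varphi^{\prime,\sharp})$. Combining this with the homogeneous identity $2\left\langle\wt\CM^{[t],+,\spt}_{n},g\wt\CM^{[t],+,\spt}_{n}\right\rangle\cdot\log q=-\del((1,\gamma),\varphi')$ of Conjecture \ref{conj (n odd,t) Y}(ii) and cancelling the factor of $2$ gives precisely $\left\langle\wt\CM^{[t],+,\spt}_{n},g\wt\CM^{[t],+,\spt}_{n}\right\rangle\cdot\log q=-\del(\gamma,\varphi^{\prime,\sharp})$, with $\varphi^{\prime,\sharp}\in C_c^\infty(S_n(F_0))$ transferring to $(\vol(K_n^{[n-1],\circ})^{-1}\varphi^{[n+1,t]}_{n+1},0)$; conversely, starting from an inhomogeneous $\varphi'$ one lifts it via the surjectivity of $\varphi\mapsto\varphi^\sharp$ onto transfers (the relevant surjectivity is part of the cited results of \cite{LRZ2}). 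The argument for (iii) is identical after replacing $\wt\CM^{[t],+,\spt}_n$ by $\wt\CM^{[t],-,\spt}_n$ in the second slot and tracking the translate $h_0$. The only genuine subtlety — and the step I expect to require the most care — is the bookkeeping of the factor $\vol(K_n^{[n-1],\circ})$ versus $\vol(K_n^{[n-1],\circ})^2$ and of the factor $2$ coming from the double cover $\CN_n^{[n-1],\circ}\to\CN_n^{[n-1]}$; one must make sure the volume normalizations in $f_0^\sharp$, the convolution defining $\varphi^{[n+1,t]}_{n+1}$ in \eqref{varphis}, and the $\sharp$-operator are all mutually consistent, but this is routine given Lemma \ref{lem: lat (n-1,t) Y} and the conventions of \S\ref{sec:thesetting}.
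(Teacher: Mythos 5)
Your proposal is correct and follows essentially the same route as the paper: the geometric reduction via $H(F_0)$-equivariance of the cycles, Proposition \ref{prop:homo-to-inhomo} for both the transfer correspondence and the factor of $2$ in $\del((1,\gamma),\varphi')=2\del(\gamma,\varphi^{\prime,\sharp})$, and the explicit computation of $f_0^\sharp$ using the left $K_n^{[n-1],\circ}$-invariance of $\varphi^{[n+1,t]}_{n+1}$ (which the paper derives from $K_n^{[n-1],\circ}\subset K_{n+1}^{[n-1,n+1]}\subset K_{n+1}^{[n+1]}$, matching your argument). The volume and factor-of-$2$ bookkeeping you flag as the delicate point works out exactly as you describe.
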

\begin{proof}
By Proposition \ref{prop:homo-to-inhomo}, we are reduced to computing 
$$
\Bigl(\vol(K_n^{[n-1],\circ})^{-2} {\bf 1}_{K_n^{[n-1],\circ}h_0}\otimes \varphi^{[n+1,t]}_{n+1}\Bigr)^\sharp,\quad \text{for some }h_0\in K_n^{[n-1]}.
$$
First, since $K_{n}^{[n-1],\circ}\subset K_{n+1}^{[n-1,n+1]}\subset K_{n+1}^{[n+1]}$, for any $k\in K_n^{[n-1],\circ}$ and $x\in \U(W_0)(F_0)$, we have 
\begin{align*}
\varphi_{n+1}^{[n+1,n-1]}(kx)={}&{\bf 1}_{K^{[n+1]}_{n+1}K^{[n-1]}_{n+1}}\ast {\bf 1}_{K^{[n-1]}_{n+1}K^{[n+1]}_{n+1}}(kx)\\
={}&\int_{\U(W_0)}{\bf 1}_{K_{n+1}^{[n+1]}K_{n+1}^{[n-1]}}(h){\bf 1}_{K_{n+1}^{[n-1]}K_{n+1}^{[n+1]}}(h^{-1}kx)dh\\
={}&\int_{\U(W_0)}{\bf 1}_{K_{n+1}^{[n+1]}K_{n+1}^{[n-1]}}(kh){\bf 1}_{K_{n+1}^{[n-1]}K_{n+1}^{[n+1]}}(h^{-1}x)dh\\
={}&\int_{\U(W_0)}{\bf 1}_{K_{n+1}^{[n+1]}K_{n+1}^{[n-1]}}(h){\bf 1}_{K_{n+1}^{[n-1]}K_{n+1}^{[n+1]}}(h^{-1}x)dh\\
={}&\varphi_{n+1}^{[n+1,n-1]}(x),
\end{align*}
i.e., the Schwartz function $\varphi^{[n+1,n-1]}_{n+1}$ is left $K_n^{[n-1],\circ}$-invariant.
Therefore, for any $h_0\in K_n^{[n-1]}$,  we have
\begin{align*}
\Bigl(\vol(K_n^{[n-1],\circ})^{-2} {\bf 1}_{K_n^{[n-1],\circ}h_0}\otimes \varphi^{[n+1,t]}_{n+1}\Bigr)^{\sharp}(g)
={}&\vol(K_n^{[n-1],\circ})^{-2}\int_{\U(W^\flat)} {\bf 1}_{K_n^{[n-1],\circ}h_0}(h) \varphi^{[n+1,t]}_{n+1}(hg) dh\\
={}&\vol(K_n^{[n-1],\circ})^{-2}\int_{\U(W^\flat)} {\bf 1}_{K_n^{[n-1],\circ}}(h) \varphi^{[n+1,t]}_{n+1}(hh_0g) dh\\
={}&\vol(K_n^{[n-1],\circ})^{-1}\varphi^{[n+1,t]}_{n+1}(h_0g).
\end{align*}
The assertion then follows from \cite[Cor. 4.2.4 and Cor. 4.2.5]{LRZ2}, see also \cite[\S 5]{RSZ1}.
\end{proof}
For the rest of the section, we take $n=1$ and $t=0$.

\subsection{Iwahori level Lubin-Tate moduli space and exceptional isomorphism}\label{sec:iwahori-exceptional-isom}
To describe the cycles and compute the intersection numbers on the geometric side, we recall some constructions in \cite[\S 8 and \S 9]{RSZ2}.
\begin{definition}
\begin{altenumerate}
\item The  Lubin-Tate moduli space $\CM$ of formal $O_{F_0}$-modules of dimension $1$ and relative height $2$ is the formal scheme representing the functor over $\Spf O_{\breve{F}_0}$ that associates to each $\Spf O_{\breve{F}_0}$-scheme $S$ the set of isomorphism classes of pairs $(Y,\rho_Y)$, where $Y$ is a formal $O_{F_0}$-module of dimension $1$ and relative height $2$ over $S$ and $\rho_{Y}:Y\times_S\ov{S}\to \BE\times_{\Spec \BF}\ov{S}$ is an $O_{F_0}$-linear quasi-isogeny of height 0.

\item The Iwahori level Lubin-Tate moduli space $\CM_{\Gamma(\pi_0)}$ is the formal scheme  representing the functor over $\Spf O_{\breve{F}_0}$ that associates to each $\Spf O_{\breve{F}_0}$-scheme $S$ the set of isomorphism classes of quadruples 
$$
(Y,Y',\phi:Y\to Y',\rho_Y),
$$
where $Y$ and $Y'$ are formal $O_{F_0}$-modules of dimension $1$ and relative height $2$   over $S$, $\phi$ is an $O_{F_0}$-linear isogeny of degree $q$ and $\rho_{Y}:Y\times_S\ov{S}\to \BE\times_{\Spec \BF}\ov{S}$ is an $O_{F_0}$-linear quasi-isogeny of height $0$. 
\end{altenumerate}
\end{definition}
From $(Y,Y',\phi:Y\to Y',\rho_Y)$, we deduce the following composition of quasi-isogenies 
\begin{equation*}\label{equ:framing for Y'}
\rho_{Y'}:Y'\times_S \ov{S}\overset{\phi^{-1}}{\to} Y\times_S \ov{S}\overset{\rho_Y}{\to}\BE\times_{\Spec \BF}\ov{S}\overset{\iota_\BE(\pi)}{\to}\BE\times_{\Spec \BF}\ov{S},
\end{equation*}
which is of height zero. Then the pullbacks $\rho_Y^*(\lambda_{\BE})$ and $\rho_{Y'}^*(\lambda_{\BE})$ lift to principal polarizations $\lambda_Y$ of $Y$ and $\lambda_{Y'}$ of $Y'$ since the same holds for the universal object over the  Lubin-Tate moduli space $\CM$.
We denote by $\phi':Y'\to Y$  the unique isogeny such that $\phi\circ\phi'=\iota(\pi)$ and $ \phi'\circ\phi=\iota(\pi).$

Consider the following framing object for $\CN_{2,1}^{[0]}=\CN_2^{[0]}$ (cf. \cite[\S 8 and Ex.  9.4]{RSZ2}):
$$
(\BX_2,\iota_{\BX_2},\lambda_{\BX_2}):=\Bigl(
\BE\times\BE,
\begin{pmatrix}
&\iota_{\BE}\\
\iota_{\BE}&
\end{pmatrix},-2(\lambda_{\BE}\times\lambda_{\BE})
\Bigr).
$$
We have an  isomorphism $(\CM_{\Gamma_0(\pi_0)})_{O_{\breve{F}}}\to \CN_2^{[0]}$ given by 
\begin{equation}\label{equ:exceptional-isom-1}
(Y,Y',\phi,\rho_Y)\mapsto \Bigl(Y\times Y',\begin{pmatrix}
&\phi'\\
\phi&
\end{pmatrix},-2(\lambda_Y\times \lambda_{Y'}),\rho_Y\times\rho_{Y'}\Bigr) ,
\end{equation}
comp. \cite[Prop. 8.2]{RSZ2}.
We consider another framing object
$$
(\wt{\BX}_2,\iota_{\wt{\BX}_2},\lambda_{\wt{\BX}_2}):=(\BE\times\ov{\BE},\iota_{\BE}\times \iota_{\ov{\BE}},\lambda_{\BE}\times\lambda_{\ov{\BE}}).
$$
By \cite[Ex. 9.4]{RSZ2}, there is an $O_F$-linear isomorphism 
$$
\psi_0=
\begin{pmatrix}
1&1\\ 1&-1
\end{pmatrix}: 
(\BX_2,\iota_{\BX_2},\lambda_{\BX_2})\to (\wt{\BX}_2,\iota_{\wt{\BX}_2},\lambda_{\wt{\BX}_2}).
$$
Combining with \eqref{equ:exceptional-isom-1}, we have an isomorphism 
\begin{equation}\label{equ:exceptional-isomorphism}
(\CM_{\Gamma_0(\pi_0)})_{\Spf O_{\breve{F}}}\to \CN_2^{[0]},\quad (Y,Y',\phi,\rho_Y)\mapsto \Bigl(Y\times Y',\begin{pmatrix}
&\phi'\\
\phi
\end{pmatrix}, -2(\lambda_Y\times\lambda_{Y'}),\psi_0\circ(\rho_Y\times\rho_{Y'})\Bigr).
\end{equation}
For the remainder of this section, we fix $\wt{\BX}_2$ as the framing object for $\CN_2^{[0]}$.

Next, we compare  the group actions along the isomorphism \eqref{equ:exceptional-isomorphism}.
Let $D$ be the unique quaternion algebra over $F_0$ and let $F\hookrightarrow D$ be the fixed embedding defined via $\iota_{\BE}$.
Recall from \cite[\S 15.1]{RSZ2} that for $(\wt{\BX}_2,\iota_{\wt{\BX}_2},\lambda_{\wt{\BX}_2})$, we have
$$
\End_{O_{F_0}}(\wt{\BX}_2)=\End_{O_{F_0}}(\BE^2)=M_2(O_D),\,\text{with $O_F$-action }\pi\mapsto \begin{pmatrix}
\pi&\\
&-\pi
\end{pmatrix},
$$
hence  
$$\End^{\circ}_{O_F}(\wt{\BX}_2)=\left\{
\begin{pmatrix}
a&b\\ c&d 
\end{pmatrix}\Biggl|\,\,
a,d\in F, b,c\in D^-
\right\},$$
where $ D=F\oplus D^-$ is the eigenspace decomposition under the conjugation action of $\pi$. 
The Rosati involution is given by 
$$x\mapsto x^\dagger:=\lambda_{\wt{\BX}_2}^{-1}\circ x^\vee\circ\lambda_{\wt{\BX}_2},\quad
\begin{pmatrix}
a&b\\ c&d
\end{pmatrix}\mapsto \begin{pmatrix}
\ov a&\ov c\\ \ov b&\ov d
\end{pmatrix},
$$
where $x\mapsto \ov{x}$ is the main involution of $D$. The unitary group $ \U(W_1)$ may be explicitly presented as 
\begin{equation}\label{equ:non-split unitary rank 2}
\U(W_1)(F_0)=\Biggl\{
\begin{pmatrix}
1&\\
&\alpha
\end{pmatrix}
\begin{pmatrix}
a&b\\
b&a
\end{pmatrix}\in M_2(D)\,\Biggl|
\begin{array}{l}
a\in F,\quad b\in D^-,\\
Na+Nb=1,\quad \alpha\in F^1
\end{array}
\Bigr.\Biggr\}.
\end{equation}

Let $D^1$ be the group of elements of norm $1$. If we fix a basis element $\zeta\in D^{-}\cap D^1$, then we may also express these presentations in terms of special embeddings into $M_2(F)$:
$$
g=\begin{pmatrix}
1&\\
&\alpha
\end{pmatrix}
\begin{pmatrix}
a&b\\
b&a
\end{pmatrix}\in \U(W_1)(F_0)\subset M_2(D)\quad\text{identifies with}\quad
\begin{pmatrix}
a&b\zeta^{-1}\\
\ov{\alpha b \zeta}& \ov{\alpha a}
\end{pmatrix}\in M_2(F).
$$
Hence, we have $\det g=a\ov{\alpha a}-(b\zeta^{-1})\cdot \ov{\alpha b \zeta}=\ov{\alpha}(Na+Nb)=\ov{\alpha}$.
In particular, $g$ lies in $\SU(W_1)(F_0)$ if and only if $\alpha=1$.

Next, we compare the actions of $D^1$ on $(\CM_{\Gamma_0(\pi_0)})_{\Spf O_{\breve{F}}}$ with the action of $\SU(W_1)(F_0)\subset \U(W_1)(F_0)$ on $\CN_2^{[0]}$.
Any $x\in D^\times=\End(\BE)^\times$ with norm a unit in $O_{F_0}$ induces an action on the Iwahori level Lubin-Tate moduli space by 
$$
(Y,Y',\phi,\rho_Y)\mapsto (Y,Y',\phi,\iota_{\BE}(x)\circ \rho_Y).
$$
It acts on $\rho_{Y'}$ via conjugation: $\iota_{\BE}(\pi)\iota_{\BE}(x)\iota_{\BE}(\pi)^{-1}=\iota_{\BE}(\pi x\pi^{-1})$.
By \eqref{equ:exceptional-isom-1}, this induces an action on $\CN_2^{[0]}$ by
$$
x\mapsto \begin{pmatrix}
1&1\\ 1&-1
\end{pmatrix}\begin{pmatrix}
x&\\ & \pi^{-1}x\pi
\end{pmatrix}\begin{pmatrix}
1&1\\ 1&-1
\end{pmatrix}^{-1}=\begin{pmatrix}
\frac{x+\pi x \pi^{-1}}{2}& \frac{x-\pi x\pi^{-1}}{2}\\
\frac{x-\pi x \pi^{-1}}{2}& \frac{x+\pi x\pi^{-1}}{2}
\end{pmatrix}\in \End_{O_{F_0}}(\wt{\BX}_2).
$$
Write $x=a+b\in F\oplus D^{-}$, then $\pi x\pi^{-1}=a-b$. Hence under the isomorphism \eqref{equ:exceptional-isomorphism}, the element $x=a+b\in \End_{O_{F_0}}(\BE)$ corresponds to $\begin{pmatrix}
a&b\\ b&a
\end{pmatrix}\in M_2(D)=\End_{O_{F_0}}(\wt{\BX}_2)$. The resulting identification $D^{1}\cong \SU(W_1)(F_0)$ is then given by 
$$
x=a+b\in D^1\longleftrightarrow \begin{pmatrix}
a&b\\ b&a
\end{pmatrix}\ni \SU(W_1)(F_0).
$$

Next, we recall the orbit matching in the inhomogeneous setting as described in \cite[§15]{RSZ2}.
On the symmetric space $S(F_0)=S_2(F_0)$, we write an element as
$$
\gamma=\begin{pmatrix}
a&b\\ c&d
\end{pmatrix}\in S(F_0).
$$
Then $\gamma$ is regular semisimple if and only if $bc\neq 0$, in which case we may write $\gamma$ as (see \cite[(15.1)]{RSZ2})
$$
\gamma=\gamma(a,b):=\left(\begin{array}{cc}
1&\\ & -b/\ov{b}
\end{array}\right)\left(\begin{array}{cc}
a&b\\
-(1-Na)/b& \ov{a}
\end{array}\right)\in S_0(F_0)_{\rs}
$$
for $a\in F\setminus F^1$ and $b\in F^\times$. 
By \cite[\S 15.2]{RSZ2},
$$
\gamma(a,b)\in S(F_0)_{\rs}\quad\text{matches}\quad \begin{pmatrix}
1&\\
&\alpha
\end{pmatrix}
\begin{pmatrix}
a'&b'\\
b'&a'
\end{pmatrix}\in\U(W_1)(F_0)_{\rs}
$$
if and only if $a=a'$ and $-b/\ov{b}=\det \gamma(a,b)=\det g=\ov{\alpha}$.

\subsection{Description of cycles}\label{sec:describe cycles}
We describe the cycles $\wt{\CM}_0^{[0]}$ and $\wt{\CM}_0^{[0],\pm}$ under the exceptional isomorphism \eqref{equ:exceptional-isomorphism}.
Recall from \cite[(12.1)]{RSZ2} that there is an embedding $\CN_1^{[0]}\hookrightarrow \CN_2^{[0]}$, identifying $\CN_1^{[0]}$ with the special cycle $\CZ(u)$, where $u=\begin{pmatrix}
0\\1
\end{pmatrix}\in \Hom(\ov{\BE},\wt{\BX}_2)$. 

By \cite[Ex. 12.2]{RSZ2}, there is also a closed embedding  $\Spf O_{\breve{F}}\hookrightarrow (\CM_{\Gamma_0(\pi_0)})_{O_{\breve{F}}}$ given by sending the canonical lift $(\CE,\rho_{\CE})$ to $ (\CE,\CE,\iota(\pi),\rho_{\CE})$. 
This identifies with the embedding $\CN_1^{[0]}\hookrightarrow \CN_2^{[0]}$ under the isomorphism \eqref{equ:exceptional-isomorphism}.

By \cite[Ex. 12.2]{RSZ2}, the cartesian diagram \eqref{eq: N circ}, after applying the exceptional isomorphism \eqref{equ:exceptional-isomorphism}, becomes
\begin{equation*}
\begin{aligned}
\xymatrix{
\CN_0^{[0],\circ}\ar@{}[rd]|{\square}\ar@{^(->}[r]\ar[d]&\CN_2^{[0,2]}\ar[d]\\
\CN_1^{[0]}\ar@{^(->}[r]&\CN_2^{[0]}
}\end{aligned}
\quad\longleftrightarrow \quad\begin{aligned}
\xymatrix{
\Spf O_{\breve{F}}\amalg \Spf O_{\breve{F}}\ar@{}[rd]|{\square}\ar@{^(->}[r]\ar[d]&(\CM_{\Gamma_0(\pi_0)})_{O_{\breve{F}}}\amalg (\CM_{\Gamma_0(\pi_0)})_{O_{\breve{F}}}\ar[d]\\
\Spf O_{\breve{F}}\ar@{^(->}[r]&(\CM_{\Gamma_0(\pi_0)})_{O_{\breve{F}}},
}
\end{aligned}
\end{equation*}
where the upper horizontal arrow respects the disjoint sum decomposition.

Next, the composition of maps  $$\CN_1^{[0],\circ}\hookrightarrow \CN_2^{[0,2]}\to \CN_2^{[2]}$$
becomes in terms of the exceptional isomorphism 
$$
\Spf O_{\breve{F}}\amalg \Spf O_{\breve{F}}\hookrightarrow (\CM_{\Gamma_0(\pi_0)})_{O_{\breve{F}}}\amalg (\CM_{\Gamma_0(\pi_0)})_{O_{\breve{F}}}\overset{\varphi}{\to} \CM_{O_{\breve{F}}}\amalg \CM_{O_{\breve{F}}},
$$
where $\varphi$ is the disjoint sum of two morphisms $p_1, p_2: (\CM_{\Gamma_0(\pi_0)})_{O_{\breve{F}}}\to \CM_{O_{\breve{F}}}$. Here   $p_1$ maps $(Y,Y',\phi,\rho_Y)$ to $(Y,\rho_Y)$ and $p_2$ maps $(Y,Y',\phi,\rho_Y)$ to $(Y',\rho_{Y'})$,  comp. \cite[diagram on page 1126]{RSZ2}. In particular, the composition is the standard canonical lift to the Lubin-Tate moduli space on each summand, see \cite[Ex. 12.2]{RSZ2}.

Under the exceptional isomorphism, the diagram \eqref{eq:def Mt} now becomes
\begin{equation}\label{eqCM0}
\begin{aligned}
\xymatrix{
\wt{\CM}_1^{[0]}\ar@{}[rd]|{\square}\ar@{^(->}[r]\ar[d]&(\CM_{\Gamma_0(\pi_0)})_{O_{\breve{F}}}\amalg (\CM_{\Gamma_0(\pi_0)})_{O_{\breve{F}}}\ar[d]^-{\varphi}\\
\Spf O_{\breve{F}}\amalg \Spf O_{\breve{F}}\ar@{^(->}[r]&\CM_{O_{\breve{F}}}\amalg \CM_{O_{\breve{F}}}.
}
\end{aligned}
\end{equation}
In particular, we have the decomposition $\wt{\CM}_1^{[0]}=\wt{\CM}_1^{[0],+}\amalg \wt{\CM}_1^{[0],-}$, where $\wt{\CM}_1^{[0],+}=p_1^*\Spf O_{\breve{F}}$ and $\wt{\CM}_1^{[0],-}=p_2^*\Spf O_{\breve{F}}$, such that 
\begin{equation}\label{equ:cycle in the iwahori lubin tate}
\wt{\CM}_1^{[0],+}\amalg \wt{\CM}_1^{[0],-}=p_1^*\Spf O_{\breve{F}}\amalg p_2^*\Spf O_{\breve{F}}\subset (\CM_{\Gamma_0(\pi_0)})_{O_{\breve{F}}}\amalg (\CM_{\Gamma_0(\pi_0)})_{O_{\breve{F}}}.
\end{equation}

By Miracle flatness \cite[\href{https://stacks.math.columbia.edu/tag/00R4}{00R4}]{stacksproject}, the projection maps  $p_i: \CM_{\Gamma_0(\pi_0)}\to \CM$ are finite flat for $ i\in\{1,2\}$. Therefore, the composition $\wt{\CM}_1^{[0]}\to \CN_1^{[0],\circ}\to \Spf O_{\breve{F}}$ is flat, hence Conjecture \ref{conjMflat} holds in this case.

To give a more precise description of $\wt{\CM}_1^{[0]}$, we recall the quasi-canonical lifting divisors of $\CM_{\Gamma_0(\pi_0)}$, introduced in \cite[\S 3.2]{Shi23a}.
For any $j\geq 0$, let $(\CE_{j},\rho_j)$ be the quasi-canonical divisor on $\CM$ of level $j$ over $\CW_j:=\Spf W_j$, see \cite[Def. 3.1]{Shi23a}. 

\begin{proposition}[\protect{\cite[Prop. 3.6]{Shi23a}}]\label{prop:quasi-canonical-iwahori}
For any $j\geq 0$, let 
$$m(j):=\left\{\begin{array}{ll}
j-1&\text{if $j\geq 1$};\\
0&\text{if $j=0$}.
\end{array}\right.$$
For all $j\geq 0$, there exist quasi-canonical lifts $\CE_{j}$ of level $j$ with a morphism $\beta_j:\CE_{m(j)}\to \CE_j$ such that the following diagrams commute:
\begin{equation}\label{equ:quasi-canonical lift in iwahori}
\begin{aligned}
\xymatrix{
\CE_{0}\otimes \BF\ar[r]^{\beta_0\otimes \BF}\ar[d]^{\rho_0}&\CE_0\otimes \BF\ar[d]^{\rho_0'}\\
\BE\ar[r]^{\iota(\pi)}&\BE
}\end{aligned}\quad\text{and}\quad \begin{aligned}
\xymatrix{
\CE_{m(j)}\otimes \BF\ar[r]^{\beta_j\otimes \BF}\ar[d]^{\rho_{m(j)}}&\CE_j\otimes \BF\ar[d]^{\rho_j}\\
\BE\ar[r]^{\iota(\pi)}&\BE,
}
\end{aligned}
\end{equation}
where $j\geq 1$ in the second diagram. These define Weil divisors $\CY_{j,+}$ isomorphic to $\Spf W_j$ of $(\CM_{\Gamma_0(\pi_0)})_{O_{\breve{F}}}\cong \CN_2^{[0]}$. The morphism 
$$
\CY_{j,+}\to (\CM_{\Gamma_0(\pi_0)})_{O_{\breve{F}}}\overset{p_2}{\to}\CM_{O_{\breve{F}}}
$$
induces an isomorphism from $\CY_{j,+}$ to its image $\CW_j$.
\end{proposition}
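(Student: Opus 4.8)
The plan is to reduce everything to the theory of quasi-canonical liftings on the hyperspecial Lubin--Tate space $\CM$, which we take as known: for each $j\ge 0$ there is a quasi-canonical lift $(\CE_j,\rho_j)$ of $(\BE,\iota_\BE)$ of level $j$, defined over $\CW_j=\Spf W_j$, where $W_j$ is a complete discrete valuation ring finite over $O_{\breve F}$, with $W_0=O_{\breve F}$ and $W_0\subseteq W_1\subseteq W_2\subseteq\cdots$ (the rings of integers of the nested tower of level-$j$ ring class fields), and the classifying morphism $\CW_j\hookrightarrow\CM_{O_{\breve F}}$ is a closed immersion. Granting this, the real content of the proposition is twofold: the construction of a degree-$q$ isogeny $\beta_j$ between consecutive levels satisfying the framing compatibility \eqref{equ:quasi-canonical lift in iwahori}, and the identification of the resulting locus inside the Iwahori-level space.

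The first step is the construction of $\beta_j$. For $j=0$ one takes $\beta_0:=\iota_\CE(\pi)\colon\CE\to\CE$ over $W_0=O_{\breve F}$; it has degree $q$ since $\iota_\CE(\pi)^2=\iota_\CE(\pi_0)$ is multiplication by $\pi_0$, of degree $q^2$, and its reduction, transported through the $O_F$-linear framing $\rho_0$ of the canonical lift, is $\iota_\BE(\pi)$, so the first diagram of \eqref{equ:quasi-canonical lift in iwahori} commutes. For $j\ge 1$ one uses the inclusion $O_{m(j)}\supseteq O_j$ of consecutive $O_{F_0}$-orders of level $j-1$ and $j$ in $O_F$: the CM theory of formal $O_{F_0}$-modules produces, after base change of $\CE_{j-1}$ from $W_{j-1}$ to $W_j$, an ($O_{F_0}$-linear) isogeny $\beta_j^{0}\colon\CE_{j-1}\otimes_{W_{j-1}}W_j\to\CE_j$ realizing this inclusion, and a lattice-index count gives $\deg\beta_j^{0}=q$. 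It then remains to normalize the reduction: $\overline{\beta_j^{0}}$ and the prescribed quasi-isogeny $\delta_j:=\rho_j^{-1}\circ\iota_\BE(\pi)\circ\rho_{j-1}$ of $\BE$ have the same height, so they differ by a unit-norm element of $\Aut^{\circ}(\BE)=D^{\times}$; one absorbs this difference by composing $\beta_j^{0}$ with the CM action of a suitable unit in $O_{j-1}^{\times}$ (or $O_j^{\times}$), whose reduction is $\iota_\BE$ of that unit, the normalization conventions fixing the $\rho_j$ being exactly what makes the required unit available. This yields $\beta_j$ satisfying the second diagram of \eqref{equ:quasi-canonical lift in iwahori}.

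The second step is to identify the divisor. By \eqref{equ:quasi-canonical lift in iwahori}, the tuple $(\CE_{m(j)}\otimes_{W_{m(j)}}W_j,\ \CE_j,\ \beta_j,\ \rho_{m(j)})$ is a $W_j$-point of $(\CM_{\Gamma_0(\pi_0)})_{O_{\breve F}}$: $\beta_j$ is an $O_{F_0}$-linear isogeny of degree $q$, the framing $\rho_{m(j)}$ has height $0$, and the framing $\rho_{Y'}=\iota_\BE(\pi)\circ\rho_{m(j)}\circ(\beta_j\otimes\BF)^{-1}$ it induces on the second factor equals $\rho_j$ precisely because the diagram commutes. This gives a morphism $f_j\colon\Spf W_j\to(\CM_{\Gamma_0(\pi_0)})_{O_{\breve F}}$ with $p_2\circ f_j$ equal to the closed immersion $\Spf W_j\hookrightarrow\CM_{O_{\breve F}}$ classifying $(\CE_j,\rho_j)$, of image $\CW_j$. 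Since $p_2$ is finite (hence separated, cf.\ the Miracle Flatness remark above) and $p_2\circ f_j$ is a closed immersion, $f_j$ is finite and a monomorphism, hence a closed immersion. Thus $\CY_{j,+}:=f_j(\Spf W_j)\cong\Spf W_j$ is an integral closed formal subscheme of codimension one in $(\CM_{\Gamma_0(\pi_0)})_{O_{\breve F}}\cong\CN_2^{[0]}$, hence a prime Weil divisor, the ambient space being normal by Theorem \ref{thm:geo-RZ-canonical} (indeed $\cong\Spf O_{\breve F}[[X,Y]]/(XY-\pi_0)$ by Remark \ref{rmk:small-dim}). Finally $p_2|_{\CY_{j,+}}$ is the composite of the isomorphism $\CY_{j,+}\xrightarrow{\ \sim\ }\Spf W_j$ with the closed immersion onto $\CW_j$, hence an isomorphism onto $\CW_j$, which is the last assertion.

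I expect the main obstacle to be the normalization of $\beta_j$ for $j\ge 1$: one must exhibit an isogeny between the level-$(j-1)$ and level-$j$ quasi-canonical lifts whose special-fiber reduction is \emph{exactly} $\rho_j^{-1}\circ\iota_\BE(\pi)\circ\rho_{j-1}$, not merely this up to an automorphism of source or target. This requires making the framing conventions defining the $\rho_j$ explicit, matching the $O_F^{\times}$-action on quasi-canonical lifts with the units of $D$, and it may be cleanest to carry it out with explicit Lubin--Tate parameters, writing down $\CM$, $\CM_{\Gamma_0(\pi_0)}$, and the quasi-canonical points by equations. Everything else — the degree count, the finiteness and closed-immersion argument, the Weil-divisor property, and the $p_2$-isomorphism — is then formal, using the explicit local structure of $\CN_2^{[0]}$.
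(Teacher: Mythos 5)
First, a remark on the comparison itself: the paper does not prove this statement — it is imported verbatim as \cite[Prop. 3.6]{Shi23a}, so there is no internal proof to measure you against. I will therefore assess your direct argument on its own terms.

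The second half of your proof (once the $W_j$-point of $\CM_{\Gamma_0(\pi_0)}$ is in hand) is correct and cleanly organized: the identification $\rho_{Y'}=\rho_j$ via the commuting square, the deduction that $f_j$ is a closed immersion from the facts that $p_2\circ f_j$ is one and $p_2$ is separated, and the codimension count giving a prime Weil divisor with $p_2|_{\CY_{j,+}}$ an isomorphism onto $\CW_j$ — all of this is fine (modulo the slip that the Miracle Flatness remark occurs \emph{after} this proposition in the paper, which is harmless since finiteness of $p_2$ is standard).

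The genuine gap is in the construction of $\beta_j$ for $j\geq 1$, which you correctly flag as the main obstacle but then resolve incorrectly. The discrepancy $u=\delta_j\circ(\ov{\beta_j^0})^{-1}$ is an arbitrary element of the height-zero part of $\Aut^\circ(\BE)=D^\times$, i.e.\ of $O_D^\times$ (up to the relevant normalization), whereas precomposing $\beta_j^0$ with the CM action of a unit $v\in O_{j-1}^\times$ only changes the reduction by the conjugate of $\iota_\BE(v)$, which ranges over a conjugate of the much smaller group $\iota_\BE(O_{j-1}^\times)\subset O_D^\times$. So an arbitrary unit of $O_D$ cannot be absorbed this way, and the "matching" strategy as written fails. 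The correct use of the existential freedom in the statement is not to fix $(\CE_j,\rho_j)$ in advance and match, but to \emph{define} $(\CE_j,\rho_j)$ as the target of the isogeny: choose an order-$q$ subgroup $C\subset(\CE_{m(j)}\otimes W_j)[\pi_0]$ defined over $W_j$, set $\CE_j:=(\CE_{m(j)}\otimes W_j)/C$ with the framing induced by the recipe $\rho_{Y'}=\iota_\BE(\pi)\circ\rho_{m(j)}\circ\ov{\phi}^{-1}$, so that the diagram commutes by construction. But then the substantive work — entirely absent from your sketch — is to show that for a suitable choice of $C$ (and one must argue such a $C$ exists over $W_j$ and is not the "level-lowering" one, e.g.\ not $\ker\iota_{\CE_0}(\pi)$ when $j=1$) the quotient has endomorphism ring exactly $O_j$, i.e.\ is quasi-canonical of level exactly $j$ and is defined over $W_j$ and not over a smaller ring. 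That verification is the actual content of \cite[Prop. 3.6]{Shi23a}, and without it the first diagram-chase half of the proposition is not established.
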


By taking the dual isogenies, for all $j\geq 0$, we obtain morphisms $\beta_j':\CE_j\to \CE_{m(j)}$ with commuting diagrams similar to \eqref{equ:quasi-canonical lift in iwahori}, see \cite[after (3.13)]{Shi23a}. This defines Weil divisors $\CY_{j,-}\hookrightarrow (\CM_{\Gamma_0(\pi_0)})_{\Spf O_{\breve{F}}}$ such that the morphism
$$
\CY_{j,-}\to (\CM_{\Gamma_0(\pi_0)})_{O_{\breve{F}}}\overset{p_1}{\to}\CM_{O_{\breve{F}}}
$$
induces an isomorphism from $\CY_{j,-}$ to its image $\CW_j$. Note that for $j=0$ we have $\CY_{0,+}=\CY_{0,-}$; we set $\CY_{0}=\CY_{0,+}=\CY_{0,-}$.

By \cite[Lem. 3.8]{Shi23a}, we have the following relations as Weil divisors:
 
\begin{equation}\label{equ:pull-back}
\wt{\CM}_1^{[0],+}= \CY_{1,+}+\CY_{0},
\quad\text{and}\quad
\wt{\CM}_1^{[0],-}=\CY_{1,-}+\CY_{0}.
\end{equation}

\begin{lemma}\label{lem:compute-Hecke}
There are the following equalities as divisor classes in $\CM_{\Spf O_{\breve{F}}}$,
\begin{equation*}
\begin{aligned}
p_{2*}p_1^*\CW_0&=\CW_0+\CW_1,\quad p_{1*}p_1^*\CW_0&=(q+1)\CW_0,\\
p_{1*}p_2^*\CW_0&=\CW_0+\CW_1,\quad p_{2*}p_2^*\CW_0&=(q+1)\CW_0.
\end{aligned}
\end{equation*}
\end{lemma}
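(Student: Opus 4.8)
The plan is to deduce all four identities from the explicit description of the divisors $p_1^*\CW_0$ and $p_2^*\CW_0$ on $\CM_{\Gamma_0(\pi_0)}$ already recorded above, combined with the behaviour of the two projections $p_1,p_2$ on the quasi-canonical lifting divisors $\CY_{j,\pm}$. First, by the cartesian square \eqref{eqCM0} and the decomposition \eqref{equ:cycle in the iwahori lubin tate}, one has $p_1^*\CW_0=\wt\CM_1^{[0],+}$ and $p_2^*\CW_0=\wt\CM_1^{[0],-}$ as Weil divisors on $(\CM_{\Gamma_0(\pi_0)})_{O_{\breve F}}\simeq\CN_2^{[0]}$, where $\CW_0=\Spf O_{\breve F}$ is the canonical lifting divisor. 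Feeding in the relation \eqref{equ:pull-back} (that is, \cite[Lem.\ 3.8]{Shi23a}) then gives
\[
p_1^*\CW_0=\CY_{1,+}+\CY_0,\qquad p_2^*\CW_0=\CY_{1,-}+\CY_0 .
\]

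For the two ``mixed'' pushforwards $p_{2*}p_1^*\CW_0$ and $p_{1*}p_2^*\CW_0$, I would invoke Proposition \ref{prop:quasi-canonical-iwahori}: the map $p_2$ carries $\CY_{1,+}$ isomorphically onto $\CW_1$, and (the case $j=0$, using $\CY_0=\CY_{0,+}$) it carries $\CY_0$ isomorphically onto $\CW_0$. Hence $p_{2*}\CY_{1,+}=\CW_1$ and $p_{2*}\CY_0=\CW_0$, so $p_{2*}p_1^*\CW_0=\CW_0+\CW_1$. By the dual construction recorded just after Proposition \ref{prop:quasi-canonical-iwahori}, $p_1$ carries $\CY_{1,-}$ isomorphically onto $\CW_1$ and $\CY_0$ isomorphically onto $\CW_0$, and the same reasoning yields $p_{1*}p_2^*\CW_0=\CW_0+\CW_1$.

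For the two remaining pushforwards, recall that $p_1,p_2\colon\CM_{\Gamma_0(\pi_0)}\to\CM$ are finite flat (Miracle flatness, as recalled in the text) of degree $q+1$: on the generic fibre, a $\Gamma_0(\pi_0)$-level structure on a one-dimensional formal $O_{F_0}$-module of relative height $2$ is a choice of one of the $q+1$ lines in the $\pi_0$-torsion. The projection formula then gives $p_{i*}p_i^*\CW_0=(q+1)\CW_0$ for $i\in\{1,2\}$. Equivalently, from Step 1, $p_{1*}p_1^*\CW_0=p_{1*}\CY_{1,+}+p_{1*}\CY_0$, where $p_1$ collapses $\CY_{1,+}\simeq\Spf W_1$ onto the single point $\CW_0$ — necessarily with multiplicity $q$, since $\deg p_1=q+1$ and $p_1|_{\CY_0}$ is an isomorphism onto $\CW_0$ — so the sum is $(q+1)\CW_0$; symmetrically for $p_2$ and the pair $\CY_{1,-},\CY_0$.

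There is no serious analytic or geometric difficulty here; everything is assembled from results already in place. The one point that demands care is purely bookkeeping: one must check that the labels $\pm$ attached to $\wt\CM_1^{[0],\pm}$ in \eqref{eqCM0} are consistent with the labels $\CY_{1,\pm}$ of Proposition \ref{prop:quasi-canonical-iwahori}, and that $p_i^*\CW_0$ really is the simple-multiplicity Weil divisor $\CY_{1,\pm}+\CY_0$ rather than some other combination (e.g.\ with $\CY_0$ appearing twice) — both of which are furnished by \cite[Lem.\ 3.8]{Shi23a}. The agreement of the two evaluations of $p_{i*}p_i^*\CW_0$ in the last step (which forces $[W_1:O_{\breve F}]=q$) then serves as a convenient internal consistency check.
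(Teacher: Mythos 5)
Your proposal is correct and follows essentially the same route as the paper's proof: decompose $p_1^*\CW_0=\CY_0+\CY_{1,+}$ and $p_2^*\CW_0=\CY_0+\CY_{1,-}$ via \eqref{equ:pull-back}, push forward term by term using Proposition \ref{prop:quasi-canonical-iwahori} (and its dual) for the mixed identities, and use finite flatness of degree $q+1$ together with the projection formula for $p_{i*}p_i^*\CW_0$. The extra consistency check on the multiplicity of $p_{1*}\CY_{1,+}$ is a harmless addition not present in the paper.
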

\begin{proof}
Without loss of generality, we prove the identities in the first line. By Proposition \ref{prop:quasi-canonical-iwahori}, the restriction 
$p_2:\CY_{i,+}\to \CW_i$ is an isomorphism. 
Hence $p_{2*}\CY_{1,+}=\CW_1$. Therefore, $p_{2*}p_1^*\CW_0=p_{2*}(\CY_0+\CY_{1,+})=\CW_0+\CW_1$.
On the other hand,  $p_1$ is a finite flat map of degree $q+1$ (comp.  \cite[Lem. 2.6]{Shi23a}), hence $p_{1*}p_1^*\CW_0=(q+1)\CW_0$.
\end{proof}

\subsection{Intersection numbers}\label{sec:intersection numbers}

From now on, we identify $\CY_{j,\pm}$ with its image in $\CN_{2}^{[0]}$ under the isomorphism $\CN_{2}^{[0]}\cong (\CM_{\Gamma_0(\pi_0)})_{O_{\breve{F}}}$. 
The relation to the splitting model is as follows. Let $\CZ_{j,\pm}\subset \CN_{2}^{[0],\spt}$ be the strict transform of $\CY_{j,\pm}$ under the blow up $\CN_{2}^{[0],\spt}\to \CN_{2,1}^{[0]}$.  We write again $\CZ_0=\CZ_{0, +}=\CZ_{0, -}$.

\begin{proposition}[\protect{\cite[Prop. 3.10]{Shi23a}}]\label{prop:Shi-Iwahori-canonical-lift}
For any integer $j\geq 0$,
$\CZ_{j,\pm}$ is a Cartier divisor of $\CN_{2}^{[0],\spt}$. The blow-up map $\CN_{2}^{[0],\spt}\to \CN_{2}^{[0]}$ induces an isomorphism 
$
\CZ_{j,\pm}\cong \CY_{j,\pm}.
$
In particular, $\CZ_{j,\pm}\cong\CW_j$.\qed
\end{proposition}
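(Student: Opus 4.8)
The plan is to deduce the proposition from the description of $\CN_2^{[0],\spt}$ as a blow-up, combined with elementary properties of strict transforms in a regular two-dimensional formal scheme; the serious geometric input (the construction of the divisors $\CY_{j,\pm}$ and their identification with $\Spf W_j$) is already packaged in Proposition \ref{prop:quasi-canonical-iwahori}. First I would record the ambient geometry: by Theorem \ref{thm:geo-RZ} the splitting model $\CN_2^{[0],\spt}$ is flat and semi-stable over $\Spf O_{\breve F}$, hence a regular (in particular locally factorial and Cohen--Macaulay) formal scheme of relative dimension $1$, and the projection $\pi\colon\CN_2^{[0],\spt}\to\CN_2^{[0]}$ is the blow-up of the finite set of worst points $\Sing$, so an isomorphism over $\CN_2^{[0]}\setminus\Sing$. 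Since $\CY_{j,\pm}\cong\CW_j=\Spf W_j$ is integral, of relative dimension $1$, and not contained in $\Sing$, its strict transform $\CZ_{j,\pm}$ is by definition the closure in $\CN_2^{[0],\spt}$ of the integral locally closed subscheme $\pi^{-1}(\CY_{j,\pm}\setminus\Sing)\cong\CY_{j,\pm}\setminus\Sing$. Thus $\CZ_{j,\pm}$ is integral of pure codimension one in the regular formal scheme $\CN_2^{[0],\spt}$, and is therefore a Cartier divisor, which is the first assertion.

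For the isomorphism, consider the induced map $\pi|\colon\CZ_{j,\pm}\to\CY_{j,\pm}$. It is proper (the restriction of the proper morphism $\pi$ to a closed subscheme) and restricts to an isomorphism over $\CY_{j,\pm}\setminus\Sing$; in particular it is birational and, $\CY_{j,\pm}$ being irreducible, surjective. Next I would check that $\CZ_{j,\pm}$ is flat over $O_{\breve F}$: it is a Cartier divisor in the Cohen--Macaulay (even regular) formal scheme $\CN_2^{[0],\spt}$, which is flat over $O_{\breve F}$, and it is not contained in the special fiber (being birational to the flat $\CY_{j,\pm}$), so locally a defining equation of $\CZ_{j,\pm}$ together with a uniformizer of $O_{\breve F}$ form a regular sequence, giving flatness. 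Consequently $\CZ_{j,\pm}$ is one-dimensional, irreducible, with zero-dimensional special fiber, so every fibre of $\pi|$ is finite; a proper quasi-finite morphism is finite, and a finite birational morphism onto the normal integral scheme $\CY_{j,\pm}\cong\Spf W_j$ is an isomorphism. Together with $\CY_{j,\pm}\cong\CW_j$ from Proposition \ref{prop:quasi-canonical-iwahori}, this yields $\CZ_{j,\pm}\cong\CW_j$.

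The one place where I expect the argument to need genuine care is the behaviour of $\CZ_{j,\pm}$ over the worst points: one must be sure that the strict transform does not pick up extra vertical components or positive-dimensional fibres lying in the exceptional locus. The abstract dimension count above handles this once $\CY_{j,\pm}$ is known to be integral of relative dimension one; alternatively, and this is the route of \cite{Shi23a}, one computes everything explicitly in the (at most two) charts of the blow-up of Kr\"amer's local model, writing down $\CY_{j,\pm}$ locally via Gross's deformation theory of quasi-canonical liftings and checking directly that its strict transform is cut out by a single equation defining $\Spf W_j$, with $\pi|$ identifying it with $\CY_{j,\pm}$. In either approach the real content sits upstream, in Proposition \ref{prop:quasi-canonical-iwahori} and the exceptional isomorphism $\CN_2^{[0]}\cong(\CM_{\Gamma_0(\pi_0)})_{O_{\breve F}}$; the present statement is then essentially formal.
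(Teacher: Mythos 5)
The paper itself offers no proof of this proposition: it is quoted verbatim from \cite[Prop.~3.10]{Shi23a}, where the argument is the explicit computation in the charts of the Kr\"amer-type blow-up that you sketch in your last paragraph. Your abstract route is therefore genuinely different from the cited source, and most of it is sound, but it has one real gap at its foundation.

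The gap is your description of $\CZ_{j,\pm}$ as ``the closure in $\CN_2^{[0],\spt}$ of $\pi^{-1}(\CY_{j,\pm}\setminus\Sing)$''. In the present situation $\CN_{2,1}^{[0]}\cong(\CM_{\Gamma_0(\pi_0)})_{O_{\breve F}}$ is $\Spf$ of a complete local ring, hence topologically a single point, and that point \emph{is} the worst point; so $\CY_{j,\pm}\setminus\Sing$ is empty as a formal scheme and the ``Zariski closure'' characterization is vacuous. This is not a pedantic point: the paper's own Definition~\ref{def:speccy}(iii) explicitly restricts the closure description to the case where $(\CN_{n,\ep}^{[t]})_{\rm red}$ is strictly larger than $\Sing$, which fails here. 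Consequently your claims that $\CZ_{j,\pm}$ is integral of codimension one (hence Cartier, by regularity of $\CN_2^{[0],\spt}$) and that $\pi|_{\CZ_{j,\pm}}$ is birational are not yet established. The argument can be repaired, and in fact simplified, by using the ideal-theoretic definition of the strict transform: the strict transform of $\CY_{j,\pm}$ under the blow-up of $\CN_2^{[0]}$ at $\Sing$ is canonically the blow-up of $\CY_{j,\pm}\cong\Spf W_j$ along the scheme-theoretic intersection $\CY_{j,\pm}\cap\Sing$, which is defined by a nonzero ideal of the discrete valuation ring $W_j$; such an ideal is principal, hence Cartier, hence its blow-up is an isomorphism. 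This gives $\CZ_{j,\pm}\cong\CY_{j,\pm}\cong\CW_j$ directly, shows $\CZ_{j,\pm}$ is integral of codimension one in the regular formal scheme $\CN_2^{[0],\spt}$ (hence a Cartier divisor), and makes the proper-plus-quasi-finite-plus-birational-onto-normal argument in your second paragraph unnecessary. With this substitution your proof is correct; as you say, the substantive input remains Proposition~\ref{prop:quasi-canonical-iwahori} and the exceptional isomorphism \eqref{equ:exceptional-isomorphism}.
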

We can now compute the intersection numbers in Conjecture \ref{conj (n odd,t) Y}, (ii).

\begin{proposition}\label{prop:compute-hecke-1}
Let 
$
g=\begin{pmatrix}
1&\\&\alpha
\end{pmatrix}
\begin{pmatrix}
a'&b'\\ b'&a'
\end{pmatrix}\in \U(W_1)(F_0)_{\rs},
$
expressed in the presentation \eqref{equ:non-split unitary rank 2}. Then

\begin{equation*}
\left\langle\wt{\CM}_1^{[0],+,\spt} , g\wt{\CM}_1^{[0],-,\spt} \right\rangle_{ \CN_{1}^{[0]} \times \CN_{2}^{[0],\spt} =}\left\{\begin{array}{ll}
(q+1)(v(b')+1)&\text{if }\alpha\equiv 1\mod \pi,\\
v(b')+2&\text{if }\alpha\equiv -1\mod \pi,
\end{array}\right.
\end{equation*}
Here $v$ denotes the natural extension of the normalized valuation $v_0$ on $F_0$  to $F$, i.e., $v(b')=v_0(b'\bar b')$.
\end{proposition}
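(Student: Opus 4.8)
The plan is to reduce the computation of the intersection number
$\left\langle\wt{\CM}_1^{[0],+,\spt}, g\wt{\CM}_1^{[0],-,\spt}\right\rangle$ on $\CN_1^{[0]}\times\CN_2^{[0],\spt}$ to an intersection computation of (quasi-)canonical lifting divisors on the Lubin--Tate moduli space $\CM$ (with hyperspecial level), where the relevant numbers are already known from the work of Keating and of Gross. By \eqref{equ:pull-back} and Proposition \ref{prop:Shi-Iwahori-canonical-lift}, the splitting cycles $\wt{\CM}_1^{[0],\pm,\spt}$ are, as Weil divisors on $\CN_2^{[0],\spt}$, equal to $\CZ_{1,\pm}+\CZ_0$, and each $\CZ_{j,\pm}$ is a Cartier divisor isomorphic to $\CW_j=\Spf W_j$. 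Since we are intersecting a divisor with a divisor inside the regular formal scheme $\CN_1^{[0]}\times\CN_2^{[0],\spt}$ (here $\CN_1^{[0]}=\Spf O_{\breve F}$ is the first factor, which plays essentially no role — the cycle $\wt{\CM}_1^{[0],\pm,\spt}$ projects isomorphically to it on the nose since the group action was already trivialized in \S\ref{sec:inhomogeneous}), the intersection number becomes a sum of local intersection multiplicities of the divisors $\CZ_{1,+}+\CZ_0$ and $g(\CZ_{1,-}+\CZ_0)$ inside $\CN_2^{[0],\spt}$. Expanding bilinearly, this is the sum of four terms: $\langle\CZ_{1,+},g\CZ_{1,-}\rangle$, $\langle\CZ_{1,+},g\CZ_0\rangle$, $\langle\CZ_0,g\CZ_{1,-}\rangle$, $\langle\CZ_0,g\CZ_0\rangle$.

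\textbf{Reduction to $\CM$ via the projections $p_1,p_2$.} The key device is \eqref{equ:cycle in the iwahori lubin tate}: under the exceptional isomorphism \eqref{equ:exceptional-isomorphism}, $\wt{\CM}_1^{[0],+}=p_1^*\Spf O_{\breve F}$ (the canonical lift pulled back along $p_1$) and $\wt{\CM}_1^{[0],-}=p_2^*\Spf O_{\breve F}$. Using the projection formula along the finite flat maps $p_i\colon\CM_{\Gamma_0(\pi_0)}\to\CM$, an intersection of the form $\langle p_1^*\CW_0, g\,p_2^*\CW_0\rangle_{\CM_{\Gamma_0(\pi_0)}}$ can be rewritten as $\langle \CW_0, p_{1*}(g\,p_2^*\CW_0)\rangle_{\CM}$. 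Now $g\in\SU(W_1)(F_0)$ (after extracting the scalar $\alpha$, which governs the two cases) acts on $\CM_{\Gamma_0(\pi_0)}$ compatibly with the $D^1$-action on $\CM$ described in \S\ref{sec:iwahori-exceptional-isom}; so $p_{1*}(g\,p_2^*\CW_0)=g\cdot p_{1*}p_2^*\CW_0$, which by Lemma \ref{lem:compute-Hecke} equals $g\cdot(\CW_0+\CW_1)$. Hence the four-term sum is built out of intersection numbers $\langle\CW_0, g\CW_0\rangle_{\CM}$, $\langle\CW_0, g\CW_1\rangle_{\CM}$, and $\langle\CW_1,g\CW_1\rangle_{\CM}$ of quasi-canonical lifting divisors on the Lubin--Tate space. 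The case distinction $\alpha\equiv\pm 1\bmod\pi$ enters because $\alpha\equiv 1$ means $g$ lies in the split group $\SU(W_1)$ acting genuinely, whereas $\alpha\equiv -1$ produces the ``opposite'' configuration; this is precisely the dichotomy $\ep^\flat=\pm1$ in Conjecture \ref{conj (n odd,t) Y}(iii) and accounts for the factor $q+1$ versus a single copy.

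\textbf{Inputting Keating's formula and the orbit matching.} It remains to plug in the known intersection multiplicities of (quasi-)canonical lifts: $\langle\CW_i,g\CW_j\rangle_{\CM}$ for $g\in D^1$ regular is given by an explicit formula of Keating/Gross in terms of $v(b')$ (the valuation of the ``$b$''-entry of $g$, which by the orbit matching $\gamma(a,b)\leftrightarrow g$ in \S\ref{sec:iwahori-exceptional-isom} corresponds to $v(b)$ on the symmetric space side) — roughly, $\langle\CW_0,g\CW_0\rangle$ contributes $v(b')+1$ up to the relevant normalization, and the higher-level terms $\CW_1$ contribute shifts governed by the ramification of $W_j/W_0$. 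Assembling the four terms with the multiplicities from Lemma \ref{lem:compute-Hecke} and collecting, one obtains $(q+1)(v(b')+1)$ when $\alpha\equiv 1\bmod\pi$ and $v(b')+2$ when $\alpha\equiv -1\bmod\pi$.

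\textbf{Main obstacle.} The hardest point is bookkeeping the compatibility of the $g$-action across all the identifications: translating the $\SU(W_1)(F_0)$-action on $\CN_2^{[0]}$ through \eqref{equ:exceptional-isomorphism} to the $D^1$-action on $\CM_{\Gamma_0(\pi_0)}$, then through $p_1,p_2$ to $\CM$, while tracking how $\alpha$ (the determinant, hence the residue-field sign $\ep^\flat$) reshuffles the two divisors $\CZ_{1,+}$ and $\CZ_{1,-}$ — so that the ``cross'' term $\langle\CZ_{1,+},g\CZ_{1,-}\rangle$ really becomes a Hecke-translate of canonical-lift intersections and not something involving self-intersections. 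One must also verify that the strict-transform passage to the splitting model does not change the local intersection numbers (which follows since $\CN_2^{[0],\spt}\to\CN_2^{[0]}$ is an isomorphism away from the worst point and the cycles $\CZ_{j,\pm}$, $g\CZ_{j,\pm}$ meet properly there — a point already used implicitly in Proposition \ref{prop:Shi-Iwahori-canonical-lift}), and that $\CZ_0$ and $g\CZ_0$ intersect properly so that the expansion into four terms is legitimate.
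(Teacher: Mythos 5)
Your overall strategy — pass to the Iwahori Lubin--Tate model, use the projection formula along $p_1,p_2$ together with Lemma \ref{lem:compute-Hecke}, and finish with Gross's formula for canonical lifts — is indeed the route the paper takes. But there is a genuine gap at the step that actually produces the case distinction on $\alpha$, and that step is the heart of the proposition. The paper factors $g=g_0g_1$ with $g_0=\mathrm{diag}(1,\alpha)$ and $g_1=\left(\begin{smallmatrix}a'&b'\\ b'&a'\end{smallmatrix}\right)\in\SU(W_1)(F_0)$, and then proves, via the Kottwitz map and the cartesian diagram \eqref{eq: N circ}, that $g_0^{-1}$ preserves the two summands $\wt{\CM}_1^{[0],\pm}$ when $\alpha\equiv 1\bmod\pi$ and interchanges them when $\alpha\equiv -1\bmod\pi$. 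This is what converts $\langle\wt{\CM}_1^{[0],+},g\wt{\CM}_1^{[0],-}\rangle$ into either the ``cross'' intersection $\langle p_1^*\CW_0,\,g_1 p_2^*\CW_0\rangle$ or the ``diagonal'' intersection $\langle p_2^*\CW_0,\,g_1 p_2^*\CW_0\rangle$, and it is only after this that Lemma \ref{lem:compute-Hecke} ($p_{2*}p_2^*\CW_0=(q+1)\CW_0$ versus $p_{1*}p_2^*\CW_0=\CW_0+\CW_1$) separates the $(q+1)(v(b')+1)$ answer from the $v(b')+2$ answer. Your appeal to ``the dichotomy $\ep^\flat=\pm1$'' does not supply this: you never say how $g_0$ permutes $\CZ_{1,+}$, $\CZ_{1,-}$ and $\CZ_0$, and without that your four-term bilinear expansion cannot even be started, since each term $\langle\CZ_{i,\epsilon},g\CZ_{j,\epsilon'}\rangle$ depends on which quasi-canonical divisor $g\CZ_{j,\epsilon'}$ actually is.

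Two further points. First, the identity $p_{1*}(g\,p_2^*\CW_0)=g\cdot p_{1*}p_2^*\CW_0$ is not literally correct: $p_2$ intertwines the action of $h\in D^1$ upstairs with the action of $\pi h\pi^{-1}$ downstairs (this is why the paper's final formulas involve $a'-b'$), and the extra $+1$ in $v(b')+2$ arises precisely from Gross's formula applied to $\CT\bigl((a'-b')\pi\bigr)=(a'-b')(\CW_0+\CW_1)$, i.e.\ from $v(b'\pi)=v(b')+1$ — your sketch never produces this shift. Second, your four-term decomposition via $\wt{\CM}_1^{[0],\pm,\spt}=\CZ_{1,\pm}+\CZ_0$ is a heavier route than necessary: it forces you to evaluate individual intersections $\langle\CW_i,h\CW_j\rangle$ of quasi-canonical divisors of levels $0$ and $1$ (Keating's formula, which the paper never invokes), whereas pushing forward the whole cycle $p_2^*\CW_0$ at once and using $\CW_0=\CT(1)$, $\CW_0+\CW_1=\CT(\pi)$ reduces everything to the single statement $\mathrm{length}\bigl(\CW_0\cap\CT(a+b)\bigr)=v(b)+1$. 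The route could be made to work, but as written the assembly of the four terms is asserted rather than carried out.
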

\begin{proof}
Since the reduced locus of $\CN_2^{[0]}\cong (\CM_{\Gamma_0(\pi_0)})_{O_{\breve{F}}}$ is a single point, by the same reasoning as in the proof of \cite[Prop. 8.10]{RSZ1}, the problem reduces to computing the length of the intersection, which is an Artinian scheme,
$$
\mathrm{length}(\wt{\CM}_1^{[0],+,\spt}\cap g\wt{\CM}_1^{[0],-.\spt}).
$$
By the description of cycles \eqref{equ:pull-back} in  $\CN_2^{[0]}$ and the definition of the splitting cycles in \S \ref{sec:n-1,t Y-cycle}, we have 
$$
\wt{\CM}_1^{[0],+,\spt}= \CZ_{1,+}+\CZ_{0},
\quad\text{and}\quad
\wt{\CM}_1^{[0],-,\spt}=\CZ_{1,-}+\CZ_{0}.
$$
By Proposition \ref{prop:Shi-Iwahori-canonical-lift}, the restrictions of the forgetful map $\CN_2^{[0],\spt}\to \CN_2^{[0]}$ to quasi-canonical divisors are isomorphisms. Therefore, we obtain  an isomorphism
$$
\wt{\CM}_1^{[0],+,\spt}\cap g\wt{\CM}_1^{[0],-.\spt}\cong \wt{\CM}_1^{[0],+}\cap g\wt{\CM}_1^{[0],-}.
$$
Hence the intersection number equals $\mathrm{length}(\wt{\CM}_1^{[0],+}\cap g\wt{\CM}_1^{[0],-})$.

Write $g_0=\begin{pmatrix}
1&\\ &\alpha
\end{pmatrix}$ and $g_1=\begin{pmatrix}
a'&b'\\b'&a'
\end{pmatrix}$, so that $g=g_1g_2$.  We have 
$$
\mathrm{length}(\wt{\CM}_1^{[0],+}\cap g\wt{\CM}_1^{[0],-})=\mathrm{length}(g_0^{-1}\wt{\CM}_1^{[0],+}\cap g_1\wt{\CM}_1^{[0],-}).
$$
We begin by considering the action of $g_0^{-1}$ on $\wt{\CM}_1^{[0],+}$. Recall from \eqref{eq:def Mt} that $\wt{\CM}_1^{[0]}=\wt{\CM}_1^{[0],+}\amalg \wt{\CM}_1^{[0],-}$ is defined via the base change of the embedding $\CN_1^{[0],\circ}\hookrightarrow \CN_2^{[2]}$. We claim that $g_0^{-1}$ stabilizes the two connected components of $\CN_1^{[0],\circ}$ when $\alpha\equiv 1\mod \pi$ and interchanges them when $\alpha\equiv -1\mod \pi$. As a consequence, we see that 
$$
g_0^{-1}\wt{\CM}_1^{[0],\pm}=\left\{\begin{array}{ll}
\wt{\CM}_1^{[0],\pm}&\text{if }\alpha\equiv 1\mod \pi,\\
\wt{\CM}_1^{[0],\mp}&\text{if }\alpha\equiv -1\mod \pi.
\end{array}\right.
$$
To prove the claim, note that the automorphism $g_0^{-1}:\CN_2^{[0]}\to \CN_2^{[0]}$ preserves the embedding $\CN_1^{[0]}\hookrightarrow \CN_2^{[0]}$. By \eqref{eq: N circ}, the automorphism $g_0^{-1}:\CN_2^{[0,2]}\to \CN_2^{[0,2]}$ preserves the embedding $\CN_1^{[0],\circ}\hookrightarrow \CN_2^{[0,2]}$. 
When $\alpha\equiv 1\mod \pi$, we have $\kappa(g_0^{-1})=1$, so $g_0^{-1}$ preserves the two connected components $\CN_2^{[0,2]}=\CN_2^{[0,2],+}\amalg \CN_2^{[0,2],-}$, hence maps $\CN_1^{[0],\pm}$ to $\CN_1^{[0],\pm}$, which is in fact an  isomorphism by the commutative diagram \eqref{eq: N circ}.
When $\alpha\equiv -1\mod \pi$, we have $\kappa(g_0^{-1})=-1$, so $g_0^{-1}$ interchanges the two components of $\CN_2^{[0,2]}$ and induces  isomorphisms $\CN_2^{[0,2],\pm}\to \CN_2^{[0,2],\mp}$.
Again, by the  diagram \eqref{eq: N circ}, $g_0^{-1}$ restricts to isomorphisms $\CN_1^{[0],\pm}$ to $\CN_1^{[0],\mp}$. 

We conclude that
$$
\mathrm{length}(\wt{\CM}_1^{[0],+,\spt}\cap g\wt{\CM}_1^{[0],-.\spt})=\left\{\begin{array}{ll}
\mathrm{length}(\wt{\CM}_1^{[0],+}\cap g_1\wt{\CM}_1^{[0],-})&\text{when }\alpha\equiv 1\mod \pi,\\
\mathrm{length}(\wt{\CM}_1^{[0],-}\cap g_1\wt{\CM}_1^{[0],-})&\text{when }\alpha\equiv -1\mod \pi,
\end{array}\right.$$
where $g_1=\begin{pmatrix}a'&b'\\b'&a'
\end{pmatrix}\in \SU(W_1)(F_0)$ corresponds to $h=a'+b'\in D^1\subset F\oplus D^{-}$.

 Passing through the exceptional isomorphism $\CN_2^{[0]}\cong (\CM_{\Gamma_0(\pi_0)})_{O_{\breve{F}}}$, the problem reduces to computing the length of certain closed subschemes in Iwahori level Lubin-Tate moduli spaces:
$$
\mathrm{length}(\wt{\CM}_1^{[0],\pm}\cap h\wt{\CM}_1^{[0],-}).
$$

Recall the description of $\wt{\CM}_1^{[0],\pm}\subset (\CM_{\Gamma_0(\pi_0)})_{O_{\breve{F}}}$ in \eqref{equ:cycle in the iwahori lubin tate} as pull-back of quasi-canonical lifts,
\begin{equation}
\wt{\CM}_1^{[0],+}=p_1^*(\CW_0), \quad \wt{\CM}_1^{[0],-}=p_2^*(\CW_0).
\end{equation}
Recall that the projection map $p_2:\CM_{\Gamma_0(\pi_0)}\to \CM$ is defined by $(Y,Y',\iota,\rho_Y)\mapsto (Y',\rho_{Y'})$ where $\rho_{Y'}=\iota_{\BE}(\pi)\circ\rho_Y$. It follows that $$hp_{1,*}(\CW_0)=p_{1,*}(h \cdot\CW_0),\quad
hp_{2,*}(\CW_0)=p_{2,*}(\pi h\pi^{-1} \cdot\CW_0).$$
By the projection formula, we have
$$
\mathrm{length}(p_i^*\CW_0\cap hp_2^*(\CW_0)=
\mathrm{length}(\CW_0\cap p_{i,*}hp_2^*(\CW_0).
$$
Combining with the discussions above and Lemma \ref{lem:compute-Hecke}, we conclude that
\begin{equation}\label{eq:reduce to gross formula}
\mathrm{length}(\wt{\CM}_1^{[0],+,\spt}\cap g\wt{\CM}_1^{[0],-.\spt})=\left\{\begin{array}{ll}
(q+1)\mathrm{length}(\CW_0\cap (a'+b')\cdot \CW_0)&\text{when }\alpha\equiv 1\mod \pi\\
\mathrm{length}(\CW_0\cap (a'-b')\cdot (\CW_0+\CW_1))&\text{when }\alpha\equiv -1\mod \pi,
\end{array}\right.
\end{equation}
where we note that $h=a'+b'$ and  $\pi h \pi^{-1}=a'-b'$.

To compute the length, recall from \cite[\S 6.2]{RSZ2} that for any $0\neq c\in O_D$, the special divisor $\CT(c)$ is defined as the locus of $\CM$ where the element $c\in \Hom_{O_{F_0}}(\ov{\BE},\BE)$ lifts to a homomorphism $\ov{\CE}\to\CY_0$, where $\CY_0$  is the universal family over $\CM$. By \cite[Prop. 7.1]{RSZ2}, we have
$$
\CW_0=\CT(1),\quad \CW_0+\CW_1=\CT(\pi),
$$
hence
$$
(a'+b')\CW_0=\CT((a'+b')),\quad (a'-b')(\CW_0+\CW_1)=\CT((a'-b')\pi).
$$
Let $h\in O_D$ be any element. From the moduli description, the intersection $\CW_0\cap \CT(h)$ is the closed sublocus of $\CW_0$ where the endomorphism $h\in \Hom_{O_{F_0}}(\BE,\BE)$ lifts to the canonical lift $\CE_0$ over $ \CW_0$. By Gross's formula \cite[Thm. 2.1]{V1}, writing $h=a+b\in F+D^-$, the length of this intersection is $v(b)+1$. The proposition then follows from \eqref{eq:reduce to gross formula}.
\end{proof}

Following the same argument, we compute the other intersection numbers in Conjecture \ref{conj (n odd,t) Y}, (ii). 
\begin{proposition}\label{prop:compute-hecke-2}
Let 
$
g=\begin{pmatrix}
1&\\&\alpha
\end{pmatrix}
\begin{pmatrix}
a'&b'\\ b'&a'
\end{pmatrix}\in \U(W_1)_{\rs}
$
expressed in the presentation \eqref{equ:non-split unitary rank 2}. Then
\begin{equation*}
\left\langle\wt{\CM}_1^{[0],+,\spt} , g\wt{\CM}_1^{[0],+,\spt} \right\rangle_{ \CN_{1}^{[0]} \times \CN_{2}^{[1],\spt} }=\left\{\begin{array}{ll}
(q+1)(v(b')+1)&\text{when }\alpha\equiv -1\mod \pi\\
v(b')+2&\text{when }\alpha\equiv 1\mod \pi,
\end{array}\right.
\end{equation*}\qed
\end{proposition}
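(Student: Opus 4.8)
The proof will follow, essentially verbatim, the argument used for Proposition \ref{prop:compute-hecke-1}; the only structural change is that the two cases $\alpha\equiv 1$ and $\alpha\equiv -1\pmod\pi$ get interchanged, and I explain below why. First I would observe that, since the reduced locus of $\CN_2^{[0]}\cong(\CM_{\Gamma_0(\pi_0)})_{O_{\breve F}}$ is a single point, the intersection $\wt{\CM}_1^{[0],+,\spt}\cap g\,\wt{\CM}_1^{[0],+,\spt}$ is an Artinian scheme, so that, exactly as in the proof of \cite[Prop.~8.10]{RSZ1}, the arithmetic intersection number equals its length. Invoking the splitting description $\wt{\CM}_1^{[0],+,\spt}=\CZ_{1,+}+\CZ_0$ of \eqref{equ:pull-back} together with Proposition \ref{prop:Shi-Iwahori-canonical-lift}, which identifies each $\CZ_{j,+}$ with $\CY_{j,+}$, this length is computed as $\mathrm{length}\bigl(\wt{\CM}_1^{[0],+}\cap g\,\wt{\CM}_1^{[0],+}\bigr)$ inside $\CN_2^{[0]}$.

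Next I would factor $g=g_0g_1$ with $g_0=\left(\begin{smallmatrix}1&\\&\alpha\end{smallmatrix}\right)$ and $g_1=\left(\begin{smallmatrix}a'&b'\\b'&a'\end{smallmatrix}\right)$, the latter corresponding under \S\ref{sec:iwahori-exceptional-isom} to the element $h=a'+b'\in D^1$ with $\pi h\pi^{-1}=a'-b'$. By the same analysis of the Kottwitz invariant as in the proof of Proposition \ref{prop:compute-hecke-1}, the automorphism $g_0^{-1}$ stabilizes the two components of $\CN_1^{[0],\circ}$ when $\kappa(g_0)=1$, i.e.\ $\alpha\equiv 1\pmod\pi$, and interchanges them when $\alpha\equiv -1\pmod\pi$; hence $g_0^{-1}\wt{\CM}_1^{[0],+}=\wt{\CM}_1^{[0],+}$ in the first case and $=\wt{\CM}_1^{[0],-}$ in the second, so that
\begin{equation*}
\mathrm{length}\bigl(\wt{\CM}_1^{[0],+,\spt}\cap g\,\wt{\CM}_1^{[0],+,\spt}\bigr)=
\begin{cases}
\mathrm{length}\bigl(\wt{\CM}_1^{[0],+}\cap g_1\,\wt{\CM}_1^{[0],+}\bigr), & \alpha\equiv 1\pmod\pi,\\[3pt]
\mathrm{length}\bigl(\wt{\CM}_1^{[0],-}\cap g_1\,\wt{\CM}_1^{[0],+}\bigr), & \alpha\equiv -1\pmod\pi.
\end{cases}
\end{equation*}
This is precisely where the cases get exchanged relative to \eqref{eq:reduce to gross formula}: there the ``$+$ versus $-$'' cycles were intersected, so $g_0^{-1}$ produced the ``diagonal'' configuration in the opposite range of $\alpha$.

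Finally I would transport the computation to $(\CM_{\Gamma_0(\pi_0)})_{O_{\breve F}}$ via the exceptional isomorphism \eqref{equ:exceptional-isomorphism}, writing $\wt{\CM}_1^{[0],+}=p_1^\ast\CW_0$ and $\wt{\CM}_1^{[0],-}=p_2^\ast\CW_0$. Using that $p_1$ is equivariant for the $D^1$-action through $\rho_Y$ whereas $p_2$ intertwines it with conjugation by $\iota_\BE(\pi)$, I would run the projection formula together with the Hecke computation of Lemma \ref{lem:compute-Hecke} exactly as in \eqref{eq:reduce to gross formula}: each of the two lengths above reduces to an intersection number on $\CM$ of the form $\mathrm{length}\bigl(\CW_0\cap c\cdot\CW_0\bigr)$, respectively $\mathrm{length}\bigl(\CW_0\cap c\cdot(\CW_0+\CW_1)\bigr)$, with $c\in\{a'+b',\,a'-b'\}$ of valuation $v(b')$ — so that, compared with \eqref{eq:reduce to gross formula}, the $(q+1)$-type term now occurs in the case $\alpha\equiv -1$ and the $(\CW_0+\CW_1)$-type term in the case $\alpha\equiv 1$. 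Evaluating these by Gross's formula \cite[Thm.~2.1]{V1} for the canonical lift and by the accompanying intersection length of the canonical lift against a translate of the level-one quasi-canonical lift (the same inputs used for Proposition \ref{prop:compute-hecke-1}) yields $v(b')+2$ when $\alpha\equiv 1$ and $(q+1)(v(b')+1)$ when $\alpha\equiv -1$, as claimed. I expect the only point requiring care to be, again, the correct propagation of the Hecke correspondence of Lemma \ref{lem:compute-Hecke} through the projection formula and its matching with the quasi-canonical picture of Proposition \ref{prop:quasi-canonical-iwahori}; but since this is carried out identically in the proof of Proposition \ref{prop:compute-hecke-1} (with the roles of $p_1$ and $p_2$ exchanged), there is no genuinely new obstacle.
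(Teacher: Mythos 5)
Your proposal is correct and is precisely the argument the paper intends: the paper proves Proposition \ref{prop:compute-hecke-2} only by the remark ``following the same argument'' as Proposition \ref{prop:compute-hecke-1}, and your write-up carries that argument out, correctly isolating the one point of substance — that replacing the $(+,-)$ intersection by the $(+,+)$ intersection makes the ``diagonal'' versus ``mixed'' configurations $p_i^*\CW_0\cap h\,p_j^*\CW_0$ occur for the opposite values of $\kappa(g_0)$, which swaps the two cases of $\alpha$ in \eqref{eq:reduce to gross formula} and hence in the final formula.
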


\subsection{Proof of Conjecture \ref{conj (n odd,t) Y} for type $(n-1,t)=(0,0)$}\label{sec:proof of 00}
Recall the following germ expansion result. We define the set of semi-simple but irregular elements as
$$
A_S:=\left\{
\begin{pmatrix}
a&\\&d
\end{pmatrix}\in S(F_0)\Biggl|\, a,d\in F^1\right\}.
$$
For $i=0,1$, let $S_{\rs, i}$ be the set of elements in $S_\rs$ matching an element in $\U(W_i)_\rs$. 
\begin{theorem}[\protect{\cite[Thm. 15.1]{RSZ2}}]\label{thm:germ-expan}
Let $\varphi'\in C_c^{\infty}(S_2(F_0))$ transfer to $(f_0,f_1)\in C_c^{\infty}(\U(W_0))\times C_c^{\infty}(\U(W_1))$, and let $\gamma_0=\mathrm{diag}(a_0,d_0)\in A_S$. Let $i\in \{0,1\}$. If $f_i=0$, then there is the following germ expansion for $\gamma=\gamma(a,b)\in S_{\rs,i}$  in a neighborhood of $\gamma_0$
$$
\partial\!\Orb(\gamma,\varphi')=\frac{1}{2}\Orb(\mathrm{diag}(a_0,d_0),f_{1-i})\log |1-Na|+C,
$$
where $C$ is a constant depending on $\gamma_0$, $\varphi'$, and $i$, but not on $\gamma$.
Here $|\,\,|$ denotes the natural extension to $F$ of the normalized absolute value on $F_0$.
\qed
\end{theorem}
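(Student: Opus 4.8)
The plan is to prove the germ expansion by a direct analysis of the weighted orbital integral in this rank-two situation, adapting the strategy used for the analogous statements of Mihatsch \cite{Mih-AT} and in \cite{Z14, RSZ2}. First I would fix the representative $\gamma=\gamma(a,b)$ with $a\in F\setminus F^1$, $b\in F^\times$, and reduce $\Orb(\gamma,\varphi',s)=\omega(\gamma)\int_{H'_{1,2}(F_0)}\varphi'(h_1^{-1}\gamma h_2)\,|\det h_1|^s\,\eta(h_2)\,dh_1\,dh_2$ to an explicit integral over a one-parameter subgroup. Choosing coordinates on $H'_{1,2}(F_0)$ adapted to the stabilizer of the degenerate element $\gamma_0=\diag(a_0,d_0)$, one is left with $\omega(\gamma(a,b))\int_{F^\times}\varphi'\big(m(a,b;\lambda)\big)\,|\lambda|^s\,\eta(\lambda)\,d^\times\lambda$, where $m(a,b;\lambda)\in S_2(F_0)$ is an explicit matrix rational in $a,b,\lambda$. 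Since $\supp\varphi'$ is compact open, the condition $m(a,b;\lambda)\in\supp\varphi'$ confines $v(\lambda)$ to an interval whose length grows like $-\log|1-Na|$ as $a\to a_0$ (note $Na_0=1$), while away from that growing tail the integrand is locally constant in $\gamma$ near $\gamma_0$.

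Next I would isolate the singular part. Because $\varphi'$ transfers to $(f_0,f_1)$ with $f_i=0$, the value $\Orb(\gamma(a,b),\varphi')=\Orb(\gamma(a,b),\varphi',0)$ vanishes identically for $\gamma(a,b)\in S_{\rs,i}$ near $\gamma_0$; this forces an exact internal cancellation in the integrand driven by the sign $\eta(\lambda)$. Hence $\partial\!\Orb(\gamma(a,b),\varphi')=\frac{d}{ds}\big|_{s=0}\Orb(\gamma(a,b),\varphi',s)$ is controlled entirely by the interaction of this cancellation with the weight $|\lambda|^s$ over the expanding range of $\lambda$. An elementary computation of $\frac{d}{ds}\big|_{s=0}\sum_k(\pm1)^k q^{-ks}$ over an interval of length $\ell$ shows that the only unbounded contribution is linear in $\ell$, hence proportional to $\log|1-Na|$; all remaining terms are locally constant near $\gamma_0$ and are absorbed into the constant $C$.

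Then I would identify the coefficient of $\log|1-Na|$ with $\tfrac12\,\Orb(\diag(a_0,d_0),f_{1-i})$. The boundary term produced above is, up to the explicit measure normalization, the non-weighted orbital integral of $\varphi'$ along the degenerate orbit through $\gamma_0$; by the smooth transfer identity defining $(f_0,f_1)$, together with the behaviour of matching under degeneration to $A_S$, this boundary integral equals $\Orb$ of $f_{1-i}$ at the semisimple element $\diag(a_0,d_0)$ on $\U(W_{1-i})$ — the side $\gamma$ is matched to contributes nothing since $f_i=0$. The factor $\tfrac12$ comes from comparing the Haar measure on $F^\times$ appearing after the coordinate change with the measure on the relevant torus on the unitary side, as in \cite[\S 5]{RSZ1} and in the passage between homogeneous and inhomogeneous orbital integrals (Proposition \ref{prop:homo-to-inhomo}). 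The two cases $i=0,1$ are handled symmetrically once the sign $\eta$ and the transfer factor $\omega$ are tracked consistently.

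The main obstacle I anticipate is exactly this last bookkeeping: pinning down the constant $\tfrac12$ and the precise normalization $\log|1-Na|$ (rather than $\log|1-Na|^2$, or a shifted variant) requires reconciling three measure normalizations — on $H'_{1,2}(F_0)$, on $F^\times$ after the coordinate change, and on the unitary torus — along with exact control of $\omega(\gamma(a,b))$ as $a\to a_0$, since a sign or power error there would change the statement. A secondary technical point is verifying that, apart from the single logarithmically divergent term, every remaining piece of $\partial\!\Orb(\gamma,\varphi')$ is genuinely locally constant in a neighborhood of $\gamma_0$, which uses the standard smoothness of orbital integrals of locally constant compactly supported functions on the regular semisimple locus together with a uniform bound near the degenerate orbit.
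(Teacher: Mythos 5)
The paper does not prove this statement: it is imported verbatim from \cite[Thm.~15.1]{RSZ2} (hence the \verb|\qed| attached to the statement itself), so the only meaningful comparison is with the proof in that reference and in \cite[\S 3]{Mih-AT}. Your sketch is a fair reconstruction of that argument: reduce the inhomogeneous orbital integral on $S_2(F_0)$ to an integral over the one-dimensional torus $F_0^\times$ acting by conjugation, observe that the support condition on $\varphi'$ confines $v_0(h)$ to an interval whose length is $v_0(1-Na)+O(1)$, and extract $\partial\!\Orb$ as (interval length)$\,\times\,$(boundary coefficient) plus terms locally constant near $\gamma_0$.

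Two places where your write-up compresses the actual content. First, the hypothesis $f_i=0$ does not produce ``exact internal cancellation in the integrand''; what it gives is $\sum_k c_k(\gamma)=\Orb(\gamma,\varphi',0)=0$, which makes $\frac{d}{ds}\big|_{s=0}\sum_k c_k q^{-ks}=-\log q\sum_k k\,c_k$ invariant under shifting the index origin. This is exactly what one needs, because the natural origin drifts by $v_0(1-Na)$ as $\gamma\to\gamma_0$; the vanishing lets you split the sum into two ends whose coefficients stabilize and isolate a single term proportional to the drift. Second, the identification of the resulting boundary coefficient with $\tfrac12\Orb(\mathrm{diag}(a_0,d_0),f_{1-i})$ is the whole theorem, and you assert it via ``the behaviour of matching under degeneration'' rather than proving it. The mechanism is to run the identical support analysis for $\gamma'\in S_{\rs,1-i}$ approaching $\gamma_0$ from the other side, where the transfer identity gives $\Orb(\gamma',\varphi',0)=\Orb(g',f_{1-i})$ with no vanishing, note that the end coefficients near $\gamma_0$ agree for the two sides (they only see $\varphi'$ near the degenerate orbit), and use continuity of $g\mapsto\Orb(g,f_{1-i})$ at $\mathrm{diag}(a_0,d_0)$, whose $\U(W^\flat_{1-i})$-orbit is closed. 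Finally, you are right that the factor $\tfrac12$ and the precise argument of the logarithm are where an error would most likely enter; they hinge on the convention $|x|=|Nx|_{F_0}$ for the extended absolute value used in this section (compare the convention $v(b')=v_0(b'\bar b')$ in Proposition~\ref{prop:compute-hecke-1}), and on the factor $2$ relating homogeneous and inhomogeneous orbital integrals in Proposition~\ref{prop:homo-to-inhomo}. As a plan this is the right route; as written it establishes the shape of the expansion but not the value of the leading coefficient.
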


We compute the orbital integrals along semisimple but not regular orbits. Recall from \S \ref{ss: Lat(n-1,t) Y} that we have 
$$
 \wt\BM^{[0]}_1=\{(\Lambda^\flat,\Lambda,\Lambda^{[0]}) \in {\rm Vert}^{0}(W_0^\flat)\times {\rm Vert}^{2}(W_0) \times {\rm Vert}^{0}(W_0)\mid \Lambda^\flat \obot \langle u_0 \rangle\supset \Lambda\subset \Lambda^{[0]} \}.
$$
Let $\Lambda^\flat=\langle u^\flat\rangle$ with $(u^\flat,u^\flat)=1$.  Our notation here agrees with the last displayed equation in \cite[p. 1162]{RSZ2}. According to the discussion before \cite[(15.4)]{RSZ2}, there are two $\pi$-modular lattices contained in $\langle u^\flat\rangle \obot \langle u_0 \rangle$, which are given by
$$
\Lambda^\pm=\pi(\langle u^\flat\rangle \obot \langle u_0 \rangle)+O_F(u_0\pm u^\flat).
$$
Hence we have the decomposition
\begin{equation}\label{equ:decomposition}
 \wt\BM^{[0]}_1=\wt\BM^{[0],+}_1\amalg \wt\BM^{[0],-}_1=\{\Lambda^{[0]} \in {\rm Vert}^{0}(W_0)\mid  \Lambda^+\subset \Lambda^{[0]} \}\amalg \{\Lambda^{[0]} \in {\rm Vert}^{0}(W_0)\mid  \Lambda^-\subset \Lambda^{[0]} \}.
\end{equation}
The action of  $\mathrm{diag}(a,d)$  on the pair $\{\Lambda^+,\Lambda^-\}$ is discussed in \cite[Before (15.4)]{RSZ2}. When $\alpha\equiv 1\mod \pi$, then $\mathrm{diag}(a,d)$ preserves the two lattices and when $a\equiv -1\mod \pi$, the two lattices are  interchanged. 
\begin{lemma}\label{lem:lattice count}
For any $g=\mathrm{diag}(a,d)$ with $a,d\in F^1$, we have:
$$
\#(\wt\BM^{[0],+}_1\cap g\wt\BM^{[0],+}_1)_{\BN_1^{[0]}\times \BN_{2}^{[0]}}=\left\{\begin{array}{ll}
q+1&\text{if }a\equiv 1\mod\pi,\\
1&\text{if }a\equiv -1\mod\pi.
\end{array}\right.
$$
\end{lemma}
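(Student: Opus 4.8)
The plan is to compute both sides directly from the explicit lattice model, following \cite[\S 15]{RSZ2}. Write $\Lambda_0:=\langle u^\flat\rangle\obot\langle u_0\rangle\in{\rm Vert}^0(W_0)$. Recall from \eqref{equ:decomposition} that the two $\pi$-modular lattices contained in $\Lambda_0$ are $\Lambda^\pm=\pi\Lambda_0+O_F(u_0\pm u^\flat)$, and that $\wt\BM^{[0],+}_1$ (resp.\ $\wt\BM^{[0],-}_1$) is the set of self-dual lattices $\Lambda^{[0]}\in{\rm Vert}^0(W_0)$ with $\Lambda^+\subseteq\Lambda^{[0]}$ (resp.\ $\Lambda^-\subseteq\Lambda^{[0]}$). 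Since $\BN_1^{[0]}={\rm Vert}^0(W_0^\flat)$ is a single point and $\wt\BM^{[0],\pm}_1$ maps into $\BN_1^{[0]}\times\BN_2^{[0]}$ injectively and by the identity on the second factor, the intersection number in question is simply the cardinality of $\wt\BM^{[0],+}_1\cap g\wt\BM^{[0],+}_1$ computed inside ${\rm Vert}^0(W_0)$, with $g=\diag(a,d)$ acting through its action on $W_0$; so I would first reduce the statement to that set-theoretic count.

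Next I would record two elementary facts. (a) \emph{There are exactly $q+1$ self-dual lattices $\Lambda^{[0]}$ with $\Lambda^+\subseteq\Lambda^{[0]}\subseteq(\Lambda^+)^\vee=\pi^{-1}\Lambda^+$.} Indeed such $\Lambda^{[0]}$ correspond to the lines of the two-dimensional $k$-vector space $(\Lambda^+)^\vee/\Lambda^+\cong k^2$ that are isotropic for the pairing induced by $\phi$; in the ramified quadratic case this induced $k$-valued pairing is \emph{alternating} (it is a nondegenerate symplectic form, because $\overline\pi=-\pi$), so every one of the $q+1$ lines is isotropic. Hence $\#\wt\BM^{[0],+}_1=\#\wt\BM^{[0],-}_1=q+1$, and moreover $\wt\BM^{[0],+}_1\cap\wt\BM^{[0],-}_1=\{\Lambda^{[0]}:\Lambda^++\Lambda^-=\Lambda_0\subseteq\Lambda^{[0]}\}=\{\Lambda_0\}$, since a self-dual lattice containing the self-dual lattice $\Lambda_0$ must equal $\Lambda_0$. (This count also appears implicitly in \cite[\S 15]{RSZ2}.) (b) \emph{The action of $\diag(a,d)$ on the pair $\{\Lambda^+,\Lambda^-\}$.} Because $a,d\in F^1$ are units, $g=\diag(a,d)$ fixes $\Lambda_0$; and a direct computation gives $g\Lambda^+=\{xu^\flat+yu_0:\bar x=(\bar a/\bar d)\,\bar y\}$, so $g$ stabilizes each of $\Lambda^\pm$ when $a\equiv 1\pmod\pi$ and interchanges $\Lambda^+$ and $\Lambda^-$ when $a\equiv-1\pmod\pi$ — which is exactly the dichotomy recorded in the discussion preceding \cite[(15.4)]{RSZ2}.

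With (a) and (b) the lemma is immediate. If $a\equiv 1\pmod\pi$, then $g\Lambda^+=\Lambda^+$, so $g$ permutes the set $\wt\BM^{[0],+}_1$; hence $g\wt\BM^{[0],+}_1=\wt\BM^{[0],+}_1$ as subsets of ${\rm Vert}^0(W_0)$ and the intersection has $q+1$ elements. If $a\equiv-1\pmod\pi$, then $g\Lambda^+=\Lambda^-$, so $g\wt\BM^{[0],+}_1=\{\Lambda^{[0]}:g\Lambda^+=\Lambda^-\subseteq\Lambda^{[0]}\}=\wt\BM^{[0],-}_1$, and therefore $\wt\BM^{[0],+}_1\cap g\wt\BM^{[0],+}_1=\wt\BM^{[0],+}_1\cap\wt\BM^{[0],-}_1=\{\Lambda_0\}$ has a single element.

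The main point requiring care is fact (a): one must verify that the relevant quotient $(\Lambda^+)^\vee/\Lambda^+$ carries an alternating — rather than a quadratic — $k$-form, which is what produces $q+1$ (and not $2$) self-dual sublattices and is special to the ramified case; this can be checked by hand in the coordinates $u^\flat,u_0$ (using that $W_0$ is split, so these have opposite-sign norms) or quoted from \cite[\S 15]{RSZ2}. The remaining input, namely the action of the diagonal torus on $\{\Lambda^+,\Lambda^-\}$, is exactly as in loc.\ cit.; everything else is routine bookkeeping, and the key conceptual observation is simply that $g$ permutes (rather than pointwise fixes) the relevant set, so the self-intersection counts the whole set.
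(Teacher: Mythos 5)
Your proposal is correct and follows essentially the same route as the paper: reduce to a set-theoretic lattice count, use the dichotomy that $\diag(a,d)$ fixes or swaps $\{\Lambda^+,\Lambda^-\}$ according to $a\bmod\pi$, count the $q+1$ isotropic lines of the symplectic $k$-plane $(\Lambda^+)^\vee/\Lambda^+$ in the first case, and observe that $\Lambda^++\Lambda^-=\Lambda_0$ is already self-dual in the second. Your extra remarks (that the induced form is alternating, and the explicit identification $\wt\BM^{[0],+}_1\cap\wt\BM^{[0],-}_1=\{\Lambda_0\}$) only make explicit what the paper asserts in passing.
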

\begin{proof}
The element $g$ acts on the lattice model 
$$
\wt{\BM}_1^{[0]}\hookrightarrow \BN_{1}^{[0],\circ} \times \BN_{2}^{[0]}
$$
via its action on the $\BN_{2}^{[0]}$, sending $(\Lambda^\flat,\Lambda,\Lambda^{[0]})$ to $(\Lambda^\flat,\Lambda,g\Lambda^{[0]})$. From the description \eqref{equ:decomposition}, we see that
$$
(\wt\BM^{[0],+}_1\cap g\wt\BM^{[0],+}_1)_{\BN_1^{[0]}\times \BN_{2}^{[0]}}=\{(\Lambda^{[0]}\in\mathrm{Vert}^0(W_0)\mid \Lambda^+\subset \Lambda^{[0]},\quad g^{-1}\Lambda^+\subset \Lambda^{[0]}\}.
$$
When $a\equiv 1\mod \pi$, we have $\Lambda^+\cap g^{-1}\Lambda^+=\Lambda^+$, hence the RHS is the set of self-dual lattices $\Lambda^{[0]}\supset \Lambda^+$, which is in one-to-one correspondence with the set of   isotropic lines in the two-dimensional symplectic space $\Lambda^\vee/\Lambda$.  There are exactly $q+1$ such lines.

When $a\equiv -1\mod \pi$, we have $\Lambda^+ + g^{-1}\Lambda^+=\Lambda^++\Lambda^-$. The RHS  parameterizes the set of self-dual lattices $\Lambda^{[0]}\supset \Lambda^++\Lambda^-$. However, $\Lambda^++\Lambda^-$  is a self-dual lattice, hence $\Lambda^{[0]}=\Lambda^++\Lambda^-$ is uniquely determined.
\end{proof}

\begin{lemma}\label{lem:compute-orbit-int}
For any $h_0\in K_1^{[0]}$ and $a,d\in F^1$, the irregular orbital integrals of $\vol(K_1^{[0],\circ})^{-1}h_0\cdot\varphi_{2}^{[2,0]}$ are given by
$$
\Orb(\mathrm{diag}(a,d),\vol(K_1^{[0],\circ})^{-1}h_0\cdot\varphi_{2}^{[2,0]})=\left\{\begin{array}{ll}
2(q+1)&\text{if }h_0a\equiv 1\mod \pi,\\
2&\text{if }h_0a\equiv -1\mod \pi.
\end{array}\right.
$$
\end{lemma}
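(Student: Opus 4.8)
The plan is to reduce this irregular orbital integral to a lattice count in the lattice model $\wt\BM^{[0]}_1$, and then to evaluate that count using Lemma \ref{lem:lattice count} together with the bookkeeping of \cite[Before (15.4)]{RSZ2}. The first step is to pass to the homogeneous picture. By the computation carried out in the proof of Corollary \ref{cor:inhomo-version}, the function $\vol(K_1^{[0],\circ})^{-1}h_0\cdot\varphi_2^{[2,0]}\in C_c^\infty(\U(W_0))$ is the $\sharp$-image, in the sense of \eqref{equ:define-inhomo-transfer} and \cite[\S 4]{LRZ2}, of the homogeneous test function $f:=\vol(K_1^{[0],\circ})^{-2}{\bf 1}_{K_1^{[0],\circ}h_0}\otimes\varphi_2^{[2,0]}\in C_c^\infty(G_{W_0})$. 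Since the $\sharp$-operation is designed precisely so that orbital integrals match — the orbital integral of $f^\sharp$ at an element $g$ coinciding with the homogeneous orbital integral $\Orb((1,g),f)$, the remaining integral running over the compact group $\U(W_0^\flat)(F_0)$ and hence causing no convergence issue even for non-regular $g$ — it suffices to compute the homogeneous orbital integral $\Orb\bigl((1,\mathrm{diag}(a,d)),f\bigr)$.

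For the second step I would invoke the lattice-model interpretation of orbital integrals from \S\ref{ss: Lat(n-1,t) Y}. That interpretation is obtained from purely formal identities among convolutions of characteristic functions of $K$-double cosets (via \eqref{eq: eM}), together with the standard dictionary between Hecke correspondences and configurations of sublattices; nothing in its derivation uses regularity, so it extends verbatim to an arbitrary semisimple element, the resulting count being finite. This yields
\[
\Orb\bigl((1,\mathrm{diag}(a,d)),f\bigr)=\#\bigl(\mathcal C\cap(1,\mathrm{diag}(a,d))\,\wt\BM^{[0]}_1\bigr)_{\BN_1^{[0],\circ}\times\BN_2^{[0]}},
\]
where $\mathcal C=\wt\BM^{[0]}_1$ when $h_0\in K_1^{[0],\circ}$ and $\mathcal C=(\sigma,1)\wt\BM^{[0]}_1$ when $h_0\in K_1^{[0]}\setminus K_1^{[0],\circ}$, in accordance with the two displayed formulas in \S\ref{ss: Lat(n-1,t) Y}.

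The last step is the actual evaluation. Writing $\wt\BM^{[0]}_1=\wt\BM^{[0],+}_1\amalg\wt\BM^{[0],-}_1$ as in \eqref{equ:decomposition}, and using that $\BN_1^{[0],\circ}\to\BN_1^{[0]}$ is a trivial double cover, one rewrites the count over $\BN_1^{[0],\circ}\times\BN_2^{[0]}$ as twice a count of $+$-components over $\BN_1^{[0]}\times\BN_2^{[0]}$ (the same projection-formula manipulation as in the proof of Lemma \ref{lem (n odd,t) Y}). The combined effect of the involution $\sigma$ and of the diagonal element $\mathrm{diag}(a,d)$ on the two components — equivalently, on the pair of $\pi$-modular lattices $\{\Lambda^+,\Lambda^-\}$ — is exactly the analysis of \cite[Before (15.4)]{RSZ2}: the net operation preserves the two components precisely when $h_0a\equiv1\bmod\pi$, in which case $\mathrm{diag}(a,d)$ fixes $\Lambda^+$ and the count collapses by Lemma \ref{lem:lattice count} to $2(q+1)$; when $h_0a\equiv-1\bmod\pi$ the net operation interchanges $\Lambda^+$ and $\Lambda^-$, and Lemma \ref{lem:lattice count} then gives $2\cdot 1=2$.

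The main obstacle is this final bookkeeping: one must track simultaneously the action of $\sigma$, the action of $\mathrm{diag}(a,d)$ — which acts on $\BN_2^{[0]}$ directly and, through the fiber product \eqref{eq: N circ} defining $\CN_n^{[n-1],\circ}$, also on $\BN_1^{[0],\circ}$ — and the factor of two coming from the \'etale double cover $\BN_1^{[0],\circ}\to\BN_1^{[0]}$, and confirm that the net parity is indeed $h_0a\bmod\pi$. This is precisely the combinatorial computation carried out in \cite[Before (15.4)]{RSZ2} for the analogous function on $\U(W_0)$, which I would follow line by line.
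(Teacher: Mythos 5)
Your proposal is correct, but it takes a genuinely different and more roundabout route than the paper. The paper's proof exploits the special shape of the element: since $\mathrm{diag}(a,d)$ is diagonal and $H(F_0)=F^1$ embeds as $h\mapsto\mathrm{diag}(h,1)$, the conjugation $h^{-1}\,\mathrm{diag}(h_0a,d)\,h$ is the identity, so the inhomogeneous orbital integral collapses in one line to $\frac{\vol(F^1)}{\vol(K_1^{[0],\circ})}\,\varphi_2^{[2,0]}(\mathrm{diag}(h_0a,d))=2\,\varphi_2^{[2,0]}(\mathrm{diag}(h_0a,d))$, and the single Hecke-function value is then read off from the lattice count of Lemma \ref{lem:lattice count} (with $a$ replaced by $h_0a$). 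You never make this observation; instead you lift back to the homogeneous setting via $\sharp$, invoke the full identity of \S\ref{ss: Lat(n-1,t) Y} between the orbital integral and the cardinality of $\mathcal C\cap g\wt\BM^{[0]}_1$, and then do the $\sigma$/component bookkeeping. Your route does land on the correct answer (I checked the final count: for $\mathcal C=\wt\BM_1^{[0]}$ the conditions $\Lambda\subset\Lambda^{[0]}$ and $\mathrm{diag}(a,d)\Lambda\subset\Lambda^{[0]}$ give $2(q+1)$ or $2$ according as $a\equiv\pm1$, and twisting by $(\sigma,1)$ flips the two cases, reproducing the parity $h_0a\bmod\pi$), and the two steps you gloss over are in fact harmless here: the identity $\Orb((1,g),f)=\Orb(g,f^\sharp)$ and the lattice-count formula both extend to irregular semisimple elements by the same formal manipulations because $H=F^1$ is compact and every lattice set in sight is finite. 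What your approach costs is exactly this extra justification, plus the longer bookkeeping; what the paper's approach buys is a two-line computation that never needs the lattice model except to evaluate one Hecke-function value. I would encourage you to at least record the trivial-conjugation observation, since it is the reason this irregular orbital integral is computable at all.
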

\begin{proof}
The conjugation action of $H(F_0)$ on diagonal matrices is trivial, hence the orbital integral equals
\begin{align*}
&\Orb(\mathrm{diag}(a,d),\vol(K_1^{[0],\circ})^{-1}h_0\cdot\varphi_{2}^{[2,0]})\\
={}&\vol(K_1^{[0],\circ})^{-1}\int_{h\in F^1}\varphi_2^{[2,0]}\Biggl(\begin{pmatrix}h^{-1}&\\&1\end{pmatrix}\begin{pmatrix}h_0a&\\&d\end{pmatrix}\begin{pmatrix}h&\\&1\end{pmatrix}\Biggr)dh\\
={}&\frac{\vol(F^1)}{\vol(K_1^{[0],\circ})}\varphi_2^{[2,0]}\Biggl(\begin{pmatrix}h_0a&\\&d\end{pmatrix}\Biggr)=2\varphi_2^{[2,0]}\Biggl(\begin{pmatrix}h_0a&\\&d\end{pmatrix}\Biggr).
\end{align*}
The results now follows from the lattice description in \S \ref{ss: Lat(n-1,t) Y} for the orbital integral and the lattice counting in Lemma \ref{lem:lattice count}. For further discussion on the relation to the lattice model, see \cite[\S 4]{LRZ1}.\end{proof}

\begin{proof}[Proof of Conjecture \ref{conj (n odd,t) Y}]
By Corollary \ref{cor:inhomo-version}, the problem reduces to  the inhomogeneous version.
Let $\wt{\varphi}'$ with transfer $(\vol(K_1^{[0],\circ}h_0\cdot \varphi_2^{[2,0]},0)\in C_c^\infty(\U(W_0))\times C_c^{\infty}(\U(W_1))$. 
Let $\phi\in C_c^{\infty}(S(F_0)_{\rs})$ be defined by
$$
\phi(\gamma):=\left\{\begin{array}{ll}
\left\langle\wt\CM^{[t],+,\spt}_{n} , g\wt\CM^{[t],-,\spt}_{n} \right\rangle_{ \CN_{n}^{[n-1]} \times \CN_{n+1}^{[t],\spt} }\cdot\log q+ \del\big(\gamma,  \varphi' \big)&\text{when }\gamma\in S(F_0)_{\rs,1},\\
0&\text{when }\gamma\in S(F_0)_{\rs,0}.
\end{array}\right.
$$
Let $\gamma_0=\mathrm{diag}(a_0,d_0)\in A_S$. By Proposition \ref{prop:compute-hecke-2}, Theorem \ref{thm:germ-expan}, and Lemma \ref{lem:compute-orbit-int}, there is a constant $C_{\gamma_0}$ such that for any $\gamma\in S(F_0)_{\rs,1}$ in a small neighborhood of $\gamma_0$
\begin{equation}\label{eqgamma0}
\phi(\gamma)=\left\{\begin{array}{ll}
q+1+C_{\gamma_0}&\text{if }a_0\equiv -1\mod \pi,\\
2+C_{\gamma_0}&\text{if }a_0\equiv 1\mod \pi.
\end{array}\right.
\end{equation}
and such  that $\phi(\gamma)=0$ when $\gamma\in S(F_0)_{\rs,0}$.

The intersection number is conjugation invariant, and the derivative of the orbital integral of our smooth transfer is conjugation invariant (cf. \cite[Lem. 3.3]{Mih-AT}). Hence  the function $\phi(\gamma)$ is conjugation invariant. By \eqref{eqgamma0} the function $\phi$ satisfies the hypotheses of \cite[Cor. 3.8]{Mih-AT}. Hence there exists $\fp'_{\rm corr}$ such that $\phi(\gamma)=\Orb(\gamma,\fp'_{\rm corr})$ for all $\gamma\in S(F_0)_{\rs}$. By \cite[Lem. 5.13]{RSZ1}, there exists $\wt\varphi'_{\rm corr}$ transferring to $ (0, 0)\in C_c^\infty(\U(W_0))\times C_c^\infty(\U(W_1))$ and such that  $\del(\gamma,\wt\fp'_{\rm corr})=\Orb(\gamma, \fp'_{\rm corr})$ for all $\gamma\in S(F_0)_{\rs}$. 
The function $\varphi'=\wt\varphi'+\wt\varphi'_{\corr}$ then  transfers to  $(\vol(K_1^{[0],\circ})^{-1} h_0\cdot\varphi^{[2,0]}_{2},0)\in C_c^\infty(\U(W_0))\times C_c^\infty(\U(W_1))$ and satisfies the  identity 
\begin{equation*}\label{equ:AT-identity-nbd}
\left\langle\wt\CM^{[0],+,\spt}_{1} , g\wt\CM^{[0],-,\spt}_{1} \right\rangle_{ \CN_{1}^{[0]} \times \CN_{2}^{[0],\spt} }\cdot\log q+\del\big(\gamma,  \varphi' \big)=0
\end{equation*}
for all $\gamma\in S(F_0)_{\rs,1}$.

By Proposition \ref{prop:compute-hecke-2}, Theorem \ref{thm:germ-expan}, and Lemma \ref{lem:compute-orbit-int}, a similar argument applies to the intersection number $\left\langle\wt\CM^{[0],+,\spt}_{1} , g\wt\CM^{[0],+,\spt}_{1} \right\rangle$. 
\end{proof}

\end{document}